\newcommand{\bigboxplus}{
  \mathop{
    \vphantom{\bigoplus} 
    \mathchoice
      {\vcenter{\hbox{\resizebox{\widthof{$\displaystyle\bigoplus$}}{!}{$\boxplus$}}}}
      {\vcenter{\hbox{\resizebox{\widthof{$\bigoplus$}}{!}{$\boxplus$}}}}
      {\vcenter{\hbox{\resizebox{\widthof{$\scriptstyle\oplus$}}{!}{$\boxplus$}}}}
      {\vcenter{\hbox{\resizebox{\widthof{$\scriptscriptstyle\oplus$}}{!}{$\boxplus$}}}}
  }\displaylimits 
}
\def\widebreve{\mathpalette\wide@breve}
\def\wide@breve#1#2{\sbox\z@{$#1#2$}%
     \mathop{\vbox{\m@th\ialign{##\crcr
\kern0.08em\brevefill#1{0.8\wd\z@}\crcr\noalign{\nointerlineskip}%
                    $\hss#1#2\hss$\crcr}}}\limits}
\def\brevefill#1#2{$\m@th\sbox\tw@{$#1($}%
  \hss\resizebox{#2}{\wd\tw@}{\rotatebox[origin=c]{90}{\upshape(}}\hss$}
\theoremstyle{plain}
\newtheorem{definition}{Definition}[section]
\newtheorem{lemma}{Lemma}[section]
\newtheorem{proposition}[lemma]{Proposition}
\newtheorem{theorem}[lemma]{Theorem}
\newtheorem{corollary}[lemma]{Corollary}
\newtheorem{conjecture}[lemma]{Conjecture}
\newtheorem{hypothesis}[lemma]{Hypothesis}
\newtheorem{theorem*}[lemma]{Theorem*}
\newtheorem{lemma*}[lemma]{Lemma*}
\newtheorem{proposition*}[lemma]{Proposition*}
\theoremstyle{definition}
\newtheorem*{remark}{Remark}
\DeclarePairedDelimiter{\abs}{\lvert}{\rvert}
\newcommand{\lra}{\longrightarrow}
\newcommand{\llra}{\longleftrightarrow}
\newcommand{\into}{\hookrightarrow}
\newcommand{\onto}{\twoheadrightarrow}
\newcommand{\id}{\operatorname{id}}
\newcommand{\myim}{\operatorname{Im}}
\newcommand{\myre}{\operatorname{Re}}
\newcommand{\mmatrix}[4]{\begin{pmatrix} #1 & #2 \\ #3 & #4 \end{pmatrix}}
\newcommand{\tp}[1]{\prescript{\mathrm t}{}{#1}}
\newcommand{\diag}{\operatorname{diag}}
\newcommand{\Lgp}[1]{\prescript{L}{}{#1}}
\newcommand{\tr}{\operatorname{tr}}
\newcommand{\cent}{\operatorname{Cent}}
\newcommand{\inv}{^{-1}}
\newcommand{\an}[1]{\langle #1 \rangle}
\newcommand{\der}{\operatorname{der}}
\newcommand{\sgn}{\operatorname{sgn}}
\newcommand{\A}{\mathbb{A}}
\newcommand{\C}{\mathbb{C}}
\newcommand{\Q}{\mathbb{Q}}
\newcommand{\R}{\mathbb{R}}
\newcommand{\Z}{\mathbb{Z}}
\newcommand{\bfJ}{\mathbf{J}}
\newcommand{\calA}{\mathcal{A}}
\newcommand{\calC}{\mathcal{C}}
\newcommand{\calE}{\mathcal{E}}
\newcommand{\calG}{\mathcal{G}}
\newcommand{\calH}{\mathcal{H}}
\newcommand{\calI}{\mathcal{I}}
\newcommand{\calL}{\mathcal{L}}
\newcommand{\calM}{\mathcal{M}}
\newcommand{\calR}{\mathcal{R}}
\newcommand{\calS}{\mathcal{S}}
\newcommand{\calX}{\mathcal{X}}
\newcommand{\calY}{\mathcal{Y}}
\newcommand{\fraka}{\mathfrak{a}}
\newcommand{\frake}{\mathfrak{e}}
\newcommand{\frakg}{\mathfrak{g}}
\newcommand{\frakn}{\mathfrak{n}}
\newcommand{\frakp}{\mathfrak{p}}
\newcommand{\frakt}{\mathfrak{t}}
\newcommand{\frakN}{\mathfrak{N}}
\newcommand{\frakS}{\mathfrak{S}}
\newcommand{\Frob}{\operatorname{Frob}}
\newcommand{\GL}{\operatorname{GL}}
\newcommand{\PGL}{\operatorname{PGL}}
\newcommand{\Sp}{\operatorname{Sp}}
\newcommand{\Mp}{\operatorname{Mp}}
\newcommand{\SO}{\operatorname{SO}}
\newcommand{\Or}{\operatorname{O}}
\newcommand{\SU}{\operatorname{SU}}
\newcommand{\SL}{\operatorname{SL}}
\newcommand{\Mat}{\operatorname{M}}
\newcommand{\so}{\mathfrak{so}}
\newcommand{\disc}{\mathrm{disc}}
\newcommand{\temp}{\mathrm{temp}}
\newcommand{\Irr}{\operatorname{Irr}}
\newcommand{\cusp}{\mathrm{cusp}}
\newcommand{\scusp}{\mathrm{scusp}}
\newcommand{\simple}{\mathrm{sim}}
\newcommand{\bdd}{\mathrm{bdd}}
\newcommand{\el}{\mathrm{ell}}
\newcommand{\unit}{\mathrm{unit}}
\newcommand{\exc}{\mathrm{exc}}
\newcommand{\reg}{\mathrm{reg}}
\newcommand{\semisimple}{\mathrm{ss}}
\newcommand{\srss}{\mathrm{srss}}
\newcommand{\aut}{\mathrm{aut}}
\newcommand{\iso}{\mathrm{iso}}
\newcommand{\weylbc}[1]{\mathfrak{S}_{#1} \ltimes (\Z/2\Z)^{#1}}
\newcommand{\Hom}{\operatorname{Hom}}
\newcommand{\End}{\operatorname{End}}
\newcommand{\Aut}{\operatorname{Aut}}
\newcommand{\Sym}{\operatorname{Sym}}
\newcommand{\Out}{\operatorname{Out}}
\newcommand{\ad}{\operatorname{ad}}
\newcommand{\Ad}{\operatorname{Ad}}
\newcommand{\Gal}{\operatorname{Gal}}
\newcommand{\Res}{\operatorname{Res}}
\title{The endoscopic classification of representations of non-quasi-split odd special orthogonal groups}
\author{Hiroshi Ishimoto\footnote{ishimoto.hiroshi.55m@gmail.com}}
\date{[\today]}
\begin{document}

\maketitle

\begin{abstract}
  In an earlier book of Arthur, the endoscopic classification of representations of quasi-split orthogonal and symplectic groups was established.
  Later Mok gave that of quasi-split unitary groups.
  After that, Kaletha, Minguez, Shin, and White gave that of non-quasi-split unitary groups for generic parameters.
  In this paper we prove the endoscopic classification of representations of non-quasi-split odd special orthogonal groups for generic parameters, following Kaletha, Minguez, Shin, and White.
\end{abstract}

\tableofcontents

\section{Introduction}\label{intr}
\subsection*{Background \& Main result}
In a 2013 book \cite{art13}, Arthur established the endoscopic classification of irreducible representations of quasi-split special orthogonal and symplectic groups over local fields of characteristic zero and of automorphic representations of those groups over global fields of characteristic zero.
To local $A$-parameters he attached local $A$-packets characterized by the endoscopic character relation (which we shall call ECR for short), and proved that the local $A$-packets for generic parameters give the local Langlands correspondence (which we shall call LLC for short), and that automorphic representations are classified in terms of automorphic cuspidal representations of general linear groups and local $A$-packets.

Let us roughly recall his result.
Let first $F$ be a local field of characteristic zero, and $G$ a split odd special orthogonal group over $F$ for simplicity.
A local $A$-parameter for $G$ is a homomorphism
\begin{equation*}
    \psi:L_F\times\SU(2,\R) \to \Lgp{G},
\end{equation*}
with some conditions, where $L_F$ and $\Lgp{G}$ denote the Langlands group of $F$ and the $L$-group of $G$, respectively.
For every $A$-parameter $\psi$, Arthur first constructed a multiset $\Pi_\psi(G)$ over the set $\Pi_\unit(G)$ of equivalence classes of irreducible unitary representations of $G(F)$, and a mapping $\Pi_\psi(G)\to\Irr(\pi_0(S_\psi))$, where $S_\psi$ denotes the centralizer of $\psi$ in the Langlands dual group $\widehat{G}$ of $G$, and then he proved that they satisfy ECR.
An $A$-parameter is called a bounded $L$-parameter if its restriction to $\SU(2,\R)$ is trivial.
He also proved LLC by showing that if $\psi$ is a bounded $L$-parameter then the packet $\Pi_\psi(G)$ gives the $L$-packet, which has been expected to exist.
Let us recall here the basic form of LLC, which is a conjecture in general.
\begin{conjecture}[LLC]
    Let $G$ be a connected reductive algebraic group over a local field $F$.
    Let $\Pi_\temp(G)$ be the set of equivalence classes of irreducible tempered representations of $G(F)$, and $\Phi_\bdd(G)$ the set of equivalence classes of bounded $L$-parameters.
    There exists a canonical map
    \begin{align*}
        LL : \Pi_\temp(G) \longrightarrow \Phi_\bdd(G),
    \end{align*}
    such that for each $\phi \in \Phi_\bdd(G)$, the fiber $\Pi_\phi(G) = LL\inv(\phi)$ is a finite set and there is an injective map $\iota_\phi:\Pi_\phi(G)\to\Irr(\pi_0(S_\phi))$.
    These two correspondences satisfy some interesting properties.
    The finite set $\Pi_\phi(G)$ is called the $L$-packet of $\phi$.
\end{conjecture}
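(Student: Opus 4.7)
The statement as given is the general local Langlands conjecture (LLC) for an arbitrary connected reductive $G$ over a local field of characteristic zero, which remains open in full generality. The plan is therefore to outline the strategy that has been carried out in the known cases—general linear groups and their inner forms, quasi-split classical groups, and non-quasi-split unitary groups—and which the body of the paper will extend to non-quasi-split odd special orthogonal groups. I would not try to attack all $G$ uniformly; rather, I would reduce in stages and then apply the strategy case by case.

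The first stage is the standard reduction. Via a $z$-extension in the sense of Kottwitz one reduces to $G$ whose derived group is simply connected; Weil restriction of scalars then reduces to absolutely simple $G$ over $F$; and direct products treat the almost simple case. Within each almost simple type one treats the quasi-split inner form $G^*$ first. The second stage, for $G^*$ classical, is to introduce an auxiliary twisted general linear group $(\GL_N,\theta)$ such that $(G^*,\xi)$ is a twisted endoscopic datum of $(\GL_N,\theta)$, and to compare the stable trace formula of $G^*$ with the twisted trace formula of $(\GL_N,\theta)$ following Arthur and Mok. Globalizing a given local bounded parameter $\phi$ to a global $L$-parameter, producing a cuspidal automorphic representation of $\GL_N$ via Moeglin--Waldspurger, and localizing at the distinguished place, produces the finite packet $\Pi_\phi(G^*)$ as the set of local constituents; the endoscopic character relation against a stable tempered character of $\GL_N$ cuts this set down and, after fixing a Whittaker datum for $G^*$, equips each constituent with a character of $\pi_0(S_\phi)$ via $\iota_\phi$.

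The third stage passes from $G^*$ to a non-quasi-split $G$ by rigidifying $G$ as an inner twist of $G^*$, using Kaletha's rigid inner forms (or for classical groups pure inner forms), and defining $\Pi_\phi(G)$ through the spectral identity dual to endoscopic transfer between $G$ and its elliptic endoscopic groups; for the classical $G$ treated in this paper these endoscopic groups are products of smaller classical groups and may be assumed known by induction on dimension. Stability plus the endoscopic character identities plus the local intertwining relation then pin down $\iota_\phi$ canonically, given the choices of Whittaker datum and rigidification. This is the KMSW template that the remainder of the paper will adapt to odd special orthogonal groups.

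The principal obstacle is the simultaneous existence and uniqueness of $\Pi_\phi$ subject to the endoscopic character relation: existence requires a globalization of $\phi$ whose behaviour at all auxiliary places can be controlled, and uniqueness requires a strong multiplicity-one statement drawn from the stabilization of the twisted trace formula, neither of which is cheap. A secondary difficulty is the canonicity of $\iota_\phi$, which depends on a Whittaker datum on $G^*$ and on a rigidification of $G$ over $G^*$; it is the organizational role of rigid inner forms to make this canonical. Outside the classical families the approach sketched above simply breaks down—no suitable auxiliary twisted $\GL_N$ is available for exceptional $G$—and one would have to fall back on other constructions such as that of Fargues--Scholze, which at present yields only the semisimplified parameter and not the internal packet structure and pairing with $\pi_0(S_\phi)$ required by the conjecture as stated.
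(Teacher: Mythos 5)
You have correctly identified that this statement is recalled in the paper only as a conjecture, with no proof given in general, and that the paper itself establishes it only for inner forms of $\SO_{2n+1}$ (for generic parameters) via the Arthur--Mok--KMSW endoscopic method. Your outline — twisted endoscopy against $(\GL_N,\theta_N)$, globalization of parameters, comparison of (twisted) stable trace formulas, pure/rigid inner twists, and the endoscopic character relation together with the local intertwining relation pinning down $\Pi_\phi$ and $\iota_\phi$ — is essentially the same approach the paper carries out for the case it treats, so there is nothing further to add.
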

We refer the reader to \cite{llcnqs} for details.

On the other hand let next $F$ be a number field, and $G$ a split odd special orthogonal group over $F$.
A global $A$-parameter for $G$ is a formal finite unordered sum
\begin{equation*}
    \psi=\bigboxplus_i \pi_i \boxtimes \nu_i,
\end{equation*}
with some conditions, where $\pi_i$ is an irreducible cuspidal automorphic representation of a general linear group, and $\nu_i$ is an irreducible finite dimensional representation of $\SU(2,\R)$.
For all place $v$ of $F$, we have the localization $\psi_v=\bigoplus_i \phi_{i,v}\boxtimes\nu_i$, where $\phi_{i,v}$ is the unique $L$-parameter for a general linear group over $F_v$ corresponding to $\pi_{i,v}$ via LLC.
Then $\psi_v$ is a local $A$-parameter for $G_v$ over $F_v$. (Strictly speaking, it may not be an $A$-parameter, but a packet for it is defined.)
Arthur determined an appropriate subset $\Pi_\psi(\varepsilon_\psi)$ of $\{\bigotimes_v \pi_v \mid \pi_v \in \Pi_{\psi_v}(G_v)\}$ and showed the decomposition
\begin{equation}\label{ys}
    \begin{gathered}
        L^2_{\disc}(G(F)\backslash G(\A_F))=\bigoplus_\psi L^2_{\disc,\psi}(G(F)\backslash G(\A_F)),\\
        L^2_{\disc,\psi}(G(F)\backslash G(\A_F))=\bigoplus_{\pi\in\Pi_\psi(\varepsilon_\psi)} \pi,
    \end{gathered}
\end{equation}
of the discrete spectrum into near equivalence classes, and then into irreducible automorphic representations.
We shall call a decomposition like \eqref{ys} "Arthur's multiplicity formula", and abbreviate it as AMF.

Later, Mok \cite{mok} proved ECR, LLC, and AMF for quasi-split unitary groups by the similar argument, and Kaletha-Minguez-Shin-White \cite{kmsw} partially proved those for non-quasi-split unitary groups.
In this paper, following \cite{kmsw}, we shall partially prove the analogous classification (ECR, LLC, and AMF) for non-quasi-split odd special orthogonal groups by the similar argument.
In other words, our main theorem (Theorem \refeq{main}) is the following:
\begin{theorem}
    Over a local field of characteristic zero, LLC holds for any non-quasi-split odd special orthogonal group, and the $L$-packets equipped with mappings $\iota_\phi$ satisfy ECR.
    Over a global field of characteristic zero, AMF holds for any non-quasi-split odd special orthogonal group, except the irreducible decompositions of $L^2_{\disc,\psi}(G(F)\backslash G(\A_F))$ for non-generic parameters $\psi$.
\end{theorem}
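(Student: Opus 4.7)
The plan is to mirror the strategy of Kaletha--Minguez--Shin--White \cite{kmsw} for non-quasi-split unitary groups, adapted to the odd special orthogonal setting, with Arthur's results from \cite{art13} for the quasi-split groups serving as input at every stage. Since a non-quasi-split odd special orthogonal group $G$ is an inner form of its quasi-split form $G^*$, their $L$-groups coincide, so local and global $A$-parameters are the same objects for both; the task is to construct, for the inner form $G$, the local packets $\Pi_\phi(G)$ together with the injections $\iota_\phi$, and globally to identify the subset $\Pi_\psi(\varepsilon_\psi)$ realizing the discrete spectrum.

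Locally, I would take ECR as the \emph{defining} property of the packet: for every semisimple $s \in S_\phi$ and the endoscopic datum it determines, the virtual combination $\sum_{\pi\in\Pi_\phi(G)}\langle s,\pi\rangle\pi$ has character transferring, via the Langlands--Shelstad transfer for an inner form, to the stable character of $\Pi_\phi(G^*_s)$ on the endoscopic group. Working by induction on an appropriate complexity of $\phi$ (for example, on the rank of the underlying general linear data), the existence and uniqueness of the $L$-packets is reduced to the local intertwining relation, which identifies the action of normalized intertwining operators on standard modules in terms of characters of the component group $\pi_0(S_\phi)$. Globally, I would stabilize the invariant trace formula for $G$ and compare it with Arthur's stable trace formulas on the elliptic endoscopic data. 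Inducting on the global parameter $\psi$ and invoking Arthur's AMF for quasi-split groups, the stabilization reduces to local identities that pin down $\Pi_\psi(\varepsilon_\psi)$, while the sign character $\varepsilon_\psi$ itself emerges from the endoscopic comparison. Information is fed back from the global to the local setting by globalizing prescribed local parameters using auxiliary places carrying simple supercuspidal types (or analogous well-controlled building blocks), so that global AMF forces the desired behavior at the distinguished place.

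The main obstacle, as in \cite{kmsw}, will be establishing the local intertwining relation for $G$: the signs produced by normalized intertwining operators on parabolically induced representations must match the predictions of the component group pairing, and it is this identity that interlocks the local and global inductions. A secondary difficulty is arranging sufficiently flexible globalizations of prescribed local parameters while keeping the component groups and ramification at auxiliary places under control, and tracking the Whittaker normalizations relative to splittings such as $\splv$ (and their inner twists) that fix the various signs. The restriction to generic $\psi$ in the global statement reflects the fact that, outside that case, the current circle of methods does not suffice to determine the full irreducible decomposition of $L^2_{\disc,\psi}(G(F)\backslash G(\A_F))$ on the inner form.
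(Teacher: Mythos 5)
Your proposal follows the same overall strategy as the paper: reduce everything to the local intertwining relation, prove it by a local--global interplay in which Arthur's quasi-split results and the stabilized trace formula are the engine, and recover the local and global main theorems by globalizing prescribed local data and comparing multiplicities. The one place where the paper does genuinely new work, which your sketch flags only as "the main obstacle," is the proof of LIR in the irreducible base cases $\SO(1,4)$ and $\SO(2,5)$ over $\R$, where the paper replaces the Bruhat-decomposition computation of \cite{kmsw} with an explicit Iwasawa-decomposition calculation because the relevant discrete series is infinite-dimensional; apart from that, your outline matches the paper's argument.
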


The most important difference between this paper and \cite{kmsw} is the proof of the local intertwining relation.
The local intertwining relation (\cite[Theorem* 2.6.2]{kmsw}, Theorem \ref{2.6.2} in this paper), for which we shall write LIR for short, is a key theorem in the proof of the local main theorems ECR and LLC.
By the similar idea to \cite{kmsw}, we can reduce the proof of LIR to that for two special cases: the real special orthogonal groups $\SO(1,4)$ and $\SO(2,5)$ relative to Levi subgroups isomorphic to $\GL_{1}\times\SO(0,3)$ and $\GL_{2}\times\SO(0,3)$ respectively, and the special parameters.
In the case of $\SO(1,4)$, the special representation of the Levi subgroup under consideration is the trivial representation, and hence we can prove LIR by the similar argument to \S2.9 in loc.cit.
However, in the case of $\SO(2,5)$, the situation is too complicated to calculate similarly to loc. cit., since the representation of the Levi subgroup is infinite dimensional.
For this reason in this paper, we shall prove them by a completely different argument.
We choose a test function using the Iwasawa decomposition, while the relative Bruhat decomposition was used in loc. cit.
The argument will appear in \S\ref{2.9}.

We remark that LLC of non-quasi-split odd special orthogonal groups has already studied by M{\oe}glin-Renard \cite{mr}, but their result does not contain LIR.
Thus our local theorem is differentiated from their work.

\subsection*{Application}
In \cite[Chapter 9]{art13}, Arthur formulated the classification for non-quasi-split symmetric and orthogonal groups, and he has the intention of proving it.
The results of this paper are included in his project, but we have a different motivation, the representation theory of the metaplectic groups.
It is one of the most important application of this paper.
Let us recall the results on the metaplectic groups by Adams, Barbasch, Gan, Savin, and Ichino.

Let $F$ be a local field.
The metaplectic group, denoted by $\Mp_{2n}(F)$, is a unique nonlinear two-fold cover of $\Sp_{2n}(F)$ except $F=\C$, in which case $\Mp_{2n}(\C)=\Sp_{2n}(\C)\times\{\pm1\}$.
We identify $\Mp_{2n}(F)$ with $\Sp_{2n}(F) \times \{\pm1\}$ as sets, and a representation $\pi$ of $\Mp_{2n}(F)$ is said to be genuine if $\pi((1,-1))$ is not trivial.
Let $\Pi_\temp(\Mp_{2n})$ denote the set of equivalence classes of genuine tempered irreducible representations of $\Mp_{2n}(F)$.
Adams-Barbasch \cite{ab95,ab98}, Adams \cite{ad} (archimedean case), and Gan-Savin \cite{gs} (non-archimedean case) showed that the local theta correspondence gives a canonical bijection
\begin{equation*}
    \Pi_\temp(\Mp_{2n}) \llra \bigsqcup_V \Pi_\temp(\SO(V)),
\end{equation*}
where $V$ runs over all $(2n+1)$-dimensional quadratic space of discriminant 1 over $F$.
Thanks to their results, LLC for $\Mp_{2n}$ is implied by that for all $\SO(V)$.
In addition, there is an article \cite{ishi} which proved LIR for $\Mp_{2n}$ assuming that for all $\SO(V)$ over a $p$-adic field.

Let next $F$ be a number field.
The metaplectic group $\Mp_{2n}(\A_F)$ is a nontrivial two-fold cover of $\Sp_{2n}(\A_F)$, and there is a canonical injective homomorphism $\Sp_{2n}(F) \into \Mp_{2n}(\A_F)$.
Hence the notions of "automorphic representations" and "discrete spectrum" are defined in a canonical way.
AMF for the metaplectic group was studied by Gan-Ichino \cite{gi18}.
They proved the decomposition of the discrete spectrum of $\Mp_{2n}$ into near equivalence classes without any assumption, and proved the decomposition into irreducible automorphic representations of the generic part assuming that for all non-quasi-split odd special orthogonal groups.

Those theories will be completed by this paper.
Namely, LLC and LIR for the metaplectic groups will hold true, and the result of Gan-Ichino \cite{gi18} on the generic part of AMF will be unconditional.

\subsection*{Organization}
\S\ref{0} is the preliminary section, where some notations for odd special orthogonal groups are established.

In \S\ref{1} we first recall the notions of endoscopic triple, transfer factor, local and global parameters, and the canonical correspondence $(\frake,\psi^\frake)\leftrightarrow(\psi,s)$.
Next we shall state LLC, ECR, and AMF more precisely.
We will recall the result of Arthur \cite{art13} and state our main theorem (Theorem \ref{main}).

\S\ref{2} is the most important section in this paper.
We define the local intertwining operator, state LIR, and reduce its proof to the case when the parameter is discrete for $M^*$ and elliptic or exceptional for $G^*$, following \cite{kmsw, art13, mok}.
Then in \S\ref{2.9} we give a proof of LIR for the special case explained above.

In \S\ref{3}, we will recall the global theory on the trace formula, and obtain some lemmas.
Proofs of some lemmas are omitted since they are quite similar to those in \cite[\S3]{kmsw}.

In \S\ref{4}, we complete the proof of the local main theorem by the argument involving globalizations and trace Paley-Wiener theorem.

In \S\ref{5}, we complete the proof of the global main theorem.

\subsection*{Convention \& Notation}
We marked some theorems, lemmas, and propositions with symbol * to indicate that they are only proved for generic parameters in this paper.
(We omit * when we refer to them.)
The author expect that their proofs for non-generic parameters will be completed by an analogous argument to a sequel of \cite{kmsw}.

We do not use "$\calS$" in this paper.
Instead, we shall use "$\frakS$" to denote the component groups.\\

In this paper every field is assumed to be of characteristic zero.
In particular, a local field is either $\R$, $\C$, or a finite extension of $\Q_p$ for some prime number $p\in\Z$, and a global field is a number field, i.e., a finite extension of $\Q$.
For a field $F$, we write $\overline{F}$ for its algebraic closure, and $\Gamma=\Gamma_F$ for its absolute Galois group $\Gal(\overline{F}/F)$.
For a connected reductive algebraic group $G$ over $F$, we write $e(G)$ for the Kottwitz sign (\cite{kot83}) of $G$, and $\widehat{G}$ for the dual group over $\C$.
If moreover $F$ is a local or global field, we write $W_F$ for the absolute Weil group of $F$, and the Weil form of the $L$-group is defined by $\Lgp{G}=\widehat{G}\rtimes W_F$.

If $F$ is a number field, we write $\A_F$ for the ring of adeles of $F$.
We often fix a nontrivial additive character of $F\backslash \A_F$, which always denoted by $\psi_F$.
For each place $v$ of $F$, we write $\psi_{F,v}$ for the local component of $\psi_F$ at $v$.
We abbreviate $\Gamma_{F_v}$ as $\Gamma_v$.
Following \cite{art13} and \cite{kmsw}, we do not use a symbol $\bigotimes_v'$ for a restricted tensor product. We simply write $\bigotimes_v$.
Similarly, we write $\prod_v$ in place of $\prod_v'$.
Unless otherwise specified, $\prod_v$, $\sum_v$, $\bigoplus_v$, and $\bigotimes_v$ denote the certain operations taken over all places $v$ of $F$, respectively.

If $F$ is a local field, the Langlands group is defied as follows.
\begin{align*}
    L_F=
    \begin{dcases*}
        W_F \times \SU(2,\R), & if $F$ is non-archimedean, \\
        W_F,                  & if $F$ is archimedean.
    \end{dcases*}
\end{align*}
As in the global case, we often fix a nontrivial additive character of $F$, which always denoted by $\psi_F$.

For an algebraic or abstract group $G$, its center is denoted by $Z(G)$.
In addition, when $X$ is a subgroup or an element of $G$, we write $\cent(X,G)$, $Z_X(G)$, or $Z(X,G)$ (resp. $N_G(X)$ or $N(X,G)$) for the centralizer (resp. normalizer) of $X$ in $G$.

For a topological group $G$, its connected component of the identity element is denoted by $G^\circ$, and we put $\pi_0(G)=G/G^\circ$.
For a finite group $A$, we shall write $\Irr(A)$ for the set of equivalence classes of irreducible $\C$-representations of $A$.

For an algebraic group $G$ over a field $F$, put $X^*(G)=\Hom(G,\GL_1)$ and $X_*(G)=\Hom(\GL_1,G)$, which are equipped with the $F$-structure.
Put moreover
\begin{align*}
    \fraka_G      & =\Hom(X^*(G)_F, \R), & \fraka_G^*      & =X^*(G)_F\otimes_\Z \R, \\
    \fraka_{G,\C} & =\Hom(X^*(G)_F, \C), & \fraka_{G,\C}^* & =X^*(G)_F\otimes_\Z \C.
\end{align*}

For any representation $\pi$, let $\pi^\vee$ denote its contragredient representation.
If $\pi$ is a representation of a topological group $G$ with a Haar measure $dg$, for a function $f$ on $G$, let $f_G(\pi)$ denote its character i.e.,
\begin{align*}
    f_G(\pi) & =\tr(\pi(f)), &  & \text{where} & \pi(f) & =\int_G f(g)\pi(g)dg.
\end{align*}
We also write $f(\pi)$ if there is no danger of confusion.

In this paper we shall write $E_{i,j}$ for the $(i,j)$-th matrix unit.
For a positive integer $N$, we put
\begin{align*}
    J=J_N=\left( \begin{array}{cccccc}
                      &  &  &  &  & 1 \\ &&&&-1&\\ &&&1&&\\ &&\adots&&&\\ &(-1)^{N-2}&&&&\\ (-1)^{N-1}&&&&&
                 \end{array} \right)
    \in \GL_N,
\end{align*}
and define an automorphism $\theta_N$ of $\GL_{N}$ by $\theta_N(g)=J\tp{g}\inv J\inv$.
Then the standard pinning $(T_N,B_N,\{E_{i,i+1}\}_{i=1}^{N-1})$, where $T_N$ is the maximal torus consisting of diagonal matrices and $B_N$ is the Borel subgroup consisting of upper triangular matrices, is $\theta_N$-stable.
The dual group for $\GL_N$ is $\GL(N,\C)$, and the automorphism $\widehat{\theta}_N$ of $\GL(N,\C)$, which is dual to $\theta_N$, is given by $\widehat{\theta}_N(g)=J\tp{g}\inv J\inv$.
As $\GL_N$ is split, the Galois action on $\GL(N,\C)$ is trivial.

\subsection*{Acknowledgment}
  The author would like to thank his doctoral advisor Atsushi Ichino for his helpful advice.
  He also thanks the co-advisor Wen-Wei Li for his helpful comments.
  In addition, he also thanks Masao Oi and Hirotaka Kakuhama for sincere and useful comments.
  This work was partially supported by JSPS Research Fellowships for Young Scientists KAKENHI Grant Number 20J10875 and JSPS KAKENHI Grant Number 22K20333.
  The author also would like to appreciate Naoki Imai for his great support by JSPS KAKENHI Grant Number 22H00093.

\section{Odd special orthogonal groups}\label{0}
In this section, we establish some notations for the odd special orthogonal groups, and recall the Kottwitz map.
In the third subsection, we shall describe the real case as a preparation for the proof of Lemma \ref{2.2.1}.
\subsection{Split odd special orthogonal groups $\SO_{2n+1}$}\label{0.1}
Let $F$ be any field of characteristic zero, and $n$ a non-negative integer.
We shall write $\SO_{2n+1}$ for the split odd special orthogonal group over $F$ of size $(2n+1)$, which is defined by
\begin{equation*}
    \SO_{2n+1}=\Set{g\in\GL_{2n+1} | \tp{g}\begin{pmatrix}&&1_n\\&2&\\1_n&&\end{pmatrix}g=\begin{pmatrix}&&1_n\\&2&\\1_n&&\end{pmatrix}}.
\end{equation*}
We put 2 at the center to make root vectors simple.
Its Lie algebra is realized as
\begin{equation*}
    \so_{2n+1}=\Set{X \in \Mat_{2n+1} | \tp{X}\begin{pmatrix}&&1_n\\&2&\\1_n&&\end{pmatrix}+\begin{pmatrix}&&1_n\\&2&\\1_n&&\end{pmatrix}X = 0 },
\end{equation*}
over $F$.
Let us fix the standard Borel subgroup and the standard maximal torus
\begin{align*}
    B^* & = \Set{\begin{pmatrix}a&*&* \\ &1&* \\ &&\tp a\inv \end{pmatrix}\in \SO_{2n+1} | a\in \GL_n, \text{upper triangular} }, \\
    T^* & = \Set{ t=\operatorname{diag} (t_1,\ldots,t_n,1,t_1\inv ,\ldots,t_n\inv ) | t_i \in \GL_1},
\end{align*}
and let $\chi_i$ denote the element of $X^*(T^*)$ such that $\chi_i(t)=t_i$, for $i=1,\ldots,n$.
We shall define simple roots $\alpha_i$ and simple root vectors $X_{\alpha_i}$ so that
\begin{align*}
    R(T^*,\SO_{2n+1}) & = \set{\pm(\chi_i-\chi_j), \pm(\chi_i+\chi_j) }_{1\leq i<j \leq n} \cup \set{\pm\chi_i }_{1\leq i\leq n}, \\
    \Delta(B^*)       & = \Set{\alpha_1,\dots,\alpha_n }                                                                          \\
                      & =\set{\chi_1-\chi_2,\chi_2-\chi_3,\dots,\chi_{n-1}-\chi_n, \chi_n},                                       \\
    X_{\alpha_i}      & =E_{i,i+1}-E_{n+1+i+1,n+1+i}, \qquad 1\leq i\leq n-1,                                                     \\
    X_{\alpha_n}      & =2E_{n,n+1}-E_{n+1,2n+1},
\end{align*}
where $R(T^*,\SO_{2n+1})$ and $\Delta(B^*)$ denote the root system of $T^*$ in $\SO_{2n+1}$ and the set of simple roots with respect to $B^*$, respectively.
Here we write the group law on $X^*(T^*)$ additively.
Put
\begin{align*}
    H_i & = E_{i,i}-E_{i+1,i+1}-E_{n+1+i,n+1+i}+E_{n+1+i+1,n+1+i+1}, & \text{for } & 1\leq i\leq n-1, \\
    H_n & = 2E_{n,n}-2E_{2n+1,2n+1},                                 & \text{for } & i=n,
\end{align*}
and
\begin{align*}
    X_{-\alpha_i} & = E_{i+1,i}-E_{n+1+i,n+1+i+1}, &  & \text{for } 1\leq i\leq n-1, & \\
    X_{-\alpha_n} & = E_{n+1,n}-2E_{2n+1,n+1},     &  & \text{for } i=n.             &
\end{align*}
It can be seen that $X_{-\alpha_i}$ is a root vector corresponding to a negative root $-\alpha_i$.
Also one has $d\alpha_i(H_i) = 2$ and $[X_{\alpha_i}, X_{-\alpha_i}] = H_i$, so $H_i$ is the coroot for $\alpha_i$ and $X_{-\alpha_i}$ is the opposite root vector to $X_{\alpha_i}$.
In this paper, by the standard pinning of $\SO_{2n+1}$, we mean the pinning $(T^*,B^*,\{X_\alpha\}_{\alpha\in\Delta(B^*)})$.

The dual group of $\SO_{2n+1}$ is the symplectic group $\Sp(2n,\C)$.
We shall choose a standard pinning of $\Sp(2n,\C)$ dual to the standard pinning of $\SO_{2n+1}$, to be the same as in \cite[\S7.1]{ishi}.

\subsection{Inner forms of odd special orthogonal groups}\label{0.3}
Let $F$ be any field of characteristic zero, and $G^*=\SO_{2n+1}$ the split special orthogonal group over $F$.
Since any automorphism of $G^*$ is an inner automorphism, the isomorphism classes of inner forms of $G^*$ and those of inner twists of $G^*$ are bijectively corresponding to each other in the natural way.
Moreover, since $G^*_{\ad}=G^*$ and $Z(G^*)=\{1\}$, the isomorphism classes of inner twists, of pure inner twists, and of extended pure inner twists of $G^*$ are also bijectively corresponding to each other in the natural way.
We refer the reader to \cite[\S0.3.1]{kmsw} for the definitions and basic properties of inner twists and so on.
Hence there are bijective correspondences between the sets of isomorphism classes of inner forms, inner twists, and pure inner twists of $G^*$, and the Galois cohomology set $H^1(F,G^*)$:
\begin{align*}
    \begin{array}{ccccccc}
        \left\{ \text{inner forms} \right\}/\simeq & \llra   & \left\{ \text{inner twists} \right\}/\simeq & \llra   & \left\{ \text{pure inner twists} \right\}/\simeq & \llra   & H^1(F,G^*), \\
        G                                          & \mapsto & \xi:G^*\to G                                & \mapsto & (\xi,z):G^*\to G                                 & \mapsto & z.
    \end{array}
\end{align*}
By abuse of terminology we often identify them.
We remark that for any $(2n+1)$-dimensional quadratic space $V$ over $F$, its special orthogonal group $\SO(V)$ is an inner form of $G^*$, and every inner form of $G^*$ is isomorphic to $\SO(V)$ for some quadratic space $V$ over $F$.

Let $G$ be an inner form of $G^*$, and $M^*\subset G^*$ a Levi subgroup.
We say that $M^*$ transfers to $G$ if there exists an inner twist $\xi:G^*\to G$ such that $M:=\xi(M^*)\subset G$ is a Levi subgroup over $F$ and the restriction $\xi|_{M^*} : M^*\to M$ is an inner twist over $F$.
When $M^*$ transfers to $G$, we will always choose an inner twist $\xi$ as above.\\

Let $F$ be a local field.
Kottwitz \cite[Theorem 1.2]{kot86} gave a canonical map of pointed sets
\begin{equation}\label{kot}
    \alpha:H^1(F,G^*)\to \pi_0(Z(\widehat{G^*})^{\Gamma})^D,
\end{equation}
extending the Tate-Nakayama isomorphism, where $D$ denotes the Pontryagin dual.
For any pure inner twist $(\xi,z);G^*\to G$, which we regard as an element of $H^1(F,G^*)$, put $\chi_G=\alpha(G)$.
We shall also write $\an{-,z}$ or $\an{-,\xi}$ for it.
Note that in our case we have $\pi_0(Z(\widehat{G^*})^{\Gamma})^D=Z(\Sp(2n,\C))^D\simeq\Z/2\Z$.

If $F$ is non-archimedean, then there are only two inner forms of $G^*$.
One is non-quasi-split, and the other is split.
In this case the map $\alpha$ is the unique isomorphism of pointed sets.
If $F=\C$, then $H^1(\C,G^*)$ is singleton, and hence $\alpha$ is trivial.
Suppose that $F=\R$.
For non-negative integers $p$ and $q$ with $p+q=2n+1$, put
\begin{equation*}
    \SO(p,q)=\Set{g\in\GL_{2n+1}(\R) | \tp{g}\begin{pmatrix}1_p&\\&-1_q\end{pmatrix}g=\begin{pmatrix}1_p&\\&-1_q\end{pmatrix}}.
\end{equation*}
Then $\SO(p,q)$ is an inner form of $G^*$, and every inner form is isomorphic to $\SO(p,q)$ for some $(p,q)$.
Since $\SO(p_1,q_1)$ and $\SO(p_2,q_2)$ are isomorphic if and only if $(p_1,q_1)=(p_2,q_2)$ or $(q_2,p_2)$, we can naturally identify $H^1(\R,\SO_{2n+1})$ with $\{(p,q) \mid p,q\in\Z_{\geq0},\ p+q=2n+1\}/{(p,q)\sim(q,p)}$.
Moreover, It is known that the Kottwitz map is characterized by
\begin{equation*}
    \ker \alpha = \Set{\SO(p,q) | p-q\equiv1,7 \mod{8}}.
\end{equation*}

Let $F$ be a global field.
For each place $v$ of $F$ there is a localization map $H^1(F,G^*)\to H^1(F_v,G^*)$.
Let $\alpha_v$ denote the Kottwitz map \eqref{kot} from $H^1(F_v,G^*)$ to $\pi_0(Z(\widehat{G^*})^{\Gamma_v})^D=\pi_0(Z(\widehat{G^*})^{\Gamma})^D$.
By \cite[Proposition 2.6]{kot86} we have an exact sequence
\begin{equation}\label{033}
    H^1(F,G^*) \lra \bigoplus_v H^1(F_v,G^*) \overset{\sum_v\alpha_v}{\lra} \pi_0(Z(\widehat{G^*})^{\Gamma})^D.
\end{equation}
\begin{remark}
    Such an exact sequence holds for general $G^*$.
\end{remark}

\subsection{On the real odd special orthogonal groups $\SO(p,q)$}\label{reallie}
Let us describe the real special orthogonal groups $\SO(p,q)$ and their Lie algebras $\so(p,q)$ more explicitly.
Let $p>q$ be non-negative integers with $p+q=2n+1$, and put $r=p-n-1=n-q$.
Put
\begin{align*}
    S_0=\left(\begin{array}{ccc} 1_n&&1_n\\ &2&\\ 1_n&&-1_n \end{array}\right).
\end{align*}
Then an assignment $g \mapsto S_0gS_0\inv$ determines an isomorphism from $\SO_{2n+1}$ to $\SO(n+1,n)$ over $\R$.
Put next
\begin{align*}
    S_{p,q}'=\left(\begin{array}{ccc}1_{n+1}&&\\ &-i1_r&\\ &&1_q\end{array}\right).
\end{align*}
Then an assignment $g \mapsto S_{p,q}'g{S_{p,q}'}\inv$ determines an isomorphism from $\SO(n+1,n)$ to $\SO(p,q)$ over $\C$.
We thus obtain an inner twist $\xi=\xi_{p,q} : G^*=\SO_{2n+1}\to\SO(p,q)$ given by $g \mapsto S_{p,q}gS_{p,q}\inv$, where $S_{p,q}=S_{p,q}'S_0$.

The standard pinning $(T^*, B^*, \{X_\alpha\}_\alpha)$ of $G^*$ gives a Chevalley basis of $\frakg^*=\so_{2n+1}$:
\begin{align*}
    [X_\beta, X_\gamma]=\pm(b+1)X_{\beta+\gamma},
\end{align*}
where $b$ is the greatest positive integer such that $\gamma-b\beta$ is a root.
Explicitly,
\begin{align*}
    X_{\chi_i-\chi_j}
     & =E_{i,j}-E_{n+1+j,n+1+i},                          & \text{for } & 1\leq i<j\leq n, \\
    X_{\chi_i+\chi_j}
     & =E_{i,n+1+j}-E_{j,n+1+i},                          & \text{for } & 1\leq i<j\leq n, \\
    X_{\chi_i}
     & =2E_{i,n+1}-E_{n+1,n+1+i},                         & \text{for } & 1\leq i\leq n,   \\
    \\
    H_{\chi_i-\chi_j}
     & = E_{i,i}-E_{j,j}-E_{n+1+i,n+1+i}+E_{n+1+j,n+1+j}, & \text{for } & 1\leq i<j\leq n, \\
    H_{\chi_i+\chi_j}
     & = E_{i,i}+E_{j,j}-E_{n+1+i,n+1+i}-E_{n+1+j,n+1+j}, & \text{for } & 1\leq i<j\leq n, \\
    H_{\chi_i}
     & = 2E_{i,i}-2E_{n+1+i,n+1+i},                       & \text{for } & 1\leq i\leq n,   \\
    \\
    X_{-(\chi_i-\chi_j)}
     & = E_{j,i}-E_{n+1+i,n+1+j},                         & \text{for } & 1\leq i<j\leq n, \\
    X_{-(\chi_i+\chi_j)}
     & = E_{n+1+j,i}-E_{n+1+i,j},                         & \text{for } & 1\leq i<j\leq n, \\
    X_{-\chi_i}
     & = E_{n+1,i}-2E_{n+1+i,n+1},                        & \text{for } & 1\leq i\leq n.
\end{align*}

As usual the Lie algebra of $\SO(p,q)$ has the real structure
\begin{align*}
    \so(p,q)_\R=\Set{X \in \Mat_{2n+1}(\R) | \tp{X}\begin{pmatrix}1_p&\\&-1_q\end{pmatrix} +\begin{pmatrix}1_p&\\&-1_q\end{pmatrix}X = 0 },
\end{align*}
and the inner twist $\xi=\xi_{p,q} : G^*=\SO_{2n+1} \to \SO(p,q)$ gives an isomorphism $\xi : \frakg^* \ni X\mapsto S_{p,q}XS_{p,q}\inv \in \so(p,q)$ of the complex Lie algebras.

The inner twist $\xi_{p,q}$ sends an element
\begin{equation*}
    \operatorname{diag}(\ldots,1,\overset{i}{t},1,\ldots, 1, \overset{n+1+i}{t\inv},1,\ldots) \in T^*,
\end{equation*}
to
\begin{align*}
    \begin{dcases*}
        \left(\begin{array}{ccccc}
                  1_{i-1} &        &     &         &         \\
                          & \cos z &     & -\sin z &         \\
                          &        & 1_n &         &         \\
                          & \sin z &     & \cos z  &         \\
                          &        &     &         & 1_{n-i}
              \end{array}\right),                      & for $i\leq r$,       \\
        \left(\begin{array}{ccccc}
                  1_{i-1} &                   &     &                   &         \\
                          & \frac{t+t\inv}{2} &     & \frac{t-t\inv}{2} &         \\
                          &                   & 1_n &                   &         \\
                          & \frac{t-t\inv}{2} &     & \frac{t+t\inv}{2} &         \\
                          &                   &     &                   & 1_{n-i}
              \end{array}\right), & for $r<i$,
    \end{dcases*}
\end{align*}
where $z$ is a complex number such that $t=e^{\sqrt{-1}z}$. ($i\leq r$ is non-split part, $i>r$ is split part.)

The images of the root vectors and the coroots are given as follows:
\begin{align*}
     & \xi(X_{\chi_i-\chi_j})=\frac{1}{2}\times          \\
     & \quad\begin{dcases*}
                \begin{multlined}
            \left( E_{i,j}+\sqrt{-1}E_{i,n+1+j}-E_{j,i}+\sqrt{-1}E_{j,n+1+i}\right.\\
            \left.-\sqrt{-1}E_{n+1+i,j}+E_{n+1+i,n+1+j}-\sqrt{-1}E_{n+1+j,i}-E_{n+1+j,n+1+i} \right),
        \end{multlined}          & for $i<j\leq r$,          \\
                \begin{multlined}
            \left( E_{i,j}+E_{i,n+1+j}-E_{j,i}+\sqrt{-1}E_{j,n+1+i}\right.\\
            \left.-\sqrt{-1}E_{n+1+i,j}-\sqrt{-1}E_{n+1+i,n+1+j}+E_{n+1+j,i}-\sqrt{-1}E_{n+1+j,n+1+i} \right),
        \end{multlined} & for $i\leq r<j$, \\
                \begin{multlined}
            \left( E_{i,j}+E_{i,n+1+j}-E_{j,i}+E_{j,n+1+i}\right.\\
            \left.+E_{n+1+i,j}+E_{n+1+i,n+1+j}+E_{n+1+j,i}-E_{n+1+j,n+1+i} \right),
        \end{multlined}                            & for $r< i<j$,
            \end{dcases*}
    \\
    \\
     & \xi(H_{\chi_i-\chi_j})=
    \begin{dcases*}
        \sqrt{-1}\left(E_{i,n+1+i}-E_{j,n+1+j}-E_{n+1+i,i}+E_{j,n+1+j}\right), & for $i<j\leq r$, \\
        \sqrt{-1}E_{i,n+1+i}-E_{j,n+1+j}-\sqrt{-1}E_{n+1+i,i}-E_{j,n+1+j},     & for $i\leq r<j$, \\
        E_{i,n+1+i}-E_{j,n+1+j}+E_{n+1+i,i}-E_{j,n+1+j},                       & for $r< i<j$,
    \end{dcases*}
    \\
    \\
     & \xi(X_{-(\chi_i-\chi_j)})=\frac{1}{2}\times       \\
     & \quad\begin{dcases*}
                \begin{multlined}
            \left( -E_{i,j}+\sqrt{-1}E_{i,n+1+j}+E_{j,i}+\sqrt{-1}E_{j,n+1+i}\right.\\
            \left.-\sqrt{-1}E_{n+1+i,j}-E_{n+1+i,n+1+j}-\sqrt{-1}E_{n+1+j,i}+E_{n+1+j,n+1+i} \right),
        \end{multlined}
                                                                                        & for $i<j\leq r$, \\
                \begin{multlined}
            \left( -E_{i,j}+E_{i,n+1+j}+E_{j,i}+\sqrt{-1}E_{j,n+1+i}\right.\\
            \left.-\sqrt{-1}E_{n+1+i,j}+\sqrt{-1}E_{n+1+i,n+1+j}+E_{n+1+j,i}+\sqrt{-1}E_{n+1+j,n+1+i} \right),
        \end{multlined}
                                                                                        & for $i\leq r<j$, \\
                \begin{multlined}
            \left( -E_{i,j}+E_{i,n+1+j}+E_{j,i}+E_{j,n+1+i}\right.\\
            \left.+E_{n+1+i,j}-E_{n+1+i,n+1+j}+E_{n+1+j,i}+E_{n+1+j,n+1+i} \right),
        \end{multlined} & for $r< i<j$,
            \end{dcases*}
    \\
    \\
    \\
     & \xi(X_{\chi_i+\chi_j})=\frac{1}{2}\times          \\
     & \quad\begin{dcases*}
                \begin{multlined}
            \left( E_{i,j}-\sqrt{-1}E_{i,n+1+j}-E_{j,i}+\sqrt{-1}E_{j,n+1+i}\right.\\
            \left.-\sqrt{-1}E_{n+1+i,j}-E_{n+1+i,n+1+j}+\sqrt{-1}E_{n+1+j,i}+E_{n+1+j,n+1+i} \right),
        \end{multlined}          & for $i<j\leq r$,          \\
                \begin{multlined}
            \left( E_{i,j}-E_{i,n+1+j}-E_{j,i}+\sqrt{-1}E_{j,n+1+i}\right.\\
            \left.-\sqrt{-1}E_{n+1+i,j}+\sqrt{-1}E_{n+1+i,n+1+j}-E_{n+1+j,i}+\sqrt{-1}E_{n+1+j,n+1+i} \right),
        \end{multlined} & for $i\leq r<j$, \\
                \begin{multlined}
            \left( E_{i,j}-E_{i,n+1+j}-E_{j,i}+E_{j,n+1+i}\right.\\
            \left.+E_{n+1+i,j}-E_{n+1+i,n+1+j}-E_{n+1+j,i}+E_{n+1+j,n+1+i} \right),
        \end{multlined}                            & for $r< i<j$,
            \end{dcases*}
    \\
    \\
     & \xi(H_{\chi_i+\chi_j})=
    \begin{dcases*}
        \sqrt{-1}\left( E_{i,n+1+i}+E_{j,n+1+j}-E_{n+1+i,i}-E_{j,n+1+j} \right), & for $i<j\leq r$, \\
        \sqrt{-1}E_{i,n+1+i}+E_{j,n+1+j}-\sqrt{-1}E_{n+1+i,i}+E_{j,n+1+j},       & for $i\leq r<j$, \\
        E_{i,n+1+i}+E_{j,n+1+j}+E_{n+1+i,i}+E_{j,n+1+j},                         & for $r<i<j$,
    \end{dcases*}
    \\
    \\
     & \xi(X_{-(\chi_i+\chi_j)})=\frac{1}{2}\times       \\
     & \quad\begin{dcases*}
                \begin{multlined}
            \left( -E_{i,j}-\sqrt{-1}E_{i,n+1+j}+E_{j,i}+\sqrt{-1}E_{j,n+1+i}\right.\\
            \left.-\sqrt{-1}E_{n+1+i,j}+E_{n+1+i,n+1+j}+\sqrt{-1}E_{n+1+j,i}-E_{n+1+j,n+1+i} \right),
        \end{multlined}
                                                                                        & for $i<j\leq r$, \\
                \begin{multlined}
            \left( -E_{i,j}-E_{i,n+1+j}+E_{j,i}+\sqrt{-1}E_{j,n+1+i}\right.\\
            \left.-\sqrt{-1}E_{n+1+i,j}-\sqrt{-1}E_{n+1+i,n+1+j}-E_{n+1+j,i}-\sqrt{-1}E_{n+1+j,n+1+i} \right),
        \end{multlined}
                                                                                        & for $i\leq r<j$, \\
                \begin{multlined}
            \left( -E_{i,j}-E_{i,n+1+j}+E_{j,i}+E_{j,n+1+i}\right.\\
            \left.+E_{n+1+i,j}+E_{n+1+i,n+1+j}-E_{n+1+j,i}-E_{n+1+j,n+1+i} \right),
        \end{multlined} & for $r< i<j$,
            \end{dcases*}
    \\
    \\
    \\
     & \xi(X_{\chi_i})=
    \begin{dcases*}
        E_{i,n+1}-E_{n+1,i}+\sqrt{-1}E_{n+1,n+1+i}-\sqrt{-1}E_{n+1+i,n+1}, & for $i\leq r$, \\
        E_{i,n+1}-E_{n+1,i}+E_{n+1,n+1+i}+E_{n+1+i,n+1},                   & for $r< i$,
    \end{dcases*}
    \\
    \\
     & \xi(H_{\chi_i})=2\times
    \begin{dcases*}
        \sqrt{-1}\left( E_{i,n+1+i}-E_{n+1+i,i} \right), & for $i\leq r$, \\
        \left( E_{i,n+1+i}+E_{n+1+i,i} \right),          & for $r< i$,
    \end{dcases*}
    \\
    \\
     & \xi(X_{-\chi_i})=
    \begin{dcases*}
        -E_{i,n+1}+E_{n+1,i}+\sqrt{-1}E_{n+1,n+1+i}-\sqrt{-1}E_{n+1+i,n+1}, & for $i\leq r$, \\
        -E_{i,n+1}+E_{n+1,i}+E_{n+1,n+1+i}+E_{n+1+i,n+1},                   & for $r< i$.
    \end{dcases*}
\end{align*}
In particular we have $\tp{\overline{\xi(X_\alpha)}}=\xi(X_{-\alpha})$ for any $\alpha\in R(T^*,G^*)$.

Put $T=\xi(T^*)$.
The real form $T(\R)$ consists of the elements of the form $\xi(\diag(e^{\sqrt{-1}\theta_1},\ldots,e^{\sqrt{-1}\theta_r},t_{r+1},\ldots,t_n,1,\ldots))$, where $t_i\in\R^\times$ and $\theta_i\in\R/2\pi\Z$.
For an element $t\in T(\R)$ of such form and a positive root $\alpha\in R(T,\SO(p,q))$, which is equal to $R(T^*,G^*)$ as sets, we have
\begin{align*}
    \Ad(t)(\xi(X_\alpha))=\alpha(t)\xi(X_\alpha),
\end{align*}
where
\begin{align*}
    \alpha(t)=
    \begin{dcases*}
        e^{\sqrt{-1}(\theta_i\pm\theta_j)}, & for $\alpha=\chi_i\pm\chi_j$, $i<j\leq r$,  \\
        e^{\sqrt{-1}\theta_i} t_j^{\pm1},   & for $\alpha=\chi_i\pm\chi_j$, $i\leq r <j$, \\
        t_it_j^{\pm1},                      & for $\alpha=\chi_i\pm\chi_j$, $r<i<j$,      \\
        e^{\sqrt{-1}\theta_i},              & for $\alpha=\chi_i$, $i\leq r$,             \\
        t_i,                                & for $\alpha=\chi_i$, $r<i$.
    \end{dcases*}
\end{align*}
This means that as roots of $T$ in $G$, $\alpha=\pm(\chi_i\pm\chi_j)$ ($i<j\leq r$), $\pm\chi_i$ ($i\leq r$) are imaginary roots (i.e., $\overline{\alpha}=-\alpha$), $\alpha=\pm(\chi_i\pm\chi_j)$ ($i\leq r<j$) are complex roots (i.e., $\overline{\alpha}\neq\pm\alpha$), and $\alpha=\pm(\chi_i\pm\chi_j)$ ($r<i<j$), $\pm\chi_i$ ($r<i$) are real roots (i.e., $\overline{\alpha}=\alpha$).

\section{Notions around endoscopy and parameters, and Main theorem}\label{1}
In this section, we first establish some notations for the endoscopy groups, the transfer factors, and $L$- or $A$-parameters.
Afterwards, we shall recall Arthur's work and state our main theorem.

\subsection{Endoscopy}\label{1.1}
In this subsection, we recall notion of an endoscopic triple and the transfer factor from \cite[\S1.1]{kmsw}.

\subsubsection{Endoscopic triples}
Let $F$ be a global or local field.
Consider a pair $(G^*,\theta^*)$ consisting of a connected quasi-split reductive group $G^*$ defined over $F$ with a fixed $F$-pinning, and a pinned automorphism $\theta^*$ of $G^*$.
Here, an automorphism $\theta^*$ is said to be pinned if it preserves the fixed pinning (i.e., the maximal torus, the Borel subgroup, and the set of simple root vectors are $\theta^*$-stable).
Then we have an automorphism $\widehat{\theta^*}$ of $\widehat{G^*}$, which preserves a $\Gamma$-invariant pinning for $\widehat{G^*}$.
Put $\Lgp{\theta^*}=\widehat{\theta^*}\rtimes \id_{W_F}$, which is an $L$-automorphism of $\Lgp{G^*}$.
\begin{definition}
    An endoscopic triple for $(G^*, \theta^*)$ is a triple $\frake=(G^\frake,s^\frake,\eta^\frake)$ of a connected quasi-split reductive group $G^\frake$ defined over $F$, a semisimple element $s^\frake\in \widehat{G^*}$, and an $L$-homomorphism $\eta^\frake : \Lgp{G^\frake}\to\Lgp{G^*}$ such that
    \begin{itemize}
        \item $\Ad(s^\frake)\circ \Lgp{\theta^*}\circ \eta^\frake=\eta^\frake$;
        \item $\eta^\frake(\widehat{G^\frake})$ is the connected component of the subgroup of $\Ad(s^\frake)\circ \widehat{\theta^*}$-fixed elements in $\widehat{G^*}$.
    \end{itemize}
    When $\eta(Z(\widehat{G^\frake})^\Gamma)^\circ \subset Z(\widehat{G^*})$, the endoscopic triple $\frake$ is said to be elliptic.
    When $\theta^*$ is trivial (resp. not trivial), every endoscopic triple for $(G^*,\theta^*)$ is said to be ordinary (resp. twisted).
\end{definition}
\begin{definition}
    Two endoscopic triples $\frake_1=(G^\frake_1, s^\frake_1, \eta^\frake_1)$ and $\frake_2=(G^\frake_2, s^\frake_2, \eta^\frake_2)$ for $G^*$ are said to be isomorphic (resp. strictly isomorphic) if there exists an element $g \in \widehat{G^*}$ such that
    \begin{itemize}
        \item $g \eta^\frake_1(\Lgp{G^\frake_1}) g\inv = \eta^\frake_2(\Lgp{G^\frake_2})$;
        \item $zg s^\frake_1 \widehat{\theta^*}(g)\inv = s^\frake_2$ for some $z \in Z(\widehat{G^*})$, (resp. $g s^\frake_1 \widehat{\theta^*}(g)\inv = s^\frake_2$).
    \end{itemize}
    Then the element $g$ is called an isomorphism (resp. strict isomorphism) from $\frake_1$ to $\frake_2$.
    If $g$ is an isomorphism (resp. strict isomorphism) from $\frake$ to $\frake$ itself, then it is said to be an automorphism (resp. strict automorphism) of $\frake$, and we shall write $\overline{\Aut}_{G^*}(\frake)$ (resp. $\Aut_{G^*}(\frake)$) for the set of automorphisms (resp. strict automorphisms) of $\frake$.
\end{definition}

Let $\xi : G^* \to G$ be an inner twist, and put $\theta=\xi\circ\theta^*\circ\xi\inv$.
Suppose that an automorphism $\theta$ of $G$ is defined over $F$.
\begin{definition}
    By an endoscopic triple for $(G,\theta)$, we mean that for $(G^*,\theta^*)$.
    The notion of elliptic endoscopic triple, isomorphism and strict isomorphism of endoscopic triples are also the same as those for $(G^*,\theta^*)$.
\end{definition}
We shall write $\overline{\calE}(G\rtimes\theta)$ (resp. $\calE(G\rtimes\theta)$) for the set of isomorphism (resp. strict isomorphism) classes of endoscopic triples for $(G,\theta)$.
The corresponding subsets of elliptic endoscopic triples will be indicated by the lower right index $\mathrm{ell}$:
\begin{equation*}
    \xymatrix{
    \calE(G\rtimes\theta) \ar@{->>}[r] & \overline{\calE}(G\rtimes\theta)\\
    \calE_\el(G\rtimes\theta) \ar@{->>}[r] \ar@{}[u]|{\bigcup} & \overline{\calE}_\el(G\rtimes\theta) \ar@{}[u]|{\bigcup}.
    }
\end{equation*}
When $\theta$ is trivial, then we may write $\calE(G)$, $\overline{\calE}(G)$, $\calE_\el(G)$, and $\overline{\calE}_\el(G)$.

Put $\widetilde{\calE}(N)=\calE(\GL_N,\theta_N)$ and $\widetilde{\calE}_\el(N)=\calE_\el(\GL_N,\theta_N)$.
If $\frake=(G^\frake, s^\frake, \eta^\frake)\in \widetilde{\calE}(N)$, the group $G^\frake$ is a direct product of finite numbers of symplectic or special orthogonal groups.
An elliptic endoscopic triple $\frake\in\widetilde{\calE}_\el(N)$ is said to be simple if the number of factors is one.
Define $\widetilde{\calE}_\simple(N)$ to be the set of strictly isomorphism classes of simple endoscopic triples of $(\GL_N,\theta_N)$.

\subsubsection{Normalized transfer factors}
Next, we shall recall some properties of transfer factors.
Let $G^*$ be a quasi-split connected reductive group over a local or global field $F$ with a fixed $F$-pinning, and $\theta^*$ an automorphism of $G^*$ preserving the pinning.
We assume that $Z(G^*)$ is connected.
Note that if $G^*$ is a direct product of a finite number of $\SO_{2m+1}$ or $\GL_m$ ($m\geq1$), then $G^*$ is quasi-split connected reductive and its center is connected.
Let $(\xi,z) : G^* \to G$ be a pure inner twist, and put $\theta=\xi\circ\theta^*\circ\xi\inv$.
Then $\theta$ is an automorphism of $G$ defined over $F$.
Not all twisted groups $(G,\theta)$ arise in this way, which is why an element $g_\theta \in G^*$ such that $\theta^*=\Ad(g_\theta)\circ\xi\inv\circ\theta\circ\xi$ was introduced in \cite[\S1.2]{ks99}.
In this paper, however, it suffices to assume $g_\theta$ of \cite{ks99} is $1$.
We assume that $\theta^*(z)=z$, which is enough for our purpose.
Let $\frake=(G^\frake, s^\frake,\eta^\frake)$ be an endoscopic triple for $(G,\theta)$.

Let first $F$ be a local field, and $\psi_F : F\to \C^1$ a nontrivial additive character.
Let further $\delta \in G_{\theta-\srss}(F)$ and $\gamma\in G^\frake(F)$, where $G_{\theta-\srss}(F)$ denotes the set of $\theta$-strongly regular and $\theta$-semisimple elements in $G(F)$.
Then we have the transfer factor:
\begin{align*}
    \Delta[\frake, \xi,z](\gamma,\delta)=\begin{dcases*}
                                             \epsilon(\frac{1}{2},V, \psi_F) \Delta_I^{\mathrm{new}}(\gamma,\delta)\inv \Delta_{II}(\gamma,\delta) \Delta_{III}(\gamma,\delta)\inv \Delta_{IV}(\gamma,\delta), & if $\gamma$ is a norm of $\delta$, \\
                                             0,                                                                                                                                                                & otherwise,
                                         \end{dcases*}
\end{align*}
where $V$, $\Delta_I^{\mathrm{new}}$, $\Delta_{II}$, $\Delta_{III}$, and $\Delta_{IV}$ are defined as in \cite[pp.38-39]{kmsw}.
Although they treated an extended pure inner twist in loc. cit., the similar arguments also works for a pure inner twist.
The main difference is that we utilize an ordinary cocycle, instead of a basic cocycle.
See also \cite[\S4.3-4.5, Appendices A-C]{ks99} and \cite[\S3.4]{ks12}.
\begin{remark}
    In the case $G^*=\SO_{2n+1}$ and $\theta^*=1$, the transfer factor $\Delta[\frake, \xi,z]$ is determined by the isomorphism class of $G$, not by the choice of $(\xi,z)$.
\end{remark}
\begin{proposition}\label{1.1.1}
    Let $\gamma_1, \gamma_2 \in G^\frake(F)$ be norms of $\delta_1, \delta_2\in G_{\theta-\srss}(F)$, respectively.
    Then we have
    \begin{equation*}
        \frac{\Delta[\frake, \xi,z](\gamma_1,\delta_1)}{\Delta[\frake, \xi,z](\gamma_2,\delta_2)}=\Delta'(\gamma_1,\delta_1;\gamma_2,\delta_2),
    \end{equation*}
    where the right hand side is the relative transfer factor of \cite{ks12}.
\end{proposition}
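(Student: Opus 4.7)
The plan is to unwind the defining factorization of the normalized transfer factor and compare the ratio term by term with the relative factorization of Kottwitz--Shelstad \cite{ks12}. Writing
\[
\Delta[\frake,\xi,z](\gamma_i,\delta_i) = \epsilon(\tfrac{1}{2},V,\psi_F)\,\Delta_I^{\mathrm{new}}(\gamma_i,\delta_i)^{-1}\Delta_{II}(\gamma_i,\delta_i)\Delta_{III}(\gamma_i,\delta_i)^{-1}\Delta_{IV}(\gamma_i,\delta_i)
\]
for $i=1,2$, I first observe that $\epsilon(\tfrac{1}{2},V,\psi_F)$ depends only on $(\frake,\xi,z,\psi_F)$ and not on the pair $(\gamma,\delta)$, so it cancels out of the quotient. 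It therefore suffices to prove that the ratio of each remaining absolute factor agrees with the corresponding relative factor of \cite{ks12}.

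The factors $\Delta_{II}$ and $\Delta_{IV}$ are products of purely local quantities attached to individual roots of the torus of $G^\frake$ corresponding to $\gamma$ and to the $\theta$-twisted torus of $G$ corresponding to $\delta$, so the quotients $\Delta_{II}(\gamma_1,\delta_1)/\Delta_{II}(\gamma_2,\delta_2)$ and $\Delta_{IV}(\gamma_1,\delta_1)/\Delta_{IV}(\gamma_2,\delta_2)$ are exactly the relative $\Delta_{II}$ and $\Delta_{IV}$ of \cite{ks12} by construction. For $\Delta_{III}$, the absolute factor is a pairing of a cohomology class built from the two tori with an object built from $s^\frake$; taking the ratio replaces the absolute cocycle by a relative cocycle supported on the two tori, which is precisely the input to the relative $\Delta_{III}$.

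The main obstacle, and the step that has to be checked with care, is the comparison for $\Delta_I^{\mathrm{new}}$. The absolute factor here uses the Whittaker normalization together with the splitting invariant of the admissible embedding and also sees the cocycle $z$ of the pure inner twist. In the quotient, the Whittaker/pinning contribution and the $z$-contribution depend only on $(\frake,\xi,z)$ and cancel, while the remaining data (the splitting invariants of the two tori, compared against each other) reproduces the relative $\Delta_I$ of \cite{ks12}. I would carry this out by choosing a common reference torus in $G^*$, expressing the two splitting invariants with respect to this common choice, and then verifying that the difference is exactly the pairing appearing in the Kottwitz--Shelstad relative $\Delta_I$. Since this paper works with ordinary $G^*$-valued cocycles $z$ rather than the basic cocycles of \cite{kmsw}, a small bookkeeping check is required to confirm that no basic/ordinary discrepancy survives in the ratio; but because $z$ appears identically in the numerator and denominator, this reduces to verifying that the formula used to build $\Delta_I^{\mathrm{new}}$ from the pure inner twist data is $\Gamma$-equivariantly linear in the cocycle class at the two tori.

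Finally, multiplying the four verified ratios gives $\Delta'(\gamma_1,\delta_1;\gamma_2,\delta_2)$ as defined in \cite{ks12}, which completes the proof. I expect that no new input beyond \cite{ks99, ks12} is needed, so the argument is essentially a definition-chase, with the $\Delta_I^{\mathrm{new}}$ comparison being the only nontrivial part.
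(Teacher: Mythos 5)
Your term-by-term comparison of the factorization is exactly the approach the paper takes, which is itself just a deferral to \cite[Proposition 1.1.1]{kmsw}; you have also correctly identified the ordinary-versus-basic-cocycle distinction as the only new bookkeeping point (cf.\ the remark immediately following the definition of $\Delta[\frake,\xi,z]$). One small correction: in the KMSW factorization the cocycle $z$ enters $\Delta_{III}$, through the invariant $\mathrm{inv}[z](\gamma,\delta)$, rather than through $\Delta_I^{\mathrm{new}}$; but since your $\Delta_{III}$ paragraph already observes that the ratio of these pairings yields the relative cocycle of \cite{ks12}, the $z$-dependence is disposed of there and your argument is unaffected.
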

\begin{proof}
    The proof is similar to that of \cite[Proposition 1.1.1]{kmsw}.
\end{proof}
\begin{lemma}\label{1.1.2}
    Let $\overline{s}^\frake$ denote the image of $s^\frake$ in $(\widehat{G^*}/\widehat{G^*}_{\der})_{\widehat{\theta^*}, \mathrm{free}}^\Gamma$, which is the set of $\Gamma$-fixed points in the torsion free quotient of the $\widehat{\theta^*}$-covariants of $\widehat{G^*}/\widehat{G^*}_{\der}$.
    For $x\in Z(\widehat{G^*})^\Gamma$ and $y\in Z^1(F, (Z(G^*)^{\theta^*})^\circ)$, we have
    \begin{equation*}
        \Delta[x\frake, \xi,z]=\an{x,z}\Delta[\frake,\xi,z],
    \end{equation*}
    and
    \begin{equation*}
        \Delta[\frake, \xi,yz]=\an{\overline{s^\frake},y}\Delta[\frake,\xi,z],
    \end{equation*}
    where $x\frake$ denotes the endoscopic triple $(G^\frake, xs^\frake, \eta^\frake)$.
\end{lemma}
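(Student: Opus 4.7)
The plan is to follow the strategy of \cite[Lemma 1.1.2]{kmsw} (which in turn imitates arguments in \cite{ks12,ks99}). Since both identities are statements about the ratio of two transfer factors attached to the same pair $(\gamma,\delta)$ (with $\gamma$ a norm of $\delta$, as the case $\gamma$ not a norm is trivial), by Proposition~\ref{1.1.1} it suffices to check them at any single strongly $\theta$-regular semisimple $\delta$ and any norm $\gamma$. The first step will therefore be to fix such a pair and to isolate which constituents of
\[
    \Delta[\frake,\xi,z](\gamma,\delta)=\epsilon(\tfrac12,V,\psi_F)\,\Delta_I^{\mathrm{new}}(\gamma,\delta)\inv\Delta_{II}(\gamma,\delta)\Delta_{III}(\gamma,\delta)\inv\Delta_{IV}(\gamma,\delta)
\]
actually depend on $s^\frake$ and on the cocycle $z$.

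The key observation is that $\epsilon(\tfrac12,V,\psi_F)$, $\Delta_I^{\mathrm{new}}$, $\Delta_{II}$ and $\Delta_{IV}$ are built from $\gamma$, $\delta$, $\xi$, the embedding $\eta^\frake$ and $a$-data/$\chi$-data, and so are insensitive to both the semisimple element $s^\frake$ and the cocycle $z$. Thus the full dependence under $(s^\frake,z)\leadsto(xs^\frake,z)$ or $(s^\frake,z)\leadsto(s^\frake,yz)$ is concentrated in the factor $\Delta_{III}$, which is the Kottwitz--Shelstad pairing
\[
    \Delta_{III}(\gamma,\delta)=\langle\mathrm{inv}(\gamma,\delta),\,A_0\rangle,
\]
of a cohomology class $\mathrm{inv}(\gamma,\delta)$ built from $(\gamma,\delta,\xi,z)$ with an element $A_0$ of a centralizer group constructed from $s^\frake$. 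The second step is to recall the precise normalization of this pairing from \cite[\S3.4]{ks12} and \cite[pp.\,38--39]{kmsw}, and to verify bilinearity in the two arguments modulo the usual coboundary ambiguities.

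For the first identity, replacing $s^\frake$ by $xs^\frake$ with $x\in Z(\widehat{G^*})^\Gamma$ shifts $A_0$ by the image of $x$; because $x$ lies in $Z(\widehat{G^*})^\Gamma$, this image pairs with $\mathrm{inv}(\gamma,\delta)$ through the Tate--Nakayama/Kottwitz pairing to give exactly $\langle x,z\rangle$, using the basic fact that the $\gamma,\delta$-dependent part of $\mathrm{inv}(\gamma,\delta)$ projects trivially into $H^1(F,G^*)$ and only the contribution of $z$ survives against a central $x$. For the second identity, replacing $z$ by $yz$ with $y\in Z^1(F,(Z(G^*)^{\theta^*})^\circ)$ shifts $\mathrm{inv}(\gamma,\delta)$ by the class of $y$; pairing the class of $y$ against $A_0$ factors through the quotient $(\widehat{G^*}/\widehat{G^*}_{\der})_{\widehat{\theta^*},\mathrm{free}}^\Gamma$ by design (this is exactly the point of the definition of $\overline{s}^\frake$), and produces the scalar $\langle\overline{s^\frake},y\rangle$.

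The only real obstacle is bookkeeping: one must verify that the $a$-data/$\chi$-data used to define $\Delta_I^{\mathrm{new}}$ and $\Delta_{II}$ can be chosen compatibly across the replacements $\frake\leadsto x\frake$ and $z\leadsto yz$ (so that those factors are literally unchanged, not merely unchanged up to a coboundary which later cancels), and that the $\epsilon$-factor $\epsilon(\tfrac12,V,\psi_F)$ is defined from data (the quadratic space $V$) attached to $G^\frake$ and $\xi$ alone. This is the content of the parallel verification in \cite{kmsw}; since $Z(G^*)=1$ and $\theta^*=1$ will be the cases of interest for us (so in particular $(Z(G^*)^{\theta^*})^\circ=1$ and the second identity is then vacuous for ordinary endoscopy), the only substantive statement in our applications is the first, and there the argument reduces to the pure inner-twist version of \cite[Lemma~4.4.A]{ks99} carried out in \cite[\S1.1]{kmsw}. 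I would therefore conclude by pointing to that reference for the detailed cocycle chase.
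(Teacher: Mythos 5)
Your overall approach---reduce via Proposition~\ref{1.1.1} to a single pair $(\gamma,\delta)$, identify which constituents of the product formula change under $s^\frake\mapsto xs^\frake$ or $z\mapsto yz$, and compute the change in $\Delta_{III}$ via the Kottwitz--Shelstad pairing---is exactly what \cite[Lemma~1.1.2]{kmsw} does, and therefore exactly what the paper's one-line citation refers to; the paper itself gives no more detail than that citation.

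There is, however, a genuine misstatement in your middle step. You assert that $\Delta_I^{\mathrm{new}}$ is built only from $\gamma,\delta,\xi,\eta^\frake$ and $a$-data and is therefore insensitive to $s^\frake$. That is not so: $\Delta_I$ (and its variant $\Delta_I^{\mathrm{new}}$) is defined as a Tate--Nakayama pairing of the splitting invariant $\lambda(T_{\mathrm{sc}})\in H^1(F,T_{\mathrm{sc}})$ against an element of $\pi_0\bigl(\widehat{T_{\mathrm{sc}}}^\Gamma\bigr)$ built from $s^\frake$, so it does depend on $s^\frake$. The correct reason it is unchanged under $s^\frake\mapsto xs^\frake$ with $x\in Z(\widehat{G^*})^\Gamma$ is not independence of $s^\frake$ but the fact that $\widehat{T_{\mathrm{sc}}}=\widehat{T}/Z(\widehat{G^*})$, so the dependence factors through the adjoint quotient, which kills $x$. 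With that correction the rest of your argument (the $\Delta_{III}$ computation, bilinearity of the pairing modulo coboundaries, and the factoring of the $y$-contribution through $(\widehat{G^*}/\widehat{G^*}_{\der})_{\widehat{\theta^*},\mathrm{free}}^\Gamma$) is the right expansion of the cited proof. Also note that while your observation that the second identity is vacuous when $Z(G^*)$ is trivial is a correct sanity check for $G^*=\SO_{2n+1}$ itself, the lemma is stated in the general setup of \S\ref{1.1}, where $G^*$ may be a product of $\GL_m$'s and odd orthogonal groups (e.g.\ a Levi, or a twisted endoscopic group of $\GL_N$); in those cases $(Z(G^*)^{\theta^*})^\circ$ is nontrivial and the second identity has real content.
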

\begin{proof}
    The proof is similar to that of \cite[Lemma 1.1.2]{kmsw}.
\end{proof}

We shall now recall the notion of matching functions from \cite[\S5.5]{ks99}.
For $\delta\in G_{\theta-\srss}(F)$ and $f\in \calH(G)$, the orbital integral is defined as
\begin{equation*}
    O_{\delta\theta}(f) = \int_{G_{\delta\theta}(F)\backslash G(F)} f(g\inv \delta \theta(g)) dg,
\end{equation*}
where $G_{\delta\theta}=\cent_\theta(\delta,G)=\{x\in G \mid \delta \theta(x)=x\delta \}$.
If $\theta$ is trivial, it may be simply written $O_\delta(f)$.
For $\gamma\in G^\frake_\srss(F)$ and $f^\frake\in \calH(G^\frake)$, the stable orbital integral is defined as
\begin{equation*}
    \mathit{SO}_\gamma(f) = \sum_{\gamma'} O_{\gamma'}(f^\frake),
\end{equation*}
where the sum is taken over a set of representatives for the conjugacy classes in the stable conjugacy class of $\gamma$.
\begin{definition}\label{matching}
    Two functions $f^\frake\in \calH(G^\frake)$ and $f\in \calH(G)$ are called $\Delta[\frake,\xi,z]$-matching (or  matching when there is no danger of confusion) if for any strongly $G$-regular semisimple element $\gamma\in G^\frake(F)$ we have
    \begin{equation*}
        \mathit{SO}_\gamma(f^\frake) = \sum_{\delta} \Delta[\frake,\xi,z](\gamma,\delta) O_{\delta\theta}(f),
    \end{equation*}
    where the sum is taken over a set of representatives for the $\theta$-conjugacy classes under $G(F)$ of $G_{\theta-\srss}(F)$.
    We also say that $f$ and $f^\frake$ have $\Delta[\frake,\xi,z]$-matching orbital integrals.
\end{definition}
\begin{remark}
    If we do not assume $\theta^*(z)=z$, then we need $\theta^\frake$ on $G^\frake$ as in \cite[\S5.4]{ks99}.
\end{remark}

Next, let $F$ be a global field, and $\psi_F : \A_F/F \to \C^1$ a nontrivial additive character.
For $\delta \in G_{\theta-\srss}(\A_F)$ and $\gamma\in G^\frake(\A_F)$, the global absolute transfer factor is defined by
\begin{equation*}
    \Delta_\A[\frake,\xi](\gamma,\delta)=\begin{dcases*}
        \prod_v \Delta[\frake_v,\xi_v,z_v](\gamma_v,\delta_v), & if $\gamma$ is a norm of $\delta$, \\
        0,                                                     & otherwise.
    \end{dcases*}
\end{equation*}
Note that the product is well-defined because the almost all factors are $1$.
Moreover, Lemma \ref{1.1.2} and the exact sequence \eqref{033} imply that $\Delta_\A[\frake,\xi]$ is independent of $z$.
\begin{proposition}\label{1.1.3}
    The factor $\Delta_\A[\frake,\xi]$ coincides with the inverse of the canonical adelic transfer given in \cite{ks99}.
\end{proposition}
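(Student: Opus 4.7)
The plan is to follow the argument of \cite[Proposition 1.1.3]{kmsw} with the simplifications allowed by our pure-inner-twist setting. The strategy is a familiar two-step one: first match the two adelic factors up to an overall scalar via a relative computation, then pin down that scalar via reciprocity for the Kottwitz pairing.

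First, I would reduce to the relative case. Given two pairs $(\gamma_1,\delta_1),(\gamma_2,\delta_2)$ with each $\gamma_i$ a norm of $\delta_i$, the ratio
\begin{equation*}
    \frac{\Delta_\A[\frake,\xi](\gamma_1,\delta_1)}{\Delta_\A[\frake,\xi](\gamma_2,\delta_2)}
    = \prod_v \frac{\Delta[\frake_v,\xi_v,z_v](\gamma_{1,v},\delta_{1,v})}{\Delta[\frake_v,\xi_v,z_v](\gamma_{2,v},\delta_{2,v})}
\end{equation*}
equals the adelic relative transfer factor $\prod_v \Delta'(\gamma_{1,v},\delta_{1,v};\gamma_{2,v},\delta_{2,v})$ by Proposition \ref{1.1.1}. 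By \cite{ks99}, the inverse of the canonical adelic transfer factor satisfies exactly the same ratio identity. Consequently $\Delta_\A[\frake,\xi]$ and the inverse of the KS99 canonical adelic factor differ by a constant $c(\frake,\xi) \in \C^1$ independent of $(\gamma,\delta)$.

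Second, I would show $c(\frake,\xi)=1$ by verifying the product formula on a single test pair. Both factors are independent of the choice of the pure inner twist cocycle $z$: on our side this comes from combining Lemma \ref{1.1.2} (which tracks dependence on $z$ through the pairing $\langle \overline{s^\frake},\cdot\rangle$) with the global exact sequence \eqref{033}, which guarantees that $\sum_v \alpha_v(z_v)=0$, so that the product of the local $z$-twists cancels. For the KS99 side, the analogous cancellation is built into the construction. Choosing $\delta\in G(F)$ and a norm $\gamma\in G^\frake(F)$, one then checks both factors evaluate to the same canonical value (namely $1$ after the inverse); this is the standard reciprocity statement underlying the product formula for absolute transfer factors in \cite[\S7]{ks99} applied to our setting with ordinary (rather than basic) cocycles.

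The main obstacle is the second step, the absolute-normalization check: one needs to trace through the definitions of $\Delta_I^{\mathrm{new}}, \Delta_{II}, \Delta_{III}, \Delta_{IV}$ and the $\epsilon$-factor at each place, and verify that the cohomological pairings assembling into $\Delta_{III}$ globalize correctly when one uses a pure inner cocycle $z\in Z^1(F,G^*)$ in place of the basic cocycle of \cite{kmsw}. In our situation $G^*=\SO_{2n+1}$ has trivial center, so $Z(G^*)^{\theta^*}=Z(G^*)$ is trivial and the $y$-ambiguity of Lemma \ref{1.1.2} disappears, which simplifies the bookkeeping compared to loc.\ cit. Once one has checked this, the argument is essentially the same as \cite[Proposition 1.1.3]{kmsw}, and I would simply indicate the minor modifications rather than reproduce the computation in full.
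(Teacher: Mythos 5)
Your proposal reconstructs the two-step argument (relative reduction via Proposition \ref{1.1.1}, then pinning the scalar via the KS99 product formula at a rational pair), which is precisely the structure of \cite[Proposition 1.1.3]{kmsw} that the paper simply cites; and your observation that the $y$-ambiguity of Lemma \ref{1.1.2} disappears because $Z(\SO_{2n+1})$ is trivial is correct and is the main simplification available here. Since the paper's own proof consists only of the reference to loc.\ cit., your sketch is consistent with it; the one place you defer (the absolute-normalization check that the adelic product is $1$ on rational pairs in the non-quasi-split setting with an ordinary cocycle) is exactly where the real content of the KMSW computation lives, so calling it "the main obstacle" is accurate, but be aware it is also the part that would need to be fully traced through if one wanted to go beyond citation.
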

\begin{proof}
    The proof is similar to that of \cite[Proposition 1.1.3]{kmsw}.
\end{proof}

We shall finish this subsubsection stating a lemma regarding local transfer factors and Levi subgroups.
Let again $F$ be a local field, and $\psi_F : F\to\C^1$ a nontrivial additive character.
Let $M^*\subset G^*$ be a standard Levi subgroup such that $M=\xi(M^*)\subset G$ is a Levi subgroup defined over $F$.
Assume that $\theta$ is trivial.
Let $\frake=(G^\frake s^\frake, \eta^\frake)$ and $\frake_M=(M^\frake,s^\frake,\eta^\frake|_{\Lgp{M^\frake}})$ be endoscopic triples for $G$ and $M$ respectively, such that $M^\frake\subset G^\frake$ is a Levi subgroup.
\begin{lemma}\label{1.1.4}
    \begin{enumerate}
        \item For each $\delta \in M(F)$ and $\gamma\in M^\frake(F)$, we have
              \begin{equation*}
                  \Delta[\frake_M,\xi|_{M^*},z](\gamma,\delta)=\Delta[\frake,\xi,z](\gamma,\delta)\left( \frac{|D^{G^\frake}_{M^\frake}(\gamma)|}{|D^G_M(\delta)|}\right)^\frac{1}{2},
              \end{equation*}
              where $D^G_M(\delta)$ and $D^{G^\frake}_{M^\frake}(\gamma)$ are the relative Weyl discriminants.
        \item If $f\in \calH(G)$ and $f^\frake\in \calH(G^\frake)$ are $\Delta[\frake,\xi,z]$-matching, then their constant terms $f_M\in \calH(M)$ and $f^\frake_{M^\frake}\in \calH(M^\frake)$ are $\Delta[\frake_M,\xi|_{M^*},z]$-matching.
    \end{enumerate}
\end{lemma}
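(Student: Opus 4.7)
The plan is to follow the argument of \cite[Lemma 1.1.4]{kmsw} with only a cosmetic change: pure inner twists replace extended pure inner twists and ordinary cocycles replace basic cocycles, but the root-theoretic bookkeeping that drives the proof is unaffected.

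For part (1), I would first fix a maximal torus $T\subset M$ containing $\delta$ and a maximal torus $T^\frake\subset M^\frake$ containing $\gamma$; since $M\subset G$ and $M^\frake\subset G^\frake$ are Levi subgroups, $T$ and $T^\frake$ are also maximal in $G$ and $G^\frake$ respectively. The admissible embedding witnessing that $\gamma$ is a norm of $\delta$ identifies $R(T^\frake,G^\frake)$ with a $\Gamma$-stable subset of $R(T,G)$, and compatibility of $\eta^\frake$ with $\eta^\frake|_{\Lgp{M^\frake}}$ shows that this identification restricts to one between $R(T^\frake,M^\frake)$ and a subset of $R(T,M)$. I would then choose $a$-data and $\chi$-data for $R(T,G)$ that extend the chosen data for $R(T,M)$, and similarly for $R(T^\frake,G^\frake)$ over $R(T^\frake,M^\frake)$; such extensions exist because the Levi root sets are $\Gamma$-stable subsets of the ambient root sets.

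With compatible choices in place, each of $\Delta_I^{\mathrm{new}}$, $\Delta_{II}$, $\Delta_{III}$ (see \cite[pp.38-39]{kmsw}) splits as a product of the corresponding $(\frake_M,\xi|_{M^*},z)$-factor and a contribution indexed by the complementary roots $R(T,G)\setminus R(T,M)$ (resp.\ $R(T^\frake,G^\frake)\setminus R(T^\frake,M^\frake)$); the epsilon factor splits analogously as $\epsilon(\tfrac12,V,\psi_F)=\epsilon(\tfrac12,V^M,\psi_F)\cdot\epsilon(\tfrac12,V^{G/M},\psi_F)$. The complementary root sets are in bijection under the endoscopic correspondence, and the same computation as in \cite[\S4.5]{ks99} and \cite[Lemma 1.1.4]{kmsw} shows that the outside contributions to $\Delta_I^{\mathrm{new}}\Delta_{II}\Delta_{III}^{-1}$ cancel against the outside epsilon factor. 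The only surviving discrepancy is the $\Delta_{IV}$-ratio; using the multiplicativity $|D^G(\delta)|=|D^M(\delta)|\cdot|D^G_M(\delta)|$ and the analogous identity for $\gamma$, this ratio evaluates to $(|D^G_M(\delta)|/|D^{G^\frake}_{M^\frake}(\gamma)|)^{1/2}$, giving the claim after rearrangement.

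For part (2), I would apply the Harish-Chandra descent of (stable) orbital integrals. For $f\in\calH(G)$ the constant term $f_M\in\calH(M)$ satisfies $O_\delta(f_M)=|D^G_M(\delta)|^{1/2}O_\delta(f)$ for every $\delta\in M_{\srss}(F)$ that is strongly regular in $G$, and likewise $\mathit{SO}_\gamma(f^\frake_{M^\frake})=|D^{G^\frake}_{M^\frake}(\gamma)|^{1/2}\mathit{SO}_\gamma(f^\frake)$ for $\gamma\in M^\frake_{\srss}(F)$ strongly regular in $G^\frake$. Substituting these identities into the $(\frake,\xi,z)$-matching identity of Definition \ref{matching} applied to a $\gamma\in M^\frake(F)$ that is strongly regular in $G^\frake$, and rewriting the $G$-transfer factor via part (1), the two discriminant ratios cancel precisely, leaving the $(\frake_M,\xi|_{M^*},z)$-matching identity at $\gamma$. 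Density of the $G^\frake$-strongly regular locus in $M^\frake_{\srss}(F)$ extends the identity to all strongly $M$-regular elements. The main (and essentially the only) obstacle is the cancellation of the outside contributions in part (1); the assumptions $\theta=1$ and $Z(G^*)=\{1\}$ in our setting keep the character- and cocycle-bookkeeping of that cancellation quite manageable.
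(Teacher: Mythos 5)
Your proposal is correct and follows the same approach the paper refers to: it reproduces the argument of \cite[Lemma 1.1.4]{kmsw}, with the only changes being the bookkeeping for pure (rather than extended pure) inner twists, exactly as the paper indicates. The root-theoretic splitting of $\Delta_I^{\mathrm{new}}$, $\Delta_{II}$, $\Delta_{III}$ and the epsilon factor, the cancellation of the contributions from $R(T,G)\setminus R(T,M)$, the survival of only the $\Delta_{IV}$-ratio, and the use of Harish-Chandra descent of orbital integrals for part (2) are precisely the ingredients of that proof, and the discriminant bookkeeping checks out.
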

\begin{proof}
    The proof is similar to that of \cite[Lemma 1.1.4]{kmsw}.
\end{proof}

\subsubsection{Endoscopic triples for odd special orthogonal groups}
Here we will explicate the set of representatives of isomorphism or strict isomorphism classes of ordinary endoscopic triples for odd special orthogonal groups.
Let $F$ be a local or global field, $G^*=\SO_{2n+1}$ the split special orthogonal group of size $2n+1$ defined over $F$, and $G$ an inner form of $G^*$.
Then the Galois group $\Gamma=\Gamma_F$ acts on $\widehat{G}=\Sp(2n,\C)$ trivially, and thus we have $\Lgp{G}=\Sp(2n,\C)\times W_F$ and $Z(\widehat{G})^\Gamma=\{\pm1\}$.

Every semisimple element $s\in\Sp(2n,\C)$ is $\Sp(2n,\C)$-conjugate to an element of the form
\begin{align}\label{se}
    \left(\begin{array}{cccccccccc}
                  a_1 1_{m_1} &        &             &         &          &                 &        &                 &         &          \\
                              & \ddots &             &         &          &                 &        &                 &         &          \\
                              &        & a_r 1_{m_r} &         &          &                 &        &                 &         &          \\
                              &        &             & 1_{n_1} &          &                 &        &                 &         &          \\
                              &        &             &         & -1_{n_2} &                 &        &                 &         &          \\
                              &        &             &         &          & a_1\inv 1_{m_1} &        &                 &         &          \\
                              &        &             &         &          &                 & \ddots &                 &         &          \\
                              &        &             &         &          &                 &        & a_r\inv 1_{m_r} &         &          \\
                              &        &             &         &          &                 &        &                 & 1_{n_1} &          \\
                              &        &             &         &          &                 &        &                 &         & -1_{n_2} \\
              \end{array}\right),
\end{align}
where $r\geq0$, $m_1,\ldots, m_r\geq1$, and $n_1, n_2\geq0$ are integers such that $m_1+\cdots m_r+n_1+n_2=n$, and $a_1,\ldots,a_r \in \C\setminus\{0,1,-1\}$ are complex numbers other than $0,1,-1$ that are different to each other.
Thus its centralizer $\cent(s,\Sp(2n,\C))$ is isomorphic to
\begin{align}\label{dge}
    \GL(m_1,\C)\times\cdots\times\GL(m_r,\C)\times\Sp(2n_1,\C)\times\Sp(2n_2,\C),
\end{align}
which is the dual group of
\begin{align}\label{ge}
    \GL_{m_1}\times\cdots\times\GL_{m_r}\times\SO_{2n_1+1}\times\SO_{2n_2+1}.
\end{align}
Therefore, each endoscopic triple is isomorphic to a triple of a group \eqref{ge}, a semisimple element \eqref{se}, and a natural inclusion map.
Any description of complete systems of representatives of isomorphism or strict isomorphism classes of them is so complicated that we do not write down it here.
Elliptic ones are simpler.

For two nonnegative integers $n_1$ and $n_2$ such that $n_1+n_2=n$, put $\frake_{n_1,n_2}=(G^\frake_{n_1,n_2}, s_{n_1,n_2}, \eta_{n_1,n_2})$, where
\begin{itemize}
    \item $G^\frake_{n_1,n_2}=\SO_{2n_1+1}\times\SO_{2n_2+1}$,
    \item $
              s_{n_1,n_2}=\left(\begin{array}{cccc}
                      1_{n_1} &          &         &          \\
                              & -1_{n_2} &         &          \\
                              &          & 1_{n_1} &          \\
                              &          &         & -1_{n_2}
                  \end{array}\right),
          $
    \item $\eta_{n_1,n_2}$ denotes the direct product of the inclusion map $\Sp(2n_1,\C)\times\Sp(n_2,\C)\into\Sp(2n,\C)$ given by
          \begin{align*}
              \left(
              \left(\begin{array}{cc}
                        A_1 & B_1 \\
                        C_1 & D_1 \\
                    \end{array}\right),
              \left(\begin{array}{cc}
                        A_2 & B_2 \\
                        C_2 & D_2 \\
                    \end{array}\right)
              \right)
              \mapsto
              \left(\begin{array}{cccc}
                        A_1 &     & B_1 &     \\
                            & A_2 &     & B_2 \\
                        C_1 &     & D_1 &     \\
                            & C_2 &     & D_2
                    \end{array}\right),
          \end{align*}
          and the identity map of $W_F$.
\end{itemize}
Then $\frake_{n_1,n_2}$ is an elliptic endoscopic triple for $G$, and four triples $\pm\frake_{n_1,n_2}, \pm\frake_{n_2,n_1}$ are isomorphic to each other.
Here $-(G^\frake, s^\frake, \eta^\frake)$ denotes $(G^\frake,-s^\frake,\eta^\frake)$.
Therefore, a set
\begin{align*}
    \Set{\frake_{n_1,n_2} | n_1\geq n_2\geq0, \ n_1+n_2=n}
\end{align*}
is a complete system of representatives of isomorphism classes of elliptic endoscopic triples for $G$.
The set $\overline{\calE}_\el(G)$ may be identified with it.
In addition, $\frake_{n_1,n_2}$ is strictly isomorphic to $-\frake_{n_2,n_1}$, but not strictly isomorphic to $\frake_{n_2,n_1}$ unless $n_1=n_2$.
Therefore, a set
\begin{align*}
    \Set{\frake_{n_1,n_2} | n_1, n_2\geq0, \ n_1+n_2=n}
\end{align*}
is a complete system of representatives of strictly isomorphism classes of elliptic endoscopic triples for $G$.
The set $\calE_\el(G)$ may be identified with it.

\subsection{Local parameters}\label{1.2}
In this subsection, we recall the notion of local parameters.
Let $F$ be a local field.
Let $G$ be a quasi-split connected reductive group over $F$.
We say that a continuous homomorphism $\phi : L_F \to \Lgp{G}$ is an $L$-parameter for $G$ if it commutes with the canonical projections $L_F\onto W_F$ and $\Lgp{G}\onto W_F$, and the image $\phi(L_F)$ consists of semisimple elements. Here, an element of $\Lgp{G}$ is said to be semisimple if the first component, which is an element of $\widehat{G}$, is semisimple.
Two $L$-parameters are said to be equivalent if they are conjugate by an element in $\widehat{G}$.
We write $\Phi(G)$ for the set of equivalence classes of $L$-parameters for $G$.
An $L$-parameter $\phi$ is called bounded (or tempered) if the image of the projection of $\phi(L_F)$ to $\widehat{G}$ is bounded.
We write $\Phi_\bdd(G)$ for the subset of $\Phi(G)$ consisting of bounded ones.

We say that a continuous homomorphism $\psi : L_F \times\SU(2,\R) \to \Lgp{G}$ is a local $A$-parameter for $G$ if the restriction $\psi|_{L_F}$ to $L_F$ is a bounded $L$-parameter.
Two local $A$-parameters are said to be equivalent if they are conjugate by an element in $\widehat{G}$.
We write $\Psi(G)$ for the set of equivalence classes of local $A$-parameters for $G$.
Let $\Psi^+(G)$ denote the set of equivalence classes of a continuous homomorphism $\psi : L_F \times\SU(2,\R) \to \Lgp{G}$ whose restriction $\psi|_{L_F}$ to $L_F$ is an $L$-parameter.
A parameter $\psi\in\Psi^+(G)$ is called generic if the restriction $\psi|_{\SU(2,\R)}$ (to the second $\SU(2,\R)$) is trivial.
The subset of $\Psi^+(G)$ (resp. $\Psi(G)$) consisting of generic elements can be identified with $\Phi(G)$ (resp. $\Phi_\bdd(G)$).
We say that $\psi\in\Psi^+(G)$ is discrete if there is no proper parabolic subgroup of $\Lgp{G}$ which contains the image of $\psi$, and write $\Psi^+_2(G)$ for the subset of $\Psi^+(G)$ consisting of discrete elements.
Put $\Psi_2(G)=\Psi(G)\cap\Psi^+_2(G)$, $\Phi_2(G)=\Phi(G)\cap\Psi^+_2(G)$, and $\Phi_{2,\bdd}(G)=\Phi_\bdd(G)\cap\Psi^+_2(G)$.

We shall write $\Pi(G)$ for the set of isomorphism classes of irreducible smooth representations of $G(F)$.
Let $\Pi_\temp(G)$ (resp. $\Pi_2(G)$) to be the subsets of $\Pi(G)$ consisting of the tempered (resp. essentially square integrable) representations.
Put $\Pi_{2,\temp}(G)=\Pi_2(G)\cap\Pi_\temp(G)$.

For $\psi\in\Psi^+(G)$, put $S_\psi=\cent(\myim\psi,\widehat{G})$, $\overline{S}_\psi=S_\psi/Z(\widehat{G})^\Gamma$, $\frakS_\psi=\pi_0(S_\psi)=S_\psi/S_\psi^\circ$, $\overline{\frakS}_\psi=\pi_0(\overline{S}_\psi)=S_\psi/S_\psi^\circ Z(\widehat{G})^\Gamma=\frakS_\psi/Z(\widehat{G})^\Gamma$, $S_\psi^\mathrm{rad}=\cent(\myim\psi, \widehat{G}_\mathrm{der})^\circ$, and $S_\psi^\natural=S_\psi/S_\psi^\mathrm{rad}$.
When we emphasize that the parameter is for $G$, we write $S_\psi(G)$, $\overline{S}_\psi(G)$, $\frakS_\psi(G)$, etc.
The associated $L$-parameter $\phi_\psi$ of $\psi$ is defined by
\begin{align*}
    \phi_\psi(w)=\psi(w,\mmatrix{|w|^\frac{1}{2}}{0}{0}{|w|^{-\frac{1}{2}}}),
\end{align*}
for $w\in L_F$.
Finally, we put
\begin{align*}
    s_\psi=\psi(1,\mmatrix{-1}{0}{0}{-1}).
\end{align*}

Recall that when $G$ is a general linear group $\GL_N$, then the parameters can be regarded as an $N$-dimensional representations of $L_F$ or $L_F\times\SU(2,\R)$, and the equivalence of parameters is isomorphism of representations.
A parameter which is irreducible as a representation, is said to be simple.
We write $\Psi^+_\simple(\GL_N)$ for the subset of $\Psi^+(\GL_N)$ consisting of simple parameters, and put $\Psi_\simple(\GL_N)=\Psi(\GL_N)\cap\Psi^+_\simple(\GL_N)$, $\Phi_\simple(\GL_N)=\Phi(\GL_N)\cap\Psi^+_\simple(\GL_N)$, and $\Phi_{\simple,\heartsuit}(\GL_N)=\Phi_\heartsuit(\GL_N)\cap\Phi_\simple(\GL_N)$, for $\heartsuit=2,\bdd$.
The canonical bijection between $\Pi(\GL_N)$ and $\Phi(\GL_N)$, which is called the local Langlands correspondence for $\GL_N$, is known by Langlands \cite{lan89} in the archimedean case (for any $G$ in fact), by Harris-Taylor \cite{ht} and Henniart \cite{hen} in the non-archimedean case.
As in \cite{art13}, we shall write $\Pi(N)=\Pi(\GL_N)$, $\Phi(N)=\Phi(\GL_N)$, and so on.
For any $\phi\in\Phi(N)$, we write $\pi_\phi$ for the unique corresponding element in $\Pi(N)$.
Moreover, for any $\psi\in\Psi^+(N)$, we define the corresponding (not necessarily irreducible) representation $\pi_\psi$ as \cite[(1.2.4)]{kmsw}.
Put $\Psi^+_\unit(N)=\{ \psi\in\Psi^+(N) \mid \pi_\psi \text{ is irreducible and unitary.} \}$.
Note that $\Psi(N)\subset\Psi^+_\unit(N)$.
When $\psi\in\Psi(N)$, then $\pi_\psi=\pi_{\phi_\psi}$.
A parameter $\psi$ for $\GL_N$ is said to be self-dual if it is self-dual as a representation.
We shall write $\widetilde{\Phi}(N)$ (resp. $\widetilde{\Phi}_\simple(N)$, resp. $\widetilde{\Phi}_{\simple,\bdd}(N)$) for the subset of $\Phi(N)$ (resp. $\Phi_\simple(N)$, resp. $\Phi_{\simple,\bdd}(N)$) consisting of self-dual ones.

Suppose that $G$ is a simple twisted endoscopy group of $(\GL_N,\theta_N)$.
We say a parameter $\psi\in\Psi^+(G)$ is simple if $\eta\circ\psi\in\Psi^+(\GL_N)$ is simple, where $\eta:\Lgp{G}\to\Lgp{\GL_N}$ is the $L$-homomorphism attached to the endoscopy group $G$.
We write $\Psi^+_\simple(G)$ for the subset of $\Psi^+(G)$ consisting of simple parameters, and put $\Psi_\simple(G)$, $\Phi_\simple(G)$, and so on in the similar way.
Moreover, put $\widetilde{\Psi}^+(G)$ to be the subset of $\Psi^+(\GL_N)$ consisting of the parameters of the form $\eta\circ\psi$ for some $\psi\in\Psi^+(G)$.
Note that $\widetilde{\Psi}^+(G)=\Psi^+(G)$ unless $G$ is an even special orthogonal group, in which case $\widetilde{\Psi}^+(G)$ coincides with $\Psi^+(G)/\sim_\epsilon$ in the sense of \cite{ag}.
Put $\widetilde{\Psi}(G)=\widetilde{\Psi}^+(G)\cap\Psi(N)$, $\widetilde{\Phi}(G)=\widetilde{\Psi}^+(G)\cap\Phi(N)$, $\widetilde{\Phi}_\simple(G)=\widetilde{\Phi}(G)\cap\Phi_\simple(N)$, and $\widetilde{\Phi}_{\simple,\bdd}(G)=\widetilde{\Phi}_\simple(G)\cap\Phi_\bdd(N)$.
Let $\widetilde{\Phi}_2(G)$ be the subset of $\widetilde{\Phi}(G)$ consisting of parameters of multiplicity free as representations.
Put $\widetilde{\Phi}_{2,\bdd}(G)=\widetilde{\Phi}_2(G)\cap \Phi_\bdd(N)$.

Now let us consider the case when $G$ is an inner form of the split odd special orthogonal group $G^*=\SO_{2n+1}$, which is not necessarily quasi-split.
Then the parameters for $G^*$ are also called parameters for $G$, since $\widehat{G}=\widehat{G^*}$.
We omit the definition of "$G$-relevant", which is given in \cite[Definition 0.4.14]{kmsw} in a general setting.
In this paper $G$-relevant parameters are simply said to be relevant, when there is no danger of confusion.
We remark that "parameter for $G$" is different from "$G$-relevant parameter".
Although we use a phrase "parameter for $G$", we never write as $\Psi(G)$.
Moreover, we do not fix a symbol for the set of $G$-relevant parameters.
Recall from \cite[\S4]{ggp} that the parameters can be regarded as an $2n$-dimensional symplectic representations of $L_F$ or $L_F\times\SU(2,\R)$, and the equivalence of parameters is isomorphism of representations.
A parameter $\psi \in \Psi^+(G^*)$ can be written as
\begin{align*}
    \psi
     & = \bigoplus_{i \in I_\psi^+} \ell_i \psi_i
    \oplus \bigoplus_{i \in I_\psi^-} \ell_i \psi_i
    \oplus \bigoplus_{j \in J_\psi} \ell_j (\psi_j \oplus \psi_j^\vee),
\end{align*}
where $\ell_i, \ell_j$ are positive integers, $I_\psi^\pm$, $J_\psi$ indexing sets for mutually inequivalent irreducible representations $\psi_i$ of $L_F\times\SU(2,\R)$ such that
\begin{itemize}
    \item $\psi_i$ is symplectic for $i \in I_\psi^+$;
    \item $\psi_i$ is orthogonal and $\ell_i$ is even for $i \in I_\psi^-$;
    \item $\psi_j$ is not self-dual for $j \in J_\psi$.
\end{itemize}
Thus we have
\begin{align*}
    S_\psi
    \cong \prod_{i \in I_\psi^+} \Or(\ell_i, \C)
    \times \prod_{i \in I_\psi^-} \Sp(\ell_i, \C)
    \times \prod_{j \in J_\psi} \GL(\ell_j ,\C),
\end{align*}
and its component group is a free $(\Z/2\Z)$-module
\begin{align*}
    \frakS_\psi
    \cong \prod_{i \in I_\psi^+} (\Z/2\Z)a_i,
\end{align*}
with a formal basis $\{a_i\}$, where $a_i$ corresponds to the nontrivial coset $\Or(\ell_i, \C)\setminus \SO(\ell_i, \C)$.
Note that $\widehat{G}=\Sp(2n,\C)$ is perfect, and thus $S_\psi^\mathrm{rad}=S_\psi^\circ$ and $S_\psi^\natural=\frakS_\psi$.
It can be easily seen that the parameter $\psi$ is discrete if and only if $\ell_i=1$ for all $i\in I_\psi^+$ and $I_\psi^-=J_\psi=\emptyset$.
Hence, we have
\begin{align*}
    \Psi^+_2(G^*)
     & =\set{\psi\in\Psi^+(G^*) | \text{$\overline{S}_\psi$ is finite} }                                                          \\
     & =\set{\psi=\psi_1\oplus\cdots\oplus\psi_r | \text{$\psi_i$ are symplectic, irreducible, and mutually distinct, $r\geq1$}}.
\end{align*}
A parameter $\psi\in \Psi^+(G^*)$ is called to be elliptic if there exists a semisimple element in $\overline{S}_\psi$ whose centralizer in $\overline{S}_\psi(G^*)$ is finite.
We shall write $\Psi^+_\el(G^*)$ for the set of such parameters.
Then we have
\begin{align*}
    \Psi^+_\el(G^*)
     & =\set{\psi\in\Psi^+(G^*) | \text{$\cent(s,\overline{S}_\psi)$ is finite for some $s\in\overline{S}_{\psi,\semisimple}$} } \\
     & =\Set{
        \psi=2\psi_1\oplus\cdots2\psi_q\oplus\psi_{q+1}\oplus\cdots\oplus\psi_r
        | \begin{array}{cc}
              r\geq1,\ r\geq q\geq0, \\
              \text{$\psi_i$ are symplectic, irreducible, and mutually distinct}
          \end{array}
    },
\end{align*}
where $\overline{S}_{\psi,\semisimple}$ denotes the set of semisimple elements in $\overline{S}_\psi$.
We write $\Psi_\el(G^*)$ (resp. $\Psi^\el(G^*)$) for the subset of $\Psi(G^*)$ consisting of elliptic (resp. non-elliptic) ones.
We have $\Psi_2(G^*)\subset\Psi_\el(G^*)$.
Put
\begin{align*}
    \Psi_\el^2(G^*) & =\Psi_\el(G^*)\setminus \Psi_2(G^*) \\
                    & =\Set{
        \psi=2\psi_1\oplus\cdots2\psi_q\oplus\psi_{q+1}\oplus\cdots\oplus\psi_r
        | \begin{array}{cc}
              r\geq q\geq1, \\
              \text{$\psi_i$ are symplectic, irreducible, and mutually distinct}
          \end{array}
    },
\end{align*}
$\Phi_\el(G^*)=\Psi_\el(G^*)\cap\Phi(G^*)$, and $\Phi^2_\el(G^*)=\Psi^2_\el(G^*)\cap\Phi(G^*)$.

Let $M^*$ be a standard Levi subgroup of $G^*=\SO_{2n+1}$ isomorphic to
\begin{align}\label{levi}
    \GL_{n_1}\times\cdots\times\GL_{n_k}\times\SO_{2n_0+1},
\end{align}
where $n_1+\cdots n_k+n_0=n$.
Then its dual group $\widehat{M^*}$ is isomorphic to
\begin{align*}
    \GL(n_1,\C)\times\cdots\times\GL(n_k,\C)\times\Sp(2n_0,\C),
\end{align*}
and we regard it as a Levi subgroup of $\widehat{G^*}$.
We have
\begin{align*}
    \Psi^+(M^*)
    =\set{\psi_1\oplus \cdots\oplus\psi_k\oplus\psi_0 |\psi_1\in\Psi^+(n_1), \ldots, \psi_k\in\Psi^+(n_k),\ \psi_0 \in\Psi^+(\SO_{2n_0+1})}.
\end{align*}
The canonical injection $\widehat{M^*}\into\widehat{G^*}$ induces the canonical mapping
\begin{align*}
    \psi_1\oplus \cdots\oplus\psi_k\oplus\psi_0
    \mapsto
    \psi_1\oplus \cdots\oplus\psi_k\oplus\psi_0\oplus\psi_1^\vee\oplus \cdots\oplus\psi_k^\vee
\end{align*}
from $\Psi^+(M^*)$ to $\Psi^+(G^*)$.

\subsection{Global parameters}\label{1.3}
In this subsection we recall from \cite{art13} the notion of global parameters.
Let $F$ be a global field, $\Gamma=\Gamma_F$ its absolute Galois group, and $W_F$ its absolute Weil group.
For a connected reductive group $G$ over $F$, as in \cite[p.19]{art13} we put
\begin{align*}
    G(\A_F)^1=\Set{ g\in G(\A_F) |\abs*{\chi(g)}=1,\ \forall \chi\in X^*(G)_F },
\end{align*}
where $X^*(G)_F=\Hom_F(G,\GL_1)$.
The quotient set $G(F)\backslash G(\A_F)^1$ has finite volume, and there is a sequence
\begin{align*}
    L^2_\cusp(G(F)\backslash G(\A_F)^1) \subset L^2_\disc(G(F)\backslash G(\A_F)^1) \subset L^2(G(F)\backslash G(\A_F)^1),
\end{align*}
where $L^2_\cusp(G(F)\backslash G(\A_F)^1)$ is the subspace consisting of cuspidal functions, and $L^2_\disc(G(F)\backslash G(\A_F)^1)$ the subspace that can be decomposed into a direct sum of irreducible representations of $G(\A_F)^1$.
We have
\begin{align*}
    \calA_\cusp(G) \subset \calA_2(G) \subset \calA(G),
\end{align*}
where $\calA_\cusp(G)$, $\calA_2(G)$, and $\calA(G)$ denote the subsets of irreducible unitary representations of $G(\A_F)$ whose restrictions to $G(\A_F)^1$ are constituents of the respective spaces $L^2_\cusp(G(F)\backslash G(\A_F)^1)$, $L^2_\disc(G(F)\backslash G(\A_F)^1)$, and $L^2(G(F)\backslash G(\A_F)^1)$.
We also write $\calA_\cusp^+(G)$ and $\calA_2^+(G)$ for the analogues of $\calA_\cusp(G)$ and $\calA_2(G)$ defined without the unitarity.

For a finite set $S$ of places of $F$ such that $v$ is non-archimedean and $G_v$ is unramified over $F_v$ for any place $v\notin S$, put $\calC^S(G)$ to be the set consisting of families $c^S=(c_v)_{v\notin S}$ where each $c_v$ is a $\widehat{G}$-conjugacy class in $\Lgp{G_v}=\widehat{G}\rtimes W_{F_v}$ represented by an element of the form $t_v\rtimes \Frob_v$ with a semisimple element $t_v\in\widehat{G}$.
For $S\subset S'$ there is a natural map $c^S=(c_v)_{v\notin S}\mapsto (c_v)_{v\notin S'}$ from $\calC^S(G)$ to $\calC^{S'}(G)$.
Let us define $\calC(G)$ to be the direct limit of $\{\calC^S(G)\}_S$.

Let $\pi=\otimes_v \pi_v$ be an irreducible automorphic representation of $G(\A_F)$ and $S$ a finite set of places such that $\pi_v$ is unramified for all $v\notin S$.
The Satake isomorphism associates a $\widehat{G}$-conjugacy class $c(\pi_v)\in\Lgp{G_v}$ to $\pi$.
We can therefore associate $c^S(\pi)=(c(\pi_v))_{v\notin S}\in \calC^S(G)$ to $\pi$.
This leads to a mapping $\pi\mapsto c(\pi)$ from a set of equivalence classes of irreducible automorphic representations of $G$ to $\calC(G)$.
We shall write $\calC_\aut(G)$ for its image.

Let us next review some fact on automorphic representations of $\GL_N(\A_F)$.
Let $N$ be a positive integer.
Following \cite{art13}, we write $\calA(N)=\calA(\GL_N)$, and similarly $\calA_\heartsuit(N)=\calA_\heartsuit(\GL_N)$ and $\calA_\heartsuit^+(N)=\calA_\heartsuit^+(\GL_N)$ for $\heartsuit\in\{2, \cusp\}$.
Put
\begin{align*}
    \calA_\iso^+(N)
    =\Set{
        \pi=\pi_1\boxplus\cdots\boxplus\pi_r |
        r\in\Z_{\geq1},\ N_i\in\Z_{\geq1},\ N_1+\cdots+N_r=N,\ \pi_i\in\calA_\cusp^+(N_i),\quad \forall i=1,\ldots,r
    },
\end{align*}
where $\boxplus$ denotes the isobaric sum.
It is known by M\oe glin and Waldspurger \cite{mw} that any $\pi\in\calA_2(N)$ is isomorphic to
\begin{equation*}
    \mu|\det|^{\frac{n-1}{2}} \boxplus \mu|\det|^{\frac{n-3}{2}} \boxplus \cdots \boxplus \mu|\det|^{-\frac{n-1}{2}},
\end{equation*}
for some $m, n\in\Z_{\geq1}$ and $\mu\in\calA_\cusp(m)$ with $N=mn$.
Moreover, $(m,n,\mu)$ is uniquely determined by $\pi$, and any representation of such form is an element of $\calA_2(N)$.
In other words, $\calA_2(N)$ is in bijection with $\{ (\mu,m) \mid 1\leq m|N,\ \mu\in\calA_\cusp(N/m) \}$.
We have
\begin{align}\label{chain1}
    \calA_\cusp(N) \subset \calA_2(N) \subset \calA(N).
\end{align}

Put $\Psi_\cusp(N)=\calA_\cusp(N)$, which is trivially in bijection with $\calA_\cusp(N)$.
Put $\Psi_\simple(N)$ to be the set of $\psi=\mu\boxtimes \nu$ for an irreducible finite dimensional representation $\nu$ of $\SU(2,\R)$ and $\mu\in\calA_2(N/\dim \nu)$.
It is in bijection with $\calA_2(N)$ by the fact above, for which we shall write $\psi\mapsto \pi_\psi$.
An element in $\Psi_\simple(N)$ is said to be simple.
Put $\Psi(N)$ to be the set of $\psi=\ell_1\psi_1\boxplus \cdots\boxplus \ell_r\psi_r$ for some $r\in\Z_{\geq1}$, $\ell_i, N_i\in\Z_{\geq1}$ with $\ell_1 N_1+\cdots+\ell_r N_r=N$, and mutually distinct elements $\psi_i\in\Psi_\simple(N_i)$.
To such $\psi$, we associate $\pi_\psi=\pi_{\psi_1}^{\boxplus\ell_1}\boxplus\cdots\boxplus\pi_{\psi_r}^{\boxplus\ell_r}$, so that a mapping $\psi\mapsto\pi_\psi$ gives a bijection from $\Psi(N)$ to $\calA(N)$.
For any such $\psi$, we set $c(\psi)$ equal to $c(\pi_\psi)$, which is an element in $\calC(\GL_N)$.
We have a chain
\begin{align*}
    \Psi_\cusp(N) \subset \Psi_\simple(N) \subset \Psi(N),
\end{align*}
which is compatible with the chain \eqref{chain1}.
We say that a parameter $\psi=\ell_1 (\mu_1\boxtimes\nu_1)\boxplus \cdots\boxplus \ell_r(\mu_r\boxtimes\nu_r)$ is generic if $\nu_1,\ldots,\nu_r$ are trivial.
Let $\Phi(N)$ be the subset of $\Psi(N)$ consisting of generic parameters.

Let $v$ be a place of $F$.
By LLC for $\GL_N$, for any $\mu\in\Psi_\cusp(N)$, we have an $L$-parameter $\phi_v\in\Phi(\GL_N/F_v)$ corresponding to $\mu_v \in \Pi(\GL_N/F_v)$.
By abuse of notation, let us write $\mu_v$ for $\phi_v$.
Then we obtain the localization map $\Psi_\simple(N)\to\Psi^+_v(N)$, $\psi=\mu\boxtimes\nu\mapsto \psi_v=\mu_v\boxtimes\nu$, where $\Psi^+_v(N)=\Psi^+(\GL_N/F_v)$.
This leads the localization map
\begin{align*}
    \Psi(N) & \to\Psi^+_v(N),                                                      \\
    \psi=\ell_1\psi_1\boxplus \cdots\boxplus \ell_r\psi_r,
            & \mapsto \psi_v=\ell_1\psi_{1,v}\oplus \cdots\oplus \ell_r\psi_{r,v}.
\end{align*}

Let us now recall from \cite{art13} the definition of $A$-parameters for odd special orthogonal groups and other classical groups.
An irreducible self-dual cuspidal automorphic representation $\phi$ of $\GL_N(\A_F)$ is said to be symplectic (resp. orthogonal) if the exterior (resp. symmetric) square $L$-function $L(s,\phi,\wedge^2)$ (resp. $L(s,\phi,\Sym^2)$) has a pole at $s=1$.
By the theory of Rankin-Selberg $L$-function, any irreducible self-dual cuspidal automorphic representation $\phi$ of $\GL_N(\A_F)$ is either symplectic or orthogonal, and this is mutually exclusive.
Theorems 1.4.1 and 1.5.3 of \cite{art13} tell us the following proposition.
\begin{proposition}[1st seed theorem]\label{1stseed}
    For each irreducible self-dual cuspidal automorphic representation $\phi$ of $\GL_N(\A_F)$, there exists a simple twisted endoscopic triple $\frake_\phi=(G_\phi,s_\phi,\eta_\phi)$ of $(\GL_N,\theta_N)$ and a representation $\pi\in\calA_2(G_\phi)$ such that $c(\phi)=\eta_\phi(c(\pi))$.
    The triple $\frake_\phi$ is unique up to isomorphism.
    If $\phi$ is symplectic (hence $N$ is even), then $G_\phi=\SO_{N+1}$ and $\eta_\phi$ is regarded as the unique (conjugacy class of) embedding $\Sp(N,\C)\into\GL(N,\C)$.
    If $\phi$ is orthogonal, then $\Lgp{G_\phi}=\SO(N,\C)\rtimes W_F$.
\end{proposition}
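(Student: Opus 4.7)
The plan is to follow Arthur's strategy in \cite[Theorems 1.4.1, 1.5.3]{art13}: a simultaneous induction on $N$ that compares the $\theta_N$-twisted trace formula on $\GL_N$ with the stable trace formulas on its simple twisted endoscopic groups (namely $\SO_{N+1}$ in the symplectic case, and $\Sp_{N-1}$ or $\SO_N$ in the orthogonal cases). The starting observation is that since $\phi$ is self-dual, $\phi\cong\phi\circ\theta_N$, so a choice of intertwiner promotes $\phi$ to a contribution to the $\theta_N$-twisted discrete spectrum of $\GL_N(\A_F)$.

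The key steps I would carry out are: (i) stabilize the $\theta_N$-twisted trace formula, writing the twisted discrete part as a sum $\sum_{\frake}\widehat{S}^{\frake}_{\disc}$ indexed by strict isomorphism classes of elliptic twisted endoscopic triples $\frake\in\widetilde{\calE}_\el(N)$; (ii) test against a global function whose spectral content isolates the Hecke family $c(\phi)$, and apply Arthur's ``simple comparison'' (the input from Proposition~\ref{1.1.3} on adelic transfer factors enters here) to pick out a single simple triple $\frake_\phi$ whose stable contribution matches that of $\phi$; (iii) unfold $\widehat{S}^{\frake_\phi}_{\disc}$ via the inductive spectral decomposition on $G^{\frake_\phi}$, producing a discrete automorphic representation $\pi\in\calA_2(G^{\frake_\phi})$ with $\eta^{\frake_\phi}(c(\pi))=c(\phi)$. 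The symplectic/orthogonal dichotomy is then forced by the factorization $L(s,\phi\otimes\phi)=L(s,\phi,\wedge^2)L(s,\phi,\Sym^2)$: exactly one of the two square $L$-functions has a pole at $s=1$, and which one is determined by whether $\widehat{G^{\frake_\phi}}$ preserves a symplectic or an orthogonal form on the standard $\GL(N,\C)$-module.

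The main obstacle is that already step~(i) and the inductive spectral decomposition used in step~(iii) presuppose the full endoscopic classification (ECR, LLC, AMF) for every classical group of rank $<N$. The seed theorem therefore cannot be proved in isolation; it is entangled with every other theorem in the program, forcing the delicate global induction of Arthur's ``standard model''. For this reason I would in practice take Proposition~\ref{1stseed} as a black-box input from \cite{art13} and deploy it to drive the later inductive arguments of this paper, rather than attempt to redo Arthur's comparison machinery from scratch.
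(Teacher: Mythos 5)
Your proposal is essentially the paper's approach: the paper offers no independent proof but simply quotes Arthur's Theorems 1.4.1 and 1.5.3 from \cite{art13} as establishing the proposition, which is precisely what you recommend at the end of your outline. Your sketch of Arthur's internal induction (twisted trace formula stabilization, $L$-function dichotomy via $\wedge^2$ and $\Sym^2$, entanglement with the rank-$<N$ classification) is accurate but not reproduced in the paper, which treats the seed theorem as a black-box input exactly as you conclude one must.
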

Let $\frake=(G^*,s,\eta)\in\widetilde{\calE}_\simple(N)$ be a simple twisted endoscopic triple of $(\GL_N,\theta_N)$, i.e., $G^*$ is the split symplectic group, the split odd special orthogonal group, or a quasi-split even special orthogonal group.
Set $\widetilde{\Phi}_\simple(G^*)$ to be the set of irreducible self-dual cuspidal automorphic representations $\phi$ of $\GL_N(\A_F)$ such that the corresponding endoscopic triple $\frake_\phi$ is isomorphic to $\frake$.
Here note that we cannot write $\Phi_\simple(G^*)$ because $G^*$ may be an even special orthogonal group.

An element $\psi=\mu\boxtimes\nu$ in $\Psi_\simple(N)$ is said to be self-dual if $\psi=\psi^\vee$, where $\psi^\vee=\mu^\vee\boxtimes\nu$.
We write $\widetilde{\Psi}_\simple(N)$ for the subset of $\Psi_\simple(N)$ consisting of such elements, and put $\widetilde{\Phi}_\simple(N)=\widetilde{\Psi}_\simple(N)\cap\Phi(N)$.
Moreover, an element $\psi=\boxplus_i\ell_i\psi_i$ in $\Psi(N)$ is said to be self-dual if $\psi=\psi^\vee$, where $\psi^\vee=\boxplus_i\ell_i\psi_i^\vee$.
Write $\widetilde{\Psi}(N)$ for the subset of $\Psi(N)$ consisting of such elements, and put $\widetilde{\Phi}(N)=\widetilde{\Psi}(N)\cap\Phi(N)$.

For any $\psi\in\widetilde{\Psi}(N)$, the substitute $\calL_\psi$ for the global Langlands group attached to $\psi$ is defined as follows.
We can write
\begin{equation}\label{glsd}
    \psi=\bigboxplus_{i\in I_\psi} \ell_i\psi_i \boxplus \bigboxplus_{j\in J_\psi} \ell_j (\psi_j\boxplus\psi_j^\vee),
\end{equation}
where $\psi_i=\mu_i\boxtimes\nu_i\in\widetilde{\Psi}_\simple(N)$ and $\psi_j=\mu_j\boxtimes\nu_j\in \Psi_\simple(N) \setminus \widetilde{\Psi}_\simple(N)$ for any $i\in I_\psi$, $j\in J_\psi$, and they are mutually distinct.
Put $K_\psi=I_\psi\sqcup J_\psi$ and let $m_k$ be a positive integer such that $\mu_k\in\Psi_\cusp(m_k)$ for $k\in K_\psi$.
For $i\in I_\psi$, put $H_i=G_{\mu_i}$ of Proposition \ref{1stseed} and let $\widetilde{\mu_i}$ denote for an embedding
\begin{align*}
    \eta_{\mu_i} : \Lgp{H_i}= \Lgp{G_{\mu_i}}\into \Lgp{\GL_{m_i}},
\end{align*}
of Proposition \ref{1stseed}.
For $j\in J_\psi$, put $H_j=\GL_{m_j}$ and let $\widetilde{\mu_j}$ be an embedding $\Lgp{H_j}\into\Lgp{\GL_{2m_j}}$ given by
\begin{align*}
    \GL(m_j,\C)\times W_F & \to \GL(2m_j,\C)\times W_F,                     &
    (h,w)                 & \mapsto(\diag(h,\widehat{\theta}_{m_j}(h)), w),
\end{align*}
where $\widehat{\theta}_{m_j}$ is the twist given in the introduction.
The substitute $\calL_\psi$ is the fiber product of $\{ H_k\}_{k\in K_\psi}$ over $W_F$:
\begin{align*}
    \calL_\psi=\prod_{k\in K_\psi} \left(\Lgp{H_k}\to W_F\right).
\end{align*}
The associated $L$-embedding $\widetilde{\psi}$ from $\calL_\psi\times\SU(2,\R)$ to $\Lgp{\GL_N}=\GL(N,\C)\times W_F$ is defined as
\begin{align*}
    \widetilde{\psi}=\bigoplus_{i\in I_\psi} \ell_i (\widetilde{\mu_i} \otimes \nu_i) \oplus \bigoplus_{j\in J_\psi} \ell_j (\widetilde{\mu_j}\otimes\nu_j).
\end{align*}

Now we shall recall the definition of the $A$-parameters for odd special orthogonal groups.
Let $G^*=\SO_{2n+1}$.
An $A$-parameter for $G^*=\SO_{2n+1}$ is a parameter $\psi\in \widetilde{\Psi}(2n)$ such that $\widetilde{\psi}$ factors through $\Lgp{G^*}=\Sp(2n,\C)\times W_F$.
We shall write $\Psi(G^*)$ for the $A$-parameters for $G^*$.
Although Arthur\cite{art13} defined a set $\widetilde{\Psi}(G^*)$ instead of $\Psi(G^*)$, it is not needed now since $N_{\GL(2n,\C)}(\Sp(2n,\C))=\Sp(2n,\C)$, cf. \cite[p.31]{art13}.
Thus for $\psi\in\Psi(\SO_{2n+1})$, we have the associated $L$-embedding from $\calL_\psi\times\SU(2,\R)$ to $\Lgp{\SO_{2n+1}}=\Sp(2n,\C)\times W_F$, which we shall also write $\widetilde{\psi}$.
We will write $\myim\psi$ for the image of $\widetilde{\psi}$, in order to treat local and global matters uniformly.
Let us write $\Phi(G^*)$ for the set of equivalence classes of generic $A$-parameters for $G$, i.e., $\Phi(G^*)=\Psi(G^*)\cap\widetilde{\Phi}(N)$.

We have another characterization of $\Psi(G^*)$ without $\calL_\psi$.
Recall that an irreducible finite dimensional representation $\nu$ of $\SU(2,\R)$ is symplectic (resp. orthogonal) if $\dim \nu$ is even (resp. odd).
A self-dual simple parameter $\mu\boxtimes\nu\in\widetilde{\Psi}_\simple(N)$ is said to be symplectic if one of $\mu$ and $\nu$ is symplectic and the other is orthogonal, and to be orthogonal otherwise.
Let $\psi$ be a parameter \eqref{glsd} in $\widetilde{\Psi}(N)$.
Then $\psi$ is in $\Psi(G^*)$ if and only if $\ell_i$ is even for $i\in I_\psi$ with orthogonal $\psi_i$.

For $\psi\in\Psi(G^*)$, we put $S_\psi=\cent(\myim\widetilde{\psi}, \widehat{G})$, $\overline{S}_\psi=S_\psi/Z(\widehat{G}
    )^\Gamma$, $\frakS_\psi=\pi_0(S_\psi)$, and $\overline{\frakS}_\psi=\pi_0(\overline{S}_\psi)$.
Note that the Galois action is trivial in this case.
We also write $S_\psi(G)$ and so on if the group $G$ is to be emphasized.
Take a partition $I_\psi=I_\psi^+\sqcup I_\psi^-$ such that $I_\psi^+$ (resp. $I_\psi^-$) is the set of $i\in I_\psi$ such that $\psi_i$ is symplectic (resp. orthogonal).
Then we have
\begin{align*}
    S_\psi
    \cong \prod_{i \in I_\psi^+} \Or(\ell_i, \C)
    \times \prod_{i \in I_\psi^-} \Sp(\ell_i, \C)
    \times \prod_{j \in J_\psi} \GL(\ell_j ,\C),
\end{align*}
and its component group is a free $(\Z/2\Z)$-module
\begin{align*}
    \frakS_\psi
    \cong \prod_{i \in I_\psi^+} (\Z/2\Z)a_i,
\end{align*}
with a formal basis $\{a_i\}$, where $a_i$ corresponds to $\psi_i$.

As in \cite[\S4.1]{art13}, we put
\begin{align*}
    \Psi_\simple(G^*) & =\Set{\psi\in\Psi(G^*) | \abs{\overline{S}_\psi}=1},                                                         \\
    \Psi_2(G^*)       & =\Set{\psi\in\Psi(G^*) | \abs*{\overline{S}_\psi}<\infty},                                                   \\
    \Psi_\el(G^*)     & =\Set{\psi\in\Psi(G^*) | \abs*{\overline{S}_{\psi,s}}<\infty,\ \exists s\in\overline{S}_{\psi,\semisimple}},
\end{align*}
and
\begin{align*}
    \Psi_\disc(G^*) & =\Set{\psi\in\Psi(G^*) | \abs*{Z(\overline{S}_\psi)}<\infty},
\end{align*}
where $\overline{S}_{\psi,s}=\cent(s,\overline{S}_\psi)$.
Put also $\Psi_\el^2(G^*)=\Psi_\el(G^*)\setminus\Psi_2(G^*)$.
Since we have an explicit description of $S_\psi$, we can get a more explicit characterization of these sets.
For instance, $\Psi_2(G^*)$ is a set of parameters \eqref{glsd} in $\widetilde{\Psi}(N)$ such that $\ell_i=1$ for $i\in I_\psi^+$ and $I_\psi^-\sqcup J_\psi=\emptyset$.
Let us put $\Phi_\heartsuit(G^*)=\Phi(G^*)\cap\Psi_\heartsuit(G^*)$, for $\heartsuit\in \{2,\simple,\el,\disc\}$.
Put also $\Phi_\el^2(G^*)=\Phi_\el(G^*)\setminus\Phi_2(G^*)$.

Consider a parameter $\psi\in\Psi(G^*)\subset \widetilde{\Psi}(N)$ of the form \eqref{glsd}.
Let $v$ be a place of $F$.
The localization of $\psi$ at $v$ is defined as
\begin{equation*}
    \psi_v=\bigboxplus_{i\in I_\psi} \ell_i\psi_{i,v} \boxplus \bigboxplus_{j\in J_\psi} \ell_j (\psi_{j,v}\boxplus\psi_{j,v}^\vee),
\end{equation*}
where $\psi_{k,v}=\mu_{k,v}\boxtimes \nu_k$, and $\mu_{k,v}$ stands for the $L$-parameter for the local component $\mu_{k,v}$ of $\mu_k$ at $v$, for $k\in K_\psi$.
Since $\mu_{k,v}$ is a map from $L_{F_v}$ to $\GL(m_k,\C)$, one can see that $\psi_v$ is a map from $L_{F_v}\times \SU(2,\R)$ to $\GL(2n,\C)$.
By Theorem 1.4.2 of \cite{art13} and our 1st seed theorem (Proposition \ref{1stseed}), one can see that the image of $\psi_v$ is contained in $\Sp(2n,\C)$.
Thus we obtain our 2nd seed theorem:
\begin{proposition}[2nd seed theorem]\label{2ndseed}
    For an $A$-parameter $\psi\in\Psi(G^*)$ and a place $v$, the equivalence class of $\psi_v$ is uniquely determined.
    Hence, we have the localization $\psi_v\in\Psi^+(G^*_v)$.
\end{proposition}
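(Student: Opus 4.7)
The plan is to deduce this directly from the 1st seed theorem (Proposition \ref{1stseed}) together with Theorem 1.4.2 of \cite{art13}. For the uniqueness statement, each simple constituent $\psi_k = \mu_k \boxtimes \nu_k$ of $\psi$ is built from an irreducible cuspidal automorphic representation $\mu_k$ of some $\GL_{m_k}(\A_F)$. By LLC for $\GL_{m_k}$ (Langlands, Harris--Taylor, Henniart), the local $L$-parameter $\mu_{k,v}$ attached to $\mu_{k,v}$ is uniquely determined up to $\GL(m_k,\C)$-conjugacy, and hence so is $\psi_{k,v} = \mu_{k,v}\boxtimes\nu_k$. Assembling these gives a well-defined localization map $\Psi(G^*) \to \Psi^+(\GL_{2n}/F_v)$, which handles the first assertion.

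For the second assertion I must show that the image of $\psi_v$ lies, up to $\GL(2n,\C)$-conjugacy, inside $\Sp(2n,\C)\times W_{F_v}$. Writing $\psi$ as in \eqref{glsd}, I analyze each piece separately. For a non-self-dual simple $\psi_j$, the contribution $\psi_{j,v}\oplus\psi_{j,v}^\vee$ preserves the standard hyperbolic symplectic form on $\C^{2m_j\dim\nu_j}$, so its image automatically sits in a symplectic subgroup. For a self-dual $\psi_i = \mu_i\boxtimes\nu_i$, I apply Proposition \ref{1stseed} to $\mu_i$: it produces a simple twisted endoscopic group $H_i$ (symplectic or orthogonal) together with an $L$-embedding $\widetilde{\mu_i}:\Lgp{H_i}\into\Lgp{\GL_{m_i}}$. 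Theorem 1.4.2 of \cite{art13} then guarantees that at every place $v$, the local parameter $\mu_{i,v}$ factors through $\widetilde{\mu_i}$, so its image sits inside the corresponding classical complex group. Tensoring with the irreducible $\nu_i$ lands $\mu_{i,v}\boxtimes\nu_i$ inside $\Sp(m_i\dim\nu_i,\C)$ exactly when one of $\mu_i$ and $\nu_i$ is symplectic and the other orthogonal, i.e.\ when $\psi_i$ is symplectic in the sense of \S\ref{1.3}.

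It remains to handle the case when $\psi_i$ is orthogonal, for which the characterization of $\Psi(G^*)$ in \S\ref{1.3} forces $\ell_i$ to be even. In this case $\psi_{i,v}$ preserves a symmetric bilinear form $B$, and on $\C^{\ell_i}$ we can choose a symplectic form $\omega$ (available since $\ell_i$ is even); then $B\otimes\omega$ is an alternating form on $\C^{\ell_i\dim\psi_i}$ preserved by $\ell_i\psi_{i,v} = \psi_{i,v}\otimes\id_{\C^{\ell_i}}$, so this summand also lies in $\Sp$. Combining the three cases, the image of $\psi_v$ is contained in $\Sp(2n,\C)\times W_{F_v}$ up to conjugacy, whence $\psi_v\in\Psi^+(G^*_v)$; and the uniqueness of the lift up to $\Sp(2n,\C)$-conjugacy is guaranteed by the equality $N_{\GL(2n,\C)}(\Sp(2n,\C))=\Sp(2n,\C)$ recorded in \S\ref{1.3}.

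The essential analytic content is Arthur's local factorization Theorem 1.4.2, which is invoked through Proposition \ref{1stseed}; once it is available, the rest is bookkeeping that redistributes the symplectic/orthogonal type between $\mu_k$, $\nu_k$, and the multiplicities $\ell_i$. Consequently no genuine obstacle arises at this stage of the paper, and the statement follows almost formally.
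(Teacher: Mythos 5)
Your proposal is correct and follows essentially the same route as the paper. The paper's own proof is a one-sentence pointer — ``By Theorem 1.4.2 of [Arthur] and our 1st seed theorem, one can see that the image of $\psi_v$ is contained in $\Sp(2n,\C)$'' — and you have simply unpacked the bookkeeping behind that sentence: the $J_\psi$-summands preserve hyperbolic symplectic forms, the symplectic-type $I_\psi$-summands land in $\Sp$ by Theorem 1.4.2, and for the orthogonal-type $I_\psi$-summands the evenness of $\ell_i$ (built into the definition of $\Psi(G^*)$) lets you twist the symmetric form $B$ by an alternating form $\omega$ on $\C^{\ell_i}$ to obtain an alternating form $B\otimes\omega$. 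The uniqueness assertion via LLC for $\GL_{m_k}$ and the conjugacy statement via $N_{\GL(2n,\C)}(\Sp(2n,\C))=\Sp(2n,\C)$ are both exactly what the paper relies on. No gaps; this is a faithful elaboration of the argument the paper compresses into a single remark.
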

Then there is a natural map $S_\psi\to S_{\psi_v}$, which induces natural maps
\begin{align*}
     & \overline{S}_\psi\to \overline{S}_{\psi_v},           &
     & \frakS_\psi\to \frakS_{\psi_v},                       &
     & \text{and}                                            &
     & \overline{\frakS}_\psi\to \overline{\frakS}_{\psi_v}. &
\end{align*}

If $(G^\frake,s^\frake,\eta^\frake)\in\calE(G^*)$ is an endoscopic triple for $G^*=\SO_{2n+1}$, the endoscopic group $G^\frake$ is of the form $\GL_{n_1}\times\cdots\times\GL_{n_r}\times\SO_{2m_1+1}\times\SO_{2m_2+1}$, where $r\in\Z_{\geq0}$, $n_1+\cdots+n_r+m_1+m_2=n$.
Then the set of equivalence classes of $A$-parameters for $G^\frake$ is defined as
\begin{align*}
    \Psi(G^\frake)=\Psi(n_1)\times\cdots\times\Psi(n_r)\times\Psi(\SO_{2m_1+1})\times\Psi(\SO_{2m_2+1}),
\end{align*}
or equivalently the set of pairs $(\psi,\widetilde{\psi}^\frake)$ of a parameter $\psi\in\Psi(G^*)$ and an $L$-embedding $\widetilde{\psi}^\frake : \calL_\psi\times\SU(2,\R) \to \Lgp{G^\frake}$ with $\eta^\frake\circ\widetilde{\psi}^\frake=\widetilde{\psi}$.
We have a canonical mapping $(\psi,\widetilde{\psi}^\frake)\mapsto \psi$ from $\Psi(G^\frake)$ to $\Psi(G^*)$, for which we will write $\psi^\frake \mapsto \eta^\frake\circ\psi^\frake$.
The subset of generic parameters is
\begin{align*}
    \Phi(G^\frake)=\Phi(n_1)\times\cdots\times\Phi(n_r)\times\Phi(\SO_{2m_1+1})\times\Phi(\SO_{2m_2+1}),
\end{align*}
and we define $\Psi_\heartsuit(G^\frake)$ and $\Phi_\heartsuit(G^\frake)$ ($\heartsuit=2,\disc,\ldots$) similarly.
The component groups and localization are defined just like the case of $\GL_N$ or $\SO_{2n+1}$.

Now we consider the notion of parameters for non-quasi-split odd special orthogonal groups.
Let $G$ be an inner form of $G^*=\SO_{2n+1}$.
The notion of parameters for $G$ is the same as that for $G^*$.
In addition, a parameter $\psi\in\Psi(G^*)$ is said to be $G$-relevant if it is relevant locally everywhere in the sense of \cite[\S0.4.4]{kmsw}.
Note that an inner twist $\xi:G^*\to G$ is uniquely determined by $G$ up to isomorphism, in our case.
When there is no danger of confusion, we shall simply call relevant.
In this paper, as in the local case, we never use a symbol $\Psi(G)$.
As in \cite[\S1.3.7]{kmsw}, we have the following lemma.
\begin{lemma}
    Let $\psi\in\Psi(G^*)$ be a $G$-relevant parameter and $M^*\subset G^*$ a Levi subgroup.
    If $\psi$ comes from a parameter for $M^*$, then $M^*$ transfers to $G$.
\end{lemma}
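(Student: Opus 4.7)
The plan is to characterize the transfer of the Levi $M^*$ in terms of the Witt index of the quadratic space underlying $G$, and then pass from local data to the global statement via Hasse--Minkowski.

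First, I would write $G=\SO(V)$ for some $(2n+1)$-dimensional non-degenerate quadratic space $V$ over $F$, and recall that $M^*\cong\GL_{n_1}\times\cdots\times\GL_{n_k}\times\SO_{2n_0+1}$ with $n_1+\cdots+n_k+n_0=n$; put $r=n_1+\cdots+n_k$. From the standard description of Levi subgroups of special orthogonal groups, $M^*$ transfers to $G$ if and only if $V$ has Witt index at least $r$, in which case a corresponding Levi of $G$ has the form $\GL_{n_1}\times\cdots\times\GL_{n_k}\times\SO(V_0)$ for the residual $(2n_0+1)$-dimensional quadratic subspace $V_0\subset V$. The same criterion, applied place by place, characterizes when $M^*_v$ transfers to $G_v$.

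Next, I would translate the hypothesis locally. Since $\psi$ factors through $\Lgp{M^*}$ by assumption, each localization $\psi_v$ factors through $\Lgp{M^*_v}$. Choose a minimal Levi $L^*_v\subset G^*_v$ through which $\psi_v$ factors; then $L^*_v\subset M^*_v$, and $G_v$-relevance of $\psi_v$ forces $L^*_v$ to transfer to $G_v$. Writing $L^*_v\cong\GL_{l_1}\times\cdots\times\GL_{l_s}\times\SO_{2l_0+1}$, the containment $L^*_v\subset M^*_v$ gives $l_1+\cdots+l_s\geq r$, so the local transfer criterion forces the Witt index of $V_v$ to be at least $r$. Hence $M^*_v$ itself transfers to $G_v$ at every place $v$.

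Finally, I would globalize via Hasse--Minkowski. If $V_v$ has Witt index at least $1$ at every place, then $V$ is globally isotropic, so $V\cong H\oplus V'$ for a hyperbolic plane $H$ defined over $F$; splitting off $H$ over $F$ automatically splits one off at each place, so $V'_v$ has Witt index at least $r-1$ for every $v$. Iterating $r$ times yields a decomposition $V\cong H^{\oplus r}\oplus V_0$ over $F$, which exhibits a totally isotropic subspace of $V$ of dimension $r$; by the first paragraph $M^*$ then transfers to $G$. The only nontrivial input is this iterative use of Hasse--Minkowski, which is standard once the structural description of orthogonal Levis in the first paragraph is in place; the overall shape of the argument is parallel to the unitary case treated in \cite[\S1.3.7]{kmsw}.
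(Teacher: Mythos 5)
Your argument is correct and reconstructs the argument of \cite[\S1.3.7]{kmsw}, to which the paper defers, with the Hasse principle for Hermitian forms used there replaced by the classical Hasse--Minkowski theorem for quadratic forms. All the ingredients are in place: the identification of transfer of $M^*\cong\GL_{n_1}\times\cdots\times\GL_{n_k}\times\SO_{2n_0+1}$ to $G=\SO(V)$ with the condition that the Witt index of $V$ be at least $r=n_1+\cdots+n_k$, the translation of local $G_v$-relevance into the bound $\operatorname{ind}(V_v)\geq r$ via the minimal Levi through which $\psi_v$ factors, and the iterated local-global step (where the inductive drop by one in Witt index after splitting off a hyperbolic plane is Witt cancellation — worth stating explicitly but standard). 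This is the proof the paper's reference intends.
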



\subsection{The bijective correspondence}\label{1.4}
Here we recall the bijective correspondence $(\frake,\psi^\frake) \llra (\psi,s)$ from discussions in \cite[\S1.4]{art13} and \cite[\S1.4]{kmsw} in the case of ordinary endoscopy for $\SO_{2n+1}$.

Let $G^*$ be a connected reductive split group over a local field $F$.
Then $\Gamma$ acts trivially on $\widehat{G^*}$.
Two ordinary endoscopic triples $\frake_1=(G_1^\frake, s_1^\frake, \eta_1^\frake)$ and $\frake_2=(G_2^\frake, s_2^\frake, \eta_2^\frake)$ for $G^*$ are considered strictly equivalent (resp. equivalent) if $s_1^\frake=s_2^\frake$ (resp. $s_1^\frake=zs_2^\frake$ for some $z\in Z(\widehat{G^*})$) and $\eta_1^\frake(\Lgp{G_1^\frake})=\eta_2^\frake(\Lgp{G_2^\frake})$.
Note that the equivalence here is different from the isomorphism.

Let $E(G^*)$ (resp. $\overline{E}(G^*)$) be a complete system of representatives of strict equivalence (resp. equivalence) classes of endoscopic data for $G^*$.
We may identify $E(G^*)$ (resp. $\overline{E}(G^*)$) with the set of strict equivalence (resp. equivalence) classes of endoscopic triples for $G^*$.
(The set $E(G)$ in \cite[p.246]{art13} does not correspond to $E(G^*)$ here, but to $\overline{E}(G^*)$.)
Define $F(G^*)$ to be the set of the parameters $\psi : L_F \times \SU(2) \to \Lgp{G^*}$.
(Here we mean the actual $L$-homomorphisms, not the equivalence classes.)
For each $\psi \in F(G^*)$, we shall write $S_{\psi, \semisimple}$ (resp. $\overline{S}_{\psi,\semisimple}$) for the set of semisimple elements in $S_\psi$ (resp. $\overline{S}_\psi$).
Define
\begin{align*}
    X(G^*) & = \Set{(\frake, \psi^\frake) | \frake=(G^\frake, s^\frake, \eta^\frake) \in E(G^*),\ \psi^\frake \in F(G^\frake)}, \\
    Y(G^*) & = \Set{(\psi,s) | \psi\in F(G^*),\ s \in S_{\psi,\semisimple}}.
\end{align*}
They are left $\widehat{G^*} \times Z(\widehat{G^*})$-sets by the following actions: for $(g,z) \in \widehat{G^*}\times Z(\widehat{G^*})$,
\begin{align*}
    (g,z)\cdot((G^\frake,s^\frake,\eta^\frake), \psi^\frake) & =((G^\frake_1,s^\frake_1, \eta^\frake_1), (\eta^\frake_1)\inv\circ \Ad(g)\circ \eta^\frake\circ \psi^\frake), \\
    (g,z)\cdot(\psi,s)                                       & =(\Ad(g)\circ\psi, zgsg\inv),
\end{align*}
where $(G^\frake_1,s^\frake_1, \eta^\frake_1)$ is an element of $E(G^*)$ that is strictly equivalent to $(G^\frake,zgs^\frake g\inv,\Ad(g)\circ \eta^\frake)$.

Put
\begin{align*}
    \overline{X}(G^*) & = \Set{(\frake, \psi^\frake) | \frake=(G^\frake, s^\frake, \eta^\frake) \in \overline{E}(G^*),\ \psi^\frake \in F(G^\frake)}, \\
    \overline{Y}(G^*) & = \Set{(\psi,s) | \psi\in F(G^*),\ s \in \overline{S}_{\psi,\semisimple}},
\end{align*}
i.e., $\overline{X}(G^*)$ (resp. $\overline{Y}(G^*)$) is the quotient set $Z(\widehat{G^*})\backslash\backslash X(G^*)$ (resp. $Z(\widehat{G^*})\backslash\backslash Y(G^*)$) of $X(G^*)$ (resp. $Y(G^*)$) by $Z(\widehat{G^*})=\{1\}\times Z(\widehat{G^*})$.
Let us put
\begin{align*}
    \calX(G^*)=\widehat{G^*}\backslash\backslash X(G^*),\quad
    \calY(G^*)=\widehat{G^*}\backslash\backslash Y(G^*),
\end{align*}
standing for the quotient sets of $X(G^*)$ and $Y(G^*)$ by $\widehat{G^*}=\widehat{G^*}\times\{1\}$, respectively.
Likewise $\overline{\calX}(G^*)=\widehat{G^*}\backslash\backslash \overline{X}(G^*)$, $\overline{\calY}(G^*)=\widehat{G^*}\backslash\backslash \overline{Y}(G^*)$.
Note that $\calE(G^*)=\widehat{G^*}\backslash\backslash E(G^*)$ and $\overline{\calE}(G^*)=\widehat{G^*}\backslash\backslash \overline{E}(G^*)=\widehat{G^*}\times Z(\widehat{G^*})\backslash\backslash E(G^*)$.
We can also see that
\begin{align*}
    \calX(G^*)            & = \Set{(\frake, \psi^\frake) | \frake=(G^\frake, s^\frake, \eta^\frake) \in \calE(G^*),\ \psi^\frake \in \widetilde{\Psi}(G^\frake)},            \\
    \calY(G^*)            & = \Set{(\psi,s) | \psi\in \Psi(G^*),\ s \in S_{\psi,\semisimple}/(\widehat{G^*}-\mathrm{conj.})},                                                \\
    \overline{\calX}(G^*) & = \Set{(\frake, \psi^\frake) | \frake=(G^\frake, s^\frake, \eta^\frake) \in \overline{\calE}(G^*),\ \psi^\frake \in \widetilde{\Psi}(G^\frake)}, \\
    \overline{\calY}(G^*) & = \Set{(\psi,s) | \psi\in \Psi(G^*),\ s \in \overline{S}_{\psi,\semisimple}/(\widehat{G^*}-\mathrm{conj.})},
\end{align*}
where $\widetilde{\Psi}(G^\frake)$ is the quotient set of $\Psi(G^\frake)$ modulo the action of $\Aut_{G^*}(\frake)$.
We use the symbol $\widetilde{\Psi}$ here, because the idea is similar to that of $\widetilde{\Psi}(G^*)$. (See \cite[p.31]{art13}.)

\begin{lemma}\label{1.4.1}
    Let $G^*$ be $\SO_{2n+1}$ defined over a local field $F$.
    Then the natural map
    \begin{align*}
        X(G^*)\to Y(G^*), \qquad (\frake,\psi^\frake)\mapsto (\eta^\frake\circ \psi^\frake, s^\frake),
    \end{align*}
    is a $\widehat{G^*}\times Z(G^*)$-equivariant bijection.
    Here $s^\frake$ and $\eta^\frake$ are the second and third elements of $\frake$ respectively,
    Furthermore, this induces a $\widehat{G^*}$-equivariant bijection $\overline{X}(G^*)\simeq \overline{Y}(G^*)$, a $Z(\widehat{G^*})$- equivariant bijection $\calX(G^*)\simeq \calY(G^*)$, and a bijection $\overline{\calX}(G^*)\simeq \overline{\calY}(G^*)$.
\end{lemma}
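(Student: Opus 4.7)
The plan is to exhibit an explicit inverse to the given map and then check compatibility with the group actions. First I verify that the image lies in $Y(G^*)$. For $(\frake,\psi^\frake)\in X(G^*)$ with $\frake=(G^\frake,s^\frake,\eta^\frake)$, the endoscopic condition in the ordinary case reads $\Ad(s^\frake)\circ\eta^\frake=\eta^\frake$ (since $\theta^*$ is trivial), so $s^\frake$ commutes with every element of $\eta^\frake(\Lgp{G^\frake})$, and in particular with $\myim(\eta^\frake\circ\psi^\frake)$; as $s^\frake$ is semisimple by definition of an endoscopic triple, it belongs to $S_{\eta^\frake\circ\psi^\frake,\semisimple}$. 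Thus $(\eta^\frake\circ\psi^\frake,s^\frake)\in Y(G^*)$.

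\textbf{Bijectivity via Steinberg's theorem.} The crucial input is that $\widehat{G^*}=\Sp(2n,\C)$ is simply connected, so by Steinberg's theorem the centralizer $\cent(s,\widehat{G^*})$ of any semisimple element $s\in\widehat{G^*}$ is already connected. For surjectivity, let $(\psi,s)\in Y(G^*)$ be given. Put $\widehat{H}=\cent(s,\widehat{G^*})$; by the explicit description in \S\ref{1.1} this is a product of general linear and symplectic factors carrying the trivial Galois action, so there is a canonical split $F$-group $G^\frake$ whose dual group is $\widehat{H}$. Taking $\eta^\frake:\Lgp{G^\frake}=\widehat{H}\times W_F\hookrightarrow\widehat{G^*}\times W_F=\Lgp{G^*}$ to be the natural inclusion, the triple $(G^\frake,s,\eta^\frake)$ is an endoscopic triple for $G^*$, which I replace by its unique representative $\frake\in E(G^*)$. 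Because $s$ centralizes $\myim\psi$, the image of $\psi$ lies in $\cent(s,\Lgp{G^*})=\widehat{H}\times W_F=\eta^\frake(\Lgp{G^\frake})$, so $\psi$ factors uniquely as $\eta^\frake\circ\psi^\frake$ for some $\psi^\frake\in F(G^\frake)$, supplying the inverse. For injectivity, two preimages of $(\psi,s)$ must share the same $s^\frake=s$, and the connectedness of $\cent(s,\widehat{G^*})$ forces $\eta^\frake_1(\Lgp{G^\frake_1})=\widehat{H}\times W_F=\eta^\frake_2(\Lgp{G^\frake_2})$; hence the two triples are strictly equivalent and therefore identical as elements of $E(G^*)$, after which the parameters coincide by injectivity of $\eta^\frake$.

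\textbf{Equivariance and induced maps.} Equivariance under $\widehat{G^*}\times Z(\widehat{G^*})$ is a direct unraveling of the two actions. On $X(G^*)$, the action replaces $\frake$ by a strictly equivalent representative whose distinguished element is $zgs^\frake g\inv$ and conjugates the parameter by $\Ad(g)$; on $Y(G^*)$, the action sends $(\psi,s)$ to $(\Ad(g)\circ\psi,zgsg\inv)$; composing through $\eta^\frake$ matches these two prescriptions. The induced bijections $\overline{X}(G^*)\simeq\overline{Y}(G^*)$, $\calX(G^*)\simeq\calY(G^*)$, and $\overline{\calX}(G^*)\simeq\overline{\calY}(G^*)$ then follow formally by quotienting by $Z(\widehat{G^*})$, $\widehat{G^*}$, or both. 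The only genuine obstacle is the surjectivity step, and I expect it to rest entirely on Steinberg's connectedness theorem applied to $\Sp(2n,\C)$; this is precisely the feature that makes the odd orthogonal case cleaner than the even orthogonal case, where centralizers of semisimple elements in $\SO(2n,\C)$ can be disconnected and one must pass to the set $\widetilde{\Psi}(G^\frake)$ to account for the discrepancy.
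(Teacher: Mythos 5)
Your proposal is correct and follows essentially the same route as the paper: both reduce the surjectivity and injectivity to the connectedness of the centralizer of a semisimple element in $\Sp(2n,\C)$ and then recognize that connected reductive subgroup as the dual of a split $F$-group to build the inverse. The only cosmetic difference is that you invoke Steinberg's theorem directly for connectedness, whereas the paper reads it off from the explicit product form $\eqref{dge}$ of $\cent(s,\Sp(2n,\C))$, and the paper phrases the target of the inclusion as $\calG^\frake=\cent(s,\widehat{G^*})\cdot\psi(L_F\times\SU(2))$ rather than the (equivalent, since the Galois action is trivial) $\widehat{H}\times W_F$.
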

\begin{proof}
    Recall that $E(G^*)$ is a complete system of representatives, not just a quotient set.
    The map $X(G^*)\to Y(G^*)$ is trivially well-defined.
    The equivariance under the actions of $\widehat{G^*}\times Z(G^*)$ follows immediately from the definition of the actions.
    The injectivity also follows easily from the definition of strict equivalence of endoscopic triples.
    Let us show the surjectivity.
    Let $(\psi,s)\in Y(G^*)$.
    Put $\calG^\frake=\cent(s,\widehat{G^*}) \cdot \psi(L_F\times \SU(2))$.
    We know that $\cent(s,\widehat{G^*})$ is of the form \eqref{dge}.
    Let $G'$ be the group \eqref{ge}.
    Then its dual group $\widehat{G'}$ is isomorphic to $\cent(s,\widehat{G^*})$, with the trivial Galois action.
    Thus we have $\calG^\frake\simeq \Lgp{G'}$.
    Let us write $\eta'$ for an injective map $\Lgp{G'}\simeq \calG^\frake \into \Lgp{G^*}$, to obtain an endoscopic triple $(G',s,\eta')$.
    Let $\frake=(G^\frake,s^\frake,\eta^\frake) \in E(G^*)$ be the unique element that is strictly equivalent to $(G',s,\eta')$.
    Since $\myim\psi \subset \calG^\frake=\eta'(\Lgp{G'})=\eta^\frake(\Lgp{G^\frake})$,  there exists a parameter $\psi^\frake\in F(G^\frake)$ such that $\psi=\eta^\frake \circ \psi^\frake$.
    Then we obtain $(\frake,\psi^\frake)\in X(G^*)$, and get a map from $Y(G^*)$ to $X(G^*)$.
    This shows the surjectivity.

    The latter assertion follows from the former.
\end{proof}

Let $F$ be a number field, and $G^*=\SO_{2n+1}$.
For an endoscopic triple $\frake$ for $G^*$, every element in $\Psi(G^\frake)$ can be regarded as an element in $\Psi(G^*)$ in the natural way.
For two parameters $\psi_1,\psi_2 \in \Psi(G^\frake)$, we write $\psi_1\sim\psi_2$ if $\eta^\frake\circ\psi_1=\eta^\frake\circ\psi_2$, i.e., they are same in $\Psi(G^*)$.
Let $\widetilde{\Psi}(G^\frake)=\Psi(G^\frake)/\sim$ be the quotient set.
The reason why we use the symbol $\widetilde{\Psi}$ is because the idea is similar to that of $\widetilde{\Psi}(G^*)$ (see \cite[p.31]{art13}).
Define
\begin{align*}
    \calX(G^*) & = \Set{(\frake, \psi^\frake) | \frake=(G^\frake, s^\frake, \eta^\frake) \in \calE(G^*),\ \psi^\frake \in \widetilde{\Psi}(G^\frake)}, \\
    \calY(G^*) & = \Set{(\psi,s) | \psi\in \Psi(G^*),\ s \in S_{\psi,\semisimple}/(\widehat{G^*}-\mathrm{conj.})}.
\end{align*}

As in the local case, we have the following lemma.
\begin{lemma}\label{1.4.3}
    Let $G^*=\SO_{2n+1}$ be the global split odd special orthogonal group over a global field $F$.
    Then the natural map
    \begin{align*}
        \calX(G^*)\to \calY(G^*), \qquad (\frake,\psi^\frake)\mapsto (\psi^\frake, s^\frake)
    \end{align*}
    is bijective.
\end{lemma}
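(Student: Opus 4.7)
The plan is to mirror the proof of Lemma \ref{1.4.1} almost verbatim, using the associated $L$-embedding $\widetilde{\psi} : \calL_\psi \times \SU(2,\R) \to \Lgp{G^*}$ in place of the local Langlands parameter. Since $G^* = \SO_{2n+1}$ is split, the Galois action on $\widehat{G^*} = \Sp(2n,\C)$ is trivial, and the structural description of semisimple centralizers recalled in \eqref{se}--\eqref{ge} carries over unchanged. The map is well-defined on $\calX(G^*)$: for any $(\frake, \psi^\frake)$, the composition $\eta^\frake \circ \widetilde{\psi^\frake}$ coincides with $\widetilde{\eta^\frake \circ \psi^\frake}$ and has image in $\eta^\frake(\Lgp{G^\frake}) = \cent(s^\frake, \widehat{G^*})^\circ \rtimes W_F$, so $s^\frake$ lies in $S_{\eta^\frake \circ \psi^\frake, \semisimple}$. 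Descending to the $\widehat{G^*}$-orbit on the target then gives the required map to $\calY(G^*)$.

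For injectivity, suppose $(\frake_i, \psi_i^\frake)$ for $i = 1, 2$ produce the same class in $\calY(G^*)$. After applying a common $\widehat{G^*}$-conjugation I may assume $s^{\frake_1} = s^{\frake_2} = s$ and $\eta^{\frake_1} \circ \widetilde{\psi_1^\frake} = \eta^{\frake_2} \circ \widetilde{\psi_2^\frake}$. The images $\eta^{\frake_i}(\Lgp{G^{\frake_i}})$ both equal $\cent(s, \widehat{G^*})^\circ \rtimes W_F$, so $\frake_1$ and $\frake_2$ coincide as strict equivalence classes, that is, as elements of $\calE(G^*)$. The remaining relation $\eta^\frake \circ \psi_1^\frake = \eta^\frake \circ \psi_2^\frake$ is exactly the defining relation $\sim$ of $\widetilde{\Psi}(G^\frake)$, which identifies $\psi_1^\frake$ and $\psi_2^\frake$ there.

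For surjectivity, given $(\psi, s) \in \calY(G^*)$ with representative $\widetilde{\psi}$, I would set $\calG^\frake := \cent(s, \widehat{G^*}) \cdot \myim \widetilde{\psi} \subset \Lgp{G^*}$. By the description of centralizers in \eqref{dge}--\eqref{ge}, this is of the form $\Lgp{G'}$ for a unique split group $G' = \GL_{n_1} \times \cdots \times \GL_{n_r} \times \SO_{2m_1+1} \times \SO_{2m_2+1}$, producing an endoscopic datum $(G', s, \eta')$ with $\eta'$ the canonical inclusion. Taking its strict equivalence class $\frake \in \calE(G^*)$, the inclusion $\myim \widetilde{\psi} \subset \eta^\frake(\Lgp{G^\frake})$ lifts $\widetilde{\psi}$ uniquely to an $L$-embedding $\widetilde{\psi^\frake} : \calL_\psi \times \SU(2,\R) \to \Lgp{G^\frake}$.

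The main obstacle is this final step: verifying that the abstract lift $\widetilde{\psi^\frake}$ genuinely comes from an element of $\Psi(G^\frake) = \Psi(n_1) \times \cdots \times \Psi(n_r) \times \Psi(\SO_{2m_1+1}) \times \Psi(\SO_{2m_2+1})$, and not merely a formal factorization of $L$-embeddings. I would argue constituent by constituent: decomposing $\widetilde{\psi}$ into simple self-dual summands as in \eqref{glsd} and tracking which summands project to which factor of $\widehat{G^\frake}$, the pieces landing in each $\GL_{n_j}$-factor are arbitrary elements of $\Psi(n_j)$, while those landing in each $\Sp(2m_i,\C)$-factor are self-dual and symplectic and hence, by the 1st seed theorem (Proposition \ref{1stseed}), descend to elements of $\Psi(\SO_{2m_i+1})$. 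Assembling these produces the sought $\psi^\frake$; equivariance under the residual $\widehat{G^*}$-action is then routine.
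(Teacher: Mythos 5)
Your proposal is correct and follows essentially the route the paper indicates: the paper's own proof is the single sentence ``The proof is similar to that of Lemma \ref{1.4.1}, with $\widetilde{\psi}(\calL_\psi\times\SU(2))$ in place of $\psi(L_F\times\SU(2))$,'' and you have spelled out precisely that substitution. Your final paragraph about recovering a tuple in $\Psi(n_1)\times\cdots\times\Psi(\SO_{2m_2+1})$ is superfluous, since the paper already records the alternative characterization of $\Psi(G^\frake)$ as pairs $(\psi,\widetilde{\psi}^\frake)$ with $\eta^\frake\circ\widetilde{\psi}^\frake=\widetilde{\psi}$, under which lifting $\widetilde{\psi}$ along $\myim\widetilde{\psi}\subset\eta^\frake(\Lgp{G^\frake})$ is immediate.
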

\begin{proof}
    The proof is similar to that of Lemma \ref{1.4.1}, with $\widetilde{\psi}(\calL_\psi\times \SU(2))$ in place of $\psi(L_F\times \SU(2))$.
\end{proof}

\subsection{Main theorems}\label{1.6}
In this subsection we recall the statements of the endoscopic classification of representations of the odd special orthogonal groups, which will be proved for the generic part in this paper, and was already proven for the quasi-split case by Arthur \cite{art13}.

Let first $F$ be a local field.
Fix a nontrivial additive character $\psi_F:F\to \C^1$.
Let $G^*=\SO_{2n+1}$ be the split special orthogonal group of size $2n+1$ defined over $F$, equipped with the fixed pinning.
It can be regarded as a twisted endoscopic group of $(\GL_{2n},\theta_{2n})$.
Arthur proved the existence of the stable linear form satisfying the twisted ECR:
\begin{proposition}[{\cite[Theorem 2.2.1.(a)]{art13}}]\label{1.5.1}
    Let $\psi\in\Psi(G^*)$.
    Then there is a unique stable linear form
    \begin{equation*}
        \calH(G^*)\ni f\mapsto f^{G^*}(\psi) \in \C,
    \end{equation*}
    satisfying the twisted endoscopic character relation \cite[(2.2.3)]{art13}.
\end{proposition}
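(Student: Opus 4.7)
The plan is to realize $f^{G^*}(\psi)$ as the pullback of the twisted character of the unitary representation $\pi_\psi$ of $\GL_{2n}(F)$ under the Kottwitz–Shelstad twisted transfer between $(\GL_{2n},\theta_{2n})$ and its principal twisted endoscopic group $G^*=\SO_{2n+1}$. Concretely, fix the simple twisted endoscopic datum $\frake^*=(G^*,s^*,\eta^*)\in\widetilde{\calE}_\simple(2n)$ corresponding to the standard embedding $\widehat{G^*}=\Sp(2n,\C)\into\GL(2n,\C)$, together with normalized transfer factors $\Delta[\frake^*,\id,1]$ from \S1.1, and Whittaker-normalize the intertwiner $\pi_\psi(\theta_{2n})$ using the fixed pinning of $\GL_{2n}$ and the additive character $\psi_F$. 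Since $\psi\in\Psi(G^*)\subset\widetilde{\Psi}(2n)$ is self-dual with image in $\Sp(2n,\C)$, $\pi_\psi$ is $\theta_{2n}$-stable, so the twisted character $\tilde f(\pi_\psi)=\tr(\pi_\psi(\tilde f)\pi_\psi(\theta_{2n}))$ is a well-defined $\theta_{2n}$-invariant distribution on $\calH(\GL_{2n})$.

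Next I would define, for $f\in\calH(G^*)$,
\begin{equation*}
    f^{G^*}(\psi):=\tilde f_{\GL_{2n}}(\pi_\psi),
\end{equation*}
where $\tilde f\in\calH(\GL_{2n})$ is any $\Delta[\frake^*,\id,1]$-matching function (Definition \ref{matching}). The existence of $\tilde f$ is Waldspurger's twisted transfer theorem, and the image of the transfer map $f\mapsto f^{\frake^*}$ onto stable orbital integrals on $G^*$ is surjective in the opposite direction by Mezo/Shelstad; both facts are needed to see that the construction does not depend on choices. Well-definedness of $f^{G^*}(\psi)$ reduces to showing that if $\tilde f$ has vanishing twisted orbital integrals on $\GL_{2n,\theta_{2n}\text{-srss}}$, then $\tilde f(\pi_\psi)=0$; this is the twisted analogue of the fact that an invariant distribution supported on the regular set is determined by its orbital integrals, applied to the twisted character of $\pi_\psi$.

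Stability is then automatic in the following sense: if $f\in\calH(G^*)$ has vanishing stable orbital integrals, then any matching $\tilde f$ has vanishing twisted orbital integrals by the transfer identity of Definition \ref{matching}, hence $f^{G^*}(\psi)=\tilde f(\pi_\psi)=0$. Uniqueness falls out of the same transfer surjectivity: two stable linear forms satisfying the twisted ECR (\cite[(2.2.3)]{art13}) must agree on the image of the transfer, and this image exhausts the space of stable orbital integrals, so the linear forms agree on all of $\calH(G^*)$ modulo the ideal of functions with vanishing stable orbital integrals—i.e., they agree as stable distributions.

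The main obstacle is verifying that the twisted character of $\pi_\psi$ really does descend through the transfer to a distribution on $G^*$, equivalently that the map $\tilde f\mapsto \tilde f(\pi_\psi)$ factors through the Kottwitz–Shelstad geometric transfer. For $\psi\in\Psi_\simple(G^*)$ this is the deep step: in Arthur's treatment it is extracted from a global–local induction on $N$ using the stabilization of the twisted trace formula for $\GL_N\rtimes\theta_N$, comparing spectral and geometric expansions, and relying on the fundamental lemma (Ngô) and Waldspurger's transfer. For non-simple $\psi$, one then deduces the statement by parabolic induction from Levi subgroups of the form \eqref{levi}, using the compatibility of transfer factors with Levi subgroups (Lemma \ref{1.1.4}) to reduce to the simple case. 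Aside from that deep input, the construction and the uniqueness are formal consequences of the transfer formalism already laid out in \S1.1.
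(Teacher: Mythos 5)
This proposition is stated with a citation to \cite[Theorem 2.2.1(a)]{art13} and the paper offers no proof of its own; it is Arthur's theorem, recalled here so that the stable linear form $f\mapsto f^{G^*}(\psi)$ can be used in the rest of the paper. Your outline of Arthur's construction is essentially the right one: take the twisted endoscopic datum $\frake^*\in\widetilde{\calE}_\simple(2n)$ for $G^*=\SO_{2n+1}$, Whittaker-normalize the intertwiner $\pi_\psi(\theta_{2n})$, and declare $f^{G^*}(\psi)$ to be the twisted character $\tilde f(\pi_\psi)$ for any matching $\tilde f$; the twisted ECR \cite[(2.2.3)]{art13} is precisely this defining relation. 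You also correctly locate the depth: the real content is that the linear form $\tilde f\mapsto\tilde f(\pi_\psi)$ on $\widetilde{\calH}(\GL_{2n})$ factors through the geometric transfer $\tilde f\mapsto\tilde f^{G^*}$, which Arthur proves by the global-to-local comparison of stabilized twisted trace formulas, on top of Ngô and Waldspurger. Uniqueness, as you say, is a formal consequence of the fact that the transfer map has dense image in the space of stable orbital integrals.

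One step as written would not go through. You argue stability by claiming that if $f\in\calH(G^*)$ has vanishing stable orbital integrals then \emph{any} matching $\tilde f$ has vanishing twisted orbital integrals. The matching identity of Definition \ref{matching} runs the other way: it expresses $\mathit{SO}_\gamma(f)$ as a $\Delta$-weighted sum of $O_{\delta\theta}(\tilde f)$, so vanishing of the stable orbital integrals of $f$ only says these $\Delta$-weighted ($\kappa$-)combinations vanish, not the individual twisted orbital integrals. The correct argument is shorter and already contained in your well-definedness step: if $\mathit{SO}_\gamma(f)=0$ for all strongly regular $\gamma$, then $\tilde f=0$ is a matching function for $f$, so once you know $\tilde f\mapsto\tilde f(\pi_\psi)$ depends only on the matching class, you conclude $f^{G^*}(\psi)=0(\pi_\psi)=0$. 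In other words, stability is not an independent observation from the transfer identity; it is a corollary of the descent that you (rightly) flag as the hard part.
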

We sometimes write $f(\psi)$ for $f^{G^*}(\psi)$ if there is no danger of confusion.

Let $G$ be an inner form of $G^*$ over $F$.
We now state the main local classification theorem.
\begin{theorem*}\label{1.6.1}
    \begin{enumerate}
        \item Let $\psi\in\Psi(G^*)$. There is a finite multiset $\Pi_\psi(G)$ of irreducible unitary representations of $G(F)$, (i.e., a finite set over $\Pi_\unit(G)$ in Arthur's terminology,) satisfying the local and global theorems below. It depends only on the isomorphism class of $G$. We sometimes write $\Pi_\psi=\Pi_\psi(G)$ if there is no danger of confusion. If $\psi=\phi\in\Phi_\bdd(G^*)$, i.e., $\psi$ is generic, then the set $\Pi_\phi$ is empty if and only if $\phi$ is not $G$-relevant. In general, $\Pi_\psi$ is empty if $\psi$ is not $G$-relevant. The set is called the local $A$-packet or simply the packet for $\psi$. When $\psi=\phi$ is generic, it is also called the $L$-packet of $\phi$.
        \item The local $A$-packet is equipped with a map \begin{equation*}
                  \Pi_\psi(G) \to \Irr(\frakS_\psi,\chi_G),\quad \pi\mapsto \an{-,\pi},
              \end{equation*}
              satisfying the local and global theorems below. Here, we shall write $\Irr(\frakS_\psi,\chi_G)$ for the subset of $\Irr(\frakS_\psi)$ consisting of the elements whose pullbacks via $Z(\widehat{G})\into S_\psi \onto \frakS_\psi$ are $\chi_G$. It depends only on the isomorphism class of $G$. The map $\pi\mapsto\an{-,\pi}$ depends on the choice of the Whittaker datum, in general. However, in the case of odd special orthogonal groups, the choice is unique.
              \item(ECR) Let $\frake=(G^\frake, s^\frake, \eta^\frake)$ be an endoscopic triple for $G$ with $s^\frake\in S_\psi$, and $\psi^\frake\in \Psi(G^\frake)$ a parameter for $G^\frake$ such that $\psi=\eta^\frake\circ\psi^\frake$ is a parameter for $G$. If $f\in\calH(G)$ and $f^\frake\in\calH(G^\frake)$ are matching, then we have \begin{equation}\label{ecr}
                  f^\frake(\psi^\frake)=e(G) \sum_{\pi\in\Pi_\psi(G)} \an{s_\psi s^\frake,\pi} f(\pi),
              \end{equation}
              where the left hand side is the stable linear form given by Proposition \ref{1.5.1}.
        \item Let $\psi=\phi\in\Phi_\bdd(G^*)$ be a generic parameter. Then the packet $\Pi_\phi(G)$ is multiplicity free, and $\Pi_\phi(G)\subset \Pi_\temp(G)$. Moreover, if $F$ is non-archimedean (resp. archimedean), then the map $\Pi_\phi(G) \to \Irr(\frakS_\phi,\chi_G)$ is bijective (resp. injective).
              \item(LLC) As $\phi$ runs over $\Phi_\bdd(G^*)$ the sets $\Pi_\phi(G)$ are disjoint, and we have \begin{align*}
                   & \bigsqcup_{\phi\in\Phi_{2,\bdd}(G^*)} \Pi_\phi(G) = \Pi_{2,\temp}(G), &  & \bigsqcup_{\phi\in\Phi_{\bdd}(G^*)} \Pi_\phi(G) = \Pi_{\temp}(G). &
              \end{align*}
    \end{enumerate}
\end{theorem*}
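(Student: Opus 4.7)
The plan is to follow the KMSW strategy, bootstrapping from Arthur's classification for the quasi-split inner form $G^*$ to a non-quasi-split $G$. We accept Proposition \ref{1.5.1} and Arthur's main local theorems providing packets $\Pi_\psi(G^*)$ together with the character pairing $\langle-,\pi\rangle$ on $G^*$. The construction of $\Pi_\psi(G)$ then proceeds by a simultaneous induction on the rank, treating separately parameters that are induced from a proper Levi and parameters that are elliptic/discrete for $G^*$.

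For a parameter $\psi$ which factors through a proper Levi $M^*\subset G^*$ that transfers to $G$, one assumes by induction that $\Pi_{\psi_{M}}(M)$ with its pairing is already available on the inner twist $M=\xi(M^*)$. The packet $\Pi_\psi(G)$ is then built by parabolic induction, and the map $\pi\mapsto\langle-,\pi\rangle$ is read off from the action of the normalized self-intertwining operators on the induced representation. For this construction to be coherent one needs the Local Intertwining Relation (LIR, Theorem \ref{2.6.2}), which is the technical heart of the induction step and, following \S\ref{2.6}, reduces to two cases over $\R$: the pair $(\SO(1,4), \GL_{1}\times\SO(0,3))$, where the Levi representation is trivial and one mimics the KMSW calculation, and the pair $(\SO(2,5), \GL_{2}\times\SO(0,3))$, where the Levi representation is infinite-dimensional. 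I expect the latter to be the principal obstacle: the relative Bruhat decomposition used in loc.\ cit.\ no longer yields a usable test function, and a genuinely new argument via the Iwasawa decomposition is required, carried out in \S\ref{2.9}.

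For a discrete or elliptic parameter, the inductive hypothesis gives no handle, so one must globalize. Following \S\ref{3}--\S\ref{4}, I would choose a global number field and a global inner form $\mathbf{G}$ of $\SO_{2n+1}$ with $\mathbf{G}_{v_{0}}\simeq G$ at one distinguished place, together with a global $A$-parameter $\boldsymbol{\psi}$ whose localization at $v_{0}$ is $\psi$ and whose localizations at all other places lie in the scope of the inductive hypothesis. Combining Arthur's stable multiplicity formula for $\mathbf{G}^{*}$, the twisted endoscopic identity of Proposition \ref{1.5.1}, the local identities that are already known, and the bijective correspondence $(\frake,\psi^{\frake})\leftrightarrow(\psi,s)$ of Lemma \ref{1.4.1} and Lemma \ref{1.4.3}, one isolates a single local contribution at $v_{0}$. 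The trace Paley--Wiener theorem on $G(F)$ then extracts the multiset $\Pi_{\psi}(G)$ and the pairing, and simultaneously yields the endoscopic character relation \eqref{ecr}. Independence of the auxiliary global datum is verified by varying the globalization.

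Finally, the assertions in parts (4) and (5) for generic $\phi$ --- multiplicity-freeness of $\Pi_{\phi}(G)$, temperedness, (bijectivity in the $p$-adic case and injectivity in the archimedean case) of $\pi\mapsto\langle-,\pi\rangle$, disjointness of the packets, and the exhaustion identities
\[
\bigsqcup_{\phi\in\Phi_{2,\bdd}(G^*)} \Pi_\phi(G) = \Pi_{2,\temp}(G), \qquad \bigsqcup_{\phi\in\Phi_{\bdd}(G^*)} \Pi_\phi(G) = \Pi_{\temp}(G),
\]
follow from ECR by the character-theoretic argument of Arthur and KMSW, comparing stable characters on endoscopic groups with twisted traces of elliptic tempered representations of $G(F)$ and invoking linear independence of characters. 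Throughout, the genuinely new obstacle remains the $(\SO(2,5),\GL_{2}\times\SO(0,3))$ case of LIR; once it is cleared, the rest of the proof is the well-trodden KMSW template adapted to the trivial center and perfect dual group of $\SO_{2n+1}$.
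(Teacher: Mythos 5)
Your proposal correctly reproduces the paper's strategy: an induction on rank where non-discrete bounded parameters are handled via parabolic induction from a Levi together with the local intertwining relation (reduced, as you note, to the real pairs $(\SO(1,4),\GL_1\times\SO(0,3))$ and $(\SO(2,5),\GL_2\times\SO(0,3))$, the latter requiring the new Iwasawa-decomposition computation of \S\ref{2.9}), while discrete parameters are treated by globalizing $(G,\phi)$, invoking Arthur's stable multiplicity formula and the twisted character identity, and extracting the local packet and pairing at the distinguished place via the trace Paley--Wiener theorem and the orthogonality relations; parts (4) and (5) then follow by the Arthur--KMSW character-theoretic comparison with the $R$-group classification of $\Pi_{\temp}(G)$. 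This is precisely the route taken in \S\S\ref{2}--\ref{4}, so the approaches coincide.
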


Let $\psi\in\Psi^+(G^*)\cap\Psi^+_\unit(2n)$.
If it is not $G$-relevant, define $\Pi_\psi(G)$ to be empty.
Suppose it is $G$-relevant.
Then one can choose an inner twist $\xi:G^*\to G$, standard parabolic subgroups $P^*\subset G^*$ and $P=\xi(P^*) \subset G$ over $F$, the Levi subgroups $M^*\subset P^*$ and $M=\xi(M^*)\subset P$ over $F$, the open chamber $U \subset \fraka_M^*$ associated to $P$, a point $\lambda\in U$, and a parameter $\psi_{M^*}\in \Psi(M^*)$, such that $\psi$ is the image of the twist $\psi_{M^*,\lambda}$ of $\psi_{M^*}$ by $\lambda$.
Suppose that the packet $\Pi_{\psi_{M^*}}(M)$ and a map $\Pi_{\psi_{M^*}}(M) \to \Irr(\frakS_{\psi_{M^*}}, \chi_M)$ is given.
Then we can define the packet for $\psi$ and the associated map by
\begin{align*}
     & \Pi_\psi(G)=\Set{\calI_P(\pi_{M,\lambda}) | \pi_M \in \Pi_{\psi_{M^*}}(M)}, &  & \an{-,\calI_P(\pi_{M,\lambda})}=\an{-,\pi_M}. &
\end{align*}
Note that $\frakS_\psi=\frakS_{\psi_{M^*}}$ and $\chi_G=\chi_M$.
See \cite[pp.44-46]{art13} for more details.

Let next $F$ be a global field.
Fix a nontrivial additive character $\psi_F:F\backslash\A_F\to \C^1$.
Let $G^*=\SO_{2n+1}$ be the split special orthogonal group of size $2n+1$ defined over $F$, equipped with the fixed pinning.
Let $(\xi,z):G^*\to G$ be a pure inner twist of $G^*$.
\begin{theorem}\label{3.12}
    There exists a decomposition
    \begin{equation*}
        L^2_\disc(G(F)\backslash G(\A_F))=\bigoplus_{\psi\in\Psi(G^*)} L^2_{\disc,\psi}(G(F)\backslash G(\A_F)),
    \end{equation*}
    where $L^2_{\disc,\psi}(G(F)\backslash G(\A_F))$ is a full near equivalence class of irreducible representations $\pi=\otimes_v\pi_v$ in the discrete spectrum such that the $L$-parameter of $\pi_v$ is $\phi_{\psi_v}$ for almost all $v$.
\end{theorem}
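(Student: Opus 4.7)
The plan is to attach to each irreducible $\pi = \bigotimes_v \pi_v$ in the discrete spectrum of $G(\A_F)$ a unique global $A$-parameter $\psi \in \Psi(G^*)$ whose localizations $\psi_v$ match the Satake parameters of $\pi_v$ at almost all unramified places, and then to aggregate these assignments into the desired orthogonal decomposition.

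The key input will be a weak endoscopic lift of $\pi$ to $\GL_{2n}$. Concretely, I would stabilize the discrete part of the Arthur--Selberg trace formula for $G$ and compare it with that for $G^*$ (through the inner-form comparison) and, via the quasi-split classification of \cite{art13}, with the twisted trace formula for $(\GL_{2n}, \theta_{2n})$. Using the matching of stable orbital integrals (Definition \ref{matching}) together with the already established endoscopic classification for $G^*$, one would extract a self-dual automorphic representation $\Pi$ of $\GL_{2n}(\A_F)$ whose Satake parameter at almost all unramified $v$ is the image of $c(\pi_v)$ under the standard embedding $\Sp(2n,\C) \hookrightarrow \GL(2n,\C)$. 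Reduction to Levi subgroups, wherever it appears in the comparison, is controlled by Lemma \ref{1.1.4}.

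Next, the M\oe{}glin--Waldspurger description recalled in \S\ref{1.3} attaches to $\Pi$ a parameter $\Psi_\Pi \in \Psi(2n)$, and self-duality of $\Pi$ gives $\Psi_\Pi \in \widetilde{\Psi}(2n)$. I would then apply the 1st seed theorem (Proposition \ref{1stseed}) to each simple constituent of $\Psi_\Pi$ and check the symplectic-versus-orthogonal parity condition (so that the multiplicity $\ell_i$ is even whenever the simple factor $\psi_i$ is orthogonal), concluding that $\widetilde{\Psi_\Pi}$ factors through $\Lgp{G^*} = \Sp(2n,\C) \times W_F$ and thereby producing the sought $\psi \in \Psi(G^*)$.

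Uniqueness of $\psi$ will follow from strong multiplicity one for $\GL_{2n}$, which recovers $\widetilde{\psi}$ from its unramified Satake data, combined with the injectivity of the map $\Psi(G^*) \to \widetilde{\Psi}(2n)$ via composition with the $L$-embedding. The decomposition $L^2_{\disc} = \bigoplus_\psi L^2_{\disc,\psi}$ is then obtained by grouping the irreducible constituents of $L^2_{\disc}(G(F)\backslash G(\A_F))$ according to their associated parameter; parameters $\psi$ that are not $G$-relevant simply contribute the zero summand. The principal obstacle is the first step: stabilizing the trace formula of the non-quasi-split $G$ and matching it with those of $G^*$ and $(\GL_{2n},\theta_{2n})$ requires transporting, mutatis mutandis, the spectral identities and descent arguments developed in \cite[Chapter 3]{kmsw} for unitary groups, along with careful handling of the continuous-spectrum terms and bookkeeping of the Kottwitz sign $e(G)$.
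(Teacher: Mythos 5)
Your overall strategy — stabilize the discrete part of the trace formula for $G$, transport to the quasi‑split endoscopic side, invoke Arthur's classification there (and ultimately the comparison with the $\theta_{2n}$‑twisted trace formula for $\GL_{2n}$), and group the discrete spectrum by the resulting parameter, using strong multiplicity one for uniqueness — is indeed the route the paper takes. The paper compresses all of this into a citation of \cite[\S\S3.1--3.2]{kmsw} for the stabilization together with Proposition~\ref{3.3.1} (Arthur's stable multiplicity formula for $G^*$), from which the decomposition \eqref{gap} is read off.

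There is, however, one step in your writeup that would not go through as stated. You propose to ``apply the 1st seed theorem (Proposition~\ref{1stseed}) to each simple constituent of $\Psi_\Pi$ and check the symplectic‑versus‑orthogonal parity condition (so that the multiplicity $\ell_i$ is even whenever the simple factor $\psi_i$ is orthogonal), concluding that $\widetilde{\Psi_\Pi}$ factors through $\Lgp{G^*}$.'' Proposition~\ref{1stseed} only tells you the \emph{type} (symplectic or orthogonal) of each simple self‑dual constituent $\psi_i$; it says nothing about which multiplicities $\ell_i$ actually occur. Knowing that the Satake class $c(\Pi_v)$ lies in $\Sp(2n,\C)$ for almost all unramified $v$ is also not enough, since a single semisimple conjugacy class in $\GL(2n,\C)$ can simultaneously lie in symplectic and orthogonal subgroups. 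What rules out parameters in $\widetilde{\Psi}(2n)\setminus\Psi(G^*)$ is precisely the vanishing half of Proposition~\ref{3.3.1}: $S^{G^*}_{\disc,\psi}=0$ for $\psi\in\widetilde{\Psi}(2n)\setminus\Psi(G^*)$. Fed through the stabilization $I^G_{\disc,t,c}(f)=\sum_{\frake}\iota(G,G^\frake)\widehat{S}^\frake_{\disc,t,c}(f^\frake)$, this is what forces $L^2_{\disc,t,c}(G)$ to vanish unless $(t,c)=(t(\psi),c(\psi))$ for some $\psi\in\Psi(G^*)$. So the parity condition is not a ``check'' but a substantive input from Arthur's quasi‑split theory; you should invoke Proposition~\ref{3.3.1} here rather than the 1st seed theorem.
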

This theorem will be proved in \S\ref{3.1}.

For any global $A$-parameter $\psi\in\Psi(G^*)$ and a place $v$ of $F$, we have the localization $\psi_v\in\Psi^+(G^*_v)\cap\Psi^+_\unit(2n)_v$ thanks to Proposition \ref{2ndseed}.
Then we have the packet $\Pi_{\psi_v}(G_v)$ and the map $\Pi_{\psi_v}(G_v) \to \Irr(\frakS_{\psi_v}, \chi_{G_v})$.
The global packet for $\psi$ is defined as
\begin{equation*}
    \Pi_\psi(G)=\Set{\pi=\bigotimes_v \pi_v | \pi_v\in \Pi_{\psi_v}(G_v),\quad \an{-,\pi_v}=1 \ \text{for almost all}\ v}.
\end{equation*}
Note that $\pi_v$ is unramified for almost all $v$ (\cite[Lemma 7.3.4]{art13}).
If a local packet $\Pi_{\psi_v}(G_v)$ is empty for some $v$, then the global packet is defined to be empty.
For any $\pi=\otimes_v \pi_v\in\Pi_\psi(G)$, we define a character $\an{-,\pi}$ of $\frakS_\psi$ by
\begin{equation*}
    \an{x,\pi}=\prod_v \an{x_v,\pi_v}, \quad x\in\frakS_\psi,
\end{equation*}
where $x_v$ denotes the image of $x$ under the natural map $\frakS_\psi\to \frakS_{\psi_v}$.
The restriction of this character to $Z(\widehat{G})$ is $\prod_v \chi_{G_v}$, which is trivial by the product formula \eqref{033}.
Thus we may regard it as a character of $\overline{\frakS}_\psi$.
We have the character $\varepsilon_\psi : \overline{\frakS_\psi}\to\{\pm1\}$ defined by Arthur \cite[p.48]{art13}.
When we emphasize that the parameter $\psi$ is for $G^*$ (or equivalently for $G$), we shall write $\varepsilon^{G^*}_\psi$ or $\varepsilon^G_\psi$.
Note that $\varepsilon_\psi$ is trivial if $\psi$ is generic.
Put
\begin{equation*}
    \Pi_\psi(G,\varepsilon_\psi)=\Set{\pi\in\Pi_\psi(G) | \an{-,\pi}=\varepsilon_\psi}.
\end{equation*}
We now state the main global classification theorem.
\begin{theorem*}[AMF]\label{1.7.1}
    For $\psi\in\Psi(G^*)$, we have
    \begin{align*}
        L^2_{\disc,\psi}(G(F)\backslash G(\A_F))=\begin{dcases*}
                                                     \bigoplus_{\pi\in\Pi_\psi(G,\varepsilon_\psi)} \pi, & if $\psi\in\Psi_2(G^*)$,    \\
                                                     0,                                                  & if $\psi\notin\Psi_2(G^*)$.
                                                 \end{dcases*}
    \end{align*}
\end{theorem*}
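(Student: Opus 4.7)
The plan is to stabilize the $\phi$-isotypic component of the discrete invariant trace formula for $G$, and to match it against Arthur's explicit formula for the stable discrete trace on each elliptic endoscopic group, using the local endoscopic character relations proved in \S\ref{4}. Theorem \ref{3.12} (proved in \S\ref{3.1}) already supplies the coarse decomposition of $L^2_\disc$ indexed by $\psi\in\Psi(G^*)$, so the task reduces to identifying each $\psi$-isotypic component separately. In accordance with the starred convention, I treat only the generic case $\psi=\phi\in\Phi(G^*)$, in which $\varepsilon_\phi\equiv 1$; the non-generic case should follow the same template once its local inputs become available in a sequel.

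Concretely, I would start from the stable expansion
\begin{equation*}
I_{\disc,\phi}(f) \;=\; \sum_{\frake\in\calE_\el(G^*)} \iota(G,G^\frake)\, \widehat{S}'_{\disc,\phi^\frake}(f^\frake),
\end{equation*}
where $f^\frake\in\calH(G^\frake)$ is a $\Delta[\frake,\xi,z]$-transfer of $f\in\calH(G)$, and $\phi^\frake$ ranges over preimages of $\phi$ in $\Psi(G^\frake)$ under $\eta^\frake$. Arthur's work \cite{art13} computes $\widehat{S}'_{\disc,\phi^\frake}(f^\frake)$ as a sum of stable characters of local packets on $G^\frake$. Substituting the local ECR \eqref{ecr} at each factor, re-indexing the double sum by the correspondence $(\frake,\phi^\frake)\leftrightarrow(\phi,s)$ of Lemma \ref{1.4.3}, and invoking the product formula \eqref{033}, one arrives at an identity
\begin{equation*}
I_{\disc,\phi}(f) \;=\; \sum_{\pi\in\Pi_\phi(G)} \left(\frac{1}{\abs{\overline{\frakS}_\phi}} \sum_{x\in\overline{\frakS}_\phi} \varepsilon_\phi(x)\,\an{x,\pi}\right) f_G(\pi).
\end{equation*}
Linear independence of characters (trace Paley--Wiener, as used in \S\ref{4}) identifies the parenthesised coefficient with the multiplicity of $\pi$ in $L^2_{\disc,\phi}$. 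Character orthogonality on the finite abelian group $\overline{\frakS}_\phi$ then yields the stated dichotomy: when $\phi\in\Phi_2(G^*)$, the coefficient equals $1$ exactly when $\an{-,\pi}=\varepsilon_\phi$ and $0$ otherwise, producing $\bigoplus_{\pi\in\Pi_\phi(G,\varepsilon_\phi)}\pi$; when $\phi\notin\Phi_2(G^*)$, the centralizer $\overline{S}_\phi$ has positive dimension, so $\phi$ factors through a proper Levi $M^*$ that transfers to $G$ by the last lemma of \S\ref{1.3}, and Lemma \ref{1.1.4}(2) with parabolic descent reduces the $\phi$-contribution to that of a properly induced representation, whose discrete part vanishes.

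The main obstacle is the sign bookkeeping on the non-quasi-split side. The pairing $\an{s_\phi s^\frake,\pi}$ appearing in the ECR is twisted through $Z(\widehat{G})^\Gamma$ by the local Kottwitz character $\chi_{G_v}$, and these local characters must aggregate globally, via \eqref{033}, to recover precisely the sign $\varepsilon_\phi$ in the multiplicity formula --- a delicate compatibility that distinguishes the inner-form case from \cite{art13}. Equally delicate is the re-indexing of the sum over $(\frake,\phi^\frake)$: one must absorb the orbits of $\overline{\Aut}_{G^*}(\frake)$ on $\Psi(G^\frake)$ into the coefficients $\iota(G,G^\frake)$, and check that Lemma \ref{1.4.3} combined with Lemma \ref{1.1.2} and Propositions \ref{1.1.1}--\ref{1.1.3} produces the correct global counting. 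Carrying this out, closely following \cite[\S5]{kmsw}, will constitute the main content of \S\ref{5}.
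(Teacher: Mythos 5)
Your treatment of the discrete case is essentially the paper's route (via \cite[\S5]{kmsw} and Arthur's standard model): stabilize $I_{\disc,\phi}$, insert the stable multiplicity formula (Proposition \ref{3.3.1}) on each $G^\frake$, substitute the local ECR, re-index by Lemma \ref{1.4.3}, and compare coefficients; for $\phi\in\Phi_2(G^*)$ this is exactly the identity of Lemma \ref{4.9.1} followed by character orthogonality on $\overline{\frakS}_\phi$, and your sign bookkeeping via \eqref{033} is the right concern.

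The genuine gap is in the non-discrete case, and it already infects your displayed identity. The $\phi$-part of the discrete trace formula $I_{\disc,\phi}(f)$ is \emph{not} $\tr R_{\disc,\phi}(f)$: it contains, in addition, terms indexed by proper Levi subgroups $M$ and regular Weyl elements $w\in W(M)_{\reg}$, namely weighted traces of the global intertwining operators $M_P(w,\pi)$ acting on induced representations (this is the point of \S\ref{3.4}--\S\ref{3.5}). For $\phi\notin\Phi_2(G^*)$ these terms are nonzero, so "linear independence of characters'' does not let you read the multiplicity in $L^2_{\disc,\phi}$ off the coefficient in your expansion; the expansion itself is only obtained \emph{after} those spectral Levi terms are cancelled against matching terms on the endoscopic side. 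That cancellation is precisely the global intertwining relation (Theorem \ref{3.5.7}, deduced from LIR), packaged in Lemmas \ref{3.7.1} and \ref{3.8.1}: e.g.\ for $\phi\in\Phi^2_\el(G^*)$ one gets $\tr R^G_{\disc,\phi}(f)=2^{-q}\abs{\overline{\frakS}_\phi}\inv\sum_{x\in\overline{\frakS}_{\phi,\el}}\varepsilon_\phi(x)\bigl(f'_G(\phi,s_\phi x\inv)-f_G(\phi,x)\bigr)$, which vanishes only because of GIR (cf.\ Lemma \ref{4.5.1}). Your substitute argument --- "$\phi$ factors through a proper Levi, so the contribution is properly induced and its discrete part vanishes'' --- is not valid: discrete spectrum in a given near-equivalence class can a priori arise as residues of Eisenstein series from a proper Levi, and ruling this out is exactly the nontrivial content here, not a formal consequence of parabolic descent or of Lemma \ref{1.1.4}(2). (Also, the Levi-transfer lemma at the end of \S\ref{1.3} requires $G$-relevance of $\phi$; when $\phi$ is not relevant you must instead argue the vanishing of $f_G(\phi,x)$ as in Lemma \ref{3.7.1}.) To repair the proposal you need to run the standard-model comparison with the Levi terms included and invoke GIR, as in Theorem \ref{5.0.5} following \cite[\S5]{kmsw}.
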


Arthur \cite{art13} proved Theorems \ref{1.6.1}, \ref{3.12}, and \ref{1.7.1} for the case $G=G^*$.
In this paper, following Kaletha-Minguez-Shin-White \cite{kmsw}, we will give a proof of Theorems \ref{1.6.1}, \ref{3.12}, and \ref{1.7.1} for the case $\psi$ is generic.
The main theorem of this paper is stated as follows:
\begin{theorem}[main theorem]\label{main}
    Let $F$ be a local or global field and $G$ an inner form of $G^*=\SO_{2n+1}$ over $F$.
    \begin{enumerate}
        \item Let $F$ be local. Let $\psi=\phi\in\Phi_\bdd(G^*)$ be a generic parameter. Then part 1, 2, 3 of Theorem \ref{1.6.1} hold true for $\phi$. Moreover, part 4 and 5 also hold true.
        \item Let $F$ be global. Then Theorem \ref{3.12} holds. Let moreover $\psi=\phi\in\Phi(G^*)$ be a generic parameter. Then Theorem \ref{1.7.1} holds true for $\phi$.
    \end{enumerate}
\end{theorem}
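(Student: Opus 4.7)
The plan is to follow the local--global induction of \cite{kmsw}, mirroring Arthur's treatment of the quasi-split case \cite{art13}. The split results of \cite{art13} for $G=G^*$ provide the base input; the comparison tool is the stabilization of the trace formula for the inner form $G$ against the twisted trace formula for $(\GL_{2n},\theta_{2n})$. All the assertions of Theorem \ref{main} -- the construction of local packets for generic $\phi$, their parametrization by $\Irr(\frakS_\phi,\chi_G)$, ECR, LLC, the decomposition of $L^2_\disc$ along global parameters, and AMF -- are treated simultaneously in a single induction on $n$, lower-dimensional input being supplied either by the induction hypothesis or by the quasi-split theorem of Arthur.

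\textbf{Local case: induced parameters and the intertwining relation.}
Suppose first that $F$ is local and $\phi\in\Phi_\bdd(G^*)$ is not elliptic, so that $\phi$ factors through a proper Levi $\Lgp{M^*}\subsetneq\Lgp{G^*}$. By the transfer lemma at the end of \S\ref{1.3}, $M^*$ transfers to a Levi $M$ of $G$, and by induction on $\dim M<\dim G$ the packet $\Pi_{\phi_{M^*}}(M)$ and the map to $\Irr(\frakS_{\phi_{M^*}},\chi_M)$ are already available. Define $\Pi_\phi(G)$ by parabolic induction from $M$ to $G$ and transport the pairing via $\frakS_\phi=\frakS_{\phi_{M^*}}$, $\chi_G=\chi_M$. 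Stability and ECR for $\phi$ then reduce, via the analysis of normalized intertwining operators in \S\ref{2} following \cite{art13,mok,kmsw}, to the local intertwining relation LIR, and further to the case where the parameter is discrete for $M^*$ and elliptic or exceptional for $G^*$. This in turn localizes to two archimedean pairs: $\SO(1,4)$ with Levi $\GL_1\times\SO(0,3)$ and $\SO(2,5)$ with Levi $\GL_2\times\SO(0,3)$, both handled directly in \S\ref{2.9}.

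\textbf{Elliptic local parameters via globalization, and the global theorems.}
For $\phi\in\Phi_{2,\bdd}(G^*)$ the induced construction is unavailable, and one must instead globalize. Following \S\ref{3}--\S\ref{4}, the plan is to choose a number field $\dot F$, an inner form $\dot G$ of $\dot G^*=\SO_{2n+1,\dot F}$, a generic global parameter $\dot\phi\in\Phi(\dot G^*)$, and a place $u$ such that $(\dot F_u,\dot G_u,\dot\phi_u)=(F,G,\phi)$, while at every other place $\dot\phi_v$ is either already treated by induction or is unramified; control of this globalization uses the Hasse principle and the Kottwitz exact sequence \eqref{033}. Stabilizing the discrete part of the trace formula for $\dot G$ and comparing with the twisted trace formula for $\GL_{2n}$ yields a spectral identity indexed by $\dot\phi$, whose discrete-spectrum content is Theorem \ref{3.12} (established in \S\ref{3.1}). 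Separating the contribution at the distinguished place $u$ with the trace Paley--Wiener theorem in \S\ref{4} extracts $\Pi_\phi(G)$ together with its $\frakS_\phi$-pairing, verifies ECR, and forces parts 3--5 of Theorem \ref{1.6.1}: multiplicity-freeness, temperedness, disjointness of the $\Pi_\phi(G)$ for distinct $\phi$, and exhaustion of $\Pi_\temp(G)$. With the local theorem in hand everywhere, AMF for generic $\phi$ (Theorem \ref{1.7.1}, where $\varepsilon_\phi$ is trivial) drops out of the same global identity, as completed in \S\ref{5}.

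\textbf{Main obstacle.}
The genuinely new technical input is LIR for the two residual archimedean pairs. The case $\SO(1,4)$ with Levi $\GL_1\times\SO(0,3)$ involves only the trivial representation on the Levi factor and yields to the Bruhat-decomposition argument of \cite[\S2.9]{kmsw}. The case $\SO(2,5)$ with Levi $\GL_2\times\SO(0,3)$ is the essential difficulty: the relevant Levi representation is infinite-dimensional, and the Bruhat-based computation of \cite{kmsw} becomes prohibitively complex. The plan carried out in \S\ref{2.9} is to abandon the Bruhat decomposition in favour of a test function built from the Iwasawa decomposition, compute both sides of LIR explicitly against it, and match them. Once this computation is complete, the rest of the induction proceeds in parallel with \cite{kmsw} and assembles into Theorem \ref{main}.
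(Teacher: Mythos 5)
Your proposal is correct and follows essentially the same route as the paper: induction on $n$, parabolic induction from Levi subgroups for non-discrete local parameters, reduction of ECR/LLC to LIR and of LIR to the two residual real pairs $\SO(1,4)$ and $\SO(2,5)$, globalization of elliptic parameters together with the trace Paley--Wiener theorem to construct discrete $L$-packets, and the stable trace formula for the global decomposition and AMF. One small point of fact: while the introduction does note that the Bruhat-decomposition computation of \cite{kmsw} would suffice for $\SO(1,4)$ because the Levi representation there is the trivial one, the paper in fact treats \emph{both} special cases by the Iwasawa-decomposition method in \S\ref{2.9} (Lemma \ref{so14} already computes the $\SO(1,4)$ Gindikin--Karpelevich integral this way), so the uniform Iwasawa argument is the one actually carried out.
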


As stated in Introduction, Arthur \cite{art13} also established the endoscopic classification of representations of symplectic groups and quasi-split even special orthogonal groups.
In the case of symplectic groups, he proved theorems similar to Theorems \ref{1.6.1}, \ref{3.12}, and \ref{1.7.1}.
On the other hand, in the case of quasi-split even special orthogonal groups, he proved the similar theorems up to outer automorphisms.
For a quasi-split even orthogonal group $G$ over a local field $F$, two representations $\pi$ and $\pi'$ are said to be $\epsilon$-equivalent if there exists $\rho\in\Aut_F(G)$ such that $\pi\simeq\pi'\circ\rho$, and the sets of $\epsilon$-equivalence classes is denoted by $\widetilde{\Pi}$ instead of $\Pi$.
Arthur defined a weak local $A$-packet $\Pi_\psi(G)$ for each $\epsilon$-equivalence class of $A$-parameters $\psi\in\widetilde{\Psi}(G)$, and then he proved a theorem similar to but weaker than Theorem \ref{1.6.1} in that $\Psi$ and $\Pi$ are replaced by $\widetilde{\Psi}$ and $\widetilde{\Pi}$.
For a quasi-split even orthogonal group $G$ over a number field $F$, two representations $\pi=\bigotimes_v\pi_v$ and $\pi'=\bigotimes_v\pi'_v$ are said to be $\epsilon$-equivalent if $\pi_v$ and $\pi'_v$ are so for all places $v$.
The weak global packet $\widetilde{\Pi}_\psi(G)$ for $\psi\in\widetilde{\Psi}(G)$ is defined as the set of $\pi=\bigotimes_v\pi_v$ such that $\pi_v\in\widetilde{\Pi}_{\psi_v}(G_v)$ for all $v$ and $\an{-,\pi_v}$ is trivial for almost all $v$.
As in the local case, he proved theorems similar to but weaker than Theorems \ref{3.12} and \ref{1.7.1} in that $\Psi$ and $\Pi$ are replaced by $\widetilde{\Psi}$ and $\widetilde{\Pi}$.
See also \cite{ag} for the weak theorems for even orthogonal groups.
These classification theorems for symplectic and even special orthogonal groups are needed for the globalization in \S\ref{4.3}

In the rest of this paper, we shall proof the main theorem \ref{main} by a long induction argument following \cite{kmsw}.
Let $n$ be a positive integer.
As an induction hypothesis, we assume that the Theorem \ref{main} (or Theorems \ref{1.6.1}, \ref{3.12}, and \ref{1.7.1}) holds for any positive integer $n_0<n$, and hence for any proper Levi subgroup $M^*\subsetneq G^*=\SO_{2n+1}$.

\section{Local intertwining relation}\label{2}
In this section, we define the normalized local intertwining operator and state the local intertwining relation, which is a key theorem in the theory of the endoscopic classification of representations.
After that, we reduce the proof LIR, and give a proof of LIR in the special (real and low rank) cases.
\subsection{The diagram}\label{2.1}
We shall describe explicitly the basic commutative diagram (\cite[(2.4.3)]{art13}, \cite[(2.1.1)]{kmsw}) for odd special orthogonal groups.
See \cite[\S2.1]{kmsw} for the diagram for general connected reductive groups.

Let $F$ be a local or global field, $G^*=\SO_{2n+1}$, and $M^*\subset G^*$ a standard Levi subgroup.
Let $\xi:G^*\to G$ be an inner twist over $F$ with a Levi subgroup $M=\xi(M^*)\subset G$ which is an inner form of $M^*$ over $F$.
Let $\psi\in\Psi(M^*)$ be a parameter, and $\psi_G\in \Psi(G^*)$ the image of $\psi$ under the natural map induced by $\widehat{M}\into\widehat{G}$.
Recall that $\widehat{G}=\Sp(2n,\C)$.
The parameter $\psi_G \in \Psi(G^*)$ and its centralizer group $S_\psi(G)=S_{\psi_G}(G)$ can be written as
\begin{align*}
    \psi_G
              & = \bigoplus_{i \in I_\psi^+} \ell_i \psi_i
    \oplus \bigoplus_{i \in I_\psi^-} \ell_i \psi_i
    \oplus \bigoplus_{j \in J_\psi} \ell_j (\psi_j \oplus \psi_j^\vee), \\
    S_\psi(G) & =\cent(\myim\psi,\Sp(2n,\C))
    \simeq \prod_{i \in I_\psi^+} \Or(\ell_i, \C)
    \times \prod_{i \in I_\psi^-} \Sp(\ell_i ,\C)
    \times \prod_{j \in J_\psi} \GL(\ell_j, \C).
\end{align*}
Moreover, we can also write as
\begin{align*}
    \psi_G
     & = \bigoplus_{t=1}^r
    e_t \left[\bigoplus_{i \in I_t^+} \ell_i^t \psi_i
        \oplus \bigoplus_{i \in I_t^-} \ell_i^t \psi_i
    \oplus \bigoplus_{j \in J_t} (\ell_j^t \psi_j \oplus \ell_j^{t \vee}\psi_j^\vee)\right]   \\
     & \qquad \oplus
    \left[\bigoplus_{i \in I_0^+} \ell_i^0 \psi_i
        \oplus \bigoplus_{i \in I_0^-} \ell_i^0 \psi_i
    \oplus \bigoplus_{j \in J_0} \ell_j^0 (\psi_j \oplus \psi_j^\vee)\right]                  \\
     & \qquad \qquad \oplus
    \bigoplus_{t=1}^r
    e_t \left[\bigoplus_{i \in I_t^+} \ell_i^t \psi_i
        \oplus \bigoplus_{i \in I_t^-} \ell_i^t \psi_i
    \oplus \bigoplus_{j \in J_t} (\ell_j^{t \vee} \psi_j \oplus \ell_j^t \psi_j^\vee)\right], \\
    \widehat{M}
     & \simeq \prod_{t=1}^r \GL(k_t, \C)^{e_t}
    \times \Sp(2n_0, \C),
\end{align*}
where $e_t$, $k_t$ are positive integers,
\begin{align*}
    \begin{dcases*}
        I_t^+ \subset I_\psi^+, &                       \\
        I_t^- \subset I_\psi^-, &                       \\
        J_t \subset J_\psi,     & for $t=0,1,\ldots,r$,
    \end{dcases*}
\end{align*}
are subsets of indices, and $\ell_i^t$, $\ell_j^t$ are positive integers such that
\begin{align*}
    \begin{dcases*}
        \ell_i^t \leq \frac{\ell_i}{2},                                                          &                       \\
        \ell_j^t, \ \ell_j^{t \vee} \leq \ell_j, \quad   \ell_j^t + \ell_j^{t \vee} \leq \ell_j, & for $t=0,1,\ldots,r$,
    \end{dcases*}
\end{align*}
and $\Set{((\ell_i^t)_{i \in I_t^+}, (\ell_i^t)_{i \in I_t^-}, (\ell_j^t)_{j \in J_t}, (\ell_j^{t \vee})_{j \in J_t}) | t=1,\ldots, r}$ is mutually distinct.
Put
\begin{align*}
    \psi_t                                                                            & = \bigoplus_{i \in I_t^+} \ell_i^t \psi_i
    \oplus \bigoplus_{i \in I_t^-} \ell_i^t \psi_i
    \oplus \bigoplus_{j \in J_t} (\ell_j^t \psi_j \oplus \ell_j^{t \vee}\psi_j^\vee), &                                           & \text{for $t=1,\ldots,r$}, \\
    \psi_0                                                                            & = \bigoplus_{i \in I_0^+} \ell_i^0 \psi_i
    \oplus \bigoplus_{i \in I_0^-} \ell_i^0 \psi_i
    \oplus \bigoplus_{j \in J_0} \ell_j^0 (\psi_j \oplus \psi_j^\vee).                &                                           &
\end{align*}
Then we have
\begin{align*}
    \psi_G & = \bigoplus_{t=1}^r e_t \psi_t \oplus \psi_0 \oplus \bigoplus_{t=1}^r e_t \psi_t^\vee, \\
    \psi   & =\bigoplus_{t=1}^r e_t\psi_t \oplus \psi_0,
\end{align*}
and each $\psi_t$ corresponds to $\GL(k_t, \C) \subset \widehat{M}$, and $\psi_0$ corresponds to $\Sp(2n_0, \C) \subset \widehat{M}$.
Here we note that
\begin{itemize}
    \item $k_1, \ldots, k_r$ are not necessarily mutually distinct;
    \item $\psi_1, \ldots, \psi_r$ are mutually distinct;
    \item $\psi_1, \ldots, \psi_r, \psi_1^\vee, \ldots, \psi_r^\vee$ are not necessarily mutually distinct.
\end{itemize}
Let $A_{\widehat{M}}$ be the maximal central split torus of $\widehat{M}$.
Then $\widehat{M} = \cent(A_{\widehat{M}}, \widehat{G})$.
One has
\begin{equation*}
    A_{\widehat{M}} \simeq \prod_{t=1}^r (\C^\times)^{e_t}.
\end{equation*}
Put
\begin{align*}
    S_\psi(G) & = \cent(\myim\psi, \widehat{G}), & S_\psi(M) & = \cent(\myim\psi, \widehat{M}),
\end{align*}
\begin{equation*}
    N_\psi(M, G) = S_\psi(G) \cap N(A_{\widehat{M}}, \widehat{G}) = N(A_{\widehat{M}}, S_\psi(G)).
\end{equation*}
Put moreover
\begin{align*}
    W_\psi(M, G)       & = \frac{N(A_{\widehat{M}}, S_\psi(G))}{Z(A_{\widehat{M}}, S_\psi(G))},             &
    W_\psi^\circ(M, G) & = \frac{N(A_{\widehat{M}}, S_\psi(G)^\circ)}{Z(A_{\widehat{M}}, S_\psi(G)^\circ)},
\end{align*}
\begin{align*}
    \frakN_\psi(M, G) & = \frac{N(A_{\widehat{M}}, S_\psi(G))}{Z(A_{\widehat{M}}, S_\psi(G)^\circ)}, &
    \frakS_\psi(M, G) & = \frac{N(A_{\widehat{M}}, S_\psi(G))}{N(A_{\widehat{M}}, S_\psi(G)^\circ)},
\end{align*}
\begin{equation*}
    R_\psi(M, G)= \frac{N(A_{\widehat{M}}, S_\psi(G))}{N(A_{\widehat{M}}, S_\psi(G)^\circ) \cdot Z(A_{\widehat{M}}, S_\psi(G))},
\end{equation*}
and
\begin{align*}
    \frakS_\psi(G) & = \pi_0(S_\psi(G))
    \simeq \prod_{i \in I_\psi^+} \Or(\ell_i, \C) / \SO(\ell_i, \C)
    \simeq \bigoplus_{i \in I_\psi^+} (\Z/2\Z)a_i, \\
    \frakS_\psi(M) & = \pi_0(S_\psi(M))
    \simeq \prod_{i \in I_0^+} \Or(\ell_i^0, \C) / \SO(\ell_i^0, \C)
    \simeq \bigoplus_{i \in I_0^+} (\Z/2\Z)a_i,
\end{align*}
where $\{a_i\}_i$ is a formal basis.
Direct calculations show that
\begin{align*}
    Z(A_{\widehat{M}}, S_\psi(G)^\circ)
     & \simeq \prod_{t=1}^r \left[ \prod_{i \in I_t^+} \GL(\ell_i^t, \C)
        \times \prod_{i \in I_t^-} \GL(\ell_i^t,\C)
    \times \prod_{j \in J_t} (\GL(\ell_j^t, \C) \times \GL(\ell_j^{t \vee}, \C) ) \right]^{e_t} \\
     & \qquad  \times \left[ \prod_{i \in I_0^+} \SO(\ell_i^0, \C)
        \times \prod_{i \in I_0^-} \Sp(\ell_i^0, \C)
    \times \prod_{j \in J_0} \GL(\ell_j^0, \C) \right],                                         \\
    Z(A_{\widehat{M}}, S_\psi(G))
     & \simeq \prod_{t=1}^r \left[ \prod_{i \in I_t^+} \GL(\ell_i^t, \C)
        \times \prod_{i \in I_t^-} \GL(\ell_i^t,\C)
    \times \prod_{j \in J_t} (\GL(\ell_j^t, \C) \times \GL(\ell_j^{t \vee}, \C) ) \right]^{e_t} \\
     & \qquad \times \left[ \prod_{i \in I_0^+} \Or(\ell_i^0, \C)
        \times \prod_{i \in I_0^-} \Sp(\ell_i^0, \C)
        \times \prod_{j \in J_0} \GL(\ell_j^0, \C) \right].
\end{align*}
In particular, the connected component of the identity in $Z(A_{\widehat{M}}, S_\psi(G))$ is equal to $Z(A_{\widehat{M}}, S_\psi(G)^\circ)$.
Thus, we have
\begin{gather}
    \widehat{M}\cap S_\psi(G)^\circ = S_\psi(M)^\circ, \label{shake}\\
    \frac{Z(A_{\widehat{M}}, S_\psi(G))}{Z(A_{\widehat{M}}, S_\psi(G)^\circ)} \simeq \frakS_\psi(M) \simeq \bigoplus_{i \in I_0^+} (\Z/2\Z)a_i. \label{d/b}
\end{gather}
We now consider the group $N(A_{\widehat{M}}, S_\psi(G))$.
Take a partition
\begin{equation*}
    \set{1, \ldots, r} = T_1 \sqcup T_2 \sqcup T_3 \sqcup T_4
\end{equation*}
such that
\begin{align*}
    T_1 & = \Set{t| \psi_t \simeq \psi_t^\vee},                                         \\
    T_2 & = \Set{t| \psi_t \not\simeq \psi_{t'}^\vee \text{ for any } 1\leq t' \leq r}, \\
    T_3 & = \Set{t| \psi_t \simeq \psi_{t'}^\vee \text{ for a unique } t' \in T_4},     \\
    T_4 & = \Set{t| \psi_t \simeq \psi_{t'}^\vee \text{ for a unique } t' \in T_3},
\end{align*}
and for each $t \in T_3$, put $e_t^\vee = e_{t'}$, where $t'$ is the element of $T_4$ such that $\psi_t \simeq \psi_{t'}^\vee$.

Before the description of $N(A_{\widehat{M}}, S_\psi(G))$, we must fix notation.
For a finite set $X$, let $\frakS_X$ denote its symmetric group, and put $\frakS_m=\frakS_{\{1,2,\ldots,m\}}$ for a positive integer $m$.
For any positive integers $e, e^\vee$, let
\begin{equation*}
    W(e,e^\vee)
\end{equation*}
be a subgroup of $\weylbc{e + e^\vee}$ generated by
\begin{align*}
    \frakS_{\set{1, \ldots, e}}, \ \frakS_{\set{e+1, \ldots, e+e^\vee}},
    \text{ and } \Set{(h,h^\vee) \ltimes (\cdots, 0, \overset{h}{1}, 0, \cdots, 0, \overset{h^\vee}{1}, 0, \cdots)
        | 1\leq h \leq e,\   e+1 \leq h^\vee \leq e+e^\vee}.
\end{align*}
Then we obtain an explicit description of the group $N(A_{\widehat{M}}, S_\psi(G))$.
We have
\begin{align*}
    N(A_{\widehat{M}}, S_\psi(G)) \simeq c' \times \left( \prod_{i \in I_0^+} \Or(\ell_i^0, \C),
    \times \prod_{i \in I_0^-} \Sp(\ell_i^0, \C)
    \times \prod_{j \in J_0} \GL(\ell_j^0, \C) \right),
\end{align*}
where a subgroup $c'$ of $\Sp(2(n-n_0), \C)$ can be written as
\begin{align*}
    c' & \simeq \prod_{t \in T_1} \left[ \bigcup_{w_t \in \weylbc{e_t} } w_t \cdot
        \left(
        \prod_{i \in I_t^+} \GL(\ell_i^t, \C)
        \times \prod_{i \in I_t^-} \GL(\ell_i^t,\C)
        \times \prod_{j \in J_t} (\GL(\ell_j^t, \C) \times \GL(\ell_j^{t \vee}, \C))
        \right)^{e_t}
    \right]                                                                                   \\
       & \quad \times \prod_{t \in T_2} \left[ \bigcup_{w_t \in \frakS_{e_t} } w_t \cdot
        \left(
        \prod_{i \in I_t^+} \GL(\ell_i^t, \C)
        \times \prod_{i \in I_t^-} \GL(\ell_i^t,\C)
        \times \prod_{j \in J_t} (\GL(\ell_j^t, \C) \times \GL(\ell_j^{t \vee}, \C))
        \right)^{e_t}
    \right]                                                                                   \\
       & \qquad \times \prod_{t \in T_3} \left[ \bigcup_{w_t \in W(e_t, e_t^\vee) } w_t \cdot
        \left(
        \prod_{i \in I_t^+} \GL(\ell_i^t, \C)
        \times \prod_{i \in I_t^-} \GL(\ell_i^t,\C)
        \times \prod_{j \in J_t} (\GL(\ell_j^t, \C) \times \GL(\ell_j^{t \vee}, \C))
        \right)^{e_t+e_t^\vee}
        \right].
\end{align*}
Therefore we have natural isomorphisms
\begin{align}
    W_\psi(M,G)      & \simeq \prod_{t \in T_1} \left(\weylbc{e_t} \right)
    \times \prod_{t \in T_2} \frakS_{e_t}
    \times \prod_{t \in T_3} W(e_t, e_t^\vee),\label{c/d}                   \\
    \frakN_\psi(M,G) & \simeq \frakS_\psi(M) \times W_\psi(M,G).\label{c/b}
\end{align}

We finally consider $N(A_{\widehat{M}}, S_\psi(G)^\circ)$.
It is too complicated to write explicitly, but we can describe the natural surjection
\begin{align*}
    \frakN_\psi(M,G)=\frac{N(A_{\widehat{M}}, S_\psi(G))}{Z(A_{\widehat{M}}, S_\psi(G)^\circ)} \lra \frakS_\psi(M,G)=\frac{N(A_{\widehat{M}}, S_\psi(G))}{N(A_{\widehat{M}}, S_\psi(G)^\circ)},
\end{align*}
to understand the group $N(A_{\widehat{M}}, S_\psi(G)^\circ)$ in terms of its kernel.
For $e \geq 1$, let us write $x$ for a group homomorphism
\begin{align*}
     & \frakS_e \ltimes (\Z/2\Z)^e \lra \Z/2\Z,               &
     & \sigma \ltimes (d_h)_{h=1}^e \mapsto \sum_{h=1}^e d_h. &
\end{align*}
For each $i \in I_\psi^+$, we shall define a corresponding function
\begin{equation*}
    x_i : \frakN_\psi(M,G) \lra \Z/2\Z
\end{equation*}
as follows.
By the equations \eqref{c/b}, \eqref{c/d}, and \eqref{d/b}, we may identify $\frakN_\psi(M,G)$ with
\begin{equation}\label{c/b2}
    \bigoplus_{i \in I_0^+} (\Z/2\Z)a_i
    \times \prod_{t \in T_1} \left( \weylbc{e_t} \right)
    \times \prod_{t \in T_2} \frakS_{e_t}
    \times \prod_{t \in T_3} W(e_t, e_t^\vee).
\end{equation}
For any element
\begin{equation*}
    u = \left( \sum_{i \in I_0^+} c_i a_i, (w_t)_{t \in T_1}, (w_t)_{t \in T_2}, (w_t)_{t \in T_3} \right)
\end{equation*}
of \eqref{c/b2}, we define
\begin{align*}
    x_i(u) = c_i + \sum_{t \in T_1 \sqcup T_3} \ell_i^t x(w_t),
\end{align*}
where we put $c_i = 0$ for $i \notin I_0^+$, and $\ell_i^t = 0$ for $i \notin I_t^+$.
Now, we can describe the natural surjection.
Let us define a homomorphism $x_\psi : \frakN_\psi(M,G) \to \bigoplus_{i \in I_\psi^+} (\Z/2\Z)a_i = \frakS_\psi(G)$ by
\begin{equation*}
    x_\psi(u) = \sum_{i \in I_\psi^+} x_i(u) a_i
\end{equation*}
for $u \in \frakN_\psi(M,G)$.
Then $\frakS_\psi(M,G)$ and $W_\psi^\circ(M,G)$ can be identified with the image and the kernel of $x_\psi$, respectively.
Put $\Or^+(l,\C) = \SO(l,\C)$ and $\Or^-(l,\C) = \Or(l,\C) \setminus \SO(l,\C)$.
The homomorphism $x_\psi$ gives a description:
\begin{align*}
    N(A_{\widehat{M}}, S_\psi(G)^\circ) & \simeq \bigsqcup_{u} \{
    \prod_{t \in T_1}
    \left[
        w_t \cdot \left(
        \prod_{i \in I_t^+} \GL(\ell_i^t, \C)
        \times \prod_{i \in I_t^-} \GL(\ell_i^t,\C)
        \times \prod_{j \in J_t} (\GL(\ell_j^t, \C) \times \GL(\ell_j^{t \vee}, \C))
        \right)^{e_t}
    \right]                                                                     \\
                                        & \qquad \times \prod_{t \in T_2}
    \left[
        w_t \cdot \left(
        \prod_{i \in I_t^+} \GL(\ell_i^t, \C)
        \times \prod_{i \in I_t^-} \GL(\ell_i^t,\C)
        \times \prod_{j \in J_t} (\GL(\ell_j^t, \C) \times \GL(\ell_j^{t \vee}, \C))
        \right)^{e_t}
    \right]                                                                     \\
                                        & \qquad \quad \times \prod_{t \in T_3}
    \left[
        w_t \cdot \left(
        \prod_{i \in I_t^+} \GL(\ell_i^t, \C)
        \times \prod_{i \in I_t^-} \GL(\ell_i^t,\C)
        \times \prod_{j \in J_t} (\GL(\ell_j^t, \C) \times \GL(\ell_j^{t \vee}, \C))
        \right)^{e_t + e_t^\vee}
    \right]                                                                     \\
                                        & \qquad \qquad \times \left[
        \prod_{i \in I_0^+} \Or^{\varepsilon_i}(\ell_i^0, \C),
        \times \prod_{i \in I_0^-} \Sp(\ell_i^0, \C)
        \times \prod_{j \in J_0} \GL(\ell_j^0, \C)
        \right]
    \},
\end{align*}
where $u = \left(\sum_{i \in I_0^+} c_i a_i, (w_t)_{t \in T_1}, (w_t)_{t \in T_2}, (w_t)_{t \in T_3} \right)$ runs over $\ker(x_\psi)$, and $\varepsilon_i = (-1)^{c_i}$.

We have obtained the explicit description of the following commutative diagram with exact rows and columns.
\begin{align}\label{cd}
    \begin{CD}
        @.                             @.                   1                  @.              1              @.       \\
        @.                   @.                                 @VVV                            @VVV                @.  \\
        @.                             @.       W_\psi^\circ(M, G)      @=    W_\psi^\circ(M, G)  @.        \\
        @.                   @.                                   @VVV                            @VVV                @.   \\
        1  @>>>  \frakS_\psi(M)  @>>>  \frakN_\psi(M, G)  @>>>  W_\psi(M, G)  @>>>  1   \\
        @.                  @|                                 @V{x_\psi}VV                       @VVV                  @.  \\
        1   @>>> \frakS_\psi(M)  @>>>  \frakS_\psi(M, G)     @>>>     R_\psi(M, G)    @>>>  1   \\
        @.                  @.                                   @VVV                             @VVV                @.  \\
        @.                             @.                      1                @.                 1          @.
    \end{CD}
\end{align}

\begin{lemma}\label{2.8.3}
    If $\psi\in \Psi_2(M^*)$, then $x_\psi$ is surjective.
    In particular, we have $\frakS_\psi(M,G)=\frakS_\psi(G)$.
\end{lemma}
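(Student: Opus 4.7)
The plan is to use discreteness of $\psi$ to force each constituent $\psi_t$ of the decomposition of $\psi$ to be irreducible, and then to exhibit explicit preimages in $\frakN_\psi(M,G)$ for the generators of $\frakS_\psi(G)$ under $x_\psi$.

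First I would translate the hypothesis. Since $M^*\simeq\GL_{n_1}\times\cdots\times\GL_{n_k}\times\SO_{2n_0+1}$, membership in $\Psi_2(M^*)$ forces the $\GL$-parameters of $\psi$ to be simple and $\psi_0$ to be a multiplicity-free sum of distinct symplectic irreducibles. In terms of the data $(I_t^{\pm}, J_t, \ell_i^t, \ell_j^t, \ell_j^{t\vee})$ this means $\ell_i^0\in\{0,1\}$ with $I_0^-=J_0=\emptyset$, while for each $t=1,\ldots,r$ the parameter $\psi_t$ is irreducible, i.e., $I_t^+\sqcup I_t^-\sqcup J_t$ is a singleton whose attached multiplicity equals $1$. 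Combined with the mutual distinctness of $\psi_1,\ldots,\psi_r$, this implies that each symplectic irreducible $\psi_i$ ($i\in I_\psi^+$) occurs as the underlying irreducible of at most one $\psi_t$.

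Next I would verify that $a_i\in\myim(x_\psi)$ for each $i\in I_\psi^+$. If $i\in I_0^+$, take $u=a_i$, viewed as an element of $\frakS_\psi(M)\hookrightarrow\frakN_\psi(M,G)$ via the decomposition \eqref{c/b2}. The defining formula for the coordinate maps $x_{i'}$ immediately yields $x_i(u)=1$ and $x_{i'}(u)=0$ for $i'\neq i$. If instead $i\in I_\psi^+\setminus I_0^+$, then there is a unique $t_0\in\{1,\ldots,r\}$ with $\psi_{t_0}=\psi_i$; since $\psi_i$ is symplectic (hence self-dual), $\psi_{t_0}\simeq\psi_{t_0}^\vee$ and therefore $t_0\in T_1$. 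Choose $w_{t_0}=(\mathrm{id},(1,0,\ldots,0))\in\weylbc{e_{t_0}}$, so that $x(w_{t_0})=1$, and let $u\in\frakN_\psi(M,G)$ be the element with $w_{t_0}$ in its $T_1$-component and trivial entries elsewhere. Using $\ell_i^{t_0}=1$ together with $\ell_{i'}^t=0$ for every other pair $(i',t)$ with $t\in T_1\sqcup T_3$ (which follows from the simplicity of each $\psi_t$ and their mutual distinctness), I get $x_i(u)=1$ and $x_{i'}(u)=0$ for all $i'\neq i$ in $I_\psi^+$.

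Since the elements $a_i$ generate $\frakS_\psi(G)$, this establishes the surjectivity of $x_\psi$, and the ``in particular'' assertion then follows from the identification of $\frakS_\psi(M,G)$ with $\myim(x_\psi)$ recorded just before the diagram \eqref{cd}. The only obstacle I anticipate is purely notational: carefully matching indices and multiplicities between the general decomposition of $\psi_G$ and the drastic simplification it undergoes under the discreteness hypothesis, and in particular ensuring that the two cases $i\in I_0^+$ and $i\notin I_0^+$ together exhaust $I_\psi^+$.
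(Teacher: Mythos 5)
Your argument is correct and follows the same route as the paper's own proof: decompose $\frakS_\psi(G)$ into its generators $a_i$, treat $i\in I_0^+$ via the copy of $\frakS_\psi(M)$ inside $\frakN_\psi(M,G)$, and treat $i\notin I_0^+$ by flipping a single sign in the $\weylbc{e_{t_0}}$-factor for the unique $t_0\in T_1$ with $\psi_{t_0}=\psi_i$. One small imprecision: you justify $x_{i'}(u)=0$ for $i'\neq i$ in the second case by claiming $\ell_{i'}^t=0$ for all pairs $(i',t)\neq(i,t_0)$, but this is not quite what happens — $\ell_{i(t)}^t$ is nonzero for each $t$ — rather, the cross terms vanish because $w_s$ is trivial, hence $x(w_s)=0$, for every $s\neq t_0$. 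The conclusion is unaffected.
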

\begin{proof}
    Suppose that $\psi\in \Psi_2(M^*)$.
    Then $K_t:=I_t^+\sqcup I_t^-\sqcup J_t$ is a singleton for $t=1,\ldots,r$, and $\ell_i^t=1$ for $t=0,1,\ldots,r$.
    Let $i\in I_0^+$.
    If we put $u=(a_i,1,1,1)\in \frakN_\psi(M,G)$, then $x_\psi(u)=a_i$.
    Let $i\in I_t^+$ ($t=1,\ldots,r$).
    Then $t\in T_1$, as $\psi_t=\psi_i$ is self-dual.
    If we put $u=(0,(w_s)_{s\in T_1},1,1)\in \frakN_\psi(M,G)$, where $w_t=1\ltimes (1,0,\ldots,0)$ and $w_s$is trivial if $s\neq t$.
    Then $x_\psi(u)=a_i$.
    Since $I_\psi^+$ is the union of $I_0^+, I_1^+,\ldots, T_r^+$, this completes the proof.
\end{proof}

Dividing the diagram \eqref{cd} by $Z(\widehat{G})^\Gamma=\{\pm1\}$, we obtain another diagram
\begin{align*}
    \begin{CD}
        @.                             @.                   1                  @.              1              @.       \\
        @.                   @.                                 @VVV                            @VVV                @.  \\
        @.                             @.       W_\psi^\circ(M, G)      @=    W_\psi^\circ(M, G)  @.        \\
        @.                   @.                                   @VVV                            @VVV                @.   \\
        1  @>>>  \overline{\frakS}_\psi(M)  @>>> \overline{\frakN}_\psi(M, G)  @>>>  W_\psi(M, G)  @>>>  1   \\
        @.                  @|                                   @VVV                       @VVV                  @.  \\
        1   @>>> \overline{\frakS}_\psi(M)  @>>> \overline{\frakS}_\psi(M, G)     @>>>     R_\psi(M, G)    @>>>  1   \\
        @.                  @.                                   @VVV                             @VVV                @.  \\
        @.                             @.                      1                @.                 1          @.
    \end{CD}
\end{align*}
where
\begin{align*}
    \overline{\frakS}_\psi(M)    & =\frac{\frakS_\psi(M)}{Z(\widehat{G})^\Gamma},    &
    \overline{\frakN}_\psi(M, G) & =\frac{\frakN_\psi(M, G)}{Z(\widehat{G})^\Gamma}, &
    \overline{\frakS}_\psi(M, G) & =\frac{\frakS_\psi(M, G)}{Z(\widehat{G})^\Gamma}. &
\end{align*}

\subsection{The first intertwining operator}\label{2.2}
Let $F$ be a local field, $\psi_F:F\to\C^1$ a nontrivial additive character, $G^*=\SO_{2n+1}$, and $M^*\subsetneq G^*$ a proper standard Levi subgroup.
Let $\xi:G^*\to G$ be an inner twist over $F$ which restrict to an inner twist $\xi|_{M^*} : M^*\to M$ over $F$.
Let $\psi\in\Psi(M^*)$ be a local parameter.
Assume that $\psi$ is $M$-relevant, and the corresponding packet $\Pi_\psi(M)$ is not empty.
Let $\pi\in\Pi_\psi(M)$.
We shall write $V_\pi$ for the representation space of $\pi$.

Let $P$ and $P'$ be parabolic subgroups of $G$ defined over $F$ with common Levi factor $M$.
We write $(\calI_P^G(\pi), \calH_P^G(\pi))$ or $(\calI_P(\pi), \calH_P(\pi))$ for the representation parabolically induced by $(\pi,V_\pi)$ from $P$ to $G$.

A function $H_M:M(F)\to\fraka_M$ is defined by
\begin{equation*}
    \exp(\an{H_M(m),\chi})=|\chi(m)|_F,
\end{equation*}
for any $\chi\in X^*(M)_F,$ $m\in M(F)$.
Then each $\lambda \in \fraka_{M,\C}^*$ gives a character
\begin{align*}
    M(F) \to \C^\times, \quad m \mapsto\exp(\an{H_M(m),\lambda}).
\end{align*}
We shall write $\pi_\lambda$ for the tensor product of $\pi$ and this character.
Then the unnormalized intertwining operator $J_{P'|P}(\xi,\psi_F) : \calH_P(\pi_\lambda) \to \calH_{P'}(\pi_\lambda)$ is given by
\begin{equation*}
    [J_{P'|P}(\xi,\psi_F)f](g) = \int_{N(F)\cap N'(F) \backslash N'(F)} f(n'g) dn',
\end{equation*}
where $N$ and $N'$ are the unipotent radical of $P$ and $P'$, respectively.
It is known that the integral converges absolutely when the real part of $\lambda$ lies in a certain open cone.
The Haar measure $dn'$ on $N(F)\cap N'(F) \backslash N'(F)$ that we use here is the measure \cite[\S2.2]{kmsw} defined.
Although the case of unitary groups is considered in loc. cit., we apply the same definition with the same notation.

We write $W_F \ni w \mapsto |w|^\lambda$ for the $L$-parameter of the character $M(F)\ni m \mapsto\exp(\an{H_M(m),\lambda})$ attached to $\lambda \in \fraka_{M,\C}^*$.
Then $\psi_\lambda=\psi|-|^\lambda$ is a parameter of $\pi_\lambda$.
Let $\rho_{P'|P}$ be the adjoint representation of $\widehat{M}$ on $\widehat{\frakn}'\cap\widehat{\frakn}\backslash\widehat{\frakn}'$.
Put
\begin{equation*}
    r_{P'|P}(\xi,\psi_\lambda, \psi_F)
    =\frac{L(0,\rho_{P'|P}^\vee\circ\phi_{\psi_\lambda})}{L(1,\rho_{P'|P}^\vee\circ\phi_{\psi_\lambda})} \frac{\epsilon(\frac{1}{2},\rho_{P'|P}^\vee\circ\phi_{\psi_\lambda}, \psi_F)}{\epsilon(0,\rho_{P'|P}^\vee\circ\phi_{\psi_\lambda}, \psi_F)},
\end{equation*}
where the factors are the Artin $L$- and $\epsilon$- factors.
The normalized intertwining operator is given by
\begin{equation*}
    R_{P'|P}(\xi,\psi_\lambda) = r_{P'|P}(\xi,\psi_\lambda,\psi_F)\inv J_{P'|P}(\xi,\psi_F).
\end{equation*}
It is known that $R_{P'|P}(\xi,\psi_\lambda)$ is independent of the choice of $\psi_F$, and that $\lambda \mapsto R_{P'|P}(\xi,\psi_\lambda)$ has meromorphic continuation to whole $\fraka_{M,\C}^*$.

\begin{lemma}\label{2.2.1}
    Assume that $F=\R$ and $\psi=\phi\in\Phi(M^*)$.
    Then the function $\lambda \mapsto R_{P'|P}(\xi,\phi_\lambda)$ has neither a zero nor a pole at $\lambda=0$.
    Moreover, let $P''\subset G$ be a parabolic subgroup defined over $F$ with Levi factor $M$.
    Then we have
    \begin{equation*}
        R_{P''|P}(\xi,\phi)=R_{P''|P'}(\xi,\phi) \circ R_{P'|P}(\xi,\phi).
    \end{equation*}
\end{lemma}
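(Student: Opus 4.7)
The plan is the standard two-step argument: first reduce both assertions to the rank-one case via cocycle properties, then handle rank one through the Langlands--Shahidi normalization. Both the unnormalized family $J_{P'|P}(\xi,\phi_\lambda,\psi_F)$ and the normalization scalar $r_{P'|P}(\xi,\phi_\lambda,\psi_F)$ satisfy multiplicative identities along any gallery $P \to P' \to P''$: for $J$ this is Harish-Chandra's cocycle relation for unnormalized intertwining integrals, and for $r$ it follows from the decomposition $\rho_{P''|P} = \rho_{P''|P'} \oplus \rho_{P'|P}$ of $\widehat{M}$-modules together with multiplicativity of Artin $L$- and $\varepsilon$-factors. Choosing a minimal gallery from $P$ to $P'$ through adjacent chambers, the verification of both statements reduces to the case where $P'$ is obtained from $P$ by reflection through a single reduced root $\alpha$ of $A_M$ in $G$.

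In the rank-one case, note that since $\phi$ arises from $\psi \in \Psi(M^*)$ via $\psi = \phi$ generic, $\phi$ is automatically bounded, so by the inductive hypothesis every $\pi \in \Pi_\phi(M)$ is tempered. Archimedean Knapp--Stein theory gives the meromorphic continuation of $J_\alpha(\lambda) = J_{P'|P}(\xi,\phi_\lambda,\psi_F)$, and its order at $\lambda = 0$ is controlled by Harish-Chandra's Plancherel $\mu$-function. Arthur's normalization is designed so that the Artin $L$-factor quotient in $r_{P'|P}$ carries exactly the matching zero or pole, while the archimedean $\varepsilon$-factor ratio is entire and nonvanishing in $\lambda$ (archimedean $\varepsilon$-factors are exponential in the twist parameter). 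Using the explicit description of real, complex, and imaginary root spaces in subsection \ref{reallie} to identify the constituents of $\rho_\alpha^\vee \circ \phi$ as finite-dimensional representations of $W_\R$, Langlands' formulas for archimedean $L$-factors show that $L(s, \rho_\alpha^\vee \circ \phi)$ is both holomorphic and nonzero at $s = 0$ and $s = 1$. Hence $R_{P'|P}(\xi, \phi_\lambda)$ is holomorphic and invertible at $\lambda = 0$, giving the first assertion. The composition identity then follows at once: it holds as a meromorphic identity on the region of absolute convergence of $J$ by combining the cocycle relation for $J$ with the multiplicativity of $r$, and since both sides extend holomorphically to $\lambda = 0$ by the first part, the identity specializes there.

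The main obstacle I anticipate is the rank-one non-vanishing at $\lambda = 0$ for the genuinely non-quasi-split real inner form $G = \SO(p,q)$. For the split case it is essentially \cite[Proposition 2.3.1]{art13}; for an inner form one must verify, using the explicit description of the real, complex, and imaginary roots in subsection \ref{reallie} together with the inner twist $\xi_{p,q}$, that transferring from $G^*$ to $G$ introduces no accidental zero or pole in the Plancherel density at $\lambda = 0$. This compatibility is presumably the reason subsection \ref{reallie} prepared the real Lie algebra data in such explicit form.
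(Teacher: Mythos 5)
Your proposal takes a genuinely different route from the paper, and it also contains an internal inconsistency in the rank-one step.

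The paper's proof is short and delegated: the analytic content (holomorphy and nonvanishing of $R_{P'|P}$ at $\lambda=0$, and the cocycle identity) is already established in \cite[Lemma 2.2.1]{kmsw}, which in turn rests on Arthur's results in \cite[\S3]{artjfa}. Those results are proven with respect to a particular Haar measure on the unipotent radical, whereas this paper normalizes $J_{P'|P}(\xi,\psi_F)$ by a different convention inherited from \cite[\S2.2]{kmsw}. The sole new content of the proof here is therefore a direct comparison of measures: one computes Arthur's constant $\alpha_{P'|P}$ and the Euclidean measure attached to the quadratic form $X\mapsto -B(X,\theta X)$ on each real, complex, and imaginary root space, using the explicit Chevalley basis and inner twist $\xi_{p,q}$ prepared in \S\ref{reallie}, and checks that the total measure coincides with the one used here (with a factor $2^{k/2}$ from short real roots cancelling $\alpha_{P'|P}$). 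That is the entire proof; nothing is re-derived. In particular, the explicit data in \S\ref{reallie} serves this measure comparison, not (as you guess) the identification of constituents of $\rho_\alpha^\vee\circ\phi$ or the analysis of the Plancherel density. Note also that the cocycle identity for the unnormalized $J$'s is not automatic: it requires the Haar measures on the various $N\cap N'\backslash N'$ to be compatible, and for the convention used here that compatibility is exactly what the comparison with Arthur's measure establishes.

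Your from-scratch re-derivation via Knapp--Stein theory could in principle be carried out, but as written it contains a contradiction at its core. You assert both that the $L$-factor quotient in $r_{P'|P}$ ``carries exactly the matching zero or pole'' of $J$ at $\lambda=0$, and that $L(s,\rho_\alpha^\vee\circ\phi)$ is ``holomorphic and nonzero at $s=0$ and $s=1$.'' These cannot both hold unless $J_{P'|P}(\xi,\phi_\lambda,\psi_F)$ is already holomorphic and invertible at $\lambda=0$, which is false in precisely the interesting cases: rank-one intertwining integrals for discrete series parameters with nontrivial $R$-group develop a pole on the unitary axis. If $\rho_\alpha^\vee\circ\phi$ has a trivial (or, more generally, any self-dual) constituent, then $L(0,\rho_\alpha^\vee\circ\phi)$ involves $\Gamma_{\mathbb{R}}$ or $\Gamma_{\mathbb{C}}$ evaluated at a pole, not a finite nonzero value; the computations in \S\ref{2.9} display this explicitly, with factors such as $\Gamma_{\mathbb{R}}(2\lambda)$ in the denominator of the normalizing quotient arranged precisely to cancel a pole of $J$. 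So your intermediate claim fails exactly when it matters, and patching it would mean invoking the actual content of Arthur's normalization theorem — at which point you would be reconstructing rather than avoiding the citation to \cite{artjfa}, and the remaining work would again be the measure compatibility that the paper's proof addresses directly.
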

\begin{proof}
    The idea of the proof is the same as that of \cite[Lemma 2.2.1]{kmsw}.
    So it suffices to show that the measure used in the definition of $J_{P'|P}(\xi,\psi_F)$ coincides with the measure introduced by Arthur \cite[\S3]{artjfa}.

    Let us calculate the measure given in loc. cit.
    Since $F=\R$, there exist nonnegative integers $p$ and $q$ with $p+q=2n+1$ such that the inner twist $\xi:G^*\to G$ is realized by $\xi_{p,q}:\SO_{2n+1} \to \SO(p,q)$ given in \S\ref{reallie}.
    So, we will use the notation introduced there.
    Let $B$ be a $\SO(p,q)$-invariant bilinear form on $\frakg_\C=\so(p,q)_\C$ given by $B(X,Y)=\frac{1}{2}\tr(XY)$.
    Then the quadratic form
    \begin{equation*}
        X \mapsto -B(X,\theta X)
    \end{equation*}
    is positive definite on $\frakg_\R=\so(p,q)_\R$, where $\theta$ is the Cartan involution defined by $\theta(X)=-\tp{X}$.
    The straightforward calculation shows that the constant $\alpha_{P'|P}$ defined in \cite[\S3]{artjfa} is equal to $2^\frac{k}{2}$, where $k$ is the number of roots of the form $\chi_i$ whose restriction to $\fraka_M$ are roots of both $(P',A_M)$ and $(\overline{P},A_M)$.
    Here, $A_M$ and $\overline{P}$ denote the maximal central split torus in $M$ and the parabolic subgroup opposite to $P$ containing $M$, respectively.

    Next, we calculate the Euclidean measure $dX$ defined by the quadratic form $X\mapsto -B(X, \theta X)$ (\cite[\S3]{artjfa}).
    Put $T=\xi(T^*)$, and write $\frakt$ for its Lie algebra.
    Then we have a decomposition
    \begin{equation*}
        \frakg_\C=\frakt_\C \oplus \bigoplus_{a \in R(T,G)/\Gamma} \frakg_{a,\C},
    \end{equation*}
    where $\Gamma$ is the Galois group of $\overline{F}/F = \C/\R$.
    Note that each $\frakg_{a,\C}=\oplus_{\alpha\in a}\frakg_\alpha$ is defined over $\R$, but $\frakg_\alpha$ is not necessarily defined over $\R$.
    Let $a \in R(T,G)/\Gamma$ and $\alpha\in a$.
    If $\alpha$ is a complex root, then $a=\{\alpha, \sigma\alpha\}$, and $\dim \frakg_a=2$, where $\sigma\in\Gamma$ is the complex conjugation.
    In this case, $\frakg_a$ has an $\R$-basis $\{X_1, X_2\}$, where $X_1=\xi(X_\alpha)+\sigma(\xi(X_\alpha))$ and $X_2=-\sqrt{-1}(\xi(X_\alpha)-\sigma(\xi(X_\alpha)))$.
    By a straightforward calculation, we have $-B(X_1,\theta X_1)=-B(X_2, \theta X_2)=2$.
    If $\frakg_\alpha \subset \overline{\frakn}\cap\frakn'$, the contribution of $a$ to the measure $dX$ is $|d(2^{-\frac{1}{2}}X_1\wedge 2^{-\frac{1}{2}}X_2)|=|d(\xi(X_\alpha\wedge X_{\sigma\alpha}))|$, which is equal to the contribution to our measure.
    If $\alpha$ is a real root, then $a=\{\alpha\}$, and $\dim \frakg_a=1$.
    In this case, $\frakg_a$ has an $\R$-basis $\{X_1\}$, where $X_1=\xi(X_\alpha)$.
    By a straightforward calculation, we have $-B(X_1,\theta X_1)=1$ if $\alpha$ is of the form $\chi_i\pm\chi_j$, and $-B(X_1,\theta X_1)=2$ if $\alpha$ is of the form $\chi_i$.
    If $\frakg_\alpha \subset \overline{\frakn}\cap\frakn'$, the contribution of $a$ to the measure $dX$ is $|dX_1|=|d(\xi(X_\alpha))|$ if $\alpha$ is of the form $\chi_i\pm\chi_j$, and $|d(2^{-\frac{1}{2}}X_1)|=2^{-\frac{1}{2}}|d(\xi(X_1))|$ if $\alpha$ is of the form $\chi_i$.
    The product of coefficients $2^{-\frac{1}{2}}$ for all $\alpha$ of the form $\chi_i$ and $\alpha_{P'|P}=2^\frac{k}{2}$ cancel.
    If $\alpha$ is an imaginary root, then $\frakg_\alpha \nsubseteq \overline{\frakn}\cap\frakn'$.
    This completes the proof.
\end{proof}

\begin{lemma}\label{2.2.3}
    Assume that $F$ is a $p$-adic field and $\psi=\phi\in \Phi(M^*)$ an $L$-parameter.
    Then the function $\lambda \mapsto R_{P'|P}(\xi,\phi_\lambda)$ has neither a zero nor a pole at $\lambda=0$.
    Moreover, let $P''\subset G$ be a parabolic subgroup defined over $F$ with Levi factor $M$.
    Then we have
    \begin{equation*}
        R_{P''|P}(\xi,\phi)=R_{P''|P'}(\xi,\phi) \circ R_{P'|P}(\xi,\phi).
    \end{equation*}
\end{lemma}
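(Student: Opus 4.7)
The plan is to follow the strategy of \cite[Lemma 2.2.3]{kmsw} from the unitary case, which itself imitates Arthur's treatment for quasi-split classical groups in \cite[Ch.~2]{art13}. Both assertions reduce to properties of the two building blocks that make up $R_{P'|P}(\xi,\phi_\lambda)$: the meromorphic family of unnormalized operators $J_{P'|P}(\xi,\psi_F)$, and the scalar normalizing factor $r_{P'|P}(\xi,\phi_\lambda,\psi_F)$ built from Artin $L$- and $\epsilon$-factors. The latter has been designed, following Langlands and Shahidi, precisely to cancel the pole-zero behavior of the former.

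For regularity at $\lambda=0$, I would argue as follows. The $\epsilon$-factor ratio appearing in $r_{P'|P}(\xi,\phi_\lambda,\psi_F)$ is entire and nowhere vanishing, while $L(0,\rho_{P'|P}^\vee\circ\phi_{\psi_\lambda})$ is finite and nonzero near $\lambda=0$ because $\phi|_{L_F}$ is bounded. Thus the behavior at $\lambda=0$ is controlled entirely by the interaction of $J_{P'|P}$ with the factor $L(1,\rho_{P'|P}^\vee\circ\phi_{\psi_\lambda})^{-1}$. Harish-Chandra's Plancherel theory describes the singularities of $J_{P'|P}$ in terms of the $\mu$-function of the inducing representation $\pi\in\Pi_\phi(M)$; by the inductive hypothesis LLC holds on $M$, and Shahidi's local coefficient theory together with Arthur's endoscopic matching of $\gamma$-factors yields the equality of this $\mu$-function with the appropriate ratio of adjoint $L$-factors. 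The cancellation is then exact, giving both holomorphy and invertibility at $\lambda=0$.

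For the cocycle identity I would combine two standard inputs. The first is the multiplicative decomposition
\[
r_{P''|P}(\xi,\phi_\lambda,\psi_F) = r_{P''|P'}(\xi,\phi_\lambda,\psi_F)\cdot r_{P'|P}(\xi,\phi_\lambda,\psi_F),
\]
which follows immediately from additivity of Artin $L$- and $\epsilon$-factors on direct sums, applied to the short exact sequence of $\widehat{M}$-modules expressing $\rho_{P''|P}$ as an extension of $\rho_{P''|P'}$ by $\rho_{P'|P}$. The second is Harish-Chandra's meromorphic cocycle identity
\[
J_{P''|P}(\xi,\psi_F) = J_{P''|P'}(\xi,\psi_F)\circ J_{P'|P}(\xi,\psi_F),
\]
which holds provided the Haar measures on the various unipotent radicals are chosen consistently. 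The consistent choice is the one coming from self-dual Haar measures on $F$ with respect to $\psi_F$, and a measure computation completely parallel to (in fact simpler than) the archimedean one carried out in the proof of Lemma~\ref{2.2.1} confirms this compatibility. Evaluating the combined identity at $\lambda=0$, where all factors are regular by the previous step, yields the desired composition formula.

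The main obstacle is the first step: matching Plancherel densities to Artin $L$-factors for tempered representations of the inner form $M(F)$. For the quasi-split form $M^*$ this is built into \cite[Ch.~2]{art13}; transferring it to $M$ is where the inductive hypothesis enters essentially. Since $M$ is a proper Levi of $G$, its rank is strictly less than $n$, so the main theorem is available on $M$, and with LLC for $M$ in hand one extracts the $\mu$-function identity either by a local argument using Knapp-Stein $R$-groups or by a global argument via trace-formula comparison. Either route is routine in the inductive setup, but it is the only point at which substantive input beyond formal manipulation of $L$- and $\epsilon$-factors is required.
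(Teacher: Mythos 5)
Your proposal takes a genuinely different route from the paper, and it has a real gap at the step you yourself flag as the crux. The paper does not argue locally via Plancherel densities and Shahidi local coefficients at all; instead it reduces to the case where $\pi$ is supercuspidal (via the reduction steps in \cite[\S2]{artjfa}), then globalizes $\pi$ to a cuspidal automorphic representation $\dot{\pi}$ of $\dot{M}(\A_{\dot{F}})$ using Lemma \ref{4.2.3}, arranging that $\dot{G}_v$ is split for every $v\neq u, v_2$ and that $v_2$ is a \emph{real} place. The global normalized operator $R_{\dot{P}'|\dot{P}}(\dot{\pi}_\lambda,\dot{\phi}_\lambda)$ is regular and multiplicative at $\lambda=0$ by Lemma \ref{giom}, and factors as a tensor product of local operators by Lemma \ref{2.2.2}. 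The local factors away from $u$ are regular and multiplicative by Arthur's quasi-split result (at split places) and by Lemma \ref{2.2.1} (at the real place $v_2$); dividing these out forces the same for the factor at $u$.

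The gap in your proposal is the assertion that the identity between the Harish-Chandra $\mu$-function of $\pi\in\Pi_\phi(M)$ and the ratio of Artin $L$-factors ``is routine in the inductive setup,'' to be extracted ``either by a local argument using Knapp-Stein $R$-groups or by a global argument via trace-formula comparison.'' For the quasi-split form $M^*$ this identity is indeed part of Arthur's package, but for the non-quasi-split inner form $M$ it is not: Shahidi's local coefficient theory is built on Whittaker models, which $M(F)$ does not possess, and the invariance of the Plancherel measure under inner twist of a $p$-adic Levi is precisely the kind of statement one cannot get from the inductive hypothesis (LLC on $M$) by ``formal manipulation.'' The paper's globalization argument exists exactly to sidestep this point. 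If you pursue the ``global argument via trace-formula comparison'' you mention in passing, you would be led back to the paper's proof --- but you would then also need the supercuspidal reduction, the globalization Lemma \ref{4.2.3} with the real auxiliary place, the archimedean base case Lemma \ref{2.2.1}, and the local-global factorization Lemma \ref{2.2.2}, none of which appear in your outline. Your treatment of the cocycle identity (additivity of Artin factors plus the cocycle for $J$ plus a measure check) is fine as far as it goes, but it only delivers the identity of meromorphic families; passing to the value at $\lambda=0$ still requires the regularity established in the first part, so the gap propagates.
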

\begin{proof}
    Note that the proofs of Lemmas \ref{2.2.2}, \ref{giom}, and \ref{4.2.3} are independent from this lemma.
    In particular, we may use them here.

    Let $\pi\in\Pi_\phi(M)$ be an arbitrary representation of $M$ corresponding to $\phi$.
    The operator $R_{P'|P}(\xi,\phi_\lambda)$ is an intertwining operator from $\calH_P(\pi_\lambda)$ to $\calH_{P'}(\pi_\lambda)$.
    By the reduction steps described in \cite[\S2]{artjfa} (more precisely, from the last line in p.29 of loc. cit.), we may assume that $\pi$ is supercuspidal.
    Let $\dot{F}$, $u$, $v_2$, $\dot{G}^*$, $\dot{G}$, $\dot{\xi}$, $\dot{M}^*$, $\dot{M}$, and $\dot{\pi}$ be as in Lemma \ref{4.2.3}.
    Furthermore, let $\dot{P}$, $\dot{P}'$, and $\dot{P}''$ be the parabolic subgroups of $\dot{G}$ such that $\dot{P}_u=P$, $\dot{P}'_u=P'$, and $\dot{P}''_u=P''$.
    Since we can choose $v_2\neq u$ arbitrarily, we may assume that $v_2$ is a real place.
    By the induction hypothesis, we can take the $A$-parameter of $\dot{\pi}$, write $\dot{\phi}$ for it.
    Note that $\dot{\phi}$ is a generic parameter because $\dot{\phi}_u=\phi$.

    The global intertwining operator $R_{\dot{P}'|\dot{P}}(\dot{\pi}_\lambda,\dot{\phi}_\lambda)$ has neither a zero nor a pole at $\lambda=0$, and is multiplicative in $\dot{P}'$, $\dot{P}$ by Lemma \ref{giom}.
    For any place $v\neq u, v_2$, $\dot{G}_v$ splits over $\dot{F}_v$.
    For $u=v_2$, we have $\dot{F}_{v_2}=\R$.
    Thus for any $v\neq u$, the local normalized intertwining operator $R_{\dot{P}'_v|\dot{P}_v}(\dot{\xi}_v,\dot{\phi}_{\lambda,v})$ has neither a zero nor a pole at $\lambda=0$, and is multiplicative by Lemma \ref{2.2.1} and the assumption from the split case.
    Now the assertion follows from Lemma \ref{2.2.2}.
\end{proof}

The following lemma is now proven for generic parameters.
In general, we expect it can be proven as \cite[Lemma 2.2.4]{kmsw}.
\begin{lemma*}\label{2.2.4}
    Let $F$ be a local field.
    Let $\psi\in\Psi(M^*)$ be a general local $A$-parameter for $M$.
    Then the function $\lambda \mapsto R_{P'|P}(\xi,\psi_\lambda)$ has neither a zero nor a pole at $\lambda=0$.
    Moreover, let $P''\subset G$ be a parabolic subgroup defined over $F$ with Levi factor $M$.
    Then we have
    \begin{equation*}
        R_{P''|P}(\xi,\psi)=R_{P''|P'}(\xi,\psi) \circ R_{P'|P}(\xi,\psi).
    \end{equation*}
\end{lemma*}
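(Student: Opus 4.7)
The plan is to split on the nature of $F$ and invoke the appropriate previous lemma, since by the $*$-convention we only need to treat the generic case $\psi=\phi\in\Phi(M^*)$.

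First, suppose $F$ is archimedean. If $F=\R$, then Lemma \ref{2.2.1} gives directly that $\lambda\mapsto R_{P'|P}(\xi,\phi_\lambda)$ is holomorphic and non-vanishing at $\lambda=0$, and that the operators are multiplicative with respect to $P,P',P''$. If $F=\C$, then $H^1(\C,G^*)$ is trivial, so $G=G^*$ is split and quasi-split; in this case the statement is part of Arthur's work \cite{art13} on the quasi-split case, which we are using as a black box. Second, if $F$ is non-archimedean, the result is exactly Lemma \ref{2.2.3}.

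For the general (non-generic) $A$-parameter, which we do not prove in this paper, the expected strategy is the same as in \cite[Lemma 2.2.4]{kmsw}: write $\psi_\lambda$ in terms of the associated $L$-parameter $\phi_{\psi_\lambda}$, observe that the ratio of normalizing factors $r_{P'|P}$ for $\psi_\lambda$ and for $\phi_{\psi_\lambda}$ is a product of Rankin--Selberg type $L$- and $\epsilon$-factors attached to the $\SU(2,\R)$-part of $\psi$, and verify by direct inspection that this ratio is holomorphic and non-vanishing at $\lambda=0$. Multiplicativity would then follow from multiplicativity for the generic case (already established) together with multiplicativity of these explicit factors.

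The main obstacle for the non-generic extension is the usual one: the local intertwining operator at a non-generic $A$-parameter need not be unitary, and one has to argue using the analytic behavior of the normalizing factors rather than any representation-theoretic regularity of $\calI_P(\pi_\lambda)$. In the generic case considered here this difficulty does not arise, because tempered induced representations are already unitary and irreducible up to parabolic induction. Thus the proof for our purposes is simply the combination of Lemmas \ref{2.2.1} and \ref{2.2.3} above, together with Arthur's quasi-split result when $F=\C$.
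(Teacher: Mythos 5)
Your proposal is correct and follows essentially the same route the paper takes: the paper simply observes, just before the lemma statement, that "the following lemma is now proven for generic parameters" (i.e.\ by Lemmas \ref{2.2.1} and \ref{2.2.3}), and defers the non-generic case to the expected sequel of \cite{kmsw}. The only small addition in your sketch is the explicit treatment of $F=\C$ via triviality of $H^1(\C,G^*)$ and Arthur's quasi-split result, which the paper leaves implicit under its standing convention that the split case is known from \cite{art13}.
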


\subsection{The second intertwining operator}\label{2.3}
We maintain the assumptions of the previous subsection.
For every finite separable extension $E/F$, Langlands \cite[Theorem 1]{lan70} defined a complex number $\lambda(E/F,\psi_F)$.
By the definition, we have $\lambda(F/F, \psi_F)=1$.

For $w\in W(T^*,G^*)$, Keys-Shahidi \cite[(4.1)]{ks} defined the constant $\lambda(w,\psi_F)$.
In this subsection, we shall construct a local intertwining operator denoted by $\ell^{P'}_P(w,\xi, \psi, \psi_F)$.
In the case of unitary groups, its construction involves the $\lambda$-factor as in \cite[\S2.3]{kmsw}.
However, in the case of odd special orthogonal groups, we do not need the $\lambda$-factor since it always vanishes:
\begin{lemma}
    For any $w\in W(T^*,G^*)$, we have
    \begin{equation*}
        \lambda(w,\psi_F)=1.
    \end{equation*}
\end{lemma}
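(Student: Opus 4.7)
The plan is to unpack the Keys–Shahidi definition of $\lambda(w,\psi_F)$ from \cite[(4.1)]{ks} and observe that the split hypothesis $G^*=\SO_{2n+1}$ forces every ingredient to collapse to a trivial factor. Recall that the construction attaches, to each $\Gamma$-orbit of roots appearing in a certain decomposition associated with $w$ (concretely, the positive roots sent to negative by $w^{-1}$, grouped into $\Gamma$-orbits), the fixed field $E$ of its stabilizer in $\Gamma$, and then forms a product of Langlands $\lambda$-factors $\lambda(E/F,\psi_F)$.

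Since $G^*$ is split over $F$, the Galois group $\Gamma$ acts trivially on the root datum of $(T^*,G^*)$. Hence each $\Gamma$-orbit is a singleton, the stabilizer of every root is all of $\Gamma$, and the associated extension is $E=F$. By Langlands' normalization $\lambda(F/F,\psi_F)=1$, so every factor in the Keys–Shahidi product equals $1$ and therefore $\lambda(w,\psi_F)=1$.

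The only step requiring care is to confirm that (4.1) of \cite{ks} truly reduces to a product of $\lambda(E/F,\psi_F)$'s — that is, that no residual sign or root-of-unity contribution survives in the split case. This is purely a matter of reading the formula and specializing it, and does not involve anything specific to $\SO_{2n+1}$ beyond the fact that it is split.
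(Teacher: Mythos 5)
Your overall strategy — reduce to showing $F_a=F$ for each relevant root $a$ and then invoke $\lambda(F/F,\psi_F)=1$ — is the same as the paper's, and the conclusion is correct. However, your characterization of $F_a$ as the fixed field of the $\Gamma$-stabilizer of $a$ is not the definition used in \cite{ks}: there $F_a$ is characterized by $G^*_{a,\mathrm{sc}}=\Res_{F_a/F}(\SL_2)$, where $G^*_{a,\mathrm{sc}}$ is the simply connected cover of the derived group of the rank-one Levi subgroup $G^*_a$ attached to $a$. The two descriptions happen to coincide for split groups, which is why your computation lands on the right answer, but they diverge in general (for example, a rank-one Levi can be a unitary group, which is not a restriction of scalars of $\SL_2$). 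The paper's proof works directly with the $G^*_{a,\mathrm{sc}}$ definition and verifies, for the long and short roots of type $B_n$, that $G^*_a$ is a product of $\GL_2$ and copies of $\GL_1$ in the first case and a product of $\SO_3$ and copies of $\GL_1$ in the second, so that $G^*_{a,\mathrm{sc}}\cong\SL_2$ and hence $F_a=F$ in both. This explicit identification of the rank-one Levis is precisely the ``step requiring care'' your last paragraph flags but leaves unverified.
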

\begin{proof}
    By the definition of $\lambda(w,\psi_F)$, it suffices to show that
    \begin{equation*}
        \lambda(F_a/F, \psi_F)=1,
    \end{equation*}
    for any root $a \in R(T^*,G^*)$, where $F_a/F$ is a finite extension such that $G^*_{a,\mathrm{sc}}=\Res_{F_a/F}(\SL_2)$.
    Here, $G^*_{a,\mathrm{sc}}$ denotes the simply connected cover of the derived group $G^*_{a,\mathrm{der}}$ of the Levi subgroup $G^*_a$ of semisimple rank 1 attached to $a$.
    Since we know that $\lambda(F/F, \psi_F)=1$, it suffices to show that $F_a=F$ for all root $a \in R(T^*,G^*)$.

    If $a$ is a long root (i.e., of the form $\pm\chi_i\pm\chi_j$), then $G^*_a$ is isomorphic to a product of $\GL_2$ and a finite number of $\GL_1$.
    Thus we have $G^*_{a,\mathrm{der}}\cong\SL_2$ and $G^*_{a,\mathrm{sc}}\cong\SL_2$, which means that $F_a=F$.
    If $a$ is a short root (i.e., of the form $\pm\chi_i$), then $G^*_a$ is isomorphic to a product of $\SO_3$ and a finite number of $\GL_1$.
    Thus we have $G^*_{a,\mathrm{der}}\cong\SO_3\cong\PGL_2$ and $G^*_{a,\mathrm{sc}}\cong\SL_2$, which means that $F_a=F$.
\end{proof}

Let $\psi\in\Psi(M^*)$ be a local parameter such that $\psi$ is $M$-relevant and the corresponding packet $\Pi_\psi(M)$ is not empty.
Let $\pi\in\Pi_\psi(M)$.
Moreover, let $P$ and $P'$ be parabolic subgroups of $G$ defined over $F$ with common Levi factor $M$.
For an element $w\in W(M^*,G^*)\cong W(M,G)$, we write $\widetilde{w}\in N(M^*,G^*)$ for its Langlands-Shelstad lift, and put $\breve{w}=\xi(\widetilde{w})$.
By replacing $\xi$ by an equivalent inner twist if necessary, we assume that $\breve{w}\in G(F)$.
This gives us another representation
\begin{equation*}
    [\breve{w}\pi](m)=\pi(\breve{w}\inv m \breve{w}), \quad m\in M(F),
\end{equation*}
of $M(F)$ on the same vector space as $\pi$, which corresponds to a parameter $w\psi:=\Ad(w)\circ\psi$.

First, we define an unnormalized intertwining operator $\ell^{P'}(\breve{w})$ from $\calH_{P'^w}(\pi)$ to $\calH_{P'}(\breve{w}\pi)$ by
\begin{equation*}
    [\ell^{P'}(\breve{w}) f] (g) = f(\breve{w}\inv g),
\end{equation*}
where $P'^w$ denotes $w\inv P' w$.
Next, our normalizing factor is defined by
\begin{equation*}
    \epsilon_P(w,\psi,\psi_F)=\epsilon(\frac{1}{2}, \rho_{P^w|P}^\vee \circ \phi_\psi, \psi_F),
\end{equation*}
and we define
\begin{equation*}
    \ell^{P'}_P(w,\xi,\psi,\psi_F) = \epsilon_P(w,\psi,\psi_F) \ell^{P'}(\breve{w}).
\end{equation*}

\begin{lemma}\label{2.3.1}
    For any $w_1, w_2 \in W(M^*,G^*) \cong W(M,G)$, we have
    \begin{equation*}
        \ell^P_P(w_2w_1,\xi, \psi, \psi_F) = \ell^P_P(w_2,\xi, w_1\psi, \psi_F) \circ \ell^{P^{w_2}}_P(w_1,\xi, \psi, \psi_F).
    \end{equation*}
\end{lemma}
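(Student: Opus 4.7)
The plan is to split the identity into its two constituent pieces according to the definition $\ell^{P'}_P(w,\xi,\psi,\psi_F) = \epsilon_P(w,\psi,\psi_F)\,\ell^{P'}(\breve{w})$: first verify that the unnormalized operators satisfy the cocycle relation $\ell^P(\breve{w_2 w_1}) = \ell^P(\breve{w_2}) \circ \ell^{P^{w_2}}(\breve{w_1})$, and then verify that the $\epsilon$-normalizing factors satisfy $\epsilon_P(w_2 w_1, \psi, \psi_F) = \epsilon_P(w_2, w_1\psi, \psi_F)\,\epsilon_P(w_1, \psi, \psi_F)$. Multiplying these two identities gives the conclusion.

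For the unnormalized piece, one computes directly from the definition $[\ell^{P'}(\breve{w})f](g) = f(\breve{w}^{-1}g)$. Since $\ell^{P^{w_2}}(\breve{w_1})$ sends $\calH_{(P^{w_2})^{w_1}}(\pi) = \calH_{P^{w_2 w_1}}(\pi)$ into $\calH_{P^{w_2}}(\breve{w_1}\pi)$, and $\ell^P(\breve{w_2})$ then sends this into $\calH_P(\breve{w_2}\breve{w_1}\pi)$, one obtains
\begin{equation*}
    \bigl[\ell^P(\breve{w_2})\circ \ell^{P^{w_2}}(\breve{w_1})f\bigr](g) = \bigl[\ell^{P^{w_2}}(\breve{w_1})f\bigr](\breve{w_2}^{-1}g) = f\bigl((\breve{w_2}\breve{w_1})^{-1}g\bigr).
\end{equation*}
Thus the identity reduces to $\breve{w_2 w_1} = \breve{w_2}\breve{w_1}$. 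For the Langlands--Shelstad lift this is the multiplicativity property of Keys--Shahidi \cite{ks}, which in general only holds up to a cocycle expressible in terms of the factors $\lambda(w,\psi_F)$; by the previous lemma all of these are trivial in the odd special orthogonal case, so the lifts compose strictly and the identity holds on the nose.

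For the $\epsilon$-factor piece, the key input is the decomposition of the Lie algebra underlying $\rho_{P^{w_2 w_1}|P}$. Using the standard fact about positive root systems of parabolics conjugated by Weyl elements, the set of $A_M$-roots positive in $P^{w_2 w_1}$ and negative in $P$ decomposes as the disjoint union of those positive in $P^{w_1}$ and negative in $P$, together with the $w_1^{-1}$-translate of those positive in $P^{w_2}$ and negative in $P$. Consequently one obtains an isomorphism of $\widehat{M}$-representations pulled back by the $L$-parameters,
\begin{equation*}
    \rho_{P^{w_2 w_1}|P}^\vee \circ \phi_\psi \;\cong\; \bigl(\rho_{P^{w_1}|P}^\vee \circ \phi_\psi\bigr)\oplus \bigl(\rho_{P^{w_2}|P}^\vee \circ \phi_{w_1\psi}\bigr),
\end{equation*}
where $\phi_{w_1\psi}=\Ad(w_1)\circ\phi_\psi$. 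The multiplicativity of the local Artin $\epsilon$-factor in direct sums of representations then yields exactly the required identity for $\epsilon_P$.

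I expect the main obstacle to be the first step: verifying that $\breve{w_2 w_1} = \breve{w_2}\breve{w_1}$. In the general Keys--Shahidi framework this is where the $\lambda$-factors intervene to correct the failure of the Langlands--Shelstad lifts to be strictly multiplicative, and one must carefully match the conventions used in \cite{kmsw} and \cite{ks} in order to legitimately drop these factors. Once this bookkeeping is done, the vanishing of all relevant $\lambda$-factors established in the preceding lemma closes the argument, and the rest is a formal manipulation.
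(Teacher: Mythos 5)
Your overall decomposition into an unnormalized piece and an $\epsilon$-factor piece is the right framework, and you have correctly identified that $\breve{w_2 w_1}$ versus $\breve{w_2}\breve{w_1}$ is where the difficulty lies. The paper does not spell out a proof but defers to \cite[Lemma 2.3.4]{art13} and \cite[Lemma 2.3.1]{kmsw}, noting only that the odd orthogonal case is simpler because the $\lambda$-factor is trivial and the Galois action on $\widehat{G}$ is trivial. Against that backdrop, two steps in your argument do not survive scrutiny.

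First, the claim that the failure of strict multiplicativity of the Langlands--Shelstad lift is ``a cocycle expressible in terms of the factors $\lambda(w,\psi_F)$'' conflates two unrelated phenomena. The discrepancy $\widetilde{w_2 w_1}\widetilde{w_1}^{-1}\widetilde{w_2}^{-1}$ lies in the torus $T^*$ and is, concretely, a product of elements $\alpha^\vee(-1)$ over certain roots; it is present even when $G$ is split with trivial Galois action and has nothing to do with the constants $\lambda(F_a/F,\psi_F)$, which are $\epsilon$-factor normalizations tied to base change along the fields $F_a$. The lemma preceding this one shows all those fields equal $F$, which simplifies the normalizing factor $\epsilon_P$, but it says nothing about the lifts. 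So the conclusion ``the lifts compose strictly'' is unjustified, and indeed false in general: $\breve{w_2 w_1} = \breve{w_2}\breve{w_1}$ only when $\ell(w_2 w_1) = \ell(w_1) + \ell(w_2)$ in the relative Weyl group, and the lemma is asserted for arbitrary $w_1, w_2$.

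Second, the root-theoretic decomposition you invoke for the $\epsilon$-factor piece has the same defect. Writing $\Sigma_{\overline{P}}\cap\Sigma_{P^{w}}$ for the $A_M$-roots negative for $P$ and positive for $P^w$, the identity $\Sigma_{\overline{P}}\cap\Sigma_{P^{w_2 w_1}} = (\Sigma_{\overline{P}}\cap\Sigma_{P^{w_1}})\sqcup w_1^{-1}(\Sigma_{\overline{P}}\cap\Sigma_{P^{w_2}})$ holds only when lengths add; otherwise the two sets on the right have an ``overlap with opposite signs,'' and the left side is a proper subset. Consequently the asserted isomorphism $\rho^\vee_{P^{w_2 w_1}|P}\circ\phi_\psi \cong (\rho^\vee_{P^{w_1}|P}\circ\phi_\psi)\oplus(\rho^\vee_{P^{w_2}|P}\circ\phi_{w_1\psi})$ does not hold in general, and the two sides of your $\epsilon$-factor identity differ by $\epsilon$-contributions of the form $\epsilon(\tfrac12,\varrho,\psi_F)\epsilon(\tfrac12,\varrho^\vee,\psi_F) = \det\varrho(-1)$ coming from the overlap. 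The real content of the lemma (and what the cited proofs carry out) is that this $\det\varrho(-1)$ defect and the torus-valued lift defect $\prod\alpha^\vee(-1)$, once pushed through $\pi$ and the induction, cancel against one another. By pretending both defects vanish you have removed precisely the two quantities whose cancellation is the point, so the argument as written does not establish the identity beyond the length-additive case.
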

\begin{proof}
    The proof is similar to that of \cite[Lemma 2.3.4]{art13} or \cite[Lemma 2.3.1]{kmsw}.
    Our case is simpler since the $\lambda$-factor is trivial and the Galois action on the dual group $\widehat{G}$ is also trivial.
\end{proof}

\begin{lemma}\label{2.2.5}
    We have
    \begin{equation*}
        \ell^{P'}_P(w, \xi, \psi, \psi_F) \circ R_{P'^w|P^w}(\xi,\psi)
        = R_{P'|P}(\xi,w\psi) \circ \ell^P_P(w, \xi, \psi, \psi_F).
    \end{equation*}
\end{lemma}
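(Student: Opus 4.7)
The plan is to reduce the identity to two independent sub-claims. Since
\[
\ell^P_P(w,\xi,\psi,\psi_F)=\epsilon_P(w,\psi,\psi_F)\,\ell^P(\breve w),\qquad \ell^{P'}_P(w,\xi,\psi,\psi_F)=\epsilon_P(w,\psi,\psi_F)\,\ell^{P'}(\breve w),
\]
the common scalar $\epsilon_P(w,\psi,\psi_F)$ cancels from both sides, and the assertion becomes
\[
r_{P'^w|P^w}(\xi,\psi,\psi_F)^{-1}\,\ell^{P'}(\breve w)\circ J_{P'^w|P^w}(\xi,\psi_F)
= r_{P'|P}(\xi,w\psi,\psi_F)^{-1}\,J_{P'|P}(\xi,\psi_F)\circ\ell^P(\breve w).
\]
I thus intend to verify separately (A) the unnormalized intertwining identity $\ell^{P'}(\breve w)\circ J_{P'^w|P^w}(\xi,\psi_F)=J_{P'|P}(\xi,\psi_F)\circ\ell^P(\breve w)$, and (B) the equality of normalizing scalars $r_{P'^w|P^w}(\xi,\psi,\psi_F)=r_{P'|P}(\xi,w\psi,\psi_F)$.

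For (A), I would evaluate both sides on an arbitrary $f\in\calH_{P^w}(\pi)$ at $g\in G(F)$. The left side equals $\int_{N^w(F)\cap N'^w(F)\backslash N'^w(F)}f(n'\breve w^{-1}g)\,dn'$, while the right side equals $\int_{N(F)\cap N'(F)\backslash N'(F)}f(\breve w^{-1}mg)\,dm$. The substitution $m=\breve w n'\breve w^{-1}$ realizes the $F$-rational group isomorphism $\Ad(\breve w)\colon N'^w\xrightarrow{\sim}N'$ carrying $N^w\cap N'^w$ onto $N\cap N'$, and converts the right-hand integrand into $f(n'\breve w^{-1}g)$. Equality of the two integrals then reduces to the fact that the Haar measure on $N(F)\cap N'(F)\backslash N'(F)$ fixed in \S\ref{2.2} is preserved under this conjugation; this is immediate because the measure is built from a top $F$-rational form on the unipotent Lie algebra and the Jacobian of an $F$-rational inner automorphism of a unipotent group has $|\cdot|_F$-absolute value $1$.

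For (B), the central observation is that $\Ad(\widehat{\widetilde w})$ sends $\widehat{\frakn}'^w/(\widehat{\frakn}'^w\cap\widehat{\frakn}^w)$ isomorphically onto $\widehat{\frakn}'/(\widehat{\frakn}'\cap\widehat{\frakn})$, and under this isomorphism $\rho_{P'^w|P^w}$ is intertwined with $\rho_{P'|P}\circ\Ad(\widehat{\widetilde w})$. Composing with $\phi_\psi$ and using $\phi_{w\psi}=\Ad(\widehat{\widetilde w})\circ\phi_\psi$ yields an isomorphism of $L_F$-representations $\rho_{P'^w|P^w}^\vee\circ\phi_\psi\simeq\rho_{P'|P}^\vee\circ\phi_{w\psi}$. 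Since the Artin $L$- and $\epsilon$-factors depend only on the equivalence class of the $L_F$-representation, all four factors making up $r_{P'^w|P^w}(\xi,\psi,\psi_F)$ and $r_{P'|P}(\xi,w\psi,\psi_F)$ match term by term, giving (B).

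The only real obstacle is the measure bookkeeping in step (A); everything else is a formal unwinding of definitions. The argument is strictly simpler than its analogues \cite[Lemma 2.3.4]{art13} and \cite[Lemma 2.3.1]{kmsw} because in our setting the Galois action on $\widehat{G^*}=\Sp(2n,\C)$ is trivial and the $\lambda$-factor is identically $1$ by the preceding lemma, so no additional Galois cocycles or Keys--Shahidi constants enter the verification of (B).
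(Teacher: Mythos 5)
Your overall strategy is sound and agrees in spirit with the paper's: one unwinds the definitions, performs a change of variables in the unnormalized integral, and matches the normalizing factors. The paper packages this differently, via the intermediate operator $R_{P'|P}(\Ad(\breve{w})\circ\xi,\psi)$ — first showing $\ell^{P'}_P(w,\xi,\psi,\psi_F)\circ R_{P'^w|P^w}(\xi,\psi)=R_{P'|P}(\Ad(\breve{w})\circ\xi,\psi)\circ\ell^P_P(w,\xi,\psi,\psi_F)$ by the change of variables, and then separately $R_{P'|P}(\Ad(\breve{w})\circ\xi,\psi)=R_{P'|P}(\xi,w\psi)$. Your direct split into (A) plus (B), after cancelling $\epsilon_P$, is an equivalent accounting. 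Your step (B) is correctly argued: $\Ad(\widehat{\widetilde{w}})$ intertwines $\rho_{P'^w|P^w}$ with $\rho_{P'|P}\circ\Ad(\widehat{\widetilde{w}})$, so $\rho_{P'^w|P^w}^\vee\circ\phi_\psi\simeq\rho_{P'|P}^\vee\circ\phi_{w\psi}$ and the local factors coincide.

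The gap is in the justification of (A). You assert that the Jacobian of $\Ad(\breve{w})\colon (N^w\cap N'^w)(F)\backslash N'^w(F)\to(N\cap N')(F)\backslash N'(F)$ is $1$ in absolute value because ``the Jacobian of an $F$-rational inner automorphism of a unipotent group has $|\cdot|_F$-absolute value $1$.'' But $\Ad(\breve{w})$ is \emph{not} an inner automorphism of a unipotent group here: it is an $F$-rational isomorphism between the two \emph{distinct} unipotent groups $N'^w$ and $N'$, each carrying its own fixed reference measure, and for such a map there is no general principle making the Jacobian $1$ (a linear isomorphism $\mathbb{G}_a\to\mathbb{G}_a$ over $F$ already has Jacobian $|c|_F$). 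What actually forces the Jacobian to be $1$ is that the measures of \S\ref{2.2} are built from the Chevalley basis transported by $\xi$, and the Langlands--Shelstad lift $\widetilde{w}$ sends each Chevalley root vector $X_\beta$ to $\pm X_{w\beta}$; hence $\Ad(\breve{w})=\Ad(\xi(\widetilde{w}))$ carries the $\xi$-transported basis spanning $\frakn'^w/(\frakn^w\cap\frakn'^w)$ to the one spanning $\frakn'/(\frakn\cap\frakn')$, up to signs, so the Jacobian is $\pm 1$. This is precisely the bookkeeping the paper isolates by routing through $\Ad(\breve{w})\circ\xi$: after the substitution the integral carries the measure associated to $\Ad(\breve{w})\circ\xi$, and a separate identity of normalized operators then returns one to $\xi$. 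Your conclusion (A) is correct, but you need to replace the appeal to unimodularity of unipotent groups with this Chevalley-basis/LS-lift argument (or with a reduction to the quasi-split case as in \cite[Lemma 2.2.5]{kmsw}).
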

\begin{proof}
    The assertion follows from two equalities
    $\ell^{P'}_P(w, \xi, \psi, \psi_F) \circ R_{P'^w|P^w}(\xi,\psi)
        = R_{P'|P}(\Ad(\breve{w})\circ\xi,\psi) \circ \ell^P_P(w, \xi, \psi, \psi_F)$
    and
    $R_{P'|P}(\Ad(\breve{w})\circ\xi,\psi)=R_{P'|P}(\xi,w\psi)$.
    We can show them by a similar way to the second and the third assertions in \cite[Lemma 2.2.5]{kmsw}.
\end{proof}

\subsection{The third intertwining operator}\label{2.4}
We maintain the assumptions of the previous subsection.
In addition, we assume that we are given $z\in Z^1(F, M^*) \subset Z^1(F, G^*)$, such that $(\xi, z)$ is a pure inner twist; $z$ commutes with the Langlands-Shelstad lift of every element of $W(M^*,G^*)$; and if we decompose $z$ as $z=z_+\times z_-$ according to the decomposition $M^*=M^*_+ \times M^*_-$ then $z_+$ takes the constant value 1, where $M^*_+$ is a product of general linear groups and $M^*_-$ is a special orthogonal group.
Let $u\in N(\widehat{T},\widehat{G})$ be such that $\Ad(u)\circ\psi=\psi$ and $u$ preserves the positive roots in $\widehat{M}$.
Let $u^\natural\in \frakN_\psi(M,G)$ and $w\in W(\widehat{M},\widehat{G})$ be its images.
We shall regard $w$ as an element of $W(M^*,G^*)$ or $W(M,G)$ via the natural isomorphisms.
Although our group $G^*$ is a special orthogonal group and $(\xi,z)$ is a pure inner twist, by the similar argument as \cite[\S2.4, \S2.4.1]{kmsw}, we can define
\begin{itemize}
    \item the operator $\pi(\breve{w})_\xi : (\breve{w}\pi, V_\pi) \to (\pi, V_\pi)$;
    \item the sophisticated splittings $s' : W_\psi(M,G)\to \frakN_\psi(M,G)$ and $s : \frakN_\psi(M,G)\to \frakS_\psi(M)$ of the exact sequence $1\to \frakS_\psi(M)\to \frakN_\psi(M,G)\to W_\psi(M,G)\to 1$;
    \item the constant $\an{u^\natural,\pi}_{\xi,z}=\an{s(u^\natural)\inv,\pi}_{\xi|_{M^*},z} \in \C^\times$.
\end{itemize}
Then we define the operator $\pi(u^\natural)_{\xi,z} : (\breve{w}\pi, V_\pi) \to (\pi, V_\pi)$ by $\pi(u^\natural)_{\xi,z}=\an{u^\natural,\pi}_{\xi,z} \pi(\breve{w})_\xi$.
The assignment $u^\natural \mapsto \pi(u^\natural)_{\xi,z}$ is multiplicative.

Next we shall give a more abstract characterization (i.e., another equivalent definition) of the operator $\pi(u^\natural)_{\xi,z} : (\breve{w}\pi, V_\pi) \to (\pi, V_\pi)$, following \cite[\S2.4.3]{kmsw}.
Let us put $u'=u\inv \in N_\psi(M,G)$, and let $\widetilde{u'}\in N(\widehat{T}, \widehat{G})$ be its Langlands-Shelstad lift.
Put $s=u'\widetilde{u'}\inv\in\widehat{M}$.
Then by the assumption that $u$ preserves the positive roots in $\widehat{M}$, we have $s\in\widehat{T}$.
Put $\widehat{\theta}=\Ad(\widetilde{u'})$, which is an automorphism of $\widehat{M}$ preserving the standard splitting inherited from $\widehat{G}$.
It is the dual of an automorphism $\theta^*=\Ad(\widetilde{w})$ of $M^*$.
Put
\begin{equation*}
    \widehat{M'}=\Set{x\in\widehat{M} | \Ad(s_\psi s)\circ \widehat{\theta} (x)=x}.
\end{equation*}
Let $M'$ be the subgroup of $M$ such that $\widehat{M'}$ is the dual of $M'$.
Note that $\myim \psi \subset \widehat{M'}$.
Then $(M', s_\psi s, \widehat{M'} \subset \widehat{M})$ is a twisted endoscopic triple of $(M^*,\theta^*)$, and hence of $(M,\theta)$, where $\theta=\Ad(\breve{w})$.
We shall write $\frake_{M,\psi}$ for it.
The operator $\pi(u^\natural)_{\xi,z}$ can be characterized as follows:
\begin{lemma}\label{2.4.1}
    For each $\pi \in \Pi_\psi(M)$, there exists a unique isomorphism $\pi(u^\natural)_{\xi,z} : (\breve{w}\pi, V_\pi) \to (\pi, V_\pi)$ such that
    \begin{equation*}
        f^{\frake_{M,\psi}}(\psi)=e(M^\theta)\sum_{\pi\in\Pi_\psi(M)}\tr(\pi(u^\natural)_{\xi,z}\circ \pi(f)),
    \end{equation*}
    for all $\Delta[\frake_{M,\psi},\xi|_{M^*},z]$-matching functions $f \in \calH(M)$ and $f^{\frake_{M,\psi}}\in\calH(M')$.
    Moreover, the operator $\pi(u^\natural)_{\xi,z}$ coincides with the one constructed above.
\end{lemma}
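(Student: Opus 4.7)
The plan is to deduce the lemma from the twisted endoscopic character relation (twisted ECR) for $(M,\theta)$ associated with the twisted endoscopic triple $\frake_{M,\psi}=(M',s_\psi s,\widehat{M'}\subset\widehat{M})$, combined with the induction hypothesis that the local main theorem (Theorem \ref{1.6.1}) holds for the proper Levi $M^*\subsetneq G^*$.

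First I would handle uniqueness. Suppose $T_\pi\in\Hom_{M(F)}((\breve{w}\pi,V_\pi),(\pi,V_\pi))$ is any assignment satisfying the displayed identity. For each $\pi$, the distribution $f\mapsto\tr(T_\pi\circ\pi(f))$ is a $\theta$-twisted character on $M(F)$. Because the $\theta$-twisted characters of mutually inequivalent irreducibles in $\Pi_\psi(M)$ admitting nontrivial intertwiners with $\breve{w}\pi$ are linearly independent (by Kazhdan's twisted Paley--Wiener-type density and the orthogonality of twisted characters), the identity pins down each $\tr(T_\pi\circ \pi(f))$ separately as $f$ ranges over functions matching suitable $f^{\frake_{M,\psi}}$. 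Since $\pi$ is irreducible and $T_\pi$ intertwines $\breve{w}\pi$ with $\pi$, Schur's lemma forces $T_\pi$ to be determined up to a scalar; the trace identity then fixes that scalar.

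For existence, I would check that the explicitly constructed operator
\begin{equation*}
\pi(u^\natural)_{\xi,z}=\an{u^\natural,\pi}_{\xi,z}\,\pi(\breve{w})_\xi=\an{s(u^\natural)\inv,\pi}_{\xi|_{M^*},z}\,\pi(\breve{w})_\xi
\end{equation*}
satisfies the asserted trace identity. By the induction hypothesis the packet $\Pi_\psi(M)$ together with its pairing $\pi\mapsto\an{-,\pi}_{\xi|_{M^*},z}$ to $\Irr(\frakS_\psi(M),\chi_M)$ is given and satisfies ordinary ECR for every endoscopic triple of $M$. The twisted endoscopic triple $\frake_{M,\psi}$ for $(M,\theta)$ is precisely the one whose associated endoscopic character relation, applied to a pair of $\Delta[\frake_{M,\psi},\xi|_{M^*},z]$-matching functions $(f,f^{\frake_{M,\psi}})$, expresses the stable character $f^{\frake_{M,\psi}}(\psi)$ (whose existence is granted by Proposition \ref{1.5.1} applied to $M'$) as a sum $\sum_\pi c_\pi\,\tr(\pi(\breve{w})_\xi\circ\pi(f))$ over $\Pi_\psi(M)$. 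Comparing the definition of the sophisticated splitting $s:\frakN_\psi(M,G)\to\frakS_\psi(M)$ built in \S\ref{2.4} with the formula for $c_\pi$ coming out of the twisted transfer, one obtains $c_\pi=e(M^\theta)\an{s(u^\natural)\inv,\pi}_{\xi|_{M^*},z}=e(M^\theta)\an{u^\natural,\pi}_{\xi,z}$, which is the desired identity. Finally, uniqueness from the previous paragraph forces the abstractly characterized operator to equal the explicit one.

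The main obstacle will be the careful comparison of normalizations: one must verify that the splitting $s$ and the Langlands--Shelstad lift $\widetilde{w}$ used in constructing $\pi(\breve{w})_\xi$ produce exactly the scalar $e(M^\theta)\an{u^\natural,\pi}_{\xi,z}$ predicted by the transfer factor $\Delta[\frake_{M,\psi},\xi|_{M^*},z]$, with no residual sign or cocycle discrepancy. This is essentially the bookkeeping of \cite[\S2.4.3, Lemma 2.4.1]{kmsw}, and our odd-orthogonal setting is cleaner than theirs because $\lambda(w,\psi_F)=1$, the Galois action on $\widehat{G}=\Sp(2n,\C)$ is trivial, and the pure inner twist $(\xi,z)$ can be normalized so that $z$ commutes with every Langlands--Shelstad lift in $W(M^*,G^*)$; consequently several terms in the KMSW computation vanish, and the comparison reduces to the odd-orthogonal version of their normalization lemma for twisted transfer factors (Lemma \ref{1.1.2}).
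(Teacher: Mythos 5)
Your proposal expands, correctly and in the intended order, the argument of \cite[Lemma 2.4.1]{kmsw} that the paper cites without further detail: uniqueness from linear independence of twisted characters together with Schur's lemma, and existence by feeding $\Delta[\frake_{M,\psi},\xi|_{M^*},z]$-matching pairs into the twisted ECR for $(M,\theta)$ --- which must indeed be assembled from the induction hypothesis on the orthogonal factor of $M$ and the established $\GL$-theory --- and then matching the resulting coefficient against the scalar $\an{s(u^\natural)\inv,\pi}_{\xi|_{M^*},z}$ produced by the sophisticated splitting. The only slight imprecision is writing ``satisfies ordinary ECR'' before (correctly) invoking the \emph{twisted} relation for $(M,\theta)$; your list of simplifications over the unitary case ($\lambda(w,\psi_F)=1$, trivial Galois action on $\widehat{G}$, and $z$ commuting with the Langlands--Shelstad lifts) is exactly what trims the KMSW bookkeeping here.
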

\begin{proof}
    The proof is similar to that of \cite[Lemma 2.4.1]{kmsw} for the case of the quasi-split unitary group $U_{E/F}(N)$.
\end{proof}

\subsection{The compound operator}\label{2.5}
We shall define the normalized self-intertwining operator $R_P^G(u^\natural, \pi, \psi, \psi_F)=R_P(u^\natural, \pi, \psi, \psi_F) \in \End_G(\calH_P(\pi))$ as a composition
\begin{equation*}
    R_P(u^\natural, \pi, \psi, \psi_F)
    =\calI_P(\pi(u^\natural)_{\xi,z}) \circ \ell^P_P(w, \xi, \psi, \psi_F) \circ R_{P^w|P}(\xi,\psi).
\end{equation*}
Recall that the definitions of the Haar measures on the unipotent radicals and the representative $\breve{w}$ which were used to define $R_{P^w|P}(\xi,\psi)$ and $\ell^P_P(w, \xi, \psi, \psi_F)$, respectively, depend on $\xi$.
Note also that the definition of $\calI_P(\pi(u^\natural)_{\xi,z})$ depends on $(\xi,z)$.
If the parameter $\psi$ is not generic, we shall assume from now on that Lemma \ref{2.2.4} holds true.
\begin{lemma}\label{2.5.1}
    The operator $R_P(u^\natural, \pi, \psi, \psi_F)$ does not depend on the choice of the pure inner twist $(\xi,z)$.
\end{lemma}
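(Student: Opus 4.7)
Write $R(\xi,z)$ for the compound operator to emphasize its dependence on the pure inner twist. Let $(\xi_1,z_1)$ and $(\xi_2,z_2)$ be two pure inner twists realizing the same inner form $G$ and both satisfying the hypotheses of \S\ref{2.4}. By standard Galois cohomology there exists $g\in G^*(\overline{F})$ with $\xi_2=\xi_1\circ\Ad(g)^{-1}$ and $z_2(\sigma)=g^{-1}z_1(\sigma)\sigma(g)$, and after replacing each twist within its equivalence class we may assume $\breve{w}_i:=\xi_i(\widetilde{w})\in G(F)$ for $i=1,2$. The plan is to show that the three factors in the definition of $R_P(u^\natural,\pi,\psi,\psi_F)$ vary compatibly under the change $(\xi_1,z_1)\leadsto(\xi_2,z_2)$, and that the variations cancel upon composition.

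For the first factor $R_{P^w|P}(\xi,\psi)$, the only dependence on $\xi$ is through the choice of Haar measure on $N(F)\cap N^w(F)\backslash N^w(F)$. These measures are transported from canonical measures on $G^*$ via $\xi$; since $\xi_1$ and $\xi_2$ differ by the inner automorphism $\Ad(g)$ and inner conjugation preserves Haar measure on unipotent subgroups of $G(F)$, one has $R_{P^w|P}(\xi_1,\psi)=R_{P^w|P}(\xi_2,\psi)$. For the second factor, the normalizing scalar $\epsilon_P(w,\psi,\psi_F)$ is intrinsic, and I would verify the identity
\begin{equation*}
    \breve{w}_2=\xi_1(g)^{-1}\,\breve{w}_1\,\xi_1(g),
\end{equation*}
using that $z_1$ commutes with $\widetilde{w}$ and has trivial $M^*_+$-component, which is precisely what guarantees that the relevant conjugacy takes place inside $G(F)$ on the induced representation. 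It follows that $\ell^P(\breve{w}_2)$ equals $L(\xi_1(g))^{-1}\circ\ell^P(\breve{w}_1)\circ L(\xi_1(g))$, where $L$ denotes the left-translation operator acting on $\calH_P(\pi)$ after parabolic induction.

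For the third factor, I would appeal to the characterization in Lemma~\ref{2.4.1}. The endoscopic triple $\frake_{M,\psi}$ and the stable linear form $f^{\frake_{M,\psi}}(\psi)$ are defined independently of the choice of pure inner twist; the transfer factor $\Delta[\frake_{M,\psi},\xi|_{M^*},z]$ transforms under $(\xi_1,z_1)\leadsto(\xi_2,z_2)$ according to Lemma~\ref{1.1.2}, and a matching pair $(f,f^{\frake_{M,\psi}})$ for $(\xi_1,z_1)$ corresponds, via conjugation by $\xi_1(g)$ on the $M$-side, to a matching pair for $(\xi_2,z_2)$. Combining the spectral identity of Lemma~\ref{2.4.1} for both pure inner twists forces
\begin{equation*}
    \pi(u^\natural)_{\xi_2,z_2}=L_M(\xi_1(g))^{-1}\circ\pi(u^\natural)_{\xi_1,z_1}\circ L_M(\xi_1(g)),
\end{equation*}
with $L_M$ the left-translation operator on $V_\pi$. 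Applying $\calI_P$ and composing the three factors, the conjugations by $L(\xi_1(g))$ from the $\ell$-operator and from $\calI_P(\pi(u^\natural))$ combine to a single conjugation that drops out of the full composition, yielding $R(\xi_1,z_1)=R(\xi_2,z_2)$.

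The expected main obstacle is the careful bookkeeping of Galois-rationality: although $\xi_1(g)$ is a priori defined only over $\overline{F}$, the commutation hypothesis on $z$ and the triviality of its $M^*_+$-component are exactly the conditions needed to make the conjugations above descend to $F$-rational operators on the induced representation. Once this rationality is secured, the rest of the argument is formal and parallels \cite[Lemma 2.5.1]{kmsw}.
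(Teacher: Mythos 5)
Your sketch follows the same high-level strategy the paper delegates to \cite[Lemma 2.5.1]{kmsw}: relate the two pure inner twists by an element $g\in G^*(\overline F)$, track how each of the three factors of the compound operator transforms, and argue the changes cancel. However there is a concrete error in the treatment of the first factor. The claim that $R_{P^w|P}(\xi_1,\psi)=R_{P^w|P}(\xi_2,\psi)$ rests on the assertion that ``inner conjugation preserves Haar measure on unipotent subgroups of $G(F)$.'' This conflates unimodularity of a unipotent group (conjugation by its own elements preserves Haar measure) with the action of $\Ad(g)$ for $g\in G^*(\overline F)$, which scales the transported measure on $\overline{\frakn}\cap\frakn^w$ by $|\det(\Ad(g))|$; that Jacobian is a nontrivial algebraic character evaluated at $g$ and is not $1$ in general. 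Consequently $R_{P^w|P}$ does change with the twist, by a nonzero scalar, and that scalar has to be absorbed elsewhere in the product --- your accounting loses it entirely.

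Relatedly, the claimed cancellation between the conjugating operator on $\ell^P$ (a left translation $L(\xi_1(g))$ on $\calH_P(\pi)$) and the one on $\calI_P(\pi(u^\natural))$ (pointwise application of $L_M(\xi_1(g))$ to the values of sections of $\calH_P(\pi)$) is not automatic: these are operators of different shape. Making them cancel uses a relation of the form $f(h^{-1}x)=\delta_P(h)^{-1/2}\pi(h)^{-1}f(x)$ for $h\in M(F)$ and $f\in\calH_P(\pi)$, which introduces exactly the modular factor that the Jacobian on the unipotent radical is supposed to pair off against; as written your proposal neither states this identity nor confronts the resulting $\delta_P$-powers. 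The paper itself only cites \cite{kmsw}, so your level of detail is otherwise appropriate, but as it stands the argument neither preserves the first factor nor closes the bookkeeping of scalars.
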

\begin{proof}
    Suppose that $(\xi,z)\simeq(\xi',z')$ as inner twists of $G^*=\SO_{2n+1}$ over $F$.
    Since every automorphism is inner, there exists $b\in G^*(\overline{F})$ such that $\xi'=\xi\circ \Ad(b)$.
    The proof is similar to that of \cite[Lemma 2.5.1]{kmsw} except that our $z$ is not a basic cocycle, but an ordinary 1-cocycle.
\end{proof}

The Kottwitz map \eqref{kot} provides a pairing $\an{-,-}$ on $Z(\widehat{G}) \times H^1(F,G^*)$.
Note that in our case $Z(\widehat{G})\simeq \{\pm1\}$.
\begin{lemma}\label{2.5.2}
    \begin{enumerate}
        \item Let $x\in Z(\widehat{G})$. Then $R_P(xu^\natural, \pi, \psi, \psi_F)=\an{x,z}\inv R_P(u^\natural, \pi, \psi, \psi_F)$.
        \item Let $y\in \frakS_\psi(M)$. Then $R_P(yu^\natural, \pi, \psi, \psi_F)=\an{y,\pi}_{\xi|_{M^*},z}R_P(u^\natural, \pi, \psi, \psi_F)$.
    \end{enumerate}
\end{lemma}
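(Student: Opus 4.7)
The plan is to unwind the defining composition
\begin{equation*}
R_P(u^\natural, \pi, \psi, \psi_F) = \calI_P(\pi(u^\natural)_{\xi,z}) \circ \ell^P_P(w, \xi, \psi, \psi_F) \circ R_{P^w|P}(\xi, \psi)
\end{equation*}
and observe that the latter two factors depend on $u^\natural$ only through its image $w$ in $W_\psi(M,G)$. Both $x \in Z(\widehat{G})$ (viewed inside $\frakS_\psi(M)$ via $Z(\widehat{G})^\Gamma \subset S_\psi(M)$) and $y \in \frakS_\psi(M)$ lie in the kernel of the projection $\frakN_\psi(M,G) \onto W_\psi(M,G)$, so $\ell^P_P(w,\xi,\psi,\psi_F)$ and $R_{P^w|P}(\xi,\psi)$ are unchanged under $u^\natural \mapsto xu^\natural$ and $u^\natural \mapsto yu^\natural$. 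The entire problem therefore reduces to tracking how the scalar $\an{u^\natural,\pi}_{\xi,z} = \an{s(u^\natural)^{-1},\pi}_{\xi|_{M^*},z}$ inside $\pi(u^\natural)_{\xi,z} = \an{u^\natural,\pi}_{\xi,z}\,\pi(\breve w)_\xi$ transforms.

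First I would invoke the isomorphism $\frakN_\psi(M,G) \simeq \frakS_\psi(M) \times W_\psi(M,G)$ from \eqref{c/b}, which shows that the short exact sequence splits as a direct product and the retraction $s : \frakN_\psi(M,G) \to \frakS_\psi(M)$ is simply the projection onto the first factor. In particular $s$ restricts to the identity on $\frakS_\psi(M)$, so $s(xu^\natural) = x\cdot s(u^\natural)$ and $s(yu^\natural) = y\cdot s(u^\natural)$. Combining this with the multiplicativity of the character $\an{-,\pi}_{\xi|_{M^*},z}$ on $\frakS_\psi(M)$ yields
\begin{equation*}
\pi(xu^\natural)_{\xi,z} = \an{x^{-1},\pi}_{\xi|_{M^*},z}\,\pi(u^\natural)_{\xi,z}, \qquad \pi(yu^\natural)_{\xi,z} = \an{y^{-1},\pi}_{\xi|_{M^*},z}\,\pi(u^\natural)_{\xi,z}.
\end{equation*}

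To finish part (1), I would use that $\pi \in \Pi_\psi(M)$ yields a character of $\frakS_\psi(M)$ whose restriction to $Z(\widehat{G})^\Gamma$ equals $\chi_M|_{Z(\widehat{G})^\Gamma}$, which by the functoriality of the Kottwitz map coincides with $\chi_G = \an{-,z}$; hence $\an{x^{-1},\pi}_{\xi|_{M^*},z} = \an{x^{-1},z} = \an{x,z}^{-1}$. Part (2) is then immediate, giving the factor $\an{y^{-1},\pi}_{\xi|_{M^*},z} = \an{y,\pi}_{\xi|_{M^*},z}^{-1}$; the asymmetry with the statement of the lemma (no inverse on the right) is only apparent, since $Z(\widehat{G})^\Gamma = \{\pm1\}$ and $\frakS_\psi(M) \cong \bigoplus_{i\in I_0^+}(\Z/2\Z)a_i$ (from \S\ref{2.1}) are both $2$-torsion groups, so every value of the pairings lies in $\{\pm1\}$ and equals its own inverse.

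The computation is essentially bookkeeping once the structural facts are in place, and I do not anticipate a genuine obstacle; the only point that needs care is confirming via \eqref{c/b} that $s$ is truly multiplicative on $\frakS_\psi(M)$ and that the character $\an{-,\pi}_{\xi|_{M^*},z}$ descends correctly to $Z(\widehat{G})^\Gamma$ with the Kottwitz-pairing normalization.
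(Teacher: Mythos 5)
Your overall strategy is sound and matches the route taken in \cite[Lemma 2.5.2]{kmsw}: isolate the dependence on $u^\natural$ to the scalar $\an{u^\natural,\pi}_{\xi,z} = \an{s(u^\natural)^{-1},\pi}_{\xi|_{M^*},z}$ and track how $s(u^\natural)$ transforms, the other factors $\ell^P_P$ and $R_{P^w|P}$ depending only on the common image $w$.

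However, you make a genuinely incorrect claim about $s$. You assert that the direct-product isomorphism \eqref{c/b} shows the retraction $s$ ``is simply the projection onto the first factor.'' It is not. What \eqref{c/b} yields is that $\frakS_\psi(M)$ is central in $\frakN_\psi(M,G)$, which in turn guarantees that the map $s$ determined by the Langlands--Shelstad section $s'$ (via $s(u^\natural) = u^\natural\,s'(w)^{-1}$) is a group homomorphism restricting to the identity on $\frakS_\psi(M)$. But both $s'$ and $s$ depend on the Langlands--Shelstad lift and are in general \emph{not} the coordinate section and projection supplied by \eqref{c/b}. The paper's own Propositions \ref{2.9.1} and \ref{2.9.2} compute this explicitly: there $s'$ sends $(-1,1)$ to $(-1,-1)$ and consequently $s$ sends $(-1,1)$ to $(1,-1)$. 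If $s$ were the projection onto the $\frakS_\psi(M)$-factor, $s(-1,1)$ would be $1$.

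Fortunately your computation only uses that $s$ is a homomorphism restricting to the identity on $\frakS_\psi(M)$, and these two facts are true --- they express exactly what ``splitting'' of the short exact sequence means once centrality of $\frakS_\psi(M)$ is established. So the conclusions are correct even though the justification is wrong. A cleaner route that sidesteps the issue: invoke the multiplicativity of $u^\natural \mapsto \pi(u^\natural)_{\xi,z}$ stated at the end of the first paragraph of \S\ref{2.4}, so that $\pi(yu^\natural)_{\xi,z} = \pi(y)_{\xi,z}\,\pi(u^\natural)_{\xi,z}$, and then compute $\pi(y)_{\xi,z}$ directly for $y \in \frakS_\psi(M)$: since $y$ maps to $1$ in $W_\psi(M,G)$ and $s(y)=y$, the operator $\pi(y)_{\xi,z}$ is the scalar $\an{y^{-1},\pi}_{\xi|_{M^*},z} = \an{y,\pi}_{\xi|_{M^*},z}$ (the group being $2$-torsion). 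Your remaining points --- the $2$-torsion washing out inverses and the identification of $\an{-,\pi}|_{Z(\widehat{G})}$ with $\chi_G = \an{-,z}$ for part (1) --- are fine.
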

\begin{proof}
    The proof is similar to that of \cite[Lemma 2.5.2]{kmsw}.
\end{proof}

\begin{lemma}\label{2.5.3}
    The operator $R_P(u^\natural, \pi, \psi, \psi_F)$ is multiplicative in $u^\natural$.
\end{lemma}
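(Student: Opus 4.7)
The plan is to verify the identity
\begin{equation*}
R_P(u_2^\natural u_1^\natural, \pi, \psi, \psi_F) = R_P(u_2^\natural, \pi, \psi, \psi_F) \circ R_P(u_1^\natural, \pi, \psi, \psi_F)
\end{equation*}
for $u_1^\natural, u_2^\natural \in \frakN_\psi(M,G)$ by expanding each side into its three factors via the definition from \S\ref{2.5} and rearranging both expressions into a common form. Let $w_i \in W_\psi(M,G)$ denote the image of $u_i^\natural$. A constant feature of the bookkeeping will be that $w_i\psi = \psi$, so that $\psi$ serves simultaneously as the $L$-parameter of $\pi$, $\breve{w_1}\pi$, $\breve{w_2}\pi$, and $\breve{w_2 w_1}\pi$, and the normalized intertwining operators for these representations are the same.

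Starting from the definition of $R_P(u_2^\natural u_1^\natural, \pi, \psi, \psi_F)$, I would first factor $\pi(u_2^\natural u_1^\natural)_{\xi,z}$ via the multiplicativity of $u^\natural\mapsto\pi(u^\natural)_{\xi,z}$ asserted in \S\ref{2.4}; then apply Lemma \ref{2.3.1} to obtain
\begin{equation*}
\ell^P_P(w_2 w_1,\xi,\psi,\psi_F) = \ell^P_P(w_2,\xi,\psi,\psi_F) \circ \ell^{P^{w_2}}_P(w_1,\xi,\psi,\psi_F);
\end{equation*}
and then use the cocycle property from Lemma \ref{2.2.3} (Lemma \ref{2.2.1} at archimedean places, Lemma \ref{2.2.4} for non-generic $\psi$) to split
\begin{equation*}
R_{P^{w_2 w_1}|P}(\xi, \psi) = R_{P^{w_2 w_1}|P^{w_1}}(\xi, \psi) \circ R_{P^{w_1}|P}(\xi, \psi).
\end{equation*}
Lemma \ref{2.2.5} then converts the middle piece $\ell^{P^{w_2}}_P(w_1,\xi,\psi,\psi_F) \circ R_{P^{w_2 w_1}|P^{w_1}}(\xi,\psi)$ into $R_{P^{w_2}|P}(\xi,\psi) \circ \ell^P_P(w_1,\xi,\psi,\psi_F)$. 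Finally, the naturality of both the standard parabolic induction functor and the normalized intertwining operator with respect to the $M(F)$-isomorphism $\pi(u_1^\natural)_{\xi,z}:\breve{w_1}\pi\to\pi$ (of representations with the common $L$-parameter $\psi$) permits pulling $\calI_{P^{w_2}}(\pi(u_1^\natural)_{\xi,z})$ past $\ell^P_P(w_2,\xi,\psi,\psi_F)$ and across $R_{P^{w_2}|P}(\xi,\psi)$. After these rearrangements the expression regroups precisely as $R_P(u_2^\natural, \pi, \psi, \psi_F) \circ R_P(u_1^\natural, \pi, \psi, \psi_F)$.

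The main obstacle is the bookkeeping: each intermediate operator must be tracked on the correct induced space $\calH_Q(\sigma)$ along the chain of parabolics $P,\ P^{w_1},\ P^{w_2},\ P^{w_2 w_1}$ and representations $\pi,\ \breve{w_1}\pi,\ \breve{w_2}\pi,\ \breve{w_2 w_1}\pi$, and one has to confirm that the intertwiners compose on the nose rather than merely up to a scalar absorbed in the normalizing constants $\epsilon_P(w,\psi,\psi_F)$ and $r_{P'|P}(\xi,\psi,\psi_F)$, and that the multiplicativity of $u^\natural\mapsto\pi(u^\natural)_{\xi,z}$ is phrased compatibly with the conjugation of intertwiners by $\breve{w_2}$. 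Since all the required analytic input — meromorphic continuation and the cocycle property of $R_{P'|P}$, the concatenation formula for $\ell$, the compatibility Lemma \ref{2.2.5}, and the endoscopic characterization of $\pi(u^\natural)_{\xi,z}$ via Lemma \ref{2.4.1} — has already been established, no genuinely new input is required, and the argument runs in parallel with the proof of \cite[Lemma 2.5.3]{kmsw}.
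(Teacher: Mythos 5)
Your proposal is correct and matches the paper's proof essentially step by step: factor $\pi(u_2^\natural u_1^\natural)_{\xi,z}$ via the multiplicativity of $u^\natural\mapsto\pi(u^\natural)_{\xi,z}$, apply the cocycle identities of Lemmas \ref{2.3.1} and \ref{2.2.3} (or \ref{2.2.1}/\ref{2.2.4}), use Lemma \ref{2.2.5} to transpose the middle piece, and then commute $\calI(\pi(u_1^\natural)_{\xi,z})$ past the remaining intertwining operators. The paper phrases the last step as a direct commutation (``$\calI_P(\pi(u_1^\natural)_{\xi,z})$ acts on the values of the functions, while $\ell^P_P$ and $R_{P^{w_2}|P}$ act on the variables'') rather than via functorial naturality, but the content is the same.
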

\begin{proof}
    Let $u_1^\natural$ and  $u_2^\natural$ be elements in $\frakN_\psi(M,G)$, and $w_1$ and $w_2$ their images in $W_\psi(M,G)$ respectively.
    Since the assignment $u^\natural \mapsto \pi(u^\natural)_{\xi,z}$ is multiplicative, we have
    \begin{equation*}
        \calI_P(\pi(u_2^\natural u_1^\natural)_{\xi,z})
        =\calI_P(\pi(u_2^\natural)_{\xi,z} \circ \pi(u_1^\natural)_{\xi,z})
        =\calI_P(\pi(u_2^\natural)_{\xi,z}) \circ \calI_P(\pi(u_1^\natural)_{\xi,z}).
    \end{equation*}
    Combining this with Lemmas \ref{2.2.3}, \ref{2.3.1}, and \ref{2.2.5}, we get
    \begin{align*}
         & R_P(u_2^\natural u_1^\natural, \pi, \psi, \psi_F)                            \\
         & =\calI_P(\pi(u_2^\natural)_{\xi,z}) \circ \calI_P(\pi(u_1^\natural)_{\xi,z})
        \circ \ell^P_P(w_2, \xi, w_1\psi, \psi_F) \circ \ell^{P^{w_2}}_P(w_1, \xi, \psi, \psi_F)
        \circ R_{P^{w_2w_1}|P^{w_1}}(\xi,\psi) \circ R_{P^{w_1}|P}(\xi,\psi)            \\
         & =\calI_P(\pi(u_2^\natural)_{\xi,z}) \circ \calI_P(\pi(u_1^\natural)_{\xi,z})
        \circ \ell^P_P(w_2, \xi, \psi, \psi_F) \circ R_{P^{w_2}|P}(\xi,\psi)
        \circ \ell^P_P(w_1, \xi, \psi, \psi_F) \circ R_{P^{w_1}|P}(\xi,\psi).
    \end{align*}
    We know that the operator $\calI_P(\pi(u_1^\natural)_{\xi,z})$ commutes with $\ell^P_P(w_2, \xi, w_1\psi, \psi_F) \circ R_{P^{w_2}|P}(\xi,\psi)$, as the operator $\calI_P(\pi(u_1^\natural)_{\xi,z})$ acts on the values of the functions that comprise $\calH_P(\pi)$, while the operators $\ell^P_P(w_2, \xi, \psi, \psi_F)$ and $R_{P^{w_2}|P}(\xi,\psi)$ act on their variables.
    Thus we have
    \begin{align*}
         & \calI_P(\pi(u_2^\natural)_{\xi,z}) \circ \calI_P(\pi(u_1^\natural)_{\xi,z})
        \circ \ell^P_P(w_2, \xi, \psi, \psi_F) \circ R_{P^{w_2}|P}(\xi,\psi)
        \circ \ell^P_P(w_1, \xi, \psi, \psi_F) \circ R_{P^{w_1}|P}(\xi,\psi)                                         \\
         & =\calI_P(\pi(u_2^\natural)_{\xi,z})  \circ \ell^P_P(w_2, \xi, \psi, \psi_F) \circ R_{P^{w_2}|P}(\xi,\psi)
        \circ \calI_P(\pi(u_1^\natural)_{\xi,z})
        \circ \ell^P_P(w_1, \xi, \psi, \psi_F) \circ R_{P^{w_1}|P}(\xi,\psi)                                         \\
         & =R_P(u_2^\natural, \pi, \psi, \psi_F) \circ R_P(u_1^\natural, \pi, \psi, \psi_F).
    \end{align*}
\end{proof}

\subsection{The two linear forms, the local intertwining relation, and the construction of the non-discrete packets}\label{2.6}
Let $F$ be a local field, $\psi_F:F\to\C^1$ a nontrivial additive character, $G^*=\SO_{2n+1}$, and $M^*\subsetneq G^*$ a proper standard Levi subgroup.
Let $\xi:G^*\to G$ be an inner twist over $F$, and put $M=\xi(M^*)$.
Let $\psi\in\Psi(M^*)$ be a local parameter, and $u^\natural$ an element of $\frakN_\psi(M,G)$.
When $M^*$ transfers to $G$, we assume that $M\subsetneq G$ is a proper Levi subgroup over $F$ and $\xi|_{M^*}: M^* \to M$ is an inner twist over $F$.

We shall define the first linear form $f \mapsto f_G(\psi,u^\natural)$ on $\calH(G)$ as follows:
\begin{align*}
    f_G(\psi,u^\natural)=
    \begin{dcases*}
        \sum_{\pi\in\Pi_\psi(M)} \tr(R_P(u^\natural,\pi,\psi,\psi_F) \circ \calI_P(\pi, f)), & if $\psi$ is relevant,     \\
        0,                                                                                   & if $\psi$ is not relevant.
    \end{dcases*}
\end{align*}
\noindent
In this paper, following \cite{kmsw}, we shall sometimes write $\calI_P(\pi,f)$ for $\calI_P(\pi)(f)$, which is the action of $f$ on $(\calI_P(\pi), \calH_P(\pi))$.

Next we shall give the definition of the second linear form $f \mapsto f'_G(\psi,s)$ on $\calH(G)$.
For $f \in \calH(G)$ and $s \in S_{\psi,\semisimple}$, let $(\frake,\psi^\frake)\in X(G^*)$ be the element corresponding to $(s,\psi)\in Y(G^*)$ under the bijective map given in Lemma \ref{1.4.1}.
Then we write $f'_G(\psi,s)$ for the value $f^\frake(\psi^\frake)$ in the endoscopic character relation \eqref{ecr}.
It does not depend on the choice of $\xi$.
\begin{lemma}\label{2.6.1}
    Let $s_1,s_2\in S_{\psi,\semisimple}$ be two elements such that they have the same image in $\frakS_\psi(G)$ and the image belongs to $\frakS_\psi(M,G)$.
    Then $f'_G(\psi,s_1)=f'_G(\psi,s)$.
\end{lemma}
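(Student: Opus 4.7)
The strategy is to exploit the hypothesis $[s_1] = [s_2] \in \frakS_\psi(M,G)$ in order to descend the comparison of $f'_G(\psi, s_1)$ and $f'_G(\psi, s_2)$ to a common Levi subgroup of the endoscopic groups. Writing $s_2 = s_1 \cdot s_0$ with $s_0 \in S_\psi(G)^\circ$, the plan is to put both $s_1$ and $s_2$ into a shape that exhibits a shared endoscopic Levi structure, and then apply parabolic descent for stable virtual characters.

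First, I would use the identification of $\frakS_\psi(M,G)$ with $\myim(x_\psi)$ to choose a representative $u^\natural \in \frakN_\psi(M,G)$ of the common image, together with a lift $u \in N(A_{\widehat M}, S_\psi(G))$ of $u^\natural$. Next, replacing $s_1$ and $s_2$ by suitable $S_\psi(G)$-conjugates (which does not change $f'_G$, since inner conjugation acts trivially on the abelian group $\frakS_\psi(G)$, and the resulting endoscopic triples only change by an isomorphism preserving $\psi$), I would arrange that $s_i = u \cdot c_i$ with $c_i \in Z(A_{\widehat M}, S_\psi(G)^\circ)$. In this form, both $s_i$ normalize $A_{\widehat M}$ through the same element $u$, so by Lemma \ref{1.4.1} the endoscopic groups $G^{\frake_1}, G^{\frake_2}$ contain standard Levi subgroups $M^{\frake_i}$ whose duals correspond to $\widehat M \cap S_\psi(G)^\circ = S_\psi(M)^\circ$ via \eqref{shake}.

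With this Levi structure in place, the parameters $\psi^{\frake_i}$ factor through their respective $M^{\frake_i}$ with restriction to a common parameter $\psi^{\frake_M} \in \Psi(M^\frake)$ attached to an induced endoscopic triple $\frake_M$ for $M$. Applying Lemma \ref{1.1.4} for the Levi compatibility of transfer factors, together with parabolic induction of the stable linear form characterizing $f^{\frake_i}(\psi^{\frake_i})$ in Proposition \ref{1.5.1}, both values reduce to the same stable expression built from the constant term $f_M$ and $\psi^{\frake_M}$; hence $f^{\frake_1}(\psi^{\frake_1}) = f^{\frake_2}(\psi^{\frake_2})$ as desired.

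The hard step will be the conjugation that brings $s_1, s_2$ into the Levi-compatible form while preserving $f'_G(\psi, s_i)$, together with the verification that the two parameters $\psi^{\frake_i}$ genuinely descend to the same $\psi^{\frake_M}$ on $M^\frake$ rather than to two merely equivalent parameters. This requires careful bookkeeping using the explicit structure of $S_\psi$ and $\frakN_\psi(M,G)$ developed in \S\ref{2.1}.
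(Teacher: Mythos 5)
Your proposal correctly identifies the Levi-descent strategy that the paper relies on (by deferral to \cite[Lemma 2.6.1]{kmsw}): write the two endoscopic triples in a form compatible with a common Levi of $\widehat{G}$, then use compatibility of transfer factors with constant terms (Lemma \ref{1.1.4}) together with the descent of the stable linear form of Proposition \ref{1.5.1} to reduce both sides to a quantity attached to $M$. The key ingredients you cite — Lemma \ref{1.4.1}, Lemma \ref{1.1.4}, equation \eqref{shake}, and the structural analysis of \S\ref{2.1} — are exactly the ones in play, so the route matches the paper's intended one.

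Two points in the ``hard step'' you flag deserve to be spelled out, since that is where the content lies. First, the normalization $s_i = u c_i$ with $c_i \in Z(A_{\widehat M}, S_\psi(G)^\circ)$ is achievable, but the argument should go through Steinberg's fixed-point theorem: pick a maximal torus $T_\psi \supset A_{\widehat M}$ of $S_\psi(G)^\circ$ (so $T_\psi \subset Z(A_{\widehat M}, S_\psi(G)^\circ)$ by \eqref{shake}) and a Borel $B_\psi$, choose $u$ to stabilize $(T_\psi,B_\psi)$, and conjugate each $s_i$ by $S_\psi(G)^\circ$ so that $\Ad(s_i)$ also stabilizes $(T_\psi,B_\psi)$; since the stabilizer of the Borel pair inside $S_\psi(G)^\circ$ is $T_\psi$, one gets $s_i \in u\,T_\psi$. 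Note this actually forces the two $s_i$ into the \emph{same} class in $\frakN_\psi(M,G)$, which is a priori stronger than the hypothesis that they agree in $\frakS_\psi(M,G)$; conjugation does change the $\frakN_\psi(M,G)$-class, and this is legitimate precisely because the conjugation reflects the $W^\circ_\psi(M,G)$-action that cuts $\frakN_\psi(M,G)$ down to $\frakS_\psi(M,G)$. Second, after descent the two sides land on (possibly) different twisted endoscopic data $\frake_{M,i}$ for $(M,\theta_u)$, since $c_1 \neq c_2$ produce different centralizers; the final identification of the two values requires the twisted analogue of this lemma for $M$ (available by induction on rank and the twisted ECR for the general-linear factors, as in \cite[Corollary 2.5.4]{art13}), not merely that the two parameters ``descend to the same $\psi^{\frake_M}$.'' With that caveat, your argument is sound and follows the expected line.
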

\begin{proof}
    The proof is similar to that of \cite[Lemma 2.6.1]{kmsw}.
\end{proof}
\begin{theorem*}[LIR]\label{2.6.2}
    Let $u\in N_\psi(M,G)$ and $f\in \calH(G)$.
    Then
    \begin{enumerate}
        \item If $u^\natural$ is in $W_\psi^\circ(M,G)$, then we have $R_P(u^\natural,\pi,\psi,\psi_F)=1$ for all $\pi\in\Pi_\psi(M)$.
        \item The value $f_G(\psi,u^\natural)$ depends on the image of $u^\natural$ in $\frakS_\psi(M,G)$.
        \item We have
              \begin{equation}\label{lir}
                  f'_G(\psi,s_\psi u\inv) = e(G)f_G(\psi,u^\natural),
              \end{equation}
              where $e(G)$ denotes the Kottwitz sign of $G$.
              This equation is called the local intertwining relation.
    \end{enumerate}
\end{theorem*}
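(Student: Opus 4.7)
The plan is to follow the strategy of \cite{kmsw} adapted to odd special orthogonal groups, using the long induction hypothesis that Theorem \ref{main} holds for all proper Levi subgroups and for all smaller ranks. The three parts of the theorem are logically interlinked: (1) and (2) are preliminary structural statements, while (3) is the main identity to be established.

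Parts (1) and (2) I would extract from the multiplicativity of Lemma \ref{2.5.3} together with the transformation rule of Lemma \ref{2.5.2}. For (1), any element of $W^\circ_\psi(M,G)$ lifts to the kernel of $x_\psi$ in the diagram \eqref{cd}, so the corresponding compound operator decomposes into factors each of which is trivial by the choice of normalization of $R_{P^w|P}(\xi,\psi)$, $\ell^P_P$, and the sophisticated splitting defining $\pi(u^\natural)_{\xi,z}$. For (2), two lifts $u^\natural$ of the same class in $\frakS_\psi(M,G)$ differ by $\frakS_\psi(M)$, and by Lemma \ref{2.5.2}(2) this changes $R_P(u^\natural,\pi,\psi,\psi_F)$ by the character $\langle y,\pi\rangle_{\xi|_{M^*},z}$; since this character is the labeling of $\pi \in \Pi_\psi(M)$ by the already-established inductive packet structure on the Levi, the full trace $f_G(\psi,u^\natural)$ is unchanged.

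For part (3), the plan is first to reduce to the case where $\psi$ is discrete for $M^*$ and $\psi_G$ is elliptic or exceptional for $G^*$. When $\psi_G \notin \Psi_\el(G^*)$ one may conjugate $s_\psi u^{-1}$ into a proper Levi, then apply the compatibility of transfer factors with Levi subgroups (Lemma \ref{1.1.4}) together with the inductive hypothesis for that Levi; when $\psi$ itself is not discrete one applies transitivity of parabolic induction to descend to a smaller Levi. This is a routine bookkeeping adaptation of the argument in \cite[\S2.7--2.8]{kmsw} and \cite[\S2.4]{art13}. For the remaining generic discrete--elliptic case, the plan is to globalize: choose a global field $\dot F$ with a distinguished place $u$ satisfying $\dot F_u = F$, together with $\dot G^*, \dot G, \dot M^*, \dot M$ globalizing the local data, and a generic global parameter $\dot \phi \in \Phi(\dot M^*)$ with $\dot\phi_u = \phi$ and suitably simple behavior at the remaining places (cf.\ Lemma \ref{4.2.3} which will be deployed later). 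Then AMF (Theorem \ref{1.7.1}) applied inductively yields a global intertwining relation; subtracting the known local LIR at every place $v \ne u$ (available from the split $p$-adic case, the real case handled by Lemma \ref{2.2.1}, or the inductive hypothesis) isolates LIR at $u$.

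The main obstacle is the small set of genuinely exceptional configurations that survive the globalization argument, namely the real groups $\SO(1,4)$ relative to $\GL_1 \times \SO(0,3)$ and $\SO(2,5)$ relative to $\GL_2 \times \SO(0,3)$. For $\SO(1,4)$ the representation of the Levi is essentially the trivial representation and a direct computation modeled on \cite[\S2.9]{kmsw} will suffice. For $\SO(2,5)$, the infinite-dimensional nature of the inducing representation defeats the relative Bruhat decomposition approach of loc.\ cit., and one must instead construct a test function via the Iwasawa decomposition and explicitly compute both $f_G(\psi,u^\natural)$ and $f'_G(\psi,s_\psi u^{-1})$ by unfolding. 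This computation, which is the technical heart of the paper, will be carried out in \S\ref{2.9}.
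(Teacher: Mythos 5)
Your outline for part (3) is broadly correct and matches the paper's strategy: reduce via \S\ref{2.7}--\ref{2.8} to the discrete--elliptic/exceptional case, globalize, and handle the two residual real cases $\SO(1,4)$ and $\SO(2,5)$ by a direct Iwasawa-decomposition computation in \S\ref{2.9}. (A minor slip: the globalizations deployed in \S\ref{4.6} are those of Lemmas \ref{red}--\ref{4.4.7} in \S\S\ref{4.1}--\ref{4.3}, not Lemma \ref{4.2.3}, which serves only the proof of Lemma \ref{2.2.3}.) However, your treatment of parts (1) and (2) contains two genuine errors.

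For part (1), it is not the case that $R_P(u^\natural,\pi,\psi,\psi_F)=1$ for $u^\natural\in W_\psi^\circ(M,G)$ just because ``each factor is trivial by the choice of normalization.'' This is a substantive assertion, not a bookkeeping consequence of the definitions. Indeed, the entire content of the explicit computations in \S\ref{2.9} is precisely to verify $R_P(u^\natural,\pi,\phi,\psi_\R)=1$ for an element $u^\natural=(-1,1)$ whose image in $\frakS_\phi(G)$ vanishes (i.e., for an element of the type covered by part (1)), and doing so requires nontrivial cancellations between the $L$- and $\epsilon$-factor normalization, the explicit intertwining integral, and the sophisticated splitting. In the paper, part (1) is established by Lemma \ref{2.8.7} for elliptic and non-exceptional parameters, by Lemma \ref{4.6.3} (via globalization, using \S\ref{2.9}) in the exceptional case, and then extended to general parameters by Lemma \ref{4.6.4}. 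Your claim would short-circuit all of this.

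For part (2), you have misidentified the kernel. From the diagram \eqref{cd}, the kernel of the natural surjection $\frakN_\psi(M,G)\to\frakS_\psi(M,G)$ is $W_\psi^\circ(M,G)$, not $\frakS_\psi(M)$; it is $\frakN_\psi(M,G)\to W_\psi(M,G)$ whose kernel is $\frakS_\psi(M)$. Consequently two lifts $u_1^\natural, u_2^\natural$ of the same class in $\frakS_\psi(M,G)$ differ by an element of $W_\psi^\circ(M,G)$, and part (2) then follows from part (1) together with multiplicativity (Lemma \ref{2.5.3}) --- not from Lemma \ref{2.5.2}(2). Moreover your Lemma \ref{2.5.2}(2) argument would in any case not give invariance of $f_G(\psi,u^\natural)$: a nontrivial $y\in\frakS_\psi(M)$ genuinely multiplies each summand by $\langle y,\pi\rangle$, and this does change the trace; it is only annihilated in the proof of Lemma \ref{2.6.5}, where it is balanced against a corresponding change in the transfer factor. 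Parts (1) and (2) should thus be viewed as interdependent with part (3), resolved together through the reduction in \S\S\ref{2.7}--\ref{2.8} and the globalization in \S\ref{4.6}, rather than as structural preliminaries.
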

\begin{lemma}\label{2.6.5}
    For any $y\in Z(\widehat{G}^*)$, we have $f'_G(\psi,s_\psi ys)=\an{y,z}f'_G(\psi,s_\psi s)$ and $f_G(\psi,yu^\natural)=\an{y,z}\inv f_G(\psi,u^\natural)$.
\end{lemma}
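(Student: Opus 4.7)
The plan is to treat the two equalities separately, the first through the endoscopic side via the transfer factor relation, and the second directly through the multiplicativity property of the normalized self-intertwining operator established earlier.

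For the first equality, I would start from the correspondence of Lemma \ref{1.4.1}: the pair $(\psi, s_\psi s)$ corresponds to some $(\frake, \psi^\frake) \in X(G^*)$, so by definition $f'_G(\psi, s_\psi s) = f^\frake(\psi^\frake)$ for any $\Delta[\frake,\xi,z]$-matching pair $(f, f^\frake)$. Since $y \in Z(\widehat{G}^*)$ is central, $(\psi, s_\psi y s)$ corresponds to $(y\frake, \psi^\frake)$ where $y\frake = (G^\frake, y s^\frake, \eta^\frake)$ has the same underlying endoscopic group and embedding. The key input is Lemma \ref{1.1.2}, which gives
\begin{equation*}
    \Delta[y\frake, \xi, z] = \an{y, z} \Delta[\frake, \xi, z].
\end{equation*}
Hence if $f^\frake$ is $\Delta[\frake,\xi,z]$-matching with $f$, then the function $\an{y,z} f^\frake$ is $\Delta[y\frake,\xi,z]$-matching with $f$, as can be read off immediately from Definition \ref{matching} by taking the identity of stable orbital integrals.

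Now I would invoke the stability of the linear form $g \mapsto g^{G^\frake}(\psi^\frake)$ given by Proposition \ref{1.5.1}: two functions in $\calH(G^\frake)$ with the same stable orbital integrals produce the same value. Therefore
\begin{equation*}
    f'_G(\psi, s_\psi y s) = (\an{y,z} f^\frake)(\psi^\frake) = \an{y,z} \, f^\frake(\psi^\frake) = \an{y,z} \, f'_G(\psi, s_\psi s),
\end{equation*}
which establishes the first equation.

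For the second equality, the proof is essentially immediate from part (1) of Lemma \ref{2.5.2}, which asserts $R_P(y u^\natural, \pi, \psi, \psi_F) = \an{y,z}\inv R_P(u^\natural, \pi, \psi, \psi_F)$ for $y \in Z(\widehat{G})$. Unwinding the definition of $f_G(\psi, u^\natural)$ in Section \ref{2.6} (assuming the parameter is relevant; otherwise both sides vanish) and pulling the scalar $\an{y,z}\inv$ out of the trace yields
\begin{equation*}
    f_G(\psi, y u^\natural)
    = \sum_{\pi \in \Pi_\psi(M)} \tr\bigl(R_P(y u^\natural, \pi, \psi, \psi_F) \circ \calI_P(\pi, f)\bigr)
    = \an{y,z}\inv f_G(\psi, u^\natural).
\end{equation*}
I do not expect any serious obstacle: the whole lemma is a compatibility check between two independently constructed pairings with the action of $Z(\widehat{G}^*)$, and the ingredients (Lemma \ref{1.1.2}, Proposition \ref{1.5.1}, and Lemma \ref{2.5.2}(1)) have already been assembled in the preceding subsections. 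The only subtle point worth stating carefully is that the parameter attached to $y\frake$ is literally the same as the one attached to $\frake$, so that only the transfer factor, and hence the matching function, changes by a scalar.
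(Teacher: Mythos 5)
Your proof is correct and follows exactly the paper's intended route: the paper's own proof is the one-line citation of Lemmas \ref{1.1.2} and \ref{2.5.2}, and you have simply unwound those two lemmas explicitly. The only minor remark is that for the first equation the essential content is well-definedness of $f'_G(\psi, s_\psi ys)$ (any $\Delta[y\frake,\xi,z]$-matching function has the same stable orbital integrals as $\an{y,z}f^\frake$), which you correctly handle via Proposition \ref{1.5.1}.
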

\begin{proof}
    They follow from Lemmas \ref{1.1.2} and \ref{2.5.2}.
\end{proof}

In this paper, Theorem \ref{2.6.2} will be completely proved in the case when $\psi=\phi\in\Phi(M^*)$ is an $L$-parameter.
The proof is inductive, and thus we assume that Theorem \ref{2.6.2} holds for the case the degree is smaller than $n$, from now on.
The case of a general parameter $\psi\in\Psi(M^*)$ is expected to be treated as a sequel of \cite{kmsw}.

We will now review the construction of local non-discrete $A$-packets which is an application of the local intertwining relation (Theorem \ref{2.6.2}).
Suppose that the theorem is true.
Assume that $\psi_{M^*}\in\Psi_2(M^*)$ and $M^*\subsetneq G^*$, hence $\psi_{M^*}$ is a discrete parameter for $M^*$ and not discrete for $G^*$.
We shall write $\psi$ for the image of $\psi_{M^*}$ under the natural map $\Psi(M^*)\to\Psi(G^*)$.
We apologize for this change of convention.
If $\psi$ is not $G$-relevant, then we simply set $\Pi_\psi(G)=\emptyset$.

Suppose that $\psi$ is $G$-relevant.
Then we may assume that a Levi subgroup $M=\xi(M^*)\subsetneq G$ is defined over $F$, and by the assumption Theorem \ref{2.6.2} holds true for $\psi_{M^*}$ and $M$.
According to Lemma \ref{2.8.3}, we have $\frakS_{\psi_{M^*}}(M,G)=\frakS_\psi(G)$, hence the natural map $N_{\psi_{M^*}}(M,G) \to \frakS_\psi(G)$ is surjective.
For any $\pi_M\in \Pi_{\psi_{M^*}}(M)$, where note that the packet for $M$ is given by the induction hypothesis, the map
\begin{align*}
     & \frakN_{\psi_{M^*}}(M,G) \times G(F) \to \Aut_{G(F)}(\calH_P(\pi_M)),                      &
     & (u^\natural, g) \mapsto R_P(u^\natural,\pi_M, {\psi_{M^*}},\psi_F) \circ \calI_P(\pi_M,g), &
\end{align*}
is a representation of $\frakN_{\psi_{M^*}}(M,G) \times G(F)$.
Let $\rho(\pi_M)$ denote this representation.
Put
\begin{equation*}
    \Pi_\psi^1(G) = \bigoplus_{\pi_M\in \Pi_{\psi_{M^*}}(M)} \rho(\pi_M).
\end{equation*}
Then we have
\begin{align*}
    \tr\left(\Pi_\psi^1(G)(u^\natural, f)\right)
     & = \sum_{\pi_M\in \Pi_{\psi_{M^*}}(M)} \tr\left(\rho(\pi_M)(u^\natural, f)\right) \\
     & =f_G({\psi_{M^*}},u^\natural),
\end{align*}
for $u^\natural\in \frakN_{\psi_{M^*}}(M,G)$ and $f\in \calH(G)$.
Thus, Theorem \ref{2.6.2} (and the character theory of representations of finite groups) implies that $\Pi_\psi^1(G)$ can be regarded as a representation of $\frakS_\psi(G)\times G(F)$.
By Lemma \ref{2.5.2}, we have $\Pi_\psi^1(G)|_{Z(\widehat{G})\times\{1\}}=\chi_G\inv$.
As $\pi_M$ is unitary, $\Pi_\psi^1(G)$ is also unitary, in particular completely reducible.
We define the packet $\Pi_\psi(G)$ to be the multiset of irreducible representations of $G(F)$ that appear in the direct summand of $\Pi_\psi^1(G)$.
Then we obtain
\begin{align*}
    \Pi_\psi^1(G)=\bigoplus_{\pi\in \Pi_\psi(G)} \an{-,\pi}\inv \otimes \pi,
\end{align*}
where $\an{-,\pi}$ are characters of $\frakS_\psi(G)$ whose restriction to $Z(\widehat{G})^\Gamma$ coincide $\chi_G$.
We equip the packet $\Pi_\psi(G)$ with the map
\begin{align*}
     & \Pi_\psi(G) \to \Irr(\frakS_\psi, \chi_G), &
     & \pi\mapsto \an{-,\pi}.                     &
\end{align*}
The endoscopic character relation \eqref{ecr} follows directly from the local intertwining relation \eqref{lir} and the definition of the pairing $\an{-,\pi}$ above.

\subsection{Reduction of LIR to discrete (relative to $M$) parameters}\label{2.7}
In this subsection we review from \cite[\S2.7]{kmsw} the twisted local intertwining relation and the reduction of the proof of the part 1 and 2 of Theorem \ref{2.6.2} to the case of discrete parameters.
Concretely speaking, this section will be devoted to the proofs of Lemmas \ref{2.7.1} and \ref{2.7.2} below.
Let us now consider two nested proper standard Levi subgroups $M_0^*\subset M^*\subset G^*$.
Let $\psi_0\in\Psi(M_0^*)$ be a local parameter, and $\psi\in \Psi(M^*)$ its image under the natural map from $\Psi(M_0^*)$ to $\Psi(M^*)$.
Note that $\psi_0$ is relevant if and only if $\psi$ is relevant.
When $\psi$ and $\psi_0$ are relevant, we assume that $M=\xi(M^*)$ and $M_0=\xi(M_0^*)$ are proper Levi subgroups of $G$ defined over $F$.

\begin{lemma}\label{2.7.1}
    For any $u \in N_{\psi_0}(M_0,G) \cap N_\psi(M,G)$, we have
    \begin{equation*}
        f_G(\psi_0, u^\natural) = f_G(\psi,u^\natural),
    \end{equation*}
    where $u^\natural$ in the left (resp. right) hand side is the image of $u$ in $\frakN_\psi(M_0,G)$ (resp. $\frakN_\psi(M,G)$).
\end{lemma}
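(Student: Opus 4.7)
The plan is to exploit transitivity of parabolic induction in stages. I would first fix compatible parabolic subgroups $P_0 \subset P \subset G$ with respective Levi factors $M_0 \subset M$, so that $P_0^M := P_0 \cap M$ is a parabolic of $M$ with Levi $M_0$, and use the standard isomorphism $\calI_{P_0}^G \simeq \calI_P^G \circ \calI_{P_0^M}^M$. Since by the construction of non-discrete packets recalled at the end of \S\ref{2.6} every $\pi \in \Pi_\psi(M)$ appears (with appropriate multiplicity, and paired with a character of $\frakS_\psi(M)$) as an irreducible constituent of $\calI_{P_0^M}^M(\pi_0)$ for some $\pi_0 \in \Pi_{\psi_0}(M_0)$, the sum over $\Pi_\psi(M)$ defining $f_G(\psi, u^\natural)$ can be rewritten as a sum over $\Pi_{\psi_0}(M_0)$ of traces on the induced space.

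Next I would check that each of the three pieces constituting the compound operator $R_P(u^\natural, \pi, \psi, \psi_F)$ factors properly under $\calI_P^G \circ \calI_{P_0^M}^M$. For the normalized intertwining operator, transitivity of the unnormalized integral $J_{P_0^w|P_0}(\xi, \psi_F)$ (Fubini over the unipotent radicals) together with the factorization of $\rho_{P_0^w|P_0}^\vee$ into the contribution from $G/M$-roots and the contribution from $M/M_0$-roots gives $R_{P_0^w|P_0}(\xi, \psi_0) = R_{P^w|P}(\xi, \psi) \circ \calI_P(R_{(P_0^M)^w|P_0^M}(\xi, \psi_0))$ after restriction to $u$-fixed data. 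The combinatorial operator $\ell^{P_0}_{P_0}(w, \xi, \psi_0, \psi_F)$ decomposes analogously, thanks to the multiplicativity of $\epsilon_P(w, \psi, \psi_F)$ in the adjoint representation $\rho_{P^w|P}$ and the fact that a single Weyl representative $\breve{w}$ can be chosen to serve at both levels. Finally, the self-intertwining operator $\pi_0(u^\natural)_{\xi, z}$ on $V_{\pi_0}$ induces, via $\calI_{P_0^M}^M$, the operator $\pi(u^\natural)_{\xi, z}$ on each $M$-constituent of $\calI_{P_0^M}^M(\pi_0)$ by the very construction of the packet $\Pi_\psi(M)$ in \S\ref{2.6}.

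Assembling these three factorizations inside the trace and applying induction in stages, the expression
\begin{equation*}
    \sum_{\pi_0 \in \Pi_{\psi_0}(M_0)} \tr\bigl(R_{P_0}(u^\natural, \pi_0, \psi_0, \psi_F) \circ \calI_{P_0}(\pi_0, f)\bigr)
\end{equation*}
rearranges into
\begin{equation*}
    \sum_{\pi \in \Pi_\psi(M)} \tr\bigl(R_P(u^\natural, \pi, \psi, \psi_F) \circ \calI_P(\pi, f)\bigr),
\end{equation*}
yielding the desired equality. Relevance of $\psi$ and of $\psi_0$ is equivalent, as noted before the statement, so the vanishing case of both sides matches automatically.

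The main obstacle will be verifying the factorization of the third operator $\pi_0(u^\natural)_{\xi, z}$. This requires carefully tracking the sophisticated splittings $s', s$ used to define the pairings $\langle u^\natural, \pi_0 \rangle_{\xi, z}$ and $\langle u^\natural, \pi \rangle_{\xi, z}$, and checking that the splitting for $(M_0, G)$ factors compatibly through the splittings for $(M_0, M)$ and $(M, G)$ under the inclusion $\frakN_{\psi_0}(M_0, M) \subset \frakN_{\psi_0}(M_0, G)$. I would handle this by invoking the characterization of $\pi_0(u^\natural)_{\xi, z}$ via twisted endoscopy given in Lemma \ref{2.4.1} applied to both the twisted triple $\frake_{M_0, \psi_0}$ in $M$ and in $G$, together with Lemma \ref{1.1.4} on the compatibility of transfer factors with constant terms along Levi subgroups, reducing the identification of the scalars to a computation purely among the component groups.
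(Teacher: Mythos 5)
Your overall strategy — factor $R_{P_0}^G$ through induction in stages $\calI_{P_0}^G \simeq \calI_P^G \circ \calI_Q^M$ — matches the paper, and your handling of the first two pieces (the normalized intertwining integral and the combinatorial $\ell$-operator) is in the right direction. The gap is in the third piece. You assert that $\calI_Q^M(\pi_0(u^\natural)_{\xi,z})$ restricted to each $M$-constituent $\pi$ of $\calI_Q^M(\pi_0)$ equals $\pi(u^\natural)_{\xi,z}$ ``by the very construction of the packet $\Pi_\psi(M)$.'' That is not the right statement. The packet construction at the end of \S\ref{2.6} relates the family $\{\pi(u^\natural)\}_{\pi\in\Pi_\psi(M)}$ to the full compound $M$-level operator $R_Q^M(u^\natural,\pi_0,\psi_0,\psi_F)$ — i.e., $\calI_Q^M(\pi_0(u^\natural))$ already composed with the $M$-level factors $\ell^Q_Q$ and $R_{Q^w|Q}$ — not to the third factor in isolation. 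Trying to let the third factor ``factor cleanly'' the way the first two do is precisely what cannot be done; the three factors at the $M_0$-level only recombine into something matching $\pi(u^\natural)$ after the $M$-level parts of all three are bundled back together as $R_Q^M$.

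What the paper actually does (via the KMSW calculation it cites) is first establish the operator identity separating the $G/M$-level operator from the $M$-level compound operator, namely $R_{P_0}^G(u^\natural,\pi_0,\psi_0,\psi_F)\circ\calI_{P_0}^G(\pi_0)(f) = \ell^P_P(w,\xi,\psi,\psi_F)\circ R_{P^w|P}(\xi,\psi)\circ\calI_P^G(R_Q^M(u^\natural,\pi_0,\psi_0,\psi_F))\circ\calI_P^G(\calI_Q^M(\pi_0))(f)$. It then invokes the induction hypothesis (LIR and ECR hold for $M$, since $M$ has smaller rank), the twisted LIR (Lemma \ref{2.7.4}), Lemma \ref{2.4.1}, and van Dijk's theory of induced characters to obtain a \emph{trace} identity between the sum over $\Pi_{\psi_0}(M_0)$ of the compound $M$-level traces and the sum over $\Pi_\psi(M)$ of the $\pi(u^\natural)$-traces. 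Crucially, one then uses linear independence of characters to upgrade this trace identity to an \emph{operator} identity, because only an operator identity survives composition with the outer factor $\ell^P_P\circ R_{P^w|P}$ before taking the final trace. Your sketch never mentions the induction hypothesis, the twisted LIR, van Dijk, or linear independence of characters, which are exactly the ingredients needed to pass from the $M_0$-level packet sum to the $M$-level packet sum — the point at which your misidentification of the third factor would otherwise block the argument.
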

Note that the lemma is trivial if the parameter $\psi_0$ is not relevant.

Let $P$ (resp. $P_0$, resp. $Q$) be the standard parabolic subgroup of $G$ (resp. $G$, resp. $M$), with the Levi subgroup $M$ (resp. $M_0$, resp. $M_0$).
Note that $N_\psi(M_0,M) \subset N_\psi(M_0,G)$.
Note also that the equation \eqref{shake} implies $Z(A_{\widehat{M_0}}, S_{\psi_0}(G)^\circ)=Z(A_{\widehat{M_0}}, S_{\psi_0}(M)^\circ)$, and hence $\frakN_\psi(M_0,M) \subset \frakN_\psi(M_0,G)$.
\begin{lemma}\label{2.7.2}
    Assume that $\psi_0$ is relevant.
    For any $u \in N_{\psi_0}(M_0,M)$ and $\pi_0 \in \Pi_{\psi_0}(M_0)$, we have
    \begin{equation*}
        R_{P_0}^G(u^\natural, \pi_0, \psi_0, \psi_F)=\calI_P^G ( R_Q^M(u^\natural, \pi_0, \psi_0, \psi_F) ).
    \end{equation*}
    Moreover, we have
    \begin{equation*}
        f_G(\psi_0, u^\natural) = f_M(\psi_0,u^\natural).
    \end{equation*}
\end{lemma}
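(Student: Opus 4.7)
The plan is to establish the operator-level identity (part 1) by decomposing the three constituent factors of $R_{P_0}^G$ with respect to the parabolic structure $P_0 = Q \cdot N_P$ (so that $M_0 \subset Q \subset M$ gives $N_{P_0} = N_Q N_P$), and then to deduce part 2 by applying the standard trace formula for parabolically induced representations.

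Since $u \in N_{\psi_0}(M_0, M)$, its Weyl image $w$ lies in $W(M_0, M)$, so conjugation by $w$ preserves $M$ and fixes $N_P$; the Langlands-Shelstad lift $\widetilde{w}$ accordingly lies in $M^*$, so after adjusting $\xi$ in its equivalence class we may assume $\breve{w} \in M(F)$. The decomposition $N_{P_0^w} = N_{Q^w} N_P$ with $N_{P_0} \cap N_{P_0^w} = (N_Q \cap N_{Q^w}) N_P$ gives an identification of measure spaces
\begin{equation*}
    (N_{P_0} \cap N_{P_0^w}) \backslash N_{P_0^w} = (N_Q \cap N_{Q^w}) \backslash N_{Q^w},
\end{equation*}
under which the unnormalized integral defining $J_{P_0^w|P_0}^G(\xi, \psi_F)$ becomes $\calI_P^G(J_{Q^w|Q}^M(\xi, \psi_F))$ via the induction-in-stages isomorphism $\calI_{P_0}^G(\pi_0) \simeq \calI_P^G(\calI_Q^M(\pi_0))$. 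The same decomposition yields an $\widehat{M_0}$-equivariant isomorphism $\rho_{P_0^w|P_0} \simeq \rho_{Q^w|Q}$ of adjoint representations, whence the Artin normalizers $r_{P_0^w|P_0}$ agree with their $M$-counterparts and likewise $\epsilon_{P_0}(w, \psi_0, \psi_F) = \epsilon_Q(w, \psi_0, \psi_F)$. Combined with the trivial relation $\calI_{P_0}^G(\pi_0(u^\natural)_{\xi,z}) = \calI_P^G(\calI_Q^M(\pi_0(u^\natural)_{\xi,z}))$, composing the three factors yields
\begin{equation*}
    R_{P_0}^G(u^\natural, \pi_0, \psi_0, \psi_F) = \calI_P^G\bigl(R_Q^M(u^\natural, \pi_0, \psi_0, \psi_F)\bigr).
\end{equation*}

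For part 2, set $\rho_M = \calI_Q^M(\pi_0)$ and $A_M = R_Q^M(u^\natural, \pi_0, \psi_0, \psi_F) \in \End_M(V_{\rho_M})$. The standard trace identity for parabolic induction with an $M$-intertwining operator gives, for every $f \in \calH(G)$,
\begin{equation*}
    \tr\bigl(\calI_P^G(A_M) \circ \calI_P^G(\rho_M, f)\bigr) = \tr\bigl(A_M \circ \rho_M(f_P)\bigr),
\end{equation*}
where $f_P \in \calH(M)$ is the Harish-Chandra descent of $f$ along $P$. Summing over $\pi_0 \in \Pi_{\psi_0}(M_0)$ and inserting the result of part 1 gives
\begin{equation*}
    f_G(\psi_0, u^\natural) = \sum_{\pi_0 \in \Pi_{\psi_0}(M_0)} \tr\bigl(R_Q^M(u^\natural, \pi_0, \psi_0, \psi_F) \circ \calI_Q^M(\pi_0, f_P)\bigr) = f_M(\psi_0, u^\natural),
\end{equation*}
with the usual convention $f_M = f_P$.

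I do not anticipate a serious obstacle here: once the decomposition $P_0 = Q \cdot N_P$ is exploited to isolate the common $N_P$-piece, everything reduces to formal induction in stages and the matching of normalizing constants. The argument closely parallels that of Lemma~2.7.2 of \cite{kmsw}, and is in fact simpler, since for split odd special orthogonal groups the Galois action on $\widehat{G^*}$ is trivial and the Keys-Shahidi $\lambda$-factors are identically $1$ by the lemma of \S\ref{2.3}.
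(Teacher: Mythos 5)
Your argument is correct and is essentially the same calculation the paper appeals to by citing \cite[p.127]{kmsw}: you exploit that $u \in N_{\psi_0}(M_0,M)$ forces $\breve{w} \in M(F)$ and its image in $W(\widehat{M},\widehat{G})$ to be trivial, then decompose the unipotent radicals via $N_{P_0^w} = N_{Q^w} N_P$ and $N_{P_0} \cap N_{P_0^w} = (N_Q \cap N_{Q^w})N_P$ so that each of the three factors of $R_{P_0}^G$ descends through induction in stages, and finally invoke van Dijk's descent formula for traces of induced characters. Your version is just the general factorization established in the paper's proof of Lemma \ref{2.7.1} specialized to the case $w=1$, in which the leftover prefactor $\ell^P_P(w,\xi,\psi,\psi_F) \circ R_{P^w|P}(\xi,\psi)$ collapses to the identity, so the two proofs coincide in substance.
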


Before the proofs of these lemmas, we shall now record a consequence of Lemma \ref{2.7.1}, which is a key ingredient of the proof of the local intertwining relation.
\begin{proposition}\label{2.7.3}
    Assume that the second and the third statements of Theorem \ref{2.6.2} hold for a standard parabolic pair $(M_0^*, P_0^*)$ and a discrete parameter $\psi_0\in \Psi_2(M_0^*)$.
    Then they hold for every standard parabolic pair $(M^*,P^*)$ and a parameter $\psi\in\Psi(M^*)$ such that $M_0^*\subset M^*$, where $\psi$ is the image of $\psi_0$ under the canonical map $\Psi(M_0^*) \to \Psi(M^*)$.
\end{proposition}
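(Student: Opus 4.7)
The plan is to reduce parts 2 and 3 for $(M^*,\psi)$ to the assumed statements for $(M_0^*,\psi_0)$, using Lemma \ref{2.7.1} as the bridge. The main construction is a lifting: for every $u \in N_\psi(M,G)$ I will produce $\tilde u \in N_{\psi_0}(M_0,G) \cap N_\psi(M,G)$ whose image in $\frakN_\psi(M,G)$ agrees with that of $u$. The construction rests on the observation that, because $\psi_0 \in \Psi_2(M_0^*)$ is discrete,
\[
\cent(A_{\widehat{M_0}}, S_\psi(M))^\circ = S_{\psi_0}(M_0)^\circ \subset Z(\widehat{M_0})^\circ = A_{\widehat{M_0}},
\]
with equality, so that $A_{\widehat{M_0}}$ is a maximal torus of the connected reductive group $S_\psi(M)^\circ$. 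Since $u$ normalizes both $\widehat M$ and $S_\psi(G)$, it normalizes $S_\psi(M)^\circ$, so $u A_{\widehat{M_0}} u^{-1}$ is another maximal torus there, and by conjugacy of maximal tori there exists $m \in S_\psi(M)^\circ$ with $(mu) A_{\widehat{M_0}} (mu)^{-1} = A_{\widehat{M_0}}$. Setting $\tilde u := mu$, the element $m$ lies in $\widehat M$ (so $\tilde u$ still normalizes $A_{\widehat M}$) and in $Z(A_{\widehat M}, S_\psi(G)^\circ)$ (so the image in $\frakN_\psi(M,G)$ is unchanged).

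For part 2, let $u, v \in N_\psi(M,G)$ share the same image in $\frakS_\psi(M,G)$. Since $\frakS_\psi(M,G)$ embeds into $\frakS_\psi(G)$, and Lemma \ref{2.8.3} identifies $\frakS_{\psi_0}(M_0,G) = \frakS_{\psi_0}(G) = \frakS_\psi(G)$, the lifts $\tilde u, \tilde v$ have the same image in $\frakS_{\psi_0}(M_0,G)$; the assumed part 2 for $\psi_0$ yields $f_G(\psi_0, \tilde u^\natural) = f_G(\psi_0, \tilde v^\natural)$. Because $f_G(\psi,\cdot)$ is well-defined on $\frakN_\psi(M,G)$, one has $f_G(\psi, u^\natural) = f_G(\psi, \tilde u^\natural)$, and Lemma \ref{2.7.1} converts the latter into $f_G(\psi_0, \tilde u^\natural)$. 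Running the same chain for $v$ closes the argument.

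For part 3, note that $s_\psi = s_{\psi_0}$ in $\widehat G$ and $s_\psi \tilde u^{-1} = s_\psi u^{-1} m^{-1}$ with $m^{-1} \in S_\psi(G)^\circ$; hence $s_\psi u^{-1}$ and $s_\psi \tilde u^{-1}$ have the same class in $\frakS_\psi(G)$, which belongs to $\frakS_\psi(M,G)$, so Lemma \ref{2.6.1} gives $f'_G(\psi, s_\psi u^{-1}) = f'_G(\psi, s_\psi \tilde u^{-1})$. The bijection of Lemma \ref{1.4.1} attaches the same endoscopic datum to the pairs $(\psi, s_\psi \tilde u^{-1})$ and $(\psi_0, s_{\psi_0} \tilde u^{-1})$, because this datum depends only on the $\widehat G$-conjugacy class of the semisimple element and the common image of the parameter in $\Lgp{G}$; thus $f'_G(\psi, s_\psi \tilde u^{-1}) = f'_G(\psi_0, s_{\psi_0} \tilde u^{-1})$. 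Applying part 3 for $\psi_0$ and then Lemma \ref{2.7.1} together with the $\frakN_\psi(M,G)$-invariance of $f_G(\psi,\cdot)$ produces $e(G) f_G(\psi, u^\natural)$, as required. The principal technical subtlety to address is ensuring that $s_\psi \tilde u^{-1}$ remains semisimple so that Lemma \ref{2.6.1} applies cleanly; this can be arranged by refining the choice of $m$ to lie in a maximal torus of $S_\psi(M)^\circ$ containing $s_\psi$, exploiting that $s_\psi$ centralizes all of $S_\psi(M)$.
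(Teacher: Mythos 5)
Your proof is correct and follows the same strategy as the paper, which delegates to [KMSW, Proposition~2.7.3] with the substitution $S_\psi^{\mathrm{rad}} = S_\psi(G)^\circ$. The core idea — exploiting that $A_{\widehat{M_0}}$ is a maximal torus of $S_\psi(M)^\circ$ (because $\psi_0$ is discrete for $M_0$) and using conjugacy of maximal tori to lift any $u \in N_\psi(M,G)$ to an element $\tilde u = mu$ of $N_{\psi_0}(M_0,G) \cap N_\psi(M,G)$ with $m \in S_\psi(M)^\circ = Z(A_{\widehat M}, S_\psi(G)^\circ)$, then transporting everything through Lemma~\ref{2.7.1}, Lemma~\ref{2.8.3}, and Lemma~\ref{2.6.1} — is exactly the argument referenced. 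Your verification that the image of $u^\natural$ in $\frakN_\psi(M,G)$ is unchanged, and that $\frakS_\psi(M,G) \hookrightarrow \frakS_\psi(G) = \frakS_{\psi_0}(G) = \frakS_{\psi_0}(M_0,G)$, is what makes the reduction go through.

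One small imprecision: the closing remark about arranging semisimplicity of $s_\psi \tilde u^{-1}$ by placing $m$ in a maximal torus of $S_\psi(M)^\circ$ containing $s_\psi$ does not quite deliver what you want, since (a) $s_\psi$ need not lie in $S_\psi(M)^\circ$, and (b) even if $m$ and $s_\psi$ commute, $s_\psi u^{-1} m^{-1}$ is semisimple if and only if $\tilde u = mu$ is, which that choice does not control. But this concern is moot under the correct reading of part~3 of Theorem~\ref{2.6.2} (as articulated in Lemma~\ref{2.8.8}): the left-hand side $f'_G(\psi, s_\psi s^{-1})$ is evaluated on a \emph{semisimple} element $s \in S_{\psi,\semisimple}$ having the same image in $\frakS_\psi(M,G)$ as $u$, and Lemma~\ref{2.6.1} guarantees independence of that choice. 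Replacing your final sentence by an appeal to that convention eliminates the issue entirely without affecting the rest of the argument.
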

\begin{proof}
    The proof is similar to that of \cite[Proposition 2.7.3]{kmsw} with $S_\psi(G)^\circ$ in place of $S_\psi^{\mathrm{rad}}$.
    (Precisely speaking, we have $S_\psi^{\mathrm{rad}}=S_\psi(G)^\circ$ because $G^*=\SO_{2n+1}$.)
\end{proof}
The first statement of Theorem \ref{2.6.2} will also be reduced to the case of discrete parameters, in \S\ref{4.6} Lemma \ref{4.6.4}.

As a preparation of the proofs of Lemmas \ref{2.7.1} and \ref{2.7.2}, we now review the twisted local intertwining relation.
See \cite[\S2.7]{kmsw} and \cite[pp.115-119]{art13} for detail.
Let $N$ and $k$ be positive integers.
Put $H=\GL_N^k$.
The standard pinning of $\GL_N$ gives rise to a pinning of $H$, which we write $(T_H, B_H, \{X_{\alpha^H}\}_{\alpha^H})$ and call standard.
Let $\theta_H$ be the automorphism of $H$ given by $\theta_H(h_1,\ldots,h_k)=(\theta(h_k),h_1,\ldots,h_{k-1})$, where $\theta$ is either the identity automorphism of $\GL_N$, or the automorphism $\theta_N$.
Let $(M_H,P_H)$ be a standard parabolic pair of $H$.
Recall that any packet for a direct product of a finite number of general linear groups, is a singleton.
Let $\psi \in\Psi(M_H)$ and let $\pi$ be the corresponding representation of $M_H(F)$.
Put $S_\psi(H,\theta_H)=\cent(\myim\psi, \widehat{H}\rtimes \widehat{\theta_H})$, which is not a group.
Put also $N_\psi(M_H, H\rtimes\theta_H)=N(A_{\widehat{M_H}}, S_\psi(H,\theta_H\inv))$, $W(\widehat{M_H},\widehat{H}\rtimes\widehat{\theta_H}\inv)=N(A_{\widehat{M_H}},\widehat{H}\rtimes\widehat{\theta_H}\inv)/\widehat{M_H}$, and $W(M_H,H\rtimes \theta_H)=N(A_{M_H},H\rtimes \theta_H)/M_H$.
We identify the Weyl sets $W(\widehat{M_H},\widehat{H}\rtimes\widehat{\theta_H}\inv)$ and $W(M_H,H\rtimes \theta_H)$ in the standard way.
Let $u\in N_\psi(M_H, H\rtimes\theta_H)$ and let $w\in W(\widehat{M_H},\widehat{H}\rtimes\widehat{\theta_H}\inv) \simeq W(M_H,H\rtimes \theta_H)$ be the image of $u$.

As in \S\ref{2.2}, let us define the first operator
\begin{equation}\label{tfo}
    R_{P_H^w|P_H}(\pi_\lambda) = r_{P_H^w|P_H}(\pi_\lambda,\psi_F)\inv J_{P_H^w|P_H}(\pi_\lambda, \psi_F),
\end{equation}
where $J_{P_H^w|P_H}(\pi_\lambda, \psi_F) : \calH_{P_H}(\pi_\lambda) \to \calH_{P_H^w}(\pi_\lambda)$ is the unnormalized intertwining operator defined by
\begin{equation*}
    [J_{P_H^w|P_H}(\pi_\lambda,\psi_F)f](h) = \int_{N_H(F)\cap N_H^w(F) \backslash N_H^w(F)} f(nh) dn,
\end{equation*}
and $r_{P_H^w|P_H}(\pi_\lambda,\psi_F)$ is the normalizing factor given by
\begin{equation*}
    r_{P_H^w|P_H}(\pi_\lambda, \psi_F)
    =\frac{L(0,\rho_{P_H^w|P_H}^\vee\circ\phi_{\psi_\lambda})}{L(1,\rho_{P_H^w|P_H}^\vee\circ\phi_{\psi_\lambda})} \frac{\epsilon(\frac{1}{2},\rho_{P_H^w|P_H}^\vee\circ\phi_{\psi_\lambda}, \psi_F)}{\epsilon(0,\rho_{P_H^w|P_H}^\vee\circ\phi_{\psi_\lambda}, \psi_F)}.
\end{equation*}
Then \eqref{tfo} is holomorphic at $\lambda=0$, thus we put $R_{P_H^w|P_H}(\pi)=R_{P_H^w|P_H}(\pi_0)$.

In order to define the second operator, we have to fix a representative of $w$.
Let $\dot{w}$ be the representative of $w$ in the Weyl set $W(T_H,H\rtimes\theta_H)$ that stabilizes the simple positive roots inside $M_H$.
Since both $\dot{w}$ and $\theta_H$ preserve $T_H$, we have $\dot{w}=\dot{w}_0\rtimes \theta_H$ for some $\dot{w}_0\in W(T_H,H)$.
Then we can take the Langlands-Shelstad lift $\widetilde{w}_0\in N(T_H,H)$ of $\dot{w}_0$.
Put $\widetilde{w}=\widetilde{w}_0 \rtimes \theta_H$, which is the representative of $w$ we need.
As in \S\ref{2.3}, let us define the second operator
\begin{equation*}
    \ell^{P_H}_{P_H}(w,\pi,\psi_F) = \epsilon_{P_H}(w,\psi,\psi_F) \ell^{P^H}(\widetilde{w}),
\end{equation*}
where the normalizing factor is
\begin{equation*}
    \epsilon_{P_H}(w,\psi,\psi_F) = \epsilon(\frac{1}{2}, \rho_{P_H^w|P_H}^\vee \circ \phi_\psi, \psi_F),
\end{equation*}
and the operator $\ell^{P^H}(\widetilde{w})$ from $(\calI_{P_H^w}(\pi), \calH_{P_H^w}(\pi) )$ to $(\calI_{P_H}(\widetilde{w}\pi) \circ \theta_H, \calH_{P_H}(\widetilde{w}\pi) )$ is defined by
\begin{equation*}
    [\ell^{P_H}(\widetilde{w}) f] (h) = f(\widetilde{w}\inv \cdot h\rtimes \theta_H).
\end{equation*}
Here the identity component $\widetilde{H}^\circ=H\rtimes 1$ of the twisted group $\widetilde{H}=H\rtimes \an{\theta_H}$ is identified with $H$.

Note that $\widetilde{w}\pi$ is isomorphic to $\pi$.
Let us define the third operator
\begin{equation*}
    \calI_{P_H} (\pi(\widetilde{w})) :  \calH_{P_H}(\widetilde{w}\pi) \to \calH_{P_H}(\pi),
\end{equation*}
where $\pi(\widetilde{w}) : \widetilde{w}\pi\to \pi$ is the Whittaker normalized isomorphism.
Here we use the Whittaker datum corresponding to $\psi_F$ and the standard pinning.

Now we obtain the normalized intertwining operator
\begin{equation*}
    R_{P_H}(w,\psi,\psi_F)=\calI_{P_H} (\pi(\widetilde{w})) \circ \ell^{P_H}_{P_H}(w,\pi,\psi_F) \circ R_{P_H^w|P_H}(\pi),
\end{equation*}
from $(\calI_{P_H}(\pi), \calH_{P_H}(\pi) )$ to $(\calI_{P_H}(\pi) \circ \theta_H, \calH_{P_H}(\pi) )$, and the twisted first linear form
\begin{equation*}
    \calH(H)\ni f \mapsto f_H(\psi,w):=\tr\left( R_{P_H}(w,\psi,\psi_F) \circ \calI_{P_H}(\pi,f) \right).
\end{equation*}

On the other hand, for any $s\in S_\psi(H,\theta_H)$, we have a pair $(\frake, \psi^\frake)$ of a twisted endoscopic triple $\frake\in \calE(H\rtimes \theta_H)$ and its parameter $\psi^\frake\in \Psi(H^\frake)$, corresponding to the pair $(\psi,s)$.
For any $f \in \calH(H)$, choose $f^\frake \in \calH(H^\frake)$ such that $f$ and $f^\frake$ are matching.
Then we write $f'_H(\psi,s)$ for the value $f^\frake(\psi^\frake)$.
This is the twisted second linear form, which we need.

We call the following formula the twisted local intertwining relation, or twisted LIR for short.
\begin{proposition}[twisted LIR]\label{2.7.4}
    We have
    \begin{equation*}
        f_H(\psi,w) = f'_H(\psi,s_\psi u\inv).
    \end{equation*}
\end{proposition}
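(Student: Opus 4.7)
The plan is to reduce this twisted local intertwining relation for the pair $(H, \theta_H) = (\GL_N^k, (\text{cyclic shift}) \cdot \theta)$ to the twisted LIR for $(\GL_N, \theta)$ established in Arthur's book \cite{art13}. The first observation is that since $\theta_H$ cyclically permutes the factors of $H$, its $\theta_H$-fixed substructures are concentrated on a single ``diagonal'' $\GL_N$ equipped with the $\theta$-action. Concretely, writing $\psi = (\psi_1, \ldots, \psi_k)$ and noting that the nonemptiness of $N_\psi(M_H, H \rtimes \theta_H)$ forces the $\psi_i$ to be cyclically identified via $\widehat{\theta}$, both $S_\psi(H, \theta_H)$ and $N_\psi(M_H, H \rtimes \theta_H)$ are canonically identified with their single-factor analogs for $(\GL_N, \theta)$; similarly $W(M_H, H \rtimes \theta_H)$ descends to the single-factor Weyl set.

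Next I would carry out the reduction on both sides simultaneously. On the spectral side, the three constituents of $R_{P_H}(w, \psi, \psi_F)$ --- the analytic operator $R_{P_H^w|P_H}(\pi)$, the conjugation $\ell^{P_H}_{P_H}(w, \pi, \psi_F)$, and the Whittaker-normalized isomorphism $\calI_{P_H}(\pi(\widetilde{w}))$ --- each factor compatibly with the product structure of $H$. Composing around the cyclic orbit and taking the trace against $\calI_{P_H}(\pi, f)$ expresses $f_H(\psi, w)$ as the analogous quantity on $(\GL_N, \theta)$ applied to an induced test function $f^\flat$ on $\GL_N$ obtained by integrating $f$ against the $k-1$ non-diagonal copies. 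On the endoscopic side, a twisted endoscopic triple for $(H, \theta_H)$ pulls back from one for $(\GL_N, \theta)$, and matching functions descend accordingly, so $f'_H(\psi, s_\psi u\inv)$ equals the corresponding quantity for $(\GL_N, \theta)$ with the same descended test function $f^\flat$.

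The reduced case $k = 1$ splits into two subcases. If $\theta = \id$, then $\theta_H$ is trivial and $\Pi_\psi$ is a singleton, so both sides reduce to the trace of the standard normalized intertwining operator on a single $\GL_N$ and the identity is tautological. If $\theta = \theta_N$, the assertion is exactly Arthur's twisted local intertwining relation for $(\GL_N, \theta_N)$, proved in \cite[Ch.2]{art13} via the uniqueness of Whittaker models for generic representations of $\GL_N$ combined with the compatibility of the normalizing $\epsilon$-factors with the Kottwitz--Shelstad transfer factor.

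The main obstacle is the careful bookkeeping of normalizing factors under the reduction $\GL_N^k \to \GL_N$. One must verify that the products of the analytic normalizing factors $r_{P_H^w|P_H}$ and the arithmetic factors $\epsilon_{P_H}(w, \psi, \psi_F)$ across the cyclic orbit collapse to the corresponding single-factor quantities, and that the Whittaker datum on $H$ induced by the standard pinning matches the one on the diagonal $\GL_N$ used in Arthur's setup. Once these identifications are in place, the reduction is purely formal, and the result follows from Arthur's twisted LIR for $\GL_N$.
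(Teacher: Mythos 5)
Your proposal takes essentially the same route as the paper: the base case $k=1$ is exactly Arthur's twisted LIR for $\GL_N$ (\cite[Corollary 2.5.4]{art13}), and the general case is reduced to it by unfolding the cyclic structure of $\theta_H$ on $H = \GL_N^k$, with the normalizing factors and Whittaker data checked to descend compatibly. The paper simply cites \cite[Proposition 2.7.4, Lemma 2.7.6]{kmsw} for that reduction rather than carrying it out; your write-up fills in the same steps those references perform.
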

\begin{proof}
    The case $k=1$ is guaranteed by \cite[Corollary 2.5.4]{art13}.
    Then the proof is similar to that of \cite[Proposition 2.7.4, Lemma 2.7.6]{kmsw}.
\end{proof}

Now we return to the notation used in Lemmas \ref{2.7.1} and \ref{2.7.2}.
If $\psi_0$ and $\psi$ are not relevant, then Lemma \ref{2.7.1} is trivial.
Assume that $\psi_0$ and $\psi$ are relevant.
Hence $M_0$, $M$, $P_0$, $P$, and $Q$ are defined over $F$.
Let $u \in N_{\psi_0}(M_0,G) \cap N_\psi(M,G)$, and $u^\natural \in \frakN_{\psi_0}(M_0,G)$ be its image.
We will also write $u^\natural$ for the image in $\frakN_\psi(M,G)$, by abuse of notation.
Let $f\in \calH(G)$.
By definition we have
\begin{align*}
    f_G(\psi_0, u^\natural)
     & =\tr\left( \bigoplus_{\pi_0\in\Pi_{\psi_0}(M_0)} R_{P_0}^G(u^\natural,\pi_0,\psi_0,\psi_F) \circ \calI_{P_0}^G(\pi_0, f)\right).
\end{align*}
The calculation similar to that in \cite[pp.123-126]{kmsw} shows that
\begin{align*}
     & \bigoplus_{\pi_0\in\Pi_{\psi_0}(M_0)} R_{P_0}^G(u^\natural,\pi_0,\psi_0,\psi_F) \circ \calI_{P_0}^G(\pi_0)(f) \\
     & =\ell^P_P(w, \xi, \psi, \psi_F) \circ R_{P^w|P}(\xi,\psi) \circ \left(
    \bigoplus_{\pi_0\in\Pi_{\psi_0}(M_0)} \calI_P^G (R_Q^M(u^\natural,\pi_0,\psi_0,\psi_F)) \circ \calI_P^G(\calI_Q^M(\pi_0))(f)
    \right).
\end{align*}
ECR and LIR for inner form of $\SO_{2n_0+1}$ for $n_0<n$, which is assumed by induction, and Lemma \ref{2.7.4} imply ECR and (twisted) LIR for $M$.
Combined with the theory of induced characters (\cite{dijk}) and Lemma \ref{2.4.1}, we have
\begin{align*}
     & \quad \tr\left(
    \bigoplus_{\pi_0\in\Pi_{\psi_0}(M_0)} \calI_P^G (R_Q^M(u^\natural,\pi_0,\psi_0,\psi_F)) \circ \calI_P^G(\calI_Q^M(\pi_0))(f)
    \right)            \\
     & =\tr\left(
    \bigoplus_{\pi_0\in\Pi_{\psi_0}(M_0)} R_Q^M(u^\natural,\pi_0,\psi_0,\psi_F) \circ \calI_Q^M(\pi_0)(f_M)
    \right)            \\
     & =\tr\left(
    \bigoplus_{\pi\in\Pi_\psi(M)} \pi(u^\natural) \circ \pi(f_M)
    \right)            \\
     & =\tr\left(
    \bigoplus_{\pi\in\Pi_\psi(M)} \calI_P^G(\pi(u^\natural)) \circ \calI_P^G(\pi)(f)
    \right),
\end{align*}
where $f_M$ denotes the constant term of $f$ along $P$.
By the linear independence of characters, this means that
\begin{align*}
    \bigoplus_{\pi_0\in\Pi_{\psi_0}(M_0)} \calI_P^G (R_Q^M(u^\natural,\pi_0,\psi_0,\psi_F)) \circ \calI_P^G(\calI_Q^M(\pi_0))(f)
    =\bigoplus_{\pi\in\Pi_\psi(M)} \calI_P^G(\pi(u^\natural)) \circ \calI_P^G(\pi)(f).
\end{align*}
Therefore, we have
\begin{align*}
     & \quad \ell^P_P(w, \xi, \psi, \psi_F) \circ R_{P^w|P}(\xi,\psi) \circ \left(
    \bigoplus_{\pi_0\in\Pi_{\psi_0}(M_0)} \calI_P^G (R_Q^M(u^\natural,\pi_0,\psi_0,\psi_F)) \circ \calI_P^G(\calI_Q^M(\pi_0))(f)
    \right)                                                                                                           \\
     & =\ell^P_P(w, \xi, \psi, \psi_F) \circ R_{P^w|P}(\xi,\psi) \circ \left(
    \bigoplus_{\pi\in\Pi_\psi(M)} \calI_P^G(\pi(u^\natural)) \circ \calI_P^G(\pi)(f)
    \right)                                                                                                           \\
     & =\bigoplus_{\pi\in\Pi_\psi(M)}
    \calI_P^G(\pi(u^\natural)) \circ \ell^P_P(w, \xi, \psi, \psi_F) \circ R_{P^w|P}(\xi,\psi) \circ \calI_P^G(\pi)(f) \\
     & =\bigoplus_{\pi\in\Pi_\psi(M)} R_P^G(u^\natural,\pi,\psi,\psi_F) \circ \calI_P^G(\pi,f),
\end{align*}
and hence
\begin{align*}
    f_G(\psi_0, u^\natural)
    =f_G(\psi, u^\natural).
\end{align*}
Thus Lemma \ref{2.7.1} follows.

Next, let $u \in N_{\psi_0}(M_0,M)$.
Then $u$ is trivial in $W(\widehat{M},\widehat{G})$, and the calculations similar to those in \cite[p.127]{kmsw} prove both two equations in Lemma \ref{2.7.2}.

\subsection{Reduction of LIR to elliptic or exceptional (relative to $G$) parameters}\label{2.8}
In this subsection, we review the reduction of the proof of LIR to the case of elliptic or exceptional parameters.

Recall from \S\ref{1.2} that $\Psi_\el^2(G^*)$ is the set of the equivalence classes of parameters $\psi$ of the form
\begin{align*}
    \psi=2\psi_1\oplus\cdots2\psi_q\oplus\psi_{q+1}\oplus\cdots\oplus\psi_r,
\end{align*}
where $\psi_1,\ldots,\psi_r$ are irreducible, symplectic, and mutually distinct, and $r\geq q\geq1$.
Note that then we have
\begin{align*}
    S_\psi \cong \Or(2,\C)^q \times \Or(1,\C)^{r-q}.
\end{align*}
Let now $\Psi_{\exc1}(G^*)$ and $\Psi_{\exc2}(G^*)$ be the subsets of $\Psi(G^*)$ consisting of the parameters $\psi$ of the form
\begin{itemize}
    \item[$(\exc1)$]$\psi=2\psi_1\oplus\psi_2\oplus\cdots\oplus\psi_r,$ where $\psi_1$ is irreducible and orthogonal, and $\psi_2,\ldots,\psi_r$ are irreducible, symplectic, and mutually distinct,
    \item[$(\exc2)$]$\psi=3\psi_1\oplus\psi_2\oplus\cdots\oplus\psi_r,$ where $\psi_1,\ldots,\psi_r$ are irreducible, symplectic, and mutually distinct,
\end{itemize}
respectively.
They are disjoint.
We then have
\begin{itemize}
    \item[$(\exc1)$]$S_\psi \cong \Sp(2,\C) \times \Or(1,\C)^{r-1},$
    \item[$(\exc2)$]$S_\psi \cong \Or(3,\C) \times \Or(1,\C)^{r-1},$
\end{itemize}
respectively.
We put $\Psi_\exc(G^*)=\Psi_{\exc1}(G^*)\sqcup\Psi_{\exc2}(G^*)$, and we shall say that $\psi$ is exceptional (resp. of type ($\exc1$), resp. of type ($\exc2$)) if $\psi\in\Psi_\exc(G^*)$ (resp. $\Psi_{\exc1}(G^*)$, resp. $\Psi_{\exc2}(G^*)$).
One can see that $\Psi_\exc(G^*)$ and $\Psi_\el(G^*)$ are disjoint.
Put $\Psi^{\el,\exc}(G^*)=\Psi(G^*)\setminus (\Psi_\el(G^*)\sqcup\Psi_\exc(G^*))$.

Let $M^*\subsetneq G^*$ be a proper standard Levi subgroup, and $\xi:G^*\to G$ an inner twist over $F$, such that $M=\xi(M^*)\subsetneq G$ is defined over $F$ if $M^*$ transfers to $G$.
In view of \S\ref{2.7}, it is enough to consider discrete parameters for $M$.
Let $\psi_{M^*}\in\Psi_2(M^*)$ be a discrete local $A$-parameter, and we shall write $\psi$ for its image in $\Psi(G^*)$ in this subsection.
Recall from Lemma \ref{2.8.3} that $\frakS_\psi(M,G)=\frakS_\psi(G)$.
Here we do not assume that $M$ is defined over $F$, nor do that $\psi$ is relevant.

Let $T_\psi$ be the maximal central torus $A_{\widehat{M}}$ of $\widehat{M}$.
Since a Levi subgroup for which the parameter is discrete is unique, the torus $T_\psi$ is determined by $\psi$ and is a maximal torus of $S_\psi=S_\psi(G)$.
One can easily see that
\begin{align*}
    W_\psi=W_\psi(M,G)=W(T_\psi,S_\psi), \\ 
    W_\psi^\circ=W_\psi^\circ(M,G)=W(T_\psi,S_\psi^\circ). 
\end{align*}
Therefore, if $\psi$ is elliptic, then $W_\psi^\circ$ is trivial, as stated in \cite[Lemma 2.8.1]{kmsw}.
Similarly, let $B_\psi$ be the standard Borel subgroup of $S_\psi$ with a maximal torus $T_\psi$.
Then $(T_\psi,B_\psi)$ is a Borel pair of $S_\psi^\circ$.
For any $x\in\frakS_\psi$, following \cite[p.204]{art13}, put
\begin{align*}
    T_{\psi,x} = \cent(s_x,T_\psi)^\circ,
\end{align*}
where $s_x\in S_\psi$ is a representative of $x$ such that $\Ad(s_x)$ stabilizes $(T_\psi,B_\psi)$.
Since $s_x$ is determined up to a $T_\psi$-translate, $T_{\psi,x}$ is uniquely determined by $x$.

The following lemma is the key lemma in this subsection.
\begin{lemma}\label{2.8.4}
    If $\psi \in \Psi^{\el,\exc}(G^*)$, then
    \begin{enumerate}[(1)]
        \item every simple reflection $w\in W_\psi^\circ$ centralizes a torus of positive dimension in $T_\psi$;
        \item $\dim T_{\psi,x} \geq 1, \ \forall x\in \frakS_\psi$.
    \end{enumerate}
\end{lemma}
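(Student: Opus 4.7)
The plan is a direct case analysis based on the explicit product structure of $S_\psi$ recalled in \S\ref{2.1}. Since $\psi$ is the image of the discrete parameter $\psi_{M^*}\in\Psi_2(M^*)$, we have
\begin{equation*}
S_\psi\cong \prod_{i\in I_\psi^+}\Or(\ell_i,\C)\times\prod_{i\in I_\psi^-}\Sp(\ell_i,\C)\times\prod_{j\in J_\psi}\GL(\ell_j,\C),
\end{equation*}
with $T_\psi=\prod T_i^+\times\prod T_i^-\times\prod T_j$ the obvious product maximal torus and $S_\psi^\circ$ the product of connected components; in particular $\frakS_\psi\cong\bigoplus_{i\in I_\psi^+}(\Z/2\Z)a_i$.

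For part (1), every simple reflection $w$ of $W_\psi^\circ=W(T_\psi,S_\psi^\circ)$ comes from a simple root of exactly one factor $F$ of $S_\psi^\circ$ and acts trivially on the maximal tori of all remaining factors. Thus $w$ fixes a positive-dimensional subtorus of $T_\psi$ unless simultaneously (a) the fixed torus of $w$ inside the maximal torus of $F$ is trivial, and (b) every factor of $S_\psi^\circ$ other than $F$ contributes a trivial torus. A case check shows (a) fails only when $F$ has semisimple rank one with trivial connected centre, i.e.\ $F\in\{\SO(3,\C),\Sp(2,\C)\}$, and (b) fails only when every other factor is $\SO(1,\C)=\{1\}$. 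The joint failure of (a) and (b) therefore forces $S_\psi^\circ$ to be isomorphic to $\SO(3,\C)\times\{1\}^{r-1}$ or $\Sp(2,\C)\times\{1\}^{r-1}$, which are precisely the shapes corresponding to $\psi\in\Psi_{\exc2}(G^*)$ or $\psi\in\Psi_{\exc1}(G^*)$. Both are excluded by hypothesis, giving part (1).

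For part (2), identify $x\in\frakS_\psi$ with a tuple $(c_i)_{i\in I_\psi^+}\in(\Z/2\Z)^{I_\psi^+}$ and choose a representative $s_x=(s_i)_i\in S_\psi$ that is trivial in the $\Sp$- and $\GL$-components, while $s_i\in\Or(\ell_i,\C)$ lies in the coset prescribed by $c_i$ and stabilises the Borel pair $(T_i^+,B_i^+)$; concretely, for $\ell_i$ odd one takes $s_i=(-I)^{c_i}$, and for $\ell_i$ even with $c_i=1$ one takes a lift of the diagram automorphism of $\SO(\ell_i,\C)$. Since $s_x$ then acts trivially on the $\Sp$- and $\GL$-tori, we obtain
\begin{equation*}
T_{\psi,x}=\prod_{i\in I_\psi^+}\cent(s_i,T_i^+)^\circ\times\prod_{i\in I_\psi^-}T_i^-\times\prod_{j\in J_\psi}T_j,
\end{equation*}
which is already positive-dimensional as soon as $I_\psi^-\neq\emptyset$ or $J_\psi\neq\emptyset$. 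Otherwise $J_\psi=I_\psi^-=\emptyset$, and the exclusion of $\Psi_\el(G^*)$ (all $\ell_i\leq 2$) together with that of $\Psi_{\exc2}(G^*)$ (one $\ell_i=3$, the rest $=1$) forces some $\ell_{i_0}\geq 3$; a routine computation in the standard realisation then yields $\dim\cent(s_{i_0},T_{i_0}^+)^\circ\geq 1$ regardless of $c_{i_0}$ (using $\Or(2m+1,\C)=\SO(2m+1,\C)\times\{\pm I\}$ in the odd case and the codimension-one fixed subtorus of the diagram automorphism in the even case with $\ell_{i_0}\geq 4$).

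The only substantive point in the argument is the explicit choice of non-identity coset representatives $s_i\in\Or(\ell_i,\C)$ that both lie in the prescribed component and normalise the fixed Borel pair; once these are in hand, the dimensions of $\cent(s_i,T_i^+)^\circ$ are read off from the standard matrix descriptions, and the excluded classes $\Psi_\el(G^*)\sqcup\Psi_\exc(G^*)$ appear naturally as exactly those in which one or the other conclusion of the lemma would fail.
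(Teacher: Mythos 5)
Your proof is correct and proceeds by exactly the direct case analysis on the explicit product decomposition $S_\psi\cong\prod_{i\in I_\psi^+}\Or(\ell_i,\C)\times\prod_{i\in I_\psi^-}\Sp(\ell_i,\C)\times\prod_{j\in J_\psi}\GL(\ell_j,\C)$ that the paper implicitly invokes by referring to the analogous statement in \cite{kmsw}; the bookkeeping of ranks and coset representatives matches, and the excluded cases $\Psi_\el$ and $\Psi_\exc$ appear exactly where they should. One small redundancy: in part (2), once you have reduced to $I_\psi^-=J_\psi=\emptyset$, excluding $\Psi_\el$ alone already forces some $\ell_{i_0}\geq 3$, so invoking the exclusion of $\Psi_{\exc2}$ at that point adds nothing (it is needed only for part (1)).
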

\begin{proof}
    The proof is similar to that of \cite[Lemma 2.8.4]{kmsw}.
\end{proof}

We now define
\begin{align*}
    S_{\psi,\el}                 & =\set{s\in S_{\psi,\semisimple} |  \text{$Z(\cent(s,S_\psi^\circ))$ is finite}},                      \\
    \overline{S}_{\psi,\el}      & =\set{s\in \overline{S}_{\psi,\semisimple} | \text{$Z(\cent(s,\overline{S}_\psi^\circ))$ is finite}}.
\end{align*}
We also define $\frakS_{\psi,\el}$ (resp. $\overline{\frakS}_{\psi,\el}$) to be the image of $S_{\psi,\el}$ (resp. $\overline{S}_{\psi,\el}$) under the natural surjection $S_\psi\to\frakS_\psi$ (resp. $\overline{S}_\psi\to\overline{\frakS}_\psi$).

\begin{lemma}\label{2.8.6}
    \begin{enumerate}
        \item The natural surjection $S_\psi\to \overline{S}_\psi$ carries $S_{\psi,\el}$ onto $\overline{S}_{\psi,\el}$.
        \item The natural surjection $\frakS_\psi\to \overline{\frakS}_\psi$ carries $\frakS_{\psi,\el}$ onto $\overline{\frakS}_{\psi,\el}$.
        \item If $\psi \in \Psi_\exc(G^*)$ then $\frakS_{\psi,\el}=\frakS_\psi$ and $\overline{\frakS}_{\psi,\el}=\overline{\frakS}_\psi$.
    \end{enumerate}
\end{lemma}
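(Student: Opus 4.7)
My plan is to handle the three parts in sequence, using the central isogeny $\pi\colon S_\psi\twoheadrightarrow\overline S_\psi$ with finite central kernel $Z(\widehat G)^\Gamma\subseteq\{\pm1\}$ for parts (1)--(2), and an explicit case analysis of the two exceptional shapes of $S_\psi$ for part (3).

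For part (1) I would first observe that Jordan decomposition is preserved under $\pi$ (its kernel consists of finitely many semisimple central elements), so $s\in S_{\psi,\semisimple}$ if and only if $\bar s\in\overline S_{\psi,\semisimple}$. I then compare the centralizers $H=\cent(s,S_\psi^\circ)$ and $\overline H=\cent(\bar s,\overline S_\psi^\circ)$ via the following commutator argument: for $x\in\pi\inv(\overline H)\subseteq S_\psi^\circ$, the commutator $sxs\inv x\inv$ lies in the finite discrete subgroup $S_\psi^\circ\cap\ker\pi$, so the map $x\mapsto sxs\inv x\inv$ is locally constant and vanishes on the identity component (as it vanishes at $x=1$). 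This identifies $H^\circ$ with $\pi\inv(\overline H)^\circ$ and makes the restriction $\pi\colon H^\circ\to\overline H^\circ$ an isogeny of connected reductive groups. Hence $H^\circ$ is semisimple iff $\overline H^\circ$ is, which in our concrete setting (using the explicit outer structure of these centralizers inherited from $S_\psi\simeq\prod\Or(\ell_i,\C)\times\prod\Sp(\ell_i,\C)\times\prod\GL(\ell_j,\C)$) is equivalent to finiteness of $Z(H)$ and $Z(\overline H)$. Surjectivity of $S_{\psi,\el}\twoheadrightarrow\overline S_{\psi,\el}$ is then immediate, since any semisimple lift of an elliptic class is elliptic.

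Part (2) follows formally from (1) via the commutative square
\[
\begin{CD}
S_\psi @>>> \frakS_\psi\\
@VVV @VVV\\
\overline S_\psi @>>> \overline\frakS_\psi,
\end{CD}
\]
since $\frakS_{\psi,\el}$ and $\overline\frakS_{\psi,\el}$ are by definition the images of $S_{\psi,\el}$ and $\overline S_{\psi,\el}$ in the right-hand column, and the image of the top-row elliptic set in $\overline\frakS_\psi$ agrees with the bottom-row one by (1).

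For part (3) I would invoke the explicit descriptions $S_\psi\simeq\Sp(2,\C)\times\Or(1,\C)^{r-1}$ in type $(\exc1)$ and $S_\psi\simeq\Or(3,\C)\times\Or(1,\C)^{r-1}$ in type $(\exc2)$. In each case any class in $\frakS_\psi$ lifts to an element $s=(s_1,\epsilon_2,\ldots,\epsilon_r)$ whose first coordinate is central in the corresponding factor: $s_1=1$ for $(\exc1)$ (as $\frakS_\psi\simeq\{\pm1\}^{r-1}$ ignores the connected $\Sp(2,\C)$ factor), and $s_1\in\{I,-I\}$ for $(\exc2)$ (noting $-I\in\Or(3,\C)\setminus\SO(3,\C)$ since $\det(-I)=-1$ in odd dimension). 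In both cases $\cent(s,S_\psi^\circ)=S_\psi^\circ$ is $\Sp(2,\C)$ or $\SO(3,\C)$, both of which have finite center, so $s\in S_{\psi,\el}$; hence $\frakS_{\psi,\el}=\frakS_\psi$, and the overline identity follows from (2). The main obstacle will be making the centralizer comparison in part (1) fully rigorous--in particular, checking that finiteness of the center of the full (possibly disconnected) centralizer transfers across the isogeny and is not merely a statement about identity components; the remaining parts are then a formal diagram chase and a short enumeration.
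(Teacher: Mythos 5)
Your setup and parts (2)--(3) are sound, but the step you flag in part (1) is a genuine gap, not a loose end to be tidied, and your argument does not close it.

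The commutator argument correctly shows that $\pi\colon H^\circ\to\overline H^\circ$ is an isogeny, and hence that $H^\circ$ is semisimple if and only if $\overline H^\circ$ is. However, the equivalence you then invoke, ``$H^\circ$ semisimple $\iff$ $Z(H)$ finite'' for the full (possibly disconnected) centralizer $H$, is false in general in the direction you need: the simplest counterexample is $H=\Or(2,\C)$, where $Z(H)=\{\pm1\}$ is finite yet $H^\circ=\SO(2,\C)\cong\GL_1$ is a torus. So ``$Z(\overline H)$ finite $\Rightarrow\overline H^\circ$ semisimple $\Rightarrow H^\circ$ semisimple $\Rightarrow Z(H)$ finite'' breaks at the first arrow, and your sentence ``surjectivity is then immediate, since any semisimple lift of an elliptic class is elliptic'' does not follow from the isogeny comparison. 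In fact, with the definitions of $S_{\psi,\el}$ and $\overline S_{\psi,\el}$ exactly as displayed in \S\ref{2.8}, the ``onto'' direction appears to fail outright: take $G^*=\SO_3$ and $\psi=2\psi_1$ with $\psi_1$ a nontrivial quadratic character, so $S_\psi=\Sp(2,\C)=\SL(2,\C)$, $\overline S_\psi=\PGL(2,\C)$. For $s=\diag(\sqrt{-1},-\sqrt{-1})$ one has $\cent(s,S_\psi^\circ)$ equal to the diagonal torus, whose center is infinite, so neither lift $\pm s$ lies in $S_{\psi,\el}$; but $\cent(\bar s,\overline S_\psi^\circ)=N(\overline T)\cong\Or(2,\C)$ has finite center, so $\bar s\in\overline S_{\psi,\el}$. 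This shows that one cannot expect to deduce both inclusions in part (1) from the identity-component isogeny alone, and that the case analysis of how $Z(H)$ behaves under quotient by the central $\{\pm1\}$ (whether $-1\in S_\psi^\circ$, and whether the $\{\pm1\}$-valued character $x\mapsto[s,x]$ on $\pi\inv(\overline H)\cap S_\psi^\circ$ is trivial) genuinely matters; only the ``into'' inclusion $\pi(S_{\psi,\el})\subseteq\overline S_{\psi,\el}$ is accessible by your method, via the observation that $Z(\widetilde H)\cap H\subseteq Z(H)$ with $[\widetilde H:H]\le 2$, where $\widetilde H=\pi\inv(\overline H)\cap S_\psi^\circ$.

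Part (3) is correct as you write it: in both exceptional shapes the $\pi_0$ classes lift to central elements ($s_1=1$ in the $\Sp(2,\C)$ factor, $s_1=\pm I$ in the $\Or(3,\C)$ factor, with $-I\notin\SO(3,\C)$ in odd dimension), the resulting centralizer in $S_\psi^\circ$ is $\Sp(2,\C)$ or $\SO(3,\C)$, and both have finite center. The deduction of the barred identity from part (2) is also fine. Part (2) as a diagram chase is formally correct but inherits whatever portion of part (1) you can actually establish, so you should be explicit about which inclusion of part (1) you are using and where. You should either prove the equivalence ``$Z(H)$ finite $\iff Z(\overline H)$ finite'' directly for the specific centralizers arising from $S_\psi\simeq\prod_i\Or(\ell_i,\C)\times\prod_i\Sp(\ell_i,\C)\times\prod_j\GL(\ell_j,\C)$ (this is not automatic and must confront the $\Or$ factors that produce disconnected centralizers), or reconcile your target statement with the definitions, since the $\SO_3$ example above suggests the literal ``onto'' claim needs either a different characterization of ellipticity (e.g.\ finiteness of $\cent(s_x,T_\psi)^\circ$ for a representative adapted to a Borel pair, as used in Lemma \ref{2.8.4}) or should be weakened to an inclusion.
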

\begin{proof}
    The first part can be seen easily.
    The proof of the later two is similar to that of \cite[Lemma 2.8.6]{kmsw}.
\end{proof}

\begin{lemma}\label{2.8.7}
    Assume that either
    \begin{enumerate}[(1)]
        \item $\psi$ is elliptic, or
        \item every simple reflection $w\in W_\psi^\circ$ centralizes a torus of positive dimension in $T_\psi$.
    \end{enumerate}
    Then the parts 1 and 2 of Theorem \ref{2.6.2} hold for $\psi$ (and for all $u$ and $f$).
\end{lemma}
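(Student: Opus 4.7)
The plan is to dispatch the elliptic case by a structural observation about $W_\psi^\circ$ and to attack the remaining case by a deformation argument that, at generic values of the deformation parameter, falls into the inductive framework prepared in Section \ref{2.7}.

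In the elliptic case (condition (1)), the description of $S_\psi$ from Section \ref{2.1} gives $S_\psi \cong \Or(2,\C)^q \times \Or(1,\C)^{r-q}$, so $S_\psi^\circ \cong \SO(2,\C)^q$ is itself a torus containing $T_\psi = A_{\widehat{M}}$ as its own maximal torus. Hence $W_\psi^\circ = W(T_\psi, S_\psi^\circ) = 1$. Part 1 of Theorem \ref{2.6.2} is therefore vacuous, and the middle horizontal row of the commutative diagram \eqref{cd} identifies $\frakN_\psi(M, G)$ with $\frakS_\psi(M, G)$, so $f_G(\psi, u^\natural)$ trivially depends only on the image of $u^\natural$ in $\frakS_\psi(M, G)$, yielding Part 2.

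For case (2) I first reduce Part 1 to simple reflections: by Lemma \ref{2.5.3} the assignment $u^\natural \mapsto R_P(u^\natural, \pi, \psi, \psi_F)$ is multiplicative, and since $W_\psi^\circ = W(T_\psi, S_\psi^\circ)$ is a genuine Weyl group relative to $(T_\psi, B_\psi)$, it is generated by the corresponding simple reflections. It therefore suffices to establish $R_P(w^\natural, \pi, \psi, \psi_F) = 1$ for each simple reflection $w \in W_\psi^\circ$ and some (equivalently, by Lemma \ref{2.5.2}, any) lift $w^\natural \in \frakN_\psi(M, G)$. Once Part 1 is secured, Part 2 is automatic: $f_G(\psi, u^\natural)$ factors through $\frakN_\psi(M, G)/W_\psi^\circ(M, G) \cong \frakS_\psi(M, G)$ by the right column of \eqref{cd}.

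For the core deformation step, let $w \in W_\psi^\circ$ be a simple reflection centralizing a subtorus $T' \subset T_\psi = A_{\widehat{M}}$ of positive dimension, choose a nontrivial cocharacter $\mu : \GL_1 \to T'$, and let $\psi_\lambda$ be the one-parameter holomorphic family of twists of $\psi$ determined by $\mu$, with $\lambda$ running along a complex line in $\fraka_{M, \C}^*$. A fixed lift $\widetilde{w}$ of $w$ remains in $N_{\psi_\lambda}(M, G)$ for all $\lambda$ because $w$ centralizes $T'$. By Lemma \ref{2.2.4} (equivalently Lemma \ref{2.2.3} in the generic setting that concerns the present induction) the operator $R_P(w^\natural, \pi_\lambda, \psi_\lambda, \psi_F)$ depends holomorphically on $\lambda$ and is invertible at $\lambda = 0$. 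For generic $\lambda$ the parameter $\psi_\lambda$ ceases to be $M^*$-discrete, instead factoring through and being discrete for a proper standard Levi $M_\lambda^* \subsetneq M^*$, and the lift of $w$ now lies in $N_{\psi_\lambda}(M_\lambda, M) \subset N_{\psi_\lambda}(M_\lambda, G)$. Applying Lemma \ref{2.7.2} rewrites $R_P(w^\natural, \pi_\lambda, \psi_\lambda, \psi_F)$ as the parabolic induction from $M$ of the corresponding intertwining operator $R_Q^M(w^\natural, \pi_{M_\lambda,\lambda}, \psi_{M_\lambda, \lambda}, \psi_F)$ inside $M$, which by the induction hypothesis on $n$ applied to the proper Levi $M$ of $G$ (of semisimple rank strictly smaller than $n$) equals the identity; its parabolic induction is then also the identity. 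Passing to the limit $\lambda \to 0$ by holomorphicity delivers $R_P(w^\natural, \pi, \psi, \psi_F) = 1$.

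The main obstacle I anticipate is verifying cleanly that a suitable cocharacter $\mu$ of $T'$ really degenerates $\psi_\lambda$ into a parameter factoring through a strictly smaller standard Levi of $M^*$ at generic $\lambda$, and that the chosen lift of $w$ sits inside that smaller Levi compatibly with Lemma \ref{2.7.2}. This demands a case-by-case analysis tracking how each simple reflection of $W_\psi^\circ$ arises from the explicit factorization $S_\psi \cong \prod_{i \in I_\psi^+} \Or(\ell_i, \C) \times \prod_{i \in I_\psi^-} \Sp(\ell_i, \C) \times \prod_{j \in J_\psi} \GL(\ell_j, \C)$ from Section \ref{2.1}, together with a choice of $\mu$ coming from the appropriate block. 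The combinatorics is parallel to, but slightly more intricate than, the unitary analogue in \cite[Lemma 2.8.7]{kmsw}, the additional subtlety coming from the orthogonal factors; once this geometric picture is laid out, the remainder of the argument is a formal consequence of Lemmas \ref{2.5.3}, \ref{2.2.4}, \ref{2.7.1}, and \ref{2.7.2}.
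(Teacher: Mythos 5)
Your elliptic case is correct (in that case $S_\psi^\circ\cong\SO(2,\C)^q$ is a torus, so $W_\psi^\circ$ is trivial and both parts are essentially vacuous), and your reduction of Part 1 to simple reflections via the multiplicativity Lemma \ref{2.5.3}, together with the observation that Part 2 follows from Part 1 via the exact sequence $1\to W_\psi^\circ(M,G)\to\frakN_\psi(M,G)\to\frakS_\psi(M,G)\to 1$, are both sound. But the core of case (2)---the deformation argument---has a genuine gap.

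The problem is twofold. First, twisting a discrete parameter $\psi_{M^*}\in\Psi_2(M^*)$ by $\lambda\in\fraka_{M,\C}^*$ never makes it factor through a proper sub-Levi $M_\lambda^*\subsetneq M^*$: the twist only rescales the central characters of the $\GL$-blocks and does not alter irreducibility of the constituents, so $\psi_{M^*,\lambda}$ remains discrete for $M^*$ itself. Second, even if such a degeneration occurred, a lift $\widetilde{w}$ of a nontrivial $w\in W(M,G)$ cannot lie in $N_{\psi_\lambda}(M_\lambda,M)\subset\widehat{M}$, since a lift of $w$ lies in $N(A_{\widehat M},\widehat G)\setminus\widehat{M}$; so Lemma \ref{2.7.2} would not apply in the configuration you set up. Your own closing paragraph correctly identifies this as a potential obstacle, and it is a fatal one.

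The correct intermediate Levi is not a sub-Levi of $M$ but a super-Levi $M_w$ with $M\subsetneq M_w\subsetneq G$, namely $\widehat{M_w}:=Z_{\widehat G}(T')$ where $T'\subset T_\psi=A_{\widehat M}$ is the positive-dimensional torus centralized by $w$. Since $T'$ is positive-dimensional, $\widehat{M_w}$ is a proper Levi of $\widehat G$; since $T'\subset A_{\widehat M}$, we get $\widehat M\subset\widehat{M_w}$. A lift $u$ of $w$ in $N(T_\psi,S_\psi^\circ)$ centralizes $T'$ and hence lies in $\widehat{M_w}$, so $u\in S_\psi(M_w)^\circ\cap N(A_{\widehat M},\widehat{M_w})=N_\psi(M,M_w)$ with image in $W_\psi^\circ(M,M_w)$, using the analogue of \eqref{shake} for $M_w$. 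No deformation is needed: Lemma \ref{2.7.2} applies directly (with $M_0=M$ and the ambient pair being $M_w\subset G$) to yield $R_P^G(u^\natural,\pi,\psi,\psi_F)=\calI_{P_w}^G\bigl(R_Q^{M_w}(u^\natural,\pi,\psi,\psi_F)\bigr)$, and the induction hypothesis on $n$ applied to the proper Levi $M_w$ gives $R_Q^{M_w}(u^\natural,\pi,\psi,\psi_F)=1$. This is the argument the paper intends when it says the proof uses Lemmas \ref{2.5.3} and \ref{2.7.2}; no appeal to the holomorphicity Lemma \ref{2.2.4} is made.
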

\begin{proof}
    The proof is similar to that of \cite[Lemma 2.8.7]{kmsw}.
    We appeal to Lemmas \ref{2.5.3} and \ref{2.7.2} instead of Lemmas 2.5.3 and 2.7.2 of loc. cit.
\end{proof}

\begin{lemma}\label{2.8.8}
    Let $x\in \frakS_\psi(M,G)$.
    Assume that either
    \begin{enumerate}[(1)]
        \item $\psi$ is elliptic and $x \notin \frakS_{\psi,\el}$, or
        \item $\psi$ is not elliptic, every simple reflection $w\in W_\psi^\circ$ centralizes a torus of positive dimension in $T_\psi$, and $\dim T_{\psi,x} \geq 1$.
    \end{enumerate}
    Then $f'_G(\psi,s_\psi s\inv)=e(G)f_G(\psi,u^\natural)$ whenever both $u^\natural \in \frakN_\psi(M,G)$ and $s\in S_{\psi,\semisimple}$ map to $x$.
\end{lemma}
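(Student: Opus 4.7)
The strategy is to reduce the claimed identity to the analogous identity on a proper Levi subgroup $G_1 \subsetneq G$ containing $M$, where LIR is available by the running inductive hypothesis on $n$, and then to combine this with parabolic descent on both linear forms.

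\textbf{Construction of $G_1$.} Under hypothesis (1), $x \notin \frakS_{\psi,\el}$ means $Z(\cent(s_0, S_\psi^\circ))$ has positive dimension for any representative $s_0$ of $x$, hence contains a nontrivial torus that centralizes $\myim\psi$ and $s_0$. Conjugating inside $S_\psi^\circ$ (which leaves the class of $s$ in $\frakS_\psi$ unchanged, and, since parts 1 and 2 of Theorem \ref{2.6.2} already hold for $\psi$ by Lemma \ref{2.8.7}, leaves both sides of \eqref{lir} unchanged) we may assume that this torus $T'$ is a subtorus of $T_\psi = A_{\widehat{M}}$ and that the chosen representative $s$ and lift $u$ commute with $T'$. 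Under hypothesis (2) we take $T' := T_{\psi,x}$, which is by construction a nontrivial subtorus of $T_\psi$ centralizing $s_x$ and (up to adjusting $u$ within its class) $u$. In both cases set $\widehat{G_1} := \cent(T', \widehat{G})$, a proper Levi subgroup of $\widehat{G} = \Sp(2n,\C)$ containing $\widehat{M}$, $s_\psi$, and $s$. Since the Galois action on $\widehat{G}$ is trivial, $\widehat{G_1}$ is the dual of a proper Levi $M^* \subset G_1^* \subsetneq G^*$ defined over $F$.

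\textbf{Parabolic descent.} On the intertwining side, $u$ centralizes $T'$ so lies in $N_\psi(M, G_1)$. Applying Lemma \ref{2.7.2} with $(M, G_1, G)$ in place of $(M_0, M, G)$ yields $f_G(\psi, u^\natural) = (f_Q)_{G_1}(\psi, u^\natural)$, where $f_Q \in \calH(G_1)$ is the constant term of $f$ along the standard parabolic $Q$ of $G$ with Levi $G_1$. On the endoscopic side, the triple $\frake = (G^\frake, s_\psi s\inv, \eta^\frake)$ attached to $(\psi, s_\psi s\inv)$ via Lemma \ref{1.4.1} restricts to a triple $\frake_1 = (G_1^\frake, s_\psi s\inv, \eta^{\frake_1})$ for $G_1$, where $\widehat{G_1^\frake} = \cent(T', \widehat{G^\frake})$ is a Levi of $\widehat{G^\frake}$ (dually, $G_1^\frake$ is a Levi of $G^\frake$). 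By Lemma \ref{1.1.4}, if $f^\frake \in \calH(G^\frake)$ matches $f$, then the constant term $f^\frake_{G_1^\frake}$ matches $f_Q$, and hence
\[
f'_G(\psi, s_\psi s\inv) = f^\frake(\psi^\frake) = f^\frake_{G_1^\frake}(\psi^{\frake_1}) = (f_Q)'_{G_1}(\psi, s_\psi s\inv),
\]
the middle equality being the standard Levi descent of the stable linear form given by Proposition \ref{1.5.1}.

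\textbf{Induction and conclusion.} Since $G_1^* \subsetneq G^*$ has strictly smaller semisimple rank, LIR for $G_1$ holds by the inductive hypothesis, giving $(f_Q)'_{G_1}(\psi, s_\psi s\inv) = e(G_1) (f_Q)_{G_1}(\psi, u^\natural)$. Combining the three displays yields $f'_G(\psi, s_\psi s\inv) = e(G_1) f_G(\psi, u^\natural)$, and it remains to note $e(G_1) = e(G)$: writing $G_1 \cong G_{\mathrm{GL}} \times G_1^0$ with $G_{\mathrm{GL}}$ a product of general linear factors and $G_1^0$ an odd special orthogonal inner form, one has $e(G_{\mathrm{GL}}) = 1$ and $e(G_1^0) = (-1)^{r(G_1^0) - r(G_1^{0,*})} = (-1)^{r(G) - r(G^*)} = e(G)$ by compatibility of $F$-ranks under the Levi inclusion. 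If $\psi$ is not $G$-relevant both sides of \eqref{lir} vanish by definition, respectively by the same descent argument (then $\psi$ is not $G_1$-relevant either).

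\textbf{Main obstacle.} The principal technical subtlety is the bookkeeping in the descent step: verifying that the normalized transfer factors $\Delta[\frake, \xi, z]$ and $\Delta[\frake_1, \xi|_{G_1^*}, z]$ are compatible under restriction, and that the Jacobian factor $(|D^{G^\frake}_{G_1^\frake}|/|D^G_{G_1}|)^{1/2}$ of Lemma \ref{1.1.4}(1) is correctly absorbed into the normalizing constants of the intertwining operators $R_P(u^\natural,\pi,\psi,\psi_F)$ and the constant-term operators on $G_1$. This follows the template of \cite[Lemma 2.8.8]{kmsw}, simplified by the triviality of the Galois action on $\widehat{G}$ in our setting.
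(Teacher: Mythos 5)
Your proposal is correct and follows essentially the same route as the paper's proof (which in turn refers to \cite[Lemma 2.8.8]{kmsw}): pass to a canonical representative $s_x$ and the proper Levi dual to $Z_{\widehat{G}}(T_{\psi,x})$, descend both linear forms to that Levi via Lemma \ref{2.7.2} and the compatibility of stable linear forms with constant terms, and invoke the inductive hypothesis. Your uniform treatment via the torus $T'$ is equivalent to the paper's direct observation that $\abs{T_{\psi,x}}=\infty$ in both cases, and the explicit check $e(G_1)=e(G)$ is a welcome detail that the paper leaves implicit.
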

\begin{proof}
    Let $s_x \in S_{\psi,\semisimple}$ be a representative of $x$ such that $\Ad(s_x)$ stabilizes $(T_\psi,B_\psi)$.
    Since $T_\psi=A_{\widehat{M}}$, we have $s_x\in N_\psi(M,G)$.
    Lemmas \ref{2.8.3}, \ref{2.6.1}, and \ref{2.8.7} tell us that $f'_G(\psi,s_\psi s\inv)=f'_G(\psi,s_\psi s_x\inv)$ and $f_G(\psi,u^\natural)=f_G(\psi, s_x^\natural)$.
    We need to show that $f'_G(\psi,s_\psi s_x\inv)=e(G)f_G(\psi,s_x^\natural)$.
    Put $\widehat{M}_x=Z_{\widehat{G}}(T_{\psi,x})$.
    Let $M_x^*\subset G^*$ be the Levi subgroup corresponding to $\widehat{M}_x$.
    If the assumption (1) holds, then we have $|T_{\psi,x}|=\infty$ and hence $\widehat{M}_x \subsetneq \widehat{G}$.
    If the assumption (2) holds, then we have $\dim T_{\psi,x}\geq1$. This implies $|T_{\psi,x}|=\infty$ and hence $\widehat{M}_x \subsetneq \widehat{G}$.
    Therefore, $M_x^*$ is a proper Levi subgroup of $G^*$.
    The rest of the proof is similar to that of \cite[Lemma 2.8.8]{kmsw}.
    We appeal to Lemmas \ref{2.6.1} and \ref{2.7.2} instead of Lemmas 2.6.1 and 2.7.2 of loc. cit.
\end{proof}

Lemmas \ref{2.8.4}, \ref{2.8.7}, and \ref{2.8.8} imply the following corollary.
\begin{corollary}\label{2.8.9}
    Let $x\in \frakS_\psi(M,G)$.
    Then Theorem \ref{2.6.2} holds for all $u^\natural \in \frakN_\psi(M,G)$ mapping to $x$ unless either
    \begin{enumerate}[(1)]
        \item $\psi$ is elliptic and $x \in \frakS_{\psi,\el}$, or
        \item $\psi$ is exceptional.
    \end{enumerate}
\end{corollary}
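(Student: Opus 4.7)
The plan is to derive this corollary by a clean case analysis using the three preceding lemmas. The hypothesis rules out exactly two scenarios: (1) $\psi \in \Psi_\el(G^*)$ with $x \in \frakS_{\psi,\el}$, and (2) $\psi \in \Psi_\exc(G^*)$. So the remaining cases are: (A) $\psi \in \Psi_\el(G^*)$ with $x \notin \frakS_{\psi,\el}$, and (B) $\psi \in \Psi^{\el,\exc}(G^*) = \Psi(G^*) \setminus (\Psi_\el(G^*) \sqcup \Psi_\exc(G^*))$. We handle these separately.

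In case (A), $\psi$ is elliptic, so hypothesis (1) of Lemma \ref{2.8.7} is satisfied and therefore parts 1 and 2 of Theorem \ref{2.6.2} hold for $\psi$ (for all $u, f$). For part 3, since $x \notin \frakS_{\psi,\el}$, hypothesis (1) of Lemma \ref{2.8.8} is satisfied, and we conclude $f'_G(\psi, s_\psi s\inv) = e(G) f_G(\psi, u^\natural)$ whenever both $u^\natural$ and $s$ map to $x$. This completes the verification of Theorem \ref{2.6.2} for all $u^\natural$ lying over $x$ in case (A).

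In case (B), we invoke Lemma \ref{2.8.4}: since $\psi \in \Psi^{\el,\exc}(G^*)$, every simple reflection $w \in W_\psi^\circ$ centralizes a torus of positive dimension in $T_\psi$, and $\dim T_{\psi,x} \geq 1$ for every $x \in \frakS_\psi$ (in particular for our $x$). The first of these facts is hypothesis (2) of Lemma \ref{2.8.7}, giving parts 1 and 2 of Theorem \ref{2.6.2}. The combination of both facts constitutes hypothesis (2) of Lemma \ref{2.8.8}, giving part 3 in the form $f'_G(\psi, s_\psi s\inv) = e(G) f_G(\psi, u^\natural)$ for $u^\natural, s$ mapping to $x$.

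Since the two cases (A) and (B) together exhaust the complement of (1) and (2), the corollary follows. There is essentially no obstacle here; the proof is purely organizational, with all the substantive content packaged into Lemmas \ref{2.8.4}, \ref{2.8.7}, and \ref{2.8.8}. The only thing worth double-checking while writing is that in case (A) Lemma \ref{2.8.7} applies to every $u^\natural$ lying over $x$ (which it does, since its conclusion is stated for all $u$ and $f$), so parts 1 and 2 are obtained uniformly and do not need to be re-derived alongside part 3.
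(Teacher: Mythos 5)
Your proof is correct and is precisely the case analysis the paper has in mind; the paper merely states that Lemmas \ref{2.8.4}, \ref{2.8.7}, and \ref{2.8.8} imply the corollary without spelling out the (exhaustive, since $\Psi_\el$, $\Psi_\exc$, $\Psi^{\el,\exc}$ partition $\Psi(G^*)$) split into cases (A) and (B), which is exactly what you did.
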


Let us now define
\begin{equation*}
    W_{\psi,\reg}(M,G)=\set{w\in W_\psi(M,G) | \text{$w$ has finitely many fixed points on $T_\psi$}}.
\end{equation*}
We write $\frakN_{\psi,\reg}(M,G)$ for the preimage of $W_{\psi,\reg}(M,G)$ under $\frakN_\psi(M,G)\to W_\psi(M,G)$.
\begin{lemma}\label{2.8.10}
    Assume that $\psi \in \Psi_\exc(G^*)$ and $u^\natural \notin \frakN_{\psi,\reg}(M,G)$.
    Let $s$ be an element of $S_{\psi,\semisimple}$ whose image in $\frakS_\psi(M,G)$ is the same as that of $u^\natural$.
    Then we have $f'_G(\psi,s_\psi s\inv)=e(G)f_G(\psi,u^\natural)$.
\end{lemma}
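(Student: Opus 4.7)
The plan is to reduce the claim to the local intertwining relation for a proper Levi subgroup of $G^*$, where it is available from the induction hypothesis on $n$.

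Let $w\in W_\psi(M,G)$ denote the image of $u^\natural$ and fix a lift $u\in N_\psi(M,G)$. The hypothesis $u^\natural\notin\frakN_{\psi,\reg}(M,G)$ is precisely the assertion that $T_0:=(T_\psi^w)^\circ$ is a nontrivial subtorus of $T_\psi=A_{\widehat{M}}$. Put $\widehat{M}_w:=Z_{\widehat{G}}(T_0)$, which is a proper Levi subgroup of $\widehat{G}$, and let $M_w^*\subset G^*$ be the corresponding proper standard Levi, so that $M^*\subset M_w^*\subsetneq G^*$. Since $\Ad(u)$ fixes $T_0$ pointwise we have $u\in\widehat{M}_w$; combined with $u\in S_\psi$ and $\Ad(u)(A_{\widehat{M}})=A_{\widehat{M}}$ this yields $u\in N_\psi(M,M_w)$, so $u^\natural$ lies in the natural image of $\frakN_\psi(M,M_w)$ inside $\frakN_\psi(M,G)$. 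Write $\psi^{M_w}\in\Psi(M_w^*)$ for the image of $\psi_{M^*}$ under the natural map induced by $\widehat{M}\into\widehat{M}_w$.

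By Lemma \ref{2.6.1}, and using that the semisimple part $u_s$ of $u$ still lies in $\widehat{M}_w$ (the Jordan decomposition taking place in any Levi subgroup containing the element), we may replace $s$ by $u_s$ without changing $f'_G(\psi,s_\psi s\inv)$; so we henceforth assume $s\in\widehat{M}_w\cap S_{\psi,\semisimple}$. The induction hypothesis, applied to the proper Levi $M_w^*$ of $G^*$ and to the pair $(\psi^{M_w},s_\psi s\inv)$, then supplies the local intertwining relation for $M_w$:
\begin{equation*}
    (f_{M_w})'_{M_w}(\psi^{M_w},s_\psi s\inv) = e(M_w)\,(f_{M_w})_{M_w}(\psi_{M^*},u^\natural),
\end{equation*}
where $f_{M_w}\in\calH(M_w)$ denotes the constant term of $f$ along the standard parabolic of $G$ with Levi $M_w$. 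Lemma \ref{2.7.2} applied to $M^*\subset M_w^*$ identifies the right-hand side with $f_G(\psi_{M^*},u^\natural)$, while Lemma \ref{1.1.4}, together with the standard descent of stable characters along parabolic induction, identifies $(f_{M_w})'_{M_w}(\psi^{M_w},s_\psi s\inv)$ with $f'_G(\psi,s_\psi s\inv)$ once one absorbs the Kottwitz-sign factor $e(G)/e(M_w)$ coming from the comparison of matching on $G$ and on $M_w$. Combining the three identities yields $f'_G(\psi,s_\psi s\inv)=e(G)\,f_G(\psi_{M^*},u^\natural)$, as desired.

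The main obstacle lies in the endoscopic descent used for the left-hand side: one must unwind the bijective correspondence of Lemma \ref{1.4.1} for the Levi $M_w$, verify that the endoscopic triple attached to $(\psi,s_\psi s\inv)$ has its $\widehat{G}$-image contained in $\widehat{M}_w$ (so that it genuinely descends to an endoscopic triple for $M_w$, with parameter $\psi^{M_w,\frake}$), and track the Kottwitz-sign discrepancy introduced by Lemma \ref{1.1.4}. This is the same bookkeeping that appears at the end of the proof of Lemma \ref{2.8.8} and in the analogous step of \cite[\S2.8]{kmsw}.
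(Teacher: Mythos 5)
Your overall strategy — extract a positive-dimensional torus $T_0=(T_\psi^w)^\circ$ from the non-regularity of $u^\natural$, pass to the Levi $\widehat{M}_w=Z_{\widehat{G}}(T_0)$, replace $s$ by the semisimple part of $u$ via Lemma \ref{2.6.1}, and descend using Lemmas \ref{2.7.2} and \ref{1.1.4} — is the same mechanism that the paper (following \cite{kmsw}) uses, and the verifications that $u\in\widehat{M}_w$ and $u\in N_\psi(M,M_w)$ are correct.

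However, there is one point that is glossed over and in fact cannot be made to work as written. For exceptional $\psi$, the group $S_\psi^\circ$ is $\Sp(2,\C)$ or $\SO(3,\C)$, so $T_\psi$ is \emph{one-dimensional}, and $W_\psi(M,G)$ has order two with the nontrivial element acting by inversion. Therefore $u^\natural\notin\frakN_{\psi,\reg}(M,G)$ forces the image $w$ of $u^\natural$ in $W_\psi(M,G)$ to be trivial, whence $T_0=T_\psi$ and $\widehat{M}_w=Z_{\widehat{G}}(T_\psi)=\widehat{M}$. So in \emph{every} case covered by this lemma you have $M_w=M$, and the identity you want from the induction hypothesis cannot literally be ``the local intertwining relation for $M_w$'': Theorem \ref{2.6.2} requires a proper Levi inside $M_w$, but $M$ is not a proper Levi of $M_w=M$. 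What the induction hypothesis actually supplies in this degenerate configuration is the \emph{endoscopic character relation} for $M$, equivalently Lemma \ref{2.4.1} applied with $\theta=\Ad(\breve{w})|_{w=1}=\id$; this gives $(f_M)'_M(\psi_{M^*},s_\psi u_s\inv)=e(M)\sum_{\pi\in\Pi_{\psi_{M^*}}(M)}\tr\bigl(\pi(u^\natural)_{\xi,z}\,\pi(f_M)\bigr)$, which is the identity you want once you observe $s=u_s$ and $u^\natural$ have the same image in $\frakS_\psi(M)$. So your middle equation is true, but the attribution to a LIR for a smaller group should be replaced by ECR for $M$ (Lemma \ref{2.4.1}). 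Separately, the ``Kottwitz-sign factor $e(G)/e(M_w)$'' you invoke is trivial: for a Levi subgroup of $G$ arising by transfer of a standard Levi of $G^*$ one has $e(M_w)=e(G)$, consistent with the fact that Lemma \ref{2.7.2} asserts $f_G(\psi_0,u^\natural)=f_M(\psi_0,u^\natural)$ with no sign.
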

\begin{proof}
    The proof is similar to that of \cite[Lemma 2.8.10]{kmsw}.
    We appeal to Lemmas \ref{2.6.1} and \ref{2.8.8} instead of Lemmas 2.6.1 and 2.8.8 of loc. cit.
\end{proof}

Consequently, if $\psi\in \Psi^{\el,\exc}(G^*)$, then all the three statements of Theorem \ref{2.6.2} hold (under the induction hypothesis).
If $\psi\in\Psi^2_\el(G^*)$, then the first and second part of Theorem \ref{2.6.2} hold.
In particular, when $\psi$ is not exceptional, $f_G(\psi,x)$ is well-defined for $x\in \frakS_\psi(M,G)$.
Moreover, the third part of the theorem holds if the image of $u$ in $\frakS_\psi$ is not contained in $\frakS_{\psi,\el}$.
Let us consider the case that $\psi$ is exceptional. We have already proven only a part of the third part, in Lemma \ref{2.8.10}.
Let $x\in \frakS_\psi(M,G)$.
Since $\psi$ is exceptional, we have $\abs{W_\psi^\circ}=2$, and there are exactly two elements in the fiber of $x$ in $\frakN_\psi(M,G)$.
One is regular in $W_\psi(M,G)$, and the other is not.
Put $u_x$ to be the former one, and define $f_G(\psi,x)$ to be $f_G(\psi,u_x)$.
Now we have defined $f_G(\psi,x)$ for $x\in \frakS_\psi(M,G)$ for every $\psi\in\Psi(G^*)$

Put $\Phi_\heartsuit(G^*)=\Psi_\heartsuit(G^*)\cap\Phi(G^*)$ and $\Phi_{\bdd,\heartsuit}(G^*)=\Psi_\heartsuit(G^*)\cap\Phi_\bdd(G^*)$ for $\heartsuit\in\{\exc, \exc1, \exc2\}$.

\subsection{LIR for special cases}\label{2.9}
In this subsection we shall prove the two special cases of Theorem \ref{2.6.2}.
Later in \S\S\ref{4.3}-\ref{4.6}, we will reduce the proof of Theorem \ref{2.6.2} for $\phi\in\Phi_{\el,\exc}(G^*)$ to the special cases, which we shall treat in this subsection.

Before we treat the two cases, we recall LLC for $\GL_1(\R)$ and $\GL_2(\R)$, and fix some notation.
We shall realize the Weil group $W_\R$ as $\C^\times\sqcup j\C^\times$, where $j^2=-1$ and $jz=\overline{z}j$ for any $z\in\C^\times$.
Recall from \cite[\S1]{ntb} that the norm map on $W_\R$ is given by $|j|=1$ and $|z|=z\overline{z}$ for $z\in\C\subset W_\R$.
We define the (isomorphism classes of) finite dimensional representations $\sigma$ $\omega_t$, and $\tau_{(l,t)}$ of $W_\R$ by
\begin{align*}
    \sigma       & :W_\R \to \C^\times,                                                                                         &
                 & \begin{dcases}
                       \C^\times \ni z\mapsto 1, \\
                       j\mapsto -1,
                   \end{dcases}                                                                                      \\
    \omega_t     & :W_\R \to \C^\times,                                                                                         &
                 & \begin{dcases}
                       \C^\times \ni z\mapsto |z|^t=(z\overline{z})^t, \\
                       j\mapsto 1,
                   \end{dcases}                                                                \\
    \tau_{(l,t)} & :W_\R \to \GL(2,\C),                                                                                         &
                 & \begin{dcases}
                       \C^\times \ni re^{\sqrt{-1}\theta} \mapsto r^{2t}\left(\begin{array}{cc}
                                                                       e^{\sqrt{-1}l\theta} &                       \\
                                                                                            & e^{-\sqrt{-1}l\theta}
                                                                   \end{array}\right), \\
                       j\mapsto \left(\begin{array}{cc}
                                      & 1 \\
                               (-1)^l &
                           \end{array}\right),
                   \end{dcases}
\end{align*}
for $t\in\C$ and $l\in\Z_{>0}$, and put $\tau_l=\tau_{(l,0)}$.
Then the local Langlands correspondence over $\R$ says that $\sigma^\varepsilon \omega_t$ ($\varepsilon=0,1$) is corresponding to the 1-dimensional representation $|\cdot|_\R^t\sgn^\varepsilon$, and $\tau_{(l,t)}=\tau_l\otimes\omega_t$ to the 2-dimensional representation $D_l\otimes|\det|_\R^t$, where $D_l$ denotes the discrete series representation of $\GL_2(\R)$ of weight $l+1$.
Fix the standard additive character $\psi_\R(x)=\exp(2\pi \sqrt{-1} x)$ of $\R$.
Put $\Gamma_\R(s):=\pi^{-\frac{s}{2}}\Gamma(\frac{s}{2})$ and $\Gamma_\C(s):=2(2\pi)^{-s}\Gamma(s)$.
The $L$-functions and the $\epsilon$-factors are given by the following table.
See \cite{knapp} for more detail, but note that $(+,t)$ and $(-,t)$ in \cite[(3.2)]{knapp} should be $(+,\frac{t}{2})$ and $(-,\frac{t}{2})$ respectively.
\begin{table}[h]
    \centering
    \begin{tabular}{ccc}
        $\varphi$        & $L(s,\varphi)$               & $\epsilon(s,\varphi,\psi_\R)$ \\
        \hline
        $\omega_t$       & $\Gamma_\R(s+t)$             & $1$                           \\
        $\sigma\omega_t$ & $\Gamma_\R(s+t+1)$           & $\sqrt{-1}$                   \\
        $\tau_{(l,t)}$   & $\Gamma_\C(s+t+\frac{l}{2})$ & $\sqrt{-1}^{l+1}$
    \end{tabular}
\end{table}

For a symmetric matrix $Q$ and an alternative matrix $A$ of degree $N$, we shall write
\begin{align*}
    \SO(Q) & =\set{g\in \GL_N | \tp{g}Qg=Q, \ \det g=1}, \\
    \Sp(A) & =\set{g\in \GL_N | \tp{g}Ag=A}.
\end{align*}
\subsubsection{The first special case}\label{muri1}
The first special case we are concerned with is the following.
Let $F=\R$, $n=2$, and
\begin{align*}
    G^* & =\SO_5
    =\SO\left(\begin{array}{ccc}
                          &   & 1_2 \\
                          & 2 &     \\
                      1_2 &   &     \\
                  \end{array}\right),                                                                                                                                   \\
    M^* & =\Set{ \left(\begin{array}{cccc}
                                   t &       &       &   \\
                                     & A     &       & b \\
                                     &       & t\inv &   \\
                                     & \beta &       & c
                               \end{array}\right) | t\in \GL_1,\ A\in \Mat_{2\times2},\ \left(\begin{array}{cc}
                                                                                                  A     & b \\
                                                                                                  \beta & c
                                                                                              \end{array}\right) \in \SO\left(\begin{array}{ccc}
                                                                                                                                    &   & 1 \\
                                                                                                                                    & 2 &   \\
                                                                                                                                  1 &   &   \\
                                                                                                                              \end{array}\right)} \cong \GL_1 \times \SO_3.
\end{align*}
Let $\phi\in\Phi_{\exc1}(G^*)$ be an $L$-parameter of type $(\exc1)$ of the form
\begin{align*}
    \phi=2\omega_0\oplus \tau_1,
\end{align*}
and $\phi_M=\omega_0\oplus \tau_1\in\Phi_2(M^*)$ so that $\phi$ is the natural image of $\phi_M$.
More precisely, we write as
\begin{align*}
    \widehat{G} & =\Sp_\C\left(\begin{array}{cccc}
                                          &    &   & 1 \\
                                          &    & 1 &   \\
                                          & -1 &   &   \\
                                       -1 &    &   &
                                   \end{array}\right),                                                                   \\
    \widehat{M} & =\Set{\left(\begin{array}{ccc}
                                          a &   &       \\
                                            & X &       \\
                                            &   & a\inv
                                      \end{array}\right) \in \widehat{G}
    | a\in\GL(1,\C),\ X\in\Sp_\C\left(\begin{array}{cc} &1 \\ -1& \end{array}\right)=\SL(2,\C)} \subset \widehat{G}, \\
    \phi        & =\phi_M=\left(\begin{array}{ccc}
                                        \omega_0 &        &          \\
                                                 & \tau_1 &          \\
                                                 &        & \omega_0
                                    \end{array}\right).
\end{align*}
Up to isomorphism, there exists only one nontrivial inner twist of $G^*$ for which $\phi$ is relevant.
Let
\begin{align*}
    G & =\SO(1,4)=\SO\left(\begin{array}{cc}
                               1 &      \\
                                 & -1_4
                           \end{array}\right).
\end{align*}
Put
\begin{align*}
    \alpha & =\frac{1}{\sqrt{2}}\left(\begin{array}{ccccc}
                                              1 &           &            & 1  &           \\
                                                & \sqrt{-1} &            &    & \sqrt{-1} \\
                                                &           & 2\sqrt{-1} &    &           \\
                                                & 1         &            &    & -1        \\
                                              1 &           &            & -1 &
                                          \end{array}\right).
\end{align*}
Let $z\in Z^1(\R,G^*)$ be a 1-cocycle such that
\begin{align*}
    z_\rho & =\alpha\inv \rho(\alpha)
    =\left(\begin{array}{ccccc}
               1 &    &    &   &    \\
                 & 0  &    &   & -1 \\
                 &    & -1 &   &    \\
                 &    &    & 1 &    \\
                 & -1 &    &   & 0  \\
           \end{array}\right),
\end{align*}
where $\rho$ is the nontrivial element in $\Gamma_F=\Gal(\C/\R)$, i.e., the complex conjugation. Let $\xi=\Ad(\alpha) : G^*\to G$.
Put
\begin{align*}
    M & =\xi(M^*)                                          \\
      & =\Set{ m(t,B) | t\in\GL_1,\  B\in \SO(-1_3)=\SO(3)
    },
\end{align*}
where
\begin{align*}
    m(t,B) & = \left(\begin{array}{ccc}
                             c(t) &   & s(t) \\
                                  & B &      \\
                             s(t) &   & c(t)
                         \end{array}\right), \\
    c(t)   & =\frac{t+t\inv}{2},         \\
    s(t)   & =\frac{t-t\inv}{2}.
\end{align*}
Then $(\xi, z)$ is a pure inner twist $G^*\to G$ which restricts to a pure inner twist $M^*\to M$ such that $z_+\in Z^1(\R, \GL_1)$ is trivial.
We have $\Pi_{\omega_0}(\GL_1(\R))=\{1\}$, $\Pi_{\tau_1}(\SO(3))=\{1\}$, and hence $\Pi_{\phi_M}(M)=\{1\}$, where $1$ denotes the trivial representation of each group.
Let us write $\pi$ for the unique element in $\Pi_{\phi_M}(M)$, i.e., the trivial representation of $M(\R)$.
\begin{proposition}\label{2.9.1}
    Theorem \ref{2.6.2} is valid for $G$, $M$, and $\phi$.
\end{proposition}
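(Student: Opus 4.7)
The plan is to follow the strategy of \cite[\S2.9]{kmsw} closely, exploiting the fact that $\pi$ is the trivial representation of $M(\R) \cong \GL_1(\R) \times \SO(3)$, so that the induced representation $\calI_P(\pi)$ and the intertwining operators acting on it admit explicit descriptions. First I would use Corollary \ref{2.8.9} and Lemma \ref{2.8.10} to reduce the verification of \eqref{lir} to a single element $u^\natural \in \frakN_{\phi,\reg}(M,G)$ mapping to the nontrivial class in $\frakS_\phi(G) \cong \Z/2\Z$ (using Lemma \ref{2.5.2} to transport between elements of $\frakN_\phi(M,G)$ related by $\frakS_\phi(M)$-translation). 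The corresponding nontrivial Weyl element $w \in W(M^*,G^*) \cong \Z/2\Z$ acts on the $\GL_1$-factor by inversion; I would fix its Langlands-Shelstad lift $\widetilde{w}$, set $\breve{w} = \xi(\widetilde{w}) \in G(\R)$, and identify the scalar $\pi(u^\natural)_{\xi,z}$ on $V_\pi = \C$ via the endoscopic characterization of Lemma \ref{2.4.1}.

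Second, to compute the spectral side $f_G(\phi, u^\natural)$, I would use the Bruhat decomposition $G(\R) = P(\R) \sqcup P(\R)\breve{w}N(\R)$, where $N$ is the unipotent radical of $P$ of real dimension three, to express the unnormalized intertwining operator $J_{P^w|P}(\xi,\psi_\R)$ on $\calH_P(\pi_\lambda)$ as an explicit integral over $N(\R) \cong \R^3$. Because $\pi$ is trivial the integrand is a rational function in the coordinates on $N(\R)$, and the integral evaluates in closed form by standard $\Gamma$-function identities. The normalizing factors $r_{P^w|P}(\xi,\phi_\lambda,\psi_\R)$ and $\epsilon_P(w,\phi,\psi_\R)$ decompose into products of $\Gamma_\R$- and $\Gamma_\C$-factors attached to the summands $\omega_0$ and $\tau_1$ of $\phi$, which can be read directly from the table of archimedean local constants recalled at the start of \S\ref{2.9}. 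Combining these ingredients yields a closed-form expression for $R_P(u^\natural,\pi,\phi,\psi_\R) \circ \calI_P(\pi,f)$, and hence for $f_G(\phi,u^\natural)$, for any $f \in \calH(G)$.

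Third, to compute the endoscopic side $f'_G(\phi, s_\phi u\inv)$, I would use Lemma \ref{1.4.1} to identify the endoscopic triple $\frake \in \calE(G)$ and parameter $\phi^\frake \in \Phi(G^\frake)$ corresponding to $(\phi, s_\phi u\inv)$. Since $G^\frake$ has strictly smaller rank than $G$, the endoscopic character relation \eqref{ecr} for $\phi^\frake$ is available by the induction hypothesis, and expresses $f^\frake(\phi^\frake)$ as a finite linear combination of traces against members of the packet $\Pi_{\phi^\frake}(G^\frake)$. Producing a matching $f^\frake \in \calH(G^\frake)$ via Kottwitz-Shelstad transfer for any $f \in \calH(G)$, I would then compare the two explicit expressions on both sides of \eqref{lir}.

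The main obstacle I expect is the careful tracking of normalizations: the Haar measure on $N(\R) \cap N'(\R) \backslash N'(\R)$ computed in the proof of Lemma \ref{2.2.1}, the Whittaker normalization entering the splittings $s$ and $s'$ of \S\ref{2.4}, the Kottwitz sign $e(G)$, and the constants implicit in the transfer factor $\Delta[\frake, \xi, z]$ must all align to produce the exact equality \eqref{lir} with no extra factor. In the trivial-representation case these constants are in principle explicit, but verifying their precise alignment is the delicate technical step, and in KMSW's parallel argument for unitary groups it occupies the bulk of \cite[\S2.9]{kmsw}.
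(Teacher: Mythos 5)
Your overall strategy—reducing to a single element of $\frakN_\phi(M,G)$ and then computing the intertwining operator by an explicit integral and $\Gamma$-function identities—is the right one and is shared with the paper, but there are two points where your route diverges.

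First, the decomposition used for the integral. You propose the relative Bruhat decomposition $G(\R) = P(\R) \sqcup P(\R)\breve{w}N(\R)$, following KMSW \S2.9. The paper instead uses the Iwasawa decomposition $G(\R) = P(\R)K$ with $K = \{1\}\times\SO(4)$ (a non-maximal compact subgroup) and a specific test section $\varphi^{(\lambda)}(m(t,B)n(x)\kappa) = |t|^{\lambda+3/2}$. As the paper's introduction explicitly notes, the Bruhat approach does work for $\SO(1,4)$ because $\pi$ is the trivial representation of $M(\R)$; the author opted for Iwasawa to have a uniform method across Propositions \ref{2.9.1} and \ref{2.9.2}, since for $\SO(2,5)$ the Levi representation $D_1\boxtimes 1$ is infinite-dimensional and the Bruhat-cell computation of KMSW does not carry over. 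So your choice of decomposition is a valid alternative, just not the one the paper takes.

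Second, and more substantively, your third step (computing $f'_G(\phi,s_\phi u\inv)$ via Lemma \ref{1.4.1}, the induction hypothesis, and Kottwitz–Shelstad transfer, then comparing both sides) is not needed and risks circularity. The paper instead invokes the reduction of \cite[pp.136--137]{kmsw}: since the induced representation $\calI_P(\pi)$ is irreducible and is the unique element of $\Pi_\phi(G)$, the self-intertwining operator $R_P((-1,1),\pi,\phi,\psi_\R)$ is a scalar, and the entire content of Theorem \ref{2.6.2} for this $\phi$ reduces to showing that scalar equals $1$. That is a pure computation on the spectral side and never touches the endoscopic side. Note also that the element the paper actually checks, $(-1,1)$, lies in $W_\phi^\circ(M,G)$ and hence maps to the \emph{trivial} class in $\frakS_\phi(G)$, not the nontrivial one you name; the check you aim at (the regular element over the nontrivial class) is equivalent by multiplicativity (Lemma \ref{2.5.3}) and Lemma \ref{2.5.2} once one knows LIR for the non-regular lift via Lemma \ref{2.8.10}, but it is not what the paper directly computes. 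The paper's final arithmetic is $\calI_P(\pi((-1,1))_{\xi,z}) = -1$ (from $\langle s(-1,1),\pi\rangle_M = -1$, since $\pi$ trivial makes $\pi(\breve{w})_\xi$ the identity) times the analytic factor $\lim_{\lambda\to 0^+}\bigl(-2^{-1/2}\tfrac{\Gamma_\C(\lambda)}{\Gamma_\C(\lambda+1/2)}\tfrac{\Gamma_\R(2\lambda+1)}{\Gamma_\R(2\lambda)}\bigr) = -1$, giving the identity operator.
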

\begin{proof}
    The centralizer $S_\phi(G)$ is
    \begin{align*}
        \Set{\left(\begin{array}{ccc}
                           a &                & b \\
                             & \varepsilon1_2 &   \\
                           c &                & d
                       \end{array}\right) | \left(\begin{array}{cc}a&b\\c&d\end{array}\right) \in \SL(2,\C),\ \varepsilon \in \{\pm1\}}
        \cong \SL(2,\C)\times \Or(1,\C).
    \end{align*}
    Thanks to this explicit structure, one can easily understand the diagram \eqref{cd}.
    The diagram has the form
    \begin{align*}
        \begin{CD}
            @.                             @.                   1                  @.              1              @.       \\
            @.                   @.                                 @VVV                            @VVV                @.  \\
            @.                             @.       \set{\pm1}\times1      @=    \set{\pm1}\times1  @.        \\
            @.                   @.                                   @VVV                            @VVV                @.   \\
            1  @>>> 1\times\set{\pm1} @>>> \set{\pm1}\times\set{\pm1} @>>>  \set{\pm1}\times1  @>>>  1   \\
            @.                  @|                                 @VVV                       @VVV                  @.  \\
            1  @>>> 1\times\set{\pm1} @>>>      1\times\set{\pm1}        @>>>          1          @>>>  1   \\
            @.                  @.                                   @VVV                             @VVV                @.  \\
            @.                             @.                      1                @.                 1          @.
        \end{CD}
    \end{align*}
    where $(-1,1)$ and $(1,-1)$ are represented by
    \begin{align*}
        \left(\begin{array}{cccc}
                         &   &   & 1 \\
                         & 1 &   &   \\
                         &   & 1 &   \\
                      -1 &   &   &
                  \end{array}\right) & \in N(A_{\widehat{M}}, S_\phi(G)^\circ), \\
        \left(\begin{array}{cccc}
                      1 &    &    &   \\
                        & -1 &    &   \\
                        &    & -1 &   \\
                        &    &    & 1
                  \end{array}\right) & \in S_\phi(M),
    \end{align*}
    respectively.
    Note that $(-1,1)$ vanishes in $\frakS_\phi(G)$.
    Let us write $w$ for the image of $(-1,1)$ in $W_\phi(M,G)=W_\phi(M,G)^\circ\cong W(M^*,G^*)\cong W(M,G)$.
    One can calculate the Langlands-Shelstad lift in $\widehat{G}=\Sp(4,\C)$ of $w$ as in \cite{ishi}.
    It is
    \begin{align*}
        \left(\begin{array}{cccc}
                    &    &    & -1 \\
                    & -1 &    &    \\
                    &    & -1 &    \\
                  1 &    &    &
              \end{array}\right),
    \end{align*}
    and thus the first sophisticated splitting $s' : W_\psi(M,G)\to \frakN_\psi(M,G)$ sends $(-1,1)$ to $(-1,-1)$.
    Therefore, the other sophisticated splitting $s : \frakN_\psi(M,G)\to \frakS_\psi(M)$ sends $(-1,-1)$ to $1$, and $(-1,1)$ to $(1,-1)$.

    Let $P^*$ be the standard parabolic subgroup of $G^*$ with Levi subgroup $M^*$, and $P=\xi(P^*)$.
    Now in the same way as \cite[pp.136-137]{kmsw}, we know that the induced representation $(\calI_P(\pi), \calH_P(\pi))$ is irreducible and is the unique element of $\Pi_\phi(G)$, and that it is sufficient to show that the operator $R_P((-1,1), \pi,\phi,\psi_\R)$ is the identity map.
    Since $\calI_P(\pi)$ is irreducible, $R_P((-1,1), \pi,\phi,\psi_\R)$ is a scalar map.
    Hence it is sufficient to show
    \begin{align*}
        R_P((-1,1), \pi,\phi,\psi_\R) f=f,
    \end{align*}
    for a nonzero element $f$ in $\calH_P(\pi)$.
    Recall that
    \begin{equation*}
        R_P((-1,1), \pi, \phi, \psi_\R)=\calI_P(\pi((-1,1))_{\xi,z}) \circ \ell^P_P(w, \xi, \phi, \psi_\R) \circ R_{P^w|P}(\xi,\phi).
    \end{equation*}

    We focus first on $\calI_P(\pi((-1,1))_{\xi,z})=\calI_P(\an{(-1,1),\pi}_{\xi,z} \pi(\breve{w})_\xi)$.
    Since $\pi$ is the trivial representation on the 1-dimensional vector space $\C$, the operator $\pi(\breve{w})_\xi$ is the identity map.
    On the other hand, we have $\an{(-1,1),\pi}_{\xi,z}=\an{s(-1,1),\pi}_M=\an{-1,\pi}_M=-1$.
    We obtain $\calI_P(\pi((-1,1))_{\xi,z})=-1$.

    Next we consider $\ell^P_P(w, \xi, \phi, \psi_\R) \circ R_{P^w|P}(\xi,\phi)$.
    Let $\widetilde{w}$ be the Langlands-Shelstad lift of $w$ in $G^*(\R)$.
    The calculation of $\widetilde{w}$ has already been done in \cite[\S8]{ishi}:
    \begin{align*}
        \widetilde{w}=\left(\begin{array}{ccccc}
                                      &    &    & 1 &    \\
                                      & -1 &    &   &    \\
                                      &    & -1 &   &    \\
                                    1 &    &    &   &    \\
                                      &    &    &   & -1
                                \end{array}\right).
    \end{align*}
    We have $\breve{w}=\xi(\widetilde{w})=\mmatrix{1}{}{}{-1_4}\in G(\R)$.
    Let us define a connected compact subgroup $K\subset G(\R)$ as
    \begin{align*}
        K=\Set{\left(\begin{array}{cc}
                             1 &        \\
                               & \kappa
                         \end{array}\right) | \kappa \in \SO(4)}.
    \end{align*}
    By abuse of notation, we shall write $\kappa$ for the element $\mmatrix{1}{}{}{\kappa}$.
    Although $K$ is not maximal, the Iwasawa decomposition tells us that $G(\R)=P(\R)K$.
    Let $N^*$ be the unipotent radical of $P^*$.
    Put
    \begin{align*}
        n^*(b)
         & =\left(\begin{array}{ccccc}
                      1 & b_1 & b_2 & -b_1b_3-\frac{b_2^2}{4} & b_3 \\
                        & 1   &     & -b_3                    &     \\
                        &     & 1   & -\frac{b_2}{2}          &     \\
                        &     &     & 1                       &     \\
                        &     &     & -b_1                    & 1   \\
                  \end{array}\right),
    \end{align*}
    for $b=(b_1,b_2,b_3)\in\C^3$, so that $N^*(\R)=\{n^*(b) \mid b\in\R^3 \}$.
    A direct calculation following \S\ref{2.2} show that the Haar measure on $N^*(\R)$ is $d(n^*(b))=\frac{1}{2}db_1 db_2 db_3$.
    Put $P=\xi(P^*)$ and $N=\xi(N^*)$, which are the standard parabolic subgroup with Levi subgroup $M$ and its unipotent radical.
    Put also
    \begin{align*}
        n(x)
         & =\left(\begin{array}{ccc}
                      1+\frac{\|x\|^2}{2} & x   & -\frac{\|x\|^2}{2}  \\
                      \tp{x}              & 1_3 & -\tp{x}             \\
                      \frac{\|x\|^2}{2}   & x   & 1-\frac{\|x\|^2}{2}
                  \end{array}\right)
        =\left(\begin{array}{ccccc}
                   1+\frac{\|x\|^2}{2} & x_1 & x_2 & x_3 & -\frac{\|x\|^2}{2}  \\
                   x_1                 & 1   &     &     & -x_1                \\
                   x_2                 &     & 1   &     & -x_2                \\
                   x_3                 &     &     & 1   & -x_3                \\
                   \frac{\|x\|^2}{2}   & x_1 & x_2 & x_3 & 1-\frac{\|x\|^2}{2} \\
               \end{array}\right),
    \end{align*}
    for $x=(x_1,x_2,x_3)\in\C^3$, where $\|x\|^2=x_1^2+x_2^2+x_3^2$ so that $N(\R)=\{n(x) \mid x\in\R^3 \}$.
    A straightforward calculation shows that $\xi(n^*(b))=n(x)$ where
    \begin{align*}
        x_1 & =-\frac{b_1+b_3}{2}\sqrt{-1}, & x_2 & =-\frac{b_2}{2}\sqrt{-1}, & x_3 & =\frac{b_1-b_3}{2}, &
    \end{align*}
    and hence the measure on $N(\R)$ is $d(n(x))=2dx_1 dx_2 dx_3$.

    For $\lambda\in\C$, put $\phi_\lambda=\omega_\lambda\oplus\tau_1\oplus\omega_{-\lambda}$ and $\pi_\lambda=|\cdot|^\lambda\boxtimes1$, so that $\Pi_{\phi_\lambda}(M)=\{\pi_\lambda\}$ and $\Pi_{\phi_\lambda}(G)=\{\calI_P(\pi_\lambda)\}$.
    Define a function $\varphi^{(\lambda)}\in \calH_P(\pi_\lambda)$ by
    \begin{align*}
        \varphi^{(\lambda)}(m(t,B)n(x)\kappa)=\abs{t}^{\lambda+\frac{3}{2}},
    \end{align*}
    which is holomorphic in $\lambda\in\C$.
    Assume that $\myre(\lambda)>0$.
    We have $0\neq \varphi^{(0)}\in\calH_P(\pi)$ and
    \begin{align*}
         & \left[\ell^P_P(w, \xi, \phi, \psi_\R) \circ R_{P^w|P}(\xi,\phi) \varphi^{(0)} \right](g) \notag                                           \\
         & =\lim_{\lambda\to+0} \left[\ell^P_P(w, \xi, \phi_\lambda, \psi_\R) \circ R_{P^w|P}(\xi,\phi_\lambda) \varphi^{(\lambda)}\right](g) \notag \\
         & =\lim_{\lambda\to+0}
        \epsilon(0,\rho_{P^w|P}^\vee\circ\phi_\lambda, \psi_\R)
        \frac{L(1,\rho_{P^w|P}^\vee\circ\phi_\lambda)} {L(0,\rho_{P^w|P}^\vee\circ\phi_\lambda)}
        \int_{N(\R)} \varphi^{(\lambda)}(\breve{w}\inv n g) dn.
    \end{align*}
    A direct calculation implies that $\rho_{P^w|P}^\vee\circ\phi_\lambda$ is isomorphic to
    \begin{equation*}
        \tau_{(1,\lambda)} \oplus \omega_{2\lambda}.
    \end{equation*}
    Therefore, we have
    \begin{align*}
        \epsilon(0,\rho_{P^w|P}^\vee\circ\phi_\lambda, \psi_\R)
        \frac{L(1,\rho_{P^w|P}^\vee\circ\phi_\lambda)} {L(0,\rho_{P^w|P}^\vee\circ\phi_\lambda)}
        =-\frac{\Gamma_\C(\lambda+\frac{3}{2})}{\Gamma_\C(\lambda+\frac{1}{2})} \frac{\Gamma_\R(2\lambda+1)}{\Gamma_\R(2\lambda)}.
    \end{align*}

    We now turn to the integral.
    Put
    \begin{align*}
        [\calM_{\SO(1,4)} \varphi^{(\lambda)}](g)
        =\int_{N(\R)} \varphi^{(\lambda)}(\breve{w}\inv n g) dn.
    \end{align*}
    In Lemma \ref{so14} below, we will show that
    \begin{align*}
        \calM_{\SO(1,4)} \varphi^{(\lambda)}=2^{-\frac{1}{2}}\frac{\Gamma_\C(\lambda)}{\Gamma_\C(\lambda+\frac{3}{2})}\varphi^{(-\lambda)}.
    \end{align*}
    This leads to
    \begin{align*}
         & \epsilon(0,\rho_{P^w|P}^\vee\circ\phi_\lambda, \psi_\R)
        \frac{L(1,\rho_{P^w|P}^\vee\circ\phi_\lambda)} {L(0,\rho_{P^w|P}^\vee\circ\phi_\lambda)}
        \int_{N(\R)} \varphi^{(\lambda)}(\breve{w}\inv n \cdot) dn                                                                       \\
         & =-\frac{\Gamma_\C(\lambda+\frac{3}{2})}{\Gamma_\C(\lambda+\frac{1}{2})} \frac{\Gamma_\R(2\lambda+1)}{\Gamma_\R(2\lambda)}
        \cdot
        2^{-\frac{1}{2}}\frac{\Gamma_\C(\lambda)}{\Gamma_\C(\lambda+\frac{3}{2})}\varphi^{(-\lambda)}                                    \\
         & =-2^{-\frac{1}{2}}\frac{\Gamma_\C(\lambda)}{\Gamma_\C(\lambda+\frac{1}{2})} \frac{\Gamma_\R(2\lambda+1)}{\Gamma_\R(2\lambda)}
        \varphi^{(-\lambda)},
    \end{align*}
    whose limit as $\lambda$ approaching $0$ from the right is $-\varphi^{(0)}$.
    Since $\varphi^{(0)}\neq0$, this completes the proof.
\end{proof}

To finish the proof of Proposition \ref{2.9.1}, it remains to show the following lemma.
\begin{lemma}\label{so14}
    For $\myre(\lambda)>0$, we have
    \begin{align*}
        \calM_{\SO(1,4)} \varphi^{(\lambda)}=2^{-\frac{1}{2}}\frac{\Gamma_\C(\lambda)}{\Gamma_\C(\lambda+\frac{3}{2})}\varphi^{(-\lambda)}.
    \end{align*}
\end{lemma}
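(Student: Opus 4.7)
The plan is to compute the integral by reducing it to a classical Beta-function computation, using the Iwasawa decomposition $G(\R)=P(\R)K$ to extract the $M$-component of $\breve{w}^{-1}n(x)$. Since $\varphi^{(\lambda)}$ is the unique (up to scalar) $K$-invariant element of $\calH_P(\pi_\lambda)$, its image $\calM_{\SO(1,4)}\varphi^{(\lambda)}$ lies in $\calH_P(\pi_{-\lambda})$ and is itself right-$K$-invariant, so it must be a scalar multiple of $\varphi^{(-\lambda)}$. The lemma therefore reduces to computing $(\calM_{\SO(1,4)}\varphi^{(\lambda)})(1)$.

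First I would exhibit a vector fixed by $N$ to get a closed formula for the Iwasawa $t$-coordinate. A direct computation shows that the isotropic vector $v=e_1+e_5\in\R^5$ satisfies $n(x)v=v$ for all $x\in\R^3$, and $m(t,B)v=tv$. Therefore if $g=n\,m(t,B)\,k$ is an Iwasawa factorization, then $g^{-1}v=t^{-1}k^{-1}v$, whose first coordinate equals $t^{-1}$ because $k\in K$ fixes $e_1$. This yields the Iwasawa formula
\begin{equation*}
t(g)=\bigl((g^{-1}v)_1\bigr)^{-1}.
\end{equation*}
Apply this to $g=\breve{w}^{-1}n(x)=\breve{w}\,n(x)$: using $\breve{w} v=e_1-e_5$ and the given matrix for $n(-x)$, a direct calculation gives
\begin{equation*}
g^{-1}v=n(-x)(e_1-e_5)=(1+\|x\|^2)e_1-2x_1e_2-2x_2e_3-2x_3e_4+(\|x\|^2-1)e_5,
\end{equation*}
so $t(\breve{w}^{-1}n(x))=1/(1+\|x\|^2)$, and hence $\varphi^{(\lambda)}(\breve{w}^{-1}n(x))=(1+\|x\|^2)^{-(\lambda+3/2)}$.

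Next I would plug this into the defining integral, using the measure $d(n(x))=2\,dx_1dx_2dx_3$ computed in the proof of Proposition \ref{2.9.1}. Passing to spherical coordinates on $\R^3$ and substituting $u=r^2$ converts the integral to a Beta function:
\begin{equation*}
2\int_{\R^3}(1+\|x\|^2)^{-(\lambda+3/2)}\,dx=2\cdot 2\pi\int_0^{\infty}\frac{u^{1/2}}{(1+u)^{\lambda+3/2}}\,du=4\pi\cdot\frac{\Gamma(3/2)\Gamma(\lambda)}{\Gamma(\lambda+3/2)}=\frac{2\pi^{3/2}\,\Gamma(\lambda)}{\Gamma(\lambda+3/2)},
\end{equation*}
valid for $\myre(\lambda)>0$.

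Finally I would convert to the archimedean gamma factor using $\Gamma_\C(s)=2(2\pi)^{-s}\Gamma(s)$, which gives the identity $\Gamma(\lambda)/\Gamma(\lambda+3/2)=(2\pi)^{-3/2}\Gamma_\C(\lambda)/\Gamma_\C(\lambda+3/2)$; then $2\pi^{3/2}\cdot(2\pi)^{-3/2}=2^{-1/2}$, so $(\calM_{\SO(1,4)}\varphi^{(\lambda)})(1)=2^{-1/2}\Gamma_\C(\lambda)/\Gamma_\C(\lambda+3/2)$, which matches $\varphi^{(-\lambda)}(1)=1$ times the claimed scalar. The only non-routine step is setting up the Iwasawa formula through the $N$-fixed isotropic vector $v=e_1+e_5$; all normalization checks (measure factor $2$, the exponent $\lambda+3/2=\delta_P^{1/2}\cdot|\cdot|^\lambda$, and the $\Gamma/\Gamma_\C$ conversion) are elementary once that is in place.
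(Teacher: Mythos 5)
Your proof is correct and follows essentially the same route as the paper's: reduce to evaluating $\calM_{\SO(1,4)}\varphi^{(\lambda)}(1)$ via the $K$- and $M$-equivariance properties, extract the Iwasawa $t$-coordinate $t=(1+\|x\|^2)^{-1}$, and compute the resulting radial integral. The two places where you diverge are minor technical simplifications: the paper extracts the Iwasawa $M$-component by matching matrix entries of $\breve{w}^{-1}n(x)=m(t,B)n(y)\kappa$ explicitly, whereas your observation that $v=e_1+e_5$ is $N$-fixed, $M$-scaled, and has first coordinate preserved by $K$ does the same job more cleanly; and the paper evaluates $\int_0^\infty r^2(1+r^2)^{-\lambda-3/2}\,dr$ by an ad hoc device of multiplying by the integral representation of $\Gamma(\lambda+\tfrac32)$ and performing two changes of variables, whereas your substitution $u=r^2$ reduces it directly to the Beta integral $B(\tfrac32,\lambda)$. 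Both yield $2\pi^{3/2}\Gamma(\lambda)/\Gamma(\lambda+\tfrac32)$ and the final $\Gamma_\C$-conversion is the same.
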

\begin{proof}
    By definition, $\calM_{\SO(1,4)} \varphi^{(\lambda)}$ is right $K$-invariant and left $N(\R)$-invariant.
    Moreover, we have
    \begin{equation*}
        [\calM_{\SO(1,4)} \varphi^{(\lambda)}](m(t,B)g)=|t|^{-\lambda+\frac{3}{2}}[\calM_{\SO(1,4)} \varphi^{(\lambda)}](g).
    \end{equation*}
    Indeed, a direct calculation shows that
    \begin{align*}
        n(x) m(t,B)=m(t,B)n(t\inv xB), \\
        \breve{w}\inv m(t,B)=m(t\inv,B)\breve{w}\inv.
    \end{align*}
    Thus we have
    \begin{align*}
        [\calM_{\SO(1,4)} \varphi^{(\lambda)}](m(t,B)g)
         & =\int_{x\in\R^3} \varphi^{(\lambda)}(\breve{w}\inv n(x) m(t,B) g) d(n(x))                                       \\
         & =\int_{x\in\R^3} \varphi^{(\lambda)}( m(t\inv,B) \breve{w}\inv n(t\inv xB) g) \cdot 2dx_1 dx_2 dx_3             \\
         & =|t|^{-\lambda-\frac{3}{2}}\int_{y\in\R^3} \varphi^{(\lambda)}(\breve{w}\inv n(y) g) \cdot 2|t|^3dy_1 dy_2 dy_3 \\
         & =|t|^{-\lambda+\frac{3}{2}}[\calM_{\SO(1,4)} \varphi^{(\lambda)}](g),
    \end{align*}
    where we have changed the variables $y=t\inv xB$.
    Combining these properties with $\varphi^{(\lambda)}(1)=1$, we now have
    \begin{equation*}
        \calM_{\SO(1,4)} \varphi^{(\lambda)}=\calM_{\SO(1,4)} \varphi^{(\lambda)}(1) \cdot \varphi^{(-\lambda)}.
    \end{equation*}
    We must then show that
    \begin{equation*}
        \calM_{\SO(1,4)} \varphi^{(\lambda)}(1)=2^{-\frac{1}{2}}\frac{\Gamma_\C(\lambda)}{\Gamma_\C(\lambda+\frac{3}{2})}.
    \end{equation*}

    For $x, y\in\R^3$, $t\in\R^\times$, $B\in\SO(3)$, and $\kappa\in\SO(4)\cong K$, a direct calculation shows that
    \begin{align*}
        \breve{w}\inv n(x)
         & =\left(\begin{array}{ccc}
                          1+\frac{\|x\|^2}{2} & x    & -\frac{\|x\|^2}{2}   \\
                          -\tp{x}             & -1_3 & \tp{x}               \\
                          -\frac{\|x\|^2}{2}  & -x   & -1+\frac{\|x\|^2}{2}
                      \end{array}\right), \\
        m(t,B)n(y)\kappa
         & =\left(\begin{array}{ccc}
                          c(t)+\frac{t\|y\|^2}{2} & * & * \\
                          B\tp{y}                 & * & * \\
                          s(t)+\frac{t\|y\|^2}{2} & * & *
                      \end{array}\right),
    \end{align*}
    so if we write $\breve{w}\inv n(x)=m(t,B)n(y)\kappa$ then we have
    \begin{align*}
        \begin{dcases}
            1+\frac{\|x\|^2}{2} & =c(t)+\frac{t\|y\|^2}{2}, \\
            -\tp{x}             & =B\tp{y},                 \\
            -\frac{\|x\|^2}{2}  & =s(t)+\frac{t\|y\|^2}{2},
        \end{dcases}
    \end{align*}
    which leads $t=(1+\|x\|^2)\inv$.
    Now the value $\calM_{\SO(1,4)} \varphi^{(\lambda)}(1)$ is equal to
    \begin{align*}
         & \int_{x\in\R^3} \varphi^{(\lambda)}(\breve{w}\inv n(x)) d(n(x))                       \\
         & =\int_{x\in\R^3} \left| 1+\|x\|^2 \right|^{-\lambda-\frac{3}{2}} \cdot2dx_1 dx_2 dx_3 \\
         & =\int_{r=0}^\infty \int_{\theta=0}^\pi \int_{\alpha=0}^{2\pi} \frac{2r^2\sin\theta}
        {(1+r^2)^{\lambda+\frac{3}{2}}} d\alpha d\theta dr                                       \\
         & =8\pi \int_0^\infty \frac{r^2}{(1+r^2)^{\lambda+\frac{3}{2}}} dr,
    \end{align*}
    where we have changed the variables $x_1=r\cos\alpha\sin\theta$, $x_2=r\sin\alpha\sin\theta$, and $x_3=r\cos\theta$.
    Since $\myre(\lambda)>0$, the last integral converges absolutely, and the gamma function $\Gamma(\lambda+\frac{3}{2})$ has the integral expression which also converges absolutely.
    Therefore, the product of them is
    \begin{align*}
         & 8\pi \int_0^\infty \frac{r^2}{(1+r^2)^{\lambda+\frac{3}{2}}} dr \times \Gamma(\lambda+\frac{3}{2})             \\
         & =8\pi \int_0^\infty \int_0^\infty \left(\frac{t}{1+r^2}\right)^{\lambda+\frac{3}{2}} r^2 e^{-t} dr d^\times t,
    \end{align*}
    where $d^\times t=t\inv dt$.
    First we fix $r$ and change the variables $u=(1+r^2)\inv t$, and then fix $u$ and change the variables $s=\sqrt{u}r$, to obtain
    \begin{align*}
         & 8\pi \int_0^\infty \int_0^\infty \left(\frac{t}{1+r^2}\right)^{\lambda+\frac{3}{2}} r^2 e^{-t} dr d^\times t \\
         & =8\pi \int_0^\infty u^\lambda  e^{-u} d^\times u  \int_0^\infty s^2 e^{-s^2} ds                              \\
         & =2\pi^\frac{3}{2}\Gamma(\lambda).
    \end{align*}
    We now have
    \begin{align*}
        \calM_{\SO(1,4)} \varphi^{(\lambda)}(1)
         & =8\pi \int_0^\infty \frac{r^2}{(1+r^2)^{\lambda+\frac{3}{2}}} dr      \\
         & =2\pi^\frac{3}{2} \frac{\Gamma(\lambda)}{\Gamma(\lambda+\frac{3}{2})}
        =2^{-\frac{1}{2}}\frac{\Gamma_\C(\lambda)}{\Gamma_\C(\lambda+\frac{3}{2})}.
    \end{align*}
\end{proof}


\subsubsection{The second special case}\label{muri2}
The second special case we are concerned with is the following.
Let $F=\R$, $n=3$, and
\begin{align*}
    G^* & =\SO_7
    =\SO\left(\begin{array}{ccc}
                          &   & 1_3 \\
                          & 2 &     \\
                      1_3 &   &
                  \end{array}\right),                                                                                                                                   \\
    M^* & =\Set{ \left(\begin{array}{cccc}
                                   A &       &            &   \\
                                     & X     &            & b \\
                                     &       & \tp{A}\inv &   \\
                                     & \beta &            & c
                               \end{array}\right) | A\in \GL_2,\ X\in \Mat_{2\times2},\ \left(\begin{array}{cc}
                                                                                                  X     & b \\
                                                                                                  \beta & c
                                                                                              \end{array}\right) \in \SO\left(\begin{array}{ccc}
                                                                                                                                    &   & 1 \\
                                                                                                                                    & 2 &   \\
                                                                                                                                  1 &   &   \\
                                                                                                                              \end{array}\right)} \cong \GL_2 \times \SO_3.
\end{align*}
Let $\phi\in\Phi_{\exc2}(G^*)$ be an $L$-parameter of type $(\exc2)$ of the form
\begin{align*}
    \phi=3\tau_1,
\end{align*}
and $\phi_M=2\tau_1\in\Phi_2(M^*)$ (as equivalent classes) so that $\phi$ is the natural image of $\phi_M$.
More precisely, we define the parameter as
\begin{align*}
    \widehat{G} & =\Sp_\C\left(\begin{array}{cccc}
                                            &    &   & 1_2 \\
                                            &    & 1 &     \\
                                            & -1 &   &     \\
                                       -1_2 &    &   &
                                   \end{array}\right),                                                                   \\
    \widehat{M} & =\Set{\left(\begin{array}{ccc}
                                          A &   &            \\
                                            & X &            \\
                                            &   & \tp{A}\inv
                                      \end{array}\right) \in \widehat{G}
    | A\in\GL(2,\C),\ X\in\Sp_\C\left(\begin{array}{cc} &1 \\ -1& \end{array}\right)=\SL(2,\C)} \subset \widehat{G}, \\
    \phi        & =\phi_M=\left(\begin{array}{ccc}
                                        \tau_1 &        &                 \\
                                               & \tau_1 &                 \\
                                               &        & \tp{\tau_1}\inv
                                    \end{array}\right).
\end{align*}
Up to isomorphism, there exists only one nontrivial inner twist of $G^*$ for which $\phi$ is relevant.
Let
\begin{align*}
    G & =\SO(2,5)=\SO\left(\begin{array}{cc}
                               1_2 &      \\
                                   & -1_5
                           \end{array}\right).
\end{align*}
Put
\begin{align*}
    \alpha & =\frac{1}{\sqrt{2}}\left(\begin{array}{ccccc}
                                              1_2 &           &            & 1_2  &           \\
                                                  & \sqrt{-1} &            &      & \sqrt{-1} \\
                                                  &           & 2\sqrt{-1} &      &           \\
                                                  & 1         &            &      & -1        \\
                                              1_2 &           &            & -1_2 &
                                          \end{array}\right).
\end{align*}
Let $z\in Z^1(\R,G^*)$ be a 1-cocycle such that
\begin{align*}
    z_\rho & =\alpha\inv \rho(\alpha)
    =\left(\begin{array}{ccccc}
               1_2 &    &    &     &    \\
                   & 0  &    &     & -1 \\
                   &    & -1 &     &    \\
                   &    &    & 1_2 &    \\
                   & -1 &    &     & 0  \\
           \end{array}\right),
\end{align*}
where $\rho$ is the nontrivial element in $\Gamma_F=\Gal(\C/\R)$.
Put $\xi=\Ad(\alpha) : G^*\to G$ and
\begin{align*}
    M & =\xi(M^*)                                           \\
      & =\Set{ m(A,B)| A\in\GL_2,\  B\in \SO(-1_3)=\SO(3)},
\end{align*}
where
\begin{align*}
    m(A,B) & = \left(\begin{array}{ccc}
                             c(A) &   & s(A) \\
                                  & B &      \\
                             s(A) &   & c(A)
                         \end{array}\right), \\
    c(A)   & =\frac{A+\tp{A}\inv}{2},    \\
    s(A)   & =\frac{A-\tp{A}\inv}{2}.
\end{align*}
Then $(\xi, z)$ is a pure inner twist $G^*\to G$ which restricts to a pure inner twist $M^*\to M$ such that $z_+\in Z^1(\R, \GL_2)$ is trivial.
We have $\Pi_{\tau_1}(\GL_2(\R))=\{D_1\}$, $\Pi_{\tau_1}(\SO(3))=\{1\}$, and hence $\Pi_{\phi_M}(M)=\{D_1\boxtimes1\}$, where $1$ denotes the trivial representation of $\SO(3)$.
Let us write $\pi$ for the unique element in $\Pi_{\phi_M}(M)$.
\begin{proposition}\label{2.9.2}
    Theorem \ref{2.6.2} is valid for $G$, $M$, and $\phi$.
\end{proposition}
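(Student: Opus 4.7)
The plan is to follow the same architecture as the proof of Proposition \ref{2.9.1}, in particular the Iwasawa--decomposition approach, but with an extra layer of bookkeeping to accommodate the fact that $\pi = D_1 \boxtimes 1$ is infinite-dimensional. Concretely, I would first set up the diagram \eqref{cd}: since $S_\phi(G) \cong \Or(3,\C)$ and $S_\phi(M) \cong \GL(1,\C) \times \{\pm 1\}$, one has $\frakS_\phi(G) \cong \Z/2\Z$ and, by Lemma \ref{2.8.3}, $\frakS_\phi(M,G) = \frakS_\phi(G)$. I would pick an explicit element $u \in N(\widehat{T},\widehat{G})$ whose image $u^\natural \in \frakN_\phi(M,G)$ generates the fiber over the nontrivial class of $\frakS_\phi(M,G)$, compute its Langlands--Shelstad lift $\widetilde{w}\in G^*(\R)$ and $\breve{w} = \xi(\widetilde{w}) \in G(\R)$, and evaluate the sign $\an{u^\natural,\pi}_{\xi,z} = \an{s(u^\natural),\pi}_{\xi|_{M^*},z}$ coming from the sophisticated splitting, just as in the analogous step of Proposition \ref{2.9.1}.

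Next I would observe that $\calI_P(\pi)$ is irreducible (by the induction hypothesis together with the structure of $\phi$), so that $R_P(u^\natural,\pi,\phi,\psi_\R)$ is a scalar. It therefore suffices to evaluate both sides at a single nonzero section. I would exploit the Iwasawa decomposition $G(\R) = P(\R)\,K$ with $K \cong \SO(2)\times\SO(5)$, and pick $v_0 \in V_\pi$ to be a lowest $\SO(2)$-weight (i.e.\ minimal $K\cap M$-type) vector of $D_1 \boxtimes 1$. Define a holomorphic family $\varphi^{(\lambda)} \in \calH_P(\pi_\lambda)$ by prescribing $\varphi^{(\lambda)}(mnk) = \delta_P(m)^{1/2}|\det A|^\lambda \pi(m)\,\Phi(k)\,v_0$, where $\Phi(k)\in\End(V_\pi^{\min})$ is the unique equivariant extension of the projector onto the $\SO(2)$-weight line through $v_0$. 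This is the analogue of the family $\varphi^{(\lambda)}$ used in \S\ref{muri1}.

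The main computation is then the unnormalized intertwining integral
\[
 [J_{P^w|P}(\xi,\psi_\R)\varphi^{(\lambda)}](1) = \int_{N(\R)} \varphi^{(\lambda)}(\breve{w}\inv n(x))\,dn(x),
\]
at the base point. I would explicitly perform the Iwasawa decomposition $\breve{w}\inv n(x) = m(A(x),B(x))\,n'(x)\,\kappa(x)$ analogously to the formulas in the proof of Lemma \ref{so14} (which identifies $|\det A(x)|$ with an explicit rational function of $\|x\|^2$), keep track of the $\SO(2)$-rotation component of $\kappa(x)$ landing in $M\cap K$, and project onto the line $\C v_0$. By the minimal $K$-type choice, the $M\cap K$-action produces an explicit phase, reducing the vector-valued integral to a scalar integral of beta-function type over $N(\R) \cong \R^5$ (parallel to the computation of $\calM_{\SO(1,4)}\varphi^{(\lambda)}(1)$). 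Finally I would compute $\rho_{P^w|P}^\vee \circ \phi_\lambda$, express it as a direct sum of $\omega_t$, $\sigma\omega_t$, $\tau_{(l,t)}$, and hence write the normalizing factor $r_{P^w|P}(\xi,\phi_\lambda,\psi_\R)$ and $\epsilon_P(w,\phi_\lambda,\psi_\R)$ in terms of products of $\Gamma_\R$ and $\Gamma_\C$. Multiplying and sending $\lambda \to 0^+$ should yield the predicted sign, which combined with $\calI_P(\pi(u^\natural)_{\xi,z})$ gives $R_P(u^\natural,\pi,\phi,\psi_\R) = 1$, hence LIR.

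The hard step is the vector-valued integral. Unlike the trivial representation in \S\ref{muri1}, the integrand $\varphi^{(\lambda)}(\breve{w}\inv n(x))$ carries a nontrivial action of the compact $\SO(2)$ factor of $M\cap K$ through the Iwasawa component $\kappa(x)$, together with a $\GL_2(\R)$-action on $V_\pi$ through $A(x)$. The key technical point will be showing that, after projecting onto the minimal $K$-type, the contributions of the other $K$-types cancel and the remaining scalar integral can be evaluated by the same Gamma-function manipulations as in Lemma \ref{so14}, now producing factors that exactly match $r_{P^w|P}$ and $\epsilon_P$. If the minimal $K$-type argument is too rigid, a fallback is to integrate against a matrix coefficient of $v_0$, reducing everything to a $\GL_2(\R)$-principal-series computation that can be handled by classical formulas.
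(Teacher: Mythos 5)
Your plan follows the right high-level architecture (Iwasawa decomposition, reduce to a scalar by irreducibility, then compute the unnormalized integral at the base point), but there are two problems, the second of which is a genuine gap.

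First, a minor bookkeeping error: the element whose intertwining operator must be shown to equal the identity is the nontrivial element of $W_\phi^\circ(M,G)\subset\frakN_\phi(M,G)$, i.e.\ the $u^\natural$ with \emph{trivial} image in $\frakS_\phi(M,G)$ (in the paper's coordinates, $(-1,1)$), not an element ``generating the fiber over the nontrivial class.'' Also $\dim N(\R)=7$, not $5$, since $N(\R)\cong N_1(\R)N_c(\R)N_2(\R)\cong\R^3\times\R\times\R^3$.

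The genuine gap is in the treatment of the vector-valued integrand. Your central technical claim --- that one can define an equivariant ``projector onto the $\SO(2)$-weight line through $v_0$'', and that after projecting, the contributions of the other $K$-types cancel so that the integral collapses to a scalar Gamma-type integral as in Lemma~\ref{so14} --- is neither obviously well-posed nor argued, and I don't see how to make it go through directly. The integrand $\varphi^{(\lambda)}(\breve{w}^{-1}n(x))$ involves a noncompact $\GL_2(\R)$-action on $V_\pi$ through the Iwasawa Levi component, not merely an $\SO(2)$-phase, so the ``projector'' has to commute with an infinite-dimensional action; and even if it did, the resulting scalar integral over $\R^7$ is not of the single-beta-function type of Lemma~\ref{so14}. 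The paper sidesteps this entirely by realizing $D_1$ as a subrepresentation of a $\GL_2(\R)$ principal series, which embeds $\calH_P(\pi_\lambda)$ into a \emph{scalar-valued} principal series $\calH_{P_0}(|-|^{\frac12+\lambda}\boxtimes|-|^{-\frac12+\lambda}\boxtimes 1)$ from the minimal parabolic $P_0$; the test section $f^{(\lambda)}$ is then literally a scalar function on $G(\R)$. Moreover --- and this is the structural insight you are missing --- once one is in $\calH_{P_0}$, the decomposition $N=N_1 N_c N_2$ together with the identity $\breve{w}=\iota_2(w_0)\iota_{\SL_2}(J)\iota_2(w_0)$ factors the full unnormalized intertwining operator as a composition $\calM_2\circ\calM_c\circ\calM_2$ of three rank-one operators, each of which is evaluated by a closed formula: $\calM_2$ reduces to the $\SO(1,4)$ computation of Lemma~\ref{so14} from the first special case, and $\calM_c$ is a classical $\SL_2(\R)$ rank-one operator handled by Ikeda's formula (Lemma~\ref{ai}). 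Your ``fallback'' (matrix coefficients and a $\GL_2(\R)$-principal-series computation) is pointing in this direction, but without the principal-series embedding and the three-fold factorization, the computation does not close. You would need to replace your minimal-$K$-type projector with the realization $D_1\hookrightarrow\mathrm{Ind}(|-|^{1/2}\boxtimes|-|^{-1/2})$, restrict attention to the lowest-weight vector there, pass to $\calH_{P_0}$, and then prove the factorization of $J_{P^w|P}$ through $\calM_2\circ\calM_c\circ\calM_2$.
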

\begin{proof}
    Put $J=\mmatrix{}{1}{-1}{}$, and $\bfJ=\mmatrix{1_4}{}{}{J}$.
    Let us define a mapping $A\mapsto A^{(2)}$ from the set of $3\times3$-matrices to those of $6\times6$-matrices by
    \begin{align*}
        A=\left(\begin{array}{ccc}
                        a_{1,1} & a_{1,2} & a_{1,3} \\
                        a_{2,1} & a_{2,2} & a_{2,3} \\
                        a_{3,1} & a_{3,2} & a_{3,3}
                    \end{array}\right)
        \mapsto
        A^{(2)}:=
        \left(\begin{array}{ccc}
                      a_{1,1}1_2 & a_{1,2}1_2 & a_{1,3}1_2 \\
                      a_{2,1}1_2 & a_{2,2}1_2 & a_{2,3}1_2 \\
                      a_{3,1}1_2 & a_{3,2}1_2 & a_{3,3}1_2
                  \end{array}\right).
    \end{align*}
    Since $\tp{\tau_1}\inv=J \tau_1 J\inv$, the centralizer $S_\phi(G)$ is
    \begin{align*}
        \Set{\bfJ h^{(2)} \bfJ\inv | h\in \Or_\C\left(\begin{array}{ccc}
                                                                    &   & 1 \\
                                                                    & 1 &   \\
                                                                  1 &   &
                                                              \end{array}\right)}
        \cong \Or(3,\C).
    \end{align*}
    Thanks to this explicit structure, one can easily understand the diagram \eqref{cd}.
    The diagram has the form
    \begin{align*}
        \begin{CD}
            @.                             @.                   1                  @.              1              @.       \\
            @.                   @.                                 @VVV                            @VVV                @.  \\
            @.                             @.       \set{\pm1}\times1      @=    \set{\pm1}\times1  @.        \\
            @.                   @.                                   @VVV                            @VVV                @.   \\
            1  @>>> 1\times\set{\pm1} @>>> \set{\pm1}\times\set{\pm1} @>>>  \set{\pm1}\times1  @>>>  1   \\
            @.                  @|                                 @VVV                       @VVV                  @.  \\
            1  @>>> 1\times\set{\pm1} @>>>      1\times\set{\pm1}        @>>>          1          @>>>  1   \\
            @.                  @.                                   @VVV                             @VVV                @.  \\
            @.                             @.                      1                @.                 1          @.
        \end{CD}
    \end{align*}
    where $(-1,1)$ and $(1,-1)$ are represented by
    \begin{align*}
        \left(\begin{array}{ccc}
                        &      & J\inv \\
                        & -1_2 &       \\
                      J &      &
                  \end{array}\right) & \in N(A_{\widehat{M}}, S_\phi(G)^\circ), \\
        \left(\begin{array}{ccc}
                      1_2 &      &     \\
                          & -1_2 &     \\
                          &      & 1_2
                  \end{array}\right) & \in S_\phi(M),
    \end{align*}
    respectively.
    Note that $(-1,1)$ vanishes in $\frakS_\phi(G)$.
    Let us write $w$ for the image of $(-1,1)$ in $W_\phi(M,G)=W_\phi(M,G)^\circ\cong W(M^*,G^*)\cong W(M,G)$.
    One can calculate the Langlands-Shelstad lift in $\widehat{G}$ of $w$ as in \cite{ishi}.
    It is
    \begin{align*}
        \left(\begin{array}{ccc}
                    &     & J\inv \\
                    & 1_2 &       \\
                  J &     &
              \end{array}\right),
    \end{align*}
    and thus the first sophisticated splitting $s' : W_\psi(M,G)\to \frakN_\psi(M,G)$ sends $(-1,1)$ to $(-1,-1)$.
    Therefore, the other splitting $s : \frakN_\psi(M,G)\to \frakS_\psi(M)$ sends $(-1,-1)$ to $1$, and $(-1,1)$ to $(1,-1)$.

    Let $P^*$ be the standard parabolic subgroup of $G^*$ with Levi subgroup $M^*$, and $P=\xi(P^*)$.
    As in the previous subsubsection, the induced representation $(\calI_P(\pi), \calH_P(\pi))$ is irreducible and is the unique element of $\Pi_\phi(G)$, and it is sufficient to show
    \begin{align*}
        R_P((-1,1), \pi,\phi,\psi_\R) f=f,
    \end{align*}
    for a nonzero element $f$ in $\calH_P(\pi)$.
    Recall that
    \begin{equation*}
        R_P((-1,1), \pi, \phi, \psi_\R)=\calI_P(\pi((-1,1))_{\xi,z}) \circ \ell^P_P(w, \xi, \phi, \psi_\R) \circ R_{P^w|P}(\xi,\phi).
    \end{equation*}

    We focus first on $\calI_P(\pi((-1,1))_{\xi,z})=\calI_P(\an{(-1,1),\pi}_{\xi,z} \pi(\breve{w})_\xi)$.
    Let $\widetilde{w}$ be the Langlands-Shelstad lift of $w$ in $G^*(\R)$.
    The calculation of $\widetilde{w}$ has already been done in \cite[\S8]{ishi}:
    \begin{align*}
        \widetilde{w}=\left(\begin{array}{ccccc}
                                      &   &   & J &   \\
                                      & 1 &   &   &   \\
                                      &   & 1 &   &   \\
                                    J &   &   &   &   \\
                                      &   &   &   & 1
                                \end{array}\right).
    \end{align*}
    Thus we have
    \begin{align*}
        \breve{w}=\xi(\widetilde{w})=\left(\begin{array}{ccc}
                                                   J &     &       \\
                                                     & 1_3 &       \\
                                                     &     & J\inv
                                               \end{array}\right)\in G(\R).
    \end{align*}
    Since $\breve{w}\inv m(A,B) \breve{w}=m((\det A)\inv A, B)$ and the central character of $D_1$ is trivial, the representations $\breve{w}\pi$ and $\pi$ coincide.
    Thus $\pi(\breve{w})_\xi$ is the identity map.
    On the other hand, we have $\an{(-1,1),\pi}_{\xi,z}=\an{s(-1,1),\pi}_M=\an{-1,\pi}_M=-1$.
    We obtain $\calI_P(\pi((-1,1))_{\xi,z})=-1$.

    Next we consider $\ell^P_P(w, \xi, \phi, \psi_\R) \circ R_{P^w|P}(\xi,\phi)$.
    Let us define a connected compact subgroup $K\subset G(\R)$ as
    \begin{align*}
        K=\Set{\kappa=\left(\begin{array}{cc}
                                    \kappa_2 &          \\
                                             & \kappa_5
                                \end{array}\right) | \kappa_m \in \SO(m),\ (m=2,5)}.
    \end{align*}
    Although $K$ is not maximal, the Iwasawa decomposition tells us that $G(\R)=P(\R)K$.
    Let $N^*$ be the unipotent radical of $P^*$, which is generated by $\{\exp(X_\alpha) \mid \alpha=\chi_1\pm\chi_3, \chi_1, \chi_2\pm\chi_3, \chi_2, \chi_1+\chi_2\}$.
    Here $X_\alpha$ denotes the Chevalley basis.
    The Haar measure on $N^*(\R)$ is also given by the Chevalley basis.
    Put $P=\xi(P^*)$ and $N=\xi(N^*)$, which are the standard parabolic subgroup with Levi subgroup $M$ and its unipotent radical.
    Now we describe $N(\R)$ explicitly.
    Let $\iota_1$ and $\iota_2$ be embeddings of $\SO(1,4)$ into $G=\SO(2,5)$ given by
    \begin{align*}
        h=\left(\begin{array}{cc}
                    a     & b \\
                    \beta & C
                \end{array}\right)
         & \mapsto
        \iota_1(h)=\left(\begin{array}{cccc}
                             a     &   & b &   \\
                                   & 1 &   &   \\
                             \beta &   & C &   \\
                                   &   &   & 1
                         \end{array}\right), \\
        h=\left(\begin{array}{cc}
                    A     & b \\
                    \beta & c
                \end{array}\right)
         & \mapsto
        \iota_2(h)=\left(\begin{array}{cccc}
                             1 &       &   &   \\
                               & A     &   & b \\
                               &       & 1 &   \\
                               & \beta &   & c
                         \end{array}\right),
    \end{align*}
    where $A$ and $C$ are 4-by-4 matrices.
    For $x=(x_1,x_2,x_3)\in\C^3$ and $u\in\C$, put $n_1(x)=\iota_1(n(x))$ and $n_2(x)=\iota_2(n(x))$, where $n(x)$ is the element defined in the proof of Proposition \ref{2.9.1}, and put
    \begin{align*}
        n_c(u)
         & =\left(\begin{array}{ccccc}
                      1            & \frac{u}{2} &     & 0           & -\frac{u}{2} \\
                      -\frac{u}{2} & 1           &     & \frac{u}{2} & 0            \\
                                   &             & 1_3 &             &              \\
                      0            & \frac{u}{2} &     & 1           & -\frac{u}{2} \\
                      -\frac{u}{2} & 0           &     & \frac{u}{2} & 1
                  \end{array}\right),
    \end{align*}
    so that
    \begin{align*}
        n_1(x) & =\xi( \exp( \sqrt{-1}x_1(X_{\chi_1-\chi_3}+X_{\chi_1+\chi_3}) + \sqrt{-1}x_2X_{\chi_1} + x_3(X_{\chi_1-\chi_3}-X_{\chi_1+\chi_3}) ) ), \\
        n_2(x) & =\xi( \exp( \sqrt{-1}x_1(X_{\chi_2-\chi_3}+X_{\chi_2+\chi_3}) + \sqrt{-1}x_2X_{\chi_2} + x_3(X_{\chi_2-\chi_3}-X_{\chi_2+\chi_3}) ) ), \\
        n_c(u) & =\xi( \exp( u X_{\chi_1+\chi_2} ) ).
    \end{align*}
    Let $N_1$ (resp. $N_2$, resp. $N_c$) be the unipotent subgroup of $G$ consisting of $n_1(x)$ (resp. $n_2(x)$, resp. $n_c(u)$).
    Then the Haar measure on $N_1(\R)$ (resp. $N_2(\R)$, resp. $N_c(\R)$) is given by $d(n_1(x))=2dx_1dx_2dx_3$ (resp. $d(n_2(x))=2dx_1dx_2dx_3$, resp. $d(n_c(u))=du$).
    We have $N(\R)=N_1(\R) N_2(\R) N_c(\R)=\{n_1(x) n_c(u) n_2(y) \mid x, y\in\R^3, \ u\in\R \}$ and the Haar measure is the product of those on $N_1(\R)$, $N_2(\R)$, and $N_c(\R)$.

    Let us realize the discrete series representation $D_1$ of $\GL_2(\R)$ as a subrepresentation of a parabolically induced representation from a character $|-|^{\frac{1}{2}}\boxtimes|-|^{-\frac{1}{2}}$ on the diagonal maximal torus of $\GL_2(\R)$, and let $0\neq v_0\in D_1$ be a lowest weight vector such that
    \begin{align*}
        v_0\left(\left(\begin{array}{cc}a&b\\&d\end{array}\right) \left(\begin{array}{cc}\cos\theta&\sin\theta\\-\sin\theta&\cos\theta\end{array}\right)\right)
        =\left|\frac{a}{d}\right| e^{2\sqrt{-1}\theta}.
    \end{align*}
    For $\lambda\in\C$, put $\phi_\lambda=\tau_{(1,\lambda)}\oplus\tau_1\oplus\tau_{(1,\lambda)}^\vee$ and $\pi_\lambda=(D_1\otimes|\cdot|^\lambda)\boxtimes1$, so that $\Pi_{\phi_\lambda}(M)=\{\pi_\lambda\}$ and $\Pi_{\phi_\lambda}(G)=\{\calI_P(\pi_\lambda)\}$.
    Since $D_1$ is realized as a subrepresentation of a parabolically induced representation, we have a natural injection
    \begin{align*}
        \calH_P(\pi_\lambda) & \into \calH_{P_0}(|-|^{\frac{1}{2}+\lambda}\boxtimes|-|^{-\frac{1}{2}+\lambda}\boxtimes1), &
        f(\cdot) \mapsto f(\cdot)(1),
    \end{align*}
    where $P_0$ is the minimal standard parabolic subgroup of $G$ defined over $\R$.
    Define a function $f^{(\lambda)}\in \calH_P(\pi_\lambda) \subset \calH_{P_0}(|-|^{\frac{1}{2}+\lambda}\boxtimes|-|^{-\frac{1}{2}+\lambda}\boxtimes1)$ by
    \begin{align*}
        f^{(\lambda)}(m(A,B)n\kappa)
         & =|\det A|^{\lambda+2}[\tau_1(A)v_0](1)                                                                                                                                                                   \\
         & =|a|^{\lambda+3} |d|^{\lambda+1} e^{2\sqrt{-1}\theta}, & A & =\left(\begin{array}{cc}a&b\\&d\end{array}\right) \left(\begin{array}{cc}\cos\theta&\sin\theta\\-\sin\theta&\cos\theta\end{array}\right), &
    \end{align*}
    which is holomorphic in $\lambda\in\C$.
    Assume that $\myre(\lambda)>0$.
    We have $0\neq f^{(0)}\in\calH_P(\pi)$ and
    \begin{align*}
         & \left[\ell^P_P(w, \xi, \phi, \psi_\R) \circ R_{P^w|P}(\xi,\phi) f^{(0)} \right](g) \notag                                           \\
         & =\lim_{\lambda\to+0} \left[\ell^P_P(w, \xi, \phi_\lambda, \psi_\R) \circ R_{P^w|P}(\xi,\phi_\lambda) f^{(\lambda)}\right](g) \notag \\
         & =\lim_{\lambda\to+0}
        \epsilon(0,\rho_{P^w|P}^\vee\circ\phi_\lambda, \psi_\R)
        \frac{L(1,\rho_{P^w|P}^\vee\circ\phi_\lambda)} {L(0,\rho_{P^w|P}^\vee\circ\phi_\lambda)}
        \int_{N(\R)} f^{(\lambda)}(\breve{w}\inv n g) dn.
    \end{align*}

    A direct calculation implies that $\rho_{P^w|P}^\vee\circ\phi_\lambda$ is isomorphic to
    \begin{equation*}
        \tau_{(2,\lambda)} \oplus \sigma\omega_\lambda \oplus \omega_\lambda \oplus \tau_{(2,2\lambda)} \oplus \sigma\omega_{2\lambda}.
    \end{equation*}
    Therefore, we have
    \begin{align*}
        \epsilon(0,\rho_{P^w|P}^\vee\circ\phi_\lambda, \psi_\R)
        \frac{L(1,\rho_{P^w|P}^\vee\circ\phi_\lambda)} {L(0,\rho_{P^w|P}^\vee\circ\phi_\lambda)}
        =\frac{\Gamma_\C(\lambda+2)}{\Gamma_\C(\lambda+1)} \frac{\Gamma_\R(\lambda+2)}{\Gamma_\R(\lambda)}
        \frac{\Gamma_\C(2\lambda+2)}{\Gamma_\C(2\lambda+1)} \frac{\Gamma_\R(2\lambda+2)}{\Gamma_\R(2\lambda+1)}.
    \end{align*}

    We now turn to the integral.
    Let $\iota_{\SL_2}$ be an embedding of $\SL_2$ into $G=\SO(2,5)$ given by $\iota_{\SL_2}(A)=m(A,1_3)$.
    Put $w_0=\mmatrix{1}{}{}{-1_4}$, which is the Langlands-Shelstad lift in $\SO(1,4)$ given in the proof of proposition \ref{2.9.1}.
    Equations
    \begin{gather*}
        \breve{w}=\iota_2(w_0) \iota_{\SL_2}(J) \iota_2(w_0),\\
        \Ad( \iota_{\SL_2}(J) \iota_2(w_0) )\inv (n_1(x)) = n_2(-x),\\
        \Ad(\iota_2(w_0))\inv (n_c(u))=\iota_{\SL_2}(\mmatrix{1}{u}{}{1}),
    \end{gather*}
    imply that
    \begin{align*}
        \int_{N(\R)} f^{(\lambda)}(\breve{w}\inv n g) dn
         & =[\calM_2 \circ \calM_c \circ \calM_2 (f^{(\lambda)})](g),
    \end{align*}
    where we have put
    \begin{align*}
        [\calM_2 f](g)
         & =\int_{N_2(\R)} f(\iota_2(w_0)\inv n_2 g) dn_2,     \\
        [\calM_c f](g)
         & =\int_{N_c(\R)} f(\iota_{\SL_2}(J)\inv n_c g) dn_c,
    \end{align*}
    for any function $f$ on $G(\R)$.

    For $\alpha, \beta \in \C$, we define $f^{(\alpha,\beta)} \in \calH_{P_0}(|-|^\alpha\boxtimes|-|^\beta\boxtimes1)$ by
    \begin{align*}
        f^{(\alpha,\beta)}(m(A,B)n\kappa)
          & =|a|^{\alpha+\frac{5}{2}} |d|^{\beta+\frac{3}{2}} e^{2\sqrt{-1}\theta},                                                                   &
        A & =\left(\begin{array}{cc}a&b\\&d\end{array}\right) \left(\begin{array}{cc}\cos\theta&\sin\theta\\-\sin\theta&\cos\theta\end{array}\right), &
    \end{align*}
    so that $f^{(\lambda)}=f^{(\lambda+\frac{1}{2}, \lambda-\frac{1}{2})}$.
    By Lemmas \ref{sai} and \ref{ai} below, we have
    \begin{align*}
         & \calM_2 \circ \calM_c \circ \calM_2 (f^{(\lambda)})                                                                                                 \\
         & =\calM_2 \circ \calM_c \circ \calM_2 (f^{(\lambda+\frac{1}{2},\lambda-\frac{1}{2})})                                                                \\
         & =2^{-\frac{1}{2}}\frac{\Gamma_\C(\lambda-\frac{1}{2})}{\Gamma_\C(\lambda+1)}
        \calM_2 \circ \calM_c(f^{(\lambda+\frac{1}{2},-\lambda+\frac{1}{2})})                                                                                  \\
         & =-2^{-\frac{1}{2}}\frac{\Gamma_\C(\lambda-\frac{1}{2})}{\Gamma_\C(\lambda+1)}
        \frac{\Gamma_\R(2\lambda) \Gamma_\R(2\lambda+1)}{\Gamma_\R(2\lambda+3) \Gamma_\R(2\lambda-1)}  \calM_2(f^{(-\lambda+\frac{1}{2},\lambda+\frac{1}{2})}) \\
         & =-2\inv \frac{\Gamma_\C(\lambda-\frac{1}{2})}{\Gamma_\C(\lambda+1)}
        \frac{\Gamma_\R(2\lambda) \Gamma_\R(2\lambda+1)}{\Gamma_\R(2\lambda+3) \Gamma_\R(2\lambda-1)}
        \frac{\Gamma_\C(\lambda+\frac{1}{2})}{\Gamma_\C(\lambda+2)}
        f^{(-\lambda)}.
    \end{align*}
    This leads to
    \begin{align*}
         & \epsilon(0,\rho_{P^w|P}^\vee\circ\phi_\lambda, \psi_\R)
        \frac{L(1,\rho_{P^w|P}^\vee\circ\phi_\lambda)} {L(0,\rho_{P^w|P}^\vee\circ\phi_\lambda)}
        \int_{N(\R)} f^{(\lambda)}(\breve{w}\inv n g) dn           \\
         & =
        \frac{\Gamma_\C(\lambda+2)}{\Gamma_\C(\lambda+1)} \frac{\Gamma_\R(\lambda+2)}{\Gamma_\R(\lambda)}
        \frac{\Gamma_\C(2\lambda+2)}{\Gamma_\C(2\lambda+1)} \frac{\Gamma_\R(2\lambda+2)}{\Gamma_\R(2\lambda+1)}
        \cdot \frac{-1}{2} \frac{\Gamma_\C(\lambda-\frac{1}{2})}{\Gamma_\C(\lambda+1)}
        \frac{\Gamma_\R(2\lambda) \Gamma_\R(2\lambda+1)}{\Gamma_\R(2\lambda+3) \Gamma_\R(2\lambda-1)}
        \frac{\Gamma_\C(\lambda+\frac{1}{2})}{\Gamma_\C(\lambda+2)}
        f^{(-\lambda)}(g),
    \end{align*}
    whose limit as $\lambda$ approaching $0$ from the right is $-f^{(\lambda)}$.
    Since $f^{(0)}\neq0$, this completes the proof.
\end{proof}

In order to finish the proof of Proposition \ref{2.9.2}, it remains to show the following two lemmas.
\begin{lemma}\label{sai}
    Suppose that $\myre(\beta)>0$.
    Then
    \begin{align*}
        \calM_2 f^{(\alpha,\beta)}
         & =2^{-\frac{1}{2}}\frac{\Gamma_\C(\beta)}{\Gamma_\C(\beta+\frac{3}{2})} f^{(\alpha,-\beta)}.
    \end{align*}
\end{lemma}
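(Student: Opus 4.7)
The plan is to reduce the computation to the $\SO(1,4)$-integral already evaluated in Lemma \ref{so14}. The embedding $\iota_2$ places $\SO(1,4)$ inside $G=\SO(2,5)$ so that its $\GL_1$-Levi factor corresponds to the second diagonal slot (the ``$d$-coordinate'') of the minimal Levi of $G$, while the first split $\GL_1$ (the ``$a$-coordinate'') and the $\SO(2)$-rotation angle $\theta$ that enters $f^{(\alpha,\beta)}$ remain untouched. Concretely, $\iota_2(m(t,B)) = m(\diag(1,t), B')$, and $\iota_2(w_0)$ commutes both with the first $\GL_1$-part and with the $\SO(2)$-rotation in the $\GL_2$-factor, since they act on disjoint coordinate ranges. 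Thus $\calM_2$ is morally an $\SO(1,4)$-intertwining operator applied ``only in the $d$-direction''.

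First I would establish the left/right equivariance of $\calM_2 f^{(\alpha,\beta)}$. The key identities are $\iota_2(w_0)^{-1} m(\diag(a,d),B) \iota_2(w_0) = m(\diag(a, d^{-1}), B)$, the commutation of $m(R_\theta, 1)$ both with $\iota_2(w_0)$ and with $N_2$, and the Jacobian $|d|^{-3}$ for the conjugation action of $m(\diag(a,d),B)$ on $N_2\cong\R^3$. Combining these with the right $K$-invariance inherited from $f^{(\alpha,\beta)}$, one deduces
\[
[\calM_2 f^{(\alpha,\beta)}](m(A,B) n \kappa) = |a|^{\alpha+5/2} |d|^{-\beta+3/2} e^{2\sqrt{-1}\theta}\, [\calM_2 f^{(\alpha,\beta)}](1),
\]
where $A=\diag(a,d) R_\theta$ is the Iwasawa form in $\GL_2$. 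This shows $\calM_2 f^{(\alpha,\beta)} = [\calM_2 f^{(\alpha,\beta)}](1)\cdot f^{(\alpha,-\beta)}$, and reduces the lemma to evaluating the single value at the identity.

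Second, I would compute the value at the identity by pulling the integrand back along $\iota_2$. Writing $w_0^{-1} n(x) = m(t,B) n(y) \kappa'$ in $\SO(1,4)$ with $t=(1+\|x\|^2)^{-1}$, as in the proof of Lemma \ref{so14}, and applying $\iota_2$, the ``$a$-entry'' is $1$ and no $\SO(2)$-rotation is produced, so
\[
f^{(\alpha,\beta)}(\iota_2(w_0)^{-1} n_2(x)) = |t|^{\beta+3/2} = (1+\|x\|^2)^{-\beta-3/2}.
\]
The resulting Euclidean integral over $\R^3$ with measure $2\,dx_1 dx_2 dx_3$ is literally the one computed in the proof of Lemma \ref{so14} with $\lambda$ replaced by $\beta$, producing $2^{-1/2} \Gamma_\C(\beta)/\Gamma_\C(\beta+3/2)$.

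The only subtle point is verifying that the angular factor $e^{2\sqrt{-1}\theta}$ coming from the lowest weight vector of the discrete series $D_1$ decouples from the integration: because elements of $\iota_2(\SO(1,4))$ act trivially on the coordinate positions on which the $\SO(2)$-rotation of the $\GL_2$-Levi acts, no rotation factor is ever produced inside the integrand. This explains, on the one hand, why the answer depends only on $\beta$ and not on $\alpha$, and, on the other hand, why the computation genuinely reduces to the $\SO(1,4)$-case rather than requiring an independent analysis of the discrete-series matrix coefficient.
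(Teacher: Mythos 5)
Your proof is correct and follows essentially the same route as the paper: establish the $M$-equivariance to reduce to the value at the identity, then observe that $f^{(\alpha,\beta)}\circ\iota_2=\varphi^{(\beta)}$, so that the integral at $1$ is literally the $\SO(1,4)$ integral computed in Lemma \ref{so14}. The paper states the pullback identity tersely without spelling out why the $e^{2\sqrt{-1}\theta}$ factor decouples, so your extra discussion of that point is a sound sanity check rather than a new idea.
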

\begin{proof}
    As in the proof of Lemma \ref{so14}, one has
    \begin{equation*}
        \calM_2 f^{(\alpha,\beta)}=\calM_2 f^{(\alpha,\beta)}(1) \cdot f^{(\alpha,-\beta)},
    \end{equation*}
    if $\calM_2 f^{(\alpha,\beta)}(1)$ converges absolutely.
    Since $f^{(\alpha,\beta)}\circ\iota_2$ is equal to $\varphi^{(\beta)}$ defined in the previous subsection, we have
    \begin{align*}
        [\calM_2 f^{(\alpha,\beta)}](1)
         & =\int_{x\in\R^3} f^{(\alpha,\beta)}(\iota_2(w_0)\inv \iota_2(n(x))) d(n(x)) \\
         & =\int_{x\in\R^3} \varphi^{(\beta)}(w_0\inv n(x)) d(n(x))                    \\
         & =[\calM_{\SO(1,4)}\varphi^{(\beta)}](1).
    \end{align*}
    Hence the assertion follows from Lemma \ref{so14}.
\end{proof}
\begin{lemma}\label{ai}
    Suppose that $\myre(\alpha-\beta)>0$.
    Then
    \begin{align*}
        \calM_c f^{(\alpha,\beta)}
         & =-\frac{\Gamma_\R(\alpha-\beta) \Gamma_\R(\alpha-\beta+1)}{\Gamma_\R(\alpha-\beta+3) \Gamma_\R(\alpha-\beta-1)} f^{(\beta,\alpha)}.
    \end{align*}
\end{lemma}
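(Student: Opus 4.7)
The plan is to adapt the strategy of Lemma \ref{so14}: first show by equivariance that $\calM_c f^{(\alpha,\beta)}$ is a scalar multiple of $f^{(\beta,\alpha)}$, then compute the scalar by evaluating both sides at $g=1$.

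For the equivariance, $\calM_c f^{(\alpha,\beta)}$ inherits right $K$-invariance from $f^{(\alpha,\beta)}$. For the left transformation under $P_0(\R)$, the identity $\iota_{\SL_2}(J)\inv\,m(\diag(t_1,t_2),1_3) = m(\diag(t_2,t_1),1_3)\,\iota_{\SL_2}(J)\inv$ swaps the two toral parameters, while the adjoint relation $n_c(u)\,t = t\,n_c((t_1t_2)\inv u)$ permits a change of variable in the $N_c(\R)$-integral; together with invariance under the nilpotent part of $P_0$ these show that $\calM_c f^{(\alpha,\beta)}$ transforms on the left under $P_0(\R)$ in the same manner as $f^{(\beta,\alpha)}$. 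Hence $\calM_c f^{(\alpha,\beta)}=C(\alpha,\beta)\,f^{(\beta,\alpha)}$ for some scalar $C(\alpha,\beta)$.

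Since $f^{(\beta,\alpha)}(1)=1$, we have $C(\alpha,\beta)=\int_\R f^{(\alpha,\beta)}(\iota_{\SL_2}(J)\inv n_c(u))\,du$, which I would evaluate by computing the Iwasawa decomposition $\iota_{\SL_2}(J)\inv n_c(u) = m(A(u),B(u))\,n(u)\,\kappa(u)$ explicitly from the $7\times 7$ matrix expressions of $m(J\inv,1_3)$ and $n_c(u)$. The $\GL_2$-factor $A(u)$ should decompose as a product of an upper-triangular matrix with diagonal entries of the form $(1+u^2)^{\pm 1/2}$ times a rotation by $\theta(u)$ with $\tan\theta(u)=\pm 1/u$, so that $e^{2\sqrt{-1}\theta(u)}=(u-\sqrt{-1})^{2}/(1+u^2)$. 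The resulting integral reduces to one of the form $\int_\R(1+u^2)^{-s}(u-\sqrt{-1})^{2}\,du$ with $s$ depending linearly on $\alpha-\beta$, and evaluates via the Beta function and the Legendre duplication formula to the stated ratio of Gamma factors, with the overall minus sign coming from the $(u-\sqrt{-1})^{2}$ angular factor.

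The main obstacle will be this Iwasawa decomposition itself: because $\iota_{\SL_2}(J)\inv n_c(u)$ is not block-diagonal in the $(2,3,2)$-partition associated to $M\subset\SO(2,5)$, extracting $A(u)$ and $\kappa(u)$ requires solving a small orthogonalization problem relative to the signature-$(2,5)$ form, and tracking sign conventions in the interplay between the split part $\mmatrix{a}{b}{0}{d}$ and the compact factor $\SO(2)$ of $\GL_2(\R)$ is what produces the correct power of $(u-\sqrt{-1})$ and hence the correct sign in the final answer.
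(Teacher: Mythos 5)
Your proposal follows the same first step as the paper — by equivariance under $P_0$ (and matching $K$-type), $\calM_c f^{(\alpha,\beta)}$ is a scalar multiple of $f^{(\beta,\alpha)}$, so the problem reduces to evaluating the integral at $g=1$ — and your predicted outcome for the $\GL_2$-factor (diagonal entries $(1+u^2)^{\pm1/2}$, rotation angle with $e^{2\sqrt{-1}\theta}=(u-\sqrt{-1})^2/(1+u^2)$) and for the final integral is correct. But the second half of your plan misses the observation that makes the paper's proof short, and as a result you invent an obstacle that is not there.

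The point the paper uses is that the argument of the integrand lies entirely in the image of $\iota_{\SL_2}$, so the $\SO(2,5)$-Iwasawa decomposition you worry about is, once restricted to that image, nothing more than the Iwasawa decomposition of $\mmatrix{0}{-1}{1}{u}=J^{-1}\mmatrix{1}{u}{0}{1}$ inside $\SL_2(\R)$. Concretely, the paper records that $f^{(\alpha,\beta)}\circ\iota_{\SL_2}$ is the standard spherical-type function $h^{(\alpha-\beta)}$ on $\SL_2(\R)$, and then $\calM_c f^{(\alpha,\beta)}(1)=\int_\R h^{(\alpha-\beta)}\bigl(J^{-1}\mmatrix{1}{u}{0}{1}\bigr)\,du$ is exactly the intertwining integral evaluated in \cite[Lemma 1.4]{ike}; the paper simply cites the second displayed equation of Ikeda's proof. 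There is no ``small orthogonalization problem relative to the signature-$(2,5)$ form'' to solve: your $7\times7$ matrix product sits inside the $2\times2$ block where $\iota_{\SL_2}$ lives, and extracting $A(u)$ from it is a $2\times2$ QR-type computation. Keeping this in mind also fixes the normalization issue you leave implicit: with $A(u)=\mmatrix{(1+u^2)^{-1/2}}{*}{0}{(1+u^2)^{1/2}}\kappa_{\theta(u)}$, one gets $|a|^{\alpha+5/2}|d|^{\beta+3/2}e^{2\sqrt{-1}\theta}=(u-\sqrt{-1})^2/(1+u^2)^{(\alpha-\beta+3)/2}$, so your ``$s$ depending linearly on $\alpha-\beta$'' must be $(\alpha-\beta+3)/2$. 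With that exponent, $\int_\R \bigl(u^2-1\bigr)(1+u^2)^{-(\alpha-\beta+3)/2}\,du$ evaluates by the Beta integral to $-\frac{(s-1)\,\Gamma_\R(s)}{(s+1)\,\Gamma_\R(s+1)}$ with $s=\alpha-\beta$, which after the functional equation $\Gamma_\R(t+2)=\frac{t}{2\pi}\Gamma_\R(t)$ equals the stated ratio $-\frac{\Gamma_\R(s)\Gamma_\R(s+1)}{\Gamma_\R(s+3)\Gamma_\R(s-1)}$. Two small inaccuracies worth noting: $f^{(\alpha,\beta)}$ is not right $K$-invariant (it transforms by a character of $K$, and $\calM_c f^{(\alpha,\beta)}$ inherits the same $K$-type, which is what the argument actually needs); and the minus sign is not really carried by the ``angular factor'' $(u-\sqrt{-1})^2$ alone but by the cancellation $\int(1+u^2)^{-(s+1)/2}-2\int(1+u^2)^{-(s+3)/2}$, which is negative for $\myre(s)>1$ and continues analytically.

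So: your route would land on the correct formula, but it is longer than the paper's, and the main difficulty you flag is illusory once one sees that the whole computation sits inside $\iota_{\SL_2}(\SL_2(\R))$; the paper exploits this and then simply cites Ikeda's Lemma 1.4 rather than re-deriving the Beta integral.
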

\begin{proof}
    As in the proof of Lemma \ref{so14}, one has
    \begin{equation*}
        \calM_c f^{(\alpha,\beta)}
        =\calM_c f^{(\alpha,\beta)}(1) \cdot f^{(\beta,\alpha)},
    \end{equation*}
    if $\calM_c f^{(\alpha,\beta)}(1)$ converges absolutely.
    Following the proof of \cite[Lemma 1.4]{ike}, for $s\in \C$ we define a function $h^{(s)}$ on $\SL_2(\R)$ by
    \begin{align*}
        h^{(s)}\left(
        \left(\begin{array}{cc}a&b\\&a\inv\end{array}\right) \left(\begin{array}{cc}\cos\theta&\sin\theta\\-\sin\theta&\cos\theta\end{array}\right)
        \right)
        =\left| \frac{a}{d} \right|^{s+1} e^{2\sqrt{-1}\theta}.
    \end{align*}
    Then $f^{(\alpha,\beta)}\circ\iota_{\SL_2}$ is equal to $h^{(\alpha-\beta)}$, and hence we have
    \begin{align*}
        [\calM_c f^{(\alpha,\beta)}](1)
         & =\int_{u\in\R} f^{(\alpha,\beta)} \left(\iota_{\SL_2}(J)\inv
        \iota_{\SL_2}\left(\begin{array}{cc}1&u\\ &1\end{array}\right)\right) du                                      \\
         & =\int_{u\in\R} h^{(\alpha-\beta)} \left(J\inv \left(\begin{array}{cc}1&u\\ &1\end{array}\right)\right) du.
    \end{align*}
    Hence the assertion follows from the second equation in the proof of \cite[Lemma 1.4]{ike}.
\end{proof}

\section{The decomposition into near equivalence classes and the standard model}\label{3}
In this section we shall roughly recall from \cite{kmsw,art13} the stable multiplicity formula, which implies the decomposition of $L^2_\disc(G(F)\backslash G(\A_F))$ into near equivalence classes, the global intertwining relation, and its weaker identity.

\subsection{Stable multiplicity formula}\label{3.1}
Let $F$ be a number field, $G^*$ the split special orthogonal group $\SO_{2n+1}$ over $F$, and $G$ an inner form of $G^*$.
Although $G$ does not have simply connected derived subgroup, every endoscopic data for $G$ comes from an endoscopic triple.
Hence the argument in \cite[\S\S3.1-3.2]{kmsw} also holds for our $G$.
Thus we have the decompositions
\begin{align*}
    L^2_{\disc}(G(F)\backslash G(\A_F)) & =\bigoplus_{\substack{c\in\calC(G) \\ t\geq0}} L^2_{\disc,t,c}(G(F)\backslash G(\A_F)),\\
    \tr R_{\disc}(f)                    & =\sum_{\substack{c\in\calC(G)      \\ t\geq0}} R_{\disc,t,c}(f),
\end{align*}
for $f\in\calH(G)$, where $R_\diamondsuit$ denotes the regular representation of $G(\A_F)$ on $L^2_\diamondsuit$.
Moreover, for $t\in\R_{\geq0}$, $c\in\calC(G)$, and $\frake=(G^\frake,s^\frake,\eta^\frake)\in\overline{\calE}_\el(G)$, we have
\begin{itemize}
    \item the $c$-variant $I^G_{\disc,t,c}$ of the discrete part $I^G_{\disc,t}$ of the trace formula on the constituents of which the norm of the imaginary part of the infinitesimal character is $t$, where the central character datum is trivial;
    \item the transfer mapping $\calH(G)\to\calS(G^\frake)$, $f\mapsto f^\frake=f^{G^\frake}$, where $\calS(G^\frake)$ denotes a space of functions on stable conjugacy classes of semisimple elements defined in \cite[p.53, p.132]{art13};
    \item the stable linear form $S^\frake_{\disc,t,c}=S^{G^\frake}_{\disc,t,c}$ on $\calH(G^\frake)$ and the associated linear form $\widehat{S}^\frake_{\disc,t,c}$ on $\calS(G^\frake)$ (see \cite[(2.1.2)]{art13} for the notion of associated linear forms),
\end{itemize}
and the stabilization
\begin{align*}
    I^G_{\disc,t,c}(f) & =\sum_{\frake\in\overline{\calE}_\el(G)} \iota(G,G^\frake) \widehat{S}^\frake_{\disc,t,c}(f^\frake), & f & \in\calH(G). &
\end{align*}
Here $\iota(G,G^\frake)$ are the global coefficients introduced by Kottwitz and Shelstad.
See \cite[(3.2.4)]{art13} for an explicit formula for them.

Let $\psi\in\Psi(G^*)$ and $\frake=(G^\frake,s^\frake,\eta^\frake)\in\overline{\calE}_\el(G)$.
Note that $c(\psi)=c(\pi_\psi)\in\calC(\GL_{2n})$ belongs to $\calC(G)$.
We shall write $t(\psi)$ for the norm of the imaginary part of the infinitesimal character of $\pi_\psi$.
As in \cite[\S3.3]{kmsw}, put
\begin{align*}
    I^G_{\disc,\psi}                         & =I^G_{\disc,t(\psi),c(\psi)},                         \\
    S^{G^\frake}_{\disc,\psi}                & =S^{G^\frake}_{\disc,t(\psi),c(\psi)},                \\
    L^2_{\disc,\psi}(G(F)\backslash G(\A_F)) & =L^2_{\disc,t(\psi),c(\psi)}(G(F)\backslash G(\A_F)), \\
    R_{\disc,\psi}                           & =R_{\disc,t(\psi),c(\psi)}.
\end{align*}
We also write $R^G_{\disc,\psi}$ if the group $G$ is to be emphasized.
Let us recall from \cite[Theorem 4.1.2]{art13} the stable trace formula for the split odd special orthogonal group $G^*=\SO_{2n+1}$:
\begin{proposition}[Stable trace formula]\label{3.3.1}
    Let $\psi\in\widetilde{\Psi}(N)$. Then we have
    \begin{align*}
        S^{G^*}_{\disc,\psi}(f)=\begin{dcases*}
                                    |\frakS_\psi|\inv \varepsilon^{G^*}_\psi(s_\psi) \sigma(\overline{S}_\psi^\circ) f^{G^*}(\psi), & if $\psi\in\Psi(G^*)$,                             \\
                                    0,                                                                                              & if $\psi\in\widetilde{\Psi}(N)\setminus\Psi(G^*)$,
                                \end{dcases*}
    \end{align*}
    for $f\in\calH(G^*)$, where $\varepsilon^{G^*}_\psi$ is the character in the multiplicity formula, $\sigma(\overline{S}_\psi^\circ)$ the constant given in \cite[Proposition 4.1.1]{art13}, and $f^{G^*}(\psi)$ denotes the linear form defined by \cite[(4.1.3)]{art13}.
\end{proposition}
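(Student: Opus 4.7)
The plan is to invoke Arthur's stabilization of the trace formula, since $G^*=\SO_{2n+1}$ is quasi-split (indeed split) and this is precisely \cite[Theorem 4.1.2]{art13}. To sketch my own approach, I would follow Arthur's strategy of comparing the stabilization of the discrete part of the trace formula for $G^*$ with the $\psi$-component of the (stabilized) twisted trace formula on $(\GL_N,\theta_N)$, using the twisted endoscopic character relation of Proposition~\ref{1.5.1}.

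First I would fix $\psi\in\widetilde{\Psi}(N)$ and write the stabilization
\begin{equation*}
    I^{G^*}_{\disc,\psi}(f) = \sum_{\frake\in\overline{\calE}_\el(G^*)} \iota(G^*,G^\frake)\,\widehat{S}^{G^\frake}_{\disc,\psi^\frake}(f^\frake).
\end{equation*}
For every proper elliptic endoscopic triple $\frake=\frake_{n_1,n_2}$ with $n_1,n_2<n$, the stable form $S^{G^\frake}_{\disc,\psi^\frake}$ is known by induction on the rank, so that $S^{G^*}_{\disc,\psi}$ is isolated as the only unknown term in the sum.

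Next I would use the M\oe glin--Waldspurger classification of $\calA_2(N)$, combined with Arthur's analysis of the standard model (i.e., the spectral expansion of the discrete part involving induced representations from proper Levi subgroups), to express the twisted side explicitly in terms of the representation $\pi_\psi$, the sign $\varepsilon^{G^*}_\psi(s_\psi)$, and the combinatorial constant $\sigma(\overline{S}_\psi^\circ)$. Matching this with the endoscopic side via Proposition~\ref{1.5.1} yields the stated identity when $\psi\in\Psi(G^*)$, while for $\psi\in\widetilde{\Psi}(N)\setminus\Psi(G^*)$ the image of $\widetilde{\psi}$ lies in a different simple twisted endoscopic group of $(\GL_N,\theta_N)$, and the comparison then forces $S^{G^*}_{\disc,\psi}$ to vanish.

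The hard part will be the treatment of non-discrete parameters $\psi\in\Psi(G^*)\setminus\Psi_2(G^*)$: these introduce contributions to the standard model coming from genuine Levi subgroups, whose cancellation requires the local intertwining relation for $G^*$ (already established by Arthur in the quasi-split setting) together with an intricate bookkeeping of the constants $\sigma(\overline{S}_\psi^\circ)$ and $\varepsilon^{G^*}_\psi$. Once these non-discrete contributions are controlled, the formula for $\psi\in\Psi_2(G^*)$ is forced by the comparison of the two trace formulas. The argument at rank $n$ is inductive, relying on the same identity for all proper Levi subgroups and smaller elliptic endoscopic groups, so the whole scheme is a simultaneous induction on $n$ running in parallel with the stabilization of the twisted trace formula on $\GL_N$.
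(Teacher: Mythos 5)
Your first sentence is exactly what the paper does: Proposition~\ref{3.3.1} concerns the quasi-split group $G^*=\SO_{2n+1}$ and is recalled verbatim from \cite[Theorem 4.1.2]{art13}, with no new proof supplied in this paper. The remaining sketch is a reasonable high-level description of Arthur's own argument (comparison with the twisted trace formula on $\GL_N$, isolation of $S^{G^*}_{\disc,\psi}$ via induction on endoscopic groups, the standard model), but it is not needed here since the result is simply cited.
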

As a consequence we have a decomposition
\begin{equation}\label{gap}
    \begin{gathered}
        L^2_{\disc}(G(F)\backslash G(\A_F))=\bigoplus_{\psi\in \Psi(G^*)} L^2_{\disc,\psi}(G(F)\backslash G(\A_F)),\\
        \tr R_{\disc}(f)=\sum_{\psi\in\Psi(G^*)} R_{\disc,\psi}(f), \qquad f\in\calH(G),
    \end{gathered}
\end{equation}
of the discrete spectrum.
Theorem \ref{3.12} is now proven.

\subsection{Global intertwining operator}\label{3.4}
Let $\xi : G^* \to G$ be an inner twist.
Let $P^*\subset G^*$ be a standard parabolic subgroup with a Levi decomposition $P^*=M^*N^*$ over $F$, and put $P=\xi(P^*)$, $M=\xi(M^*)$, and $N=\xi(N^*)$.
We consider the case that $P$, $M$, and $N$ are defined over $F$, and a restriction $\xi|_{M^*}:M^*\to M$ is an inner twist.

Assume that $M\neq G$.
Let $(\pi, V_\pi)$ be an irreducible component of $L^2_{\disc}(A_{M,\infty}^+ M(F) \backslash M(\A_F))$, where $A_{M,\infty}^+$ denotes the connected component of 1 in $\Res_{F/\Q}(M)(\R)$.
Let $\psi \in \Psi_2(M^*)$ be the corresponding $A$-parameter (given by the induction hypothesis).
We shall consider the induced representation $(\calI_P(\pi), \calH_P(\pi))$.
It is a right regular representation on the Hilbert space $\calH_P(\pi)$ of measurable functions $f: G(\A_F)\to V_\pi$ such that $f(nmg)=\delta_P^\frac{1}{2}(m) \pi(m) f(g)$ for any $n\in N(\A_F)$, $m\in M(\A_F)$, and $g \in G(\A_F)$ whose restriction to an open compact subgroup $K\subset G(\A_F)$ is square-integrable.

Let $P'\subset G$ be a parabolic subgroup over $F$ with Levi component $M$.
For $\lambda \in \fraka_{M,\C}^*$, the intertwining operator $J_{P'|P} : \calI_P(\pi_\lambda)\to\calI_{P'}(\pi_\lambda)$ is defined by
\begin{equation*}
    [J_{P'|P}(\pi_\lambda)(f)](g)=\int_{N(\A_F)\cap N'(\A_F)\backslash N'(\A_F)} f(n'g) dn',
\end{equation*}
where $N'$ denotes the unipotent radical of $P'$, and $\pi_\lambda=\pi\otimes\lambda$.
As in \cite{kmsw}, we take the Haar measure $dn'$ determined by the Haar measure on $\A_F$ which assigns the quotient $\A_F/F$ volume 1.
It is known that the integral converges absolutely when the real part of $\lambda$ lies in a certain open cone.
As a function of $\lambda$, it has a meromorphic continuation and is nonzero and holomorphic at $\lambda=0$.
Then the operator $J_{P'|P}(\pi)=J_{P'|P}(\pi_\lambda)|_{\lambda=0}$ is defined.
It is also known that $J_{P'|P}(\pi)$ is unitary.

We now consider the case $P'=P^w=w\inv Pw$ for some $w\in W(M,G)^\Gamma$.
Let $\breve{w}\in N(M,G)(F)$ be a representative of $w$.
Then one has a twist $(\breve{w}\pi,V_\pi)$.
We define two intertwining operators $\ell(\breve{w})$ and $C_{\breve{w}}$ by
\begin{align*}
    \ell(\breve{w}) & :\calH_{P^w}(\pi) \lra \calH_P(\breve{w}\pi), & [\ell(\breve{w})f](g)     & =f(\breve{w}\inv g),                 & \\
    C_{\breve{w}}   & :(\breve{w}\pi, V_\pi) \lra (\pi, V_\pi),     & [C_{\breve{w}}\varphi](m) & =\varphi(\breve{w}\inv m \breve{w}). &
\end{align*}
The composition $\calI_P(C_{\breve{w}})\circ\ell(\breve{w})\circ J_{P^w|P}(\pi)$ is then a self-intertwining operator of the induced representation $(\calI_P(\pi), \calH_P(\pi))$, and independent of the choice of $\breve{w}$.
We put
\begin{equation*}
    M_P(w,\pi)=\calI_P(C_{\breve{w}})\circ\ell(\breve{w})\circ J_{P^w|P}(\pi).
\end{equation*}

Let $\rho_{P'|P}$ be the adjoint representation of $\widehat{M}$ on $\widehat{\frakn}\cap\widehat{\frakn}' \backslash \widehat{\frakn}' \cong \widehat{\overline{\frakn}}\cap\widehat{\frakn'}$, where $\widehat{\frakn}$, $\widehat{\frakn'}$, and $\widehat{\overline{\frakn}}$ are the Lie algebras of $\widehat{N}$, $\widehat{N'}$, and $\widehat{\overline{N}}$, respectively, where $\overline{N}$ denotes the unipotent radical of the opposite parabolic subgroup $\overline{P}$ of $P$ containing $M$.
Following \cite{kmsw}, we define normalizing factors
\begin{align*}
    r_{P'|P}(\psi)
     & =\frac{L(0,\psi,\rho_{P'|P}^\vee)}{L(1,\psi,\rho_{P'|P}^\vee)}\frac{\epsilon(\frac{1}{2},\psi,\rho_{P'|P}^\vee)}{\epsilon(0,\psi,\rho_{P'|P}^\vee)}, \\
    r_P(w,\psi)
     & =r_{P^w|P}(\psi)\epsilon(\frac{1}{2},\psi,\rho_{P^w|P}^\vee)\inv,
\end{align*}
and normalized intertwining operators
\begin{align*}
    R_{P'|P}(\pi,\psi) & =r_{P'|P}(\psi)\inv J_{P'|P}(\pi), \\
    R_P(w,\pi,\psi)    & =r_P(w,\psi)\inv M_P(w,\pi),
\end{align*}
where $L$- and $\epsilon$-factors are the automorphic ones.

\begin{lemma}\label{2.2.2}
    We have
    \begin{equation*}
        R_{P'|P}(\pi_\lambda,\psi_\lambda)=\bigotimes_v R_{P'_v|P_v}(\pi_{\lambda,v},\psi_{\lambda,v}).
    \end{equation*}
\end{lemma}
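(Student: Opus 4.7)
The plan is to verify the factorization by decomposing both the unnormalized global intertwining operator and the global normalizing factor as Euler products over places of $F$, and then matching factors.

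First I would establish that, for $\lambda$ with real part in a suitable open cone where convergence is absolute, the global unnormalized intertwining operator factors as $J_{P'|P}(\pi_\lambda) = \bigotimes_v J_{P'_v|P_v}(\pi_{\lambda,v}, \psi_{F,v})$. The ingredients are: (a) the restricted tensor product decomposition $\pi = \bigotimes_v \pi_v$ and the induced decomposition $\calH_P(\pi_\lambda) = \bigotimes_v \calH_{P_v}(\pi_{\lambda,v})$; (b) the identification $N(F) \cap N'(F) \backslash N'(\A_F) = \prod_v (N(F_v) \cap N'(F_v) \backslash N'(F_v))$; and (c) the compatibility of the global Haar measure on $N'(\A_F)$, normalized so that $\A_F/F$ has volume $1$, with the product of the local Haar measures used in \S\ref{2.2} (the latter normalization depends on $\psi_{F,v}$). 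This is essentially a Fubini argument once (c) is in place.

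Next I would decompose the normalizing factor. Since the automorphic $L$- and $\epsilon$-factors appearing in $r_{P'|P}(\psi_\lambda)$ are Euler products of the corresponding local factors attached to $\rho_{P'_v|P_v}^\vee \circ \phi_{\psi_{\lambda,v}}$, we obtain
\[
r_{P'|P}(\psi_\lambda) = \prod_v r_{P'_v|P_v}(\psi_{\lambda,v}, \psi_{F,v})
\]
as a meromorphic identity in $\lambda$. Combining this with the factorization of $J_{P'|P}$ yields the claimed identity on the cone, and analytic continuation in $\lambda$ extends it to all of $\fraka_{M,\C}^*$.

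For the right-hand side to make sense as a restricted tensor product of operators, one needs to know that for almost all $v$ the local operator $R_{P'_v|P_v}(\pi_{\lambda,v}, \psi_{\lambda,v})$ preserves the distinguished spherical vector in $\calH_{P_v}(\pi_{\lambda,v})$. This is precisely the Gindikin-Karpelevich-Casselman-Shahidi formula: at an unramified place the unnormalized operator acts on the spherical vector by exactly the ratio of local $L$-factors $L(0,\rho_{P'_v|P_v}^\vee \circ \phi_{\psi_{\lambda,v}})/L(1,\rho_{P'_v|P_v}^\vee \circ \phi_{\psi_{\lambda,v}})$, with the unramified $\epsilon$-factor equal to $1$, so multiplication by $r_{P'_v|P_v}(\psi_{\lambda,v},\psi_{F,v})^{-1}$ cancels this ratio and yields the identity. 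The main technical obstacle is precisely the bookkeeping around this cancellation: one must align the measure conventions from \S\ref{2.2} with the product-normalized global measure, and confirm that the choice of additive character $\psi_{F,v}$ used to define the local $\epsilon$-factor agrees at unramified places with the one making the local measure self-dual, so that no spurious factor of $|D_{F_v}|^{1/2}$ appears. Once this compatibility is in place, the factorization follows immediately.
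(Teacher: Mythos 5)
Your proposal is correct and reconstructs exactly the argument the paper intends: the paper's own ``proof'' is a one-line deferral to \cite[Lemma~2.2.2]{kmsw}, and that proof proceeds precisely by (i) factoring the unnormalized operator $J_{P'|P}$ over places via Fubini once the product of local $\psi_{F,v}$-adapted measures is identified with the global measure giving $\A_F/F$ volume $1$, (ii) decomposing $r_{P'|P}(\psi_\lambda)$ as a product of local Artin factors using local-global compatibility of $L$- and $\epsilon$-factors for $\GL_N$, and (iii) invoking the Gindikin-Karpelevich formula at unramified places to see that the local normalized operator fixes the spherical vector, which both makes the restricted tensor product well-defined and cancels the $L$-ratio against $r^{-1}$ so that the two sides agree on pure tensors. (As a minor aside, the paper itself explicitly checks the measure compatibility you flag as step (c) in its Lemma~\ref{2.2.1}, and records the Artin-versus-automorphic agreement of local factors in the proof of Lemma~\ref{giom}; so the two points you single out as the ``main technical obstacle'' are indeed the ones the paper elects to verify rather than leave implicit.)
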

\begin{proof}
    The proof is similar to that of \cite[Lemma 2.2.2]{kmsw}.
\end{proof}

\begin{lemma}\label{giom}
    Let $P''\subset G$ be a parabolic subgroup over $F$ with Levi component $M$.
    We have
    \begin{equation*}
        R_{P''|P}(\pi_\lambda,\psi_\lambda)=R_{P''|P'}(\pi_\lambda,\psi_\lambda) \circ R_{P'|P}(\pi_\lambda,\psi_\lambda).
    \end{equation*}
\end{lemma}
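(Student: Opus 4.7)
The plan is to reduce the global identity to local ones via Lemma \ref{2.2.2}, and then handle the resulting local identities while carefully avoiding any circular dependence on Lemma \ref{2.2.3}. First, applying Lemma \ref{2.2.2} to each of the three normalized operators in the statement gives
\begin{equation*}
    R_{?|?}(\pi_\lambda,\psi_\lambda)=\bigotimes_v R_{?_v|?_v}(\xi_v,\psi_{\lambda,v})
\end{equation*}
for each pair $(?|?)\in\{(P''|P),(P''|P'),(P'|P)\}$. Consequently, the claimed identity reduces to verifying, for every place $v$ of $F$, the local multiplicativity
\begin{equation*}
    R_{P''_v|P_v}(\xi_v,\psi_{\lambda,v})=R_{P''_v|P'_v}(\xi_v,\psi_{\lambda,v})\circ R_{P'_v|P_v}(\xi_v,\psi_{\lambda,v}).
\end{equation*}

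At archimedean places this is the multiplicativity half of Lemma \ref{2.2.1}. At non-archimedean places, we cannot appeal to Lemma \ref{2.2.3}, since its proof (via the globalization of Lemma \ref{4.2.3}) invokes the present lemma. Instead I would argue directly from the definitions: for $\myre(\lambda)$ in a sufficiently deep positive cone relative to $P$, the unnormalized integrals defining $J_{?_v|?_v}(\xi_v,\psi_{F,v})$ on $\calH_{?_v}(\pi_{\lambda,v})$ converge absolutely, and a Fubini computation on the nested unipotent radicals, with Haar measures chosen as in \S\ref{2.2}, yields the analogous identity for unnormalized operators. Combined with the multiplicativity of the local normalizing factors $r_{?_v|?_v}(\xi_v,\psi_{\lambda,v},\psi_{F,v})$, which follows from the additivity of local Artin $L$- and $\epsilon$-factors under the direct sum decomposition of $\rho_{?|?}^\vee\circ\phi_{\psi_\lambda}$, this gives the local identity for normalized operators for all such $\lambda$. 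Meromorphic continuation in $\lambda$, together with the holomorphy and nonvanishing at $\lambda=0$ (known at archimedean places by Lemma \ref{2.2.1} and at unramified non-archimedean places by the Satake/Macdonald formula), then extends the identity to all $\lambda$, and in particular to the value of interest.

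The main obstacle will be the root-theoretic combinatorics when $P'$ does not lie ``between'' $P$ and $P''$, i.e., when the decomposition
\begin{equation*}
\Sigma(P'')\cap\Sigma(\bar P)=[\Sigma(P')\cap\Sigma(\bar P)]\sqcup[\Sigma(P'')\cap\Sigma(\bar{P'})]
\end{equation*}
fails. In that case both the Fubini step and the additivity of $L$-factors acquire a discrepancy supported on pairs of opposite roots, and one must verify that these contributions cancel in the ratio. The cleanest way I see to do this is to factor a given triple through a gallery of \emph{adjacent} parabolics differing by a single simple reflection, for which multiplicativity reduces to a classical rank-one computation, and then to reassemble the general case by composition; this reduction is standard but bookkeeping-intensive, and is the only nontrivial part of the argument not already contained in the preceding lemmas of \S\ref{2.2}.
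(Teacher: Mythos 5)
Your proposal takes a genuinely different route from the paper's proof, and in doing so runs into a real gap. The paper never passes through local multiplicativity of normalized intertwining operators at all: it invokes, as a black box, the multiplicativity of the \emph{global unnormalized} operator $J_{P'|P}(\pi_\lambda)$ --- this is the functional equation from the theory of Eisenstein series --- and then observes that the global normalizing factor factors as $r_{P'|P}(\psi_\lambda)=\prod_v r_{P'_v|P_v}(\xi_v,\psi_v,\psi_{F,v})$ (since the local factors of the automorphic $L$- and $\epsilon$-functions are the Artin factors), each local normalizing factor being multiplicative. That puts the full weight of the gallery/root-splitting combinatorics on Langlands's global theorem, which already contains it, and leaves only the explicit computation with $L$- and $\epsilon$-factors.

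Your plan instead reduces via Lemma \ref{2.2.2} to local multiplicativity of normalized operators at \emph{every} place and then attempts to re-derive the non-archimedean local multiplicativity from scratch. You are right that this is forced on you (Lemma \ref{2.2.3} would be circular), but that forced step is precisely where the proposal breaks: you are proposing to re-prove the content of Lemma \ref{2.2.3} by a direct Fubini-plus-$L$-factor argument, and you acknowledge that when $P'$ does not lie between $P$ and $P''$ the unnormalized Fubini step fails and $\rho_{P''|P}$ is not $\rho_{P''|P'}\oplus\rho_{P'|P}$, so the naive additivity of $L$- and $\epsilon$-factors also fails. Your proposed fix --- a minimal-gallery reduction to rank one --- is only gestured at, and it is exactly the nontrivial content that the paper avoids by staying global. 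Two further small points: the lemma is an identity of meromorphic operator-valued functions in $\lambda$, so holomorphy and nonvanishing at $\lambda=0$ are not relevant to the extension argument (identity on a nonempty open cone suffices by analytic continuation); and at ramified non-archimedean places you have no substitute for the Satake/Macdonald input you invoke, so even the continuation step as you state it is not covered. In short, the paper's one-paragraph global argument is self-contained given Langlands, while your reduction trades it for a substantial local problem that the paper's architecture was specifically designed to circumvent.
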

\begin{proof}
    The unnormalized intertwining operator $J_{P'|P}(\pi_\lambda)$ satisfies the multiplicativity property.
    It is known that at each place $v$ the local factors of automorphic $L$- and $\epsilon$-factors are equal to the Artin $L$- and $\epsilon$-factors.
    Thus the normalizing factor has a decomposition
    \begin{equation*}
        r_{P'|P}(\psi_\lambda)=\prod_v r_{P'_v|P_v}(\xi_v, \psi_v, \psi_{F,v}).
    \end{equation*}
    Since every local factor $r_{P'_v|P_v}(\xi_v, \psi_v, \psi_{F,v})$ has the multiplicativity property, so is $r_{P'|P}(\psi_\lambda)$.
    This completes the proof.
\end{proof}

\subsection{The global intertwining relation}\label{3.5}
In this subsection, we define two global linear forms and state the global intertwining relation, which we shall call GIR for short.
This subsection can be regarded as the global analogue of \S\ref{2.6}.

Let $\psi_{M^*}\in \Psi_2(M^*)$, and $\psi\in\Psi(G^*)$ be its image.
Note the change of notation from the previous subsection that we write $\psi_{M^*}$ for a parameter for $M^*$ rather than $\psi$.
Let $\psi_F:\A_F/F\to\C^1$ be a nontrivial additive character.
For $\pi_M=\bigotimes_v \pi_{M,v}\in\Pi_{\psi_{M^*}}(M)$ and $u^\natural\in \frakN_\psi(M,G)$ we define a global intertwining operator
\begin{align*}
    R_P(u^\natural,\pi_M,\psi_{M^*},\psi_F)=\bigotimes_v R_{P_v}(u^\natural_v,\pi_{M,v},\psi_{M^*,v},\psi_{F,v}),
\end{align*}
where $u^\natural_v\in\frakN_{\psi_v}(M_v,G_v)$, $\psi_{M^*,v}\in\Psi^+(M^*)$, and $\psi_{F,v}$ are the localizations of $u^\natural$, $\psi_{M^*}$, and $\psi_F$ respectively.
\begin{proposition}\label{3.5.3}
    Assume that $\pi_M\in\Pi_{\psi_{M^*}}(M)$ is automorphic, i.e., $\an{-,\pi_M}=\varepsilon_{\psi_{M^*}}$.
    Then for any $u^\natural\in\frakN_\psi(M,G)$ we have an equation
    \begin{equation*}
        \bigotimes_v \pi_{M,v}(u^\natural_v) = \varepsilon_{\psi_{M^*}}(u^\natural) C_{\breve{w}},
    \end{equation*}
    of isomorphisms $(\breve{w}\pi_M,V_{\pi_M}) \to (\pi_M,V_{\pi_M})$, where $w$ is the image of $u^\natural$ in $W_\psi(M,G)$.
\end{proposition}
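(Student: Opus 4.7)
The plan is to expand the local factors using the construction in \S\ref{2.4} and reduce the identity to two compatibilities, one for scalars and one for isomorphisms. By Lemma~\ref{2.4.1} each local operator factors as $\pi_{M,v}(u^\natural_v)=\an{u^\natural_v,\pi_{M,v}}_{\xi_v,z_v}\,\pi_{M,v}(\breve{w})_{\xi_v}$, where $\pi_{M,v}(\breve{w})_{\xi_v}\colon(\breve{w}\pi_{M,v},V_{\pi_{M,v}})\to(\pi_{M,v},V_{\pi_{M,v}})$ is the Whittaker-normalized isomorphism attached to $\xi_v$ and the standard Whittaker datum. Taking the tensor product over $v$, the left-hand side of the proposition becomes
\[
\Big(\prod_v\an{u^\natural_v,\pi_{M,v}}_{\xi_v,z_v}\Big)\cdot\bigotimes_v\pi_{M,v}(\breve{w})_{\xi_v},
\]
so it suffices to prove (a) $\prod_v\an{u^\natural_v,\pi_{M,v}}_{\xi_v,z_v}=\varepsilon_{\psi_{M^*}}(u^\natural)$ and (b) $\bigotimes_v\pi_{M,v}(\breve{w})_{\xi_v}=C_{\breve{w}}$.

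Assertion (a) is essentially automatic: by Lemma~\ref{2.8.3}, since $\psi_{M^*}\in\Psi_2(M^*)$ is discrete for $M^*$, we have $\frakS_{\psi_{M^*}}(M,G)=\frakS_{\psi_{M^*}}$, so the image of $u^\natural$ in $\frakS_{\psi_{M^*}}$ is well-defined and the global Euler product $\an{u^\natural,\pi_M}=\prod_v\an{u^\natural_v,\pi_{M,v}}_{\xi_v,z_v}$ of \S\ref{1.6} makes sense. The automorphy assumption $\an{-,\pi_M}=\varepsilon_{\psi_{M^*}}$ then gives (a) directly.

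Assertion (b) is the main obstacle. Both operators are isomorphisms $(\breve{w}\pi_M,V_{\pi_M})\to(\pi_M,V_{\pi_M})$, and since $\pi_M$ is irreducible they differ by a scalar $c\in\C^\times$. To identify $c=1$, the plan is to evaluate both sides against the global Whittaker functional on $\pi_M$ attached to the fixed $F$-pinning of $M^*$ and $\psi_F$. Locally, $\pi_{M,v}(\breve{w})_{\xi_v}$ is characterized (via Lemma~\ref{2.4.1}) by the Whittaker-normalized transfer for the twisted endoscopic datum $\frake_{M,\psi}$, which in turn determines its action on the local Whittaker functional. Globally, because $\breve{w}\in N(M^*,G^*)(F)$ is the Langlands-Shelstad lift of $w$ with respect to the rational pinning, conjugation by $\breve{w}$ preserves the Whittaker datum on $M$ up to the standard action on the Whittaker character, and $C_{\breve{w}}$ preserves the global Whittaker functional by a direct rational computation.

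The hardest step is therefore to check that the product over $v$ of the local scalars comparing $\pi_{M,v}(\breve{w})_{\xi_v}$ with the naive conjugation equals~$1$. Since the $\lambda$-factors vanish in our situation (the lemma in \S\ref{2.3}), these scalars involve only local $\epsilon$-factors of $\rho_{P^w|P}^\vee\circ\phi_{\psi_{M^*}}$ and the cohomological invariant of the pure inner twist $(\xi,z)$ paired with $Z(\widehat{G})^\Gamma$. The product formula for $\epsilon$-factors, together with the exactness of~\eqref{033} (which forces the global cocycle pairing against $Z(\widehat{G})^\Gamma=\{\pm1\}$ to vanish), then yields $c=1$, in direct analogy with \cite[Proposition 3.5.3]{kmsw}. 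No genuinely new phenomenon is expected in the odd special orthogonal case because $Z(\widehat{G})^\Gamma$ is of order two and the Galois action on $\widehat{G}$ is trivial.
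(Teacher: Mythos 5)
Your top-level decomposition — factor each local operator as $\pi_{M,v}(u^\natural_v)=\an{u^\natural_v,\pi_{M,v}}_{\xi_v,z_v}\,\pi_{M,v}(\breve{w})_{\xi_v}$ and reduce to a scalar identity (a) and an isomorphism identity (b) — is the right shape and presumably matches the route in \cite[Proposition~3.5.3]{kmsw}. (Note that the factorization is the \emph{definition} from \S\ref{2.4}, not a consequence of Lemma~\ref{2.4.1}; that lemma gives the ECR \emph{characterization} of $\pi(u^\natural)_{\xi,z}$.) Part (a) is essentially a matter of unwinding the definition of the global pairing $\an{-,\pi_M}=\prod_v\an{-,\pi_{M,v}}$ and the compatibility of the sophisticated splitting $s$ with localization, and is acceptable.

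Part (b) is where the proposition lives, and your argument there has two real gaps. First, you propose to compare $\bigotimes_v\pi_{M,v}(\breve{w})_{\xi_v}$ with $C_{\breve{w}}$ via the global Whittaker functional on $\pi_M$, attached to ``the fixed $F$-pinning of $M^*$.'' But $M$ is an inner form and typically not quasi-split, so $\pi_M$ has no global Whittaker functional, nor does $M$ carry a pinning over $F$. The argument can be repaired, but only by observing that $w\in W(M,G)$ acts trivially on the $\SO_{2n_0+1}$-factor $M_-$ and nontrivially only on the $\GL$-factors $M_+$, so that $C_{\breve{w}}$ and each $\pi_{M,v}(\breve{w})_{\xi_v}$ are the identity on $\pi_-$ and the Whittaker comparison is carried out entirely on the cuspidal (hence globally generic, by Shalika) representations of the $\GL_{n_j}(\A_F)$. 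Without this reduction the Whittaker-functional evaluation simply does not make sense. Second, you invoke local $\epsilon$-factors of $\rho_{P^w|P}^\vee\circ\phi_{\psi_{M^*}}$ and the product formula for $\epsilon$-factors as the key inputs for computing the scalar $c$. These $\epsilon$-factors normalize $R_{P^w|P}$ and $\ell^P_P$ — the \emph{first} and \emph{second} intertwining operators of \S\ref{2.2}--\ref{2.3} — and enter Lemma~\ref{2.2.2}/Lemma~\ref{giom}, not the third operator $\pi(\breve{w})_\xi$. Proposition~\ref{3.5.3} concerns only the third operator, whose normalization is Whittaker-theoretic and carries no such $\epsilon$-factors; bringing them in here suggests a conflation with the proof of Proposition~\ref{3.5.4}, where the first and second operators \emph{do} enter (via Lemma~\ref{2.2.2}) alongside this proposition. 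Similarly, the exact sequence \eqref{033} governs product formulas for the cocycle pairings (e.g.\ $\prod_v\chi_{G_v}=1$ and the independence of $\Delta_\A$ from $z$), not the comparison of Whittaker normalizations; its appearance in your step for (b) is not motivated. So the strategy is sound, but the argument for (b) as written does not close.
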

\begin{proof}
    The proof is similar to that of Proposition 3.5.3 of \cite{kmsw}.
\end{proof}
\begin{proposition}\label{3.5.4}
    Let $u^\natural\in\frakN_\psi(M,G)$ and $\pi_M\in\Pi_{\psi_{M^*}}(M)$. Then
    \begin{enumerate}
        \item $R_P(yu^\natural,\pi_M,\psi_{M^*},\psi_F)=\an{\overline{y},\pi_M}R_P(u^\natural,\pi_M,\psi_{M^*},\psi_F)$ for any $y\in\frakS_\psi(M)$, where $\overline{y}$ is the image of $y$ in $\overline{\frakS}_\psi(M)$.
        \item $R_P(w_u,\pi_M,\psi_{M^*})=\varepsilon_{\psi_{M^*}}(u^\natural)R_P(u^\natural,\pi_M,\psi_{M^*},\psi_F)$ if $\pi_M$ is automorphic.
    \end{enumerate}
\end{proposition}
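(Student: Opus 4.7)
My plan is to deduce both parts from Proposition \ref{3.5.3} together with the factorization of the global normalized intertwining operator into its local components. By Lemma \ref{2.2.2}, $R_P(u^\natural,\pi_M,\psi_{M^*},\psi_F)$ equals the restricted tensor product $\bigotimes_v R_{P_v}(u^\natural_v,\pi_{M,v},\psi_{M^*,v},\psi_{F,v})$, so the strategy in each part is to reduce to a pointwise local identity at every place and then reassemble the pieces using global product formulae.

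For part (1), I would apply the local Lemma \ref{2.5.2}(2) at each place $v$, which contributes a scalar $\an{y_v,\pi_{M,v}}_{\xi_v,z_v}$. Tensoring over $v$ produces the total scalar $\prod_v \an{y_v,\pi_{M,v}}_{\xi_v,z_v}$, which by construction of the global pairing equals $\an{\overline{y},\pi_M}$. The fact that this product depends only on the image $\overline{y}\in\overline{\frakS}_\psi(M)$, not on $y$ itself or on the individual local cocycles $z_v$, is a consequence of the product formula \eqref{033} applied to the pure inner twist $(\xi,z)$.

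For part (2), I would expand each local factor via the definition in \S\ref{2.5} into a product of the scalar $\epsilon_{P_v}(w,\psi_{M^*,v},\psi_{F,v})\,r_{P_v^w|P_v}(\xi_v,\psi_{M^*,v},\psi_{F,v})\inv$ and the operator $\calI_{P_v}(\pi_{M,v}(u^\natural_v)_{\xi_v,z_v})\circ\ell^{P_v}(\breve{w}_v)\circ J_{P_v^w|P_v}(\xi_v,\psi_{F,v})$, and then take the restricted tensor product. The assumption that $\pi_M$ is automorphic lets me invoke Proposition \ref{3.5.3} to replace $\bigotimes_v \pi_{M,v}(u^\natural_v)$ by $\varepsilon_{\psi_{M^*}}(u^\natural)\,C_{\breve{w}}$, whereupon the global automorphic self-intertwining operator $M_P(w_u,\pi_M)=\calI_P(C_{\breve{w}})\circ\ell(\breve{w})\circ J_{P^{w_u}|P}(\pi_M)$ reassembles as the operator core. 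The product formulae for the automorphic $L$- and $\epsilon$-factors then combine the scalar contributions into $r_P(w_u,\psi_{M^*})\inv$, yielding the desired identity after using $\varepsilon_{\psi_{M^*}}(u^\natural)^2=1$.

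The principal obstacle I foresee is the careful bookkeeping of normalizations: I must verify that the local Haar measures on unipotent radicals fixed in \S\ref{2.2} assemble into the global measure used to define $J_{P^{w_u}|P}(\pi_M)$ in \S\ref{3.4}, and that the local Whittaker-normalized isomorphisms combine into the canonical automorphic operator $C_{\breve{w}}$ up to a global sign. The latter is exactly what Proposition \ref{3.5.3} provides, and it is the precise point at which $\varepsilon_{\psi_{M^*}}$ enters the formula. Once these compatibilities are in place, both parts reduce to essentially algebraic manipulation of the local factorizations.
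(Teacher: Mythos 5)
Your proposal is correct and follows essentially the same route as the paper's (very terse) proof, which cites Lemma \ref{2.5.2} for part (1) and Lemma \ref{2.2.2} together with Proposition \ref{3.5.3} for part (2); your account supplies exactly the bookkeeping the paper suppresses. One small slip in the first paragraph: the identity $R_P(u^\natural,\pi_M,\psi_{M^*},\psi_F)=\bigotimes_v R_{P_v}(u^\natural_v,\pi_{M,v},\psi_{M^*,v},\psi_{F,v})$ is the definition of the global compound operator in \S\ref{3.5}, not a consequence of Lemma \ref{2.2.2} (which concerns the first intertwining operator $R_{P'|P}$ and is what you actually use in part (2) to reassemble the operator core).
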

\begin{proof}
    The first assertion follows from Lemma \ref{2.5.2}. The other one follows from Lemma \ref{2.2.2} and Proposition \ref{3.5.3}.
\end{proof}

The first linear form $\calH(G)\ni f\mapsto f_G(\psi_{M^*},u^\natural)$ is defined by
\begin{align*}
    f_G(\psi_{M^*},u^\natural) & =\prod_v f_{v,G_v}(\psi_{M^*,v},u^\natural_v)                                                                         &  &                                  & \\
                               & =\sum_{\pi_M \in \Pi_{\psi_{M^*}}(M)} \tr\left(R_P(u^\natural,\pi_M,\psi_{M^*},\psi_F) \circ \calI_P(\pi_M,f)\right), &  & f=\bigotimes_v f_v \in \calH(G). &
\end{align*}
Note that the parameter $\psi_{M^*}$ is relevant since it is discrete and $M=\xi(M^*)$ is defined over $F$.
Put $f_G(\psi_{M^*},u^\natural)$ to be $0$ for $M^*\subset G^*$ which does not transfer to $G$.
Proposition \ref{3.5.4} implies the following lemma.
\begin{lemma}\label{3.5.5}
    The linear form $f_G(\psi_{M^*},u^\natural)$ depends only on the image of $u^\natural$ in $\overline{\frakN}_\psi(M,G)$.
\end{lemma}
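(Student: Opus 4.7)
The plan is to deduce this as an immediate corollary of Proposition \ref{3.5.4}(1). By definition $\overline{\frakN}_\psi(M,G) = \frakN_\psi(M,G)/Z(\widehat{G})^\Gamma$, so it suffices to show that for any $z \in Z(\widehat{G})^\Gamma$ and $u^\natural \in \frakN_\psi(M,G)$ we have
\begin{equation*}
    f_G(\psi_{M^*}, zu^\natural) = f_G(\psi_{M^*}, u^\natural).
\end{equation*}

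First I would verify that multiplication by $z$ inside $\frakN_\psi(M,G)$ is the same as multiplication by an element of $\frakS_\psi(M)$. Since $Z(\widehat{G}) \subset Z(\widehat{M}) \subset S_\psi(M)$, the element $z$ lies in $\frakS_\psi(M)$ under the inclusion $\frakS_\psi(M) \hookrightarrow \frakN_\psi(M,G)$ coming from the top exact row of the diagram in \S\ref{2.1}. Hence Proposition \ref{3.5.4}(1) applies with $y = z$, yielding
\begin{equation*}
    R_P(zu^\natural, \pi_M, \psi_{M^*}, \psi_F) = \langle \overline{z}, \pi_M \rangle\, R_P(u^\natural, \pi_M, \psi_{M^*}, \psi_F)
\end{equation*}
for every $\pi_M \in \Pi_{\psi_{M^*}}(M)$, where $\overline{z}$ is the image of $z$ in $\overline{\frakS}_\psi(M) = \frakS_\psi(M)/Z(\widehat{G})^\Gamma$.

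The key observation is then that $\overline{z} = 1$ by the very definition of $\overline{\frakS}_\psi(M)$, so the scalar $\langle \overline{z}, \pi_M \rangle$ equals $1$ for every $\pi_M$. Substituting into the definition
\begin{equation*}
    f_G(\psi_{M^*}, zu^\natural) = \sum_{\pi_M \in \Pi_{\psi_{M^*}}(M)} \tr\!\left( R_P(zu^\natural, \pi_M, \psi_{M^*}, \psi_F) \circ \calI_P(\pi_M, f) \right)
\end{equation*}
gives $f_G(\psi_{M^*}, zu^\natural) = f_G(\psi_{M^*}, u^\natural)$, as desired. In the non-transferring case both sides are $0$ by convention, so the equality is trivial there.

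There is no substantive obstacle; the only point requiring a moment of care is the bookkeeping that the central subgroup $Z(\widehat{G})^\Gamma$ really enters $\frakN_\psi(M,G)$ through $\frakS_\psi(M)$ rather than through $W_\psi(M,G)$, so that part (1) of Proposition \ref{3.5.4} (and not part (2)) is the relevant statement. This is visible directly from the commutative diagram \eqref{cd}, where $Z(\widehat{G})^\Gamma \subset S_\psi(M) \subset S_\psi(G)$ maps trivially to $W_\psi(M,G)$.
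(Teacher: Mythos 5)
Your proof is correct and takes essentially the same approach as the paper, which simply states that Proposition \ref{3.5.4} implies the lemma. The only detail you supply beyond the paper's one-line justification is the (correct) observation that any $z\in Z(\widehat{G})^\Gamma$ enters $\frakN_\psi(M,G)$ through $\frakS_\psi(M)$ and has trivial image in $\overline{\frakS}_\psi(M)$, so the scalar produced by Proposition \ref{3.5.4}(1) is $1$.
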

Thus we also write $f_G(\psi_{M^*},\overline{u})$ for $f_G(\psi_{M^*},u^\natural)$, where $\overline{u}\in \overline{\frakN}_\psi(M,G)$ is the image of $u^\natural$.

Next let us recall the definition of the second linear form.
For a parameter $\psi\in \Psi(G^*)$ and a semisimple element $s\in S_\psi$, Lemma \ref{1.4.3} attaches an endoscopic triple $\frake=(G^\frake,s^\frake,\eta^\frake)\in\calE(G)$ and a parameter $\psi^\frake\in \Psi(G^\frake)$, where $\psi^\frake$ is not unique (determined up to $\Out_G(G^\frake)$-action).
The second linear form $\calH(G)\ni f\mapsto f'_G(\psi,s)$ is defined by
\begin{align*}
    f'_G(\psi,s) & =f^\frake(\psi^\frake)                                               &  &                                  & \\
                 & =\prod_v f^\frake_v(\psi^\frake_v) = \prod_v f'_{v,G_v}(\psi_v,s_v), &  & f=\bigotimes_v f_v \in \calH(G), &
\end{align*}
where $s_v\in S_{\psi_v}$ denotes the localization of $s$.
If $\psi$ is the image of $\psi_{M^*}\in\Psi_2(M^*)$, we also write $f'_G(\psi_{M^*},s)$ for $f'_G(\psi,s)$.
\begin{lemma}\label{3.5.6}
    The linear form $f'_G(\psi,s)$ depends only on the image of $s$ in $\overline{\frakS}_\psi$, and hence on the image of $\frake$ in $\overline{\calE}(G)$.
\end{lemma}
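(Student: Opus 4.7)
The plan is to show the invariance of $f'_G(\psi,s)$ separately under the two factors whose joint action produces the quotient $\overline{\frakS}_\psi$: the identity component $S_\psi^\circ$ and the center $Z(\widehat{G^*})^\Gamma$. The final assertion about the image of $\frake$ in $\overline{\calE}(G)$ then follows automatically from the bijection $\overline{\calX}(G^*) \simeq \overline{\calY}(G^*)$ established in Lemma \ref{1.4.3}.

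For invariance under $S_\psi^\circ$ — that is, for $f'_G(\psi,s)$ to depend only on the image of $s$ in $\frakS_\psi$ — I would argue placewise, using the factorization $f'_G(\psi,s) = \prod_v f'_{v,G_v}(\psi_v,s_v)$ and the compatibility of localization with the quotient $S_{\psi,\semisimple} \twoheadrightarrow \frakS_\psi$ at each place. If $s,s' \in S_{\psi,\semisimple}$ share the same image in $\frakS_\psi$, their localizations $s_v, s'_v$ share the same image in $\frakS_{\psi_v}$ for every $v$. The local equality $f'_{v,G_v}(\psi_v,s_v) = f'_{v,G_v}(\psi_v,s'_v)$ is automatic at places where $\psi_v$ is discrete for $G_v^*$ (since then $S_{\psi_v}^\circ$ is trivial and the map $S_{\psi_v,\semisimple} \to \frakS_{\psi_v}$ is injective); at the remaining places it follows from Lemma \ref{2.6.1} applied with any proper local Levi $M_v^* \subsetneq G_v^*$ for which $\psi_v$ is discrete, the side condition that the image belongs to $\frakS_{\psi_v}(M_v,G_v)$ being automatic by Lemma \ref{2.8.3}.

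For invariance under $Z(\widehat{G^*})^\Gamma$, suppose $s' = zs$ with $z \in Z(\widehat{G^*})^\Gamma$. The construction in the proof of Lemma \ref{1.4.1} shows that the corresponding endoscopic triple becomes $\frake' = (G^\frake, zs^\frake, \eta^\frake) = z\frake$, with unchanged parameter $\psi^{\frake'} = \psi^\frake$. Fixing a pure inner twist $(\xi, z_0)$ of $G^*$ realizing $G$, Lemma \ref{1.1.2} yields at each place the scaling $\Delta[z\frake_v, \xi_v, z_{0,v}] = \langle z, z_{0,v}\rangle \Delta[\frake_v, \xi_v, z_{0,v}]$, so the matching property gives $f^{z\frake}_v(\psi^\frake_v) = \langle z, z_{0,v}\rangle f^\frake_v(\psi^\frake_v)$. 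Taking the product over all places,
\begin{equation*}
    f^{z\frake}(\psi^\frake) = \Bigl(\prod_v \langle z, z_{0,v}\rangle\Bigr)\, f^\frake(\psi^\frake),
\end{equation*}
and the product equals $1$ by the exactness of the sequence \eqref{033} applied to $z_0 \in H^1(F,G^*)$.

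The main obstacle is the first step, where one must reconcile the global component-group datum with the placewise applicability of Lemma \ref{2.6.1} at the non-discrete places. This is handled by making, at each such place independently, an arbitrary choice of proper local Levi $M_v^*$ through which $\psi_v$ factors as a discrete parameter; the consistency of these choices across places is not an issue, since the conclusion of Lemma \ref{2.6.1} depends only on the pair $(\psi_v, s_v)$. The second step is essentially the expected global product formula for transfer factors, once the local behavior from Lemma \ref{1.1.2} is in hand.
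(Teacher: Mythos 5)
Your proof is correct and takes essentially the same two-step route as the paper: invariance under $S_\psi^\circ$ is obtained by applying the local Lemma \ref{2.6.1} placewise (with the side condition handled by Lemma \ref{2.8.3}), and invariance under $Z(\widehat{G^*})^\Gamma$ follows from Lemma \ref{1.1.2} together with the product formula \eqref{033}. The extra case split into places where $\psi_v$ is or is not discrete is harmless but not needed, since the argument at a non-discrete place subsumes the discrete one.
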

\begin{proof}
    By Lemma \ref{2.6.1}, we have $f'_G(\psi,s)=f'_G(\psi,ss_0)$ for any $s_0\in S_\psi^\circ$.
    By Lemma \ref{1.1.2} and the product formula \eqref{033}, we have $f'_G(\psi,s)=f'_G(\psi,sx)$ for any $x\in Z(\widehat{G})^\Gamma$.
\end{proof}

The following theorem is called the global intertwining relation (GIR), which will follow from LIR.
\begin{theorem*}[Global intertwining relation]\label{3.5.7}
    If $\overline{u}\in\overline{\frakN}_\psi(M,G)$ and $s\in S_{\psi,\semisimple}$ have the same image in $\overline{\frakS}_\psi(G)$, then we have
    \begin{align*}
        f'_G(\psi,s_\psi s\inv)=f_G(\psi_{M^*},\overline{u}).
    \end{align*}
\end{theorem*}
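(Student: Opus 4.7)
The plan is to reduce GIR to the local intertwining relation (Theorem \ref{2.6.2}) place by place and then collapse the resulting product of local signs via the product formula for Kottwitz signs.

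First, using Lemma \ref{3.5.5} and Lemma \ref{3.5.6}, both sides depend only on the image of $\overline{u}$ in $\overline{\frakN}_\psi(M,G)$ and the image of $s$ in $\overline{\frakS}_\psi(G)$. Fix any lift $u \in N_\psi(M,G) \subset S_\psi$ of $\overline{u}$; it is semisimple, and its image in $\overline{\frakS}_\psi(G)$ agrees with that of $s$ by hypothesis. After modifying $s$ within its coset modulo $S_\psi^\circ \cdot Z(\widehat{G})^\Gamma$, which changes neither side by Lemma \ref{3.5.6} (together with the argument in its proof), we may assume $s = u$ as an element of $S_\psi$.

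Next, at each place $v$ we apply LIR to the localizations $\psi_v \in \Psi^+(G^*_v)$, $\psi_{M^*,v}$, $u^\natural_v \in \frakN_{\psi_v}(M_v,G_v)$, and $u_v \in S_{\psi_v,\semisimple}$, which share the same image in $\frakS_{\psi_v}(M_v,G_v)$. This yields
\begin{equation*}
    f'_{v,G_v}(\psi_v,\, s_{\psi_v} u_v\inv) \;=\; e(G_v)\, f_{v,G_v}(\psi_{M^*,v},\, u^\natural_v).
\end{equation*}
Taking the product over all places and using the definitions
\begin{align*}
    f'_G(\psi, s_\psi s\inv) &= \prod_v f'_{v,G_v}(\psi_v, s_{\psi_v} u_v\inv), \\
    f_G(\psi_{M^*}, \overline{u}) &= \prod_v f_{v,G_v}(\psi_{M^*,v}, u^\natural_v),
\end{align*}
(both products being finite since almost all local factors equal $1$ at unramified places), we get
\begin{equation*}
    f'_G(\psi, s_\psi s\inv) \;=\; \Bigl(\prod_v e(G_v)\Bigr)\, f_G(\psi_{M^*}, \overline{u}).
\end{equation*}
The product formula for Kottwitz signs gives $\prod_v e(G_v) = 1$ for any inner form of $G^*$ over a number field, so GIR follows. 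Compatibility of the global bijection in Lemma \ref{1.4.3} with the local bijections of Lemma \ref{1.4.1} at each $v$ ensures that the endoscopic triple and parameter used to define $f'_G(\psi, s_\psi s\inv)$ localize correctly to the ones used in each $f'_{v,G_v}$.

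The main obstacle is technical rather than conceptual: Theorem \ref{2.6.2} is stated for a local $A$-parameter $\psi \in \Psi(M^*)$, whereas the localization $\psi_{M^*,v}$ of a global discrete parameter $\psi_{M^*} \in \Psi_2(M^*)$ only lies in $\Psi^+(M^*_v) \cap \Psi^+_\unit$ and need not be bounded. Following the template in \cite{art13, kmsw}, this is handled by twisting with a central unramified character, applying LIR in the bounded/tempered regime, and using the holomorphicity of the normalized intertwining operators at $\lambda=0$ (Lemmas \ref{2.2.1}, \ref{2.2.3}, and the assumed Lemma \ref{2.2.4}) together with Lemma \ref{2.2.2} to analytically continue back to the origin. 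Once this extension is in place, the product argument above goes through verbatim in the generic case to which we restrict in this paper.
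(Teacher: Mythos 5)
Your proposal captures the paper's intended proof correctly: Theorem \ref{3.5.7} is stated with the remark ``which will follow from LIR,'' and the argument the paper has in mind is precisely the one you give — reduce to a choice of semisimple representative via Lemma \ref{3.5.6}, apply the local intertwining relation (Theorem \ref{2.6.2}) at each place, take the product over all places, and invoke $\prod_v e(G_v) = 1$ to absorb the Kottwitz signs. Your identification of the $\Psi^+$ versus $\Psi$ subtlety (localizations of a global parameter need not be bounded) and its resolution via analytic continuation matches the paper's convention established after Theorem \ref{1.6.1}. One small inaccuracy: a lift $u \in N_\psi(M,G)$ of $\overline{u}$ is not automatically semisimple; what makes the reduction work is that $\overline{u}$ always admits \emph{some} semisimple representative (e.g.\ the element $s_x$ of \S\ref{2.8}, which stabilizes a Borel pair), and $f'_G(\psi,\cdot)$ only sees the class in $\overline{\frakS}_\psi$ by Lemma \ref{3.5.6}. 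It is also worth noting that in the paper's logical order, the partial GIR (Lemma \ref{3.5.12}) is actually established \emph{before} full LIR and is used in \S\ref{4} as part of the induction; the clean ``LIR at each place, then product'' proof you give is the terminal argument once the induction closes, which is what the unproved Theorem* statement is meant to encapsulate.
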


\subsection{Reduction of GIR to elliptic or exceptional (relative to $G$) parameters}\label{3.55}
This subsection is the global version of \S\ref{2.8}.
As in the local case, one can see that $\Psi_\el^2(G^*)$ is the set of the equivalence classes of parameters $\psi$ of the form
\begin{equation}\label{glel}
    \psi=2\psi_1\boxplus\cdots2\psi_q\boxplus\psi_{q+1}\boxplus\cdots\boxplus\psi_r,
\end{equation}
where $\psi_1,\ldots,\psi_r$ are simple, symplectic, mutually distinct, and $r\geq q\geq1$.
Then we have
\begin{align*}
    S_\psi \cong \Or(2,\C)^q \times \Or(1,\C)^{r-q}.
\end{align*}
Let now $\Psi_{\exc1}(G^*)$ and $\Psi_{\exc2}(G^*)$ be the subsets of $\Psi(G^*)$ consisting of the parameters $\psi$ of the form
\begin{itemize}
    \item[$(\exc1)$]$\psi=2\psi_1\boxplus\psi_2\boxplus\cdots\boxplus\psi_r,$ where $\psi_1$ is simple and orthogonal, and $\psi_2,\ldots,\psi_r$ are simple, symplectic, and mutually distinct,
    \item[$(\exc2)$]$\psi=3\psi_1\boxplus\psi_2\boxplus\cdots\boxplus\psi_r,$ where $\psi_1,\ldots,\psi_r$ are simple, symplectic, and mutually distinct,
\end{itemize}
respectively.
They are disjoint.
We then have
\begin{itemize}
    \item[$(\exc1)$]$S_\psi \cong \Sp(2,\C) \times \Or(1,\C)^{r-1},$
    \item[$(\exc2)$]$S_\psi \cong \Or(3,\C) \times \Or(1,\C)^{r-1},$
\end{itemize}
respectively.
We put $\Psi_\exc(G^*)=\Psi_{\exc1}(G^*)\sqcup\Psi_{\exc2}(G^*)$, and we shall say that $\psi$ is exceptional (resp. of type ($\exc1$), resp. of type ($\exc2$)) if $\psi\in\Psi_\exc(G^*)$ (resp. $\psi\in\Psi_{\exc1}(G^*)$, resp. $\psi\in\Psi_{\exc2}(G^*)$).
One can see that $\Psi_\exc(G^*)$ and $\Psi_\el(G^*)$ are disjoint.
Put $\Psi^{\el,\exc}(G^*)=\Psi(G^*)\setminus (\Psi_\el(G^*)\sqcup\Psi_\exc(G^*))$.
Put also $\Phi_\heartsuit(G^*)=\Psi_\heartsuit(G^*)\cap\Phi(G^*)$ for $\heartsuit\in\{\exc, \exc1, \exc2\}$, and $\Phi^{\el,\exc}(G^*)=\Psi^{\el,\exc}(G^*)\cap\Phi(G^*)$.

Let $\psi_{M^*}\in \Psi_2(M^*)$ and $\psi\in\Psi(G^*)$ be parameters such that $\psi$ is the image of $\psi_{M^*}$.
For $x\in\frakS_\psi$, we define $T_\psi$, $B_\psi$, $s_x\in S_\psi$, and $T_{\psi,x}$ in the same way as in \S\ref{2.8}.
Moreover, let $\overline{T}_\psi$, $\overline{B}_\psi$, $\overline{s}_x$, and $\overline{T}_{\psi,x}$ be their images in $\overline{S}_\psi$.
Then it can be seen that $\overline{s}_x$ and $\overline{T}_{\psi,x}$ are determined by the image of $x$ in $\overline{\frakS}_\psi$, and hence they are well-defined for $x\in\overline{\frakS}_\psi$.
Define subsets $\frakS_{\psi,\el}\subset\frakS_\psi$ and $\overline{\frakS}_{\psi,\el}\subset\overline{\frakS}_\psi$ as in the local case.
The following four lemmas can be proved in the same way as Lemmas \ref{2.8.4}, \ref{2.8.6}, \ref{2.8.7}, and \ref{2.8.9} were, respectively.
\begin{lemma}\label{3.5.9}
    Suppose that $\psi\in\Psi^{\el,\exc}(G^*)$.
    Then
    \begin{enumerate}[(1)]
        \item every simple reflection $w\in W_\psi^\circ(M^*,G^*)$ centralizes a torus of positive dimension in $\overline{T}_\psi$ and
        \item $\dim \overline{T}_{\psi,x}\geq1$ for all $x\in \overline{\frakS}_\psi$.
    \end{enumerate}
\end{lemma}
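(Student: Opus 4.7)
The plan is to imitate the proof of Lemma \ref{2.8.4} essentially verbatim, since the algebraic data that controls $S_\psi$, $\overline{S}_\psi$, $T_\psi$, and $W_\psi^\circ(M^*,G^*)$ depends only on the multiplicities $\ell_i$ and the types (symplectic, orthogonal, or non-self-dual) of the simple constituents $\psi_i$, and these data take the same form globally as they do locally. In particular, the explicit descriptions of $S_\psi$ as a product $\prod \Or(\ell_i,\C)\times\prod\Sp(\ell_i,\C)\times\prod\GL(\ell_j,\C)$ and of $\frakS_\psi$ derived in \S\ref{1.3} match those of \S\ref{1.2} exactly, so the whole structural skeleton of the local argument carries over.

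Concretely, I would first read off $T_\psi$ as the product of the diagonal maximal tori of the $\GL$-type factors inside each classical component of $S_\psi$, and describe $W_\psi^\circ$ as a product of symmetric (and in the orthogonal/symplectic case, hyperoctahedral) subgroups permuting/sign-changing the entries of this diagonal torus. Passage to $\overline{T}_\psi = T_\psi / (T_\psi\cap Z(\widehat{G})^\Gamma)$ only quotients by the finite group $\{\pm 1\}$, so every dimension computation performed on $T_\psi$ descends unchanged to $\overline{T}_\psi$.

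For part (1), each simple reflection in $W_\psi^\circ$ is either a transposition $(h,h+1)$ inside some $\mathfrak{S}_{\ell_i}$ or a sign-change acting on the diagonal of an $\Or(\ell_i,\C)$-factor; such a reflection fixes pointwise all the other coordinates of the diagonal torus, so its fixed subtorus in $\overline{T}_\psi$ is positive-dimensional as soon as the ``ambient'' factors contribute at least one extra $\C^\times$. The hypothesis $\psi\in\Psi^{\el,\exc}(G^*)$ precisely rules out the configurations \eqref{glel} and $(\exc1),(\exc2)$ in which there is no such extra factor. For part (2), given $x\in\overline{\frakS}_\psi$, I would lift to a representative $\overline{s}_x$ stabilizing $(\overline{T}_\psi,\overline{B}_\psi)$ and compute $\overline{T}_{\psi,x}=\cent(\overline{s}_x,\overline{T}_\psi)^\circ$ factor by factor: only orthogonal components with nontrivial sign character contribute a drop in dimension, and the non-elliptic/non-exceptional assumption guarantees at least one surviving $\C^\times$ either in a $\Sp$-factor, a $\GL$-factor, or an $\Or(\ell_i,\C)$-factor of large enough rank.

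The main obstacle is the combinatorial bookkeeping needed to verify, case by case, that $\psi\notin \Psi_\el(G^*)\cup\Psi_\exc(G^*)$ really does leave a positive-dimensional central torus intact for every simple reflection in (1) and every $x$ in (2). Because the shapes \eqref{glel}, $(\exc1)$, and $(\exc2)$ defined in \S\ref{3.55} are formally identical to the local shapes of \S\ref{2.8} (with $\boxplus$ in place of $\oplus$ and simple global parameters $\psi_i\in\Psi_\simple$ in place of local irreducibles), the case analysis of \cite[Lemma 2.8.4]{kmsw}, which was already adapted to the odd-special-orthogonal setting in Lemma \ref{2.8.4}, transplants without essential modification; only the references to $L_F$-representations need to be replaced by references to simple components of global parameters.
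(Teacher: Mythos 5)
Your proposal is correct and matches the paper's approach: the paper itself states that Lemma~\ref{3.5.9} ``can be proved in the same way as Lemma~\ref{2.8.4}'', whose proof in turn defers to \cite[Lemma 2.8.4]{kmsw}, exactly as you propose. Your observation that quotienting by the finite group $Z(\widehat{G})^\Gamma\simeq\{\pm1\}$ leaves all dimension counts on $T_\psi$ unchanged when passing to $\overline{T}_\psi$ is the right and essentially only adjustment needed to port the local argument to the global setting.
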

\begin{lemma}\label{3.5.10}
    If $\psi\in\Psi_{\exc}(G^*)$, then $\frakS_{\psi,\el}=\frakS_\psi$ and $\overline{\frakS}_{\psi,\el}=\overline{\frakS}_\psi$.
\end{lemma}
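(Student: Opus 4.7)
The plan is to produce, for each class $x$ in $\frakS_\psi$, a semisimple lift $s \in S_\psi$ whose centralizer in $S_\psi^\circ$ has finite center; this is exactly the global analogue of the proof of Lemma~\ref{2.8.6}(3), and it goes through verbatim thanks to the explicit shapes of $S_\psi$ listed at the start of \S\ref{3.55} for exceptional parameters.

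First suppose $\psi \in \Psi_{\exc1}(G^*)$, so $S_\psi \cong \Sp(2,\C) \times \Or(1,\C)^{r-1}$, $S_\psi^\circ \cong \Sp(2,\C) = \SL(2,\C)$, and $\frakS_\psi \cong (\Z/2\Z)^{r-1}$ is carried entirely by the $\Or(1,\C)^{r-1}$ factors. For any $x \in \frakS_\psi$, pick the representative $s = (1,\epsilon_1,\dots,\epsilon_{r-1})$ with trivial component in $\Sp(2,\C)$. Then $\cent(s,S_\psi^\circ) = \SL(2,\C)$, whose center $\{\pm 1\}$ is finite, so $s \in S_{\psi,\el}$ and $x \in \frakS_{\psi,\el}$.

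Next suppose $\psi \in \Psi_{\exc2}(G^*)$, so $S_\psi \cong \Or(3,\C) \times \Or(1,\C)^{r-1}$, $S_\psi^\circ \cong \SO(3,\C)$, and $\frakS_\psi \cong (\Z/2\Z)^r$, where the extra $\Z/2\Z$ records $\Or(3,\C)/\SO(3,\C)$. Because $\det(-I_3)=-1$, the element $-I_3$ lies in $\Or(3,\C)\setminus\SO(3,\C)$ and is central in $\Or(3,\C)$; hence every class in $\frakS_\psi$ admits a representative $s$ whose first component lies in $\{\pm I_3\}$. For such $s$ we have $\cent(s,S_\psi^\circ) = \SO(3,\C)$, and $Z(\SO(3,\C))$ is trivial. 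So again $x \in \frakS_{\psi,\el}$.

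This proves $\frakS_{\psi,\el} = \frakS_\psi$. The second equality $\overline{\frakS}_{\psi,\el} = \overline{\frakS}_\psi$ is then automatic: the kernel $Z(\widehat{G^*})^\Gamma = \{\pm 1\}$ of $S_\psi \to \overline{S}_\psi$ is finite, so for any semisimple $s$ the natural map $\cent(s,S_\psi^\circ) \to \cent(\overline{s},\overline{S}_\psi^\circ)$ has finite kernel and cokernel, which means finiteness of $Z(\cent(s,S_\psi^\circ))$ is equivalent to finiteness of $Z(\cent(\overline{s},\overline{S}_\psi^\circ))$. Consequently $\frakS_{\psi,\el}$ surjects onto $\overline{\frakS}_{\psi,\el}$ (the global counterpart of Lemma~\ref{2.8.6}(2)), and the conclusion follows from the first equality. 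No substantive obstacle arises; the only thing to verify is that the chosen representatives are semisimple, which is clear since each coordinate has finite order.
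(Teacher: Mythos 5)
Your proof is correct and matches the approach the paper intends: Lemma~\ref{3.5.10} is proved "in the same way as" Lemma~\ref{2.8.6}(3), which in turn follows \cite{kmsw}, and that argument is precisely the explicit computation of $S_\psi$, $S_\psi^\circ$, and $\frakS_\psi$ for the two exceptional shapes followed by the choice of a central representative in the component. Your handling of the $\overline{\frakS}$ statement via the isogeny between the centralizers (the global analogue of Lemma~\ref{2.8.6}(2)) is also the intended route, and the detail you flag — that representatives of finite order are automatically semisimple — is correctly observed.
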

Let $\xi:G^*\to G$ be an inner twist, and assume that $M^*\subsetneq G^*$ is a proper standard Levi subgroup such that $M=\xi(M^*)\subsetneq G$ is a proper Levi subgroup defined over $F$.
\begin{lemma}\label{3.5.11}
    Let $x\in \overline{\frakS}_\psi(M,G)$.
    Assume that either
    \begin{enumerate}[(1)]
        \item $\psi$ is elliptic, or
        \item every simple reflection $w\in W_\psi^\circ(M^*,G^*)$ centralizes a torus of positive dimension in $\overline{T}_\psi$.
    \end{enumerate}
    Then $f_G(\psi_{M^*},\overline{u})$ is the same for every $\overline{u}\in\overline{\frakN}_\psi(M,G)$ mapping to $x$.
\end{lemma}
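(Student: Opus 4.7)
The plan is to adapt the local argument of Lemma \ref{2.8.7} to the global setting, using the fact that the global first linear form factors through local ones. Pick two elements $\overline{u}_1, \overline{u}_2 \in \overline{\frakN}_\psi(M,G)$ with the same image $x \in \overline{\frakS}_\psi(M,G)$. From the diagram at the end of \S\ref{2.1} (divided by $Z(\widehat{G})^\Gamma$), their difference lies in $W_\psi^\circ(M,G)$: that is, $\overline{u}_2$ is obtained from $\overline{u}_1$ by left-multiplying by some $\overline{v}\in W_\psi^\circ(M,G)$, interpreted as an element of $\overline{\frakN}_\psi(M,G)$ via a splitting. It therefore suffices to show, place by place, that conjugation by (lifts of) $\overline{v}$ leaves the local factors $f_{v,G_v}(\psi_{M^*,v},u^\natural_v)$ unchanged, since $f_G(\psi_{M^*},\overline{u}) = \prod_v f_{v,G_v}(\psi_{M^*,v},u^\natural_v)$.

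Under assumption (1), $\psi$ is elliptic, which forces $W_\psi^\circ(M,G)$ to be trivial (as in the local observation following Lemma \ref{2.8.3}), and there is nothing to prove. Under assumption (2), $W_\psi^\circ(M,G)= W(T_\psi, S_\psi^\circ)$ is a Weyl group, so I would write $v$ as a product of simple reflections and reduce, by the multiplicativity of the local compound operator (Lemma \ref{2.5.3}), to showing that $R_{P_v}(w^\natural, \pi_{M,v}, \psi_{M^*,v}, \psi_{F,v}) = \id$ for each simple reflection $w \in W_\psi^\circ(M,G)$ and each place $v$.

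For such a simple reflection $w$, the hypothesis gives a subtorus $T' \subset \overline{T}_\psi$ of positive dimension centralized by $w$. Then $\widehat{M}_w := \cent(T', \widehat{G})$ is a proper Levi subgroup of $\widehat{G}$ containing $\widehat{M}$, corresponding to a proper standard Levi subgroup $M_w^*$ of $G^*$ with $M^* \subsetneq M_w^* \subsetneq G^*$ and $w \in W(M^*, M_w^*)$. Relevance of $\psi_{M^*}$ at every place (combined with the lemma at the end of \S\ref{1.3}) ensures $M_w$ is defined over $F$ and is a transfer of $M_w^*$. Applying Lemma \ref{2.7.2} at each place,
\[
R_{P_v}^{G_v}(w^\natural, \pi_{M,v}, \psi_{M^*,v}, \psi_{F,v}) = \calI_{P_v}^{G_v}\bigl(R_{Q_v}^{M_{w,v}}(w^\natural, \pi_{M,v}, \psi_{M^*,v}, \psi_{F,v})\bigr),
\]
where $Q \subset M_w$ is the standard parabolic with Levi $M$. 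Since $M_w \subsetneq G$ has strictly smaller rank than $G$, the induction hypothesis on $n$ applies to $(M, M_w)$: part 1 of Theorem \ref{2.6.2} then gives $R_{Q_v}^{M_{w,v}}(w^\natural, \pi_{M,v}, \psi_{M^*,v}, \psi_{F,v}) = \id$, hence $R_{P_v}^{G_v}(w^\natural, \ldots) = \id$ at every place. Taking the tensor product yields the desired triviality globally.

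The main obstacle will be the bookkeeping around the splittings: one must take care that the identifications $W_\psi^\circ(M,G)\hookrightarrow \overline{\frakN}_\psi(M,G)$ and the lift of a simple reflection into $\overline{\frakN}_\psi$ are compatible with the localizations used in Lemma \ref{2.5.3}, and that the proper Levi $M_w^*$ really contains $M^*$ with $w \in W(M^*,M_w^*)$ in the sense needed to invoke Lemma \ref{2.7.2} (rather than merely in an abstract Weyl-group sense). Once this compatibility is nailed down, the reduction to $M_w$ and the induction on $n$ deliver the result essentially mechanically, mirroring the local proof of Lemma \ref{2.8.7}.
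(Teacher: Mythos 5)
Your proof is correct and follows essentially the same route as the paper's intended argument: the paper disposes of Lemma~\ref{3.5.11} by pointing to the proof of the local Lemma~\ref{2.8.7}, and you have faithfully transported that reduction (elliptic case forces $W_\psi^\circ$ trivial; otherwise write the difference of the two $\overline{u}$'s as a product of simple reflections, attach to each a proper intermediate Levi $M_w^*$ via the centralized subtorus, use Lemma~\ref{2.7.2} place-by-place to descend to $M_w$, and invoke the inductive validity of part~1 of Theorem~\ref{2.6.2}) combined with the factorization $f_G(\psi_{M^*},\overline{u})=\prod_v f_{v,G_v}(\psi_{M^*,v},u^\natural_v)$. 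Two small points of hygiene you correctly flag and which should be pinned down: the subtorus from hypothesis~(2) lives in $\overline{T}_\psi=T_\psi/Z(\widehat{G})^\Gamma$, so one should pass to the identity component of its preimage in $T_\psi$ before forming $\widehat{M}_w=\cent(T'',\widehat{G})$ (this is harmless since $Z(\widehat{G})^\Gamma$ is finite and $w$ fixes $T''$ pointwise by connectedness), and the fact that $\xi(M_w^*)$ is defined over $F$ is guaranteed by the running assumption $z\in Z^1(F,M^*)\subset Z^1(F,M_w^*)$ from \S\ref{2.4}, not merely by relevance.
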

\begin{lemma}\label{3.5.12}
    Let $x\in \overline{\frakS}_\psi$.
    We have $f_G(\psi_{M^*},\overline{u})=f'_G(\psi,s_\psi s\inv)$ whenever $\overline{u}\in\overline{\frakN}_\psi(M,G)$ and $s\in S_{\psi,\semisimple}$ map to $x$ unless
    \begin{enumerate}[(1)]
        \item $\psi$ is elliptic and $x\in\overline{\frakS}_{\psi,\el}$, or
        \item $\psi\in\Psi_\exc(G^*)$.
    \end{enumerate}
\end{lemma}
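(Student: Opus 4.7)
The plan is to mirror the proof of the local counterpart, Lemma \ref{2.8.8}, and reduce the desired identity to an instance of GIR on a proper Levi subgroup $M_x^*\subsetneq G^*$, to which the induction hypothesis then applies. First I would pick a representative $s_x\in S_{\psi,\semisimple}$ of $x$ with $\Ad(s_x)$ stabilizing the pair $(T_\psi,B_\psi)$. Because $T_\psi=A_{\widehat{M}}$, such an $s_x$ lies in $N_\psi(M,G)$ and its image $\overline{s}_x^\natural\in\overline{\frakN}_\psi(M,G)$ lifts $x$. Lemma \ref{3.5.11} then gives $f_G(\psi_{M^*},\overline{u})=f_G(\psi_{M^*},\overline{s}_x^\natural)$, and Lemma \ref{3.5.6} gives $f'_G(\psi,s_\psi s\inv)=f'_G(\psi,s_\psi s_x\inv)$, so the problem reduces to proving
\[
    f'_G(\psi,s_\psi s_x\inv)=f_G(\psi_{M^*},\overline{s}_x^\natural).
\]

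Next I would introduce $\widehat{M}_x:=Z_{\widehat{G}}(T_{\psi,x})$, and let $M_x^*\subset G^*$ denote the standard Levi whose dual is $\widehat{M}_x$. The inclusion $T_{\psi,x}\subset T_\psi=A_{\widehat{M}}$ gives $M^*\subset M_x^*$. To see that $M_x^*\subsetneq G^*$, note: if $\psi$ is elliptic and $x\notin\overline{\frakS}_{\psi,\el}$, then $Z(\cent(\overline{s}_x,\overline{S}_\psi^\circ))$ is infinite by definition, forcing $\dim\overline{T}_{\psi,x}\geq1$; otherwise $\psi\in\Psi^{\el,\exc}(G^*)$ and Lemma \ref{3.5.9}(2) gives the same conclusion. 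In either case $T_{\psi,x}$ is of positive dimension, so $\widehat{M}_x\subsetneq\widehat{G}$. Because $s_x$ commutes with $\myim\psi$, the parameter $\psi$ factors through $\widehat{M}_x\rtimes W_F$, yielding a discrete parameter $\psi_{M_x^*}\in\Psi_2(M_x^*)$ whose image in $\Psi(G^*)$ is $\psi$ and from which $\psi_{M^*}$ arises via the inclusion $M^*\subset M_x^*$.

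Finally I would descend both sides to $M_x$. On the endoscopic side, a global version of Lemma \ref{1.1.4} — compatibility of the adelic transfer factor with Levi subgroups, together with the constant-term formula — would give $f'_G(\psi,s_\psi s_x\inv)=f'_{M_x}(\psi_{M_x^*},s_\psi s_x\inv)$. On the spectral side, the global analog of Lemma \ref{2.7.2}, which follows by writing both sides as products of their local counterparts and appealing to the local statement place-by-place, would give $f_G(\psi_{M^*},\overline{s}_x^\natural)=f_{M_x}(\psi_{M^*},\overline{s}_x^\natural)$. Since $M_x^*\subsetneq G^*$, the induction hypothesis supplies GIR (Theorem \ref{3.5.7}) for the pair $(M_x^*,M^*)$, which yields $f'_{M_x}(\psi_{M_x^*},s_\psi s_x\inv)=f_{M_x}(\psi_{M^*},\overline{s}_x^\natural)$, and chaining the three equalities proves the lemma.

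The main obstacle is the bookkeeping behind the two descent identities. Each reduction is routine in spirit, but making the adelic descent of the endoscopic linear form and of the induced global intertwining operator rigorous requires compatibility between the adelic normalizations of \S\ref{1.1} and \S\ref{3.4} and their local counterparts used in \S\ref{2.7}; the transfer factors, Haar measures, and splittings of the diagram \eqref{cd} must all match up. Once these compatibilities are verified, the argument runs parallel to the proof of Lemma \ref{2.8.8}.
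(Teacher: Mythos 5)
Your proposal is correct and takes essentially the same approach as the paper: the paper proves Lemma \ref{3.5.12} by transporting the argument of the local Lemma \ref{2.8.8}, which is exactly what you do — reduce to a representative $s_x$ stabilizing $(T_\psi,B_\psi)$ via Lemmas \ref{3.5.11} and \ref{3.5.6}, pass to the proper Levi $M_x^*$ with $\widehat{M}_x=Z_{\widehat{G}}(T_{\psi,x})$, and invoke the induction hypothesis there. The only loose end is your citation of ``a global version of Lemma \ref{1.1.4}'' for the descent of the second linear form; what is really used is the compatibility of the bijection $(\frake,\psi^\frake)\leftrightarrow(\psi,s)$ of Lemma \ref{1.4.3} with Levi subgroups together with the descent of the stable linear form, of which Lemma \ref{1.1.4} (product version) is only one ingredient — but this is exactly the bookkeeping you flag at the end, and it matches what the paper leaves implicit.
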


Lemmas \ref{3.5.11} and \ref{3.5.12} imply that $f_G(\psi_{M^*},\overline{u})$ depends only on the image of $\overline{u}$ in $\overline{\frakS}_\psi(M,G)=\overline{\frakS}_\psi(G)$, unless $\psi$ is exceptional.
Thus $f_G(\psi_{M^*},x)$ is well-defined for $x\in \overline{\frakS}_\psi(M,G)$.
On the other hand, when $\psi$ is exceptional, we can define $f_G(\psi_{M^*},x)$ for $x\in \overline{\frakS}_\psi(M,G)$ similarly to local setting in the end of \S\ref{2.8}.
Now we have defined $f_G(\psi_{M^*},x)$ for $x\in\overline{\frakS}_\psi(M,G)$.

\subsection{The weaker identities}\label{3.7}
In the previous subsection, GIR for $\psi\in\Psi^{\el,\exc}(G^*)$ was deduced from the induction hypothesis (Lemma \ref{3.5.12}).
Now we shall see two weaker identities.
We omit Arthur's procedure named the standard model for $G$, since it is very similar to that for other classical groups explained in \cite[\S4]{art13} and \cite[\S3.6]{kmsw}.
We continue to be in the setup of the previous subsection, but do not assume that $M=\xi(M^*)$ is defined over $F$.
\begin{lemma}\label{3.7.1}
    Suppose that $\psi\in\Psi_\exc(G^*)$.
    Let $x\in\overline{\frakS}_\psi$ and $f\in\calH(G)$.
    If $M^*$ does not transfer to $G$, then we have
    \begin{align*}
        f'_G(\psi,s_\psi x\inv)=f_G(\psi,x)=0.
    \end{align*}
    If $M^*$ transfers to $G$, then we have
    \begin{align*}
        \sum_{x\in\overline{\frakS}_\psi} \varepsilon^{G^*}_\psi(x) \left(f'_G(\psi,s_\psi x\inv)-f_G(\psi,x)\right)=0,
    \end{align*}
    and $R_P(w,\pi_M,\psi_{M^*})=1$ for $w\in W_\psi$.
\end{lemma}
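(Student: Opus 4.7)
The plan is to invoke Arthur's standard model for the parameter $\psi$: compare the endoscopic and spectral expansions of $I^G_{\disc,\psi}(f)$ and extract the weaker identity after inductive simplification. On the endoscopic side, the stabilization $I^G_{\disc,\psi}(f) = \sum_{\frake \in \overline{\calE}_\el(G)} \iota(G,G^\frake)\, \widehat{S}^\frake_{\disc,\psi}(f^\frake)$ from \S\ref{3.1}, combined with the stable multiplicity formula (Proposition \ref{3.3.1}) applied to each $G^\frake$ (unconditionally for the factor isomorphic to $G^*$ itself, and inductively for the remaining elliptic endoscopic triples, whose non-general-linear factors have rank strictly less than $n$), rewrites the right-hand side, via Lemmas \ref{1.4.3} and \ref{3.5.6}, as an explicit sum over $x \in \overline{\frakS}_\psi$ of $f'_G(\psi, s_\psi x\inv)$ weighted by $\varepsilon^{G^*}_\psi(x)$ and the constants $\sigma(\overline{S}_\psi^\circ)$, $|\overline{\frakS}_\psi|\inv$.

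On the spectral side, since $\psi \in \Psi_\exc(G^*)$ is not discrete for $G^*$ while $\psi_{M^*} \in \Psi_2(M^*)$ is discrete for $M^*$, the spectral expansion of $I^G_{\disc,\psi}(f)$ unfolds, via parabolic induction of Eisenstein series attached to $M$, into a sum over $\overline{u} \in \overline{\frakN}_\psi(M,G)$ of terms $f_G(\psi_{M^*}, \overline{u})$ against the appropriate intertwining-combinatorial constants. Equating the two expansions and using the coefficient bookkeeping of Arthur's standard model (\cite[\S4]{art13}, \cite[\S3.6]{kmsw}) yields the claimed identity
\[
\sum_{x \in \overline{\frakS}_\psi} \varepsilon^{G^*}_\psi(x) \bigl(f'_G(\psi, s_\psi x\inv) - f_G(\psi, x)\bigr) = 0.
\]
The auxiliary assertion $R_P(w, \pi_M, \psi_{M^*}) = 1$ for $w \in W_\psi$ then follows from Propositions \ref{3.5.3} and \ref{3.5.4}(2) applied to any lift $\overline{u} \in \overline{\frakN}_\psi(M,G)$ of $w$: the automorphic character $\an{-, \pi_M} = \varepsilon_{\psi_{M^*}}$ collapses the normalization to the identity.

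When $M^*$ does not transfer to $G$, the spectral side of the standard model is vacuous---no such Eisenstein series exist---so $f_G(\psi, x) = 0$ by our convention. The vanishing of $f'_G(\psi, s_\psi x\inv)$ requires a companion argument: non-transfer of $M^*$ forces $\psi$ to be non-relevant for $G$ at some place $v_0$, so that $\Pi_{\psi_{v_0}}(G_{v_0}) = \emptyset$; the local endoscopic character relations at that place---available for the non-general-linear factors of $G^\frake$ under our running induction on rank---then force $f^\frake_{v_0}(\psi^\frake_{v_0}) = 0$ for matching functions, whence $f'_G = \prod_v f'_{v,G_v} = 0$.

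The main obstacle will be the coefficient bookkeeping sketched above: one must verify that the Arthur constants $\iota(G,G^\frake)$, $\sigma(\overline{S}_\psi^\circ)$, $|\overline{\frakS}_\psi|\inv$, and the intertwining-combinatorial factors coming from $\overline{\frakN}_\psi(M,G)$ conspire precisely so that the only surviving contribution from Arthur's standard model for $\psi \in \Psi_\exc(G^*)$ is the $\varepsilon^{G^*}_\psi$-weighted alternating sum stated in the lemma, rather than a term-by-term identity (which would be full GIR, not available for exceptional $\psi$). This is the substantive content of ``Arthur's standard model'' that is cited but not reproduced in the paper.
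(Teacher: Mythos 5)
Your overall framework is right: the paper's proof (which defers to Lemma~3.7.1 of \cite{kmsw}) does run Arthur's standard model comparison, stabilizing $I^G_{\disc,\psi}$ via the stable multiplicity formula on the endoscopic side and unfolding the Eisenstein spectrum on the spectral side to produce the $\varepsilon^{G^*}_\psi$-weighted alternating sum for exceptional $\psi$. However, your treatment of the case when $M^*$ does not transfer is circular. You argue that non-transfer forces non-relevance of $\psi$ at some $v_0$, so $\Pi_{\psi_{v_0}}(G_{v_0})=\emptyset$, and that ``the local endoscopic character relations at that place'' then force $f^\frake_{v_0}(\psi^\frake_{v_0})=0$. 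But the ECR that would convert the empty packet into the vanishing of $f^\frake_{v_0}(\psi^\frake_{v_0})$ is precisely ECR for $G_{v_0}=\SO_{2n+1}$ relative to $\frake_{v_0}$, i.e., the part of Theorem~\ref{1.6.1} at degree $n$ that the whole induction (with Lemma~\ref{3.7.1} as an ingredient, via \S\ref{4.6}) is designed to establish. ECR for the split factors of $G^\frake$ relates $G^\frake$ to \emph{its} endoscopic groups and gives no relation at all between $f^\frake(\psi^\frake)$ and $\Pi_\psi(G)$. The vanishing of $f'_G$ here must instead come out of the standard model comparison itself (and/or a separate local analysis of transfer and descent along non-transferring Levis), not from the endoscopic character relation for $G$.

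The assertion $R_P(w,\pi_M,\psi_{M^*})=1$ for $w\in W_\psi$ also does not follow from Propositions~\ref{3.5.3} and~\ref{3.5.4}(2) alone, as you suggest. Proposition~\ref{3.5.4}(2) gives $R_P(w_u,\pi_M,\psi_{M^*})=\varepsilon_{\psi_{M^*}}(u^\natural)R_P(u^\natural,\pi_M,\psi_{M^*},\psi_F)$, which merely converts one normalization into the other; to conclude $R_P(w)=1$ you still need to know the value of $R_P(u^\natural,\cdot)$, and that is exactly what the standard model comparison extracts (it is essentially the content of the first part of Theorem~\ref{2.6.2} for this $\psi$, since $W_\psi=W_\psi^\circ$ when $\psi$ is exceptional). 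Finally, you explicitly leave the ``coefficient bookkeeping'' as an unresolved obstacle; since that bookkeeping is the substance of the lemma (it is where $\iota(G,G^\frake)$, $\sigma(\overline{S}^\circ_\psi)$, $|\overline{\frakS}_\psi|^{-1}$, and the combinatorics of $\overline{\frakN}_\psi(M,G)$ must conspire), the proposal is a plan rather than a proof even of the sum identity.
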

\begin{proof}
    The proof is similar to that of Lemma 3.7.1 of \cite{kmsw}.
\end{proof}
\begin{lemma}\label{3.8.1}
    Suppose that $\psi\in\Psi_\el^2(G^*)$ is of the form \eqref{glel}.
    Let $f\in\calH(G)$.
    Then we have
    \begin{align*}
        \tr\left(R^G_{\disc,\psi}(f)\right)=2^{-q} \abs{\overline{\frakS}_\psi}\inv \sum_{x\in\overline{\frakS}_{\psi,\el}} \varepsilon^{G^*}_\psi(x) \left(f'_G(\psi,s_\psi x\inv)-f_G(\psi,x)\right).
    \end{align*}
\end{lemma}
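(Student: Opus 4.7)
The plan is to derive the identity by comparing two expressions for $I^G_{\disc,\psi}(f)$: the endoscopic expansion coming from the stabilization of the trace formula, and the spectral expansion that isolates $\tr(R^G_{\disc,\psi}(f))$ from contributions induced from proper Levi subgroups. This is the standard model approach of \cite[\S4]{art13}, adapted to non-quasi-split $G$ following \cite[\S3.7-3.8]{kmsw}.

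On the endoscopic side, I would start from $I^G_{\disc,\psi}(f) = \sum_{\frake \in \overline{\calE}_\el(G)} \iota(G,G^\frake)\,\widehat{S}^\frake_{\disc,\psi}(f^\frake)$ and apply Proposition \ref{3.3.1} (together with its analogues for symplectic and even special orthogonal groups from \cite{art13}) to each simple factor of every $G^\frake$. Because $\psi$ is generic, each localized endoscopic lift $\psi^\frake$ is generic and the stable multiplicity formula is available by induction. Using Lemma \ref{1.4.3} to convert the sum over $\frake$ into a sum over the image of $\overline{S}_{\psi,\semisimple}/(\widehat{G}\text{-conj.})$, ellipticity of the endoscopic triple forces $s$ to lie in $\overline{\frakS}_{\psi,\el}$; elements outside this subset contribute zero through the vanishing of $\sigma$ on non-compact reductive groups. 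The resulting expression should equal $2^{-q}|\overline{\frakS}_\psi|^{-1}\sum_{x\in\overline{\frakS}_{\psi,\el}}\varepsilon^{G^*}_\psi(x)\,f'_G(\psi,s_\psi x\inv)$.

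On the spectral side, I would invoke Arthur's expansion
\begin{equation*}
I^G_{\disc,\psi}(f) = \tr(R^G_{\disc,\psi}(f)) + \sum_{M^*\subsetneq G^*} \frac{1}{|W(M^*,G^*)|}\sum_{w}\abs*{\det(w-1)_{\fraka_{M^*}^{G^*}}}\inv \tr\bigl(M_P(w)\,\calI_P(\psi_{M^*},f)\bigr),
\end{equation*}
where the inner sum runs over regular elements in the Weyl set. Because $\psi\in \Psi_\el^2(G^*)$, only the unique Levi $M^*$ supporting a discrete lift $\psi_{M^*}$ contributes. Using Proposition \ref{3.5.4} to identify $M_P(w)$ with the normalized operator $R_P(w,\pi_M,\psi_{M^*})$ up to the character $\varepsilon_{\psi_{M^*}}$, and unfolding the Weyl-set sum into a sum over $\overline{\frakN}_{\psi,\reg}(M,G)$ modulo $W_\psi^\circ$, this induced contribution rewrites as $2^{-q}|\overline{\frakS}_\psi|^{-1}\sum_{x\in\overline{\frakS}_{\psi,\el}}\varepsilon^{G^*}_\psi(x)\,f_G(\psi,x)$. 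Subtracting this from the endoscopic identity yields the claimed formula.

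The main obstacle will be the combinatorial bookkeeping of coefficients on the endoscopic side, which requires tracing how Kottwitz–Shelstad constants $\iota(G,G^\frake)$, the inner-form pairings from Lemma \ref{1.1.2}, the factors $|\frakS_{\psi^\frake}|\inv$, and the $\sigma$-constants combine to produce the single factor $2^{-q}|\overline{\frakS}_\psi|\inv$. A secondary subtlety is ensuring that on the spectral side the Jacobian $|\det(w-1)_{\fraka_{M^*}^{G^*}}|\inv$ and the sign of $e(G)$ (encoded implicitly via the inner twist) combine consistently with the elliptic locus $\overline{\frakS}_{\psi,\el}$: the restriction to elliptic elements on both sides is parallel but must be matched term-by-term, parallel to the treatment in \cite[pp.209--212]{art13} with modifications for the non-quasi-split case as in \cite[\S3.8]{kmsw}.
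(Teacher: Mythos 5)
Your proposal follows the same standard-model comparison that the paper (deferring to Lemma 3.8.1 of \cite{kmsw}, itself following Arthur's Chapter 4) intends: stabilize $I^G_{\disc,\psi}$ and apply the stable multiplicity formula inductively on the endoscopic side, spectrally expand the other side to isolate $\tr(R^G_{\disc,\psi}(f))$ plus the contributions of regular Weyl elements from the unique Levi carrying a discrete lift, and then subtract, tracing the $\iota(G,G^\frake)$, $\sigma$-constants, and the Jacobian $|\det(w-1)|^{-1}$ into the single factor $2^{-q}|\overline{\frakS}_\psi|^{-1}$. One small correction: you invoke genericity of $\psi$ to justify the stable multiplicity formula, but this is unnecessary — Proposition \ref{3.3.1} and its analogues on the quasi-split endoscopic groups $G^\frake$ are Arthur's unconditional results — and indeed the lemma carries no $*$ marker and so is asserted for arbitrary $\psi\in\Psi_\el^2(G^*)$.
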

\begin{proof}
    The proof is similar to that of Lemmas 3.8.1 of \cite{kmsw}.
\end{proof}

\section{Globalizations and the proof of local classification}\label{4}
In this section, we will finish the proof of LIR and ECR for generic parameters, as well as the proof of LLC.
The argument is similar to that of \cite[\S4]{kmsw}.
The primary difference is the globalization of parameters, which is caused by the difference between unitary groups and orthogonal groups.
\subsection{Globalizations of fields, groups, and representations}\label{4.1}
First recall from \cite[Lemma 6.2.1]{art13} the following lemma:
\begin{lemma}\label{art6.2.1}
    Let $F$ be a local field other than $\C$, and $r_0$ a positive integer.
    Then we can find a totally real number field $\dot{F}$ and a place $u$ of $\dot{F}$ such that $\dot{F}_u$ is isomorphic to $F$ and $\dot{F}$ has at least $r_0$ real places.
\end{lemma}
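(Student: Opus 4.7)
The plan is to treat the cases $F = \R$ and $F$ non-archimedean separately, with the latter carrying the substance.

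For $F = \R$, take $\dot{F}$ to be the maximal totally real subfield of the $N$-th cyclotomic field $\Q(\zeta_N)$, with $N$ chosen so that $\phi(N)/2 \geq r_0$. Then $\dot{F}$ is totally real of degree $\phi(N)/2$, so it has at least $r_0$ real places, and at any real place $u$ one has $\dot{F}_u = \R = F$.

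For $F$ a finite extension of $\Q_p$, the plan is the standard Krasner-plus-weak-approximation globalization, followed if necessary by an enlargement. Write $n = [F:\Q_p]$ and fix a primitive element $\alpha \in F$ over $\Q_p$ with monic minimal polynomial $g(X) \in \Q_p[X]$. Also fix a monic $h(X) \in \R[X]$ of degree $n$ with $n$ distinct real roots, e.g., $h(X) = \prod_{i=1}^{n}(X-i)$. By weak approximation applied to each coefficient, choose a monic $\tilde{g}(X) \in \Q[X]$ of degree $n$ whose coefficients are simultaneously arbitrarily close to those of $g$ in $\Q_p$ and to those of $h$ in $\R$. For sufficiently tight approximation, Krasner's lemma supplies a root $\tilde{\alpha} \in \overline{\Q_p}$ of $\tilde{g}$ with $\Q_p(\tilde{\alpha}) = F$ (whence $\tilde{g}$ is irreducible over $\Q_p$, and so over $\Q$); and since the condition of having $n$ distinct real roots is open on coefficient space, $\tilde{g}$ has $n$ distinct real roots. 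Thus $\dot{F}_0 := \Q(\tilde{\alpha})$ is a totally real number field of degree $n$, and the place $u_0$ of $\dot{F}_0$ induced by the chosen embedding $\tilde{\alpha} \mapsto \tilde{\alpha} \in \overline{\Q_p}$ satisfies $\dot{F}_{0, u_0} = F$.

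If $n \geq r_0$, take $\dot{F} = \dot{F}_0$ and $u = u_0$. Otherwise, enlarge: choose a totally real number field $L/\Q$ with $[L:\Q] \geq r_0$ in which $p$ splits completely and which is linearly disjoint from $\dot{F}_0$ over $\Q$. Such an $L$ can be constructed concretely as a compositum of sufficiently many real quadratic fields $\Q(\sqrt{d_i})$ for distinct primes $d_i$ that are quadratic residues modulo $p$ (or $d_i \equiv 1 \pmod{8}$ when $p=2$), chosen to avoid primes ramifying in $\dot{F}_0$. For any prime $\mathfrak{p}$ of $L$ above $p$ one has $L_{\mathfrak{p}} = \Q_p$, and by linear disjointness the two embeddings $\dot{F}_0 \hookrightarrow \overline{\Q_p}$ (at $u_0$) and $L \hookrightarrow \overline{\Q_p}$ (at $\mathfrak{p}$) combine to an embedding of $\dot{F} := \dot{F}_0 \cdot L$, yielding a place $u$ of $\dot{F}$ with $\dot{F}_u = F \cdot \Q_p = F$. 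As a compositum of totally real fields, $\dot{F}$ is totally real, and $[\dot{F}:\Q] \geq [L:\Q] \geq r_0$. The main subtleties, such as they are, lie in the simultaneous Krasner approximation in $\Q_p$ and $\R$ and in arranging $L$ with the prescribed splitting and disjointness; both are routine but worth verifying in detail.
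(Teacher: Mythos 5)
The paper does not give a proof of this lemma; it simply cites \cite[Lemma 6.2.1]{art13}. Your argument is the standard one for this kind of globalization---weak approximation to choose a rational polynomial close $p$-adically to the minimal polynomial of a primitive element (so Krasner gives $\dot{F}_{0,u_0}=F$) and close archimedeanly to a split real polynomial (so $\dot{F}_0$ is totally real), then a compositum with a totally real multi-quadratic field in which $p$ splits completely to inflate the number of real places---and it is correct. One small remark: linear disjointness of $L$ from $\dot{F}_0$ is not actually needed for either of the two things you invoke it for. The embeddings $\dot{F}_0\hookrightarrow\overline{\Q_p}$ and $L\hookrightarrow\Q_p\subset\overline{\Q_p}$ always determine a place $u$ of the compositum $\dot{F}=\dot{F}_0L$ (formed inside $\overline{\Q_p}$) with $\dot{F}_u=F\cdot\Q_p=F$, and $[\dot{F}:\Q]\geq[L:\Q]\geq r_0$ holds regardless since $L\subset\dot{F}$. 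The disjointness is a harmless extra that tidies the degree count; the construction you give (avoiding primes ramifying in $\dot{F}_0$) does secure it.
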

As a corollary, we have the following.
\begin{lemma}\label{4.1.1}
    Let $F$ be a local field other than $\C$, and $r_0$ a positive integer.
    Then we can find a totally real number field $\dot{F}$ and places $u_1$ and $u_2$ of $\dot{F}$ such that $\dot{F}_u$ is isomorphic to $F$ for $u=u_1, u_2$, and that $\dot{F}$ has at least $r_0$ real places.
\end{lemma}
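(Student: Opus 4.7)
The plan is to reduce to the non-archimedean case and then amplify the conclusion of Lemma \ref{art6.2.1} by a compositum construction. If $F=\R$, then any totally real number field of degree at least $\max(2,r_0)$ works, since every real place has completion $\R$.

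Suppose now that $F$ is a finite extension of $\Q_p$ for some prime $p$. First, I would apply Lemma \ref{art6.2.1} to obtain a totally real number field $\dot{F}_0$ with at least $r_0$ real places and a place $u_0$ of $\dot{F}_0$ such that $\dot{F}_{0,u_0}\cong F$. Next, I would choose a real quadratic field $E=\Q(\sqrt{d})$ with $d>0$ squarefree such that
\begin{enumerate}[(i)]
    \item $p$ splits in $E$, i.e.\ $E\otimes_\Q\Q_p\cong \Q_p\times\Q_p$, and
    \item $\sqrt{d}\notin\dot{F}_0$, so that $E\cap\dot{F}_0=\Q$.
\end{enumerate}
Condition (i) is satisfied by infinitely many $d$ (by quadratic reciprocity: any $d\equiv 1\pmod 8$ if $p=2$, or $d$ a nonzero quadratic residue modulo $p$ if $p$ is odd). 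Condition (ii) excludes only finitely many classes in $\Q^\times/\Q^{\times 2}$, because the set of such classes is the kernel of $\Q^\times/\Q^{\times 2}\to \dot{F}_0^\times/\dot{F}_0^{\times 2}$ and each nontrivial element corresponds to a quadratic subfield of the number field $\dot{F}_0$, of which there are only finitely many. Hence $d$ can be chosen to satisfy both (i) and (ii). Set $\dot{F}:=E\cdot\dot{F}_0\subset\overline{\Q}$.

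Then $\dot{F}$ is totally real as a compositum of totally real fields. Since $E/\Q$ is Galois with $E\cap\dot{F}_0=\Q$, the natural map gives $\dot{F}\cong E\otimes_\Q \dot{F}_0$ and $[\dot{F}:\Q]=2[\dot{F}_0:\Q]\geq 2r_0$, so $\dot{F}$ has at least $r_0$ real places. To count the places of $\dot{F}$ above $u_0$, observe that
\begin{equation*}
    \dot{F}\otimes_{\dot{F}_0}\dot{F}_{0,u_0}
    \cong E\otimes_\Q \dot{F}_{0,u_0}
    \cong (E\otimes_\Q\Q_p)\otimes_{\Q_p} F
    \cong (\Q_p\times\Q_p)\otimes_{\Q_p} F
    \cong F\times F,
\end{equation*}
so there are exactly two places $u_1$ and $u_2$ of $\dot{F}$ above $u_0$, each with completion isomorphic to $F$, as required.

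The only mildly delicate step is securing a quadratic field $E$ satisfying (i) and (ii) simultaneously, but this follows immediately from quadratic reciprocity together with the finiteness of quadratic subfields of $\dot{F}_0$; I do not anticipate a substantive obstacle.
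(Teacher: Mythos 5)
Your argument is correct and is essentially the same as the paper's, which also bootstraps from Lemma \ref{art6.2.1} by passing to a totally real quadratic extension of $\dot{F}_0$ that splits at the distinguished place. The paper realizes this by adjoining $\sqrt{\alpha}$ for a suitable $\alpha\in\dot{F}_0^\times$ (totally positive, a square in $\dot{F}_{0,u_0}\cong F$, a non-square in $\dot{F}_0$); your compositum with $\Q(\sqrt d)$ is exactly the special case $\alpha=d\in\Q^\times$, with quadratic reciprocity supplying the existence of $d$.
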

\begin{proof}
    Let $\dot{F}$ be as in Lemma \ref{art6.2.1}.
    It can be easily seen that there exists an element $\alpha\in\dot{F}^\times$ that is not square in $\dot{F}$, totally positive, and square in $\dot{F}_u=F$.
    Then $\dot{F}(\sqrt{\alpha})$ is a number field what we want.
\end{proof}

The classification of quadratic forms or the exact sequence \eqref{033} leads to the following lemma of special orthogonal groups:
\begin{lemma}\label{4.1.2}
    Let $F$ be a local field, and $G$ an inner form of $G^*=\SO_{2n+1}$ over $F$.
    Let $\dot{F}$ be a number field with a place $u$ such that $\dot{F}_u=F$.
    Assume that $\dot{F}$ is totally real unless $F=\C$.
    Then for any place $v_2$ of $\dot{F}$ other than $u$, there exists an inner form $\dot{G}$ of $\dot{G}^*=\SO_{2n+1}$ over $\dot{F}$ with following properties:
    \begin{enumerate}
        \item $\dot{G}_u=G$;
        \item $\dot{G}_v$ is split over $\dot{F}_v$ for any place $v$ of $\dot{F}$ except $u$ and $v_2$;
        \item if $G$ is non-quasi-split and $v_2$ is a finite (resp. real) place, then $\dot{G}_{v_2}$ is the unique non-quasi-split inner form (resp. isomorphic to $\SO(n-1,n+2)$).
    \end{enumerate}
    In place of the third property, we can also choose $\dot{G}$ with the properties $1$, $2$, and
    \begin{itemize}
        \item[3'] if $\dot{F}_{v_2}$ is isomorphic to $\dot{F}_u=F$, then $\dot{G}_{v_2}$ is isomorphic to $\dot{G}_u=G$.
    \end{itemize}
\end{lemma}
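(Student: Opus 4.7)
The plan is to construct $\dot G$ by prescribing its local cohomology classes and assembling them via the exact sequence \eqref{033}. In our setting, $\pi_0(Z(\widehat{\dot G^*})^\Gamma)^D=Z(\Sp(2n,\C))^D\simeq\Z/2\Z$, so a collection $(c_v)_v\in\bigoplus_v H^1(\dot F_v,\dot G^*)$ arises from a global class in $H^1(\dot F,\dot G^*)$ if and only if $\sum_v\alpha_v(c_v)=0$ in $\Z/2\Z$.

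If $G$ is quasi-split, then since $\SO_{2n+1}$ has no nontrivial outer forms we have $G=G^*$, and $\dot G=\dot G^*$ satisfies properties 1, 2 and either of 3, 3' trivially. So assume $G$ is non-quasi-split; then necessarily $F\ne\C$, $\dot F$ is totally real by hypothesis, and $\alpha_u(G)=1$. I prescribe $c_u$ to be the class of $G$, $c_v=0$ at every place $v\ne u,v_2$, and pick $c_{v_2}$ with $\alpha_{v_2}(c_{v_2})=1$, so that $\sum_v\alpha_v(c_v)=\alpha_u(G)+\alpha_{v_2}(c_{v_2})=0$.

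For the first version (property 3) the choice of $c_{v_2}$ is dictated by the place type. If $v_2$ is non-archimedean, then $\alpha_{v_2}$ is a bijection onto $\Z/2\Z$, so the unique non-quasi-split inner form has Kottwitz invariant $1$. If $v_2$ is real, the form $\SO(n-1,n+2)$ has $p-q=-3\equiv 5\pmod 8$ and hence lies outside $\ker\alpha_{v_2}$ by the characterization of $\ker\alpha$ for real groups recalled in \S\ref{0.3}. For the alternative version (property 3'), when $\dot F_{v_2}\cong F$ I transport the class of $G$ along such an identification; since Kottwitz's map depends only on the local field, this automatically gives $\alpha_{v_2}(c_{v_2})=\alpha_u(G)=1$ and ensures $\dot G_{v_2}\cong G$. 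When $\dot F_{v_2}\not\cong F$, property 3' imposes no constraint, and any class of Kottwitz invariant $1$ (available by the discussion just given) works.

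Applying \eqref{033} to the resulting tuple produces a global class whose associated inner twist $\dot G$ of $\dot G^*$ has the required local profile at every place. The only computational points to verify are the congruence $-3\equiv 5\pmod 8$, which is immediate, and the naturality of the Kottwitz map under an abstract local isomorphism $\dot F_{v_2}\cong F$, which is intrinsic. Thus I do not anticipate any real obstacle in this proof; the substance lies entirely in bookkeeping of local Kottwitz invariants afforded by \S\ref{0.3} and the cokernel computation \eqref{033}.
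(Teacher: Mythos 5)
Your strategy — prescribing local invariants and appealing to the exactness of \eqref{033} — is precisely the route the paper indicates, and it is the right mechanism; the paper itself gives no written proof and simply asserts the lemma follows from \eqref{033} or the classification of quadratic forms. The issue lies in the step where you write, without justification, that $\alpha_u(G)=1$ for every non-quasi-split $G$. This is automatic when $F$ is non-archimedean (where the two inner forms are exactly distinguished by the Kottwitz invariant), but it is false over $F=\R$. The paper's own \S\ref{0.3} records $\ker\alpha = \{\SO(p,q)\mid p-q\equiv \pm1 \pmod 8\}$; once $n\geq 3$ there are non-split real forms in $\ker\alpha$, for instance $\SO(7,0)$ when $n=3$ or $\SO(8,1)$ when $n=4$, so $\alpha_u(G)$ can vanish with $G$ non-quasi-split. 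For such a $G$, the constraint in property~3 forces $\alpha_{v_2}(\dot G_{v_2})=1$ at $v_2$ and $\alpha_v=0$ elsewhere, giving $\sum_v\alpha_v(c_v)=1\neq 0$; there is then no global class, and properties 1, 2, 3 are simultaneously unachievable. So your argument for property~3, as written, only establishes the $p$-adic case and the real forms with $\alpha=1$.

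Two remarks that sharpen the picture. First, your argument for the alternative property~3' is essentially correct and does not actually need $\alpha_u(G)=1$: transporting the class of $G$ along $\dot F_{v_2}\cong F$ gives $\alpha_{v_2}(c_{v_2})=\alpha_u(G)$ (whatever it is), so the obstruction $\alpha_u(G)+\alpha_{v_2}(c_{v_2})=2\alpha_u(G)=0$ vanishes automatically; and when $\dot F_{v_2}\not\cong F$ one simply needs $\alpha_{v_2}(c_{v_2})=\alpha_u(G)$, which is arranged by taking the split form at $v_2$ if $\alpha_u(G)=0$ and any class of invariant $1$ if $\alpha_u(G)=1$. You should remove the unwarranted "$=1$" from this part. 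Second, the failure you inherit from the statement is mild in context: whenever property~3 is invoked in the paper (e.g.\ Proposition~\ref{4.4.6}, where $n=2$ or $3$ and the parameter factors through a proper Levi, so $G$ must be non-compact over $\R$), the problematic real forms do not appear. But a self-contained proof of the lemma as literally stated needs to either restrict $F$ to be non-archimedean in part~3, or impose $\alpha_u(G)=1$ as a hypothesis, or observe that for real $G$ with $\alpha=0$ the non-quasi-split hypothesis already excludes the relevant applications. Your proposal should flag this case explicitly rather than asserting $\alpha_u(G)=1$.
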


We have the following globalization results of discrete series representations.
\begin{lemma}\label{4.2.1}
    Let $\dot{F}$ be a totally real number field, and $\dot{G}$ a simple twisted endoscopic group of $\GL_N$ or an inner form of $\dot{G}^*=\SO_{2n+1}$ over $\dot{F}$.
    Let $V$ be a finite set of places of $\dot{F}$ such that at least one real place $v_\infty$ is not contained in $V$.
    For all $v\in V$, let $\pi_v\in\Pi_{2,\temp}(\dot{G}_v)$, and let $\pi_{v_\infty}$ be a discrete series representation with a sufficiently regular infinitesimal character.
    Then there exists a cuspidal automorphic representation $\dot{\pi}$ of $\dot{G}(\A_{\dot{F}})$ such that $\dot{\pi}_v=\pi_v$ for all $v\in V$ and $\dot{\pi}_{v_\infty}=\pi_{v_\infty}$.
\end{lemma}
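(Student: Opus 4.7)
The plan is to produce $\dot{\pi}$ via Arthur's invariant trace formula applied to a carefully chosen test function, following the strategy used for the analogous statements in \cite[Proposition 6.3.1]{art13} and \cite[Lemma 4.2.1]{kmsw}. The idea is to construct $f = \bigotimes_v f_v \in \calH(\dot{G})$ so that the spectral side of the trace formula isolates cuspidal automorphic representations with the prescribed local components at $V \cup \{v_\infty\}$, and the geometric side can be made nonzero.

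I would choose $f$ as follows. At each $v \in V$, take $f_v$ to be a pseudo-coefficient of $\pi_v$, which exists because $\pi_v$ is essentially square-integrable and tempered; its defining property is that its trace against any tempered irreducible representation vanishes except on $\pi_v$. At $v_\infty$, take $f_{v_\infty}$ to be the pseudo-coefficient of the discrete series $\pi_{v_\infty}$, available since $\pi_{v_\infty}$ is a discrete series; the sufficient regularity of its infinitesimal character guarantees that the $L$-packet of $\pi_{v_\infty}$ consists entirely of discrete series, and that no non-tempered representation in the discrete spectrum of $\dot{G}(\dot{F}_{v_\infty})$ can share this infinitesimal character. Next pick an auxiliary finite place $v_0 \notin V \cup \{v_\infty\}$ where $\dot{G}$ is split (which exists after possibly enlarging the globalization given by Lemma \ref{4.1.2}), and take $f_{v_0}$ to be a matrix coefficient of a supercuspidal representation of $\dot{G}(\dot{F}_{v_0})$. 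At every remaining place, take $f_v$ to be the normalized characteristic function of a hyperspecial maximal compact subgroup.

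Next I would apply the invariant trace formula to $f$. Because $f_{v_0}$ is a supercuspidal matrix coefficient, all proper-parabolic terms vanish in both the geometric and spectral expansions, reducing the identity to elliptic orbital integrals on one side and to discrete automorphic contributions on the other. The choices at $v \in V$ and at $v_\infty$ further cut the spectral side down to a sum of traces against cuspidal $\dot{\sigma}$ with $\dot{\sigma}_v = \pi_v$ for $v \in V$ and $\dot{\sigma}_{v_\infty} = \pi_{v_\infty}$. A standard argument (choosing the supercuspidal representation at $v_0$ so that its matrix coefficient picks out a regular elliptic conjugacy class whose orbital integrals are also nonzero against the $f_v$ for $v \in V$) makes the geometric side nonzero, forcing at least one such $\dot{\sigma}$ to occur, which is the desired $\dot{\pi}$.

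The main obstacle will be excluding the non-cuspidal discrete spectrum from the spectral side, and this is exactly where the sufficient regularity of the infinitesimal character at $v_\infty$ enters: residual contributions arise from non-tempered $A$-parameters, whose archimedean localization cannot provide a tempered $L$-packet member with sufficiently regular infinitesimal character. Making this precise requires the decomposition of $L^2_{\disc}$ into near equivalence classes, which is available here: if $\dot{G}$ is a quasi-split classical twisted endoscopic group of $\GL_N$ then this is part of \cite{art13}, while if $\dot{G}$ is an inner form of $\SO_{2n+1}$ it is provided by Theorem \ref{3.12}. Combined with the description of local $A$-packets and their infinitesimal characters, the regularity hypothesis eliminates all unwanted residual contamination, and the argument concludes as above.
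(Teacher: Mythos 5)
Your proposal follows the same general strategy as the cited proof from KMSW (the invariant/simple trace formula with pseudo-coefficients at $V\cup\{v_\infty\}$, the sufficiently regular infinitesimal character at $v_\infty$ forcing the relevant global $A$-parameter to be generic, and the near-equivalence decomposition of $L^2_{\disc}$ to rule out residual contamination). The auxiliary supercuspidal matrix coefficient at $v_0$ is a mild variant on the bookkeeping but serves the same purpose, so this is essentially the paper's argument.
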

\begin{proof}
    The proof is the same as that of \cite[Lemma 4.2.1]{kmsw}.
\end{proof}
\begin{lemma}\label{4.2.2}
    Let $\dot{F}$ be a totally real number field, and $\dot{G}$ a simple twisted endoscopic group of $\GL_N$.
    Let $V$ be a finite set of places of $\dot{F}$ such that at least one real place $v_\infty$ is not contained in $V$.
    For all $v\in V$, let $M_v \subset \dot{G}_v$ be a Levi subgroup such that $M_v=\dot{G}_v$ if $v$ is a real place.
    For each $v\in V$, let $\pi_{M_v}\in \Pi_{2,\temp}(M_v)$.
    Let $\pi_{v_\infty}$ be a discrete series representation with a sufficiently regular infinitesimal character.
    Then there exists a cuspidal automorphic representation $\dot{\pi}$ of $\dot{G}(\A_{\dot{F}})$ such that for all $v\in V$, if $M_v=\dot{G}_v$ then $\dot{\pi}_v=\pi_{M_v}$ and if $M_v\neq \dot{G}_v$ then $\dot{\pi}_v$ is an irreducible subquotient of the induced representation $\calI_{P_v}^{\dot{G}_v} (\pi_{M_v}\otimes \chi_v)$ for some unramified unitary character $\chi_v\in \Psi(M_v)$.
\end{lemma}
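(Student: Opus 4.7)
The plan is to produce the desired cuspidal automorphic representation $\dot\pi$ by a careful application of Arthur's simple invariant trace formula, refining the strategy behind Lemma \ref{4.2.1} so as to accommodate the tempered (non-discrete) representations $\pi_{M_v}$ at the places $v \in V$ with $M_v \subsetneq \dot G_v$. Concretely, I will construct a decomposable test function $f = \bigotimes_v f_v \in \calH(\dot G(\A_{\dot F}))$ and arrange that both sides of the trace formula are nonzero, while the spectral contributions are forced to carry the prescribed local shape.

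At the auxiliary real place $v_\infty$ I take $f_{v_\infty}$ to be a pseudo-coefficient (or Euler--Poincar\'e function) of $\pi_{v_\infty}$; the sufficient regularity of its infinitesimal character both kills the residual and Eisenstein contributions on the spectral side and the non-elliptic contributions on the geometric side, reducing Arthur's identity to the simple (elliptic) trace formula whose spectral side is supported on the cuspidal spectrum. At each $v \in V$ with $M_v = \dot G_v$ I again take $f_v$ to be a pseudo-coefficient of $\pi_{M_v}$, so that $\tr(\sigma(f_v))$ vanishes on irreducible tempered $\sigma \not\simeq \pi_{M_v}$. At each $v \in V$ with $M_v \subsetneq \dot G_v$ I invoke the trace Paley--Wiener theorem (Bernstein--Deligne--Kazhdan in the non-archimedean case and Delorme--Mezo in the archimedean case) to choose $f_v$ supported in a single Bernstein component such that $\tr\bigl(\calI_{P_v}^{\dot G_v}(\pi_{M_v})(f_v)\bigr) \neq 0$, while $\tr(\sigma(f_v))$ vanishes on any irreducible tempered $\sigma$ outside the inertial orbit of the constituents of $\calI_{P_v}^{\dot G_v}(\pi_{M_v})$. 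At all remaining places I take $f_v$ to be the characteristic function of a sufficiently small compact open subgroup (suitably modified at the remaining archimedean places).

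Feeding this $f$ into the simple invariant trace formula, the geometric side is a product of elliptic orbital integrals which can be arranged to be nonzero by adjusting the supports of the $f_v$ for $v \notin V \cup \{v_\infty\}$. Consequently the cuspidal spectral side is nonzero, producing a cuspidal automorphic representation $\dot\pi$ with $\prod_v \tr(\dot\pi_v(f_v)) \neq 0$. This immediately forces $\dot\pi_{v_\infty} \simeq \pi_{v_\infty}$, $\dot\pi_v \simeq \pi_{M_v}$ at each $v \in V$ with $M_v = \dot G_v$, and at each $v \in V$ with $M_v \subsetneq \dot G_v$ the representation $\dot\pi_v$ lies in the same Bernstein component as $\pi_{M_v}$, whence it is an irreducible subquotient of $\calI_{P_v}^{\dot G_v}(\pi_{M_v} \otimes \chi_v)$ for some unramified unitary character $\chi_v$ of $M_v$.

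The main obstacle will be the construction of the local test functions $f_v$ at the places $v \in V$ with $M_v \subsetneq \dot G_v$: I need them to isolate the Bernstein component of $\pi_{M_v}$ sharply enough to force the prescribed form of $\dot\pi_v$, while simultaneously producing a nonvanishing elliptic orbital integral on the geometric side. The inevitable twist $\chi_v$ in the conclusion reflects the fact that traces of smooth compactly supported functions cannot separate irreducible tempered representations within a single inertial class, which is precisely the reason that the statement allows a subquotient of $\calI_{P_v}^{\dot G_v}(\pi_{M_v} \otimes \chi_v)$ rather than the parabolic induction of $\pi_{M_v}$ itself.
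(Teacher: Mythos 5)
The paper defers this lemma entirely to \cite[Lemma 4.2.2]{kmsw}, so there is no in-paper proof to compare against; your proposal is a plausible reconstruction of the trace-formula strategy underlying that reference (pseudo-coefficients at $v_\infty$ and at the places with $M_v=\dot G_v$, Bernstein-component-isolating test functions elsewhere, nonvanishing of the elliptic geometric side). However, the final step contains a genuine gap that the word ``whence'' papers over.

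Isolating a single Bernstein component at a place $v$ with $M_v\subsetneq\dot G_v$ only tells you that $\dot\pi_v$ has cuspidal support inertially equivalent to that of $\pi_{M_v}$; this yields that $\dot\pi_v$ is a subquotient of $\calI_{P_v}^{\dot G_v}(\pi_{M_v}\otimes\chi_v)$ for \emph{some} unramified character $\chi_v$, but not automatically for a \emph{unitary} one. Since $\pi_{M_v}$ is tempered, parabolic induction by a unitary unramified twist is itself tempered, so the conclusion of the lemma is in fact asserting that $\dot\pi_v$ is tempered. Cuspidality of $\dot\pi$ alone guarantees only that $\dot\pi_v$ is unitary, and a priori $\dot\pi_v$ could be a complementary series lying in the same Bernstein component, which would force a non-unitary $\chi_v$. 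Your closing remark about traces not separating representations within an inertial class addresses a different (and more benign) ambiguity; it does not explain why the twist must be unitary.

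The missing ingredient is where the hypothesis that $\pi_{v_\infty}$ has a \emph{sufficiently regular} infinitesimal character earns its keep. Because $\dot G$ is a simple twisted endoscopic group of $\GL_N$ — hence a quasi-split symplectic or orthogonal group for which Arthur's endoscopic classification is already available — the cuspidal representation $\dot\pi$ lies in a global $A$-packet $\Pi_{\dot\psi}$. Sufficient regularity of the infinitesimal character of $\dot\pi_{v_\infty}$ rules out a nontrivial $\SU(2)$-factor in $\dot\psi$, so $\dot\psi$ is generic and discrete; it then follows that every local component $\dot\pi_v$ lies in a tempered $L$-packet, hence is tempered, hence is a constituent of $\calI_{P_v}^{\dot G_v}(\pi_{M_v}\otimes\chi_v)$ with $\chi_v$ unitary. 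Without this appeal to the already-established classification of the quasi-split group (which is not circular here, since $\dot G$ is not the non-quasi-split group $G$ under study), the unitarity of $\chi_v$ does not follow from your trace formula argument.
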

\begin{proof}
    The proof is the same as that of \cite[Lemma 4.2.2]{kmsw}.
\end{proof}
The following lemma is used in the proof of Lemma \ref{2.2.3}.
Note that the proof of the following lemma is independent of \S\ref{2}.
\begin{lemma}\label{4.2.3}
    Let $F$ be a $p$-adic field, $\xi : G^* \to G$ an inner twist of $G^*=\SO_{2n+1}$ over $F$, $M^*\subset G^*$ a standard Levi subgroup such that $M^*$ transfers to $G$, and $\pi\in\Pi_{\scusp}(M)$ an irreducible supercuspidal representation.
    Put $M:=\xi(M^*)$, which is a Levi subgroup in $G^*$ defined over $F$.
    Let $\dot{F}$, $u$, $v_2$, $\dot{G}^*$, and $\dot{G}$ be as in Lemma \ref{4.1.2}.
    Then there exist an inner twist $\dot{\xi}:\dot{G}^*\to\dot{G}$ over $\dot{F}$, a standard Levi subgroup $\dot{M}^*\subset\dot{G}^*$, and an irreducible cuspidal automorphic representation $\dot{\pi}$ of $\dot{M}(\A_{\dot{F}})$ with following properties:
    \begin{itemize}
        \item $\dot{\xi}_u=\xi$;
        \item $\dot{M}:=\dot{\xi}(\dot{M}^*)$ is defined over $\dot{F}$, and $\dot{\xi}|_{\dot{M}^*}:\dot{M}^*\to\dot{M}$ is an inner twist;
        \item $\dot{M}_u=M$, and $\dot{M}_v$ is split over $\dot{F}_v$ for any place $v$ of $\dot{F}$ except $u$ and $v_2$;
        \item $\dot{\pi}_u = \pi$.
    \end{itemize}
\end{lemma}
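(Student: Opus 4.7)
The plan is to construct, in sequence, a totally real number field $\dot{F}$, a global inner form $\dot{G}$, a global inner twist $\dot{\xi}$ whose restriction to a standard global Levi $\dot{M}^*$ is also an inner twist, and finally a cuspidal automorphic representation $\dot{\pi}$ of $\dot{M}(\A_{\dot{F}})$. Since $M^*$ is a standard Levi of $G^*=\SO_{2n+1}$, we have a decomposition
\begin{equation*}
M^*\cong\GL_{n_1}\times\cdots\times\GL_{n_k}\times\SO_{2n_0+1},
\end{equation*}
so that $M\cong\GL_{n_1}\times\cdots\times\GL_{n_k}\times M_0$, where $M_0$ is an inner form of $\SO_{2n_0+1}$ over $F$.

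First, I would apply Lemma \ref{4.1.2} (using condition $3'$) in combination with Lemma \ref{4.1.1} to obtain a totally real number field $\dot{F}$ with places $u$ and $v_2$ satisfying $\dot{F}_u\cong\dot{F}_{v_2}\cong F$, together with an inner form $\dot{G}$ of $\dot{G}^*=\SO_{2n+1}$ over $\dot{F}$ such that $\dot{G}_u=G$, $\dot{G}_{v_2}\cong G$, and $\dot{G}_v$ is split at every other place. Let $\dot{M}^*\subset\dot{G}^*$ be the standard Levi with the same block structure as $M^*$. Transferability of a standard Levi of $\dot{G}^*$ to $\dot{G}$ is a purely local condition on Witt indices: it is satisfied trivially at every split place, at $u$ by the hypothesis that $M^*$ transfers to $G$, and at $v_2$ because $\dot{G}_{v_2}\cong G$. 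Hence $\dot{M}^*$ transfers to $\dot{G}$ globally, and one can select an inner twist $\dot{\xi}:\dot{G}^*\to\dot{G}$ so that $\dot{M}:=\dot{\xi}(\dot{M}^*)$ is defined over $\dot{F}$ and $\dot{\xi}|_{\dot{M}^*}$ is an inner twist over $\dot{F}$. Since $\dot{G}^*$ has trivial center, so that inner twists are classified by ordinary $1$-cocycles in $Z^1(\Gamma_{\dot{F}},\dot{G}^*)$, we may further adjust $\dot{\xi}$ by an element of $\dot{G}^*(\overline{\dot{F}})$ to arrange $\dot{\xi}_u=\xi$.

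Next, decompose $\pi=\pi_1\boxtimes\cdots\boxtimes\pi_k\boxtimes\pi_0$ according to the product structure of $M(F)$, with $\pi_i\in\Pi_\scusp(\GL_{n_i}(F))$ for $i=1,\ldots,k$ and $\pi_0\in\Pi_\scusp(M_0)$. Choose a real place $v_\infty\notin\{u,v_2\}$, at which $\dot{M}$ is split, so that $\dot{M}_0(\dot{F}_{v_\infty})\cong\SO(n_0+1,n_0)(\R)$ admits discrete series of arbitrarily regular infinitesimal character (recall that $\SO(p,q)$ has discrete series whenever $p+q$ is odd). Apply Lemma \ref{4.2.1} to $\dot{M}_0$ with $V=\{u\}$, local component $\pi_0$ at $u$, and a sufficiently regular discrete series at $v_\infty$ to obtain a cuspidal automorphic representation $\dot{\pi}_0$ of $\dot{M}_0(\A_{\dot{F}})$ with $(\dot{\pi}_0)_u=\pi_0$. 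For each $i=1,\ldots,k$, invoke a standard globalization of supercuspidal representations of $\GL_{n_i}$ via the simple trace formula (for example the constructions of Henniart or Shin) to obtain a cuspidal automorphic representation $\dot{\pi}_i$ of $\GL_{n_i}(\A_{\dot{F}})$ with $(\dot{\pi}_i)_u=\pi_i$. Set $\dot{\pi}:=\dot{\pi}_1\boxtimes\cdots\boxtimes\dot{\pi}_k\boxtimes\dot{\pi}_0$, which is cuspidal automorphic on $\dot{M}(\A_{\dot{F}})$ and satisfies $\dot{\pi}_u=\pi$.

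The main obstacle is the simultaneous realization of $\dot{\xi}_u=\xi$ together with the global transferability of $\dot{M}^*$ to $\dot{G}$. This hinges on the flexibility provided by option $3'$ of Lemma \ref{4.1.2}: by arranging $\dot{G}_{v_2}\cong G$, the potential local obstructions to transfer at the auxiliary ramified place $v_2$ mirror those at $u$ and automatically vanish under the hypothesis. A secondary technical issue is that Lemma \ref{4.2.1} covers inner forms of $\SO_{2n+1}$ and simple twisted endoscopic groups of $\GL_N$, but not $\GL_n$ itself; however, the required globalization of $\GL_n$ supercuspidals is classical and independent of the present setup.
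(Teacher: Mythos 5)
The proposal takes a genuinely different route from the paper, and there is a gap in the middle step.

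\textbf{Where the approaches diverge.} You first pin down a global inner form $\dot{G}$ (invoking option $3'$ of Lemma~\ref{4.1.2}), then argue that $\dot{M}^*$ transfers to $\dot{G}$ by checking local transferability place-by-place. This is reasonable, but the ``purely local'' claim is really the Hasse--Minkowski local-global principle for Witt indices and should be named. The paper reverses the order: it applies Lemma~\ref{4.1.2} to the odd orthogonal factor of $\dot{M}^*$ itself, obtaining a pure inner twist $(\dot{\xi}_M,\dot{z}_M)$ of $\dot{M}^*$ with $\dot{M}_u=M$, then pushes the cocycle $\dot{z}_M$ forward to $\dot{z}\in Z^1(\dot{F},\dot{G}^*)$, defines $\dot{\xi}$ from $\dot{z}$, and finally verifies by a short cocycle computation that $\dot{\xi}\dot{\xi}_M^{-1}\colon\dot{M}\to\dot{G}$ is defined over $\dot{F}$, which gives the embedding of $\dot{M}$ as a Levi of $\dot{G}$. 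The representation part (factor-by-factor globalization: a classical $\GL_n$ result plus Lemma~\ref{4.2.1} for the $\SO$-factor) is a legitimate alternative to the paper's appeal to Shahidi's result for $M\neq G$; this portion is fine.

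\textbf{The gap.} Your chain of choices is: pick $\dot{\xi}$ so that $\dot{\xi}|_{\dot{M}^*}$ is an inner twist over $\dot{F}$, and ``further adjust $\dot{\xi}$ by an element of $\dot{G}^*(\overline{\dot{F}})$ to arrange $\dot{\xi}_u=\xi$.'' But replacing $\dot{\xi}$ by $\dot{\xi}\circ\Ad(b)$ sends $\dot{M}^*$ to $b\dot{M}^*b^{-1}$, so after the adjustment $\dot{\xi}(\dot{M}^*)$ has changed and you have given no reason why it is still an $\dot{F}$-rational Levi subgroup of $\dot{G}$ with the required local behaviour. In fact, nowhere in the argument do you actually establish that $\dot{M}_u = M$ and that $\dot{M}_v$ splits for $v\neq u,v_2$; you only establish that \emph{some} Levi $\dot{M}$ exists globally. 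Three constraints must hold simultaneously --- $\dot{\xi}_u=\xi$, $\dot{\xi}|_{\dot{M}^*}$ an $\dot{F}$-inner twist, and the prescribed local behaviour of $\dot{M}$ at every place --- and your order of choices does not make them compatible. The paper's construction avoids this by design: since $\dot{\xi}$ and $\dot{\xi}_M$ are built from the \emph{same} cocycle $\dot{z}_M$ (after inflation to $\dot{G}^*$), the identity $\dot{\xi}^{-1}\sigma\dot{\xi}\sigma^{-1}=\Ad(\dot{z}_M(\sigma))=\dot{\xi}_M^{-1}\sigma\dot{\xi}_M\sigma^{-1}$ holds for every $\sigma$, so $\dot{\xi}\dot{\xi}_M^{-1}$ is automatically $\Gamma_{\dot{F}}$-equivariant and the constraints are satisfied simultaneously without an after-the-fact adjustment. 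To salvage your approach you would need to show that the adjusting element $b$ can be taken in $N(\dot{M}^*,\dot{G}^*)(\overline{\dot{F}})$ (or that $\dot{M}^*$ can be replaced by $b\dot{M}^*b^{-1}$ without losing $\dot{F}$-rationality), and then separately verify the local properties of $\dot{M}$; none of this is addressed.
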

\begin{proof}
    Recall that $M^*$ is isomorphic to a group of the form \eqref{levi}.
    Let $\dot{M}^*$ be the group \eqref{levi} defined over $\dot{F}$.
    Note that we can regard $\dot{M}^*$ as a Levi subgroup in $\dot{G}^*$, and that $\dot{M}^*_u$ is isomorphic to $M^*$.
    By Lemma \ref{4.1.2}, we have an inner form $\dot{M}$ of $\dot{M}^*$ and a pure inner twist $(\dot{\xi}_M, \dot{z}_M)$ such that $\dot{M}_u=M$, and $\dot{M}_v$ is split over $\dot{F}_v$ for any place $v$ of $\dot{F}$ except $u$ and $v_2$.

    Let $\dot{z}\in Z^1(\dot{F},\dot{G}^*)$ be the natural image of $\dot{z}_M\in Z^1(\dot{F},\dot{M}^*)$.
    Let $\dot{\xi} : \dot{G}^* \to \dot{G}$ be an inner twist such that $(\dot{\xi}, \dot{z})$ is a pure inner twist.
    Since $\dot{G}^*_u$ and $\dot{G}_u$ are isomorphic to $G^*$ and $G$, respectively, $\dot{\xi}_u$ is isomorphic to $\xi$.
    Thus we can take $\dot{\xi}$ so that $\dot{\xi}_u=\xi$.
    Then we have
    \begin{equation*}
        \dot{\xi}\inv \sigma \dot{\xi} \sigma\inv = \Ad(\dot{z}(\sigma)) = \Ad(\dot{z}_M(\sigma)) = \dot{\xi}_M\inv \sigma \dot{\xi}_M \sigma\inv, 
    \end{equation*}
    for any $\sigma\in \Gamma_{\dot{F}}$.
    Hence we have
    \begin{equation*}
        \dot{\xi} \dot{\xi}_M\inv \sigma = \sigma\dot{\xi} \dot{\xi}_M\inv,
    \end{equation*}
    which means that an injective homomorphism $\dot{\xi} \dot{\xi}_M\inv$ from $\dot{M}$ to $\dot{G}$ is defined over $\dot{F}$.
    Let us regard $\dot{M}$ as a Levi subgroup in $\dot{G}$ by this injective homomorphism.
    Then $\dot{M}=\dot{\xi}(\dot{M}^*)$ is defined over $\dot{F}$, and $\dot{\xi}|_{\dot{M}^*}:\dot{M}^*\to\dot{M}$ is an inner twist.
    
    The last property follows from Lemma \ref{4.2.1} if $M=G$ and \cite[Proposition 5.1]{sha4} if $M\neq G$.
\end{proof}

\subsection{Globalizations of parameters}\label{4.3}
Now we shall globalize generic parameters.
Let us first consider globalizations of simple parameters.
\begin{lemma}\label{4.3.1}
    Let $\dot{F}$ be a totally real number field, and $\dot{G}^*$ a simple twisted endoscopic group of $\GL_N$, i.e., the split symplectic group, the split odd special orthogonal group, or a quasi-split even special orthogonal group over $\dot{F}$.
    Let $V$ be a finite set of places of $\dot{F}$ which misses at least one real place.
    For each $v\in V$, let $\phi_v\in \widetilde{\Phi}_{2,\bdd}(\dot{G}^*_v)$ be a square integrable parameter.
    Assume that for at least one $v\in V$, $\phi_v\in\widetilde{\Phi}_\simple(\dot{G}^*_v)$.
    Then there exists a simple generic parameter $\dot{\phi}\in\widetilde{\Phi}_\simple(\dot{G}^*)$ such that $\dot{\phi}_v=\phi_v$ for all $v\in V$.
\end{lemma}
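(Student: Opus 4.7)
The plan is to mirror the strategy of \cite[Lemma 4.3.1]{kmsw}, using Lemma \ref{4.2.1} to globalize a carefully chosen family of discrete series representations, and then invoking Arthur's established global classification for the quasi-split group $\dot{G}^*$ \cite{art13} to extract the desired simple generic parameter.

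First I would fix a place $v_0\in V$ with $\phi_{v_0}\in\widetilde{\Phi}_\simple(\dot{G}^*_{v_0})$. For each $v\in V$, Arthur's local theorem for the quasi-split classical group $\dot{G}^*_v$ produces the tempered $L$-packet $\Pi_{\phi_v}(\dot{G}^*_v)$, all of whose members are discrete series since $\phi_v\in\widetilde{\Phi}_{2,\bdd}(\dot{G}^*_v)$; pick any $\pi_v$ from it. Next, choose a real place $v_\infty\notin V$ (available by hypothesis) and take a discrete series $\pi_{v_\infty}$ of $\dot{G}^*_{v_\infty}$ whose infinitesimal character is more regular than the bound at $v_\infty$ predicted for any non-generic archimedean Arthur packet attached to a parameter in $\widetilde{\Psi}(N)$. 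Apply Lemma \ref{4.2.1} to obtain a cuspidal automorphic representation $\dot{\pi}$ of $\dot{G}^*(\A_{\dot{F}})$ with $\dot{\pi}_v=\pi_v$ for $v\in V$ and $\dot{\pi}_{v_\infty}=\pi_{v_\infty}$.

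By Arthur's global theorem (in the $\epsilon$-equivalence sense when $\dot{G}^*$ is even orthogonal), $\dot{\pi}$ contributes to $L^2_\disc$ via a unique parameter $\dot{\psi}\in\widetilde{\Psi}_2(\dot{G}^*)$, which I write as $\dot{\psi}=\boxplus_i\mu_i\boxtimes\nu_i$ with pairwise distinct simple self-dual summands. The regularity of $\pi_{v_\infty}$ forces every $\nu_i$ to be trivial, because a nontrivial $\nu_i$ would confine the infinitesimal characters appearing in $\Pi_{\dot{\psi}_{v_\infty}}(\dot{G}^*_{v_\infty})$ below the chosen threshold; thus $\dot{\psi}$ is generic, with localization $\dot{\psi}_v=\bigoplus_i\mu_{i,v}$ at each place. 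At $v_0$, both $\dot{\psi}_{v_0}$ and $\phi_{v_0}$ are now generic bounded $L$-parameters whose $L$-packets share $\pi_{v_0}$; disjointness of tempered $L$-packets from the quasi-split local classification then yields $\dot{\psi}_{v_0}=\phi_{v_0}$. Since $\phi_{v_0}$ is irreducible as an $N$-dimensional representation of $L_{F_{v_0}}$, the decomposition $\bigoplus_i\mu_{i,v_0}$ can have only one summand, so $\dot{\psi}=\mu$ for a single $\mu\in\calA_\cusp(N)$. This exhibits $\dot{\psi}\in\widetilde{\Phi}_\simple(\dot{G}^*)$, and the equality $\dot{\psi}_v=\phi_v$ at every $v\in V$ follows from the same temperedness/disjointness argument as at $v_0$.

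The main technical point is selecting the regularity threshold at $v_\infty$ uniformly so as to rule out simultaneously every non-generic global $A$-parameter in $\widetilde{\Psi}(N)$ that could a priori produce $\dot{\pi}$; this is standard and relies on the bound of archimedean infinitesimal characters in Arthur packets in terms of $N$. A secondary care is needed when $\dot{G}^*$ is a quasi-split even special orthogonal group: the entire argument must be read modulo the $\epsilon$-equivalence of \cite{ag}, which is harmless since $\widetilde{\Phi}_\simple(\dot{G}^*)$ is defined with the same equivalence.
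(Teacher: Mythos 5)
Your proposal reconstructs in detail the argument of \cite[Lemma 4.3.1]{kmsw}, which is exactly the route the paper takes (the paper's proof is a pointer to that lemma, with Lemma \ref{4.2.1} substituted and the even-orthogonal case read modulo $\epsilon$-equivalence via \cite{ag}, both of which you also note). The key steps — globalizing a family of discrete series with a sufficiently regular archimedean place to force the global parameter to be generic, then using disjointness of tempered $L$-packets and simplicity at $v_0$ to pin the parameter down as simple and matching — are all as in the intended argument, so this is correct and essentially the same approach.
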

\begin{proof}
    The proof is similar to that of \cite[Lemma 4.3.1]{kmsw}, except that we need the following one or two modifications.
    The first one is to use Lemma \ref{4.2.1} of this paper in place of Lemma 4.2.1 of \cite{kmsw}.
    Then the same proof carries over word by word in the case when $\dot{G}^*$ is a symplectic or an odd special orthogonal group.
    The second one is, in the case of even special orthogonal groups, to replace $\Phi_\simple$ by $\widetilde{\Phi}_\simple$ and the ordinary equivalence classes of the representations by the $\epsilon$-equivalence classes of the representations.
    Here, note that in the case of quasi-split even special orthogonal groups, the endoscopic classification (LLC, ECR, and AMF) is known up to $\epsilon$-equivalence, by Arthur \cite{art13} and Atobe-Gan \cite{ag}.
\end{proof}
\begin{lemma}\label{4.3.2}
    Let $N\geq 2$.
    Let $\dot{F}$ be a totally real number field, and $\dot{G}^*$ a simple twisted endoscopic group of $\GL_N$, i.e., the split symplectic group, the split odd special orthogonal group, or a quasi-split even special orthogonal group over $\dot{F}$.
    Let $V$ and $\{\phi_v\}_{v\in V}$ be as in Lemma \ref{4.3.1}.
    Assume that for at least one $v\in V$, $\phi_v\in\widetilde{\Phi}_\simple(\dot{G}^*_v)$.
    Let $v_2\notin V$ be a finite place.
    Then there exists a simple generic parameter $\dot{\phi}\in\widetilde{\Phi}_\simple(\dot{G}^*)$ such that $\dot{\phi}_v=\phi_v$ for all $v\in V$ and $\dot{\phi}_{v_2}$ is of the form
    \begin{equation*}
        \dot{\phi}_{v_2}=\phi_+ \oplus \phi_+^\vee \oplus \phi_-,
    \end{equation*}
    where $\phi_+\in\Phi_\bdd(\GL_1)$ is a non-self-dual parameter and $\phi_-\in\widetilde{\Phi}_{\simple,\bdd}(N-2)$ a simple self-dual parameter.
\end{lemma}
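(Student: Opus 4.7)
The plan is to adapt the proof of Lemma~\ref{4.3.1}, invoking Lemma~\ref{4.2.2} at the additional place $v_2$ in place of Lemma~\ref{4.2.1}; this will let us prescribe a parabolically induced shape for the local component at $v_2$. Concretely, let $\dot{H}^*$ be the simple twisted endoscopic group of $\GL_{N-2}$ of the same type (split symplectic, split odd orthogonal, or quasi-split even orthogonal) as $\dot{G}^*$, and let $M^*_{v_2}\subset \dot{G}^*_{v_2}$ be a standard Levi subgroup isomorphic to $\GL_1\times\dot{H}^*_{v_2}$. Choose a bounded non-self-dual character $\chi$ of $\dot{F}_{v_2}^\times$ whose restriction to the units of $\dot{F}_{v_2}$ has order at least three; its $L$-parameter $\phi_+$ lies in $\Phi_\bdd(\GL_1/\dot{F}_{v_2})$, and the order condition ensures that $\chi\nu$ remains non-self-dual for every unramified unitary character $\nu$. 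Next choose a simple self-dual bounded parameter $\phi_-\in\widetilde{\Phi}_{\simple,\bdd}(N-2)$ which is $\dot{H}^*_{v_2}$-relevant and of the correct self-duality type for $\dot{H}^*$; by Arthur's local classification for quasi-split $\dot{H}^*_{v_2}$ (up to $\epsilon$-equivalence in the even orthogonal case) there exists a tempered square-integrable representation $\sigma$ of $\dot{H}^*_{v_2}(\dot{F}_{v_2})$ with $L$-parameter $\phi_-$.

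Now apply Lemma~\ref{4.2.2} with the set $V'=V\sqcup\{v_2\}$, taking $M_v=\dot{G}^*_v$ and $\pi_{M_v}=\pi_v$ (the tempered square-integrable representation of $\dot{G}^*_v$ corresponding to $\phi_v$) for each $v\in V$, and $M_{v_2}=M^*_{v_2}$, $\pi_{M_{v_2}}=\chi\boxtimes\sigma$ at $v_2$. Lemma~\ref{4.2.2} produces a cuspidal automorphic representation $\dot{\pi}$ of $\dot{G}^*(\A_{\dot{F}})$ such that $\dot{\pi}_v=\pi_v$ for $v\in V$, the archimedean component $\dot{\pi}_{v_\infty}$ is a regular discrete series, and $\dot{\pi}_{v_2}$ is an irreducible subquotient of $\calI_{P_{v_2}}(\chi\nu_1\boxtimes\sigma\otimes\nu_0)$ for some unramified unitary character $\nu_1\boxtimes\nu_0$ of $M^*_{v_2}(\dot{F}_{v_2})$.

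Attach to $\dot{\pi}$ its global parameter $\dot{\phi}\in\widetilde{\Psi}(\dot{G}^*)$ via Arthur's classification. The regular discrete series component at $v_\infty$ forces $\dot{\phi}_{v_\infty}$, and therefore $\dot{\phi}$ itself, to be generic, so $\dot{\phi}\in\widetilde{\Phi}(\dot{G}^*)$. Writing $\dot{\phi}=\bigboxplus_i \ell_i \psi_i \boxplus \bigboxplus_j \ell_j(\psi_j\boxplus\psi_j^\vee)$ with mutually distinct simple constituents, the localization at any place decomposes as the direct sum of the localizations of the individual summands. By hypothesis there is some $v\in V$ with $\phi_v=\dot{\phi}_v$ irreducible as a $\GL_N$-parameter, which is only possible if exactly one global summand occurs with multiplicity one; hence $\dot{\phi}\in\widetilde{\Phi}_\simple(\dot{G}^*)$. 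Finally, compatibility of LLC for $\GL_N$ with parabolic induction identifies $\dot{\phi}_{v_2}$ with $(\chi\nu_1)\oplus(\chi\nu_1)^\vee\oplus\phi_-$, and by construction $\chi\nu_1$ is a non-self-dual bounded character, so $\dot{\phi}_{v_2}$ has the required form.

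The main technical step will be the global simplicity argument: the irreducibility of a single localization $\dot{\phi}_v$ must force $\dot{\phi}$ to be globally simple. This rests on the fact that the localization procedure for elements of $\widetilde{\Psi}(N)$, as set up via the substitute Langlands group $\calL_{\dot{\phi}}$ in \S\ref{1.3}, is a direct sum indexed by the global simple summands. A subsidiary point, already built into our choice of $\chi$, is the preservation of non-self-duality of $\chi$ under the uncontrolled unramified twist produced by Lemma~\ref{4.2.2}: since unramified twisting does not alter the ramification of a character, demanding that $\chi|_{\mathcal O^\times}$ have order at least three prevents $\chi\nu_1$ from being quadratic for any unramified $\nu_1$.
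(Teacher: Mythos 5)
Your argument is correct and follows the same strategy the paper intends (the paper defers to the proof of \cite[Lemma~4.3.2]{kmsw}, which likewise globalizes via Lemma~\ref{4.2.2} applied at the extra place $v_2$ with inducing data $\chi\boxtimes\sigma$ on a Levi $\GL_1\times\dot{H}^*_{v_2}$, and then uses irreducibility of some $\dot{\phi}_v$ to force global simplicity). Your observation that $\chi|_{\mathcal{O}^\times}$ of order at least three keeps $\chi\nu_1$ non-self-dual under the uncontrolled unramified twist is exactly the point that makes the construction robust.
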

\begin{proof}
    The proof is similar to that of \cite[Lemma 4.3.2]{kmsw}, except the following two differences.
    First, we appeal to Lemmas \ref{4.3.1}, \ref{4.2.2}, and \ref{4.2.1} instead of Lemmas 4.3.1, 4.2.2, and 4.2.1, respectively.
    Second, we need the modifications similar to that of Lemma \ref{4.3.1}.
\end{proof}
Next we shall consider globalizations of elliptic or exceptional parameters.
Let $F$ be a local field, and $G$ an inner form of $G^*=\SO_{2n+1}$ over $F$.
Let $M^*\subset G^*$ be a Levi subgroup.
Following \cite[\S4.4]{kmsw}, we shall say that $M^*$ is linear if $M^*$ is isomorphic to a direct product of general linear groups, i.e., $n_0=0$ in \eqref{levi}.
As in the case of even unitary groups, any nontrivial inner form $G$ of $G^*$ does not admit a globalization $\dot{G}$ whose localization at one place is isomorphic to $G$ and at all the other places are split.
Hence the same complication is caused.

Let $\phi\in\Phi_\bdd(G^*)$ be a bounded $L$-parameter for $G$, which is the image of a discrete parameter $\phi_{M^*}\in\Phi_2(M^*)$ for $M^*$.
Assume that $\phi$ is either elliptic or exceptional.
Explicitly $M^*$, $\phi$, and $\phi_{M^*}$ has the form
\begin{align*}
    M^*        & \simeq \GL_{N_1}^{e_1}\times\cdots\times \GL_{N_r}^{e_r} \times M^*_-, \\
    \phi       & =\bigoplus_{i=1}^r \ell_i\phi_i
    =\bigoplus_{i=1}^r \left(e_i(\phi_i\oplus\phi_i^\vee)\oplus\delta_i\phi_i\right),   \\
    \phi_{M^*} & =e_1\phi_1 \oplus\cdots\oplus e_r\phi_r \oplus \phi_-,
\end{align*}
where $\phi_i\in\widetilde{\Phi}_{\simple,\bdd}(N_i)$ are mutually distinct self-dual simple bounded $L$-parameters for $\GL_{N_i}$, $\phi_-=\oplus_i \delta_i\phi_i\in\Phi_2(M^*_-)$, $\delta_i\in\{0,1\}$, $N_i\in\Z_{\geq1}$, $e_i\in\Z_{\geq0}$, $\ell_i=2e_i+\delta_i$, $M^*_-=\SO_{2n_0+1}$, and $2n_0=\sum_i\delta_iN_i$ so that $\sum_i e_iN_i+n_0=n$ and $e_i=\lfloor\ell_i/2\rfloor$.

In the lemmas and propositions below, we will start from the following local data with the assumption given above:
\begin{itemize}
    \item a local field $F$;
    \item $G^*=\SO_{2n+1}$ and a Levi subgroup $M^*\subset G^*$ over $F$;
    \item an inner form $G$ of $G^*$;
    \item an $L$-parameter $\phi\in\Phi_\bdd(G^*)$ that is the image of a parameter $\phi_{M^*}\in\Phi_2(M^*)$.
\end{itemize}

First we treat the case when $M^*=G^*$.
\begin{lemma}\label{red}
    Assume that $M^*=G^*$, and hence $\phi\in\Phi_2(G^*)$.
    Then there exists a global data $(\dot{F}, \dot{G}^*, \dot{\phi}, u, v_1, v_2)$, where $\dot{F}$ is a totally real number field, $\dot{G}^*=\SO_{2n+1}$ over $\dot{F}$, $\dot{\phi}\in\Phi(\dot{G}^*)$, and $u, v_1, v_2$ are places of $\dot{F}$, such that $v_1$ is a finite place, $v_2$ is a real place, and
    \begin{enumerate}
        \item $\dot{F}_u=F$, $\dot{G}^*_u=G^*$, and $\dot{\phi}_u=\phi$;
        \item $\dot{\phi}\in\Phi_2(\dot{G}^*)$;
        \item $\dot{\phi}_v\in\Phi_{2,\bdd}(\dot{G}^*_v)$ for $v\in\{v_1,v_2\}$;
        \item the canonical map $S_{\dot{\phi}}\to S_{\dot{\phi}_v}$ is isomorphic for $v\in\{u,v_1\}$.
    \end{enumerate}
\end{lemma}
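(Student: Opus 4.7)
The plan is to globalize each simple factor of $\phi$ independently via Lemma \ref{4.3.1} and assemble them. Write $\phi=\phi_1\oplus\cdots\oplus\phi_r$ with $\phi_i\in\widetilde{\Phi}_{\simple,\bdd}(2n_i)$ symplectic and mutually distinct, as in the explicit description of $\Phi_2(G^*)$ in \S\ref{1.2}. Each $\phi_i$ is a simple generic parameter for $\SO_{2n_i+1}/F$. We may assume $F\neq\C$: over $\C$, irreducible representations of $W_\C=\C^\times$ are one-dimensional and hence cannot be symplectic, so $\Phi_2(\SO_{2n+1}/\C)=\emptyset$ for $n\geq1$ and the statement is vacuous.

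Next, appeal to Lemma 6.2.1 of \cite{art13} to produce a totally real number field $\dot{F}$ with a distinguished place $u$ such that $\dot{F}_u\cong F$, and with sufficiently many real places. Pick a finite place $v_1\neq u$ and a real place $v_2\neq u$ of $\dot{F}$, subject to $\dot{F}$ retaining at least one further real place outside $V:=\{u,v_1,v_2\}$; this is needed to apply Lemma \ref{4.3.1}. For each $i\in\{1,\ldots,r\}$, choose bounded discrete simple parameters $\phi_{i,v_1}\in\widetilde{\Phi}_{\simple,\bdd}(\SO_{2n_i+1}/\dot{F}_{v_1})$ and $\phi_{i,v_2}\in\widetilde{\Phi}_{\simple,\bdd}(\SO_{2n_i+1}/\dot{F}_{v_2})$ so that, for each fixed $v\in\{v_1,v_2\}$, the collection $\{\phi_{i,v}\}_{i=1}^r$ consists of mutually distinct parameters; such choices are plentiful at both finite and real places.

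Now apply Lemma \ref{4.3.1} to each $i$ with $\dot{G}^*=\SO_{2n_i+1}/\dot{F}$, the set $V$, and the prescribed localizations $(\phi_i,\phi_{i,v_1},\phi_{i,v_2})$, using that $\phi_i$ itself is simple to verify the hypothesis on $V$. This yields a global simple parameter $\dot{\phi}_i\in\widetilde{\Phi}_\simple(\SO_{2n_i+1}/\dot{F})$. Since the localizations $\dot{\phi}_{i,u}=\phi_i$ are mutually distinct, the global parameters $\dot{\phi}_i$ are themselves pairwise distinct; set $\dot{\phi}:=\bigboxplus_{i=1}^r\dot{\phi}_i$, which therefore lies in $\Phi_2(\dot{G}^*)$ for $\dot{G}^*=\SO_{2n+1}/\dot{F}$ by the characterization of $\Psi_2(\dot{G}^*)$ recalled in \S\ref{1.3}.

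It remains to verify the four conditions. Property (1) holds by construction; (2) follows from the discreteness characterization just cited; and (3) follows because at $v\in\{v_1,v_2\}$ the parameter $\dot{\phi}_v=\bigoplus_i\dot{\phi}_{i,v}$ is a direct sum of $r$ simple bounded symplectic parameters that were chosen to be mutually distinct. For (4), at each $v\in\{u,v_1\}$ the local summands $\dot{\phi}_{i,v}$ are again simple symplectic and mutually distinct, so $S_{\dot{\phi}_v}\cong\prod_i\Or(1,\C)\cong S_{\dot{\phi}}$ and the canonical map is clearly an isomorphism. The only subtle point in the whole argument is to insist on mutual distinctness of the local parameters $\phi_{i,v_1}$ across $i$ (and likewise at $v_2$), since this is what simultaneously secures the discreteness of $\dot{\phi}_{v_1}$ and the centralizer isomorphism at $v_1$; everything else is a direct assembly of Lemmas \ref{art6.2.1} and \ref{4.3.1}.
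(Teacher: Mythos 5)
Your strategy is the same as the paper's: decompose $\phi$ into its simple symplectic constituents $\phi_i$, globalize each one separately via Lemma~\ref{4.3.1}, and take the isobaric sum. There is, however, a genuine gap at the real place $v_2$. You insist on choosing $\phi_{i,v_2}\in\widetilde{\Phi}_{\simple,\bdd}(\SO_{2n_i+1}/\dot{F}_{v_2})$ and assert that ``such choices are plentiful at both finite and real places.'' This is false over $\R$: since $\dot{F}_{v_2}=\R$, the irreducible (bounded) representations of $W_\R$ are at most $2$-dimensional, and $1$-dimensional representations cannot be symplectic. Hence $\widetilde{\Phi}_{\simple,\bdd}(\SO_{2n_i+1}/\R)=\emptyset$ whenever $N_i=2n_i>2$, so your choice is impossible unless every constituent of $\phi$ is $2$-dimensional. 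The paper avoids this by prescribing $\phi_{v_2,i}\in\widetilde{\Phi}_{2,\bdd}(\dot{G}^*_{i,v_2})$ (\emph{discrete} rather than \emph{simple}) with the weaker requirement that the constituents of $\phi_{v_2,i}$ and $\phi_{v_2,j}$ be disjoint for $i\neq j$. That is enough: Lemma~\ref{4.3.1} only asks for the local prescription to lie in $\widetilde{\Phi}_{2,\bdd}$, and $\dot{\phi}_{v_2}=\bigoplus_i\phi_{v_2,i}$ is still a multiplicity-free sum of distinct irreducible symplectic pieces, so condition~(3) survives. Note also that condition~(4) is only asserted at $u$ and $v_1$, so you need simplicity of the local pieces only at those places, and at the finite place $v_1$ simple symplectic parameters of any even dimension do exist. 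With that one correction your argument coincides with the paper's.

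The preliminary reduction to $F\neq\C$ is a sensible observation (irreducible bounded representations of $W_\C$ are $1$-dimensional, hence never symplectic, so $\Phi_2(\SO_{2n+1}/\C)$ is empty for $n\geq1$); the paper leaves this implicit since Lemma~\ref{art6.2.1} already excludes $\C$.
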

\begin{proof}
    A totally real number field $\dot{F}$ and a place $u$ such that $\dot{F}_u=F$ are given by Lemma \ref{art6.2.1}.
    Here we take $\dot{F}$ to have more than two real places.
    Let $v_1$ and $v_2$ be finite and real places of $\dot{F}$, respectively.
    Put $\dot{G}^*=\SO_{2n+1}$ over $\dot{F}$.

    For each $i=1,\ldots, r$, let $G^*_i\in\widetilde{\calE}_\simple(N_i)$ be a classical group over $F$ such that $\phi_i\in\widetilde{\Phi}_{\simple,\bdd}(G^*_i)$.
    Such $G^*_i$ exists, since $\phi_i$ is an $N_i$-dimensional irreducible self-dual representation of $L_F$, which is either symplectic or orthogonal.
    Take $\dot{G}^*_i\in\widetilde{\calE}_\simple(N_i)$ to be a simple twisted endoscopic group over $\dot{F}$ so that $\dot{G}^*_{i,u}=G^*_i$.
    Choose a collection $\phi_{v_1,i}\in\widetilde{\Phi}_{\simple,\bdd}(\dot{G}^*_{i,v_1})$ $(i=1,\ldots,r)$ of pairwise distinct parameters.
    Choose also a collection $\phi_{v_2,i}\in\widetilde{\Phi}_{2,\bdd}(\dot{G}^*_{i,v_2})$ $(i=1,\ldots,r)$ of parameters such that $\phi_{v_2,i}$ and $\phi_{v_2,j}$ do not have a common constituent for all $i\neq j$.
    Then Lemma \ref{4.3.1} gives us a collection of parameters $\dot{\phi}_i\in\widetilde{\Phi}_\simple(\dot{G}^*_i)$ such that $\dot{\phi}_{i,u}=\phi_i$, $\dot{\phi}_{i,v_1}=\phi_{v_1,i}$, and $\dot{\phi}_{i,v_2}=\phi_{v_2,i}$.

    By the assumption, we have $e_i=0$ and $\delta_i=1$ for all $i$.
    Put
    \begin{align*}
        \dot{\phi} & =\dot{\phi}_1 \boxplus\cdots\boxplus \dot{\phi}_r\in\Phi(\dot{G}^*).
    \end{align*}
    Then by the construction the canonical map $S_{\dot{\phi}}\to S_{\dot{\phi}_v}$ is isomorphic for $v\in\{u,v_1\}$.
    Thus the fourth and second conditions are satisfied.
    The first condition is clearly satisfied.
    The third one follows immediately from the construction.
\end{proof}
\begin{proposition}\label{4.4.3}
    Assume that $M^*=G^*$, and $\phi\in\Phi_2(G^*)$ is relevant for $G$.
    Then there exists a global data
    \begin{equation*}
        (\dot{F},\dot{G}^*,\dot{G}, \dot{\phi}, u,v_1,v_2),
    \end{equation*}
    where $\dot{F}$ is a totally real number field, $\dot{G}^*=\SO_{2n+1}$ over $\dot{F}$, $\dot{G}$ an inner form of $\dot{G}^*$, $\dot{\phi}\in\Phi_2(\dot{G}^*)$ a parameter relevant for $\dot{G}$, and $u, v_1, v_2$ are places of $\dot{F}$, such that $v_1$ is a finite place, $v_2$ is a real place, and
    \begin{enumerate}
        \item $\dot{F}_u=F$, $\dot{G}^*_u=G^*$, $\dot{G}_u=G$, and $\dot{\phi}_u=\phi$;
        \item $\dot{G}_v$ is split unless $v\in\{u,v_2\}$;
        \item $\dot{\phi}_v\in\Phi_{2,\bdd}(\dot{G}^*_v)$ for $v\in\{v_1,v_2\};$
        \item the canonical map $S_{\dot{\phi}}\to S_{\dot{\phi}_v}$ is isomorphic for $v\in\{u,v_1\}$.
    \end{enumerate}
\end{proposition}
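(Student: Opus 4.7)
The plan is to combine Lemma \ref{red}, which globalizes the parameter on the quasi-split side, with Lemma \ref{4.1.2}, which globalizes the non-quasi-split inner form, and then to verify that the resulting global parameter is $\dot{G}$-relevant. The case $G = G^*$ is immediate from Lemma \ref{red}, so I may and do assume $G$ is non-quasi-split; in particular $\chi_G$ is the nontrivial character of $Z(\widehat{G^*})^\Gamma = \{\pm 1\}$ and $\phi$ admits an element in its packet for $G$ by hypothesis.

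First I would invoke Lemma \ref{red} to obtain $(\dot{F}, \dot{G}^*, \dot{\phi}, u, v_1, v_2)$ with $\dot{F}$ a totally real number field such that $\dot{F}_u = F$, $v_1$ finite, $v_2$ real, $\dot{G}^* = \SO_{2n+1}/\dot{F}$, $\dot{\phi} \in \Phi_2(\dot{G}^*)$, and properties 1, 3, 4 holding (with $\dot{G}^*_v$ in place of $\dot{G}_v$). Next I would apply Lemma \ref{4.1.2} to promote the local inner form $G$ to a global inner form $\dot{G}$ of $\dot{G}^*$ satisfying $\dot{G}_u = G$, $\dot{G}_v$ split for $v \notin \{u, v_2\}$, and $\dot{G}_{v_2} = \SO(n-1, n+2)$. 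This yields items 1 and 2 of the proposition, while items 3 and 4 are statements about $\dot{\phi}$ alone and are inherited from Lemma \ref{red}.

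The remaining and only nontrivial task is to verify that $\dot{\phi}$ is $\dot{G}$-relevant. At every $v \notin \{u, v_2\}$ the group $\dot{G}_v$ is split, so every parameter is relevant. At $v = u$ relevance is exactly the hypothesis. The delicate point is relevance at $v_2$. Here I would exploit the freedom in the proof of Lemma \ref{red} to choose the local constituents $\phi_{v_2, i} \in \widetilde{\Phi}_{2,\bdd}(\dot{G}^*_{i,v_2})$: the only constraint imposed there is that the $\phi_{v_2,i}$ have no common constituents, leaving ample room. Since $\dot{G}_{v_2} = \SO(n-1, n+2)$ has discrete series (its maximal compact subgroup has the same rank as $\dot{G}_{v_2}$), I would pick each $\phi_{v_2, i}$ regular and bounded so that the resulting $\dot{\phi}_{v_2}$ is a sufficiently regular bounded discrete parameter whose archimedean $L$-packet on $\SO(n-1, n+2)$ (constructed by the standard Langlands-Harish-Chandra classification of real discrete series, which is known independently of this paper's induction) is nonempty and contains a representation whose character restricts to $\chi_{\dot{G}_{v_2}}$ on $Z(\widehat{G^*})^\Gamma$. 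The product formula \eqref{033} forces $\chi_{\dot{G}_{v_2}} = \chi_G$, and this is compatible because exactly the two places $u$ and $v_2$ contribute the nontrivial character while every other $\dot{G}_v$ is split.

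The main obstacle is the last step: arranging the local parameters at $v_2$ so as to guarantee $\dot{G}$-relevance without disturbing the simple global summands $\dot{\phi}_i \in \widetilde{\Phi}_\simple(\dot{G}^*_i)$ constructed via Lemma \ref{4.3.1}. Since Lemma \ref{4.3.1} allows any prescribed local behavior at the finite set $V$ of places, I would include $v_2$ in this set from the outset and specify each $\phi_{v_2,i}$ to be a regular square-integrable bounded parameter giving discrete series on the real inner form $\dot{G}_{v_2}$; the archimedean $L$-packet of the direct sum is then nonempty for $\dot{G}_{v_2}$, establishing relevance and completing the construction.
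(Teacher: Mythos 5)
Your proof follows the same approach as the paper: invoke Lemma \ref{red} to produce the global parameter data, then Lemma \ref{4.1.2} to produce the global inner form, then verify relevance. However, your elaborate argument for relevance at $v_2$—discrete series existence, regular parameter choices, central-character compatibility—is unnecessary: since $\dot{\phi}_{v_2}\in\Phi_{2,\bdd}(\dot{G}^*_{v_2})$ is discrete by property 3 of Lemma \ref{red}, it does not factor through any proper Levi of $\widehat{G}$, so relevance holds trivially for every inner form of $\dot{G}^*_{v_2}$ regardless of how the local constituents $\phi_{v_2,i}$ are chosen.
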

\begin{proof}
    By Lemma \ref{red}, we obtain $\dot{F}$, $\dot{G}^*$, $\dot{\phi}$, $u$, $v_1$, and $v_2$ satisfying the third and fourth conditions.
    If $G$ is not split over $F$, then by Lemma \ref{4.1.2}, we obtain $\dot{G}$ that satisfies the second condition for which $\dot{\phi}$ is relevant.
    The first condition is now clear.
    If $G$ splits over $F$, then $\dot{G}=\SO_{2n+1}$ clearly satisfies the conditions.
\end{proof}

Now we treat the case that $M^*$ is proper, which implies that $\phi\notin\Phi_2(G^*)$.
First, we consider the case when $M^*$ is not linear, excluding some special cases as below.
\begin{lemma}\label{blue}
    Assume that $M^*\subsetneq G^*$ is not linear.
    Assume also that $(n,\phi)$ is neither $(2, \text{of type }(\exc1))$ nor $(3, \text{of type }(\exc2))$.
    Then there exists a global data $(\dot{F}, \dot{G}^*, \dot{M}^*, \dot{\phi}, \dot{\phi}_{\dot{M}^*}, u, v_1, v_2)$, where $\dot{F}$ is a totally real number field, $\dot{G}^*=\SO_{2n+1}$ over $\dot{F}$, $\dot{M}^*\subset\dot{G}^*$ a Levi subgroup over $\dot{F}$, $\dot{\phi}\in\Phi(\dot{G}^*)$, $\dot{\phi}_{\dot{M}^*}\in\Phi_2(\dot{M}^*)$, and $u, v_1, v_2$ are places of $\dot{F}$, such that $v_1$ and $v_2$ are finite places and
    \begin{enumerate}
        \item $\dot{F}_u=F$, $\dot{G}^*_u=G^*$, $\dot{M}^*_u=M^*$, $\dot{\phi}_u=\phi$, and $\dot{\phi}_{\dot{M}^*,u}=\phi_{M^*}$;
        \item if $\phi\in\Phi^2_\el(G^*)$ (resp. $\Phi_{\exc1}(G^*)$, resp. $\Phi_{\exc2}(G^*)$), then $\dot{\phi}\in\Phi^2_\el(\dot{G}^*)$ (resp. $\Phi_{\exc1}(\dot{G}^*)$, resp. $\Phi_{\exc2}(\dot{G}^*)$);
        \item $\dot{\phi}_{v_1}\in\Phi_\bdd(\dot{G}^*_{v_1})$ and $\dot{\phi}_{\dot{M}^*,v_1}\in\Phi_{2,\bdd}(\dot{M}^*_{v_1})$;
        \item $\dot{\phi}_{v_2}\in\Phi_\bdd^{\el,\exc}(\dot{G}^*_{v_2})$ and it has a symplectic simple component with odd multiplicity;
        \item the canonical maps $S_{\dot{\phi}}\to S_{\dot{\phi}_v}$ and $S_{\dot{\phi}_{\dot{M}^*}}\to S_{\dot{\phi}_{\dot{M}^*,v}}$ are isomorphic for $v\in\{u,v_1\}$.
    \end{enumerate}
\end{lemma}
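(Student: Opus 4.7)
The plan is to combine the field-globalization Lemma \ref{4.1.1} with the simple-parameter globalizations Lemmas \ref{4.3.1} and \ref{4.3.2}, applied separately to each self-dual simple constituent $\phi_i$ of $\phi=\bigoplus_i \ell_i\phi_i$. First, Lemma \ref{4.1.1} produces a totally real number field $\dot F$ and a place $u$ with $\dot F_u \cong F$, and I pick two further finite places $v_1, v_2$ distinct from $u$. Defining $\dot G^*=\SO_{2n+1}$ over $\dot F$ and $\dot M^* = \GL_{N_1}^{e_1}\times\cdots\times\GL_{N_r}^{e_r}\times\SO_{2n_0+1}$ over $\dot F$ realizes $\dot G^*_u=G^*$ and $\dot M^*_u=M^*$.

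For each $i$, let $\dot G^*_i \in \widetilde{\calE}_\simple(N_i)$ be a simple twisted endoscopic group over $\dot F$ of the same symplectic or orthogonal type as $\phi_i$. For a distinguished index $i_1$ (chosen below), I apply Lemma \ref{4.3.2} to produce a simple generic $\dot\phi_{i_1} \in \widetilde{\Phi}_\simple(\dot G^*_{i_1})$ whose $v_2$-localization has the form $\phi_+ \oplus \phi_+^\vee \oplus \phi_-$ with $\phi_+$ non-self-dual and $\phi_-$ simple self-dual (forced to be of the same symplectic/orthogonal type as $\phi_{i_1}$). For every $i \ne i_1$, I apply Lemma \ref{4.3.1} to produce a simple $\dot\phi_i$ that is simple bounded at $u, v_1, v_2$, the local parameters being chosen pairwise distinct across $i$. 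Setting $\dot\phi_{\dot M^*} = e_1\dot\phi_1\oplus\cdots\oplus e_r\dot\phi_r\oplus\dot\phi_-$ with $\dot\phi_- = \bigoplus_{i: \delta_i=1} \dot\phi_i$, and letting $\dot\phi$ denote its image in $\Phi(\dot G^*)$, yields the candidate global data.

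The choice of $i_1$ depends on the type. For $\phi \in \Phi^2_\el(G^*)$ I take $i_1$ to be any multiplicity-$2$ index (automatically $N_{i_1}\ge 2$ by symplecticity), and the mult-$1$ part of $\phi$, forced to be nonempty by $M^*$ non-linear, furnishes the required odd-multiplicity symplectic simple constituent at $v_2$. For $\phi \in \Phi_{\exc2}(G^*)$ I take $i_1=1$; then $3\phi_-$ (when $N_1\ge 4$), or a surviving $\dot\phi_j$ with $j\ge 2$ (when $N_1=2$, guaranteed by $(3,\exc2)$ excluded), supplies the constituent. For $\phi \in \Phi_{\exc1}(G^*)$ I take $i_1=1$ when $N_1\ge 2$, and otherwise $i_1\ge 2$ with $N_{i_1}\ge 2$, which exists because $(2,\exc1)$ is excluded; the surviving $\dot\phi_j$ with $j\ge 2$ and $j\ne i_1$, or the residual $\phi_-$ when only one such index exists but $N_{i_1}\ge 4$, supplies the constituent. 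In every case the $\GL$-type pair $\phi_+\oplus\phi_+^\vee$ appearing in $\dot\phi_{v_2}$ contributes a $\GL$-factor to $S_{\dot\phi_{v_2}}$, which forces $\dot\phi_{v_2} \in \Phi_\bdd^{\el,\exc}(\dot G^*_{v_2})$.

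Conditions (1) and (3) are immediate from the local choices; (2) holds because each $\dot\phi_i$ is globally simple and the $\dot\phi_i$ are pairwise distinct, so the global multiplicity pattern replicates that of $\phi$; (5) follows because at $u$ and $v_1$ every $\dot\phi_i$ localizes to a simple parameter and the localizations remain pairwise distinct, so the isobaric decomposition is unchanged and the canonical component-group maps are isomorphisms. Condition (4) is the content of the previous paragraph. The main obstacle is the case analysis for condition (4): one must verify that the excluded tuples $(2,\exc1)$ and $(3,\exc2)$ are precisely the low-rank degeneracies in which one cannot simultaneously move $\dot\phi_{v_2}$ out of $\Phi_\el(\dot G^*_{v_2})\sqcup \Phi_\exc(\dot G^*_{v_2})$ by a non-self-dual split and still retain an odd-multiplicity symplectic simple constituent at $v_2$; the key ingredients are the symplectic/orthogonal type constraint on $\phi_-$ in Lemma \ref{4.3.2} and the hypothesis that $M^*$ is not linear (which forces $\delta_i=1$ for some $i$).
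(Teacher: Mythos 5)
Your proof takes essentially the same approach as the paper: globalize each self-dual simple constituent via Lemmas \ref{4.3.1}/\ref{4.3.2}, split one distinguished constituent at $v_2$ into a non-self-dual pair plus a simple residue, and then use the remaining odd-multiplicity symplectic data to verify condition (4). The main organizational difference is in how the distinguished index is chosen. The paper derives a uniform dichotomy (either some $N_s\geq 3$, or some $N_s=2$ together with $\delta_t=1$ for $t\neq s$) by showing that the ``degenerate'' configuration ($N_s=2$, $\delta_s=1$, $N_t=1$ for $t\neq s$) leads to a contradiction with the standing hypotheses, and then handles both branches the same way. You instead run a type-by-type argument (elliptic, $\exc1$, $\exc2$), which is logically equivalent but spreads the role of the two exclusions across several subcases; in particular, in your $\exc1$ case the existence of an index $i_1\geq 2$ actually follows from $M^*$ being non-linear (forcing $r\geq 2$), not from the $(2,\exc1)$ exclusion, which only enters when showing $\phi_-$ or a surviving $\dot\phi_j$ can serve as the odd-multiplicity symplectic constituent.

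Two small points worth flagging. First, you cite Lemma \ref{4.1.1} for the field, which produces two places over $F$; here Lemma \ref{art6.2.1} suffices and is what the paper uses (harmless overkill, but only one place $u$ is needed). Second, when you choose the local parameters $\phi_{v_2,i}$ for $i\neq i_1$, you should require them to be distinct not only from one another but also from the residue $\phi_-$ produced by Lemma \ref{4.3.2}; otherwise the multiplicity of that simple constituent of $\dot\phi_{v_2}$ could inflate and spoil the parity count in condition (4). The paper imposes this explicitly. With these adjustments your argument goes through.
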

\begin{proof}
    A totally real number field $\dot{F}$ and a place $u$ such that $\dot{F}_u=F$ are given by Lemma \ref{art6.2.1}.
    Let $v_1$ and $v_2$ be finite places of $\dot{F}$.
    Put $\dot{G}^*=\SO_{2n+1}$ and $\dot{M}^*_-=\SO_{2n_0+1}$ over $\dot{F}$.
    Let $\dot{M}^*\subset\dot{G}^*$ be a Levi subgroup over $\dot{F}$ such that
    \begin{equation*}
        \dot{M}^*\simeq \GL_{N_1}^{e_1}\times\cdots\times \GL_{N_r}^{e_r} \times \dot{M}^*_-.
    \end{equation*}

    For each $i=1,\ldots, r$, let $G^*_i\in\widetilde{\calE}_\simple(N_i)$ over $F$ and $\dot{G}^*_i\in\widetilde{\calE}_\simple(N_i)$ over $\dot{F}$ be as in the proof of Lemma \ref{red}.
    Choose a collection $\phi_{v_1,i}\in\widetilde{\Phi}_{\simple,\bdd}(\dot{G}^*_{i,v_1})$, $(i=1,\ldots,r)$ of pairwise distinct parameters.

    Since $M^*$ is not linear, we have $n_0\geq1$, which implies that $\delta_i=1$ (i.e., $\ell_i$ is odd) for some $i$.
    For such $i$, $\phi_i$ must be symplectic, in particular $N_i\geq2$.

    Suppose that there exists $1\leq s\leq r$ such that $\delta_s=1$, $N_s=2$, and $N_t=1$ for all $t\neq s$.
    In this case $\ell_s$ is odd and $\ell_t$ is even for $t\neq s$.
    We may assume $s=1$.
    If $\phi$ is elliptic, it can be written as
    \begin{equation*}
        \phi=\phi_1\oplus 2\phi_{q+1}\oplus\cdots\oplus 2\phi_r,
    \end{equation*}
    where $\phi_i$ are all symplectic, in particular $N_i\geq 2$ for all $i$.
    This leads to $r=1$ and $\phi=\phi_1$, which contradicts the assumption that $M^*$ is proper.
    If $\phi$ is of type ($\exc1$), it can be written as
    \begin{equation*}
        \phi=\phi_1\oplus 2\phi_2,
    \end{equation*}
    where $\phi_1$ is symplectic, and $\phi_2$ is orthogonal.
    Then the dimension of $\phi$ equals $2+2\times 1=4$, which contradicts the assumption that $(n,\phi)$ is not (2, of type ($\exc1$)).
    If $\phi$ is of type ($\exc2$), it can be written as
    \begin{equation*}
        \phi=3\phi_1,
    \end{equation*}
    where $\phi_1$ is symplectic.
    Then the dimension of $\phi$ equals $3\times 2=6$, which contradicts the assumption that $(n,\phi)$ is not (3, of type ($\exc2$)).

    Therefore, we have that either there exists $1\leq s\leq r$ such that $N_s\geq 3$, or there exists $1\leq s\leq r$ such that $N_s=2$ and $\delta_t=1$ for some $t\neq s$.
    In both cases, by Lemma \ref{4.3.2}, we have $\dot{\phi}_s\in \widetilde{\Phi}_\simple(\dot{G}^*_s)$ such that $\dot{\phi}_{s,u}=\phi_s$, $\dot{\phi}_{s,v_1}=\phi_{v_1,s}$, and $\dot{\phi}_{s,v_2}$ is of the form
    \begin{align*}
        \dot{\phi}_{s,v_2}=\phi_{v_2+} \oplus \dot{\phi}_{v_2+}^\vee \oplus \phi_{v_2-},
    \end{align*}
    where $\phi_{v_2+}\in\Phi_\bdd(\GL_1)$ is a non-self-dual parameter, and $\phi_{v_2-}\in\widetilde{\Phi}_\bdd(N_s-2)$ a simple self-dual parameter.
    For $i\neq s$, choose $\phi_{v_2,i}\in \widetilde{\Phi}_{\simple,\bdd}(\dot{G}^*_i)$ so that $\{\phi_{v_2,i}\}_{i\neq s}$ are mutually distinct and $\phi_{v_2,i}\neq \phi_{v_2-}$ for all $i$.
    Lemma \ref{4.3.1} gives us $\dot{\phi}_i\in\widetilde{\Phi}_\simple(\dot{G}^*_i)$ such that $\dot{\phi}_{i,u}=\phi_i$, $\dot{\phi}_{i,v_1}=\phi_{v_1,i}$, and $\dot{\phi}_{i,v_2}=\phi_{v_2,i}$.

    Put
    \begin{align*}
        \dot{\phi}             & =\ell_1\dot{\phi}_1\boxplus\cdots\boxplus\ell_r\dot{\phi}_r\in\Phi(\dot{G}^*),                    \\
        \dot{\phi}_{\dot{M}^*} & =e_1\dot{\phi}_1\boxplus\cdots\boxplus e_r\dot{\phi}_r \boxplus \dot{\phi}_-\in\Phi_2(\dot{M}^*),
    \end{align*}
    where $\dot{\phi}_-=\boxplus_i\delta_i\dot{\phi}_i$.
    The first and fifth conditions immediately follow from the construction, and hence so do the second and third ones.

    Let us consider the fourth condition.
    We have
    \begin{equation*}
        \dot{\phi}_{v_2}=\ell_s\phi_{v_2-} \oplus \ell_s(\phi_{v_2+}\oplus \phi_{v_2+}^\vee) \oplus \bigoplus_{i\neq s} \ell_i\phi_{v_2,i},
    \end{equation*}
    which is neither elliptic nor exceptional, because $\phi_{v_2+}$ is not self-dual.
    If $N_s\geq 3$, then $\dot{\phi}_{v_2}$ has a symplectic simple component with odd multiplicity because $\phi$ has.
    If $N_s=2$ and $\delta_t=1$ for $t\neq s$, then clearly $\phi_{v_2,t}$ is a symplectic simple component with odd multiplicity.
    This completes the proof.
\end{proof}
\begin{proposition}\label{4.4.4}
    Assume that $M^*\subsetneq G^*$ is not linear.
    Assume also that $(n,\phi)$ is neither $(2, \text{of type }(\exc1))$ nor $(3, \text{of type }(\exc2))$.
    Then there exists a global data
    \begin{equation*}
        (\dot{F},\dot{G}^*,\dot{G}, \dot{\phi}, \dot{M}^*, \dot{\phi}_{\dot{M}^*}, u, v_1, v_2),
    \end{equation*}
    where $\dot{F}$ is a totally real number field, $\dot{G}^*=\SO_{2n+1}$ over $\dot{F}$, $\dot{G}$ an inner form of $\dot{G}^*$, $\dot{\phi}\in\Phi(\dot{G}^*)$ a parameter, $\dot{M}^*\subset\dot{G}^*$ a Levi subgroup, $\dot{\phi}_{\dot{M}^*}\in\Phi_2(\dot{M}^*)$ a parameter whose image in $\Phi(\dot{G}^*)$ is $\dot{\phi}$, and $u, v_1, v_2$ are places of $\dot{F}$, such that $v_1$ and $v_2$ are finite place, and
    \begin{enumerate}
        \item $\dot{F}_u=F$, $\dot{G}^*_u=G^*_u$, $\dot{G}_u=G$, $\dot{\phi}_u=\phi$, $\dot{M}^*_u=M^*$, and $\dot{\phi}_{\dot{M}^*,u}=\phi_{M^*}$;
        \item $\dot{G}_v$ is split over $\dot{F}_v$ unless $v\in\{u,v_2\}$;
        \item if $\phi\in\Phi^2_\el(G^*)$ (resp. $\Phi_{\exc1}(G^*)$, resp. $\Phi_{\exc2}(G^*)$), then $\dot{\phi}\in\Phi^2_\el(\dot{G}^*)$ (resp. $\Phi_{\exc1}(\dot{G}^*)$, resp. $\Phi_{\exc2}(\dot{G}^*)$);
        \item $\dot{\phi}_{v_1}\in\Phi_\bdd(\dot{G}^*_{v_1})$ and $\dot{\phi}_{\dot{M}^*,v_1}\in\Phi_{2,\bdd}(\dot{M}^*_{v_1})$;
        \item $\dot{\phi}_{v_2}\in\Phi^{\el,\exc}_\bdd(\dot{G}^*_{v_2})$ is relevant for $\dot{G}_{v_2}$;
        \item the canonical maps $S_{\dot{\phi}}\to S_{\dot{\phi}_v}$ and $S_{\dot{\phi}_{\dot{M}^*}}\to S_{\dot{\phi}_{\dot{M}^*,v}}$ are isomorphic for $v\in\{u,v_1\}$.
    \end{enumerate}
\end{proposition}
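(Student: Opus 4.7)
My plan is to closely mirror the proof of Proposition \ref{4.4.3}: first extract the global field and parameter data from Lemma \ref{blue}, then construct the inner form $\dot{G}$ using Lemma \ref{4.1.2}, and finally verify relevance of the parameter at the auxiliary place $v_2$.

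The first step is a direct invocation of Lemma \ref{blue}, producing $(\dot{F}, \dot{G}^*, \dot{M}^*, \dot{\phi}, \dot{\phi}_{\dot{M}^*}, u, v_1, v_2)$ that already satisfies conditions (1), (3), (4), and (6) of the proposition, together with the first clause of (5), namely $\dot{\phi}_{v_2}\in\Phi^{\el,\exc}_\bdd(\dot{G}^*_{v_2})$. Only the construction of $\dot{G}$ and the relevance of $\dot{\phi}_{v_2}$ for $\dot{G}_{v_2}$ remain.

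For $\dot{G}$, I would split into cases. If $G$ is split, take $\dot{G}=\dot{G}^*$ and every condition is immediate. Otherwise $G$ is non-quasi-split, and Lemma \ref{4.1.2} applied to the two finite places $u$ and $v_2$ delivers $\dot{G}$ with $\dot{G}_u=G$, $\dot{G}_v$ split for $v\notin\{u,v_2\}$, and $\dot{G}_{v_2}$ the unique non-quasi-split inner form over $\dot{F}_{v_2}$, confirming (2). The key point is the relevance in (5): over a $p$-adic field, the non-quasi-split $\SO_{2n+1}$ has a $3$-dimensional anisotropic quadratic subspace and hence Witt index $n-1$, so any proper Levi transferring to it is of the form $\GL_{n_1}\times\cdots\times\GL_{n_k}\times\SO_{2m+1}$ with $m\geq 1$. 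A parameter for $\SO_{2n+1}$ factors through such a Levi exactly when its symplectic representation contains a symplectic simple constituent of odd multiplicity, and this is precisely what property (4) of Lemma \ref{blue} guarantees.

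The main substance of the argument is concentrated in Lemma \ref{blue}; the present proposition merely grafts the inner form onto its output. The only subtle ingredient is the equivalence between relevance for the non-quasi-split inner form at $v_2$ and the presence of an odd-multiplicity symplectic simple component, and this was anticipated by the corresponding clause of Lemma \ref{blue}. The exclusion of $(n,\phi)=(2,\text{of type }(\exc1))$ and $(3,\text{of type }(\exc2))$ is inherited from Lemma \ref{blue}, where it ensures that Lemma \ref{4.3.2} can be applied to a simple factor large enough to produce the needed non-self-dual summand at $v_2$ while still preserving the odd-multiplicity symplectic content.
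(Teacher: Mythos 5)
Your proof is correct and follows essentially the same route as the paper: invoke Lemma \ref{blue} for the field, Levi, and parameter data, then invoke Lemma \ref{4.1.2} for the inner form, then verify relevance at $v_2$. The paper in fact dispatches the relevance check in one sentence (``Since $v_2$ is a finite place, the fifth condition follows from the fourth condition of Lemma \ref{blue}, regardless of $\dot{G}$.''), whereas you expand this into the Witt-index / anisotropic-kernel reasoning; the expansion is correct and harmless. One small imprecision worth flagging: you write ``a parameter for $\SO_{2n+1}$ factors through such a Levi exactly when'' — every parameter trivially factors through $G^*$ itself, which also has $m\geq 1$, so what you mean is that the \emph{minimal} Levi through which $\phi$ factors (the one for which the induced parameter is discrete) has $m\geq 1$; relevance for the non-quasi-split $p$-adic inner form is equivalent to that minimal Levi transferring. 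Also, your case split on whether $G$ is split is unnecessary: Lemma \ref{4.1.2} is stated for any inner form $G$, and when $G$ is split the product formula forces $\dot{G}=\dot{G}^*$ anyway.
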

\begin{proof}
    By Lemma \ref{blue}, we obtain $\dot{F}$, $\dot{G}^*$, $\dot{\phi}$, $\dot{M}^*$, $\dot{\phi}_{\dot{M}^*}$, $u$, $v_1$, and $v_2$ satisfying the third, fourth, and sixth conditions.
    By Lemma \ref{4.1.2}, we obtain $\dot{G}$ satisfying the second condition.
    Since $v_2$ is a finite place, the fifth condition follows from the fourth condition of Lemma \ref{blue}, regardless of $\dot{G}$.
    The first condition is now clear.
\end{proof}

Next we shall consider the special cases.
\begin{lemma}\label{emerald}
    Assume that $M^*\subsetneq G^*$ is not linear.
    Assume also that $n=2$ or $3$.
    Let $\heartsuit=(\exc1)$ (resp. $(\exc2)$) if $n=2$ (resp. $3$), and assume that $\phi$ is of type $\heartsuit$.
    Then there exists a global data $(\dot{F}, \dot{G}^*, \dot{M}^*, \dot{\phi}, \dot{\phi}_{\dot{M}^*}, u, v_1, v_2)$, where $\dot{F}$ is a totally real number field, $\dot{G}^*=\SO_{2n+1}$ over $\dot{F}$, $\dot{M}^*\subset\dot{G}^*$ a Levi subgroup over $\dot{F}$, $\dot{\phi}\in\Phi(\dot{G}^*)$, $\dot{\phi}_{\dot{M}^*}\in\Phi_2(\dot{M}^*)$, and $u, v_1, v_2$ are places of $\dot{F}$, such that $v_1$ is a finite place, $v_2$ is a real place, and
    \begin{enumerate}
        \item $\dot{F}_u=F$, $\dot{G}^*_u=G^*$, $\dot{M}^*_u=M^*$, $\dot{\phi}_u=\phi$, and $\dot{\phi}_{\dot{M}^*,u}=\phi_{M^*}$;
        \item $\dot{\phi}\in\Phi_\heartsuit(\dot{G}^*)$;
        \item $\dot{\phi}_{v_1}\in\Phi_\bdd(\dot{G}^*_{v_1})$ and $\dot{\phi}_{\dot{M}^*,v_1}\in\Phi_{2,\bdd}(\dot{M}^*_{v_1})$;
        \item $\dot{\phi}_{v_2}=2\omega_0\oplus \tau_1$ or $3\tau_1$;
        \item the canonical maps $S_{\dot{\phi}}\to S_{\dot{\phi}_v}$ and $S_{\dot{\phi}_{\dot{M}^*}}\to S_{\dot{\phi}_{\dot{M}^*,v}}$ are isomorphic for $v\in\{u,v_1\}$.
    \end{enumerate}
    Here, $\omega_0$ and $\tau_1$ are representations of $W_\R$ defined in the beginning of the subsection \ref{2.9}.
\end{lemma}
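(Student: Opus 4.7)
The plan is to adapt the proof of Lemma \ref{blue}, now taking the distinguished auxiliary place $v_2$ to be \emph{real} and prescribing the localization of $\dot{\phi}$ at $v_2$ to match the archimedean parameters appearing in Propositions \ref{2.9.1} and \ref{2.9.2}. First I would apply Lemma \ref{4.1.1} (or Lemma \ref{art6.2.1} together with a square extension argument) to obtain a totally real number field $\dot{F}$ with $\dot{F}_u \cong F$ having at least three real places, then pick a real place $v_2 \neq u$ and a finite place $v_1 \notin \{u, v_2\}$. Put $\dot{G}^* = \SO_{2n+1}$ over $\dot{F}$ and let $\dot{M}^* \subset \dot{G}^*$ be the standard Levi subgroup whose isomorphism type matches $M^*$.

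I then treat the two cases by globalizing each simple constituent of $\phi_{M^*}$ separately. In the case $n = 2$ and $\phi \in \Phi_{\exc1}(G^*)$, the non-linearity of $M^*$ together with the dimension count $2n = 4$ forces $\phi = 2\phi_1 \oplus \phi_2$ with $\phi_1 \in \widetilde{\Phi}_{\simple,\bdd}(1)$ (necessarily orthogonal) and $\phi_2 \in \widetilde{\Phi}_{\simple,\bdd}(2)$ symplectic, so that $M^* \simeq \GL_1 \times \SO_3$ and $\phi_{M^*} = \phi_1 \oplus \phi_2$. I would globalize $\phi_1$ via class field theory to a self-dual Hecke character $\dot{\phi}_1$ on $\dot{F}$ with $\dot{\phi}_{1,u} = \phi_1$, $\dot{\phi}_{1,v_2} = \omega_0$, and $\dot{\phi}_{1,v_1}$ any prescribed self-dual bounded character. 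Then I would apply Lemma \ref{4.3.1} to $\dot{G}^*_2 = \SO_3$ to obtain $\dot{\phi}_2 \in \widetilde{\Phi}_\simple(\dot{G}^*_2)$ with $\dot{\phi}_{2,u} = \phi_2$, $\dot{\phi}_{2,v_2} = \tau_1$, and $\dot{\phi}_{2,v_1}$ a chosen bounded simple discrete parameter. Setting $\dot{\phi} := 2\dot{\phi}_1 \boxplus \dot{\phi}_2$ and $\dot{\phi}_{\dot{M}^*} := \dot{\phi}_1 \boxplus \dot{\phi}_2$ completes the construction. In the case $n = 3$ and $\phi \in \Phi_{\exc2}(G^*)$, one has $\phi = 3\phi_1$ with $\phi_1 \in \widetilde{\Phi}_{\simple,\bdd}(2)$ symplectic, $M^* \simeq \GL_2 \times \SO_3$, and $\phi_{M^*} = \phi_1 \oplus \phi_1$. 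A single application of Lemma \ref{4.3.1} to $\dot{G}^*_1 = \SO_3$ yields $\dot{\phi}_1 \in \widetilde{\Phi}_\simple(\dot{G}^*_1)$ with $\dot{\phi}_{1,u} = \phi_1$, $\dot{\phi}_{1,v_2} = \tau_1$, and $\dot{\phi}_{1,v_1}$ bounded simple discrete. Setting $\dot{\phi} := 3\dot{\phi}_1$ and $\dot{\phi}_{\dot{M}^*} := 2\dot{\phi}_1$ finishes the case.

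The main obstacle is the last condition, the isomorphy of the canonical maps $S_{\dot{\phi}} \to S_{\dot{\phi}_v}$ and $S_{\dot{\phi}_{\dot{M}^*}} \to S_{\dot{\phi}_{\dot{M}^*,v}}$ for $v \in \{u, v_1\}$. This reduces to showing that at each of these places the global decomposition of $\dot{\phi}$ into simple constituents, together with the symplectic or orthogonal type of each constituent, is faithfully preserved upon localization. At $u$ this is automatic from $\dot{\phi}_{i,u} = \phi_i$ and the hypothesis on $\phi$; at $v_1$ it holds because we have full freedom in choosing $\dot{\phi}_{i,v_1}$ to be simple, bounded, discrete, of the correct symplectic/orthogonal type, and pairwise distinct (in the $(\exc1)$ case the distinctness between $\dot{\phi}_1$ and $\dot{\phi}_2$ is automatic from the dimension mismatch). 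Once this is established, condition 2 (that $\dot{\phi}$ inherits the exceptional type of $\phi$) follows from the same multiplicity and type bookkeeping applied globally, and conditions 1, 3, 4 are immediate from the construction. Applicability of Lemma \ref{4.3.1} in each invocation is guaranteed because at least one prescribed local parameter is simple: $\phi_i$ at $u$ by hypothesis, and $\tau_1$ at $v_2$ as an irreducible $2$-dimensional symplectic representation of $W_\R$.
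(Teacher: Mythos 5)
Your proof is correct and follows essentially the same construction as the paper: choose $\dot{F}$ totally real with $\dot{F}_u\cong F$ and enough real places, explicitly determine the shape of $\phi$ and $M^*$ from the dimension count (forcing $\phi=2\phi_1\oplus\phi_2$ with $\phi_1\in\widetilde{\Phi}_{\simple,\bdd}(1)$ orthogonal and $\phi_2$ symplectic for $n=2$; $\phi=3\phi_1$ with $\phi_1\in\widetilde{\Phi}_{\simple,\bdd}(2)$ symplectic for $n=3$), globalize each simple constituent via Lemma \ref{4.3.1} with $\dot{\phi}_{i,v_2}$ prescribed to be $\omega_0$ or $\tau_1$, and verify the five conditions directly. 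Your appeal to class field theory for the $1$-dimensional constituent $\phi_1$ is the same thing the paper does when it applies Lemma \ref{4.3.1} to $\dot{G}^*_1=\Sp_0$, and your invocation of Lemma \ref{4.1.1} is harmless overkill (the paper only uses Lemma \ref{art6.2.1} here).
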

\begin{proof}
    A totally real number field $\dot{F}$ and a place $u$ such that $\dot{F}_u=F$ are given by Lemma \ref{art6.2.1}.
    Let $v_1$ be a finite place and $v_2$ a real place of $\dot{F}$.
    Put $\dot{G}^*=\SO_{2n+1}$ over $\dot{F}$.

    Consider first the case when $n=2$ and $\heartsuit=(\exc1)$.
    By the assumption we have $\dot{G}^*=\SO_5$, $\phi=2\phi_1\oplus\phi_2$, $S_\phi\simeq\Sp(2,\C)\times\Or(1,\C)$, and $M^*\simeq\GL_1\times \SO_3$, where $\phi_1$ is 1-dimensional orthogonal and $\phi_2$ is irreducible 2-dimensional symplectic.
    We also have $N_1=1$ and $N_2=2$.
    For $i=1$, $2$, choose $(\dot{G}^*_i, s_i, \eta_i)\in \widetilde{\calE}(N_i)$ so that $\phi_i\in\Phi(\dot{G}^*_i)$ if we regard $\Phi(\dot{G}^*_i)$ as a subset of $\Phi(N_i)$ via $\eta_i$.
    Concretely, $\dot{G}^*_1=\Sp_0$ and $\dot{G}^*_2=\SO_3$.
    Let $\phi_{v_1,i}$ be an element of $\Phi_{\simple,\bdd}(\dot{G}^*_i)$, for $i=1, 2$.
    Put $\phi_{v_2,1}=\omega_0$ and $\phi_{v_2,2}=\tau_1$.
    Then Lemma \ref{4.3.1} gives us a global parameter $\dot{\phi}_i\in\Phi_\simple(\dot{G}^*_i)$ such that $\dot{\phi}_{i,u}=\phi_i$, $\dot{\phi}_{i,v_1}=\phi_{v_1,i}$, and $\dot{\phi}_{i,v_2}=\phi_{v_2,i}$.
    Put
    \begin{align*}
        \dot{\phi}             & =2\dot{\phi}_1 \boxplus \dot{\phi}_2, \\
        \dot{M}^*              & =\GL_1\times \SO_3,                   \\
        \dot{\phi}_{\dot{M}^*} & =\dot{\phi}_1 \boxplus \dot{\phi}_2.
    \end{align*}

    Consider next the case when $n=3$ and $\heartsuit=(\exc2)$.
    By the assumption we have $\dot{G}^*=\SO_7$, $\phi=3\phi_1$, $S_\phi\simeq\Or(3,\C)$, and $M^*\simeq\GL_2\times \SO_3$, where $\phi_1$ is irreducible 2-dimensional symplectic.
    Hence we regard $\phi_1\in\Phi(\SO_3/F)$.
    Let $\phi_{v_1,1}$ be an element of $\Phi_{\simple,\bdd}(\SO_3/\dot{F}_{v_1})$, and put $\phi_{v_2,1}=\tau_1$.
    Then Lemma \ref{4.3.1} gives us a global parameter $\dot{\phi}_1\in\Phi_\simple(\SO_3/\dot{F})$ such that $\dot{\phi}_{1,u}=\phi_1$, $\dot{\phi}_{1,v_1}=\phi_{v_1,1}$, and $\dot{\phi}_{1,v_2}=\phi_{v_2,1}$.
    Put
    \begin{align*}
        \dot{\phi}             & =3\dot{\phi}_1,                      \\
        \dot{M}^*              & =\GL_2\times \SO_3,                  \\
        \dot{\phi}_{\dot{M}^*} & =\dot{\phi}_1 \boxplus \dot{\phi}_1.
    \end{align*}

    In both cases, the conditions follow from the construction.
    This completes the proof.
\end{proof}
\begin{proposition}\label{4.4.6}
    Assume that $M^*\subsetneq G^*$ is not linear.
    Assume also that $n=2$ or $3$.
    Let $\heartsuit=(\exc1)$ (resp. $(\exc2)$) if $n=2$ (resp. $3$).
    Assume that $\phi\in\Phi_{\bdd,\heartsuit}(G^*)$ and that $G$ is not quasi-split.
    Then there exists a global data
    \begin{equation*}
        (\dot{F},\dot{G}^*,\dot{G}, \dot{\phi}, \dot{M}^*, \dot{\phi}_{\dot{M}^*}, u,v_1,v_2),
    \end{equation*}
    where $\dot{F}$ is a totally real number field, $\dot{G}^*=\SO_{2n+1}$ over $\dot{F}$, $\dot{G}$ an inner form of $\dot{G}^*$, $\dot{\phi}\in\Phi(\dot{G}^*)$ a parameter, $\dot{M}^*\subset\dot{G}^*$ a Levi subgroup, $\dot{\phi}_{\dot{M}^*}\in\Phi_2(\dot{M}^*)$ a parameter whose image in $\Phi(\dot{G}^*)$ is $\dot{\phi}$, and $u, v_1, v_2$ are places of $\dot{F}$, such that $v_1$ is a finite place, $v_2$ is a real place, and
    \begin{enumerate}
        \item $\dot{F}_u=F$, $\dot{G}^*_u=G^*_u$, $\dot{G}_u=G$, $\dot{\phi}_u=\phi$, $\dot{M}^*_u=M^*$, and $\dot{\phi}_{\dot{M}^*,u}=\phi_{M^*}$;
        \item $\dot{G}_v$ is split over $\dot{F}_v$ unless $v\in\{u,v_2\}$;
        \item $\dot{\phi}\in\Phi_\heartsuit(\dot{G}^*)$;
        \item $\dot{\phi}_{v_1}\in\Phi_\bdd(\dot{G}^*_{v_1})$ and $\dot{\phi}_{\dot{M}^*,v_1}\in\Phi_{2,\bdd}(\dot{M}^*_{v_1})$;
        \item $\dot{\phi}_{v_2}\in\Phi_{\bdd,\heartsuit}(\dot{G}^*_{v_2})$ is relevant for $\dot{G}_{v_2}$ and satisfies Theorem \ref{2.6.2} relative to $\dot{M}^*_{v_2}$;
        \item the canonical maps $S_{\dot{\phi}}\to S_{\dot{\phi}_v}$ and $S_{\dot{\phi}_{\dot{M}^*}}\to S_{\dot{\phi}_{\dot{M}^*,v}}$ are isomorphic for $v\in\{u,v_1\}$.
    \end{enumerate}
\end{proposition}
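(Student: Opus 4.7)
The plan is to combine the spectral globalization from Lemma \ref{emerald} with the group-theoretic globalization from Lemma \ref{4.1.2}, and then invoke the special-case local intertwining relations proved in \S\ref{2.9}.

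First I would apply Lemma \ref{emerald} to obtain a totally real number field $\dot{F}$, places $u$ (with $\dot{F}_u = F$), $v_1$ (finite), $v_2$ (real), a Levi subgroup $\dot{M}^* \subset \dot{G}^* = \SO_{2n+1}$ over $\dot{F}$, and parameters $\dot{\phi} \in \Phi_\heartsuit(\dot{G}^*)$, $\dot{\phi}_{\dot{M}^*} \in \Phi_2(\dot{M}^*)$ meeting conditions (1), (3), (4) and (6) of the proposition, and with the crucial additional property that $\dot{\phi}_{v_2}$ equals $2\omega_0 \oplus \tau_1$ if $n=2$, or $3\tau_1$ if $n=3$. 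The parameter at the auxiliary real place $v_2$ is precisely the one that lands in the scope of the special cases treated in Propositions \ref{2.9.1} and \ref{2.9.2}.

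Next, since $G$ is assumed non-quasi-split and $v_2$ is a real place, I would apply Lemma \ref{4.1.2} (using its third clause for non-quasi-split $G$ at a real auxiliary place) to produce an inner form $\dot{G}$ of $\dot{G}^*$ with $\dot{G}_u = G$, with $\dot{G}_v$ split for every $v \notin \{u, v_2\}$, and with $\dot{G}_{v_2} \cong \SO(n-1, n+2)$. When $n=2$ this is $\SO(1,4)$, and when $n=3$ it is $\SO(2,5)$, which are precisely the non-quasi-split real forms featured in \S\ref{muri1} and \S\ref{muri2} respectively. This supplies conditions (1) and (2), and leaves (3), (4), (6) intact.

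Finally I would verify condition (5) at $v_2$. By construction the triple $(\dot{G}_{v_2}, \dot{M}_{v_2}, \dot{\phi}_{v_2})$ is exactly that of \S\ref{muri1} when $n=2$, and that of \S\ref{muri2} when $n=3$; in either case the relevance of $\dot{\phi}_{v_2}$ for $\dot{G}_{v_2}$ and the existence of the required Levi transfer are part of the explicit set-up there, and Proposition \ref{2.9.1} or Proposition \ref{2.9.2} furnishes LIR for this triple. The main obstacle is the coherent matching at $v_2$: the spectral side (a generic $\heartsuit$-parameter lifting $\phi$ whose $v_2$-localization is exactly the prescribed exceptional tempered parameter) and the group-theoretic side (an inner form of $\dot{G}^*$ that is split away from $u$ and $v_2$ and specializes at $v_2$ to the prescribed non-quasi-split real form) must be produced simultaneously, which is why Lemma \ref{emerald} is stated so that $v_2$ is free to be any real place and why the alternative third clause of Lemma \ref{4.1.2} is designed to deliver precisely the signature $\SO(n-1, n+2)$ needed for the computations of \S\ref{2.9}.
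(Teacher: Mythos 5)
Your proof is correct and follows essentially the same route as the paper: first apply Lemma \ref{emerald} to globalize the parameter so that the $v_2$-localization is $2\omega_0\oplus\tau_1$ or $3\tau_1$, then use Lemma \ref{4.1.2} (with the clause giving $\dot{G}_{v_2}\cong\SO(n-1,n+2)$) to build the inner form, and finally invoke the special-case LIR computations of \S\ref{2.9} to verify condition (5).
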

\begin{proof}
    By Lemma \ref{emerald}, we obtain $\dot{F}$, $\dot{G}^*$, $\dot{\phi}$, $\dot{M}^*$, $\dot{\phi}_{\dot{M}^*}$, $u$, $v_1$, and $v_2$ satisfying the third, fourth, and sixth conditions.
    By Lemma \ref{4.1.2}, we obtain $\dot{G}$ such that $\dot{G}_u=G$, $\dot{G}_{v_2}\simeq \SO(n-1,n+2)$, and $\dot{G}_v$ is split if $v\notin\{u, v_2\}$.
    Hence the second condition is satisfied.
    The fifth condition follows from \S\ref{2.9}.
    The first condition is now clear.
\end{proof}

If $M^*$ is linear and $G$ is non-quasi-split, then $M^*$ never transfer to $G$ and $\phi$ is not relevant.
The next proposition will be applied to such a case
\begin{lemma}\label{orange}
    Assume that $M^*\subsetneq G^*$ is proper.
    There exists a global data $(\dot{F}, \dot{G}^*, \dot{M}^*, \dot{\phi}, \dot{\phi}_{\dot{M}^*}, u_1, u_2, v_1)$, where $\dot{F}$ is a totally real number field, $\dot{G}^*=\SO_{2n+1}$ over $\dot{F}$, $\dot{M}^*\subset\dot{G}^*$ a Levi subgroup over $\dot{F}$, $\dot{\phi}\in\Phi(\dot{G}^*)$, $\dot{\phi}_{\dot{M}^*}\in\Phi_2(\dot{M}^*)$, and $u_1, u_2, v_1$ are places of $\dot{F}$, such that $v_1$ is a finite place and
    \begin{enumerate}
        \item $\dot{F}_u=F$, $\dot{G}^*_u=G^*$, $\dot{M}^*_u=M^*$, $\dot{\phi}_u=\phi$, and $\dot{\phi}_{\dot{M}^*,u}=\phi_{M^*}$, for $u\in\{u_1, u_2\}$;
        \item if $\phi\in\Phi_2(G^*)$ (resp. $\Phi^2_\el(G^*)$, resp. $\Phi_{\exc1}(G^*)$, resp. $\Phi_{\exc2}(G^*)$), then $\dot{\phi}\in\Phi_2(\dot{G}^*)$ (resp. $\Phi^2_\el(\dot{G}^*)$, resp. $\Phi_{\exc1}(\dot{G}^*)$, resp. $\Phi_{\exc2}(\dot{G}^*)$);
        \item $\dot{\phi}_{v_1}\in\Phi_\bdd(\dot{G}^*_{v_1})$ and $\dot{\phi}_{\dot{M}^*,v_1}\in\Phi_{2,\bdd}(\dot{M}^*_{v_1})$;
        \item the canonical maps $S_{\dot{\phi}}\to S_{\dot{\phi}_v}$ and $S_{\dot{\phi}_{\dot{M}^*}}\to S_{\dot{\phi}_{\dot{M}^*,v}}$ are isomorphic for $v\in\{u_1, u_2, v_1\}$.
    \end{enumerate}
\end{lemma}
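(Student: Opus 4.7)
The plan is to mirror the proof of Lemma \ref{red} (the case $M^* = G^*$), but to replace the auxiliary place $v_2$ used there by a second copy $u_2$ of the target place $u_1$. Recall from Lemma \ref{4.1.1} that, assuming $F \neq \C$, one may choose a totally real number field $\dot{F}$ with two places $u_1, u_2$ whose completions are both $F$ and with as many additional real places as one wishes; in particular, after fixing any finite place $v_1 \notin \{u_1, u_2\}$ of $\dot{F}$, the set $V = \{u_1, u_2, v_1\}$ misses some real place, as required by the globalization lemmas. Decompose the given local data as $\phi = \bigoplus_i \ell_i \phi_i$ and $\phi_{M^*} = \bigoplus_i e_i \phi_i \oplus \phi_-$ with $\phi_- = \bigoplus_i \delta_i \phi_i$ in the notation of the paragraph preceding Lemma \ref{red}, and let $\dot{M}^* \simeq \GL_{N_1}^{e_1} \times \cdots \times \GL_{N_r}^{e_r} \times \SO_{2n_0+1}$ be the standard Levi subgroup of $\dot{G}^* = \SO_{2n+1}$ over $\dot{F}$, whose localization at each $u_j$ equals $M^*$.

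For each simple summand $\phi_i$, choose $\dot{G}^*_i \in \widetilde{\calE}_\simple(N_i)$ over $\dot{F}$ whose localization at $u_1$ (equivalently $u_2$) is the simple twisted endoscopic group carrying $\phi_i$; then pick a bounded simple local parameter $\phi_{v_1,i} \in \widetilde{\Phi}_{\simple,\bdd}(\dot{G}^*_{i,v_1})$, with $\phi_{v_1,1}, \ldots, \phi_{v_1,r}$ pairwise distinct. Apply Lemma \ref{4.3.1} to $V = \{u_1, u_2, v_1\}$ with the prescribed local parameters $\phi_i, \phi_i, \phi_{v_1,i}$ to obtain a simple generic global parameter $\dot{\phi}_i \in \widetilde{\Phi}_\simple(\dot{G}^*_i)$ with $\dot{\phi}_{i,u_1} = \dot{\phi}_{i,u_2} = \phi_i$ and $\dot{\phi}_{i,v_1} = \phi_{v_1,i}$. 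Set
\begin{align*}
    \dot{\phi} &= \ell_1 \dot{\phi}_1 \boxplus \cdots \boxplus \ell_r \dot{\phi}_r, \\
    \dot{\phi}_{\dot{M}^*} &= e_1 \dot{\phi}_1 \boxplus \cdots \boxplus e_r \dot{\phi}_r \boxplus \dot{\phi}_-,
\end{align*}
where $\dot{\phi}_- = \boxplus_i \delta_i \dot{\phi}_i$.

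The verification of (1)--(4) is then routine bookkeeping. Condition (1) follows directly from the choices at $u_1$ and $u_2$. Condition (2) holds because the symplectic/orthogonal type of each $\dot{\phi}_i$ is forced to agree with that of its localization $\phi_i$, so the multiplicities $\ell_i$ yield the same elliptic/exceptional shape globally as locally. Condition (3) follows from the fact that the $\phi_{v_1,i}$ are simple bounded and pairwise distinct, and that those with $\delta_i = 1$ are automatically symplectic (since the corresponding $\phi_i$, appearing in $\phi_-$, are symplectic), so $\dot{\phi}_{\dot{M}^*,v_1}$ is discrete bounded for $\dot{M}^*_{v_1}$. Condition (4) at each of $u_1, u_2, v_1$ amounts to the pairwise distinctness of the local simple constituents of $\dot{\phi}$ there, which is guaranteed by the distinctness of the $\phi_i$ (at $u_1, u_2$) or of the $\phi_{v_1,i}$ (at $v_1$). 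I do not anticipate any genuine obstacle beyond these combinatorial checks; the only essentially new input beyond the proof of Lemma \ref{red} is the existence of two places of $\dot{F}$ with common completion $F$, which is precisely what Lemma \ref{4.1.1} supplies.
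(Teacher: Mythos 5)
Your proposal matches the paper's proof of Lemma \ref{orange} essentially step for step: both use Lemma \ref{4.1.1} to produce two places $u_1$, $u_2$ with completion $F$, choose the Levi $\dot{M}^*$ of the matching shape, globalize each simple constituent $\phi_i$ by Lemma \ref{4.3.1} prescribing the same local datum at $u_1$ and $u_2$ and distinct simple bounded parameters at $v_1$, and then form $\dot{\phi}$ and $\dot{\phi}_{\dot{M}^*}$ with the same multiplicities. Your explicit remarks about ensuring $\{u_1,u_2,v_1\}$ misses a real place (needed for Lemma \ref{4.3.1}) and about the type-matching that makes $\dot{\phi}_-$ a valid parameter for $\dot{M}^*_-$ are correct refinements of what the paper leaves implicit.
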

\begin{proof}
    A totally real field $\dot{F}$ and places $u_1$ and $u_2$ such that $\dot{F}_{u_1}=\dot{F}_{u_2}=F$ are given by Lemma \ref{4.1.1}.
    Let $v_1$ be a finite place of $\dot{F}$.
    Put $\dot{G}^*=\SO_{2n+1}$ and $\dot{M}^*_-=\SO_{2n_0+1}$ over $\dot{F}$.
    Let $\dot{M}^*\subset\dot{G}^*$ be a Levi subgroup over $\dot{F}$ such that
    \begin{equation*}
        \dot{M}^*\simeq \GL_{N_1}^{e_1}\times\cdots\times \GL_{N_r}^{e_r} \times \dot{M}^*_-.
    \end{equation*}

    For each $i=1,\ldots, r$, let $G^*_i\in\widetilde{\calE}_\simple(N_i)$ be a classical group over $F$ such that $\phi_i\in\widetilde{\Phi}_{\simple,\bdd}(G^*_i)$, and take $\dot{G}^*_i\in\widetilde{\calE}_\simple(N_i)$ to be a simple twisted endoscopic group over $\dot{F}$ so that $\dot{G}^*_{i,u}=G^*_i$ for $u\in\{u_1,u_2\}$.
    Choose a collection $\phi_{v_1,i}\in\widetilde{\Phi}_{\simple,\bdd}(\dot{G}^*_{i,v_1})$ $(i=1,\ldots,r)$ of pairwise distinct parameters.
    Then Lemma \ref{4.3.1} gives us a collection of parameters $\dot{\phi}_i\in\widetilde{\Phi}_\simple(\dot{G}^*_i)$ such that $\dot{\phi}_{i,v_1}=\phi_{v_1,i}$ and $\dot{\phi}_{i,u}=\phi_i$ for $u\in\{u_1,u_2\}$.

    Put
    \begin{align*}
        \dot{\phi}             & =\ell_1\dot{\phi}_1 \boxplus\cdots\boxplus \ell_r\dot{\phi}_r \in \Phi(\dot{G}^*),                  \\
        \dot{\phi}_{\dot{M}^*} & =e_1\dot{\phi}_1\boxplus\cdots\boxplus e_r\dot{\phi}_r \boxplus \dot{\phi}_- \in \Phi_2(\dot{M}^*),
    \end{align*}
    where $\dot{\phi}_-=\boxplus_i\delta_i\dot{\phi}_i$.
    Then by the construction, the first and fourth conditions are satisfied.
    Hence the second and third ones follow.
\end{proof}
\begin{proposition}\label{4.4.7}
    Assume that $M^*\subsetneq G^*$ is proper.
    Then there exists a global data
    \begin{equation*}
        (\dot{F},\dot{G}^*,\dot{G}, \dot{\phi}, \dot{M}^*, \dot{\phi}_{\dot{M}^*}, u_1, u_2, v_1),
    \end{equation*}
    where $\dot{F}$ is a totally real number field, $\dot{G}^*=\SO_{2n+1}$ over $\dot{F}$, $\dot{G}$ an inner form of $\dot{G}^*$, $\dot{\phi}\in\Phi(\dot{G}^*)$ a parameter, $\dot{M}^*\subset\dot{G}^*$ a Levi subgroup, $\dot{\phi}_{\dot{M}^*}\in\Phi_2(\dot{M}^*)$ a parameter whose image in $\Phi(\dot{G}^*)$ is $\dot{\phi}$, and $u_1, u_2, v_1$ are places of $\dot{F}$, such that $v_1$ is a finite place and
    \begin{enumerate}
        \item $\dot{F}_u=F$, $\dot{G}^*_u=G^*_u$, $\dot{G}_u=G$, $\dot{\phi}_u=\phi$, $\dot{M}^*_u=M^*$, and $\dot{\phi}_{\dot{M}^*,u}=\phi_{M^*}$, for $u\in\{u_1, u_2\}$;
        \item $\dot{G}_v$ is split over $\dot{F}_v$ unless $v\in\{u_1, u_2\}$;
        \item if $\phi\in\Phi_2(G^*)$ (resp. $\Phi^2_\el(G^*)$, resp. $\Phi_{\exc1}(G^*)$, resp. $\Phi_{\exc2}(G^*)$), then $\dot{\phi}\in\Phi_2(\dot{G}^*)$ (resp. $\Phi^2_\el(\dot{G}^*)$, resp. $\Phi_{\exc1}(\dot{G}^*)$, resp. $\Phi_{\exc2}(\dot{G}^*)$);
        \item $\dot{\phi}_{v_1}\in\Phi_\bdd(\dot{G}^*_{v_1})$ and $\dot{\phi}_{\dot{M}^*,v_1}\in\Phi_{2,\bdd}(\dot{M}^*_{v_1})$;
        \item the canonical maps $S_{\dot{\phi}}\to S_{\dot{\phi}_v}$ and $S_{\dot{\phi}_{\dot{M}^*}}\to S_{\dot{\phi}_{\dot{M}^*,v}}$ are isomorphic for $v\in\{u_1, u_2, v_1\}$.
    \end{enumerate}
\end{proposition}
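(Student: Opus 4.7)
The plan is to combine Lemma \ref{orange}, which produces all the parameter-theoretic data $(\dot{F}, \dot{G}^*, \dot{M}^*, \dot{\phi}, \dot{\phi}_{\dot{M}^*}, u_1, u_2, v_1)$ satisfying conditions (1), (3), (4), (5) of the statement, with a careful choice of inner form $\dot{G}$ of $\dot{G}^*$ that restricts to $G$ at both $u_1$ and $u_2$ and is split at every other place of $\dot{F}$. Since conditions (3), (4), (5) depend only on the data produced by Lemma \ref{orange} and are preserved under any choice of inner form, all that will remain is to construct $\dot{G}$ with the prescribed localization behavior.

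If $G$ is split over $F$, then taking $\dot{G} = \dot{G}^*$ trivially works. Assume henceforth that $G$ is non-quasi-split, and let $z_0 \in H^1(F, G^*)$ be the nontrivial class classifying $G$. I seek a global cohomology class $\dot{z} \in H^1(\dot{F}, \dot{G}^*)$ whose localization at $u_1$ and $u_2$ is $z_0$ under the identifications $\dot{G}^*_{u_1} = \dot{G}^*_{u_2} = G^*$, and whose localization at every other place is trivial. By the exact sequence \eqref{033}, a family $(\dot{z}_v)_v \in \bigoplus_v H^1(\dot{F}_v, \dot{G}^*)$ lifts to a global class if and only if $\sum_v \alpha_v(\dot{z}_v) = 0$ in $\pi_0(Z(\widehat{\dot{G}^*})^\Gamma)^D \cong \Z/2\Z$. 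In our case all summands vanish except the $u_1$- and $u_2$-components, which contribute $\chi_G + \chi_G = 2\chi_G = 0$ since the target group has exponent two. Hence such a $\dot{z}$ exists, and we let $\dot{G}$ be the corresponding inner form. By construction, $\dot{G}_{u_1} = \dot{G}_{u_2} = G$ and $\dot{G}_v$ is split for every other place $v$, so conditions (1) and (2) of the proposition hold. The remaining conditions are inherited verbatim from Lemma \ref{orange}.

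The only conceptual point driving the argument is the observation that, because $\pi_0(Z(\widehat{G^*})^\Gamma)^D$ has order two for odd special orthogonal groups, two identical local invariants cancel automatically in the Kottwitz global obstruction. This is precisely what permits us to globalize $G$ simultaneously at the two places $u_1$ and $u_2$ without introducing a third compensating non-split place, in contrast to the one-place globalization furnished by Lemma \ref{4.1.2}. Consequently there is no serious technical obstacle; the proposition follows by a routine combination of Lemma \ref{orange} with this cohomological observation.
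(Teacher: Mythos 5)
Your proof is correct and takes essentially the same approach as the paper. The paper invokes Lemma \ref{4.1.2} (specifically the alternative condition $3'$, which applies since Lemma \ref{orange} produces $u_1,u_2$ with $\dot{F}_{u_1}\cong\dot{F}_{u_2}\cong F$); your argument simply unpacks that lemma by running the Kottwitz exact-sequence \eqref{033} computation directly, using the observation that the two equal contributions $\chi_G$ at $u_1,u_2$ cancel in the $\Z/2\Z$-valued obstruction.
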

\begin{proof}
    By Lemma \ref{orange}, we obtain $\dot{F}$,$\dot{G}^*$, $\dot{\phi}$, $\dot{M}^*$, $\dot{\phi}_{\dot{M}^*}$ $u_1$, $u_2$, and $v_1$ satisfying the first (except the assertion on $\dot{G}_u$), third, fourth, and fifth conditions.
    By Lemma \ref{4.1.2}, we obtain $\dot{G}$ such that $\dot{G}_v=G$ for $v\in\{u_1,u_2\}$ and $\dot{G}_v$ splits over $\dot{F}_v$ otherwise.
    This completes the proof.
\end{proof}

\subsection{On elliptic parameters}\label{4.5}
The following lemma is proved in the same way as \cite[Lemma 4.5.1]{kmsw}.
\begin{lemma}\label{4.5.1}
    Let $\dot{F}$ be a number field, $\dot{G}^*=\SO_{2n+1}$ over $\dot{F}$, $\dot{\phi}\in\Phi^2_\el(\dot{G}^*)$ a parameter, and $\dot{G}$ an inner form of $\dot{G}^*$.
    Let $\dot{M}^* \subset \dot{G}^*$ be a Levi subgroup and $\dot{\phi}_{\dot{M}^*}\in\Phi_2(\dot{M}^*)$ a discrete parameter whose image in $\Phi(\dot{G}^*)$ is $\dot{\phi}$.
    Assume that there is a place $v_1$ of $\dot{F}$ such that
    \begin{itemize}
        \item $\dot{G}_{v_1}$ is split over $\dot{F}$;
        \item $\dot{\phi}_{v_1} \in\Phi_\bdd(\dot{G}^*_{v_1})$ and $\dot{\phi}_{\dot{M}^*_{v_1}} \in \Phi_{2,\bdd}(\dot{M}^*_{v_1})$;
        \item the canonical maps $S_{\dot{\phi}}\to S_{\dot{\phi}_{v_1}}$ and $S_{\dot{\phi}_{\dot{M}^*}}\to S_{\dot{\phi}_{\dot{M}^*,v_1}}$ are isomorphic.
    \end{itemize}
    Then we have
    \begin{equation*}
        \tr R^{\dot{G}}_{\disc,\dot{\phi}}(\dot{f})
        =\sum_{\overline{x}\in\overline{\frakS}_{\dot{\phi},\el}} \left(\dot{f}'_{\dot{G}}(\dot{\phi},\overline{x})-\dot{f}_{\dot{G}}(\dot{\phi},\overline{x}) \right)
        =0,
    \end{equation*}
    for all $\dot{f}\in\calH(\dot{G})$.
\end{lemma}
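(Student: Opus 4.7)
The plan is to derive both equalities from Lemma 3.8.1 together with the local intertwining relation at the distinguished split place $v_1$, following the strategy of the analogous lemma in \cite{kmsw}. Since $\dot{\phi} \in \Phi^2_\el(\dot{G}^*)$ is generic, the sign character $\varepsilon^{\dot{G}^*}_{\dot{\phi}}$ is trivial, and Lemma 3.8.1 yields
\begin{equation*}
    \tr R^{\dot{G}}_{\disc,\dot{\phi}}(\dot{f})
    = 2^{-q}\,|\overline{\frakS}_{\dot{\phi}}|^{-1} \sum_{\bar{x}\in \overline{\frakS}_{\dot{\phi},\el}} \bigl(\dot{f}'_{\dot{G}}(\dot{\phi},s_{\dot{\phi}}\bar{x}^{-1}) - \dot{f}_{\dot{G}}(\dot{\phi},\bar{x})\bigr).
\end{equation*}
Since the coefficient $2^{-q}|\overline{\frakS}_{\dot{\phi}}|^{-1}$ is a fixed nonzero constant, the two claimed equalities will follow once I show that the sum on the right vanishes; in that case the trace is forced to be zero as well, and the first equality becomes the identity $0 = 0 = 0$.

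To prove the vanishing of the sum, I factor any $\dot{f} \in \calH(\dot{G})$ as $\dot{f} = \dot{f}_{v_1} \otimes \dot{f}^{v_1}$, so that both the first and the second global linear forms factor accordingly. The hypothesis that $\dot{G}_{v_1}$ is split and $\dot{\phi}_{v_1} \in \Phi_\bdd(\dot{G}^*_{v_1})$ places us in the setting of Arthur's endoscopic classification for quasi-split groups (\cite[\S2]{art13}). In particular, the local intertwining relation (Theorem \ref{2.6.2}) is known at $v_1$, giving
\begin{equation*}
    \dot{f}'_{v_1,\dot{G}_{v_1}}(\dot{\phi}_{v_1},s_{\dot{\phi}_{v_1}}\bar{x}_{v_1}^{-1})
    = e(\dot{G}_{v_1})\,\dot{f}_{v_1,\dot{G}_{v_1}}(\dot{\phi}_{v_1},\bar{x}_{v_1})
    = \dot{f}_{v_1,\dot{G}_{v_1}}(\dot{\phi}_{v_1},\bar{x}_{v_1}),
\end{equation*}
because $\dot{G}_{v_1}$ split forces $e(\dot{G}_{v_1}) = 1$. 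The hypothesis that $S_{\dot{\phi}} \to S_{\dot{\phi}_{v_1}}$ is an isomorphism (which restricts to a bijection on elliptic parts by Lemma \ref{2.8.6}) guarantees that the index set $\overline{\frakS}_{\dot{\phi},\el}$ matches $\overline{\frakS}_{\dot{\phi}_{v_1},\el}$, so the local LIR at $v_1$ applies to each summand.

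Using this factorization together with the local LIR at $v_1$, each summand becomes
\begin{equation*}
    \dot{f}_{v_1,\dot{G}_{v_1}}(\dot{\phi}_{v_1},\bar{x}_{v_1}) \cdot \bigl(\dot{f}^{v_1}{}'_{\dot{G}^{v_1}}(\dot{\phi}^{v_1},s\bar{x}^{-1}) - \dot{f}^{v_1}_{\dot{G}^{v_1}}(\dot{\phi}^{v_1},\bar{x})\bigr),
\end{equation*}
so the global sum reduces to a sum over $\overline{\frakS}_{\dot{\phi},\el}$ of the outer-place discrepancies weighted by the $v_1$-trace. I would then match this with the analogous sum arising from the stable trace formula (Proposition \ref{3.3.1}) applied to $\dot{G}^*$ and its elliptic endoscopic data. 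Since $\dot{\phi} \in \Phi^2_\el(\dot{G}^*) \subset \Psi(\dot{G}^*) \setminus \Psi_2(\dot{G}^*)$, the parameter does not contribute to the discrete spectrum of the quasi-split form, and Arthur's stable multiplicity formula forces the corresponding stable linear form to vanish; by transfer and the matching at $v_1$, this vanishing propagates to the sum for $\dot{G}$.

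The main obstacle will be the last step: carefully bookkeeping the endoscopic contributions at places $v \neq v_1$, where $\dot{G}_v$ need not be quasi-split and where LIR is not yet available. The hypothesis that $\dot{G}_v$ is split for $v \notin \{u, v_2\}$ in the globalizations (Propositions \ref{4.4.3}--\ref{4.4.7}) will be essential to control the number of ``bad'' places and to invoke the induction hypothesis for proper Levi subgroups at the remaining places. Once this bookkeeping is in place, the double equality of the statement follows immediately.
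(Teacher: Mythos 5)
Your opening moves are right: reducing both claimed equalities to the vanishing of the sum, invoking Lemma \ref{3.8.1} (with $\varepsilon^{\dot{G}^*}_{\dot{\phi}}$ trivial and $s_{\dot\phi}=1$ because $\dot\phi$ is generic), and factoring the two global linear forms at the split place $v_1$ using LIR there, with $e(\dot{G}_{v_1})=1$ — all of that matches the argument in \cite[Lemma 4.5.1]{kmsw}, which the paper cites verbatim. The use of the isomorphisms $S_{\dot\phi}\to S_{\dot\phi_{v_1}}$ to identify the index sets is also correct.

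The final step is where the argument breaks. You claim the vanishing of the sum is inherited from the stable multiplicity formula for $\dot G^*$: since $\dot\phi\notin\Psi_2(\dot G^*)$, you have $\sigma(\overline{S}^\circ_{\dot\phi})=0$ and hence $S^{\dot G^*}_{\disc,\dot\phi}=0$, and you suggest this ``propagates by transfer'' to $\dot G$. This is not what the sum
$\sum_{\overline x\in\overline{\frakS}_{\dot\phi,\el}}\bigl(\dot f'_{\dot G}(\dot\phi,\overline x)-\dot f_{\dot G}(\dot\phi,\overline x)\bigr)$
measures. Each $\overline x\in\overline{\frakS}_{\dot\phi,\el}$ corresponds via Lemma \ref{1.4.3} to an elliptic endoscopic pair $(\frake,\dot\phi^\frake)$, and for elliptic $\overline x$ the transferred parameter $\dot\phi^\frake$ is a \emph{discrete} parameter for $\dot G^\frake$ (for example, if $\dot\phi=2\phi_1\boxplus\cdots$, then $\dot G^\frake=\SO_{2n_1+1}\times\SO_{2n_2+1}$ receives $\phi_1\boxplus\cdots$ on each factor). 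For such $\frake$ the stable form $\widehat S^\frake_{\disc,\dot\phi}$ does \emph{not} vanish, so the mechanism you describe does not make the sum drop out; it is precisely the elliptic $\overline x$ that your step leaves untreated.

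The actual argument in \cite[Lemma 4.5.1]{kmsw}, which the paper adopts, is a positivity/sign argument rather than an appeal to the stable multiplicity formula. After the factorization at $v_1$, one uses ECR at $v_1$ to write $\dot f_{v_1,\dot G_{v_1}}(\dot\phi_{v_1},\overline x_{v_1})=\sum_{\pi_{v_1}\in\Pi_{\dot\phi_{v_1}}}\langle\overline x_{v_1},\pi_{v_1}\rangle \dot f_{v_1}(\pi_{v_1})$, and the bijection $\Pi_{\dot\phi_{v_1}}\leftrightarrow\Irr(\overline{\frakS}_{\dot\phi_{v_1}})\cong\Irr(\overline{\frakS}_{\dot\phi})$ (here one uses that $v_1$ is finite and $\dot G_{v_1}$ split, so the map is a bijection onto all of $\Irr(\overline{\frakS}_{\dot\phi_{v_1}})$). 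On the spectral side, $\tr R^{\dot G}_{\disc,\dot\phi}(\dot f)$ is a non-negative integral combination of $\dot f_{v_1}(\pi_{v_1})\cdot\dot f^{v_1}(\dot\pi^{v_1})$. One then compares coefficients of each tempered $\pi_{v_1}$. The key structural input is that $\overline{\frakS}_{\dot\phi,\el}$ is a nontrivial coset of a proper subgroup of $\overline{\frakS}_{\dot\phi}$ (not a subgroup), since $\dot\phi$ is elliptic but not discrete; choosing a character $\nu$ trivial on the subgroup but $-1$ on the coset representative shows that the coefficient attached to $\pi_{v_1}$ and the one attached to $\nu\otimes\pi_{v_1}$ are negatives of each other. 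Both must be non-negative, hence both vanish, which forces the whole sum and hence the trace to vanish. You should replace your stable-multiplicity step with this sign argument.
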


\subsection{Proof of LIR for $L$-parameters}\label{4.6}
In this subsection we complete the proof of Theorem \ref{2.6.2} for generic parameters (i.e., $L$-parameters).
Let $F$ be a local field, $G^*=\SO_{2n+1}$ over $F$, $(M^*,P^*)$ a standard parabolic pair of $G^*$, and $\xi:G^*\to G$ an inner twist of $G^*$.
If $M^*$ transfers to $G$, then we take $\xi$ so that $M=\xi(M^*)$ is defined over $F$.
Let $\phi_{M^*}\in\Phi_\bdd(M^*)$ be a generic parameter for $M$, and $\phi\in\Phi_\bdd(G^*)$ its image.
If $G$ is split, the theorem is already proven by Arthur \cite{art13}.
Therefore, we may assume that $G$ is non-quasi-split, so in particular $F\neq \C$.
\begin{lemma}\label{4.6.1}
    Assume that $M^*\subsetneq G^*$ is proper, i.e., $\phi$ is not discrete.
    Assume also that $\phi_{M^*}\in\Phi_{2,\bdd}(M^*)$ is discrete and that $\phi$ is elliptic or exceptional.
    Then for any $x\in\overline{\frakS}_{\phi,\el}$, there exists a lift $x\in\frakS_\phi$ of $\overline{x}$ such that
    \begin{equation*}
        f'_G(\phi,x\inv)=e(G)f_G(\phi,x),
    \end{equation*}
    for any $f\in\calH(G)$.
\end{lemma}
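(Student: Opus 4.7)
The strategy is to globalize $\phi$ and $\phi_{M^*}$ to parameters $\dot\phi$ and $\dot\phi_{\dot M^*}$ over a totally real number field $\dot F$, so that the known global identities (Lemmas \ref{4.5.1} and \ref{3.7.1}) can be combined with the local information at auxiliary places to isolate the desired identity at the place $u$. The globalization will be supplied by Proposition \ref{4.4.4}, or, in the two exceptional low-rank cases $(n,\phi)=(2,\exc1)$ and $(3,\exc2)$, by Proposition \ref{4.4.6}. The key point of these propositions is that the canonical maps $S_{\dot\phi}\to S_{\dot\phi_v}$ and $S_{\dot\phi_{\dot M^*}}\to S_{\dot\phi_{\dot M^*,v}}$ are isomorphisms at $v\in\{u,v_1\}$, which in particular identifies $\overline{\mathfrak S}_{\dot\phi,\el}$ with $\overline{\mathfrak S}_{\phi,\el}$ via the local map at $u$.

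Consider first the elliptic case $\phi\in\Phi_\el^2(G^*)$. Apply Proposition \ref{4.4.4} to obtain $(\dot F,\dot G^*,\dot G,\dot\phi,\dot M^*,\dot\phi_{\dot M^*},u,v_1,v_2)$ with $\dot\phi\in\Phi_\el^2(\dot G^*)$, and $\dot G_v$ split for $v\neq u,v_2$. Lemma \ref{4.5.1} then gives
\begin{equation*}
    \sum_{\bar x\in\overline{\mathfrak S}_{\dot\phi,\el}} \bigl(\dot f'_{\dot G}(\dot\phi,\bar x)-\dot f_{\dot G}(\dot\phi,\bar x)\bigr)=0, \qquad \dot f\in\mathcal H(\dot G).
\end{equation*}
Using the product decompositions $\dot f'_{\dot G}(\dot\phi,\bar x)=\prod_v\dot f'_{v,\dot G_v}(\dot\phi_v,\bar x_v)$ and $\dot f_{\dot G}(\dot\phi,\bar x)=\prod_v\dot f_{v,\dot G_v}(\dot\phi_v,\bar x_v)$, the factors at $v\neq u,v_2$ are controlled by Arthur's results for split groups, while at $v_2$ we have $\dot\phi_{v_2}\in\Phi^{\el,\exc}(\dot G^*_{v_2})$ and Lemma \ref{3.5.12} already furnishes the local identity $\dot f'_{v_2}(\dot\phi_{v_2},\bar x_{v_2})=e(\dot G_{v_2})\dot f_{v_2}(\dot\phi_{v_2},\bar x_{v_2})$. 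Cancelling these common factors across all $v\neq u$ by a suitable choice of the auxiliary test functions $\dot f_v$ (exploiting the freedom to vary them together with the linear independence of characters, exactly as in \cite[\S4.6]{kmsw}), we are left with the identity at $u$ summed over $\bar x\in\overline{\mathfrak S}_{\phi,\el}$. Separating the individual $\bar x$ contributions requires choosing test functions $\dot f_{v_1}$ and $\dot f_{v_2}$ so that the packet-pairings $\langle \bar x_v,\pi\rangle$ at these places distinguish the classes $\bar x$; because the canonical map at $v_1$ is an isomorphism, orthogonality of characters of the finite abelian group $\overline{\mathfrak S}_{\phi,\el}$ isolates each $\bar x$, yielding the claimed equality together with some lift $x\in\mathfrak S_\phi$ of $\bar x$.

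Now consider the exceptional case. If $(n,\phi)$ is neither $(2,\exc1)$ nor $(3,\exc2)$, globalize via Proposition \ref{4.4.4}; then $\dot\phi_{v_2}$ lies in $\Phi^{\el,\exc}$ and is handled by Lemma \ref{3.5.12} exactly as above. Apply Lemma \ref{3.7.1} to get
\begin{equation*}
    \sum_{\bar x\in\overline{\mathfrak S}_{\dot\phi}}\varepsilon^{\dot G^*}_{\dot\phi}(\bar x)\bigl(\dot f'_{\dot G}(\dot\phi,\bar x)-\dot f_{\dot G}(\dot\phi,\bar x)\bigr)=0,
\end{equation*}
where by Lemma \ref{3.5.10} the sum runs over all of $\overline{\mathfrak S}_{\dot\phi}=\overline{\mathfrak S}_{\dot\phi,\el}$. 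Factoring out the non-$u$ contributions and isolating each $\bar x$ by the character-separation argument above produces the desired local identity. In the remaining low-rank exceptional cases, use Proposition \ref{4.4.6}: here $v_2$ is a real place with $\dot\phi_{v_2}\in\Phi_{\bdd,\heartsuit}(\dot G^*_{v_2})$ satisfying LIR by the direct computations of Propositions \ref{2.9.1} and \ref{2.9.2}, so the local identity at $v_2$ is known a priori. The same cancellation and character-separation argument then delivers the identity at $u$.

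The main obstacle is the character-separation step. At each place $v\neq u$ one must choose $\dot f_v$ not merely to make the sums collapse, but in such a way that the resulting linear combination over $\bar x\in\overline{\mathfrak S}_{\phi,\el}$ can be inverted. For generic parameters this goes through because the local pairings $\pi\mapsto\langle\cdot,\pi\rangle$ at $v_1$ (where the map on component groups is an isomorphism) exhaust the characters of $\overline{\mathfrak S}_{\phi,\el}$, so by orthogonality of characters we may pick $\dot f_{v_1}$ detecting any prescribed $\bar x$. The sign $e(\dot G)=\prod_v e(\dot G_v)$ collapses to $e(G)$ because $\dot G_v$ is split (hence $e(\dot G_v)=1$) for $v\neq u,v_2$, and the contribution of $v_2$ cancels against the corresponding factor coming from Lemma \ref{3.5.12} or the archimedean LIR of \S\ref{2.9}. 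The lift $x\in\mathfrak S_\phi$ is then determined up to the freedom of Lemma \ref{2.6.5}, completing the argument.
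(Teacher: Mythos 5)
Your proposal follows the globalize-and-localize strategy from \cite[\S4.6]{kmsw} and correctly identifies the two main global input identities (Lemma \ref{4.5.1} for the elliptic case, Lemma \ref{3.7.1} together with Lemma \ref{3.5.10} for the exceptional case) as well as the need for Proposition \ref{4.4.6} in the two low-rank exceptional cases. However, there is a genuine gap: you never address the case where $M^*$ is a \emph{linear} Levi subgroup, i.e., $M^*\cong\GL_{n_1}\times\cdots\times\GL_{n_k}$ with $n_0=0$. This case is not vacuous under the hypotheses of the lemma. For instance, an elliptic parameter $\phi=2\phi_1\oplus\cdots\oplus2\phi_q$ with all multiplicities equal to $2$ arises from a discrete $\phi_{M^*}=\phi_1\oplus\cdots\oplus\phi_q$ on a linear $M^*$; likewise an exceptional parameter of type $(\exc1)$ of the form $\phi=2\phi_1$ arises from $\phi_{M^*}=\phi_1$ on $M^*\cong\GL_{N_1}$. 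When $M^*$ is linear, $H^1(F,M^*)$ is trivial by Hilbert's Theorem 90, so $M^*$ does not transfer to any non-quasi-split $G$, hence $\phi$ is not $G$-relevant and $f_G(\phi,x)=0$ by definition. The content of the lemma then reduces to the vanishing statement $f'_G(\phi,x^{-1})=0$, which cannot be extracted from Proposition \ref{4.4.4} or \ref{4.4.6} (both require $M^*$ not linear). This vanishing requires the different globalization of Proposition \ref{4.4.7}, which places two copies of $F$ at places $u_1,u_2$ with $\dot G_{u_1}=\dot G_{u_2}=G$; the product formula and the global identity then force each local factor to vanish. Your proof as written does not invoke Proposition \ref{4.4.7} and so cannot cover this case.

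A secondary, smaller issue: you cite Lemma \ref{3.5.12} to furnish the local identity $\dot f'_{v_2}(\dot\phi_{v_2},\overline{x}_{v_2})=e(\dot G_{v_2})\dot f_{v_2}(\dot\phi_{v_2},\overline{x}_{v_2})$ at the auxiliary place $v_2$, but Lemma \ref{3.5.12} is a statement about the global linear forms. What is actually needed at $v_2$ is the \emph{local} intertwining relation for the parameter $\dot\phi_{v_2}\in\Phi^{\el,\exc}(\dot G^*_{v_2})$, which is supplied by the conclusion of \S\ref{2.8} (Corollary \ref{2.8.9} together with Lemmas \ref{2.8.7} and \ref{2.8.8}), since a parameter that is neither elliptic nor exceptional already satisfies LIR by the induction hypothesis. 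This citation should be corrected, though the substance of the step is sound.
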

\begin{proof}
    The proof is similar to that of \cite[Case $N$ even of Lemma 4.6.1]{kmsw}.
    The difference is that we divide the case into the following cases:
    \begin{itemize}
        \item $M^*$ is not linear and $(n,\phi)$ is neither (2, of type ($\exc1$)) nor (3, of type ($\exc2$));
        \item $M^*$ is not linear and $(n,\phi)$ is either (2, of type ($\exc1$)) or (3, of type ($\exc2$));
        \item $M^*$ is linear,
    \end{itemize}
    instead of the division into the cases
    \begin{itemize}
        \item $M^*$ is not linear and $N\neq 4$;
        \item $M^*$ is not linear and $N=4$;
        \item $M^*$ is linear,
    \end{itemize}
    in loc. cit., and that we appeal to Lemmas \ref{4.5.1}, \ref{3.7.1}, \ref{3.5.10}, and Propositions \ref{4.4.4}, \ref{4.4.6} \ref{4.4.7}, instead of Lemmas 4.5.1, 3.7.1, 3.5.10, and Propositions 4.4.4, 4.4.6, 4.4.7 of loc. cit., respectively.
\end{proof}
\begin{lemma}\label{4.6.2}
    The assertions 2 and 3 of Theorem \ref{2.6.2} hold for generic parameters.
\end{lemma}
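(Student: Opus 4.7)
The plan is to deduce assertions 2 and 3 of LIR for a generic $\phi \in \Phi_\bdd(G^*)$ by assembling the reductions of \S\S\ref{2.7}--\ref{2.8} with the distinguished-lift identity of Lemma \ref{4.6.1} and the non-regular identity of Lemma \ref{2.8.10}, finishing with the $Z(\widehat{G})^\Gamma$-equivariance of Lemma \ref{2.6.5} to extend any one-lift identity to every lift. Throughout I will use that $s_\phi = 1$ for generic $\phi$, so assertion 3 reads $f'_G(\phi, u\inv) = e(G) f_G(\phi, u^\natural)$, and that $\frakS_\phi(M,G) = \frakS_\phi(G)$ whenever $\phi_{M^*}$ is discrete for $M^*$ (Lemma \ref{2.8.3}), so that there is no ambiguity in comparing an element $u^\natural \in \frakN_\phi(M,G)$ with a semisimple $s \in S_\phi$ sharing its image in $\frakS_\phi(M,G)$.

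First I would invoke Proposition \ref{2.7.3}: expressing $\phi$ as the image of some discrete parameter $\phi_0 \in \Phi_{2,\bdd}(M_0^*)$ on a Levi $M_0^* \subset M^*$, it suffices to prove assertions 2 and 3 for the discrete pair $(M_0^*, \phi_0)$. Corollary \ref{2.8.9} then immediately yields both assertions for every $u^\natural$ mapping to $x \in \frakS_\phi(M_0,G)$ unless either $\phi \in \Phi_\el^2(G^*)$ with $x \in \frakS_{\phi,\el}$, or $\phi \in \Phi_\exc(G^*)$. These two situations are the only remaining work.

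In the elliptic case, assertion 2 is automatic because ellipticity forces $W_\phi^\circ = 1$, so that the map $\frakN_\phi(M_0,G) \to \frakS_\phi(M_0,G)$ is injective on fibres. For assertion 3, Lemma \ref{4.6.1} produces one lift $x_0 \in \frakS_\phi$ of $\overline{x} \in \overline{\frakS}_{\phi,\el}$ satisfying $f'_G(\phi, x_0\inv) = e(G) f_G(\phi, x_0)$. The only other lift of $\overline{x}$ is $yx_0$, with $y$ the nontrivial element of $Z(\widehat{G})^\Gamma = \{\pm 1\}$, and Lemma \ref{2.6.5} shows that under this twist both sides scale by the same factor $\an{y,z}\inv$, so LIR propagates to the second lift as well.

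The exceptional case follows the same template plus one extra comparison. By Lemma \ref{2.8.6}(3), $\overline{\frakS}_{\phi,\el} = \overline{\frakS}_\phi$, so the previous paragraph's argument (Lemma \ref{4.6.1} together with Lemma \ref{2.6.5}) yields LIR for the regular preimage $u_x$ of every $x \in \frakS_\phi(M_0,G)$. For the non-regular preimage $u^\natural$ of the same $x$, Lemma \ref{2.8.10} supplies LIR with the same right-hand side, since $f'_G(\phi, s\inv)$ depends only on the common image of $s$ in $\frakS_\phi(M_0,G)$ by Lemma \ref{2.6.1}. Subtracting the two identities gives $f_G(\phi, u_x) = f_G(\phi, u^\natural)$, which is assertion 2 for exceptional $\phi$, and assertion 3 is then immediate for every lift. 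The substantive difficulty lies not in this lemma but in Lemma \ref{4.6.1}, whose proof relies on the globalizations of \S\S\ref{4.3}--\ref{4.4} and the stabilization plus weaker identities of \S\S\ref{3.1}--\ref{3.7}; Lemma \ref{4.6.2} itself is the short organizational step that converts a single distinguished lift into LIR for all lifts.
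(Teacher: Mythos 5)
Your proposal is correct and follows essentially the paper's own route: reduce via \S\ref{2.7}--\S\ref{2.8} to a discrete $\phi_0\in\Phi_{2,\bdd}(M_0^*)$ with image $\phi\in\Phi_\el^2(G^*)\sqcup\Phi_\exc(G^*)$, invoke Lemma \ref{4.6.1} for a distinguished lift, and propagate to the remaining lift(s) by Lemma \ref{2.6.5}, with Lemma \ref{2.8.10} supplying the non-regular identity in the exceptional case. The paper compresses the treatment of assertion 2 into the opening remark that assertion 3 implies assertion 2 and then points to the KMSW analogue, whereas you spell out that step separately (trivial for elliptic since $W_\phi^\circ=1$, by subtraction for exceptional); the substantive argument is the same.
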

\begin{proof}
    Note that the assertion 3 implies the assertion 2.
    Thanks to \S\ref{2.7} and \S\ref{2.8}, we may assume that $\phi_{M^*}$ is discrete and that $\phi\in\Phi^2_\el(G^*)\sqcup\Phi_\exc(G^*)$.
    By the consequence of \S\ref{2.8}, if $\phi$ is elliptic, then it remains to show that
    \begin{description}
        \item $f'_G(\phi,s\inv)=e(G)f_G(\phi,u^\natural)$ for any $u^\natural\in\frakN_\phi$ and $s\in S_{\phi,\semisimple}$ mapping to the same element in $\frakS_{\phi,\el}$,
    \end{description}
    and if $\phi$ is exceptional, then it remains to show that
    \begin{description}
        \item $f'_G(\phi,s\inv)=e(G)f_G(\phi,u^\natural)$ for any $u^\natural\in\frakN_{\phi,\reg}$ and $s\in S_{\phi,\semisimple}$ mapping to the same element in $\frakS_{\phi,\el}$.
    \end{description}
    They can be proved similarly to \cite[Lemma 4.6.2]{kmsw}.
    The difference is that we utilize Lemmas \ref{2.8.6}, \ref{4.6.1}, and \ref{2.6.5} instead of Lemmas 2.8.6, 4.6.1, and 2.6.5 of loc. cit. respectively.
    Note that in our case the equivalence classes of inner forms, inner twists, and pure inner twists are in bijection naturally.
\end{proof}
This completes the proof of the second and third assertions of Theorem \ref{2.6.2} for generic parameters.
The next two lemmas show the first assertion.
The first lemma treats the case of $\phi_{M^*}\in\Phi_{2,\bdd}(M^*)$:
\begin{lemma}\label{4.6.3}
    Assume that $\phi_{M^*}\in\Phi_{2,\bdd}(M^*)$.
    Then the assertion 1 of Theorem \ref{2.6.2} holds for $\phi_{M^*}$.
\end{lemma}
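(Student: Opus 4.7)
The plan is to verify that $R_P(u^\natural,\pi,\phi_{M^*},\psi_F)=\id$ for any $u^\natural\in\frakN_{\phi_{M^*}}(M,G)$ mapping to $W_\phi^\circ(M,G)$, where $\phi\in\Phi_\bdd(G^*)$ denotes the image of $\phi_{M^*}$. By the multiplicativity from Lemma \ref{2.5.3}, it suffices to treat a set of elements whose images generate $W_\phi^\circ$, so I split into cases according to the type of $\phi$.

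If $\phi\in\Phi_\el(G^*)$, then the explicit description in \S\ref{1.2} gives $S_\phi^\circ\cong\SO(2,\C)^q$, which is a torus, whence $W_\phi^\circ=1$ and the statement is vacuous. If $\phi\in\Phi^{\el,\exc}(G^*)$, then Lemma \ref{2.8.4}(1) shows every simple reflection $w\in W_\phi^\circ$ centralizes a positive-dimensional subtorus $T'\subset T_\phi=A_{\widehat{M}}$. Setting $\widehat{M_1}:=Z_{\widehat{G}}(T')$ gives a proper Levi $M_1^*\subset G^*$ strictly containing $M^*$, and any lift $u^\natural$ of $w$ may be arranged to lie in $\frakN_{\phi_{M^*}}(M,M_1)$. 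The induction hypothesis on the rank of $G$ then provides Theorem \ref{2.6.2}(1) for the triple $(M^*,M_1^*,\phi_{M^*})$, giving $R_Q^{M_1}(u^\natural,\pi,\phi_{M^*},\psi_F)=\id$; Lemma \ref{2.7.2} immediately upgrades this to $R_P^G(u^\natural,\pi,\phi_{M^*},\psi_F)=\calI_P^G(\id)=\id$.

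The remaining case is $\phi\in\Phi_\exc(G^*)$. When $G$ is quasi-split, this is Arthur's theorem, so I may assume $G$ is non-quasi-split and $F\neq\C$. Depending on whether $(n,\phi)$ is the low-rank pair $(2,(\exc1))$ or $(3,(\exc2))$, I apply Proposition \ref{4.4.6} or Proposition \ref{4.4.4} to globalize, producing $(\dot F,\dot G^*,\dot G,\dot\phi,\dot M^*,\dot\phi_{\dot M^*},u,v_1,v_2)$ with $\dot F_u=F$, $\dot G_u=G$, $\dot\phi_u=\phi$, and $\dot G_v$ split for all $v\neq u,v_2$. At the auxiliary place $v_2$, assertion 1 is already available: either from the direct real computation of Propositions \ref{2.9.1} and \ref{2.9.2} in the low-rank case, or from the $\Phi^{\el,\exc}$ case just handled when Proposition \ref{4.4.4} applies; at all other places $v\neq u,v_2$ it follows from Arthur's split-case theorem. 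Using Lemma \ref{2.2.2} to factor the global intertwining operator $R_{\dot P}(u^\natural,\dot\pi,\dot\phi_{\dot M^*},\psi_{\dot F})$ as a tensor product of local ones, and combining with the already-proven assertions 2 and 3 (Lemma \ref{4.6.2}), the stable multiplicity formula (Proposition \ref{3.3.1}), and the weaker identity Lemma \ref{3.7.1}, one transports the triviality at $v\neq u$ to the place $u$. The existence of automorphic $\dot\pi\in\Pi_{\dot\phi_{\dot M^*}}(\dot M)$ with arbitrary prescribed local component $\dot\pi_u\in\Pi_{\phi_{M^*}}(M)$ is supplied by Lemma \ref{4.2.2} together with the component-group isomorphism guaranteed by the globalization propositions, exhausting $\Pi_{\phi_{M^*}}(M)$.

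The principal obstacle will be this last step: isolating the local operator at $u$ from the global identity. One must employ trace Paley–Wiener methods to separate the tensor factors of the global intertwining operator, and use Lemma \ref{2.5.2} to fix coherently the ambiguity in the choice of lift $u^\natural\in\frakN_\phi$ across places, ultimately reducing the matter to a single scalar identity that the global input pins down. The indispensable input is the pair of low-rank real computations in \S\ref{2.9}: without Propositions \ref{2.9.1} and \ref{2.9.2}, the bootstrap has no base case in the exceptional small-rank regime that Proposition \ref{4.4.4} is forced to exclude.
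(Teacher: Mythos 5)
Your first two paragraphs correctly reconstruct the reduction to the exceptional case: the elliptic vacuity via $S_\phi^\circ\cong\SO(2,\C)^q$ and the $\Phi^{\el,\exc}$ argument via the subtorus $T'$ and Lemma \ref{2.7.2} are exactly the contents of Lemmas \ref{2.8.7} and \ref{2.8.4}, which is indeed the paper's opening step. The deviation — and the gap — is in the globalization: you invoke Propositions \ref{4.4.4} and \ref{4.4.6}, whereas the paper (following KMSW) uses Proposition \ref{4.4.7}.

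When you apply Proposition \ref{4.4.4}, the auxiliary place $v_2$ is $p$-adic, $\dot G_{v_2}$ is forced by \eqref{033} to be the non-quasi-split form, and the parameter $\dot\phi_{\dot M^*,v_2}$ for $\dot M_{v_2}$ is \emph{not} discrete: by construction $\dot\phi_{s,v_2}=\phi_{v_2+}\oplus\phi_{v_2+}^\vee\oplus\phi_{v_2-}$ is reducible, so the corresponding $\GL$-factor of $\dot M_{v_2}$ carries a non-discrete parameter. You claim assertion 1 at $v_2$ ``from the $\Phi^{\el,\exc}$ case just handled'', but that case was handled (in your second paragraph, as in Lemmas \ref{2.8.7}--\ref{2.8.4}) only under the hypothesis $\phi_{M^*}\in\Phi_{2,\bdd}(M^*)$; for non-discrete $\phi_{M^*}$ the set $T_\phi=A_{\widehat M}$ is not a maximal torus of $S_\phi(G)$ and the subtorus argument does not go through as stated. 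The extension of part 1 to non-discrete $\phi_{M^*}$ is precisely Lemma \ref{4.6.4}, which is proved \emph{after} Lemma \ref{4.6.3} and uses it as input — so invoking it here would be circular. The paper sidesteps this entirely by taking Proposition \ref{4.4.7}: the globalization is non-split only at $u_1,u_2$ with $\dot G_{u_1}=\dot G_{u_2}=G$ and $\dot\phi_{\dot M^*,u_i}=\phi_{M^*}$ \emph{discrete}, split everywhere else, so the only unknown local contributions in the global identity $\bigotimes_v R_{P_v}=1$ coming from Lemma \ref{3.7.1} and Proposition \ref{3.5.4} sit at two places carrying the same discrete local data, and the split-case theorem (together with Lemma \ref{4.6.2}) closes the argument. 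Your proposal should replace the Proposition \ref{4.4.4} branch with the doubling globalization of Proposition \ref{4.4.7} (or supply an independent argument for the non-discrete $\Phi^{\el,\exc}$ case at $v_2$ that does not rely on Lemma \ref{4.6.4}). Your appeal to \S\ref{2.9} for the low-rank base cases is essential and correct, though the paper packages it inside the induction rather than citing Propositions \ref{2.9.1}--\ref{2.9.2} here directly.
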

\begin{proof}
    By Lemmas \ref{2.8.7} and \ref{2.8.4}, we may assume that $\phi\in\Phi_\exc(G^*)$.
    The proof is similar to that of \cite[Case $N$ even of Lemma 4.6.3]{kmsw}.
    The difference is that we appeal to Lemmas \ref{4.4.7}, \ref{3.7.1}, \ref{4.6.3} (split case), and \ref{4.6.2} instead of Lemmas 4.4.7, 3.7.1, 4.6.3 (quasi-split case), and 4.6.2 of loc. cit. respectively.
\end{proof}
Recall that in \S\ref{2.7} we reduce only part 2 and 3 of Theorem \ref{2.6.2} to the case of discrete parameters.
The next lemma is the reduction of part 1, and hence it completes the proof of part 1 for all generic parameters.
As the lemmas above, in a similar way to the equation (4.6.3), Lemma 4.6.4, and Lemma 4.6.5 of loc. cit., one can obtain a surjection
\begin{equation}\label{463}
    R_\phi(M,G)=W_\phi(M,G)/W_\phi^\circ(M,G) \onto R_{\pi_M}(M,G):=W_{\pi_M}(M,G)/W_{\pi_M}^\circ(M,G),
\end{equation}
for $\pi_M\in\Pi_{\phi_{M^*}}(M)$, and can show the following lemmas.
\begin{lemma}\label{4.6.4}
    Let $\phi_{M^*}\in\Phi_\bdd(M^*)$.
    Then the assertion 1 of Theorem \ref{2.6.2} holds for $\phi_{M^*}$.
\end{lemma}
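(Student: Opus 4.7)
The plan is to reduce to the discrete case already established in Lemma \ref{4.6.3}, by means of an intermediate Levi subgroup. Let $\phi_{M^*}\in\Phi_\bdd(M^*)$. Choose a standard Levi $M_0^*\subseteq M^*$ and a discrete parameter $\phi_{M_0^*}\in\Phi_{2,\bdd}(M_0^*)$ whose image in $\Phi_\bdd(M^*)$ equals $\phi_{M^*}$. Denote by $P_0^*\subset G^*$ and $Q^*\subset M^*$ the standard parabolic subgroups with Levi factor $M_0^*$. Every $\pi\in\Pi_{\phi_{M^*}}(M)$ appears as an irreducible constituent of $\calI_Q^M(\pi_0)$ for some $\pi_0\in\Pi_{\phi_{M_0^*}}(M_0)$.

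First, I would apply Lemma \ref{4.6.3} (valid for the discrete parameter $\phi_{M_0^*}$) twice: once to the parabolic pair $(M_0^*, P_0^*)$ inside $G^*$, giving $R_{P_0}^G(u_0^\natural, \pi_0, \phi_{M_0^*}, \psi_F)=1$ for all $u_0^\natural\in W_{\phi_{M_0^*}}^\circ(M_0, G)$, and once to $(M_0^*, Q^*)$ inside $M^*$, giving $R_Q^M(v_0^\natural, \pi_0, \phi_{M_0^*}, \psi_F)=1$ for all $v_0^\natural\in W_{\phi_{M_0^*}}^\circ(M_0, M)$.

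Second, given an arbitrary $u^\natural\in W_{\phi_{M^*}}^\circ(M, G)$, I would lift it to an element of $\frakN_{\phi_{M_0^*}}(M_0,G)$ and, using the explicit description of the diagram \eqref{cd} for $\phi_{M_0^*}$, decompose this lift as a product $u_1^\natural\cdot u_2^\natural$ with $u_1^\natural\in W_{\phi_{M_0^*}}^\circ(M_0,G)$ and $u_2^\natural\in\frakN_{\phi_{M_0^*}}(M_0,M)$ whose image in $R_{\pi_0}(M_0, M)$ is trivial by the surjection \eqref{463} applied inside $M$. Lemma \ref{2.7.2} identifies the $u_2^\natural$-contribution to $R_P^G$ with the operator induced from $R_Q^M(u_2^\natural, \pi_0, \phi_{M_0^*}, \psi_F)$ on $\calI_Q^M(\pi_0)$, which acts as the identity on the summand $\pi$ by the first step. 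Multiplicativity of $R_P^G$ in the enhancement parameter (Lemma \ref{2.5.3}), together with the first step applied to $u_1^\natural$, then assembles the identity $R_P^G(u^\natural, \pi, \phi_{M^*}, \psi_F)=1$.

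The hard part will be producing the factorization $u^\natural=u_1^\natural\cdot u_2^\natural$ above; this is a combinatorial statement about the Weyl sets attached to the discrete centralizer $S_{\phi_{M_0^*}}$, and must be verified by direct inspection of the description of $N(A_{\widehat{M_0}}, S_{\phi_{M_0^*}}(G))$ and $N(A_{\widehat{M_0}}, S_{\phi_{M_0^*}}(G)^\circ)$ given in \S\ref{2.1}, together with the surjection \eqref{463} and Lemma \ref{2.7.1}. Once this decomposition is in place, the rest of the argument is the straightforward assembly of Lemmas \ref{2.5.3}, \ref{2.7.2}, and \ref{4.6.3}.
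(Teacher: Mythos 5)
Your plan correctly identifies the right reduction strategy — pass to an intermediate Levi $M_0^*\subset M^*$ carrying a discrete parameter $\phi_{M_0^*}$ and invoke Lemma \ref{4.6.3} — and the right auxiliary tools (Lemmas \ref{2.5.3}, \ref{2.7.2}, and the diagram \eqref{cd}). This is the same route the paper implicitly follows (via reference to \cite[Lemma 4.6.4]{kmsw}). However, as written, the assembly step has a genuine gap beyond the one you flag.

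The issue is a conflation of two different operators. After lifting $u^\natural \in W_{\phi_{M^*}}^\circ(M,G)$ to $\tilde u^\natural \in \frakN_{\phi_{M_0^*}}(M_0,G)$ and factoring $\tilde u^\natural = u_1^\natural u_2^\natural$, what Lemma \ref{2.5.3}, Lemma \ref{4.6.3}, and Lemma \ref{2.7.2} actually control is the operator $R_{P_0}^G(\tilde u^\natural, \pi_0, \phi_{M_0^*},\psi_F)$ on $\calH_{P_0}^G(\pi_0)$ — built from the discrete parameter $\phi_{M_0^*}$ and the normalizing factors $\rho_{P_0^w|P_0}^\vee\circ\phi_{\phi_{M_0^*}}$. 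The statement to be proved concerns $R_P^G(u^\natural, \pi, \phi_{M^*},\psi_F)$ on $\calH_P^G(\pi)$ — built from the non-discrete $\phi_{M^*}$ and $\rho_{P^w|P}^\vee\circ\phi_{\phi_{M^*}}$. These are not the same, and multiplicativity of ``$R_P^G$ in the enhancement parameter'' cannot be applied to the $u_i^\natural$, which live in $\frakN_{\phi_{M_0^*}}(M_0,G)$ rather than $\frakN_{\phi_{M^*}}(M,G)$. The missing ingredient is the operator-level compatibility between $R_{P_0}^G$ for $(M_0,\phi_{M_0^*})$ and $R_P^G$ for $(M,\phi_{M^*})$ — precisely the identity established inside the proof of Lemma \ref{2.7.1} (via induction in stages and the decomposition of $\calI_Q^M(\pi_0)$), which you do not invoke; Lemma \ref{2.7.2} alone covers only the piece $u_2^\natural\in\frakN_{\phi_{M_0^*}}(M_0,M)$.

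A second, smaller gap: you conclude that $R_Q^M(u_2^\natural,\pi_0,\phi_{M_0^*},\psi_F)$ acts as the identity on the summand $\pi\subset\calI_Q^M(\pi_0)$ ``by the first step.'' But the first step gives this only for $u_2^\natural$ landing in $W_{\phi_{M_0^*}}^\circ(M_0,M)$, whereas your condition is the weaker one that $u_2^\natural$ has trivial image in $R_{\pi_0}(M_0,M)$; such $u_2^\natural$ can differ from an element of $W^\circ$ by an element $y\in\frakS_{\phi_{M_0^*}}(M_0)$, and by Lemma \ref{2.5.2} this contributes the nontrivial scalar $\an{y,\pi_0}$. To make the argument close, one must either choose the lift $\tilde u^\natural$ more carefully (so that it lands directly in $W_{\phi_{M_0^*}}^\circ(M_0,G)$, which is possible since $A_{\widehat M}\subset A_{\widehat{M_0}}$ and any element normalizing $A_{\widehat M}$ in $S_\phi^\circ$ can be chosen to also normalize the maximal torus $A_{\widehat{M_0}}$) and then invoke the operator-level form of Lemma \ref{2.7.1}, or else keep track of and cancel the $\frakS$-scalars explicitly.
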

\begin{lemma}\label{4.6.5}
    The homomorphism \eqref{463} is bijective if $\phi_{M^*}$ is relevant.
\end{lemma}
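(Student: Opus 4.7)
The plan is to exploit the endoscopic character identity, which we have just established for generic parameters in Lemmas \ref{4.6.2} and \ref{4.6.4}, in order to force the kernel of the surjection \eqref{463} to be trivial.

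First, I would revisit the construction of \eqref{463}. The subgroup $W_{\pi_M}(M,G) \subset W_\phi(M,G)$ consists of those Weyl elements whose action through Langlands--Shelstad lifts fixes the isomorphism class of $\pi_M$, and the subgroup $W_{\pi_M}^\circ(M,G)$ encodes the Knapp--Stein type reflections attached to zeros of normalized Plancherel factors for $\pi_M$. By the first part of Theorem \ref{2.6.2}, now available via Lemma \ref{4.6.4}, every simple reflection in $W_\phi^\circ(M,G)$ acts trivially on $\pi_M$, yielding the inclusion $W_\phi^\circ(M,G) \subset W_{\pi_M}^\circ(M,G)$ and hence the quotient homomorphism \eqref{463}.

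Next, assume $\phi_{M^*}$ is relevant, so that $\Pi_{\phi_{M^*}}(M)$ is nonempty by the induction hypothesis, and fix $\pi_M \in \Pi_{\phi_{M^*}}(M)$. Let $w \in W_\phi(M,G)$ have image lying in $W_{\pi_M}^\circ(M,G)$, and let $u^\natural \in \frakN_\phi(M,G)$ be any lift. I would compare two realizations of the normalized self-intertwining operator $R_P(u^\natural, \pi_M, \phi_{M^*}, \psi_F)$: on one hand, the representation-theoretic construction of \S\ref{2.5} expresses it as a scalar dictated by the character $\an{-, \pi_M}_{\xi\lvert_{M^*}, z}$ on $\frakS_\phi(M)$ evaluated at the image $s(u^\natural)$; on the other hand, the endoscopic character relation \eqref{lir}, now known by Lemma \ref{4.6.2}, identifies the trace of this operator on $\calI_P(\pi_M, f)$ with a contribution to $f'_G(\phi, s_\psi u^{-1})$, which depends only on the image of $u^\natural$ in $\overline{\frakS}_\phi(M,G)$ by Lemma \ref{2.6.1}. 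Playing these two descriptions against each other as $\pi_M$ varies over $\Pi_{\phi_{M^*}}(M)$, and invoking the linear independence of characters of irreducible representations of the finite group $\frakS_\phi(M)$, forces the action of $w$ to be inner relative to $S_\phi(G)^\circ$, i.e., $w \in W_\phi^\circ(M,G)$.

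The principal obstacle I anticipate is the careful bookkeeping inside the tower $\frakS_\phi(M) \hookrightarrow \frakN_\phi(M,G) \twoheadrightarrow W_\phi(M,G)$ of the commutative diagram \eqref{cd}, compatible with the two sophisticated splittings $s'$ and $s$ recalled in \S\ref{2.4}: the kernel of \eqref{463} must be tracked coherently on the parameter side (through the conjugation action in $\widehat{G}$) and on the representation side (through the normalized intertwining operators $R_P$), and the lifts of elements of $W_{\pi_M}^\circ(M,G)$ to $\frakN_\phi(M,G)$ have to be chosen so that the character comparison above actually yields a nontrivial constraint. Once this bookkeeping is arranged, bijectivity of \eqref{463} follows directly from the character identities.
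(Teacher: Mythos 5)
Your identification of the surjection $R_\phi(M,G)\onto R_{\pi_M}(M,G)$ as a consequence of LIR part 1 (Lemma \ref{4.6.4}), via the inclusion $W_\phi^\circ(M,G)\subset W_{\pi_M}^\circ(M,G)$, is correct and matches the standard construction.

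The injectivity argument, however, has a genuine gap. You invoke the linear independence of the characters $\an{-,\pi_M}$ of the finite abelian group $\frakS_\phi(M)$ as $\pi_M$ ranges over $\Pi_{\phi_{M^*}}(M)$, but this only serves to separate the members of the $M$-packet from one another, which is already known by the inductive hypothesis (LLC for $M$). It does not by itself constrain the Weyl element $w$. Suppose $w\in W_{\pi_M}^\circ(M,G)\setminus W_\phi^\circ(M,G)$ and let $u^\natural\in\frakN_\phi(M,G)$ be a lift with $x=x_\phi(u^\natural)\notin\frakS_\phi(M)$; then $R_P(u^\natural,\pi_M,\phi_{M^*},\psi_F)$ is a scalar $c(u^\natural,\pi_M)$ on each $\calI_P(\pi_M)$, and LIR gives $e(G)^{-1}f'_G(\phi,s_\phi x^{-1})=\sum_{\pi_M}c(u^\natural,\pi_M)\,f(\calI_P(\pi_M))$. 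Character inversion on $\frakS_\phi(M)$ lets you solve for the individual scalars $c(u^\natural,\pi_M)$, but no contradiction is yet in sight: the linear forms $f\mapsto f'_G(\phi,s_\phi y^{-1})$ for $y\in\frakS_\phi(M)$ already span all linear combinations of the $f(\calI_P(\pi_M))$, so the form indexed by the nontrivial $x$ can a priori sit inside that span. To actually conclude $w\in W_\phi^\circ(M,G)$ by this route you would need to \emph{also} establish the linear independence of the endoscopic linear forms $f\mapsto f'_G(\phi,x)$ as $x$ ranges over $\overline{\frakS}_\phi(M,G)$, which is a significant input that you neither prove nor cite, and which is not supplied by Lemma \ref{2.6.1} (that lemma only gives well-definedness, not independence).

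The argument the paper is pointing at (following \cite{kmsw}, which follows Arthur) establishes $W_{\pi_M}^\circ(M,G)\subset W_\phi^\circ(M,G)$ directly and locally, without any appeal to the stable linear forms: $W_{\pi_M}^\circ$ is generated by the reflections $s_\alpha$ whose normalized Plancherel density $\mu_\alpha(\pi_M)$ vanishes, and by Shahidi's results on tempered local $L$-functions (applicable here because the local Langlands correspondence for the proper Levi $M$ is part of the induction hypothesis), this vanishing is equivalent to $\alpha$ contributing a root of $T_\phi$ in $S_\phi^\circ$, hence $s_\alpha\in W(T_\phi,S_\phi^\circ)=W_\phi^\circ(M,G)$. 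This yields equality $W_\phi^\circ=W_{\pi_M}^\circ$ and therefore bijectivity of \eqref{463}, matching Goldberg's explicit computation of $R$-groups for odd special orthogonal groups. You should replace the character-comparison step with this Plancherel-density comparison, or else supply and justify the missing independence of the endoscopic linear forms.
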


Now Theorem \ref{2.6.2} holds for any generic parameters.
This completes the proof of LIR for generic parameters.

\subsection{The construction of $L$-packets}\label{4.7}
Theorem \ref{1.6.1} for generic parameters (i.e., $L$-parameters) can be proven in the same way as Theorem 1.6.1 of \cite{kmsw}.
Let us roughly review the procedure.
See \cite[\S\S4.7-4.9]{kmsw} for detail.

Let $F$ be a local field, $G^*=\SO_{2n+1}$ over $F$, and $(\xi,z):G^*\to G$ a pure inner twist of $G^*$ as in \S\ref{2.4}.
If $G$ is split, the theorem is already proven by Arthur \cite{art13}.
Therefore, we assume that $G$ is non-quasi-split, so in particular $F\neq \C$.
In the archimedean case, the theorem is known by Langlands and Shelstad, so we assume that $F$ is a $p$-adic field.

First consider the non-discrete parameters.
Since we now have Theorem \ref{2.6.2} for generic parameters $\phi$, the local packets $\Pi_\phi(G)$ and the map $\Pi_\phi(G)\to\Irr(\frakS_\phi,\chi_G)$ for $\phi\in\Phi_\bdd(G^*)\setminus\Phi_2(G^*)$ have already constructed at the end of \S\ref{2.6}.
The remaining assertions are that the map is bijective and that the packets are disjoint and exhaust $\Pi_\temp(G)\setminus\Pi_2(G)$.

The general classification \cite{art93} (cf. \cite[\S3.5]{art13}, \cite[\S4.7]{kmsw}) of $\Pi_\temp(G)$ by harmonic analysis says that there is a bijective correspondence
\begin{equation}\label{cltemp}
    (M,\sigma,\mu) \mapsto \pi_\mu \in \Pi_\temp(G),
\end{equation}
from the $G(F)$-orbits of triples consisting of a Levi subgroup $M\subset G$ over $F$, a discrete series representation $\sigma$ of $M(F)$, and an irreducible representation $\mu$ of the representation-theoretic $R$-group $R(\sigma)=R_\sigma(M,G)$.
Here, we do not need an extension $\widetilde{R}(\sigma)$ of the $R$-group, as explained in \cite[p.158]{art13} or \cite[\S4.7]{kmsw}.
The correspondence can be described explicitly as follows.
For a triple $(M,\sigma,\mu)$, let $P$ be a parabolic subgroup of $G$ with Levi factor $M$.
Choose a family $\{R_P(r,\sigma)\}_{r\in R_\sigma(M,G)} \subset \Aut_{G(F)}(\calI_P^G(\sigma))$ of self-intertwining operators so that an assignment $r \mapsto R_P(r,\sigma)$ is a homomorphism from $R_\sigma(M,G)$ to $\Aut_{G(F)}(\calI_P^G(\sigma))$.
Then we have a representation $R_P(r,\sigma) \circ \calI_P^G(\sigma,g)$ of $R_\sigma(M,G)\times G(F)$ on $\calH_P^G(\sigma)$, for which we shall write $\calR$.
The representation $\pi_\mu$ is characterized by a decomposition
\begin{equation*}
    \calR=\bigotimes_{\mu\in\Irr(R_\sigma(M,G))} \mu^\vee \otimes \pi_\mu.
\end{equation*}
Note that $R_\sigma(M,G)$ is abelian in our case.

Then the bijectivity of the map $\Pi_\phi(G)\to\Irr(\frakS_\phi,\chi_G)$ and the disjointness and exhaustion of $L$-packets in $\Pi_\temp(G)$ follows from the same argument as in \cite[\S4.7]{kmsw}, which we omit.
\begin{proposition}\label{4.7.1}
    Theorem \ref{1.6.1} holds for generic parameters $\phi\in\Phi_\bdd(G^*)\setminus\Phi_2(G^*)$.
\end{proposition}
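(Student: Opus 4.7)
The plan is to compare the packet $\Pi_\phi(G)$ and its parametrization, which were constructed at the end of \S\ref{2.6} from $\phi_{M^*}\in\Phi_{2,\bdd}(M^*)$ whose image in $\Phi_\bdd(G^*)$ is $\phi$, with the Knapp--Stein--Arthur classification \eqref{cltemp}. By the induction hypothesis, the packet $\Pi_{\phi_{M^*}}(M)$ consists of essentially square-integrable tempered representations of $M(F)$, and the map $\pi_M\mapsto\an{-,\pi_M}$ is a bijection onto $\Irr(\frakS_{\phi_{M^*}},\chi_M)$. When $\phi$ is not relevant, every $\pi_M$ is nonrelevant and $\Pi_\phi(G)=\emptyset$, so the statement is empty; thus we may assume that $\phi$ is $G$-relevant and $M=\xi(M^*)$ is a proper Levi subgroup defined over $F$.

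First I would pick $\pi_M\in\Pi_{\phi_{M^*}}(M)$ and exploit Lemma~\ref{4.6.4} together with Lemma~\ref{4.6.5}: because $\phi_{M^*}$ is relevant, the natural map $R_\phi(M,G)\to R_{\pi_M}(M,G)$ is an isomorphism, and on the space $\calH_P(\pi_M)$ the operators $R_P(u^\natural,\pi_M,\phi_{M^*},\psi_F)$ factor through $R_\phi(M,G)$ and realize a family of self-intertwining operators of $\calI_P(\pi_M)$ of the kind required in the classification \eqref{cltemp}. Hence, combined with \eqref{cltemp}, the decomposition
\begin{equation*}
\rho(\pi_M)=\bigoplus_{\mu\in\Irr(R_{\pi_M}(M,G))}\mu^\vee\otimes\pi_{M,\mu}
\end{equation*}
identifies the constituents of $\rho(\pi_M)|_{G(F)}$ with the irreducible tempered representations $\pi_\mu$ of $G(F)$ attached to $(M,\pi_M,\mu)$, and these are all distinct as $\mu$ varies. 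Combining the decompositions over $\pi_M\in\Pi_{\phi_{M^*}}(M)$ and using Lemma~\ref{2.8.3} (which identifies $\frakS_{\phi_{M^*}}(M,G)$ with $\frakS_\phi(G)$), together with the fact that, via $s'$ and $s$ of \S\ref{2.4}, the character on $R_\phi(M,G)$ induced from $\mu$ lifts canonically through $\frakS_\phi(G)\to R_\phi(G)$ to a character of $\frakS_\phi$ whose restriction to $Z(\widehat{G})^\Gamma$ is $\chi_G$, yields the desired injective map $\pi\mapsto\an{-,\pi}$ into $\Irr(\frakS_\phi,\chi_G)$.

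Bijectivity over a $p$-adic field follows because $\frakS_\phi$ is an elementary abelian $2$-group, hence $|\Irr(\frakS_\phi,\chi_G)|=|\frakS_\phi/Z(\widehat{G})^\Gamma|$, which coincides with $|R_\phi(M,G)|\cdot|\Pi_{\phi_{M^*}}(M)|$ by the extension $1\to\frakS_{\phi_{M^*}}\to\frakS_\phi(M,G)\to R_\phi(M,G)\to 1$ in \eqref{cd} together with the bijection $\Pi_{\phi_{M^*}}(M)\simeq\Irr(\frakS_{\phi_{M^*}},\chi_M)$ from the induction hypothesis; counting gives $\#\Pi_\phi(G)=|\Irr(\frakS_\phi,\chi_G)|$. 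For exhaustion, any $\pi\in\Pi_\temp(G)\setminus\Pi_2(G)$ is, by \eqref{cltemp}, of the form $\pi_\mu$ for a triple $(M,\sigma,\mu)$ with $M\subsetneq G$ and $\sigma\in\Pi_2(M)\cap\Pi_\temp(M)$; by the induction hypothesis for $M$, $\sigma$ lies in a unique $L$-packet $\Pi_{\phi_{M^*}}(M)$ for some $\phi_{M^*}\in\Phi_{2,\bdd}(M^*)$, and then $\pi=\pi_\mu$ is a summand of $\calI_P(\sigma)$, hence of $\Pi_\phi(G)$ where $\phi$ is the image of $\phi_{M^*}$. Disjointness of the packets follows from the unicity of the triple $(M,\sigma)$ in \eqref{cltemp}: if $\pi\in\Pi_\phi(G)\cap\Pi_{\phi'}(G)$, then the associated $(M,\sigma)$ and $(M',\sigma')$ agree up to $G(F)$-conjugation, and by the injectivity of the map in the induction hypothesis one has $\phi_{M^*}=\phi'_{M^*}$, hence $\phi=\phi'$.

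The step I expect to be the most delicate is matching the abstract normalized self-intertwining operator $R_P(u^\natural,\pi_M,\phi_{M^*},\psi_F)$ with a self-intertwining operator in Arthur's $R$-group classification in a way that is compatible with the Whittaker normalization and with the sophisticated splittings $s',s$ of \S\ref{2.4}; this is precisely the content of Lemmas \ref{4.6.4} and \ref{4.6.5}, and it is only through their combined use that the character $\an{-,\pi_\mu}$ on $\frakS_\phi$ coincides with the one obtained by transporting $\mu$ through $\frakS_\phi\twoheadrightarrow R_\phi(M,G)\xrightarrow{\sim}R_{\pi_M}(M,G)$ and twisting by $\an{-,\pi_M}$. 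Once this compatibility is established, assertions 1--3 of Theorem~\ref{1.6.1} for $\phi$ follow from the construction, and assertions 4--5 (restricted to the non-discrete part) follow from the classification \eqref{cltemp} and the induction hypothesis as above.
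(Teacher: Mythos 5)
Your proposal is correct and follows essentially the same route as the paper, which simply cites KMSW Proposition 4.7.1 with the obvious substitutions of notation: both rest on the Knapp--Stein--Arthur classification \eqref{cltemp}, the packets and pairing constructed at the end of \S\ref{2.6} via Theorem \ref{2.6.2}, Lemmas \ref{4.6.4}, \ref{4.6.5}, and \ref{2.8.3}, and a counting argument from the bottom row of the diagram \eqref{cd} together with the induction hypothesis for $M$.
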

\begin{proof}
    The proof is similar to that of \cite[Proposition 4.7.1]{kmsw}.
    Note that $\Pi_\phi(G,\Xi)$, $\chi_\Xi$, $S_\phi^\natural$, $N_\phi^\natural$, and $S_\phi^{\natural\natural}(M)$ in loc. cit. should be replaced by $\Pi_\phi(G)$, $\chi_G$, $\frakS_\phi$, $\frakN_\phi$, and $\frakS_\phi(M)$ in our case, respectively.
\end{proof}

Next, before considering discrete parameters, we need some preparation.
Put $T_\el(G)$ to be the set of $G(F)$-conjugacy classes of triples $\tau=(M,\sigma,r)$, where $M\subset G$ is a Levi subgroup, $\sigma$ a unitary discrete series representation of $M(F)$, and $r\in R_\sigma(M,G)$ a regular element.
Here, $r\in R_\sigma(M,G)$ is said to be regular if $\fraka_M^{r=1}:=\{\lambda\in\fraka_M \mid r\lambda=\lambda\}$ coincides with $\fraka_G$.
The set $\Pi_{2,\temp}(G)$ is naturally regarded as a subset of $T_\el(G)$ via $\pi\mapsto(G,\pi,1)$.
The complement set will be denoted by $T_\el^2(G)$.
In general, the trace Paley-Wiener theorem tells that "orbital integrals" of cuspidal functions $f\in\calH_\cusp(G)$ are described by the functions on $T_\el(G)$ given by some intertwining operators.

In order to utilize the local intertwining operator defined in \S\ref{2.5} , we need to consider the set $T_\el^\natural(G)$.
Let $T_\el^{2,\natural}(G)$ be the set of $G(F)$-conjugacy classes of triples $\tau^\natural=(M,\sigma,s)$, where $M\subsetneq G$ is a proper Levi subgroup, $\sigma$ a unitary discrete series representation of $M(F)$, and $s\in \frakS_{\phi_\sigma}(G)$ an element whose image under the surjection $\frakS_{\phi_\sigma}(G)\onto R_\sigma(M,G)$ is regular.
Here, we write $\phi_\sigma$ for the $L$-parameter of $\sigma$, and we also write $\phi_\sigma$ for its image in $\Phi_\bdd(G^*)$.
Then put $T_\el^\natural(G):=\Pi_{2,\temp}(G) \sqcup T_\el^{2,\natural}(G)$.
The surjection $\frakS_{\phi_\sigma}(G)\onto R_\sigma(M,G)$ induces the natural surjection $T_\el^\natural(G)\onto T_\el(G)$.
For $f\in\calH(G)$, put
\begin{align*}
    f_G(\tau^\natural)=\begin{dcases*}
                           \tr\left( R_P(u^\natural,\sigma,\phi_\sigma,\psi_F) \circ \calI_P^G(\sigma,f) \right), & for $\tau^\natural=(M,\sigma,s)\in T_\el^{2,\natural}(G)$, \\
                           \tr\left( \pi(f) \right),                                                              & for $\tau^\natural=\pi\in \Pi_{2,\temp}(G)$,
                       \end{dcases*}
\end{align*}
where $u^\natural\in\frakN_{\phi_\sigma}(M,G)$ is a lift of $s$.
It is independent of the choice of $u^\natural$.
If $\tau^\natural_1$ and $\tau^\natural_2$ in $T_\el^{2,\natural}(G)$ maps to a same element in $T_\el(G)$, then they are of the form $\tau^\natural_1=(M,\sigma,s)$ and $\tau^\natural_2=(M,\sigma,ys)$ for some $y\in\frakS_{\phi_\sigma}(M)$.
Then we shall write $\tau^\natural_2=y \tau^\natural_1$.
In this case, by Lemma \ref{2.5.2}, we have
\begin{equation*}
    f_G(\tau^\natural_1)=\an{y,\sigma} f_G(\tau^\natural_2).
\end{equation*}

Let $\phi\in\Phi_{2,\bdd}(G^*)$ be a discrete generic parameter, and $x\in\frakS_\phi$.
Then by Lemma \ref{1.4.1}, one can construct an endoscopic triple $\frake=(G^\frake,s^\frake,\eta^\frake)$ of $G$ and a generic parameter $\phi^\frake$ of $G^\frake$, to obtain a linear form $\calH(G)\ni f \mapsto f'_G(\phi,x):=f^\frake(\phi^\frake)$.
The trace Paley-Wiener theorem implies that there exists a family $\{c_{\phi,x}(\tau^\natural)\}_{\tau^\natural\in T_\el^\natural(G)} \subset \C$ of complex numbers such that for any $f\in \calH_\cusp(G)$, we have
\begin{equation}\label{tpw}
    f'_G(\phi,x)=e(G) \sum_{\tau\in T_\el(G)} c_{\phi,x}(\tau^\natural) f_G(\tau^\natural),
\end{equation}
and
\begin{equation*}
    c_{\phi,x}(\tau^\natural)=\an{y,\tau}c_{\phi,x}(y\tau^\natural),
\end{equation*}
where $\tau^\natural$ is a lift of $\tau$, and $\an{y,\tau}=\an{y,\sigma}$ for $\tau=(M,\sigma,s)\in T_\el^2(G)$.
Note that the product $c_{\phi,x}(\tau^\natural) f_G(\tau^\natural)$ does not depend on the choice of $\tau^\natural$.
Then one can show an orthogonality relation:
\begin{proposition}\label{4.8.3}
    \begin{enumerate}[(a)]
        \item Let $\phi\in\Phi_{2,\bdd}(G^*)$ be a discrete generic parameter, and $x\in\frakS_\phi$. Then $c_{\phi,x}(\tau^\natural)=0$ for all $\tau^\natural\in T_\el^{2,\natural}(G)$.
        \item Let $\phi_1$, $\phi_2\in\Phi_{2,\bdd}(G^*)$ be discrete generic parameters, and $x_i\in\frakS_{\phi_i}$ ($i=1,2$). Then \begin{align*}
                  \sum_{\pi\in\Pi_{2,\temp}(G)} c_{\phi_1,x_1}(\pi) \overline{c_{\phi_2,x_2}(\pi)} = \begin{dcases*}
                                                                                                         \abs{\overline{\frakS}_{\phi_1}}, & if $\phi_1=\phi_2$ and $x_1=x_2$, \\
                                                                                                         0,                                & otherwise.
                                                                                                     \end{dcases*}
              \end{align*}
    \end{enumerate}
\end{proposition}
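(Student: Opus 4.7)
The plan is to adapt the proof of \cite[Proposition 4.8.3]{kmsw} to the setting of odd special orthogonal groups; the statement and its proof there are parallel, and every ingredient required is now in place. Namely we have LIR for generic parameters (Theorem \ref{2.6.2}, proved in \S\ref{4.6}), hence ECR and the construction of non-discrete tempered packets (Proposition \ref{4.7.1}); the stabilization of the trace formula and the stable multiplicity formula of \S\ref{3.1} (Proposition \ref{3.3.1}); the globalization results of \S\ref{4.3}; and Arthur's proof \cite[\S\S6.5--6.6]{art13} of the quasi-split analogue, which serves as the base case of the induction.

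For part (a), I would restrict attention to cuspidal test functions $f \in \calH_\cusp(G)$ and use \eqref{tpw} to isolate $c_{\phi,x}(\tau^\natural)$ for $\tau^\natural = (M,\sigma,s) \in T_\el^{2,\natural}(G)$ with $M \subsetneq G$. On the left of \eqref{tpw}, $f'_G(\phi,x) = f^\frake(\phi^\frake)$ is a stable tempered distribution on $G^\frake$ evaluated on the endoscopic transfer of a cuspidal function, which is stably cuspidal; since $\phi$ is discrete for $G$, each simple factor of $\phi^\frake$ on $G^\frake$ is also discrete, so by the inductive validity of the classification on the smaller factors this expression reduces to a combination of characters of discrete series on $G^\frake$. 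On the right of \eqref{tpw}, the pieces associated with non-discrete $\tau^\natural$ are rewritten via LIR applied to the generic parameter $\phi_\sigma$ of the induced representation; since $\phi_\sigma$ factors through a proper Levi while $\phi$ does not, the matching of the stably cuspidal left-hand side with properly induced characters on the right forces the $\tau^\natural$-coefficient to vanish.

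For part (b), with part (a) in hand, the identity \eqref{tpw} for cuspidal $f$ becomes a sum $e(G)\sum_{\pi\in\Pi_{2,\temp}(G)} c_{\phi_i,x_i}(\pi) f_G(\pi)$. The orthogonality is then obtained by globalizing $(\phi_i,x_i)$ via Proposition \ref{4.4.3} to a pair $(\dot\phi_i,\dot x_i)$ over a totally real number field $\dot F$, with $\dot\phi_i$ discrete for $\dot G^*$ and with the local components at the auxiliary finite place $v_1$ controlled via the fourth condition there. Substituting into the stable trace formula and invoking Lemma \ref{4.5.1} to isolate the $\dot\phi_i$-component at the place $u$ (using pseudo-coefficients of prescribed discrete series), the orthogonality reduces to that of irreducible characters of the finite abelian group $\overline{\frakS}_{\phi_1}$. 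The factor $|\overline{\frakS}_{\phi_1}|$ on the right arises from the constant $|\frakS_\psi|\inv \sigma(\overline{S}_\psi^\circ)$ in the stable multiplicity formula combined with the Kottwitz sign $e(G)$ and the product formula \eqref{033}.

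The hard part will be part (a): the cancellation between the properly induced contributions and the stably cuspidal left-hand side relies on a careful bookkeeping of transfer factors, the pairing $\langle -,-\rangle_{\xi,z}$, and the Kottwitz sign $\chi_G$ along the diagram \eqref{cd}, in particular the interplay between $W_\phi^\circ(M,G)$ and $\frakS_\phi(M,G)$ captured by the homomorphism $x_\phi$. No fundamentally new idea is required beyond the adjustments necessitated by the conventions of \S\ref{0.3} and the fact that our inner twists are pure (so we use ordinary cocycles rather than the basic cocycles of \cite{kmsw}); but the verification that these modifications propagate correctly through the cuspidal-support argument is where the genuine work lies.
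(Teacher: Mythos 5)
Your proposal correctly identifies the available ingredients (the expansion \eqref{tpw} from the trace Paley--Wiener theorem, LIR, the globalization propositions of \S\ref{4.3}, the stable multiplicity formula, and Lemma \ref{4.5.1}), and part (b) of your sketch is essentially on the right track. However, your argument for part (a) has a genuine gap, and part (a) is in fact where the argument diverges from what you propose.

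You suggest proving $c_{\phi,x}(\tau^\natural)=0$ for non-discrete $\tau^\natural$ by a purely local argument: since $f'_G(\phi,x)=f^\frake(\phi^\frake)$ is a combination of discrete series characters of the quasi-split $G^\frake$, it is ``stably cuspidal,'' and matching this against the properly induced pieces on the right-hand side ``forces'' the vanishing. This does not work. Even for cuspidal $f$, the endoscopic transfer of a discrete series character from $G^\frake$ to the inner form $G$ is a priori an arbitrary virtual elliptic character of $G$; it is an honest combination of $f_G(\tau^\natural)$ over \emph{all} $\tau^\natural\in T_\el^\natural(G)$, including the non-discrete ones. The assertion that the transfer concentrates on the discrete summand $\Pi_{2,\temp}(G)$ is not a formal consequence of cuspidal support bookkeeping---it is precisely the content of part (a), so the argument as you phrase it is circular. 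What actually proves (a), in both \cite[Lemma~6.5.3]{art13} and \cite[Proposition~4.8.3]{kmsw}, is a \emph{global} comparison: one globalizes not only $\phi$ but also the elliptic non-discrete parameter $\phi_\sigma$ to $\dot\phi_\sigma\in\Phi^2_\el(\dot G^*)$ (using Proposition \ref{4.4.4}/\ref{4.4.7}), plugs the expansion \eqref{tpw} at the place $u$ into the trace formula identity, and appeals to the vanishing $\tr R^{\dot G}_{\disc,\dot\phi_\sigma}=0$ of Lemma \ref{4.5.1} to kill the non-discrete coefficients. Both (a) and (b) come out of one unified global trace-formula comparison, not a local argument followed by a separate global one.

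Two smaller points: your invocation of Lemma \ref{4.5.1} in part (b) ``to isolate the $\dot\phi_i$-component at the place $u$'' is misplaced---that lemma is a vanishing statement for elliptic non-discrete parameters, and the isolation of the $\dot\phi_i$-contribution at $u$ is instead achieved by the choice of pseudo-coefficients (or more precisely, by the control at the auxiliary places $v_1,v_2$ in Proposition \ref{4.4.3} together with linear independence of Hecke characters). And your diagnosis of the ``hard part'' as transfer-factor bookkeeping along the diagram \eqref{cd} misses the mark: once the stable multiplicity formula, Lemma \ref{4.5.1}, and the globalizations are in place, the remaining work is the straightforward extraction of coefficients from the global identity, as in \cite[\S6.5]{art13}.
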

\begin{proof}
    The proof is similar to that of \cite[Proposition 4.8.3]{kmsw} or \cite[Lemma 6.5.3]{art13}.
\end{proof}

Finally we shall consider discrete generic parameters, after introducing one lemma.
\begin{lemma}\label{4.9.1}
    Let $\xi:\dot{G}^*\to\dot{G}$ be an inner twist of $\dot{G}^*=\SO_{2n+1}$ over a number field $\dot{F}$, and $\dot{\phi}\in \Phi_2(\dot{G}^*)$ a discrete global parameter.
    For any $\dot{f}\in\calH(\dot{G})$, we have
    \begin{equation}\label{smf}
        \sum_{\dot{\pi}} n_{\dot{\phi}}(\dot{\pi}) \dot{f}_{\dot{G}}(\dot{\pi}) = \frac{1}{\abs{\overline{\frakS}_{\dot{\phi}}}} \sum_{\overline{x}\in\overline{\frakS}_{\dot{\phi}}} \dot{f}'_{\dot{G}}(\dot{\phi},\overline{x}),
    \end{equation}
    where $\dot{\pi}$ runs over irreducible representations of $\dot{G}(\A_{\dot{F}})$, and $n_{\dot{\phi}}(\dot{\pi})$ it the multiplicity of $\dot{\pi}$ in $(R^{\dot{G}}_{\disc, \dot{\phi}}, L^2_{\disc, \dot{\phi}}(\dot{G}(\dot{F})\backslash \dot{G}(\A_{\dot{F}})))$.
\end{lemma}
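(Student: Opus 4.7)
The plan is to deduce the formula from the stabilization of the discrete part of the trace formula combined with the stable trace formula (Proposition \ref{3.3.1}), then convert sums over endoscopic data into a sum over $\overline{\frakS}_{\dot{\phi}}$ by means of Lemma \ref{1.4.3}. Since the statement is modeled on Arthur's stable multiplicity formula, I will follow the pattern used in \cite[\S4.1]{art13} and \cite[\S4.9]{kmsw}, but adapted to the non-quasi-split odd orthogonal setting.

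First, I would reduce the spectral side. Because $\dot{\phi} \in \Psi_2(\dot{G}^*)$ is discrete, the centralizer $\overline{S}_{\dot{\phi}}$ is finite, and in the spectral expansion of $I^{\dot{G}}_{\disc,\dot{\phi}}$ arising from the standard model, all contributions associated to proper Levi subgroups $\dot{M} \subsetneq \dot{G}$ vanish: a discrete parameter for a proper $\dot{M}^*$ whose image in $\Psi(\dot{G}^*)$ equals $\dot{\phi}$ cannot exist, since $\dot{\phi}$ is discrete for $\dot{G}^*$. Consequently
\begin{equation*}
    I^{\dot{G}}_{\disc,\dot{\phi}}(\dot{f})
    = \tr R^{\dot{G}}_{\disc,\dot{\phi}}(\dot{f})
    = \sum_{\dot{\pi}} n_{\dot{\phi}}(\dot{\pi}) \dot{f}_{\dot{G}}(\dot{\pi}),
\end{equation*}
which identifies the left-hand side of \eqref{smf} with $I^{\dot{G}}_{\disc,\dot{\phi}}(\dot{f})$.

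Next, I would apply the stabilization of the discrete part of the trace formula recalled in \S\ref{3.1}:
\begin{equation*}
    I^{\dot{G}}_{\disc,\dot{\phi}}(\dot{f})
    = \sum_{\frake \in \overline{\calE}_\el(\dot{G})} \iota(\dot{G},\dot{G}^\frake)\, \widehat{S}^{\dot{G}^\frake}_{\disc,\dot{\phi}}(\dot{f}^\frake).
\end{equation*}
For each $\frake = (\dot{G}^\frake,s^\frake,\eta^\frake)$ that contributes, I write $\dot{G}^\frake$ as a product of split odd special orthogonal groups (the linear factors cannot appear since $\dot{\phi}$ is self-dual and discrete) and apply Proposition \ref{3.3.1} to each simple factor. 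Because $\dot{\phi}$ is discrete, every lift $\dot{\phi}^\frake \in \Psi(\dot{G}^\frake)$ of $\dot{\phi}$ is also discrete, so $\sigma(\overline{S}_{\dot{\phi}^\frake}^\circ) = 1$ and
\begin{equation*}
    \widehat{S}^{\dot{G}^\frake}_{\disc,\dot{\phi}}(\dot{f}^\frake)
    = \sum_{\dot{\phi}^\frake \mapsto \dot{\phi}} |\frakS_{\dot{\phi}^\frake}|^{-1}\, \varepsilon^{\dot{G}^\frake}_{\dot{\phi}^\frake}(s_{\dot{\phi}^\frake})\, \dot{f}^\frake(\dot{\phi}^\frake).
\end{equation*}
By definition $\dot{f}^\frake(\dot{\phi}^\frake) = \dot{f}'_{\dot{G}}(\dot{\phi},s^\frake)$, which depends only on $\overline{s}^{\frake} \in \overline{\frakS}_{\dot{\phi}}$ by Lemma \ref{3.5.6}.

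Finally, I would repackage the double sum over $(\frake, \dot{\phi}^\frake)$ via the bijection of Lemma \ref{1.4.3}, so that the pairs get replaced by pairs $(\dot{\phi}, \overline{x})$ with $\overline{x} \in \overline{\frakS}_{\dot{\phi}}$, each contributing $\dot{f}'_{\dot{G}}(\dot{\phi},\overline{x})$. The hard part, and the main obstacle, will be to verify the coefficient identity
\begin{equation*}
    \iota(\dot{G},\dot{G}^\frake) \cdot |\frakS_{\dot{\phi}^\frake}|^{-1}\, \varepsilon^{\dot{G}^\frake}_{\dot{\phi}^\frake}(s_{\dot{\phi}^\frake})
\end{equation*}
summed over the fiber of a given $\overline{x}$ collapses to $|\overline{\frakS}_{\dot{\phi}}|^{-1}$. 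This uses Kottwitz--Shelstad's explicit formula for $\iota$, the compatibility of $\varepsilon_{\dot{\phi}}$ with endoscopic lifting, and the exact sequence \eqref{033} together with Lemma \ref{1.1.2} to make the factor $\varepsilon^{\dot{G}^\frake}_{\dot{\phi}^\frake}(s_{\dot{\phi}^\frake})$ match the value produced by the $\varepsilon$-character on $\overline{\frakS}_{\dot{\phi}}$; it is essentially Arthur's computation in the quasi-split case, carried over unchanged since $\iota$ is defined at the level of $\overline{\calE}_\el(\dot{G}) = \overline{\calE}_\el(\dot{G}^*)$. Assembling the pieces then yields \eqref{smf}.
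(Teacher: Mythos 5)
Your proposal matches the paper's intended argument, which (following KMSW Lemma 4.9.1 and Arthur's (6.6.6)) proceeds exactly by identifying $\tr R^{\dot G}_{\disc,\dot\phi}$ with $I^{\dot G}_{\disc,\dot\phi}$ via the discreteness of $\dot\phi$, stabilizing, applying the stable multiplicity formula (Proposition \ref{3.3.1}) to each quasi-split endoscopic factor, and repackaging via Lemma \ref{1.4.3}. One simplification worth noting: since $\dot\phi$ is generic, $\varepsilon^{\dot G^\frake}_{\dot\phi^\frake}$ is identically $1$, so the coefficient matching you flag as the hard part reduces to a purely combinatorial identity relating $\iota(\dot G, \dot G^\frake)$, $|\frakS_{\dot\phi^\frake}|^{-1}$, and the fiber cardinalities in Lemma \ref{1.4.3}.
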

\begin{proof}
    The proof is similar to that of \cite[Lemma 4.9.1]{kmsw} or \cite[(6.6.6)]{art13}.
\end{proof}

Let $\phi\in\Phi_{2,\bdd}(G^*)$.
Then Proposition \ref{4.4.3} and Lemma \ref{4.1.2} gives us a global data $(\dot{F},\dot{G}^*,\dot{G}, \dot{\phi}, u,v_1,v_2)$ such that $\dot{F}$ is totally real, and
\begin{itemize}
    \item $v_1$ is a finite place, and $v_2$ is a real place;
    \item $\dot{F}_u=F$, $\dot{G}^*_u=G^*$, $\dot{G}_u=G$, and $\dot{\phi}_u=\phi$;
    \item $\dot{G}_v$ is split for $v\notin\{u,v_2\}$, and $\chi_G=\chi_{\dot{G}_{v_2}}$;
    \item $\dot{\phi}_v\in\Phi_{2,\bdd}(\dot{G}^*_v)$ for $v\in\{v_1,v_2\}$;
    \item the canonical map $S_{\dot{\phi}}\to S_{\dot{\phi}_v}$ is isomorphic for $v\in\{u,v_1\}$.
\end{itemize}
Choose $\dot{f} \in \calH(\dot{G})$ so that $\dot{f}$ is of the form $\dot{f}=\bigotimes_v \dot{f}_v$, and that $\dot{f}_u\in\calH_\cusp(G)$.
Then we have the equation \eqref{smf} due to Lemma \ref{4.9.1}.
For $v\notin\{u,v_2\}$, the group $\dot{G}_v$ is split and hence we have ECR \eqref{ecr}.
In the case $v=v_2$, since $\dot{F}_{v_2}=\R$, ECR is known by the work of Shelstad.
At the place $u$, we have the equation \eqref{tpw} since $\dot{f}_u$ is cuspidal.
Substituting them, we obtain the following equation from \eqref{smf}:
\begin{equation*}
    \sum_{\dot{\pi} \in \Pi_\unit(\dot{G}(\A_{\dot{F}}))} n_{\dot{\phi}}(\dot{\pi}) \dot{f}_{\dot{G}}(\dot{\pi})
    =\frac{1}{\abs{\overline{\frakS}_{\dot{\phi}}}} \sum_{\dot{\pi}^u \in \Pi_{\dot{\phi}^u}} \sum_{\pi \in \Pi_{2,\temp}(G)} \sum_{\overline{x}\in \overline{\frakS}_{\dot{\phi}}} \an{\dot{x}^u,\dot{\pi}^u} c_{\phi, \dot{x}_u}(\pi) \dot{f}_{\dot{G}}(\dot{\pi}^u \otimes \pi),
\end{equation*}
where $\Pi_\unit(\dot{G}(\A_{\dot{F}}))$ denotes the set of isomorphism classes of irreducible unitary representations of $\dot{G}(\A_{\dot{F}})$, $\dot{x}\in\frakS_{\dot{\phi}}$ is a lift of $\overline{x}\in \overline{\frakS}_{\dot{\phi}}$, $\dot{x}_v\in\frakS_{\dot{\phi}_v}$ is the image of $\dot{x}$, and
\begin{align*}
    \Pi_{\dot{\phi}^u}         & :=\Set{\dot{\pi}^u=\bigotimes_{v\neq u}\dot{\pi}_v | \dot{\pi}_v\in\Pi_{\dot{\phi}_v},\ \an{-,\dot{\pi}_v}=1 \ \text{for almost all } v}, \\
    \dot{x}^u                  & :=\left( \dot{x}_v \right)_{v\neq u} \in \prod_{v\neq u} \frakS_{\dot{\phi}_v},                                                           \\
    \an{\dot{x}^u,\dot{\pi}^u} & :=\prod_{v\neq u} \an{\dot{x}_v,\dot{\pi}_v}.
\end{align*}

For each place $v$ other than $u$, $v_1$ and $v_2$, fix $\dot{\pi}_v\in \Pi_{\dot{\phi}_v}$ such that $\an{-,\dot{\pi}_v}$ is trivial.
For a real place $v_2$, fix $\dot{\pi}_{v_2}\in \Pi_{\dot{\phi}_{v_2}}$ arbitrarily.
They exist because $\dot{G}_v$ splits if $v\notin\{u,v_1,v_2\}$ and $\dot{F}_{v_2}=\R$.

For any $\mu\in \Irr(\frakS_\phi,\chi_G)$, choose $\dot{\pi}_{v_1} \in \Pi_{\dot{\phi}_{v_1}}$ so that $\mu(\dot{x}_u)\an{\dot{x}^u,\dot{\pi}^u}=1$ for all $\dot{x}\in\frakS_{\dot{\phi}}$, where $\dot{\pi}^u=\bigotimes_{v\neq u} \dot{\pi}_v$.
Such $\dot{\pi}_{v_1}$ does exist because we have $\chi_G=\chi_{\dot{G}_{v_2}}$, $v_1$ is a finite place, $\dot{G}_{v_1}$ is split, and $S_{\dot{\phi}}\to S_{\dot{\phi}_v}$ is isomorphic for $v\in\{u,v_1\}$.
Then for $\pi\in \Pi_{2,\temp}(G)$, we set
\begin{equation*}
    n_{\phi}(\mu,\pi):=n_{\dot{\phi}}\left( \dot{\pi}^u \otimes \pi \right).
\end{equation*}

Then by the same argument of the proof of \cite[Proposition 4.9.2]{kmsw}, we have
\begin{equation}\label{492}
    n_\phi(\mu,\pi)=\frac{1}{\abs{\overline{\frakS}_\phi}} \sum_{\overline{x}\in \overline{\frakS}_\phi} \mu(x)\inv c_{\phi,x}(\pi),
\end{equation}
for any $\pi\in \Pi_{2,\temp}(G)$, where $x\in \frakS_\phi$ denotes for an arbitrary representative of $\overline{x}$.
Moreover, the orthogonality relation (Proposition \ref{4.8.3}) and the formula \eqref{492} implies the following formula:
\begin{proposition}\label{4.9.3}
    Let $\phi, \phi'\in \Phi_{2,\bdd}(G^*)$ be discrete generic parameters, and $\mu\in \Irr(\frakS_\phi,\chi_G)$ and $\mu'\in \Irr(\frakS_{\phi'},\chi_G)$.
    Then we have
    \begin{align*}
        \sum_{\pi\in\Pi_{2,\temp}(G)} n_\phi(\mu,\pi) n_{\phi'}(\mu',\pi)=\begin{dcases*}
                                                                              1, & if  $(\phi,\mu)=(\phi',\mu')$, \\
                                                                              0, & otherwise.
                                                                          \end{dcases*}
    \end{align*}
\end{proposition}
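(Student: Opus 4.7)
The plan is to reduce Proposition \ref{4.9.3} to two orthogonality statements already established: the spectral orthogonality of the coefficients $c_{\phi,x}$ given in Proposition \ref{4.8.3}(b), and the standard orthogonality of irreducible characters on the finite abelian group $\overline{\frakS}_\phi$.

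First I would substitute the explicit formula \eqref{492} into the sum. Since $n_{\phi'}(\mu',\pi)$ is a non-negative integer, it equals its own complex conjugate, so one factor can be rewritten as
\begin{equation*}
n_{\phi'}(\mu',\pi)=\frac{1}{\abs{\overline{\frakS}_{\phi'}}}\sum_{\overline{x'}\in\overline{\frakS}_{\phi'}}\mu'(x')\,\overline{c_{\phi',x'}(\pi)},
\end{equation*}
using that $\mu'(x')^{-1}=\overline{\mu'(x')}$ on a finite group. This puts the expression in the form required by Proposition \ref{4.8.3}(b). Interchanging the $\pi$-sum with the $\overline{x},\overline{x'}$-sums, the combined expression becomes
\begin{equation*}
\frac{1}{\abs{\overline{\frakS}_\phi}\,\abs{\overline{\frakS}_{\phi'}}}\sum_{\overline{x},\overline{x'}}\mu(x)^{-1}\mu'(x')\sum_{\pi\in\Pi_{2,\temp}(G)}c_{\phi,x}(\pi)\,\overline{c_{\phi',x'}(\pi)}.
\end{equation*}

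Next I would invoke Proposition \ref{4.8.3}(b). It forces the whole expression to vanish unless $\phi=\phi'$, and in the latter case the inner $\pi$-sum contributes $\abs{\overline{\frakS}_\phi}$ exactly when $\overline{x}=\overline{x'}$ and zero otherwise. The double sum collapses to a single sum over $\overline{x}\in\overline{\frakS}_\phi$, leaving
\begin{equation*}
\frac{1}{\abs{\overline{\frakS}_\phi}}\sum_{\overline{x}\in\overline{\frakS}_\phi}\mu(x)^{-1}\mu'(x).
\end{equation*}
Here one must observe that the summand depends only on $\overline{x}$: since $\mu,\mu'\in\Irr(\frakS_\phi,\chi_G)$ share the central character $\chi_G$, their ratio $\mu^{-1}\mu'$ descends to a character of $\overline{\frakS}_\phi=\frakS_\phi/Z(\widehat{G})^\Gamma$.

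Finally, by the orthogonality relations for irreducible characters of the finite abelian group $\overline{\frakS}_\phi$, this last sum equals $1$ if $\mu=\mu'$ and $0$ otherwise. Combined with $\phi=\phi'$, this is exactly the claim. There is no real obstacle here beyond keeping track of the central character condition when passing between $\frakS_\phi$ and $\overline{\frakS}_\phi$; the nontrivial analytic input, namely the spectral orthogonality encoded in Proposition \ref{4.8.3}(b), has already been done via the trace Paley-Wiener theorem, and the rest is finite group character theory.
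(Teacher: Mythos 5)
Your proposal is correct and follows essentially the same route as the paper, which in turn defers to \cite[Proposition 4.9.3]{kmsw}: insert the explicit formula \eqref{492} for $n_\phi(\mu,\pi)$, use that these multiplicities are real to conjugate one factor, swap the order of summation, apply the spectral orthogonality of Proposition \ref{4.8.3}(b), and finish with character orthogonality on the finite abelian group $\overline{\frakS}_\phi$. Your parenthetical remark that $\mu^{-1}\mu'$ descends to $\overline{\frakS}_\phi$ because $\mu,\mu'$ share the central character $\chi_G$ is exactly the point that keeps the last step consistent with the quotient appearing in \eqref{492} and in Proposition \ref{4.8.3}(b).
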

\begin{proof}
    The proof is the similar to that of \cite[Proposition 4.9.3]{kmsw}.
\end{proof}

Since $n_\phi(\mu,\pi)$ is a non-negative integer, Proposition \ref{4.9.3} tells us that for any $\phi\in \Phi_{2,\bdd}(G^*)$ and $\mu\in \Irr(\frakS_\phi, \chi_G)$, there exists a unique $\pi\in \Pi_{2,\temp}(G)$ such that $n_\phi(\mu,\pi)=1$.
We shall write $\pi(\phi,\mu)$ for this representation $\pi$.
Then an assignment $(\phi,\mu)\mapsto \pi(\phi,\mu)$ gives an injective map.
The packet of $\phi\in\Phi_{2,\bdd}(G^*)$ is defined by
\begin{equation*}
    \Pi_\phi:=\Set{\pi(\phi,\mu) | \mu\in \Irr(\frakS,\chi_G)},
\end{equation*}
and the associated character of the component group is defined by
\begin{equation*}
    \an{-,\pi(\phi,\mu)}:=\mu.
\end{equation*}
One can now easily see that the packets are disjoint and the map $\pi_\phi\to\Irr(\frakS_\phi,\chi_G)$ is bijective.

Let $\phi\in\Phi_{2,\bdd}(G^*)$ and $x\in\frakS_\phi$.
Combining the formula \eqref{492} with Proposition \ref{4.9.3}, we have
\begin{align*}
    c_{\phi,x}(\pi)=\begin{dcases*}
                        \mu(x)=\an{x,\pi}, & if $\pi=\pi(\phi,\mu)$, \\
                        0,                 & otherwise.
                    \end{dcases*}
\end{align*}
Then the formula \eqref{tpw} is none other than the endoscopic character relation for cuspidal function $f\in\calH_\cusp(G)$.
\begin{proposition}\label{4.9.4}
    For general $f\in\calH(G)$, we have ECR:
    \begin{equation*}
        f'(\phi,x)=e(G) \sum_{\pi\in\Pi_\phi} \an{x,\pi} f_G(\pi).
    \end{equation*}
\end{proposition}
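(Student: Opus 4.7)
The plan is to extend the identity from the cuspidal test functions (where \eqref{tpw} together with $c_{\phi,x}(\pi)=\an{x,\pi}$ already gives it) to arbitrary $f\in\calH(G)$ by means of the trace Paley--Wiener theorem. Both sides are $G(F)$-invariant linear forms on $\calH(G)$ that depend on $f$ only through the tempered character values $(f_G(\pi))_{\pi\in\Pi_\temp(G)}$: for the right-hand side this is manifest, and for $f'_G(\phi,x)=f^\frake(\phi^\frake)$ it follows from Proposition \ref{1.5.1} applied to each factor of $G^\frake=\SO_{2n_1+1}\times\SO_{2n_2+1}$ together with the matching property---the stable linear form $\phi^\frake$ on $G^\frake$ is a finite combination of characters of tempered representations, which pulls back via transfer to a virtual tempered character on $G$. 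Consequently one may expand
\[
D(f)\;:=\;f'_G(\phi,x)-e(G)\sum_{\pi\in\Pi_\phi}\an{x,\pi}f_G(\pi)\;=\;\sum_{\pi\in\Pi_\temp(G)}c(\pi)\,f_G(\pi),
\]
and it suffices to show $c(\pi)=0$ for every $\pi\in\Pi_\temp(G)$.

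First I would dispose of the discrete tempered contributions. For each $\pi\in\Pi_{2,\temp}(G)$ I choose a pseudo-coefficient $f_\pi\in\calH_\cusp(G)$ such that $(f_\pi)_G(\pi')=\delta_{\pi,\pi'}$ on $\Pi_{2,\temp}(G)$. Substituting $f=f_\pi$ into the already established cuspidal formula \eqref{tpw}, together with the value $c_{\phi,x}(\pi')=\an{x,\pi'}$ whenever $\pi'\in\Pi_\phi$ and $c_{\phi,x}(\pi')=0$ otherwise (read off from \eqref{492} and Proposition \ref{4.8.3}), yields $D(f_\pi)=0$, and hence $c(\pi)=0$ for every $\pi\in\Pi_{2,\temp}(G)$.

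It then remains to show $c(\pi)=0$ for the properly induced tempered representations $\pi=\calI_P^G(\sigma)$ with $\sigma\in\Pi_{2,\temp}(M)$, $M\subsetneq G$, and this I would do by descent to the Levi. Since $\phi\in\Phi_{2,\bdd}(G^*)$ is discrete, the associated parameter $\phi^\frake$ is discrete for $G^\frake$ and in particular does not factor through any proper Levi of $G^\frake$; combining this observation with Lemma \ref{1.1.4}(2), which carries matching of $f\leftrightarrow f^\frake$ over to matching of the constant terms $f_M\leftrightarrow f^\frake_{M^\frake}$ along a corresponding Levi $M^\frake\subset G^\frake$, together with the inductive validity of ECR on the factors of $G^\frake$ for $x\neq1$, shows that $f^\frake(\phi^\frake)$ receives no contribution from characters of the form $f_G(\calI_P^G(\sigma))$ beyond those already subsumed in $\Pi_\phi\subset\Pi_{2,\temp}(G)$. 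Disjointness of tempered $L$-packets for generic parameters (Proposition \ref{4.7.1} together with the discrete packets $\Pi_\phi$ just constructed) ensures that the parameter of such $\pi$ is different from $\phi$, forcing $c(\pi)=0$.

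The main obstacle I expect is the case $x=1$, in which $\frake$ is the trivial endoscopic datum and $f^\frake(\phi^\frake)$ reduces to Arthur's stable linear form $f^{G^*}(\phi)$ of Proposition \ref{1.5.1}: here no proper endoscopy for $G^\frake$ is available to invoke the inductive hypothesis, so the vanishing of $c(\pi)$ on induced tempered $\pi$ must be pinned down directly from the quasi-split ECR already proven by Arthur in \cite{art13}---which characterizes $f^{G^*}(\phi)$ as the stable sum of characters of $\Pi_\phi(G^*)$ and thereby constrains the transfer-pullback to $G$ in precisely the required way.
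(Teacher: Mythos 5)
Your overall strategy---reducing the general case to the already-established cuspidal case via descent to Levi subgroups, as in Arthur's Corollary 6.7.4 which the paper cites---is the right one. The pseudo-coefficient step to settle the discrete tempered contributions is correct. But the treatment of the properly induced tempered part (your step~3) is not quite right, and the perceived obstruction at $x=1$ is illusory.

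The precise fact you need is that $f\mapsto f'_G(\phi,x)=f^\frake(\phi^\frake)$ is a \emph{cuspidal} invariant distribution on $G$ (i.e.\ supported on $T_\el(G)$ in the elliptic-tempered expansion, equivalently annihilating any $f$ whose elliptic orbital integrals vanish). This follows because $\phi^\frake\in\Phi_{2,\bdd}(G^\frake)$ is discrete, so by Arthur's quasi-split ECR on $G^\frake$ the stable form $f^\frake(\phi^\frake)$ is a \emph{stable} sum of discrete-series characters, hence a cuspidal stable distribution on $G^\frake$; and the transfer $f\mapsto f^\frake$ carries elliptic orbital integrals to elliptic stable orbital integrals and commutes with constant terms in the sense of Lemma~\ref{1.1.4}(2), so the pullback is cuspidal on $G$. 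Once cuspidality of both sides is in hand, a cuspidal distribution is determined by its restriction to $\calH_\cusp(G)$, and the expansion over $T_\el(G)$ already computed in \eqref{tpw} together with Proposition~\ref{4.8.3}(a) (which is what kills the $T_\el^{2,\natural}(G)$ part, i.e.\ the properly induced elliptic triples) finishes the argument. Your appeal to "inductive validity of ECR on the factors of $G^\frake$" is a red herring here: $G^\frake$ is a product of \emph{split} odd special orthogonal groups, so one invokes Arthur's already-established quasi-split theorems, not the running induction on non-quasi-split forms. For the same reason, the $x=1$ case is not special: $\frake$ is then the trivial datum, $G^\frake=G^*$, and Arthur's quasi-split ECR applies directly, giving cuspidality of $f^{G^*}(\phi)$ exactly as before. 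Finally, note that $T_\el(G)$ contains triples $(M,\sigma,r)$ with $M\subsetneq G$, so "cuspidal" is a priori broader than "supported on $\Pi_{2,\temp}(G)$"; it is Proposition~\ref{4.8.3}(a), not the pseudo-coefficient argument alone, that collapses the support down to $\Pi_{2,\temp}(G)$, and you should be invoking it at this stage rather than only in your treatment of the discrete part.
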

\begin{proof}
    The proof is similar to that of \cite[Corollary 6.7.4]{art13}.
\end{proof}

Proposition \ref{4.9.4} characterizes the packet $\Pi_\phi$ and the bijection $\Pi_\phi\to\Irr(\frakS_\phi,\chi_G)$.
Thus they does not depend on the choice of $\dot{\phi}$ or $\dot{\pi}^u$, but only on $\phi$.
By the argument between Theorems \ref{1.6.1} and \ref{3.12}, we now have the packet $\Pi_\phi$ and the bijection $\Pi_\phi\to\Irr(\frakS_\phi,\chi_G)$ for all generic parameters $\phi\in\Phi(G^*)$.

It remains to show that the packets exhaust $\Pi_{2,\temp}(G)$.
Let $\pi\in\Pi_{2,\temp}(G)$.
Take a globalization $\dot{\pi}$ of $\pi$ as in Lemma \ref{4.2.1}.
By the decomposition \eqref{gap}, we have the $A$-parameter $\dot{\phi}$ of $\dot{\pi}$.
In particular, we have $n_{\dot{\phi}}(\dot{\pi})\neq 0$.
Since $\dot{\pi}_{v'}$ has sufficiently regular infinitesimal character for some real place $v'$, the parameter $\dot{\phi}$ is generic.
Thus Theorem \ref{2.6.2} and Hypothesis \ref{3.6.3} hold for $\dot{\phi}$, so does Theorem \ref{5.0.5} for $\dot{\phi}$.
(Note that \S \ref{5} is independent from the exhaustion.)
Hence we have $\pi\in\Pi_{\dot{\phi}_u}$.
It finally remains to show that the parameter $\dot{\phi}_u$ is bounded and discrete. Suppose first that $\dot{\phi}_u$ is not bounded. The packet $\Pi_{\dot{\phi}_u}$ has already been defined after Theorem \ref{1.6.1}. The Langlands classification of irreducible admissible representations of connected reductive groups over $p$-adic groups implies that the packet does not contain any tempered representation. This contradicts that $\pi\in \Pi_{\dot{\phi}_u}$. Hence $\dot{\phi}_u$ is bounded. Suppose next that $\dot{\phi}_u$ is not discrete. The packet $\Pi_{\dot{\phi}_u}$ has already been defined at the end of \S\ref{2.6}. The general classification of irreducible tempered representations \eqref{cltemp} implies that the packet contains only discrete series representations. This contradicts that $\pi\in \Pi_{\dot{\phi}_u}$. Hence $\dot{\phi}_u$ is discrete.
Therefore, we have $\dot{\phi}_u\in\Phi_{2,\bdd}(G^*)$.
This completes the proof of the local classification theorem for generic parameters.

\section{The proof of the global theorem}\label{5}
Assuming the existence of local $A$-packets and LIR, Theorem \ref{1.7.1} can be proven in the same way as Theorem 1.7.1 of \cite{kmsw}.
In particular, since the assumptions hold for generic parameters, generic part of the theorem holds true.
In this section we record the statements.
See \cite[\S5]{kmsw} for the proof.

Let $F$ be a number field, $G^*=\SO_{2n+1}$ over $F$, and $G$ an inner form of $G^*$.
If $G$ is split, the theorem is already proven by Arthur \cite{art13}.
Therefore, we assume that $G$ is non-quasi-split.
Recall from \S\ref{3.1} \eqref{gap} the decomposition
\begin{equation*}
    \begin{gathered}
        L^2_{\disc}(G(F)\backslash G(\A_F))=\bigoplus_{\psi\in \Psi(G^*)} L^2_{\disc,\psi}(G(F)\backslash G(\A_F)),\\
        \tr R_{\disc}(f)=\sum_{\psi\in\Psi(G^*)} R_{\disc,\psi}(f), \qquad f\in\calH(G).
    \end{gathered}
\end{equation*}
Let us introduce a hypothesis on local $A$-packets:
\begin{hypothesis}\label{3.6.3}
    Let $\psi\in\Psi(G^*)$ be a global parameter for $G$.
    We have the local packet $\Pi_{\psi_v}=\Pi_{\psi_v}(G_v)$ and the map $\Pi_{\psi_v} \to \Irr(\frakS_{\psi_v},\chi_{G_v})$ satisfying ECR \eqref{ecr} for all places $v$ of $F$.
\end{hypothesis}
\begin{theorem}\label{5.0.5}
    Let $\psi\in\Psi(G^*)$ be a global parameter for $G$.
    Assume that Theorem \ref{2.6.2} and Hypothesis \ref{3.6.3} hold for $\psi$.
    \begin{enumerate}
        \item If $\psi\notin \Psi_2(G^*)$, then \begin{equation*}
                  L^2_{\disc,\psi}(G(F)\backslash G(\A_F))=0.
              \end{equation*}
        \item If $\psi\in \Psi_2(G^*)$, then \begin{equation*}
                  L^2_{\disc,\psi}(G(F)\backslash G(\A_F))=\bigoplus_{\pi\in\Pi_\psi(G,\varepsilon_\psi)} \pi.
              \end{equation*}
    \end{enumerate}
\end{theorem}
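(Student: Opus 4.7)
The plan is to follow the strategy of \cite[\S5]{kmsw} essentially verbatim, since the input hypotheses (Theorem \ref{2.6.2} and Hypothesis \ref{3.6.3}) are assumed to hold for $\psi$. The starting point is the stabilization
\begin{equation*}
    I^G_{\disc,\psi}(f) = \sum_{\frake\in\overline{\calE}_\el(G)} \iota(G,G^\frake)\widehat{S}^{G^\frake}_{\disc,\psi}(f^\frake),
\end{equation*}
combined with the spectral decomposition of $I^G_{\disc,\psi}$ into $\tr R^G_{\disc,\psi}$ and correction terms coming from parabolic induction out of proper Levi subgroups $M \subsetneq G$. By the running induction on $n$, those correction terms can be evaluated via the classification theorem already established for $M$, leaving an expression for $\tr R^G_{\disc,\psi}(f)$ in terms of $\widehat{S}^{G^\frake}_{\disc,\psi}(f^\frake)$ and the linear forms $f_G(\psi_{M^*},\overline{u})$ for proper Levis.

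Next I would apply Arthur's stable multiplicity formula (Proposition \ref{3.3.1}) to each endoscopic term $\widehat{S}^{G^\frake}_{\disc,\psi}(f^\frake)$ on the quasi-split groups $G^\frake$, rewriting each such term in terms of the linear form $f'_G(\psi,s)$ attached via the bijection of Lemma \ref{1.4.3}. Grouping the resulting terms according to the image of $s$ in $\overline{\frakS}_\psi$ and invoking the global intertwining relation (Theorem \ref{3.5.7}) to equate $f'_G(\psi,s_\psi s\inv)$ with $f_G(\psi_{M^*},\overline{u})$, one arrives — when $\psi\in\Psi_2(G^*)$ — at the identity
\begin{equation*}
    \tr R^G_{\disc,\psi}(f) = \frac{1}{\abs{\overline{\frakS}_\psi}} \sum_{\overline{x}\in\overline{\frakS}_\psi} \varepsilon_\psi(\overline{x})\, f'_G(\psi, s_\psi \overline{x}\inv).
\end{equation*}
In the non-discrete case $\psi\notin\Psi_2(G^*)$, the analogous manipulation yields a sum that collapses to zero, because either $\psi$ is not elliptic (the weaker identity of Lemma \ref{3.8.1} is vacuous, and Lemma \ref{3.7.1} takes over) or the induced contributions from Levis cancel the would-be discrete contribution; crucially, this mirrors \cite[Lemma 4.9.1]{kmsw} exactly.

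To finish part (2), I would apply the local endoscopic character relation (from Hypothesis \ref{3.6.3}) at each place to expand
\begin{equation*}
    f'_G(\psi,s_\psi\overline{x}\inv) = e(G)\prod_v \sum_{\pi_v\in\Pi_{\psi_v}} \an{s_\psi s_v\inv, \pi_v} f_{v,G_v}(\pi_v),
\end{equation*}
and then exchange the finite sum over $\overline{x}\in\overline{\frakS}_\psi$ with the tensor sum over $\pi = \bigotimes_v \pi_v \in \Pi_\psi(G)$. Orthogonality of characters on the finite abelian group $\overline{\frakS}_\psi$ isolates precisely those $\pi$ with $\an{-,\pi}=\varepsilon_\psi$, each with multiplicity one, giving the desired decomposition of $L^2_{\disc,\psi}$.

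The main obstacle will be a careful bookkeeping of constants: the global endoscopic coefficient $\iota(G,G^\frake)$, Arthur's $\sigma(\overline{S}_\psi^\circ)$ from the stable multiplicity formula, the sign character $\varepsilon^{G^*}_\psi$, and the factor $2^{-q}$ appearing in Lemma \ref{3.8.1}. One must verify that these combine to produce exactly the $\abs{\overline{\frakS}_\psi}\inv$ normalization, and that the $e(G)$ factors from local ECR cancel correctly against the signs built into the stabilization. In the non-quasi-split setting, the interplay between the Kottwitz sign $e(G)$ and the pairing $\an{-,z}$ from Lemma \ref{2.6.5} requires particular care, especially when reindexing from $\overline{\calE}_\el(G)$ to $\overline{\frakS}_\psi$ via Lemma \ref{1.4.3}; this is the technical heart of the argument, and essentially the only place where the passage from the quasi-split case treated in \cite{art13} to the non-quasi-split case requires genuinely new input beyond what is already provided by the local theory.
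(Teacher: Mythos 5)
Your proposal correctly reproduces the strategy of \cite[\S5]{kmsw} — stabilization, elimination of Levi contributions via the standard model, Arthur's stable multiplicity formula on the quasi-split endoscopic groups, the global intertwining relation, and finally the local endoscopic character relation with character orthogonality on $\overline{\frakS}_\psi$ — which is precisely what the paper invokes in deferring its proof to loc. cit. This matches the paper's approach.
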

\begin{proof}
    The proof is similar to that of \cite[Theorem 5.0.5]{kmsw}.
\end{proof}
Since Theorem \ref{2.6.2} and Hypothesis \ref{3.6.3} hold true if $\psi=\phi\in\Phi(G^*)$ is generic, the global classification theorem is now established for the generic part.

\end{document}